\newcommand{\ds}{\displaystyle}
\newtheorem{theorem}{Theorem}[section]
\newtheorem{lemma}[theorem]{Lemma}
\newtheorem{proposition}[theorem]{Proposition}
\newtheorem{corollary}[theorem]{Corollary}
\theoremstyle{definition}
\newtheorem{definition}[theorem]{Definition}
\newtheorem{assumption}{Assumption}
\newtheorem{remark}[theorem]{Remark}
\numberwithin{equation}{section}
\newtheorem{example}[theorem]{Example}
\DeclareMathOperator{\id}{Id}
\DeclareMathOperator{\diam}{Diam}
\DeclareMathOperator{\vol}{vol}
\DeclareMathOperator{\loc}{loc}
\DeclareMathOperator{\di}{div}
\DeclareMathOperator{\dd}{d}
\DeclareMathOperator{\supp}{supp}
\DeclareMathOperator{\Lip}{FLip}
\DeclareMathOperator{\Geo}{Geo}
\DeclareMathOperator{\FAC}{\mathsf{FAC}}
\DeclareMathOperator{\AC}{\mathsf{AC}}
\DeclareMathOperator{\BAC}{\mathsf{BAC}}
\DeclareMathOperator{\APE}{\mathsf{FEPMMS}}
\DeclareMathOperator{\Chc}{\mathsf{Ch}}
\DeclareMathOperator{\intt}{int}
\DeclareMathOperator{\m}{\mathfrak{m}}
\DeclareMathOperator{\dm}{d\mathfrak{m}}
\DeclareMathOperator{\Ht}{\mathsf{H}_{\it t}}
\DeclareMathOperator{\Htt}{\mathsf{H}_{\it t}}
\DeclareMathOperator{\osc}{osc}
\DeclareMathOperator{\RL}{BLip}
\DeclareMathOperator{\Tt}{\mathcal{T}}
\DeclareMathOperator{\res}{\mathsf{restr}}
\DeclareMathOperator{\g}{\mathfrak{g}}
\DeclareMathOperator{\I}{\mathfrak{T}}
\DeclareMathOperator{\SL}{Lip}
\DeclareMathOperator{\ee}{{e}}
\DeclareMathOperator{\CD}{\mathsf{CD}}
\DeclareMathOperator{\Po}{\mathscr{P}}
\DeclareMathOperator{\X}{\mathcal {X}}
\DeclareMathOperator{\Mod}{\mathsf{Mod}}
\author{Alexandru Krist\'aly}
\address{Department of Economics\\
	Babe\c s-Bolyai University\\
	400591 Cluj-Napoca, Romania \&  Institute of Applied Mathematics\\
 \'Obuda University\\
 1034 Budapest, Hungary}
  \email{alex.kristaly@econ.ubbcluj.ro; kristaly.alexandru@nik.uni-obuda.hu}
  \thanks{A.~Krist\'aly is supported by the
  	Excellence Researcher Program \'OE-KP-2-2022 of \'Obuda University, Hungary.}
\author{Shin-ichi Ohta}
\address{
Department of Mathematics\\
Osaka University\\
560-0043 Osaka\\
Japan\\
and RIKEN Center for Advanced Intelligence Project (AIP)\\
1-4-1 Nihonbashi \\ Tokyo 103-0027\\
Japan}
\email{s.ohta@math.sci.osaka-u.ac.jp}
\thanks{S.~Ohta is supported by the JSPS Grant-in-Aid for Scientific Research (KAKENHI)
22H04942, 24K00523, 24K21511.}
\author{Wei Zhao}
\address{
School of Mathematics\\
East China University of Science and Technology\\
200237 Shanghai, China}
\email{szhao\underline{ }wei@yahoo.com}
\thanks{W.~Zhao is supported by  Natural Science Foundation of China (No. 12471045), Ningbo  Natural Science Foundation (No. 2024J017) and Science and Technology Project of Xinjiang Production and Construction Corps (No. 2023CB008-12).}
\date{\today}
\keywords{Asymmetric metric space; heat flow; Laplacian; Sobolev space; Finsler manifold; Funk metric.}
\subjclass[2010]{Primary 49Q20, Secondary 35K05, 53C21, 58J35}
\begin{document}

\title[Analysis on asymmetric metric measure spaces]
{Analysis on asymmetric metric measure spaces:\\ $q$-heat flow, $q$-Laplacian and Sobolev spaces}

\begin{abstract}
We investigate geometric analysis on metric measure spaces equipped with asymmetric distance functions.
Extending concepts from the symmetric case,
we introduce upper gradients and corresponding $L^q$-energy functionals
as well as $q$-Laplacian in the asymmetric setting.
Along the lines of gradient flow theory,
we then study $q$-heat flow as the $L^2$-gradient flow for $L^q$-energy.
We also study the Sobolev spaces over asymmetric metric measure spaces,
extending some results of Ambrosio--Gigli--Savar\'e to asymmetric distances.
A wide class of asymmetric metric measure spaces is provided by irreversible Finsler manifolds,
which serve to construct various model examples by pointing out genuine differences between the symmetric and asymmetric settings.
\end{abstract}

\maketitle
\tableofcontents

\section{Introduction}

We often encounter asymmetric quantities (transport costs, action functionals, etc.) in our daily life.
Among others, irreversible Finsler manifolds and, more generally, asymmetric distance functions provide
an appropriate framework for studying such asymmetric phenomena.
Relevant examples include the Zermelo navigation problem (see Bao--Robles--Shen \cite{BRS}),
the Matsumoto mountain slope model describing the law of walking on an inclined plane under the action of gravity
(see Matsumoto \cite{Matsumoto}),
and modeling light propagation in anisotropic and inhomogeneous media (see Antonelli--Ingarden--Matsumoto \cite{AIM}).
An important model example is the Funk metric on the $n$-dimensional Euclidean unit ball $\mathbb B^n$,
where the distance function is given by
\begin{equation}\label{dist-reversfunk-00}
d_{F}(x_1,x_2) =\log \Biggl[
 \frac{\sqrt{\|x_1-x_2\|^2-(\|x_1\|^2\|x_2\|^2-\langle x_1,x_2\rangle^2)}-\langle x_1,x_2-x_1\rangle}{\sqrt{\|x_1-x_2\|^2-(\|x_1\|^2\|x_2\|^2-\langle x_1,x_2\rangle^2)}-\langle x_2,x_2-x_1\rangle} \Biggr],
 \quad x_1,x_2\in \mathbb{B}^n
\end{equation}
(see, e.g., Shen \cite{Sh1}), where $\|\cdot\|$ and $\langle\cdot, \cdot\rangle$ denote
the Euclidean norm and inner product, respectively.
Clearly $d_F$ is asymmetric ($d_F(x_1,x_2) \neq d_F(x_2,x_1)$); moreover,
\begin{equation}\label{reversfunk-00}
	\lim_{\|x\|\to1^-}d_F(\mathbf{0},x)=\infty, \qquad \lim_{\|x\|\to1^-} d_F(x,\mathbf{0})=\log2.
\end{equation}
Such asymmetric metric spaces can be efficiently discussed within the theory of Finsler manifolds;
in particular, due to the asymmetric version of the Busemann--Mayer theorem \cite{BM} (see also Bao--Chern--Shen \cite[Exercise~6.3.4]{BCS}),
the Funk metric $d_F$ in \eqref{dist-reversfunk-00} is associated with the Finsler metric:
\begin{equation}\label{Finsler-funk}
	F(x,y) =\frac{\sqrt{\|y\|^2-(\|x\|^2\|y\|^2-\langle x,y\rangle^2)}}{1-\|x\|^2}+\frac{\langle x,y\rangle}{1-\|x\|^2},
	\quad x \in \mathbb{B}^n,\ y\in T_x\mathbb{B}^n=\mathbb R^n.
\end{equation}
We notice that the unboundedness of the ratio $d_F(\mathbf{0},x)/d_F(x,\mathbf{0})$
(called the reversibility; see Definition~\ref{reversibilitydef}) observed in \eqref{reversfunk-00}
deeply affects the validity of many of the expected geometric/analytic results on this structure.

The primordial aim of this article is to elaborate the theory of $q$-heat flow and $q$-Laplacian for $q \in (1,\infty)$
on asymmetric metric measure spaces, as a continuation of our previous works
on optimal transport theory  and gradient flows in asymmetric metric spaces (see Krist\'aly--Zhao \cite{KZ} and Ohta--Zhao \cite{OZ}).
Precisely, in \cite[\S 4.4]{OZ}, we constructed usual ($2$-)heat flow as gradient flow for the relative entropy
in the $L^2$-Wasserstein space over a Finsler manifold.
Instead, we shall construct $q$-heat flow as gradient flow for an appropriate energy functional
(called the (weak) $q$-Cheeger energy) on the $L^2$-space.
In the symmetric setting, heat flow on metric measure spaces has been deeply studied.
Among them, the most relevant one to us is Ambrosio--Gigli--Savar\'e's \cite{AGS4,AGS2}
based on the gradient flow theory developed in \cite{AGS},
systematically integrating seminal works by Cheeger~\cite{Ch} and Shanmugalingam~\cite{Sh}
in terms of upper gradients (see also Heinonen \cite{He} and Heinonen--Koskela--Shanmugalingam--Tyson \cite{HKST}).
Precisely, they studied the case of $q=2$ and $q \neq 2$ in \cite{AGS2} and \cite{AGS4}, respectively,
in the symmetric setting.
We also refer to Kell~\cite{Ke} for the construction of $q$-heat flow as gradient flow
for the R\'enyi entropy in the $L^p$-Wasserstein space with $q \neq 2$ and $p^{-1}+q^{-1}=1$,
again in the symmetric case.

We intend to extend the notions of weak upper gradients and relaxed gradients as well as
corresponding Cheeger-type energies and Laplacians to the asymmetric setting with $q \neq 2$,
emphasizing at the same time relevant differences between the usual symmetric and the present asymmetric cases.
By discussing in an appropriate framework, our results are formally similar to the symmetric setting to some extent.
However, pathological situations may occur due to the asymmetric character of the distance function,
which will be exemplified by, e.g., the aforementioned Funk metric \eqref{dist-reversfunk-00} or \eqref{Finsler-funk}.
For instance, we need to distinguish forward and backward curves;
a constant speed minimal geodesic in the forward sense may not be even absolutely continuous in the backward sense
(see Example~\ref{forwardabsoucuveextend}).

We begin with the study of a weak upper gradient $G$ of a function $f$
on an asymmetric metric measure space $(X,d,\m)$, requiring
\[
\int_{\partial\gamma}f \leq \int_{\gamma}G
\]
along curves $\gamma$ in a certain class $\I$ (Definition~\ref{weakuppgrad}).
Depending the choice of forward and backward curves,
we arrive at the notion of minimal weak forward and backward upper gradients $|D^\pm f|_{w,\I_p}$
(Definition~\ref{df:min-wug}).
This leads us to the weak forward $q$-Cheeger energy
\begin{equation}\label{Cheeger-0}
	\Chc^+_{w,q}(f) :=\frac1q \int_X |D^+ f|^q_{w,\I_p} \,{\dm}
\end{equation}
(Definition~\ref{weakcheegerenergy}),
while the backward energy satisfies $\Chc^-_{w,q}(f)=\Chc^+_{w,q}(-f)$.
It turns out that $\Chc^+_{w,q}$ is convex and lower semi-continuous in the Hilbert space $L^2(X,\m)$;
therefore, one can construct $q$-heat flow as gradient flow for $\Chc^+_{w,q}$
as well as the $q$-Laplacian $\Delta_q$ as the subdifferential of $\Chc^+_{w,q}$ with the least $L^2$-norm.
Thus, given any initial datum $f \in L^2(X,\m)$,
we have $f_t=\Ht(f)$ solving the $q$-heat equation:
\[
\frac{\dd\ }{{\dd}t^+}f_t=\Delta_q f_t, \qquad f_t \to f \text{ in } L^2(X,\m) \text{ as } t \to 0
\]
(Theorems~\ref{existstheoremft}, \ref{realchgraevslop}, Corollary~\ref{heatflowequationproper-00}).
We also establish Kuwada's lemma in our setting
stating an upper bound of the backward metric speed $|\mu'_-|(t)$ for a curve $\mu_t=\Ht(f_0)\m$ of probability measures
obeying the $q$-heat equation (Lemma \ref{wssvolemm2}), utilizing the Hopf--Lax semigroup studied in the Appendix.
The appearance of the backward speed is an interesting phenomenon apparent only in the asymmetric setting
(we refer to Ohta \cite{Oham}, Ohta--Sturm \cite{OS} and Ohta--Suzuki \cite{OSuz} for related results).

We then introduce relaxed gradients via an approximation by Lipschitz functions in $L^q(X,\m)$
along the lines of Cheeger's construction \cite{Ch} (Definition~\ref{relaxedgradient}).
Then, similarly to \eqref{Cheeger-0}, the forward $q$-Cheeger energy $\Chc^+_q(f)$
is defined by using minimal relaxed ascending gradients (Definition~\ref{df:Cheeger}).
By definition, we have $\Chc^+_{w,q}(f) \le \Chc^+_q(f)$ in general (Proposition~\ref{bascieastime}).
Analyzing $\Chc^+_q$ requires more care since it is defined via an approximation in $L^q(X,\m)$
but gradient flow is constructed in $L^2(X,\m)$;
for this reason we assume $q \ge 2$ or $\m[X]<\infty$ to obtain gradient flow for $\Chc^+_q$
(Theorems~\ref{chgradinetflwo}, \ref{compprinconvr12}).
Nonetheless, thanks to Kuwada's lemma for relaxed gradients (Lemma~\ref{wssvolemm}),
we establish the coincidence of weak upper and relaxed gradients under $q \ge 2$ or $\m[X]<\infty$
(Theorem~\ref{twogradientcoindced}).
This generalizes \cite[Theorem~7.3]{AGS4} in two respects:
the distance function can be asymmetric, and we do not assume $\m[X]<\infty$ when $q \ge 2$.
We also expand the equivalence to include relaxed upper gradients and upper gradients
(Theorem~\ref{conincdiegradee}).

We can define the Sobolev space associated with the $q$-Cheeger energy over asymmetric metric measure spaces.
Then, however, the forward energy is not suitable since the resulting Sobolev space may not be a linear space;
precisely, $\Chc^+_q(f)<\infty$ does not imply $\Chc^-_q(f)<\infty$
(see Example~\ref{Examphalfsobolevspace} concering the Funk metric \eqref{dist-reversfunk-00}).
Therefore, we naturally consider the Sobolev space $W^{1,q}_*(X,d,\m)$
in terms of the absolute $q$-Cheeger energy (Definition~\ref{standardsobolespaces}),
and show that it coincides with $W^{1,q}_0(\X,d_F,\m)$ for measured Finsler manifolds $(\X,F,\m)$
(Theorem~\ref{sobolevfinsler}).
The Appendix is devoted to studying the Hopf--Lax semigroup and the Hamilton--Jacobi equation
in the asymmetric setting, those results play a crucial role in the proof of Kuwada's lemma.

A natural further problem is to show that the $q$-heat flow constructed in this article
coincides with gradient flow for the R\'enyi entropy in the $L^p$-Wasserstein space,
generalizing Kell's result \cite{Ke} to the asymmetric setting.
See also Remark~\ref{rm:further} for some possible applications of the analysis of Hopf--Lax semigroups.

\section{Preliminaries}\label{sc:prel}

\subsection{Asymmetric metric spaces}\label{asymsec}

\subsubsection{Setting}

We will follow and extend the setting of \cite{KZ,OZ}.

\begin{definition}[Asymmetric (extended) metric spaces]\label{generalsapcedef}
Let $X$ be a set and $d:X\times X\rightarrow [0,\infty]$.
We call $(X,d)$ {an} {\it asymmetric extended metric space} if for any $x,y,z \in X$:
\begin{enumerate}[{\rm (1)}]
\item $d(x,y)\geq 0$, with equality if and only if $x=y$;
\item $ d(x,z)\leq d(x,y)+d(y,z)$.
\end{enumerate}
We call $d$ a {\it metric} on $X$.
If $d$ is always finite, then $(X,d)$ is called an {\it asymmetric metric space}.
\end{definition}

Due to the asymmetry of $d$, we could have $d(x,y)<\infty$ while $d(y,x)=\infty$.
Moreover,  there are two kinds of balls.
The \emph{forward open ball} $B^+_x(r)$ and \emph{backward open ball} $B_x^-(r)$
of radius $r>0$ centered at $x \in X$ are defined as
\[
B^+_x(r):=\{y\in X| \, d(x,y)<r\}, \qquad B^-_x(r):=\{y\in X| \, d(y,x)<r\}.
\]
Let $\mathcal {T}_+$ (resp., $\mathcal {T}_-$) denote the topology induced by forward (resp., backward) open balls,
and let $\widehat{\mathcal {T}}$ be the topology induced by both forward and backward open balls.
We summarize some fundamental facts (see Mennucci\cite[\S 3.1]{Me} and Wilson \cite{Wi}).

\begin{theorem}\label{asymmetrtopol}
Let $(X,d)$ be an asymmetric extended metric space.
Then we have the following.
\begin{enumerate}[{\rm (i)}]
\item
$d:X\times X\rightarrow [0,\infty]$ is continuous under $\widehat{\mathcal {T}}\times \widehat{\mathcal {T}}$,
and $(X,d)$ is a Hausdorff space.

\item
The topology $\widehat{\mathcal {T}}$ is exactly the one induced from the symmetrized metric
\begin{equation}\label{symmmetricde}
\hat{d}(x,y) :=\frac12 \{ d(x,y)+d(y,x) \}.
\end{equation}

\item\label{equconv}
A sequence $(x_i)_{i \ge 1}$ converges to $x$ in $(X,\widehat{\mathcal {T}})$
if and only if both $d(x,x_i)\to 0$ and $d(x_i,x)\to 0$ hold as  $i\to \infty$.
\end{enumerate}
\end{theorem}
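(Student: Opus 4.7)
The plan is to handle parts (ii) and (iii) first, since (iii) is an immediate consequence of (ii), and then derive part (i) by transferring the Hausdorff property from $\hat d$ and applying the triangle inequality four times. The fundamental inequality that drives the whole argument is
\[
\tfrac12\max\{d(x,y),d(y,x)\}\le \hat d(x,y)\le \max\{d(x,y),d(y,x)\},
\]
which links the symmetrized metric with the ``forward and backward simultaneously'' viewpoint built into $\widehat{\mathcal {T}}$.

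For part (ii), I would verify mutual refinement of the two families of open balls. If $\hat d(x,y)<r/2$, then each of $d(x,y)$ and $d(y,x)$ is strictly less than $r$, so $B^{\hat d}_x(r/2)\subseteq B^+_x(r)\cap B^-_x(r)$; since the sets $B^\pm_x(r)$ form a subbasis for $\widehat{\mathcal {T}}$, this shows that every $\widehat{\mathcal {T}}$-neighborhood of $x$ contains a $\hat d$-ball, and hence that $\widehat{\mathcal {T}}$ is coarser than the $\hat d$-topology. Conversely, if $d(x,y)<r$ and $d(y,x)<r$, then $\hat d(x,y)<r$, so $B^+_x(r)\cap B^-_x(r)\subseteq B^{\hat d}_x(r)$, giving the opposite refinement. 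Part (iii) then follows at once: $x_i\to x$ in $\widehat{\mathcal {T}}$ is the same as $\hat d(x_i,x)\to 0$, which (since $d(x,x_i),d(x_i,x)\ge 0$) is equivalent to $d(x,x_i)\to 0$ and $d(x_i,x)\to 0$ simultaneously.

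For part (i), Hausdorffness is immediate once (ii) is in hand: if $x\ne y$, axiom~(1) rules out both $d(x,y)=0$ and $d(y,x)=0$, so $\hat d(x,y)>0$, and standard disjoint $\hat d$-balls of radius $\hat d(x,y)/2$ separate the points. For joint continuity of $d$ on $\widehat{\mathcal{T}}\times\widehat{\mathcal{T}}$ at $(x,y)$ with $d(x,y)<\infty$, I would apply the triangle inequality twice to bracket
\[
d(x,y)-d(x,x_i)-d(y_i,y)\le d(x_i,y_i)\le d(x_i,x)+d(x,y)+d(y,y_i),
\]
and conclude $d(x_i,y_i)\to d(x,y)$ from $d(x,x_i)+d(x_i,x)\to 0$ and $d(y,y_i)+d(y_i,y)\to 0$. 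For the case $d(x,y)=\infty$, I would argue by contradiction: if some subsequence of $d(x_i,y_i)$ were bounded by $M$, the triangle inequality $d(x,y)\le d(x,x_i)+d(x_i,y_i)+d(y_i,y)$ would give $d(x,y)\le M+o(1)$, contradicting $d(x,y)=\infty$.

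The main delicate point is the presence of the value $+\infty$: one cannot freely form $|d(x_i,y_i)-d(x,y)|$, so the estimate for continuity must be written as a two-sided sandwich valid in the extended real line, and the $\infty$-case must be treated on its own. Everything else is bookkeeping with the triangle inequality and the definition of $\widehat{\mathcal{T}}$ as the coarsest topology making the $B^\pm_x(r)$'s open.
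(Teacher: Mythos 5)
Your proof is correct, and since the paper merely cites Mennucci and Wilson for Theorem~\ref{asymmetrtopol} without reproducing an argument, there is no textual proof to compare against; what you wrote is the standard derivation, essentially the same route suggested by the remark following the theorem that $\hat{d} \le \tilde{d} \le 2\hat{d}$ for $\tilde{d}(x,y)=\max\{d(x,y),d(y,x)\}$.

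One small point worth tightening: in the proof of~(ii), to show $\widehat{\mathcal{T}}$ is coarser than the $\hat{d}$-topology you exhibit $B^{\hat d}_x(r/2)\subseteq B^+_x(r)\cap B^-_x(r)$, but a generic $\widehat{\mathcal{T}}$-neighborhood of $x$ is a finite intersection of forward/backward balls that need not be centered at $x$. You should first record that the sets $B^+_x(\varepsilon)\cap B^-_x(\varepsilon)$, $\varepsilon>0$, form a local base at $x$ for $\widehat{\mathcal{T}}$: if $x\in B^+_{z}(r)$ then the triangle inequality $d(z,y)\le d(z,x)+d(x,y)$ gives $B^+_x(r-d(z,x))\subseteq B^+_{z}(r)$, and symmetrically for backward balls, so one can always shrink to a centered ball. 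With that observation in place, your sandwich of $\hat d$ between $\tfrac12\max\{d(x,y),d(y,x)\}$ and $\max\{d(x,y),d(y,x)\}$ finishes~(ii), and the rest of the argument (in particular the two-sided triangle-inequality estimate for continuity of $d$, with the separate treatment of $d(x,y)=\infty$) is sound.
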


Note that $\widehat{\mathcal {T}}$ coincides with the topology induced from $\tilde{d}(x,y):=\max\{d(x,y), d(y,x) \}$
(see \cite[\S 3.1]{Me}), which is equivalent to $\hat{d}$ since $\hat{d} \leq \tilde{d} \leq 2\hat{d}$.

\begin{definition}[Completeness]\label{df:cmplt}
Let $(X,d)$ be {an} asymmetric extended metric space.
\begin{enumerate}[{\rm (1)}]
\item
A sequence $(x_i)_{i \ge 1}$ in $X$ is called
a {\it forward} (resp., {\it backward}) {\it Cauchy sequence} if, for each $\varepsilon>0$,
there is $N \ge 1$ such that $d(x_i,x_j)<\varepsilon$ (resp., $d(x_j,x_i)<\varepsilon$) holds for all $j \geq i \ge N$.

\item  $(X, d)$ is said to be {\it forward} (resp., {\it backward}) {\it complete} if
every forward (resp., backward) Cauchy sequence in $X$ converges with respect to $\widehat{\Tt}$.

\item We say that $(X,d)$ is {\it complete} if it is both forward and backward complete.
\end{enumerate}
\end{definition}

The following notion will play an important role.

\begin{definition}[Reversibility]\label{reversibilitydef}
Let $(X,d)$ be {an}  asymmetric extended metric space.
For a nonempty set $A\subset X$, define
\begin{equation*}
\lambda_d(A):=\inf\left\{\lambda\geq1 \mid d(x,y)\leq \lambda d(y,x) \text{ for all }x,y\in A  \right\}.
\end{equation*}
We call $\lambda_d(X)$ the {\it reversibility} of $(X,d)$.
\end{definition}

Clearly, $d$ is symmetric if and only if $\lambda_d(X)=1$.
We consider a decomposition of an asymmetric extended metric space $(X,d)$ into the disjoint union of asymmetric metric spaces.
Setting
\begin{equation}\label{equasetfintex}
X_{[x]}:=\{y\in X \mid d(x,y)<\infty\}, \qquad \overleftarrow{X}_{[x]}:=\{y\in X \mid d(y,x)<\infty\},\quad x\in X,
\end{equation}
if we have
\[
X=\bigsqcup_{\alpha\in \mathscr{A}} X_{[x_\alpha]}, \qquad X=\bigsqcup_{\beta\in \mathscr{B}} \overleftarrow{X}_{[x_\beta]}, \quad x_\alpha,x_\beta\in X
\]
for some index sets $\mathscr{A},\mathscr{B}$, then we call them \emph{forward} and \emph{backward finite decompositions} of $(X,d)$.

\begin{definition}[Forward (extended) metric space]\label{forwcondt}
An asymmetric extended metric space $(X,d)$ is called a {\it forward extended metric space} if
there are a forward finite decomposition
\begin{equation}\label{decompsiofx}
X=\bigsqcup_{\alpha\in \mathscr{A}} X_{[x_\alpha]}, \quad x_\alpha\in X,
\end{equation}
and a family of non-decreasing functions $\Theta_\alpha:(0,\infty)\rightarrow [1,\infty)$, $\alpha\in \mathscr{A}$,
such that
\begin{equation}\label{thecontric}
\lambda_d\big( B^+_{x_\alpha}(r)  \big)\leq \Theta_\alpha(r) \quad \text{for all}\ r>0.
\end{equation}
Moreover, if additionally $d$ is finite (so that $X=X_{[x]}$ for any $x \in X$), then $(X,d)$ is called a {\it forward metric space}.
\end{definition}

Note that \eqref{thecontric} implies the equivalence between $d(x,y)<\infty$ and $d(y,x)<\infty$, which means $X_{[x]}= \overleftarrow{X}_{[x]}$ for every $x\in X$.
We refer to  \cite{KZ,OZ}
for the theories of Gromov--Hausdorff topology, optimal transport and gradient flow for forward metric spaces.
By a similar argument to \cite[Theorem 2.6]{KZ}, we have the following.

\begin{proposition}\label{topoconditiondeq}
For a forward extended metric space $(X,d)$,
we have $\mathcal {T}_- \subset \mathcal {T}_+ =\widehat{\mathcal {T}}$.
\end{proposition}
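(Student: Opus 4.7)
The plan is to reduce the statement to the single inclusion $\mathcal{T}_- \subset \mathcal{T}_+$. Since $\widehat{\mathcal{T}}$ is by definition generated by all forward and backward open balls, the inclusion $\mathcal{T}_+ \subset \widehat{\mathcal{T}}$ is immediate, while conversely, once every backward ball is known to be $\mathcal{T}_+$-open, the reverse inclusion $\widehat{\mathcal{T}} \subset \mathcal{T}_+$ follows by taking unions. Hence both $\mathcal{T}_+ = \widehat{\mathcal{T}}$ and $\mathcal{T}_- \subset \mathcal{T}_+$ drop out of the single task of showing that an arbitrary backward open ball is $\mathcal{T}_+$-open.

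For this core step, I would fix a backward ball $B^-_x(r)$ together with a point $y \in B^-_x(r)$ (so $d(y,x) < r$) and look for $s > 0$ with $B^+_y(s) \subset B^-_x(r)$. The idea is to use the defining property \eqref{thecontric} of a forward extended metric space to upgrade the forward estimate $d(y,z) < s$ into a backward estimate on $d(z,y)$, so that the triangle inequality $d(z,x) \leq d(z,y) + d(y,x)$ delivers the desired bound $d(z,x) < r$. To apply \eqref{thecontric}, I must locate both $y$ and $z$ inside a common forward ball centered at a basepoint $x_\alpha$ of the decomposition \eqref{decompsiofx}. Let $\alpha$ be the unique index with $y \in X_{[x_\alpha]}$ and set $r_0 := d(x_\alpha, y) < \infty$. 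For any $z \in B^+_y(s)$, the triangle inequality gives $d(x_\alpha, z) \leq r_0 + s$, so both $y$ and $z$ lie in $B^+_{x_\alpha}(r_0 + s)$, and \eqref{thecontric} then yields $d(z,y) \leq \Theta_\alpha(r_0 + s) \, d(y,z) < \Theta_\alpha(r_0 + s) \, s$.

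The only step that requires a little care is the choice of $s$: to avoid a circular dependence between $s$ and the bound $\Theta_\alpha(r_0 + s)$, I would first restrict to $s \leq 1$ and then use monotonicity of $\Theta_\alpha$ to replace $\Theta_\alpha(r_0 + s)$ by $\Theta_\alpha(r_0 + 1)$. Taking, for instance, $s := \min\{1, (r - d(y,x))/(2 \Theta_\alpha(r_0 + 1))\}$ then forces $\Theta_\alpha(r_0 + s) \, s + d(y,x) < r$, so that $z \in B^-_x(r)$, as required. Apart from this minor bookkeeping I do not foresee any serious obstacle: the forward decomposition and the local reversibility bound \eqref{thecontric} are tailored precisely for this kind of argument, and the proof parallels the reasoning of \cite[Theorem~2.6]{KZ} cited in the statement.
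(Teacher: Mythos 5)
Your proof is correct, and since the paper itself gives no argument for this proposition (it merely cites \cite[Theorem~2.6]{KZ} and asserts that a similar argument works), your write-up supplies exactly the sort of detail the paper leaves implicit. The reduction to showing backward balls are $\mathcal{T}_+$-open is sound, the use of the unique component $X_{[x_\alpha]}$ containing $y$ is the right way to bring \eqref{thecontric} into play (note that $B^+_y(s) \subset X_{[x_\alpha]}$ automatically by the triangle inequality, so both $y$ and any $z\in B^+_y(s)$ land in $B^+_{x_\alpha}(r_0+s)$), and the cap $s\le 1$ together with monotonicity of $\Theta_\alpha$ cleanly breaks the circular dependence on $s$. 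I see no gaps.
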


Owing to Theorem \ref{asymmetrtopol} and Proposition \ref{topoconditiondeq},
it is convenient to endow every forward extended metric space $(X,d)$ with the forward topology $\mathcal {T}_+$.

\begin{remark}\label{forwardpointspaceandbackwardones}
\begin{enumerate}[(a)]
\item \label{pfms-b}
For a forward extended metric space $(X,d)$, given another forward finite decomposition
\[
X=\bigsqcup_{\alpha'\in \mathscr{A}'} X_{[x_{\alpha'}]},
\]
there always exists a family of non-decreasing functions $\widetilde{\Theta}_{\alpha'}$, $\alpha'\in \mathscr{A}'$, with
\begin{equation*}
\lambda_d\big( B^+_{x_{\alpha'}}(r)   \big)\leq \widetilde{\Theta}_{\alpha'}(r)\quad \text{for all}\ r>0.
\end{equation*}
In fact, for every $\alpha'\in \mathscr{A}'$, there exists some $\alpha\in \mathscr{A}$
such that $x_{\alpha'}\in X_{[x_\alpha]}$.
Then, we have $X_{[x_{\alpha'}]}=X_{[x_\alpha]}$ with the help of \eqref{thecontric} and
\[
\lambda_d\bigl( B^+_{x_{\alpha'}}(r) \bigr)
\leq \lambda_d\bigl( B^+_{x_{\alpha}}\left( d(x_\alpha,x_{\alpha'})+r \right) \bigr)
\leq \Theta_\alpha \big( d(x_\alpha,x_{\alpha'})+r \big).
\]
Hence, we can choose $\widetilde{\Theta}_{\alpha'}(r):=\Theta_\alpha( d(x_\alpha,x_{\alpha'})+r)$.

\item \label{pfms-c}
One can similarly introduce a \emph{backward extended metric space} $(X,d)$
by a backward finite decomposition $X=\bigsqcup_{\beta\in \mathscr{B}} \overleftarrow{X}_{[x_\beta]}$
and a family of non-decreasing functions $\Theta_\beta:(0,\infty)\rightarrow [1,\infty)$, $\beta\in \mathscr{B}$,
satisfying $\lambda_d(B^-_{x_\beta}(r))\leq \Theta_\beta(r)$ for all $r>0$.
Note that a backward extended metric space may not be a forward extended metric space;
see Example \ref{funkmetricsapce} below for instance.
However, since $(X,d)$ is a backward extended metric space
if and only if $(X, \overleftarrow{d})$ is a forward extended metric space
with the \emph{reverse metric} $\overleftarrow{d}(x,y):=d(y,x)$,
in the sequel we will focus only on  forward extended metric spaces.
\end{enumerate}
\end{remark}

\begin{lemma}\label{pfms-d}
An asymmetric extended metric space $(X,d)$ is a forward extended metric space
if and only if there exists a function $\Theta:X\times (0,\infty)\rightarrow [1,\infty)$ such that, for every $x\in M$,
\begin{itemize}
\item the function $\Theta_x(r):=\Theta(x,r)$ is non-decreasing in $r$,
\item $\lambda_d(B^+_x(r))\leq \Theta_x(r)$ for any $r>0$.
\end{itemize}
\end{lemma}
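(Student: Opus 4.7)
The plan is to prove the two implications of the equivalence separately, using the reversibility bounds to shuttle information between points in the same ``finite'' class.

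For the forward direction $(\Rightarrow)$, assume $(X,d)$ is a forward extended metric space with decomposition $X=\bigsqcup_{\alpha\in\mathscr{A}}X_{[x_\alpha]}$ and non-decreasing functions $\Theta_\alpha$ controlling $\lambda_d(B^+_{x_\alpha}(r))$. For any $x\in X$ there is a unique $\alpha(x)\in\mathscr{A}$ with $x\in X_{[x_{\alpha(x)}]}$, in particular $d(x_{\alpha(x)},x)<\infty$. Following the construction in Remark~\ref{forwardpointspaceandbackwardones}\eqref{pfms-b}, I would set
\[
\Theta(x,r) := \Theta_{\alpha(x)}\bigl(d(x_{\alpha(x)},x)+r\bigr).
\]
The triangle inequality gives $B^+_x(r)\subset B^+_{x_{\alpha(x)}}(d(x_{\alpha(x)},x)+r)$, so $\lambda_d(B^+_x(r))\leq\lambda_d(B^+_{x_{\alpha(x)}}(d(x_{\alpha(x)},x)+r))\leq\Theta(x,r)$, and monotonicity in $r$ is inherited directly from $\Theta_{\alpha(x)}$.

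For the reverse direction $(\Leftarrow)$, the first step is to verify that the relation $x\sim y\iff d(x,y)<\infty$ is an equivalence relation. Reflexivity is trivial and transitivity follows from the triangle inequality. The essential point is symmetry: if $d(x,y)<\infty$, choose $r>d(x,y)$ so $y\in B^+_x(r)$; then by hypothesis $d(y,x)\leq\Theta_x(r)\cdot d(x,y)<\infty$. Conversely, if $d(y,x)<\infty$, pick $r>d(y,x)$; since $x,y\in B^+_y(r)$ (using $x\in B^+_y(r)$ and the trivial $y\in B^+_y(r)$), we get $d(x,y)\leq\Theta_y(r)\cdot d(y,x)<\infty$. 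This produces equivalence classes $X_{[x_\alpha]}$ indexed by a choice of representatives $\{x_\alpha\}_{\alpha\in\mathscr{A}}$, yielding a forward finite decomposition
\[
X=\bigsqcup_{\alpha\in\mathscr{A}}X_{[x_\alpha]}.
\]
Setting $\Theta_\alpha(r):=\Theta(x_\alpha,r)$ then furnishes the non-decreasing family with $\lambda_d(B^+_{x_\alpha}(r))\leq\Theta_\alpha(r)$, confirming that $(X,d)$ is a forward extended metric space.

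The only delicate point is the symmetry of the finiteness relation in the $(\Leftarrow)$ direction, but the hypothesis $\lambda_d(B^+_x(r))\leq\Theta_x(r)<\infty$ directly yields quantitative control in both directions as soon as one point lies in a forward ball of the other, so this is really a matter of organizing the observation rather than a genuine obstacle. No further ingredients beyond the triangle inequality and the definition of the reversibility are needed.
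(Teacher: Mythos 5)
Your proof is correct and follows the same route as the paper: the construction $\Theta(x,r):=\Theta_{\alpha(x)}\bigl(d(x_{\alpha(x)},x)+r\bigr)$ for the ``only if'' direction is exactly the paper's, and for the ``if'' direction the paper simply says it is clear by taking $\Theta_\alpha:=\Theta_{x_\alpha}$. The only thing you add is the explicit verification that the sets $X_{[x]}$ actually partition $X$ under the hypothesis (using the bound $\lambda_d(B^+_x(r))\leq\Theta_x(r)<\infty$ to get symmetry of the finiteness relation), a step the paper leaves implicit but which is a legitimate detail to spell out.
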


\begin{proof}
The ``if" part is clear by taking $\Theta_{\alpha} :=\Theta_{x_\alpha}$.
For the ``only if" part, define $\Theta:X\times (0,\infty)\rightarrow [1,\infty)$ as
$\Theta(x,r) := \Theta_\alpha (d(x_\alpha,x)+r)$, where $x \in X_{[x_\alpha]}$.
Note that, by \eqref{decompsiofx}, there is unique $\alpha\in \mathscr{A}$ with $x \in X_{[x_\alpha]}$.
Then, for any $r>0$,
we have $\lambda_d(B^+_x(r)) \leq \lambda_d(B^+_{x_\alpha}(d(x_\alpha ,x)+r)) \leq \Theta_x(r)$.
\end{proof}

\begin{remark}\label{setreversiblitfuntion}
Given a set $K\subset X$, define $d(K,x):=\inf_{z\in K}d(z,x)$, $d(x,K):=\inf_{z\in K}d(x,z)$, and
\[
B^+_K(r):=\left\{x\in X \mid d(K,x)<r \right \},\qquad B^-_K(r):=\left\{x\in X \mid d(x,K)<r \right \}.
\]
If $K$ has
finite diameter, i.e.,
\[
\diam_d(K):=\sup_{x,y\in K}d(x,y)<\infty,
\]
then we can introduce
\begin{equation}\label{setreversbility}
\Theta_K(r) := \Theta_\alpha\bigl( d(x_\alpha,K)+\diam_d(K)+r \bigr), \quad
\text{where}\ K \subset X_{[x_\alpha]}.
\end{equation}
Observe that $\lambda_d(B^+_K(r))\leq \Theta_K(r)$ for any $r>0$.
\end{remark}

Inspired by  \cite{AGS2}, we introduce the following definition.

\begin{definition}[Forward (extended) Polish spaces]\label{weaktopolgytau}
A triple $(X,\tau,d)$ is called a {\it forward $($extended$)$ Polish space} if the following hold.
\begin{enumerate}[{\rm (1)}]
\item \label{weakt1-1} $(X,d)$ is a forward complete forward (extended) metric space
(endowed with the topology $\mathcal {T}_+$).

\item \label{weakt1-2} $\tau$ is a Polish topology of $X$, i.e.,
there is some {symmetric metric} $\rho$ such that $(X,\rho)$ is a complete separable metric space,
whose metric topology coincides with $\tau$.

\item \label{weakt1-3} For any sequence $(x_i)_{i \ge 1}$ in $X$,
the backward convergence $d(x_i,x)\to 0$ implies the $\tau$-convergence $x_i \to x$.

\item \label{weakt1-4} $d$ is lower semi-continuous in $X\times X$ with respect to the ($\tau\times \tau$)-topology.
\end{enumerate}
\end{definition}

Some remarks are in order.

\begin{remark}\label{exposlipspaceexplain}
\begin{enumerate}[(a)]
\item\label{rmpolish-a}
In \eqref{weakt1-1} we required that $(X,d)$ is forward complete.
Thus, it follows from Theorem \ref{asymmetrtopol} and Proposition \ref{topoconditiondeq}
that the symmetrized space $(X,\hat{d})$ is complete, and vice versa.
There is a stronger assumption that $(X,d)$ is backward complete,
in which case not only $(X,\hat{d})$ but also $(X,d)$ is complete, and vice versa
(cf.\ \cite[Proposition 2.8]{KZ}).

\item\label{rmpolish-b}
In \eqref{weakt1-2}, the symmetry of $\rho$ was required merely for convenience.
In fact, one can equivalently assume that $(X,\rho)$ is a forward complete, separable, forward metric space,
because in this case the symmetrized space $(X,\hat{\rho})$ is also a Polish space
by Theorem \ref{asymmetrtopol} and Proposition \ref{topoconditiondeq}.

\item\label{rmpolish-c}
In \eqref{weakt1-3}, although $x_i\to x$ with respect to $\widehat{\Tt}$ means
$d(x,x_i)\to 0$ and $d(x_i,x)\to 0$ (recall Theorem \ref{asymmetrtopol}\eqref{equconv}),
Proposition \ref{topoconditiondeq} points out that it is also equivalent to $d(x,x_i)\to 0$ only, i.e.,
$d(x,x_i)\to 0$ implies $d(x_i,x)\to 0$ (but not vice versa),
and then $x_i\overset{\tau}{\longrightarrow}x$ by \eqref{weakt1-3}.
Hence, $\tau\subset \widehat{\Tt}=\Tt_+$.
\end{enumerate}
\end{remark}

An important example is the following.

\begin{example}[Minkowski spaces]\label{ex:Randers}
Let $(\mathscr{H},\langle\cdot,\cdot\rangle)$ be a separable Hilbert space
and $\|x\|:=\sqrt{\langle x,x \rangle}$. Set
\[
X:=\overline{\mathbb{B}_\mathbf{0}(1)}:=\{x\in \mathscr{H}\mid \|x\|\leq 1 \}.
\]
Choose $\omega \in X$ with $\|\omega\|<1$ and define a function $d:X\times X \rightarrow [0,\infty)$ by
\[
 d(x,y) :=\|y-x\|+\langle \omega, y-x \rangle.
\]
Then $(X,d)$ is a complete forward metric space
with $\Theta_x \equiv (1+\|\omega\|)/(1-\|\omega\|)$ for any $x\in X$,
and $\mathcal{T}_+=\mathcal{T}_-$ coincides with the (strong) topology of $(X,\langle\cdot,\cdot\rangle)$.
Now, let $\tau$ denote the weak topology of $\mathscr{H}$ on $X$.
It follows from \cite[\S 5.1.2]{AGS} that $\tau$ can be induced from the norm
\[
\|x\|_\varpi:=\left(\sum_{i=1}^\infty \frac1{i^2}\langle x, e_i \rangle^2 \right)^{1/2},
\]
where $(e_i)_{i \ge 1}$ is an orthonormal basis of $\mathscr{H}$.
Set $\rho(x,y):=\|y-x\|_\varpi$.
Since $X$ is bounded with respect to $\|\cdot\|$,
the compactness of $(X,\rho)$ is seen by the argument before \cite[Definition 5.1.11]{AGS}.
Moreover, since
\[
\rho(x,y)\leq \left(\sum_{i=1}^\infty \frac1{i^2}\right)^{1/2}\|y-x\|
 \leq \left(\sum_{i=1}^\infty \frac1{i^2}\right)^{1/2} \frac{d(x,y)}{1-\|\omega\|}
 \quad \text{for all}\ x,y\in X,
\]
the separability of $(X,\rho)$ follows from that of $(X,\|\cdot\|)$.
Thus, $(X,\tau, d)$ is a forward extended Polish space.
\end{example}

In the sequel, unless other topology (e.g., $d$) is explicitly mentioned, all the topological notations
(e.g., compact sets, Borel sets, continuous functions) are referred to the topology $\tau$.

\begin{lemma}\label{twoclosedballcoincides}
Let $(X,\tau,d)$ be a forward extended Polish space.
Then, every forward or backward closed ball $\overline{B^\pm_{x_0}(r)}^d$ is closed, where
\[
\overline{B^+_{x_0}(r)}^d:=\{ x\in X \mid d(x_0,x)\leq r\}, \qquad
\overline{B^-_{x_0}(r)}^d:=\{ x\in X \mid d(x,x_0)\leq r\}.
\]
\end{lemma}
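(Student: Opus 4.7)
The plan is to deduce both closedness statements directly from the lower semi-continuity of $d$ on $X\times X$ with respect to $\tau\times\tau$ given in Definition~\ref{weaktopolgytau}\eqref{weakt1-4}. The topology $\tau$ is metrizable (it is Polish by Definition~\ref{weaktopolgytau}\eqref{weakt1-2}), so closedness can be checked along sequences.

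First I would observe that for fixed $x_0 \in X$, the single-variable functions
\[
\phi^+(x) := d(x_0,x), \qquad \phi^-(x) := d(x,x_0)
\]
are $\tau$-lower semi-continuous on $X$. Indeed, if $x_n \to x$ in $\tau$, then the constant sequence $(x_0)$ trivially $\tau$-converges to $x_0$, so $(x_0,x_n) \to (x_0,x)$ and $(x_n,x_0) \to (x,x_0)$ in the product topology $\tau\times\tau$. The joint lower semi-continuity of $d$ therefore yields
\[
\phi^+(x) = d(x_0,x) \le \liminf_{n\to\infty} d(x_0,x_n) = \liminf_{n\to\infty} \phi^+(x_n),
\]
and an analogous inequality for $\phi^-$.

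Since $\overline{B^+_{x_0}(r)}^d = (\phi^+)^{-1}([0,r])$ and $\overline{B^-_{x_0}(r)}^d = (\phi^-)^{-1}([0,r])$ are sublevel sets of lower semi-continuous functions, both are $\tau$-closed. Concretely, if $(x_n)_{n\geq 1} \subset \overline{B^+_{x_0}(r)}^d$ and $x_n \to x$ in $\tau$, then $d(x_0,x_n) \le r$ for all $n$, whence $d(x_0,x) \le \liminf_n d(x_0,x_n) \le r$ and $x$ lies in the forward closed ball; the argument for the backward closed ball is the same with the roles of the two coordinates exchanged.

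There is no real obstacle here: everything reduces to the elementary fact that slices of a jointly lower semi-continuous function are lower semi-continuous, combined with the metrizability of $\tau$ so that only sequential arguments are needed. The lemma is essentially a bookkeeping consequence of the Polish framework set up in Definition~\ref{weaktopolgytau}, and will be invoked later whenever one needs compactness or closedness of metric balls in the weak topology $\tau$ (as, e.g., in the Minkowski-type Example~\ref{ex:Randers}).
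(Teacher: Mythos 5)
Your argument is correct and matches the paper's proof essentially verbatim: both pass through the $\tau$-lower semi-continuity of $d$ from Definition~\ref{weaktopolgytau}\eqref{weakt1-4} and the metrizability of $\tau$ to conclude sequentially that $d(x_0,x)\le\liminf_i d(x_0,x_i)\le r$. The extra framing via sublevel sets of lower semi-continuous functions is a harmless repackaging of the same observation.
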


\begin{proof}
For any sequence $(x_i)_{i \ge 1}$ in $\overline{B^+_{x_0}(r)}^d$ with $x_i \overset{\tau}{\rightarrow }x$,
Definition \ref{weaktopolgytau}\eqref{weakt1-4} furnishes
\[
r\geq \liminf_{i \to \infty}d(x_0,x_i)\geq d(x_0,x),
\]
thereby $x \in \overline{B^+_{x_0}(r)}^d$ and $\overline{B^+_{x_0}(r)}^d$ is $\tau$-closed.
Similarly one can show the case of backward balls.
\end{proof}

\begin{lemma}
Let  $(X,\tau,d)$ be a forward extended Polish space.
Then, the Borel sets of $\widehat{\Tt}$ and $\tau$,
denoted by $\mathcal {B}_{\widehat{\Tt}}(X)$ and $\mathcal {B}(X)$, coincide.
\end{lemma}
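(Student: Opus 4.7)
The inclusion $\mathcal{B}(X) \subset \mathcal{B}_{\widehat{\mathcal{T}}}(X)$ is immediate from the topology inclusion $\tau \subset \widehat{\mathcal{T}}$ noted in Remark~\ref{exposlipspaceexplain}\eqref{rmpolish-c}, since the Borel $\sigma$-algebra is monotone in the topology.

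For the reverse inclusion, my plan is to show that every $\widehat{\mathcal{T}}$-open set lies in $\mathcal{B}(X)$. By Proposition~\ref{topoconditiondeq} the topology $\widehat{\mathcal{T}}$ coincides with $\mathcal{T}_+$, so the forward open balls $\{B^+_x(r)\}_{x\in X,\, r>0}$ form a topological base. First I verify that each such ball is $\tau$-Borel by writing
\[
B^+_x(r) = \bigcup_{n \in \mathbb{N},\, 1/n<r} \overline{B^+_x(r - 1/n)}^d;
\]
each forward closed ball on the right is $\tau$-closed by Lemma~\ref{twoclosedballcoincides}, so $B^+_x(r)$ is a $\tau$-$F_\sigma$ set and in particular belongs to $\mathcal{B}(X)$.

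The remaining step is to express an arbitrary $\widehat{\mathcal{T}}$-open set $U$ as a countable union of forward open balls. Since $\widehat{\mathcal{T}}$ is metrizable by $\hat{d}$ (Theorem~\ref{asymmetrtopol}(ii)), this reduces to the Lindel\"of property, equivalent in the metrizable setting to separability of $(X,\hat{d})$. I expect this separability to be the main obstacle: since $\tau \subsetneq \widehat{\mathcal{T}}$ in general, a $\tau$-dense countable subset need not be $\hat{d}$-dense (as illustrated by the Minkowski example in Example~\ref{ex:Randers}). I would attack it by combining the Polishness of $\tau$ (providing a countable $\tau$-dense subset $D$) with the local reversibility bounds of Lemma~\ref{pfms-d}---which on each forward ball $B^+_x(r)$ make $d$ and $\hat{d}$ comparable with a constant depending only on $\Theta_x(r)$---together with a diagonal refinement of $D$ over rational radii and over the pieces of the forward finite decomposition~\eqref{decompsiofx}, so as to produce a countable $\hat{d}$-dense subset and thereby close the argument.
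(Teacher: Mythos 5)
Your first two steps match the paper's proof exactly: the trivial inclusion $\mathcal{B}(X)\subset\mathcal{B}_{\widehat{\Tt}}(X)$ via $\tau\subset\widehat{\Tt}$, and the $F_\sigma$-representation of forward open balls via Lemma~\ref{twoclosedballcoincides}. You have also put your finger on the step that the paper's two-line proof leaves implicit: knowing that forward open balls are $\tau$-Borel yields $\mathcal{B}_{\widehat{\Tt}}(X)\subset\mathcal{B}(X)$ only if every $\widehat{\Tt}$-open set is a \emph{countable} union of such balls, i.e., only if $(X,\hat{d})$ is separable (equivalently $\widehat{\Tt}$ is second countable or Lindel\"of). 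That is a real gap in the argument as written, and you were right to flag it.

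The problem is that the plan you sketch for closing it cannot succeed. Lemma~\ref{pfms-d} compares $d$ with $\hat{d}$ on each forward ball, but it gives no control on the relation between $\rho$ (whose topology is $\tau$) and $\hat{d}$; since $\tau$ can be strictly coarser than $\widehat{\Tt}$, $\tau$-density is strictly weaker than $\hat{d}$-density and cannot be promoted by the local reversibility bounds. In fact, the required separability does not follow from Definition~\ref{weaktopolgytau} at all. Take $X=[0,1]$, $\tau$ the Euclidean topology (with $\rho(x,y)=|x-y|$), and $d$ the discrete metric, $d(x,y)=1$ for $x\neq y$. Then $(X,d)$ is a forward complete forward metric space with $\Theta\equiv 1$; backward $d$-convergence forces $x_i=x$ eventually and hence implies $\tau$-convergence; and for every $c\geq 0$ the set $\{(x,y):d(x,y)\leq c\}$ is either the diagonal of $[0,1]^2$ or all of it, both $\tau\times\tau$-closed, so $d$ is lower semi-continuous. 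Thus $(X,\tau,d)$ satisfies all the axioms of a forward extended Polish space, yet $\widehat{\Tt}$ is discrete, $\mathcal{B}_{\widehat{\Tt}}(X)=2^{[0,1]}$, and the conclusion of the lemma fails. The statement therefore needs an additional hypothesis --- separability of $(X,\hat{d})$, which does hold in the Finsler setting and in Example~\ref{ex:Randers}, where $\widehat{\Tt}$ is the norm topology of a separable Hilbert space --- and that hypothesis is exactly what neither the paper's proof nor your sketch supplies.
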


\begin{proof}
On the one hand, every $d$-forward closed ball is $\tau$-closed by Lemma~\ref{twoclosedballcoincides}.
Since every $d$-forward open ball is realized as the union of $d$-forward closed balls, it is Borel in $\tau$;
thereby $\mathcal{B}_{\widehat{\Tt}}(X)\subset \mathcal{B}(X)$.
On the other hand, recall from Remark \ref{exposlipspaceexplain}\eqref{rmpolish-c} that $\tau \subset \widehat{\Tt}$.
Thus, $\mathcal{B}(X)\subset \mathcal {B}_{\widehat{\Tt}}(X)$.
\end{proof}

Recall that, on a Polish space, every Borel probability measure is tight.

\begin{proposition}\label{tightmeasper}
Let $(X,\tau,d)$ be a forward extended Polish space.
Then, every Borel probability measure $\mu$ on $X$ is tight with respect to $\tau$.
\end{proposition}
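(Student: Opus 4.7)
The plan is to reduce the statement to Ulam's classical tightness theorem for Borel probability measures on Polish spaces. By Definition~\ref{weaktopolgytau}\eqref{weakt1-2}, there exists a symmetric metric $\rho$ on $X$ such that $(X,\rho)$ is a complete separable metric space whose metric topology agrees with $\tau$. Thus $(X,\tau)$ is a Polish space in the standard (symmetric) sense, and the Borel $\sigma$-algebra of $(X,\tau)$ coincides with that of $(X,\rho)$. Hence any Borel probability measure $\mu$ on $(X,\tau)$ is simultaneously a Borel probability measure on the Polish metric space $(X,\rho)$.

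I would then invoke Ulam's tightness theorem, which asserts that every Borel probability measure on a complete separable metric space is tight: for each $\varepsilon>0$ there exists a $\rho$-compact set $K_\varepsilon \subset X$ with $\mu(X \setminus K_\varepsilon) < \varepsilon$. Since compactness is a topological notion and $\rho$ induces exactly the topology $\tau$, the set $K_\varepsilon$ is also $\tau$-compact, which yields tightness of $\mu$ with respect to $\tau$.

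No asymmetric-metric subtlety enters here because the argument is carried out entirely within the symmetric topology $\tau$, and the asymmetric distance $d$ plays no role in the proof. Consequently there is no real obstacle; the only point worth underlining is that Definition~\ref{weaktopolgytau}\eqref{weakt1-2} was tailored precisely so that $(X,\tau)$ is Polish in the classical sense, allowing the direct application of Ulam's theorem.
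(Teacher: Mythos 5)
Your proof is correct and matches the paper's approach: the paper precedes the proposition with the remark ``Recall that, on a Polish space, every Borel probability measure is tight,'' and the statement is then a direct consequence of Definition~\ref{weaktopolgytau}\eqref{weakt1-2}, exactly as you argue via Ulam's theorem.
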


\subsubsection{Finsler manifolds}\label{Finslergeometry}

In this article, Finsler manifolds serve to construct various model examples.
Here we recall some definitions and properties; for details see \cite{BCS,Obook,Sh1}, etc.

Let $\mathcal {X}$ be an $n$-dimensional connected $C^\infty$-manifold without boundary
and $T{\X}=\bigcup_{x \in \X}T_{x} {\X}$ be its tangent bundle.
We say that $(\X,F)$ is a \emph{Finsler manifold} if $F:T{\X}\to [0,\infty)$ satisfies:
\begin{enumerate}[(1)]
\item\label{finslerpr1} $F\in C^{\infty}(T{\X}\setminus\{ 0 \});$
\item\label{finslerpr2} $F(x,\lambda y)=\lambda F(x,y)$ for all $\lambda\geq 0$ and $(x,y)\in T{\X};$
\item\label{finslerpr3} $g_{ij}(x,y) :=[\frac12F^{2}]_{y^{i}y^{j}}(x,y)$
is positive definite for all $(x,y)\in T{\X}\setminus\{ 0 \}$,
where $y=\sum_{i=1}^n y^i\frac{\partial}{\partial x^i}|_x$.
\end{enumerate}


The {\it reversibility} of $A \subset {\X}$ is defined as
\[
\lambda_F(A):=\sup_{x\in A} \sup_{y \in T_x{\X} \setminus \{0\}} \frac{F(x,-y)}{F(x,y)},
\]
see Rademacher \cite{Ra1, Ra2}.
We have $\lambda_F({\X})\geq 1$ with equality if and only if $F$ is {\it reversible} (i.e., $F(x,-y)=F(x,y)$).
Note also that $\lambda_F(x):=\lambda_F(\{x\})$ is a continuous function.

Denote by $T^*{\X}$ the cotangent bundle, and define the {\it dual metric} $F^*$ by
\begin{equation*}\label{dualFinslermetric}
F^*(x,\xi) :=\sup_{y \in T_x{\X} \setminus \{0\}} \frac{\xi(y)}{F(x,y)}
\end{equation*}
for $\xi \in T_x^* \X$.
Note that
\[
\xi(y) \leq F^*(x,\xi) F(x,y) \quad \text{for all}\ y\in T_x{\X},\, \xi \in T^*_x{\X}.
\]
The {\it Legendre transformation} $\mathfrak{L} : T_x{\X} \rightarrow T_x^*{\X}$ is defined by
\begin{equation*}
\mathfrak{L}(x,y):=\sum_{i,j=1}^n g_{ij}(x,y) y^i \,{\dd}x^j \ \text{ if } y\neq0,
\qquad
\mathfrak{L}(x,y):=0 \ \text{ if } y=0.
\end{equation*}
Then, $\mathfrak{L}:T{\X}\setminus\{0\}\rightarrow T^*{\X}\setminus\{0\}$ is a diffeomorphism
and $F^*(\mathfrak{L}(x,y))=F(x,y)$ for any $(x,y)\in T{\X}$.
For a $C^1$-function $f : M \rightarrow \mathbb{R}$,
the {\it gradient vector field} of $f$ is defined as $\nabla f := \mathfrak{L}^{-1}({\dd}f)$.
We remark that $\nabla$ is nonlinear (unless $F$ is Riemannian), i.e.,
$\nabla(f+h)\neq\nabla f+\nabla h$.

Let $C^\infty_{\loc}([0,1];{\X})$ denote the set of piecewise smooth curves defined on $[0,1]$.
Given $\gamma\in C^\infty_{\loc}([0,1];{\X})$, its \textit{length} is defined as
\[
L_F(\gamma):=\int^1_0 F\bigl(\gamma(t), \gamma'(t) \bigr) \,{\dd}t.
\]
Define the {\it distance function} $d_F:\X\times \X \rightarrow [0,\infty)$ by
\[
d_{F}(x_0,x_1):=\inf\{L_F(\gamma) \mid \gamma\in C^\infty_{\loc}([0,1];{\X}),\,
\gamma(0)=x_0,\, \gamma(1)=x_1\}.
\]
Then $({\X},d_F)$ is an asymmetric metric space,
and the forward topology (as well as the backward topology) coincides with the original topology of the manifold.
In the Finsler setting, we always choose the topology $\tau$ as the original topology of $\X$,
and hence, $\tau={\Tt}_\pm=\widehat{\Tt}$.

A \emph{geodesic} $\gamma:[0,1] \rightarrow \X$ is understood as a constant speed, locally minimizing curve,
in the sense that $d_F(\gamma(s),\gamma(t)) =[(t-s)/(b-a)] \cdot d_F(\gamma(a),\gamma(b))$
for sufficiently short intervals $[a,b]$ and any $a \le s<t \le b$.
A Finsler manifold $(\X,F)$ is said to be {\it forward} (resp., {\it backward}) {\it complete}
if so is $(\X,d_F)$, in which case every geodesic defined on $[0,1]$ (resp., $[-1, 0]$)
can be extended to $[0, \infty)$ (resp., $(-\infty, 0]$).
According to the Hopf--Rinow theorem (see \cite{BCS,Obook,Sh1}),
the closure of a forward ball (with a finite radius) is compact if $(\X,F)$ is forward complete.
However, this is not the case for backward balls (see Example \ref{funkmetricsapce}).

According to \cite[Theorem 2.23]{KZ}, we have the following.

\begin{proposition}\label{finslermetricspace}
Any asymmetric  metric space $({\X},d_F)$ induced from a forward complete Finsler manifold $({\X},F)$
is a forward metric space with
\[
\lambda_{d_F}\bigl( B^+_x(r) \bigr) \leq \Theta_x(r)
 :=\lambda_F\Bigl( B_x^+\bigl(2r+ \lambda_F\bigl( B^+_x(r) \bigr) r \bigr) \Bigr),
 \quad x\in \X,\, r>0.
\]
\end{proposition}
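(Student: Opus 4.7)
The plan is to verify Definition \ref{forwcondt} directly. Since $(\X, F)$ is forward complete and connected, $d_F$ is finite-valued, hence $\X_{[x]} = \X$ for every $x \in \X$ and one may take the trivial forward decomposition; equivalently, by Lemma \ref{pfms-d} it suffices to exhibit a non-decreasing function $\Theta_x(\cdot)$ for each $x \in \X$ satisfying the stated bound. The monotonicity of $\Theta_x$ in $r$ is automatic from monotonicity of the set map $A \mapsto \lambda_F(A)$, so the real content is the reversibility estimate $\lambda_{d_F}(B^+_x(r)) \leq \Theta_x(r)$.

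The essential Finsler input is that time-reversing a curve picks up at most a factor $\lambda_F(A)$ over any set $A$ containing its image: if $\gamma : [0,1] \to \X$ is piecewise smooth with image in $A$, then
\[
L_F(\gamma^{\mathrm{rev}}) = \int_0^1 F\bigl(\gamma(1-t), -\gamma'(1-t)\bigr) \,{\dd}t \leq \lambda_F(A) \cdot L_F(\gamma),
\]
where $\gamma^{\mathrm{rev}}(t):=\gamma(1-t)$. I will combine this with the Finsler Hopf--Rinow theorem, which under forward completeness supplies a minimizing geodesic between any two points. The argument then runs in three steps. \emph{Step 1.} For any $p \in B^+_x(r)$, the minimizing constant-speed geodesic from $x$ to $p$ lies in $B^+_x(r)$ (its intermediate points are at forward distance $< r$ from $x$), so reversing it yields $d_F(p, x) \leq \lambda_F(B^+_x(r)) \cdot d_F(x, p) < \lambda_F(B^+_x(r)) \cdot r$. \emph{Step 2.} For $y, z \in B^+_x(r)$, applying Step 1 to $z$ and the triangle inequality gives
\[
d_F(z, y) \leq d_F(z, x) + d_F(x, y) < \bigl(1 + \lambda_F(B^+_x(r))\bigr) r.
\]
\emph{Step 3.} Pick a minimizing geodesic $\sigma : [0,1] \to \X$ from $z$ to $y$; since $d_F(z, \sigma(t)) = t \cdot d_F(z, y) \leq d_F(z, y)$, the triangle inequality combined with Step 2 and $d_F(x, z) < r$ yields
\[
d_F(x, \sigma(t)) \leq d_F(x, z) + d_F(z, \sigma(t)) < r + d_F(z, y) < \bigl(2 + \lambda_F(B^+_x(r))\bigr) r,
\]
so $\sigma$ is contained in $B^+_x\bigl( 2r + \lambda_F(B^+_x(r)) r \bigr)$. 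Reversing $\sigma$ and applying the Finsler input to this larger ball then gives
\[
d_F(y, z) \leq L_F(\sigma^{\mathrm{rev}}) \leq \lambda_F\bigl( B^+_x(2r + \lambda_F(B^+_x(r)) r) \bigr) \cdot d_F(z, y) = \Theta_x(r) \cdot d_F(z, y),
\]
which is the required estimate.

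I expect the main subtlety to be the bookkeeping of radii in Step 3, which explains the doubly-nested appearance of $\lambda_F$ in the definition of $\Theta_x$. A naive one-step estimate cannot locate $\sigma$, because in the asymmetric setting one has no direct control on $d_F(z, x)$ from $d_F(x, z)$; one must first pay a factor of $\lambda_F(B^+_x(r))$ to switch the orientation (Step 1), and only then invoke the triangle inequality (Steps 2--3). This nested structure is precisely what forces the enlarged radius $2r + \lambda_F(B^+_x(r)) r$ rather than a simple multiple of $r$.
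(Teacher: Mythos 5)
Your proof is correct. The paper itself gives no proof of Proposition~\ref{finslermetricspace} — it simply cites \cite[Theorem~2.23]{KZ} — so there is no in-text argument to compare against, but your three-step derivation is sound and, because each step accounts for exactly one factor in the formula (Step~1 produces the inner $\lambda_F(B^+_x(r))$, the triangle inequalities in Steps~2--3 produce the $2r$, and the reversal in Step~3 produces the outer $\lambda_F(\cdot)$), it recovers precisely the radius $2r + \lambda_F(B^+_x(r))\,r$ stated in the proposition, strongly suggesting it is essentially the argument in the cited reference. The two ingredients you invoke — that reversing a curve contained in $A$ multiplies its length by at most $\lambda_F(A)$, and that forward completeness plus Hopf--Rinow furnish minimizing geodesics — are used exactly as they should be, and Step~1's observation that the minimizing geodesic from $x$ to $p\in B^+_x(r)$ stays inside $B^+_x(r)$ (since $d_F(x,\gamma(t))=t\,d_F(x,p)<r$) is the key point that a careless reader might overlook. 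Your closing remark about why the nested $\lambda_F$ cannot be avoided — because one has no a priori control on $d_F(z,x)$ from $d_F(x,z)$ until a reversal has been paid for — is the right explanation for the structure of the bound.
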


Let $\m$ be a smooth positive measure on $\X$.
In a local coordinate system ($x^i$), $\m$ can be expressed as
\begin{equation*}
\m({\dd}x) = \sigma\, {\dd}x^1 \cdots {\dd}x^n.
\end{equation*}
For instance, the {\it Busemann--Hausdorff measure}
$\m_{\mathrm{BH}}$ is defined by
\[
\m_{\mathrm{BH}}({\dd}x):=\frac{\vol(\mathbb{S}^{n-1})}{\vol(S_x\X)}\, {\dd}x^1 \cdots {\dd}x^n,
\]
where $\mathbb{S}^{n-1} \subset \mathbb{R}^n$ is the $(n-1)$-dimensional Euclidean
sphere and $S_x\X:=\{y\in T_xM\,|\, F(x,y)=1\}$ identified with a subset of $\mathbb{R}^n$ via the chart $(x^i)$.
The {\it distortion} $\tau$ and the {\it S-curvature} $\mathbf{S}$ of $(\X,F,\m)$ are defined  by
\begin{equation*}
\tau(x,y):= \log \Biggl[ \frac{\sqrt{\det g_{ij}(x,y)}}{\sigma(x)} \Biggr], \qquad
 \mathbf{S}(x,y):=\left.\frac{\dd}{{\dd}t}\right|_{t=0}\tau\bigl( \gamma_y(t), {\gamma}'_y(t) \bigr), \quad y\in T_x\X \setminus \{0\},
\end{equation*}
respectively, where  $t\mapsto \gamma_y(t)$ is the geodesic with $\gamma'_y(0)=y$.

According to Ohta \cite{Oint}, the {\it weighted Ricci curvature} $\mathbf{Ric}_N$ is defined as follows:
given $N\in [n,\infty]$, for any unit vector $y\in S\X:=\bigcup_{x\in \X}S_x\X$,
\[
\mathbf{Ric}_N(y):=\begin{cases}
\mathbf{Ric}(y)+\left.\frac{\dd}{{\dd}t}\right|_{t=0}\mathbf{S}(\dot{\gamma}_y(t))-\frac{\mathbf{S}^2(y)}{N-n} & \text{ for }N\in (n, \infty),\\
\displaystyle\lim_{L\downarrow n} \mathbf{Ric}_L(y) & \text{ for }N=n, \\
\mathbf{Ric}(y)+\frac{\dd}{{\dd}t}\big|_{t=0} \mathbf{S}(\dot{\gamma}_y(t)) & \text{ for }N=\infty.
\end{cases}
\]
Weighted Ricci curvature enables us to generalize many results in Riemannian geometry to the Finsler setting (see \cite{Oint,Obook}).
For example, $\mathbf{Ric}_N(y) \ge KF^2(y)$ for all $y \in T\X$ is equivalent to the \emph{curvature-dimension condition} $\CD(K,N)$.

The following is a model example of a forward complete forward metric space we have in mind.

\begin{example}\label{funkmetricsapce}
Let $\mathbb{B}^n=\{x\in \mathbb R^n \mid \|x\|<1\}$ be the open unit ball,
where $\|\cdot\|$ and $\langle\cdot, \cdot\rangle$ denote the Euclidean norm and inner product, respectively.
The \textit{Funk metric} (see, e.g., \cite{Sh1})
$F:\mathbb{B}^n\times \mathbb R^{n}\to \mathbb R$ is defined  by
\begin{equation*}\label{Funckmeatirc}
F(x,y) =\frac{\sqrt{\|y\|^2-(\|x\|^2\|y\|^2-\langle x,y\rangle^2)}}{1-\|x\|^2}+\frac{\langle x,y\rangle}{1-\|x\|^2},
 \quad x \in \mathbb{B}^n,\ y\in T_x\mathbb{B}^n=\mathbb R^n.
\end{equation*}
The distance function associated to $F$ is explicitly written as
\[
d_{F}(x_1,x_2)=\log \Biggl[ \frac{\sqrt{\|x_1-x_2\|^2-(\|x_1\|^2\|x_2\|^2-\langle x_1,x_2\rangle^2)}-\langle x_1,x_2-x_1\rangle}{\sqrt{\|x_1-x_2\|^2-(\|x_1\|^2\|x_2\|^2-\langle x_1,x_2\rangle^2)}-\langle x_2,x_2-x_1\rangle} \Biggr],
\quad x_1,x_2\in \mathbb{B}^n.
\]
Clearly $d_F$ is asymmetric, moreover,
\begin{equation}\label{reversfunk}
\lim_{\|x\|\to1^-}d_F(\mathbf{0},x)=\infty, \qquad \lim_{\|x\|\to1^-} d_F(x,\mathbf{0})=\log2.
\end{equation}
By Proposition~\ref{finslermetricspace},
$(\mathbb{B}^n,d_F)$ is a forward complete forward metric space with $\Theta_{\mathbf{0}}(r):=2{\ee}^r-1$.
However, it is not a backward metric space because \eqref{reversfunk} implies
$\lambda_{d_F}(B^-_\mathbf{0}(r))=\lambda_{d_F}(\mathbb{B}^n)=\infty$ for $r\geq \log 2$.
Moreover, we have $\mathbf{Ric}(y)=-(n-1)/4$ and, with respect to the Lebesgue measure $\m_{\mathrm{L}}$,
$\mathbf{S}(y)=(n+1)/2$ for any $y\in S\mathbb{B}^n$.
Hence, $(\mathbb{B}^n,d_F,\m_{\mathrm{L}})$ satisfies $\CD\left(-\frac{n-1}{4}-\frac{(n+1)^2}{4(N-n)},N\right)$ for any $N\in (n,\infty]$ (see Ohta \cite{Ofunk} as well as Example~\ref{ex-1} below).
\end{example}

\begin{example}
Let $({\X}_i,F_i)$, $i\in \mathbb{N}$, be a sequence of forward complete Finsler manifolds.
Set ${\X}:=\bigsqcup_i {\X}_i$ and $d|_{\X_i\times \X_i}:=d_{F_i}$, $d|_{\X_i\times \X_j}:=\infty$ for $i\neq j$.
Then, $({\X},d)$ is a forward extended metric space with $\Tt_+=\Tt_-=\widehat{\Tt}$.
\end{example}

\subsection{Absolutely continuous curves}\label{asconsec}

\begin{definition}[Forward absolutely continuous curves]\label{absoldef}
Let $(X,d)$ be an asymmetric extended metric space and $I$ be an interval in $[-\infty,\infty]$.
Given $p\in [1,\infty]$, a curve $\gamma:I\rightarrow X$ is said to be
{\it forward} (resp., \emph{backward}) \emph{$p$-absolutely continuous}
if there is a nonnegative function $f\in L^p(I)$ such that
\[
d\bigl( \gamma(t_1),\gamma(t_2) \bigr) \leq \int^{t_2}_{t_1}f(s) \,{\dd}s \quad
\biggl(\text{resp.,}\ d\bigl( \gamma(t_2),\gamma(t_1) \bigr) \leq \int^{t_2}_{t_1}f(s) \,{\dd}s \biggr)
\]
for any $t_1,t_2\in I$ with $t_1\leq t_2$.
The set of forward (resp., backward) $p$-absolutely continuous curves is denoted by
$\FAC^p(I;X)$ (resp., $\BAC^p(I;X)$).
We say that $\gamma$ is {\it $p$-absolutely continuous} if
\[ \gamma\in \AC^p(I;X) := \FAC^p(I;X) \cap \BAC^p(I;X). \]
We denote $\FAC^1(I;X)$, $\BAC^1(I;X)$ and $\AC^1(I;X)$
by $ \FAC(I;X)$, $\BAC(I;X)$ and $\AC(I;X)$, respectively.
\end{definition}

In the symmetric case, absolutely continuous curves defined on open intervals
are continuously extended to closed intervals.
In the asymmetric case, however, it is not the case as in the next example
(see \cite[\S 2.2]{OZ} for further discussion).

\begin{example}\label{forwardabsoucuveextend}
Let $(\mathbb{B}^n,d_F)$ be the Funk metric space as in Example \ref{funkmetricsapce},
and $\gamma:(0,1] \rightarrow \mathbb{B}^n$ be the constant speed minimal geodesic
such that $\gamma(1)=\mathbf{0}$ and $\gamma(t) \to (-1,0,\ldots,0)$ in $\mathbb{R}^n$ as $t \to 0$.
Then, $d_F(\gamma_s,\gamma_t)=(t-s)\log 2$ for $0<s\leq t\leq 1$,
thus $\gamma$ is forward absolutely continuous.
However, $\gamma$ is not backward absolutely continuous and cannot be extended to $t=0$.
\end{example}

\begin{remark}\label{lengfinite}
In an asymmetric metric space, forward absolutely continuous curves could be discontinuous (in $\widehat{\Tt}$).
For example, consider $X=\mathbb{R}$ endowed with the asymmetric distance:
\[
d(x,y) :=y-x\ \text{ for } x \le y, \qquad d(x,y) :=1\ \text{ for } x>y.
\]
Then $\gamma_t:=t$ ($t\in [0,1]$) is forward absolutely continuous,
but is discontinuous in $\widehat{\Tt}$ (recall Theorem \ref{asymmetrtopol}\eqref{equconv}).
In fact, $X$ is a discrete space under $\widehat{\Tt}$.
%
\end{remark}

For a forward extended metric space,
the relationship between forward and backward absolutely continuous curves reads as follows
(see Zhang--Zhao \cite[Proposition 5.3, Remark 8]{ZZ}).

\begin{proposition}\label{extabslcurv}
Let $(X,d)$ be a forward extended metric space and $I$ be a bounded closed interval.
Then we have
\[
\AC^p(I;X) =\FAC^p(I;X) \subset \BAC^p(I;X) \cap C(I;X)
\]
for all $p\in [1,\infty]$,
where $C(I;X)$ denotes the set of continuous curves $($in $\Tt_+=\widehat{\Tt})$ defined on $I$.
Moreover, if $\BAC^p(I;X) \subset C(I;X)$, then
\[
\AC^p(I;X) =\FAC^p(I;X) =\BAC^p(I;X).
\]
\end{proposition}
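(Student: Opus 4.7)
The plan is to exploit the reversibility control furnished by Lemma~\ref{pfms-d}: in a forward extended metric space one has $d(x,y)\le \Theta_{x_0}(r)\cdot d(y,x)$ for all $x,y\in B_{x_0}^+(r)$. Once the image of a given curve is shown to sit inside a single forward ball of finite radius centred at a fixed basepoint, FAC and BAC become equivalent up to the uniform factor $\Theta_{x_0}(r)$. I will verify such boundedness in two separate situations: $(\mathrm{i})$ for any $\gamma\in\FAC^p(I;X)$ with $I$ a bounded closed interval; $(\mathrm{ii})$ for $\gamma\in\BAC^p(I;X)$ which is additionally known to lie in $C(I;X)$.

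For $(\mathrm{i})$, I would write $I=[a,b]$ and take $f\in L^p(I)$ a density of $\gamma$. Boundedness of $I$ gives $f\in L^1(I)$, so $R:=\int_a^b f(s)\,ds<\infty$ and $d(\gamma(a),\gamma(t))\le R$ for all $t\in I$. Setting $\Theta:=\Theta_{\gamma(a)}(R+1)$ from Lemma~\ref{pfms-d}, for any $a\le t_1\le t_2\le b$ one gets
\[
d\bigl(\gamma(t_2),\gamma(t_1)\bigr)\le \Theta\cdot d\bigl(\gamma(t_1),\gamma(t_2)\bigr)\le \Theta\int_{t_1}^{t_2} f(s)\,ds,
\]
so $\gamma\in\BAC^p(I;X)$ with density $\Theta f\in L^p(I)$; in particular $\FAC^p\subset\BAC^p$ and hence $\AC^p=\FAC^p$. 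Continuity at each $t_0\in I$ with respect to $\widehat{\Tt}=\Tt_+$ (cf.\ Proposition~\ref{topoconditiondeq}) is then immediate: as $t\to t_0$, both $d(\gamma(t_0),\gamma(t))$ and $d(\gamma(t),\gamma(t_0))$ tend to zero, directly from absolute continuity of the integral on the side where the arguments are correctly ordered for the FAC bound, and via a reversibility flip by $\Theta$ on the opposite side, so Theorem~\ref{asymmetrtopol}\eqref{equconv} applies.

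For $(\mathrm{ii})$, I would first record the fact that in a forward extended metric space $X_{[x]}=\overleftarrow{X}_{[x]}$ for every $x$; indeed, the reversibility axiom immediately yields $X_{[x_\alpha]}\subset\overleftarrow{X}_{[x_\alpha]}$, and the reverse inclusion follows because the forward decomposition is disjoint (the same argument used in Remark~\ref{forwardpointspaceandbackwardones}\eqref{pfms-b}). Given $\gamma\in\BAC^p(I;X)$ with density $g$, the hypothesis forces $\gamma\in C(I;X)$, so $\gamma(I)$ is compact in $\tau=\Tt_+$. The BAC inequality gives $d(\gamma(t),\gamma(a))\le \int_a^b g(s)\,ds<\infty$ for every $t\in I$, hence $\gamma(I)\subset\overleftarrow{X}_{[\gamma(a)]}=X_{[\gamma(a)]}$. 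Covering the compact set $\gamma(I)$ by finitely many forward balls $B^+_{\gamma(t_i)}(1)$ and noting that each $d(\gamma(a),\gamma(t_i))$ is finite, the triangle inequality produces some finite $r$ with $\gamma(I)\subset B_{\gamma(a)}^+(r)$. Applying the $\Theta_{\gamma(a)}(r)$ bound then reverses the previous calculation and delivers $\gamma\in\FAC^p(I;X)$ with density $\Theta_{\gamma(a)}(r)\cdot g$.

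The principal subtlety will lie in step $(\mathrm{ii})$, namely upgrading a merely $\tau$-compact and backward-bounded image to genuine containment in a single forward ball of finite radius. This rests essentially on the equivalence $X_{[x]}=\overleftarrow{X}_{[x]}$, a characteristic consequence of the forward extended metric space axiom that breaks down for its mirror backward counterpart, as is transparent from the Funk example in Example~\ref{funkmetricsapce}. Beyond Lemma~\ref{pfms-d} and routine properties of the Lebesgue integral, no additional machinery appears to be required.
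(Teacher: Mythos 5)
Your proposal is correct. The paper itself does not prove this proposition but cites Zhang--Zhao (\cite[Proposition~5.3, Remark~8]{ZZ}), so there is no internal proof to compare against; nonetheless, your argument is the natural one and it goes through. The key structural fact you isolate --- that the forward extended metric space axiom forces $X_{[x]}=\overleftarrow{X}_{[x]}$, so backward-reachability from $\gamma(a)$ already places $\gamma(I)$ inside a single finiteness class, and then continuity plus compactness of $\gamma(I)$ in $\Tt_+$ pins the image inside one forward ball $B^+_{\gamma(a)}(r)$ --- is exactly the point where the asymmetry must be controlled, and your two-step compactness argument (finitely many balls of radius $1$ centred on $\gamma(I)$, then the triangle inequality from $\gamma(a)$ to the finitely many centres) handles it cleanly. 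Two small cosmetic remarks: in the present proposition there is no ambient Polish topology $\tau$, only $\Tt_+=\widehat{\Tt}$, so the phrase ``compact in $\tau=\Tt_+$'' should just read ``compact in $\Tt_+$''; and when invoking $\lambda_d(B^+_{x_0}(r))\le\Theta_{x_0}(r)$ from Lemma~\ref{pfms-d} you implicitly use that the infimum defining $\lambda_d$ is attained (so the bound $d(x,y)\le\Theta_{x_0}(r)\,d(y,x)$ really holds pointwise), which follows because the admissible set of $\lambda$'s is a closed half-line --- worth a one-line remark but not a gap. Everything else, including the continuity argument via Theorem~\ref{asymmetrtopol}\eqref{equconv} and the use of absolute continuity of the Lebesgue integral in both time-directions, is sound for all $p\in[1,\infty]$ on a bounded interval.
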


In particular, the set of $p$-absolutely continuous curves for the symmetrized distance
$\hat{d}$ (recall \eqref{symmmetricde}) coincides with $\AC^p(I;X)$.
Let $\mathscr{L}^n$ denote the Lebesgue measure of $\mathbb{R}^n$.
According to Rossi--Mielke--Savar\'e\cite[Proposition 2.2]{RMS}, we have the following.

\begin{proposition}[Forward metric derivative]\label{speedofabscc}
Let $(X,d)$ be an asymmetric extended metric space and $I$ be an interval in $[-\infty,\infty]$.
For any $\gamma\in \FAC^p(I;X)$ $($resp., $\gamma\in \BAC^p(I;X))$ with $p\in [1,\infty]$,
the \emph{forward} $($resp., \emph{backward}$)$ \emph{metric derivative}
\begin{align*}
|\gamma'_+|(t)&:=\lim_{h\to 0^+}\frac{d(\gamma(t),\gamma(t+h))}{h}=\lim_{h\to 0^+}\frac{d(\gamma(t-h),\gamma(t))}{h}\\
 \biggl( \text{resp.,\ } |\gamma'_-|(t)&:=\lim_{h\to 0^+}\frac{d(\gamma(t+h),\gamma(t))}{h}=\lim_{h\to 0^+}\frac{d(\gamma(t),\gamma(t-h))}{h} \biggr)
\end{align*}
exists for $\mathscr{L}^1$-a.e.\ $t\in \intt(I)$.
Furthermore, $|\gamma'_+|$ $($resp., $|\gamma'_-|)$ is in $L^p(I)$ and satisfies
\[
d\bigl( \gamma(t_1),\gamma(t_2) \bigr) \leq \int^{t_2}_{t_1}|\gamma'_+|(s) \,{\dd}s \quad
\biggl( \text{resp.,\ } d\bigl( \gamma(t_2),\gamma(t_1) \bigr) \leq \int^{t_2}_{t_1}|\gamma'_-|(s) \,{\dd}s \biggr)
\]
for any $t_1,t_2\in I$ with $t_1\leq t_2$.
\end{proposition}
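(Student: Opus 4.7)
The plan is to adapt Ambrosio's classical proof of the metric derivative theorem to the forward asymmetric setting. Fix $\gamma \in \FAC^p(I;X)$ and choose a dominating function $f \in L^p(I)$ with $d(\gamma(s),\gamma(t)) \leq \int_s^t f(r)\,{\dd}r$ for $s \leq t$. Set $F(t) := \int_a^t f$ for some base point $a \in I$ and define the forward upper Dini derivative
\[
m(t) := \limsup_{h \to 0^+} \frac{d(\gamma(t),\gamma(t+h))}{h}.
\]
Lebesgue's differentiation theorem applied to the AC function $F$ yields $m(t) \leq f(t)$ at every Lebesgue point of $f$, so $m \in L^p(I)$; in particular, $m \leq f$ a.e.\ for any admissible dominating function $f$, so $m$ is the pointwise minimal candidate.

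The first main step is to show that $m$ itself dominates $\gamma$ in the forward sense, i.e., $d(\gamma(s),\gamma(t)) \leq \int_s^t m$ for all $s \leq t$. My approach is a Vitali-type covering argument: fix $\epsilon > 0$; by the definition of $m$, for each $r \in [s,t]$ there are arbitrarily small $k > 0$ with $d(\gamma(r),\gamma(r+k)) < (m(r) + \epsilon)k$. The family of such intervals forms a fine cover of $[s,t]$; Vitali's lemma extracts a disjoint countable subfamily $\{[r_i, r_i+k_i]\}$ covering $[s,t]$ up to a Lebesgue-null set. Applying the triangle inequality along the ordered intervals gives $d(\gamma(s),\gamma(t)) \leq \sum_i d(\gamma(r_i),\gamma(r_i+k_i)) + (\text{gap contributions})$. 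The first sum is bounded by $\sum_i (m(r_i)+\epsilon)k_i$, which by refining the Vitali selection at Lebesgue points of $m$ is at most $\int_s^t m + \epsilon(t-s)$; the gap contributions are dominated by $\int f$ over a set of arbitrarily small measure and vanish by absolute continuity of $F$. Letting $\epsilon \to 0$ yields the domination.

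The second main step is to show that the full limit exists a.e., i.e., that $\ell(t) := \liminf_{h \to 0^+} d(\gamma(t),\gamma(t+h))/h$ coincides with $m(t)$ a.e. Suppose for contradiction that $E := \{\ell < m\}$ has positive Lebesgue measure. For each $t \in E$ and $\epsilon > 0$, there are arbitrarily small $h > 0$ with $d(\gamma(t),\gamma(t+h)) < (\ell(t) + \epsilon)h$; a second Vitali covering of $E$ by such intervals, combined with the domination by $m$ on the complement established in the previous paragraph, produces a new function $f_1 := \ell \cdot \mathbf{1}_E + m \cdot \mathbf{1}_{E^c} \in L^p(I)$ that itself dominates $\gamma$ in the forward sense. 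Since $f_1 < m$ on $E$, this contradicts the minimality $m \leq f$ a.e.\ recorded in the setup. Hence $\mathscr{L}^1(E) = 0$ and the forward limit exists a.e.\ and equals $m(t)$.

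The analogous argument applied to the left-shifted quotient $d(\gamma(t-h),\gamma(t))/h$ (which is likewise dominated by $\int_{t-h}^t m$) gives the existence and value of the other one-sided limit, confirming the second equality in the statement. The $L^p$-membership of $|\gamma'_+|$ and the bound $d(\gamma(t_1),\gamma(t_2)) \leq \int_{t_1}^{t_2} |\gamma'_+|$ then follow at once, and the case $\gamma \in \BAC^p$ is handled by interchanging the roles of forward and backward triangle inequalities. The main obstacle is that Ambrosio's classical proof in the symmetric setting uses countable $\hat{d}$-dense probes $\phi_{x_n}(t) := d(x_n,\gamma(t))$ that are honestly $f$-Lipschitz, with $d(\gamma(s),\gamma(t)) = \sup_n (\phi_{x_n}(t) - \phi_{x_n}(s))$ by density; in our asymmetric setting these probes satisfy only the one-sided estimate $\phi_{x_n}(t) - \phi_{x_n}(s) \leq d(\gamma(s),\gamma(t))$ for $s \leq t$, so that $\phi_{x_n} - F$ is merely non-increasing rather than Lipschitz from below, and the image $\gamma(I)$ may fail to be $\hat{d}$-separable (cf.\ Example~\ref{forwardabsoucuveextend}). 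The double Vitali covering approach circumvents both difficulties by working directly with Dini derivatives, using only the forward metric structure.
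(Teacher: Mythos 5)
The paper does not prove this proposition --- it is quoted from Rossi--Mielke--Savar\'e \cite[Proposition~2.2]{RMS} --- so you are supplying a proof where the paper supplies only a reference. Your Vitali-covering strategy is essentially the correct one, and the obstacle you identify is real: the probes $\phi_x(t)=d(x,\gamma(t))$ are only forward Lipschitz, and a forward absolutely continuous curve need not be $\widehat{\Tt}$-continuous nor have $\hat d$-separable image (the example that shows this is actually Remark~\ref{lengfinite}, where $\widehat{\Tt}$ is the discrete topology, rather than Example~\ref{forwardabsoucuveextend}). The gap in your write-up is measurability. The proposition is stated for a bare asymmetric extended metric space with no Borel structure on $X$ that $\gamma$ is compatible with, so $t\mapsto d(\gamma(t),\gamma(t+h))$ --- and hence the Dini derivatives $m(t)$, $\ell(t)$ and the set $E=\{\ell<m\}$ --- are not a priori Lebesgue measurable. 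Yet you invoke Lebesgue points of $m$ (which presupposes $m\in L^1_{\loc}$), deduce $m\in L^p(I)$ from the pointwise inequality $m\le f$ a.e.\ (valid only once $m$ is known measurable), and assert $f_1=\ell\,\mathbbm{1}_E+m\,\mathbbm{1}_{E^c}\in L^p(I)$. Since the conclusion $|\gamma'_+|\in L^p(I)$ itself carries a measurability claim, the proof must establish it rather than assume it; as written the argument is circular on this point.

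The standard repair is to route the argument through the forward variation function $V(t):=\sup\{\sum_{i=1}^n d(\gamma(t_{i-1}),\gamma(t_i)) : t_0<t_1<\cdots<t_n=t\}$, which is non-decreasing and satisfies $d(\gamma(s),\gamma(t))\le V(t)-V(s)\le\int_s^t f$ for $s\le t$; thus $V$ is absolutely continuous, $V'\le f$ a.e., $V'\in L^p(I)$ is measurable, and $V(t)-V(s)=\int_s^t V'$ gives the domination with no covering argument at all. At differentiability points of $V$ the upper bounds $\limsup_{h\to 0^+}d(\gamma(t),\gamma(t+h))/h\le V'(t)$ and $\limsup_{h\to 0^+}d(\gamma(t-h),\gamma(t))/h\le V'(t)$ are immediate. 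Your Vitali contradiction argument should then be applied to the set where $\liminf_{h\to 0^+}d(\gamma(t),\gamma(t+h))/h<V'(t)-\varepsilon$, but using Lebesgue \emph{outer} measure so that measurability of this set is irrelevant: if its outer measure is $c>0$, insert disjoint Vitali intervals $[t_i,t_i+h_i]$ with $V(t_i+h_i)-V(t_i)-d(\gamma(t_i),\gamma(t_i+h_i))>\tfrac{\varepsilon}{2}h_i$, each finer than an arbitrary partition $P$; the superadditivity of $V(v)-V(u)-d(\gamma(u),\gamma(v))\ge 0$ under partition refinement then forces $\sum_P d(\gamma(s_{j-1}),\gamma(s_j))\le V(b)-V(a)-\tfrac{\varepsilon c}{2}$ for every $P$, contradicting $\sup_P\sum_P d=V(b)-V(a)$. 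This delivers the contradiction directly against the definition of $V$ rather than against a putative a.e.\ minimality of a possibly non-measurable function, and once both one-sided limits are shown to equal $V'$ a.e., the measurability and $L^p$-membership of $|\gamma'_+|$ follow for free.
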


\begin{definition}[Rectifiable curves]
Let $(X,d)$ be an asymmetric extended metric space and $\gamma:[0,1]\rightarrow X$ be a continuous curve.
Consider a partition $Y=\{t_0,\ldots ,t_N\}$ of $[0,1]$ such that $0=t_0\leq t_1\leq \cdots \leq t_N=1$.
The supremum of the sum
$\Sigma(Y):=\sum_{i=1}^N d(\gamma(t_{i-1}),\gamma(t_i))$ 
over all partitions $Y$ is called the {\it length} of $\gamma$ and denoted by $L_d(\gamma)$.
A curve is said to be {\it rectifiable} if its length is finite.
\end{definition}

As in Burago--Burago--Ivanov \cite[Theorem 2.7.6]{DYS}, we see that
\begin{equation}\label{lengthabscur}
L_d(\gamma)=\int^1_0 |\gamma'_+|(t) \,{\dd}t <\infty \quad \text{for all}\ \gamma\in \FAC([0,1];X).
\end{equation}
For a forward extended Polish space $(X,\tau,d)$,
denote by $C_\tau ([0,1];X)$ the collection of $\tau$-continuous curves defined on $[0,1]$.
It follows from Definition \ref{weaktopolgytau}\eqref{weakt1-3} that $\AC([0,1];X)\subset C_\tau ([0,1];X)$.
We will endow $C_\tau ([0,1];X)$ with the {\it uniform distance}
$d^*(\gamma,\tilde{\gamma}):=\sup_{t\in [0,1]}d(\gamma(t),\tilde{\gamma}(t))$
and the {\it compact-open topology} $\tau^*$ generated by a subbase
$\{ \gamma\in C_\tau ([0,1];X) \mid \gamma(K)\subset U \}$,
where $K \subset [0,1]$ is compact and $U\in \tau$.

\begin{proposition}\label{curvetopology}
Let $(X,\tau,d)$ be a forward extended Polish space.

\begin{enumerate}[{\rm  (i)}]
\item \label{curvespace-2}
For each $t\in [0,1]$, the \emph{evaluation map} $e_t:(C_\tau ([0,1];X),\tau^*)\rightarrow(X,\tau)$,
$\gamma \mapsto \gamma(t)$, is continuous.

\item \label{curvespace-1}
Suppose that the symmetric distance $\rho$ inducing $\tau$ satisfies $\rho \le d$.
Then, the following hold.
\begin{itemize}
\item $(C_\tau ([0,1];X),\tau^*)$ is a Polish space and $d^*$ is lower semi-continuous in $\tau^*$.

\item If additionally $\Theta_K(r)$ defined in \eqref{setreversbility} is finite
for any compact set $K\subset (X,\tau)$ and $r>0$,
then $(C_\tau ([0,1];X),\tau^*,d^*)$ is a forward extended Polish space.
\end{itemize}
\end{enumerate}
\end{proposition}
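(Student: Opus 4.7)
For (i), the continuity of $e_t$ is immediate: for any $\tau$-open $U\subset X$, the preimage $e_t^{-1}(U)=\{\gamma \mid \gamma(\{t\})\subset U\}$ is a subbase element of $\tau^*$, since $\{t\}$ is compact.

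For (ii), I plan to use the hypothesis $\rho\le d$ to handle the Polish structure and one direction of completeness through the symmetric metric $\rho$, and to rely on the forward extended metric structure (via the bound $\Theta_K$) only to reverse backward convergence into forward convergence. Since $\tau$ is generated by $\rho$, $C_\tau([0,1];X)$ equals the space of $\rho$-continuous curves, and on the compact interval $[0,1]$ the compact-open topology $\tau^*$ coincides with the uniform topology of $\rho^*(\gamma,\tilde\gamma):=\sup_t \rho(\gamma(t),\tilde\gamma(t))$. Since $(X,\rho)$ is a complete separable metric space, a standard argument makes $(C_\tau([0,1];X),\rho^*)$ Polish. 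Lower semi-continuity of $d^*$ in $\tau^*\times\tau^*$ then follows by combining (i) with the joint lower semi-continuity of $d$: for $\gamma_n\to\gamma$ and $\tilde\gamma_n\to\tilde\gamma$ in $\tau^*$, pointwise evaluation yields $d(\gamma(t),\tilde\gamma(t))\le\liminf_n d(\gamma_n(t),\tilde\gamma_n(t))\le\liminf_n d^*(\gamma_n,\tilde\gamma_n)$ for each $t$, and taking $\sup_t$ closes the first bullet.

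For the second bullet I will verify the four items of Definition~\ref{weaktopolgytau} for $(C_\tau([0,1];X),\tau^*,d^*)$. Items (2) and (4) are already in hand; item (3) follows from $\rho^*\le d^*$, since backward $d^*$-convergence forces uniform $\rho$-convergence and hence $\tau^*$-convergence. The substantive content is item (1). To establish the forward extended metric structure I use Lemma~\ref{pfms-d}: for any $\gamma$, set $K:=\gamma([0,1])$, which is $\tau$-compact; then any $\tilde\gamma,\tilde\gamma'\in B^+_\gamma(r)$ satisfy $\tilde\gamma(t),\tilde\gamma'(t)\in B^+_K(r)$ pointwise, so Remark~\ref{setreversiblitfuntion} together with the assumption $\Theta_K(r)<\infty$ yields $d^*(\tilde\gamma,\tilde\gamma')\le \Theta_K(r)\,d^*(\tilde\gamma',\tilde\gamma)$; thus $\Theta^*_\gamma(r):=\Theta_K(r)$ works.

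The main obstacle is forward completeness. Given a forward $d^*$-Cauchy sequence $(\gamma_n)$, the bound $\rho\le d$ supplies a uniform $\rho$-Cauchy property and hence uniform $\rho$-convergence to a $\rho$-continuous limit $\gamma$; passing $m\to\infty$ in $d(\gamma_n(t),\gamma_m(t))<\varepsilon$ and invoking the lower semi-continuity of $d$ gives $d^*(\gamma_n,\gamma)\to 0$. The delicate point is upgrading this backward convergence to the forward direction $d^*(\gamma,\gamma_n)\to 0$ required by $\widehat{\Tt^*}$-convergence (Theorem~\ref{asymmetrtopol}\eqref{equconv}). Here the finiteness of $\Theta_K$ on compact sets is essential: choosing $N$ so that $d^*(\gamma_N,\gamma_n)<1$ for all $n\ge N$, the previous step also yields $d^*(\gamma_N,\gamma)\le 1$, and both $\gamma_n(t)$ and $\gamma(t)$ therefore lie in $B^+_{K_N}(1)$ with $K_N:=\gamma_N([0,1])$ compact, so $d(\gamma(t),\gamma_n(t))\le \Theta_{K_N}(1)\,d(\gamma_n(t),\gamma(t))$ pointwise and hence $d^*(\gamma,\gamma_n)\le \Theta_{K_N}(1)\,d^*(\gamma_n,\gamma)\to 0$.
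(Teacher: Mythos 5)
Your proof of (i) and of the first bullet of (ii) is essentially the paper's: identify $\tau^*$ with the $\rho^*$-uniform topology, invoke Polishness of $(X,\rho)$, and obtain lower semi-continuity of $d^*$ pointwise from that of $d$. The establishment of the forward extended metric structure is also the same idea; the paper uses $\Theta_\gamma(r):=\sup_{t\in[0,1]}\Theta_{\gamma(t)}(r)$ whereas you invoke $\Theta_K(r)$ with $K=\gamma([0,1])$ directly through Remark~\ref{setreversiblitfuntion}, which is if anything a bit cleaner.

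Where you genuinely diverge from the paper is the forward completeness argument, and both routes work. The paper uses the pointwise forward completeness of $(X,d)$ to produce a limit $\gamma$ with $d(\gamma(t),\gamma_j(t))\to 0$ at each $t$, then combines this with the triangle inequality and the reversibility bound $\Theta_{\gamma_{N_0}}(1)$ to pass directly to uniform \emph{forward} convergence $d^*(\gamma,\gamma_i)\to 0$. You instead build the limit through $\rho^*$-completeness, deduce the \emph{backward} convergence $d^*(\gamma_n,\gamma)\to 0$ via the $\tau$-lower semi-continuity of $d$, and only afterwards upgrade to forward convergence with the $\Theta_{K_N}$ bound. Your route makes a second use of the lower semi-continuity hypothesis and a second appeal to the reversibility bound, while the paper's handles forward convergence in one pass; both are correct, and your version makes the role of the extra hypothesis on $\Theta_K$ arguably more transparent (it is the precise tool that converts backward into forward control).

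One small point worth tightening: from $d^*(\gamma_N,\gamma_m)<1$ for $m\ge N$ you get $d^*(\gamma_N,\gamma)\le 1$ by lower semi-continuity, so $\gamma(t)$ only lands in the \emph{closed} ball $\overline{B^+_{K_N}(1)}^d$, not necessarily the open one; you should either replace $\Theta_{K_N}(1)$ by $\Theta_{K_N}(2)$, or choose $N$ so that $d^*(\gamma_N,\gamma_m)<1/2$ for $m\ge N$. This is cosmetic and does not affect the validity of the argument.
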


\begin{proof}
\eqref{curvespace-2} is straightforward from the definition of $\tau^*$.
For \eqref{curvespace-1}, note that the compact-open topology $\tau^*$ is exactly the topology
induced from the uniform distance
$\rho^*(\gamma,\tilde{\gamma}):=\sup_{t\in [0,1]} \rho(\gamma(t),\tilde{\gamma}(t))$.
Thus, $(C_\tau ([0,1];X),\tau^*)$ is a Polish space since $(X,\rho)$ is Polish (see Garling \cite[Theorem 3.3.12]{Gar}).
The lower semi-continuity of $d^*$ directly follows from that of $d$
(recall Definition \ref{weaktopolgytau}\eqref{weakt1-4}).

For the latter part of \eqref{curvespace-1}, given $\gamma\in C_\tau ([0,1];X)$,
since $\gamma([0,1])$ is compact in $\tau$,
the function $\Theta_{\gamma}(r):=\sup_{t\in [0,1]}\Theta_{\gamma(t)}(r)$ is well-defined.
Then, for any
\[
\eta_1,\eta_2 \in \mathfrak{B}^+_{\gamma}(r)
 :=\left\{ \xi \in C_\tau ([0,1];X) \mid d^*(\gamma,\xi)<r  \right\}
\]
and $t\in [0,1]$, since $\eta_1(t),\eta_2(t)\in {B}^+_{\gamma(t)}(r)$,
we infer that $d(\eta_1(t),\eta_2(t)) \leq \Theta_{\gamma(t)}(r) \cdot d(\eta_2(t),\eta_1(t))$.
Hence, $d^*(\eta_1,\eta_2)\leq \Theta_{\gamma}(r)\cdot d^*(\eta_2,\eta_1)$,
and $(C_\tau ([0,1];X),d^*)$ is a forward extended metric space by Lemma~\ref{pfms-d}.
Moreover, together with the hypothesis $\rho\leq d$, we find that
$d^*(\gamma_i,\gamma) \to 0$ implies $\gamma_i \overset{\tau^*}{\longrightarrow }\gamma$.

It remains to show the forward completeness.
Given a forward Cauchy sequence $\gamma_i\in C_\tau ([0,1];X)$,
there is $N_0 \ge 1$ such that $\gamma_i\in \mathfrak{B}^+_{\gamma_{N_0}}(1)$ for all $i\geq N_0$.
Moreover, for any $\varepsilon>0$, there is $N=N(\varepsilon)>N_0$ such that
$d^*(\gamma_i,\gamma_j)<\varepsilon/\Theta_{\gamma_{N_0}}(1)$ for any $j\geq i\geq N$.
On the other hand, the forward completeness of $(X,d)$ yields a curve $\gamma:[0,1]\rightarrow X$
such that $\lim_{j\to\infty}d(\gamma(t),\gamma_j(t))=0$ for every $t\in [0,1]$.
Then we have
\begin{align*}
d\bigl( \gamma(t),\gamma_i(t) \bigr)
&\leq d\bigl( \gamma(t),\gamma_j(t) \bigr) +d\bigl( \gamma_j(t),\gamma_i(t) \bigr)
 \leq d\bigl( \gamma(t),\gamma_j(t) \bigr) +\Theta_{\gamma_{N_0}}(1)\cdot d^* (\gamma_i,\gamma_j) \\
&\leq d\bigl( \gamma(t),\gamma_j(t) \bigr) +\varepsilon
\end{align*}
for all $t \in [0,1]$.
By letting $j\to \infty$, we obtain $d(\gamma(t),\gamma_i(t))\leq \varepsilon$,
thereby $d^*(\gamma,\gamma_i)\leq\varepsilon$ for all $i \ge N$.
Thus, $\gamma$ is continuous  under $\rho$ (i.e., $\tau$), and the forward completeness holds.
\end{proof}

We remark that the assumption $\Theta_K(r)<\infty$ holds true when $\tau=\widehat{\Tt}$.


\begin{definition}[$p$-energy]\label{pengergycurve}
Let $(X,d)$ be an asymmetric extended metric space.
For $p>1$, the ({\it forward}) \emph{$p$-energy} is defined by
\[
\mathcal {E}_p(\gamma) :=\int_{0}^1 |\gamma'_+|^p(t) \,{\dd}t\,\ \text{ if } \gamma\in \FAC^p([0,1];X),
\qquad
\mathcal {E}_p(\gamma) :=\infty\ \text{ otherwise.}
\]
\end{definition}


\begin{lemma}\label{poslishspeccurves}
Let $(X,\tau,d)$ be a forward extended Polish space.
Then, $\mathcal {E}_p$ is $\tau^*$-lower semi-continuous
and $\AC^p([0,1];X)$ is a Borel set in $C_\tau ([0,1];X)$.
\end{lemma}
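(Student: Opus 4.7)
The plan is to rewrite $\mathcal{E}_p$ variationally as a countable supremum of $\tau^*$-lower semi-continuous functionals, which then yields both assertions at once.

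First, I would establish for every $\gamma\in C_\tau([0,1];X)$ the identity
\[
\mathcal{E}_p(\gamma) = E(\gamma) := \sup_Y \sum_{i=1}^N \frac{d(\gamma(t_{i-1}),\gamma(t_i))^p}{(t_i - t_{i-1})^{p-1}},
\]
with the convention that $\mathcal{E}_p(\gamma)=\infty$ when $\gamma\notin\FAC^p([0,1];X)$, where $Y$ ranges over finite partitions $0=t_0<t_1<\cdots<t_N=1$. The inequality $E\leq\mathcal{E}_p$ is immediate from Proposition~\ref{speedofabscc} and Hölder's inequality applied on each subinterval $[t_{i-1},t_i]$. For the reverse direction, when $E(\gamma)<\infty$ I would introduce the non-decreasing function $\alpha(t):=\sup\sum_i d(\gamma(t_{i-1}),\gamma(t_i))^p(t_i-t_{i-1})^{1-p}$ over partitions of $[0,t]$, and verify by a standard bounded-variation argument that $\gamma\in\FAC^p([0,1];X)$ with $|\gamma'_+|^p\leq\alpha'$ a.e., yielding $\mathcal{E}_p(\gamma)\leq\alpha(1)=E(\gamma)$.

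Next, I would show that $E$ can be computed by restricting the supremum to partitions whose nodes lie in $\mathbb{Q}\cap[0,1]$. Given any partition $Y$ with strict inequalities, approximate it by rational partitions $Y_k$ with $t_i^{(k)}\to t_i$ monotonically; since $\gamma\in C_\tau([0,1];X)$ and $d$ is $(\tau\times\tau)$-lower semi-continuous by Definition~\ref{weaktopolgytau}\eqref{weakt1-4}, one obtains
\[
d\bigl(\gamma(t_{i-1}),\gamma(t_i)\bigr)^p\leq\liminf_{k\to\infty}d\bigl(\gamma(t_{i-1}^{(k)}),\gamma(t_i^{(k)})\bigr)^p,
\]
so passing to the limit shows that the supremum over rational partitions equals $E(\gamma)$. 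For each fixed rational partition, the continuity of the evaluation maps (Proposition~\ref{curvetopology}\eqref{curvespace-2}) combined with the $(\tau\times\tau)$-lower semi-continuity of $d$ makes the corresponding summand $\tau^*$-lower semi-continuous in $\gamma$. Hence $E$ is a countable supremum of $\tau^*$-lower semi-continuous functionals, hence itself $\tau^*$-lower semi-continuous, and the same therefore holds for $\mathcal{E}_p=E$.

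Finally, by Proposition~\ref{extabslcurv} we have $\AC^p([0,1];X)=\FAC^p([0,1];X)$, and the first step identifies this set with $\{\gamma\in C_\tau([0,1];X):\mathcal{E}_p(\gamma)<\infty\}=\bigcup_{n\in\mathbb{N}}\{\mathcal{E}_p\leq n\}$. Each sublevel set $\{\mathcal{E}_p\leq n\}$ is $\tau^*$-closed by the lower semi-continuity just proved, so $\AC^p([0,1];X)$ is an $F_\sigma$ subset of $C_\tau([0,1];X)$, hence Borel. The main obstacle will be the reverse inequality $\mathcal{E}_p\leq E$ in the first step, since the classical BV/AC argument must be adapted to the \emph{forward} metric derivative $|\gamma'_+|$ in the asymmetric setting; this is feasible because Proposition~\ref{speedofabscc} still guarantees the a.e.\ existence of $|\gamma'_+|$ once forward absolute continuity is established from the finiteness of $\alpha$.
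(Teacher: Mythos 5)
Your argument is correct, but it takes a genuinely different route from the paper's. The paper proves lower semi-continuity directly by the ``weak compactness'' method: given $\gamma_i \xrightarrow{\tau^*} \gamma$ with $\liminf_i \mathcal{E}_p(\gamma_i)<\infty$, one uses reflexivity of $L^p([0,1])$ to extract a weak limit $f$ of $|(\gamma_i)'_+|$, then passes the estimate $d(\gamma_i(s),\gamma_i(t))\le\int_s^t|(\gamma_i)'_+|$ through the $(\tau\times\tau)$-lower semi-continuity of $d$ to obtain $d(\gamma(s),\gamma(t))\le\int_s^t f$, so that $|\gamma'_+|\le f$ a.e.\ and $\mathcal{E}_p(\gamma)\le\|f\|_{L^p}^p\le\liminf_i\mathcal{E}_p(\gamma_i)$ by weak lower semi-continuity of the $L^p$-norm. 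You instead establish an asymmetric version of Riesz's variational characterization of $W^{1,p}$ on the interval, identifying $\mathcal{E}_p$ with a supremum over partitions, reduce to rational partitions, and invoke the general principle that a countable supremum of $\tau^*$-l.s.c.\ functionals is $\tau^*$-l.s.c. Both yield the $F_\sigma$-structure of $\AC^p([0,1];X)$ in the same way. Your approach is more elementary (no weak compactness of $L^p$ is needed, only Hölder and monotone-function differentiation) and produces an explicit partition formula for $\mathcal{E}_p$ that could be reused; the paper's approach is shorter given the standard $L^p$ machinery. The one step you flag as delicate --- $E(\gamma)<\infty$ implies $\gamma\in\FAC^p$ with $\mathcal{E}_p(\gamma)\le E(\gamma)$ --- indeed goes through: Hölder over finite unions of disjoint intervals $(a_k,b_k)$ gives $\sum_k d(\gamma(a_k),\gamma(b_k))\le E(\gamma)^{1/p}(\sum_k(b_k-a_k))^{1/q}$, hence forward absolute continuity, after which Proposition~\ref{speedofabscc} provides $|\gamma'_+|$ a.e.\ and the superadditivity of your function $\alpha$ gives $|\gamma'_+|(t)^p\le\alpha'(t)$ a.e., whence $\mathcal{E}_p(\gamma)\le\int_0^1\alpha'\le\alpha(1)=E(\gamma)$.
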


\begin{proof}
Observe that, in view of Proposition \ref{extabslcurv},
$\AC^p([0,1];X) =\FAC^p([0,1];X) =\mathcal {E}_p^{-1}([0,\infty))$.
Hence, it suffices to show that $\mathcal {E}_p$ is $\tau^*$-lower semi-continuous
(cf.\ \cite[\S 2.2]{AGS2}).

Let $(\gamma_i)_{i \ge 1} \subset \AC^p([0,1];X)$ be a sequence $\tau^*$-converging to $\gamma$.
Without loss of generality, we assume $\liminf_{i \to \infty} \mathcal {E}_p(\gamma_i) <\infty$.
Then, the reflexivity of the $L^p$-space yields that (a subsequence of)
$f_i :=|(\gamma_i)'_{+}|$ weakly converges to some nonnegative function $f\in L^p([0,1])$.
Hence,
\[
\liminf_{i \to \infty} \int_0^1 f_i(r) \,{\dd}r =\int_0^1 f(r) \,{\dd}r\leq \|f\|_{L^p([0,1])}<\infty.
\]
By the $\tau$-lower semi-continuity of $d$, we have, for any $0\leq s<t \leq 1$,
\[
d\bigl( \gamma(s),\gamma(t) \bigr)
\leq \liminf_{i \to \infty} d\bigl( \gamma_i(s),\gamma_i(t) \bigr)
\leq \liminf_{i \to \infty} \int_{s}^t |(\gamma_i)'_{+}|(r) \,{\dd}r,
\]
which implies $d(\gamma(s),\gamma(t)) \leq \int^t_s f(r) \,{\dd}r$,
and hence $|\gamma'_+|\leq f$ $\mathscr{L}^1$-a.e.\ on $[0,1]$.
Thus, $|\gamma'_+|\in L^p([0,1])$ and $\gamma\in \AC^p([0,1];X)$.
Finally, by the lower semi-continuity of the norm $\|\cdot\|_{L^p([0,1])}$ under weak convergence, we obtain
\[
\liminf_{i \to\infty}\mathcal {E}_p(\gamma_{i})
 =\liminf_{i \to \infty}\|f_i\|^p_{L^p([0,1])}
 \ge \|f\|^p_{L^p([0,1])}
 \geq \bigl\| |\gamma'_+| \bigr\|_{L^p([0,1])}^p
 =\mathcal {E}_p(\gamma).
\]
Therefore, $\mathcal {E}_p$ is lower semi-continuous.
\end{proof}


\subsection{Slopes and Lipschitz functions}\label{sloplifunsec}

For $a \in \mathbb{R}$, we set $a^+:=\max\{a,0\}$ and $a^-:=\max\{-a,0\}=[-a]^+$
(thus $a=a^+ -a^-$).

\begin{definition}
Let $(X,d)$ be a forward extended metric space.
For $f:X\rightarrow [-\infty,\infty]$, denote by $\mathfrak{D}(f):=f^{-1}(\mathbb{R})$ the {\it effective domain of $f$}.
The {\it local Lipschitz constant} at $x\in \mathfrak{D}(f)$ is defined by
\[
|Df|(x):=\limsup_{y\to x}\frac{|f(y)-f(x)|}{d(x,y)}.
\]
We also define the {\it descending} and {\it ascending slopes} at $x\in \mathfrak{D}(f)$ as
\[
|D^- f|(x):=\limsup_{y\to x}\frac{[f(x)-f(y)]^+}{d(x,y)},\qquad
|D^+ f|(x):=\limsup_{y\to x}\frac{[f(y)-f(x)]^+}{d(x,y)},
\]
respectively.
When $x\in \mathfrak{D}(f)$ is an isolated point of $X$, we set
\[
|Df|(x)=|D^-f|(x)=|D^+f|(x):=0.
\]
All slopes are set to be $\infty$ on $X\setminus \mathfrak{D}(f)$.
\end{definition}


\begin{example}\label{forwardbackwardslop}
Let $({\X},d_F)$ be an asymmetric metric space induced by a Finsler manifold $({\X},F)$.
For any $C^1$-function $f$ on $\X$,
a direct calculation implies $|D^\pm f|(x)=F(x,\nabla(\pm f))=F^*(x,\pm {\dd}f)$
and $|Df|(x)=\max\{|D^\pm f|(x)\}$.
Note that $|D^+ f|(x) \neq |D^- f|(x)$ in general due to the asymmetry of $d_F$.
\end{example}

The same argument as in \cite[\S 2.2]{AGS2} yields the following,
with the convention $0 \cdot \infty=\infty$.

\begin{lemma}\label{derivatnorm}
Let $(X,d)$ be a forward extended metric space and $f,g:X\rightarrow [-\infty,\infty]$.
\begin{enumerate}[{\rm (i)}]
\item\label{derivatnorm1}
For all $x\in \mathfrak{D}(f)$, we have
$|D f|(x)=\max\{ |D^+ f|(x),|D^- f|(x) \}$ and $|D^-f|(x)=|D^+(-f)|(x)$.

\item\label{derivatnorm2}
On $\mathfrak{D}(f)\cap \mathfrak{D}(g)$, for all $\alpha,\beta\in \mathbb{R}$, we have
\[
|D(fg)|\leq |f||Dg|+|g||Df|, \qquad |D(\alpha f+\beta g)|\leq |\alpha||D f|+|\beta||Dg|.
\]
Moreover, for any $\alpha,\beta>0$,
\begin{equation}\label{postivegradnormineq}
|D^\pm (\alpha f)|=\alpha|D^\pm f|,\qquad
 |D^\pm (\alpha f+\beta g)|\leq \alpha |D^\pm f|+\beta |D^\pm g|.
\end{equation}

\item\label{derivatnorm3}
For $\chi:X\rightarrow [0,1]$, we have
\[
\bigl| D^\pm \bigl( \chi f+(1-\chi)g \bigr) \bigr| \leq \chi |D^\pm f|+(1-\chi)|D^\pm g|+|D\chi||f-g|.
\]
\end{enumerate}
\end{lemma}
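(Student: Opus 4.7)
The plan is to derive each part from a pointwise algebraic identity or inequality on the finite differences $f(y)-f(x)$ and $g(y)-g(x)$, then divide by $d(x,y)$ and take $\limsup$ as $y\to x$ in $\Tt_+$. I will use two elementary facts throughout: $\limsup$ of a pointwise maximum of two nonnegative functions equals the maximum of their $\limsup$'s, and $\limsup$ is sublinear on nonnegative quantities. The degenerate cases in which a slope equals $\infty$ are absorbed by the convention $0\cdot\infty=\infty$ stated before the lemma, which forces the right-hand sides to be $\infty$ whenever the natural bound would otherwise be ill-defined.

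For (i), the first identity follows from $|a|=\max\{a^+,(-a)^+\}$ applied to $a=f(y)-f(x)$ and passage to $\limsup$; the second is the trivial rewriting $[f(x)-f(y)]^+=[(-f)(y)-(-f)(x)]^+$. For (ii), the decomposition
\[ f(y)g(y)-f(x)g(x)=[f(y)-f(x)]g(y)+f(x)[g(y)-g(x)] \]
yields $|f(y)g(y)-f(x)g(x)|/d(x,y)\leq |g(y)|\cdot|f(y)-f(x)|/d(x,y)+|f(x)|\cdot|g(y)-g(x)|/d(x,y)$. The factor $|g(y)|$ tends to $|g(x)|$ whenever $|Dg|(x)<\infty$ (see last paragraph); if $|Dg|(x)=\infty$ the right-hand side is already $\infty$. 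The linearity estimates for $|D(\alpha f+\beta g)|$ come from $|\alpha a+\beta b|\leq|\alpha||a|+|\beta||b|$, and the one-sided estimates \eqref{postivegradnormineq} from $[\alpha a+\beta b]^+\leq \alpha a^++\beta b^+$ and $[\alpha a]^+=\alpha a^+$ when $\alpha,\beta>0$, together with sublinearity of $\limsup$.

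For (iii), the key identity is
\[ (\chi f+(1-\chi)g)(y)-(\chi f+(1-\chi)g)(x)=\chi(y)[f(y)-f(x)]+(1-\chi(y))[g(y)-g(x)]+[\chi(y)-\chi(x)][f(x)-g(x)]. \]
Taking $[\,\cdot\,]^\pm$ of both sides and using $\chi(y),1-\chi(y)\in[0,1]$ bounds the first two weighted contributions by $\chi(y)[f(y)-f(x)]^\pm+(1-\chi(y))[g(y)-g(x)]^\pm$, while the last is controlled in absolute value by $|\chi(y)-\chi(x)|\cdot|f(x)-g(x)|$. Dividing by $d(x,y)$, taking $\limsup$, and using $\chi(y)\to\chi(x)$ at points with $|D\chi|(x)<\infty$ delivers the claim.

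The only delicate step — and the one place where the asymmetry could in principle intrude — is the continuity used at the end of (ii) and (iii): if $|Dg|(x)<\infty$, then for every $\varepsilon>0$ the ratio $|g(y)-g(x)|/d(x,y)$ is bounded by $|Dg|(x)+\varepsilon$ on a sufficiently small forward ball $B^+_x(r)$, so $|g(y)-g(x)|\leq (|Dg|(x)+\varepsilon)\,d(x,y)\to 0$ whenever $d(x,y)\to 0$. Since $y\to x$ in $\Tt_+$ is by definition $d(x,y)\to 0$ — the very same distance used in the slope — this argument goes through verbatim in the forward extended metric setting, and no separate treatment of the asymmetry is needed. The isolated-point convention disposes of the sole remaining edge case.
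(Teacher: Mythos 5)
Your proof is correct and follows exactly the elementary pointwise argument that the paper delegates to \cite[\S 2.2]{AGS2} (the paper gives no proof of its own, only the citation, together with the convention $0\cdot\infty=\infty$ which you correctly invoke). All three parts reduce, as you show, to algebraic decompositions of finite differences, sublinearity and multiplicativity of $\limsup$, and the continuity $g(y)\to g(x)$ on $\{|Dg|(x)<\infty\}$ -- which works in the asymmetric setting precisely because $\mathcal{T}_+$-convergence is characterized by $d(x,y)\to 0$, the same distance appearing in the slope denominator.
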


We remark that $|D^+(fg)| \leq f|D^+ g| +g|D^+ f|$ also holds if $f$ and $g$ are nonnegative.

\begin{definition}[Lipschitz functions]\label{forwadefsepaere}
Let $(X,d)$ be a forward extended metric space.
\begin{enumerate}[(1)]
\item
The {\it reversibility} at $x\in X$ is defined as $\lambda_d(x):=\lim_{r\to 0^+}\lambda_d({B}^+_x(r))$.

\item
A function $f:X\rightarrow \mathbb{R}$ is said to be {\it forward Lipschitz}
if there is some constant $C>0$ such that $f(y)-f(x)\leq Cd(x,y)$ for all $x,y\in X$.
The smallest constant $C$ is denoted by $\Lip(f)$.

\item
We say that $f$ is \emph{backward Lipschitz} if $-f$ is forward Lipschitz,
and then set $\RL(f):=\Lip(-f)$.

\item
A function $f:X\rightarrow \mathbb{R}$ is said to be {\it Lipschitz}
if there is some constant $C>0$ such that $|f(y)-f(x)|\leq Cd(x,y)$ for all $x,y\in X$.
The smallest constant $C$ is denoted by $\SL(f)$.
\end{enumerate}
\end{definition}

We remark that forward Lipschitz functions are called Lipschitz functions in some literature
(e.g., \cite{Obook}).
In our notation, the distance function $d(x_0,\cdot)$ from $x_0$ is forward Lipschitz
but not necessarily backward Lipschitz.
Observe also that $f$ is Lipschitz if and only if it is both forward and backward Lipschitz. See also Daniilidis--Jaramillo--Venegas \cite{DJV} for more discussions on Lipschitz functions in the Finsler setting.

\begin{lemma}\label{Lipscontin}
Let $(X,d)$ be a forward extended metric space.
\begin{enumerate}[{\rm (i)}]
\item\label{Lispro1}
For every forward Lipschitz function $f$, we have
\[
|D^+f|(x)\leq \Lip(f),\qquad |D^-f|(x)\leq \lambda_d(x) \Lip(f)
 \qquad \text{for all $x \in X$.}
\]
In particular, $|Df|(x)<\infty$ for all $x\in X$.

\item\label{Lispro2}
If $f:X\rightarrow \mathbb{R}$ is forward or backward Lipschitz, then it is continuous.
\end{enumerate}
\end{lemma}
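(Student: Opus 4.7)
For part (i), the forward bound $|D^+f|(x)\leq \Lip(f)$ is immediate from unwrapping definitions: by forward Lipschitzness $f(y)-f(x)\leq \Lip(f)\,d(x,y)$ for all $y$, so $[f(y)-f(x)]^+/d(x,y)\leq \Lip(f)$ whenever $d(x,y)>0$, and the $\limsup$ as $y\to x$ preserves the bound. The backward bound needs a comparison of $d(y,x)$ against $d(x,y)$, which is precisely what the reversibility function supplies. Fixing $r>0$, for every $y\in B^+_x(r)\setminus\{x\}$ I would estimate
\[
\frac{[f(x)-f(y)]^+}{d(x,y)}\ \leq\ \Lip(f)\cdot\frac{d(y,x)}{d(x,y)}\ \leq\ \Lip(f)\cdot \lambda_d\bigl(B^+_x(r)\bigr),
\]
taking the $\limsup$ as $y\to x$ (which in $\Tt_+$ forces $y\in B^+_x(r)$ eventually) and then letting $r\to 0^+$ gives $|D^-f|(x)\leq \lambda_d(x)\Lip(f)$.

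The ``in particular'' clause then follows from Lemma~\ref{derivatnorm}\eqref{derivatnorm1}, $|Df|=\max\{|D^+f|,|D^-f|\}$, combined with the fact that $\lambda_d(x)<\infty$. The latter is a direct consequence of Lemma~\ref{pfms-d}: the function $\Theta_x$ associated with $x$ is non-decreasing and finite-valued, so $\lambda_d(x)=\lim_{r\to 0^+}\lambda_d(B^+_x(r))\leq \lim_{r\to 0^+}\Theta_x(r)<\infty$.

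For part (ii), the key observation is that on a forward extended metric space the forward topology already detects backward convergence: Proposition~\ref{topoconditiondeq} gives $\Tt_+=\widehat{\Tt}$, and Theorem~\ref{asymmetrtopol}\eqref{equconv} then characterizes convergence $x_i\to x$ as the simultaneous validity of $d(x,x_i)\to 0$ and $d(x_i,x)\to 0$. Assuming $f$ is forward Lipschitz, the two-sided Lipschitz inequality
\[
-\Lip(f)\,d(x_i,x)\ \leq\ f(x_i)-f(x)\ \leq\ \Lip(f)\,d(x,x_i)
\]
then immediately gives $f(x_i)\to f(x)$. The backward Lipschitz case reduces to the forward one since $-f$ is forward Lipschitz by definition.

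The only subtle point I anticipate is being careful in the backward slope estimate that the $\limsup$ over $y\to x$ is genuinely taken in the ambient topology $\Tt_+=\widehat{\Tt}$, so that $y$ is eventually trapped in any prescribed forward ball $B^+_x(r)$; this is what allows replacing the pointwise ratio $d(y,x)/d(x,y)$ by the \emph{set} reversibility $\lambda_d(B^+_x(r))$ and then passing to $\lambda_d(x)$. Beyond that, both parts are a direct unpacking of the definitions of slope, (forward/backward) Lipschitz constant, and reversibility.
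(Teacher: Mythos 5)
Your argument is correct and follows the same basic structure as the paper's; the only noticeable divergence is in the conclusion of part (ii). For part (i), the paper simply asserts that both slope bounds ``follow by the definition,'' and your careful unpacking---especially the step $d(y,x)/d(x,y)\le\lambda_d(B^+_x(r))$ for $y\in B^+_x(r)$, followed by $r\to 0^+$, and the observation $\lambda_d(x)\le\lim_{r\to 0^+}\Theta_x(r)<\infty$---is precisely what that terse statement is hiding. For part (ii), both you and the paper start from the two-sided estimate $-\Lip(f)\,d(y,x)\le f(y)-f(x)\le\Lip(f)\,d(x,y)$, but then finish differently: the paper invokes Lemma~\ref{pfms-d} to bound $d(y,x)\le\Theta_x(d(x,y))\,d(x,y)$, so that everything is controlled by $d(x,y)\to 0$ alone and the continuity modulus is explicit; you instead cite Proposition~\ref{topoconditiondeq} together with Theorem~\ref{asymmetrtopol}\eqref{equconv} so that $\Tt_+$-convergence already hands you both $d(x,x_i)\to 0$ and $d(x_i,x)\to 0$. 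The two routes prove exactly the same thing. Yours is slightly more modular (it delegates the $\Theta$-estimate to the already-established equality $\Tt_+=\widehat{\Tt}$), while the paper's is slightly more self-contained and quantitatively explicit. Either one is a valid proof; there is no gap.
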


\begin{proof}
\eqref{Lispro1} follows by the definition of $|D^+ f|(x)$.
To see \eqref{Lispro2}, it suffices to consider forward Lipschitz functions.
For any $x,y\in X$ with $d(x,y)<\infty$, we have $f(y)-f(x) \leq \Lip(f) d(x,y)$ and,
by Lemma~\ref{pfms-d},
\begin{equation*}
f(y)-f(x) \ge -\Lip(f)d(y,x) \ge -\Lip(f) \Theta_x\bigl( d(x,y) \bigr) d(x,y).
\end{equation*}
Letting $y \to x$ in $\mathcal{T}_+ =\widehat{\mathcal {T}}$ implies the continuity of $f$.
\end{proof}

A forward extended metric space $(X,d)$ is called a {\it geodesic space} if, for any $x,y\in X$ with $d(x,y)<\infty$,
there is a curve $\gamma$ from $x$ to $y$ with $L_d(\gamma)=d(x,y)$.
Such $\gamma$ of constant speed is called a \emph{minimal geodesic}.

\begin{lemma}\label{forwardlipsislpsc}
Let $(X,d)$ be a forward extended metric space and $f:X\rightarrow \mathbb{R}$ be forward Lipschitz.
Then, $f$ is Lipschitz if one of the following conditions holds.
\begin{enumerate}[{\rm (a)}]
\item\label{forwardislip1} The reversibility of $(X,d)$ is finite.
\item\label{forwardislip2} $(X,d)$ is a geodesic space and $\supp(f)$ is of finite diameter.
\end{enumerate}
\end{lemma}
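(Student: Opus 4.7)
The plan is to establish the backward inequality $f(x) - f(y) \leq C d(x,y)$ for a suitable constant $C$, since the forward direction is the forward Lipschitz hypothesis. Case \eqref{forwardislip1} will be immediate: by the definition of $\lambda_d$ one has $d(y,x) \leq \lambda_d(X) d(x,y)$ for all $x, y \in X$, so that
\[
f(x) - f(y) \leq \Lip(f)\, d(y,x) \leq \lambda_d(X) \Lip(f)\, d(x,y).
\]

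For case \eqref{forwardislip2}, I would set $K := \supp(f)$ and $L := \Lip(f)$; one may assume $K \neq \emptyset$. Since $\diam_d(K) < \infty$, fixing any $x_0 \in K$ places $K$ inside the forward component $X_{[x_0]}$, so Remark~\ref{setreversiblitfuntion} will supply a finite reversibility $\Lambda_K := \lambda_d(K) \leq \Theta_K(1) < \infty$. Moreover, $f$ is continuous by Lemma~\ref{Lipscontin} and identically zero on $X \setminus K$. The plan is to treat points $x, y \in X$ with $d(x,y) < \infty$ (otherwise the estimate is trivial) by subcases according to membership in $K$. When $x, y \in K$, the argument of case \eqref{forwardislip1} applied within $K$ will give $f(x) - f(y) \leq L d(y,x) \leq L \Lambda_K d(x,y)$; when $x, y \notin K$, both sides vanish. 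For the mixed cases, by symmetry it suffices to treat $x \in K$ and $y \notin K$: take a constant-speed minimal geodesic $\gamma : [0,1] \to X$ from $x$ to $y$ (provided by the geodesic hypothesis), and set $t^* := \sup\{t \in [0,1] : \gamma(t) \in K\}$. Because $K$ is closed and $y \notin K$, one has $t^* < 1$ and $z := \gamma(t^*) \in K$; moreover $f \equiv 0$ on $\gamma((t^*, 1])$, so continuity of $f$ forces $f(z) = 0$. The constant-speed property yields $d(x,z) = t^* d(x,y) \leq d(x,y)$, whence reversibility in $K$ gives
\[
d(z,x) \leq \Lambda_K d(x,z) \leq \Lambda_K d(x,y).
\]
Applying forward Lipschitzness to the pair $(z,x)$ then produces $f(x) = f(x) - f(z) \leq L d(z,x) \leq L \Lambda_K d(x,y)$, and combined with $-f(x) = f(y) - f(x) \leq L d(x,y)$ from the forward Lipschitz hypothesis this yields $|f(y) - f(x)| \leq L \Lambda_K d(x,y)$, as required.

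The main obstacle will be exactly the mixed subcase $x \in K$, $y \notin K$: only a forward minimal geodesic from $x$ to $y$ is directly supplied by the hypotheses, whereas the needed inequality $f(x) - f(y) \leq C d(x,y)$ is a bound on an \emph{increase} of $f$ at $x$ in a direction we do not a priori control. The maneuver that overcomes this asymmetry is a three-step combination: producing the exit point $z \in K$ from the geodesic together with the closedness of $K$, reading off $f(z) = 0$ from continuity of $f$, and invoking the finite reversibility $\Lambda_K$ on $K$ (furnished by Remark~\ref{setreversiblitfuntion}) to convert the forward estimate on $d(x,z)$ into a bound on $d(z,x)$, at which point forward Lipschitzness closes the estimate.
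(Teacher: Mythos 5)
Your proof is correct and arrives at the result by a route close to, but not identical to, the paper's. In case~(b), the paper enlarges $\supp(f)$ to the open ball $B^+_\star(D)$ with $D=\diam_d(\supp(f))+1$ and locates an intermediate point $x_0$ on the geodesic lying in the buffer region $B^+_\star(D)\setminus\supp(f)$, where $f\equiv 0$ automatically; it then works with the reversibility constant $\Theta_\star(D)$ on that ball and never needs to analyze where the geodesic crosses $\partial\supp(f)$. You instead work directly with $K=\supp(f)$: you take the exit time $t^*=\sup\{t:\gamma(t)\in K\}$, use the closedness of $K$ (together with continuity of $\gamma$ on $[0,1]$) to conclude $z=\gamma(t^*)\in K$ and $t^*<1$, and then invoke continuity of $f$ (from Lemma~\ref{Lipscontin}) to get $f(z)=0$. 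This buys a slightly sharper constant ($\Lambda_K=\lambda_d(K)$ rather than the reversibility on the enlarged ball), at the cost of the small topological argument about the exit point; the paper's buffer trick sidesteps that argument and is arguably more robust. Both proofs are sound.

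One small caution: ``by symmetry it suffices to treat $x\in K$ and $y\notin K$'' is loose phrasing in an asymmetric setting, since the cases $(x\in K,\ y\notin K)$ and $(x\notin K,\ y\in K)$ are genuinely distinct and neither can be reduced to the other by swapping names. Your argument does transfer -- in the other mixed case you take the \emph{entry} time $t_*=\inf\{t:\gamma(t)\in K\}$, set $z=\gamma(t_*)\in K$ with $f(z)=0$, observe $d(z,y)=(1-t_*)\,d(x,y)\le d(x,y)$, invoke reversibility on $K$ to get $d(y,z)\le\Lambda_K\, d(x,y)$, and apply forward Lipschitzness to $(y,z)$ to bound $-f(y)=f(z)-f(y)\le L\Lambda_K\, d(x,y)$ -- but the details should be spelled out rather than dismissed as symmetric.
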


\begin{proof}
Clearly \eqref{forwardislip1} implies the claim.
Thus, we assume \eqref{forwardislip2}.
Put $D:=\diam_d(\supp(f))+1<\infty$ and fix a point $\star\in \supp(f)$.
Then, for any $x,y\in  B^+_{\star}(D)$ ($\supset\supp(f)$), we have
\begin{equation}\label{fintechies}
f(y)-f(x) \ge -\Lip(f) d(y,x) \ge -\Lip(f) \Theta_\star(D) d(x,y).
\end{equation}
Note that \eqref{fintechies} also holds for $x,y\notin B^+_{\star}(D)$ since then $f(x)=f(y)=0$.
We shall show that \eqref{fintechies} remains valid for all $x,y\in X$,
which completes the proof.
Without loss of generality, suppose $d(x,y)<\infty$.

When $x\notin  B^+_{\star}(D)$ and $y\in B^+_{\star}(D)$, since $(X,d)$ is a geodesic space,
there exists a minimal geodesic $\gamma(t)$, $t\in [0,1]$, from $x$ to $y$ with $L_d(\gamma)=d(x,y)$.
Thus, we find $x_0\in \gamma((0,1))$ such that $x_0\in B^+_\star (D)\setminus \supp(f)$
and $d(x_0,y)<d(x,y)$.
Since $x_0,y\in B^+_\star(D)$ and $f(x)=f(x_0)=0$,
we deduce from \eqref{fintechies} that
\begin{align*}
f(y)-f(x) =f(y)-f(x_0)\geq -\Lip(f) \Theta_\star(D) d(x_0,y) >-\Lip(f) \Theta_\star(D) d(x,y).
\end{align*}
For $x\in  B^+_{\star}(D)$ and $y\notin B^+_{\star}(D)$,
we again take a minimal geodesic $\gamma$ from $x$ to $y$ with $L_d(\gamma)=d(x,y)$.
We find $y_0$ on $\gamma$ such that $y_0\in B^+_\star (D)\setminus \supp(f)$ and $d(x,y_0)<d(x,y)$,
and then
\begin{align*}
f(y)-f(x) =f(y_0)-f(x) \geq -\Lip(f) \Theta_\star(D) d(x,y_0) >-\Lip(f) \Theta_\star(D) d(x,y),
\end{align*}
which concludes the proof.
\end{proof}

The following example points out that the conditions in Lemma \ref{forwardlipsislpsc} are necessary.

\begin{example}\label{lipschitz-example}
Let $(\mathbb{B}^n,d_F)$ be the Funk metric space as in Example \ref{funkmetricsapce}.
Let $r(x):=d_F(\mathbf{0},x)$ be the distance function from the origin,
which is forward $1$-Lipschitz by the triangle inequality.
However, it is not backward Lipschitz since, in view of \eqref{reversfunk},
there does not exist a constant $C \geq 0$ such that
\[
d_F(\mathbf{0},y)=r(y)-r(\mathbf{0})\leq C d_F(y,\mathbf{0}) \quad  \text{for all }y\in \mathbb{B}^n.
\]
\end{example}

The next proposition is inspired by \cite[Proposition~4.1]{AGS2}.

\begin{proposition}\label{densfolLIPS}
Let $(X,\tau,d,\m)$ be a forward extended Polish space equipped with
a nonnegative Borel measure $\m$ on $(X,\tau)$ such that
\begin{equation}\label{Kcondition1}
\forall K\subset (X,\tau)\colon \text{compact}, \qquad \exists\, r=r(K)>0, \qquad
 \m\Bigl[ \overline{B^+_K(r)\cup B^-_K(r)}^d  \Bigr] <\infty.
\end{equation}
Denote by $\mathscr{C}$ the class of bounded, Borel and forward Lipschitz functions $f$ such that
there is a compact set $K$ with $\supp(f) \subset \overline{B^+_K(r)\cup B^-_K(r)}^d$
for $r=r(K)$ in \eqref{Kcondition1}.
Then, $\mathscr{C}$ is dense in $L^q(X,\m)$ for all $q\in [1,\infty)$.
\end{proposition}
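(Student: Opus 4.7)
The plan is to reduce the problem, in three classical steps adapted to the asymmetric setting, to approximating indicators of $\tau$-compact sets by elements of $\mathscr{C}$. First, density of simple functions in $L^q(X,\m)$ for $q\in[1,\infty)$ (noting that $\{|f|>0\}$ is $\sigma$-finite for $f\in L^q$) reduces the task to approximating $\chi_B$ for Borel sets $B$ with $\m[B]<\infty$. Second, Proposition~\ref{tightmeasper} applied to $\m$ restricted to $B$ (after normalisation to a probability, if needed) on the Polish space $(X,\tau)$ yields $\tau$-compact sets $K_n\subset B$ with $\m[B\setminus K_n]\to 0$, so $\chi_{K_n}\to\chi_B$ in $L^q(X,\m)$. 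It therefore suffices to approximate $\chi_K$ in $L^q$ by elements of $\mathscr{C}$ for each $\tau$-compact set $K$.

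For this third step, fix a $\tau$-compact $K$, let $r=r(K)>0$ be the constant from \eqref{Kcondition1}, and for each integer $n\geq 1/r$ define
\[
f_n(x):=\max\bigl\{0,\,1-n\cdot d(x,K)\bigr\},\qquad x\in X.
\]
Then $0\leq f_n\leq 1$ and
\[
\supp(f_n)\subset\bigl\{x\in X : d(x,K)\leq 1/n\bigr\}\subset\overline{B^-_K(r)}^d\subset\overline{B^+_K(r)\cup B^-_K(r)}^d,
\]
which has finite $\m$-measure by \eqref{Kcondition1}. The asymmetric triangle inequality $d(x,z)\leq d(x,y)+d(y,z)$, minimised over $z\in K$, yields $d(x,K)-d(y,K)\leq d(x,y)$, and a routine case analysis (comparing $d(x,K)$ and $d(y,K)$ with the threshold $1/n$) then gives $f_n(y)-f_n(x)\leq n\cdot d(x,y)$ for all $x,y\in X$. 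Hence $f_n$ is forward $n$-Lipschitz, hence continuous in $\Tt_+=\widehat{\Tt}$ by Lemma~\ref{Lipscontin}, and Borel since $\mathcal{B}_{\widehat{\Tt}}(X)=\mathcal{B}(X)$; thus $f_n\in\mathscr{C}$.

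It remains to show $f_n\to\chi_K$ in $L^q(X,\m)$. Pointwise, $f_n=1$ on $K$, while for $x\notin K$ one has $d(x,K)>0$: otherwise there exist $z_i\in K$ with $d(x,z_i)\to 0$, meaning $z_i\to x$ in $\Tt_+$ and, by $\tau\subset\widehat{\Tt}=\Tt_+$ (Remark~\ref{exposlipspaceexplain}\eqref{rmpolish-c}), also in $\tau$; the $\tau$-compactness of $K$, together with Hausdorffness, then forces $x\in K$, a contradiction. With the uniform bound $|f_n|\leq \chi_{\overline{B^+_K(r)\cup B^-_K(r)}^d}\in L^q(X,\m)$, dominated convergence delivers $f_n\to\chi_K$ in $L^q$, concluding the argument.

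The principal obstacle, invisible in the symmetric case, is securing the \emph{forward} Lipschitz property of the cutoff. The natural candidate $\max\{0,1-n\cdot d(K,\cdot)\}$, built from the forward $1$-Lipschitz function $d(K,\cdot)$, yields only a backward Lipschitz function, since composition with a decreasing cutoff reverses the Lipschitz direction. One must instead use the ``distance-to-$K$" function $d(\cdot,K)$, which is itself merely backward $1$-Lipschitz but whose composition with a decreasing cutoff becomes forward Lipschitz. This role reversal between $d(K,\cdot)$ and $d(\cdot,K)$ is the essential asymmetric ingredient of the proof.
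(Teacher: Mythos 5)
Your bump construction and your identification of the correct distance function ($d(\cdot,K)$ rather than $d(K,\cdot)$) are correct, and the forward-Lipschitz verification of $f_n = \max\{0,1-n\,d(\cdot,K)\}$ is sound. However, there is a genuine gap in the first reduction: \emph{density of simple functions does not reduce the problem to approximating indicators}, because $\mathscr{C}$ is not a linear space. Forward Lipschitz functions are not stable under negation --- if $g$ is forward Lipschitz, then $-g$ is backward, not forward, Lipschitz --- so $\overline{\mathscr{C}}^{L^q}$ is at best a closed convex cone, not a subspace. Knowing $\chi_K \in \overline{\mathscr{C}}^{L^q}$ for every compact $K$ therefore does \emph{not} imply that a simple function $\sum_i c_i\chi_{B_i}$ with some $c_i<0$ lies in $\overline{\mathscr{C}}^{L^q}$: approximating $f^+$ and $f^-$ separately by your nonnegative bumps $g^+,g^-\in\mathscr{C}$ and subtracting produces $g^+-g^-$, which is a difference of forward Lipschitz functions and need not be forward Lipschitz. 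Your argument as written proves the density only within the cone of nonnegative $L^q$-functions.

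The fix requires a second family of \emph{nonpositive} forward Lipschitz bumps, and here the ``role reversal'' you noticed runs the other way: to approximate $-\chi_K$ by a nonpositive forward Lipschitz function, one should use $d(K,\cdot)$, setting $h_n := -\max\{0,1-n\,d(K,\cdot)\}$. Since $d(K,\cdot)$ is forward $1$-Lipschitz, the outer negation restores forward Lipschitzness of $h_n$, and its support sits in $\overline{B^+_K(r)}^d$. One then decomposes a general $f\in L^q$ as $f = f^+ + (-f^-)$, approximates the nonnegative part by your bumps (built from $d(\cdot,K)$, supported in backward balls) and the nonpositive part $-f^-$ by the $h_n$-type bumps (built from $d(K,\cdot)$, supported in forward balls), and adds; the sum of two forward Lipschitz functions is forward Lipschitz, and this is exactly why the hypothesis \eqref{Kcondition1} involves both $B^+_K(r)$ and $B^-_K(r)$. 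This two-sided construction is precisely the Step 1/Step 2/Step 3 structure of the paper's proof, which uses the supremal convolution $\sup_{y}[\varphi(y)-i\,d(x,y)]$ for $\varphi\ge 0$ and the infimal convolution $\inf_{y}[\varphi(y)+i\,d(y,x)]$ for $\varphi\le 0$, both of which are forward Lipschitz. (A secondary, minor point: you should take $n>1/r$ strictly, or $n\ge 2/r$, to guarantee $\{d(\cdot,K)\le 1/n\}\subset B^-_K(r)$; at $n=1/r$ the inclusion into $\overline{B^-_K(r)}^d$ is not automatic.)
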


Note that every $f \in \mathscr{C}$ enjoys $f \in L^q(X,\m)$ and $|D^+ f|\in L^q(X,\m)$
(recall Lemma \ref{Lipscontin}).

\begin{proof}
Since $L^q(X,\m)=\overline{C_0(X)}^{\|\cdot\|_{L^q}}$,
it suffices to prove that, for any $\varphi\in C_0(X)$, there is a sequence $\varphi_i \in \mathscr{C}$
such that $\varphi_i \to \varphi$ in $L^q(X,\m)$.
Let $K \subset X$ be a compact set including $\supp(\varphi)$.

\textbf{Step 1.}
Firstly we assume that $\varphi$ is nonnegative and set $S:=\max_K\varphi\geq 0$.
For $i \ge 1$, define
\[
\varphi_i (x) :=\sup_{y\in K}\left[ \varphi(y) -id(x,y) \right].
\]
Clearly $0 \le \varphi \le \varphi_i \le S$.
We claim that $\varphi_i$ is forward Lipschitz with $\Lip(\varphi_i)\leq i$
and $\tau$-upper semi-continuous.

For any $x_1,x_2\in X$ and $\varepsilon>0$, we take $y_\varepsilon\in K$ such that
\[
\varphi_i(x_2) \leq \varphi(y_\varepsilon)-id(x_2,y_\varepsilon) +\varepsilon,
\]
which furnishes
\[
\varphi_i(x_2)-\varphi_i(x_1)
 \leq \varphi(y_\varepsilon)-id(x_2,y_\varepsilon) +\varepsilon
 -\{ \varphi(y_\varepsilon)-id(x_1,y_\varepsilon) \}
 \leq i d(x_1,x_2)+\varepsilon.
\]
Hence, $\varphi_i$ is $i$-forward Lipschitz.
To show the $\tau$-upper semi-continuity,
for any sequence $(x_k)_{k \ge 1}$ converging to $x$ under $\tau$,
take $y_k\in K$ such that
\[
\varphi_i(x_k) \leq \varphi(y_k) -id(x_k,y_k) +k^{-1}.
\]
By the compactness of $K$, we may assume that $(y_k)_{k \ge 1}$ converges to some $y_\infty\in K$.
Then, the $\tau$-lower semi-continuity of $d$ (Definition \ref{weaktopolgytau}\eqref{weakt1-4}) yields
\begin{align*}
\limsup_{k\to \infty}\varphi_i(x_k)
\leq \limsup_{k\to \infty} [\varphi(y_k)-i d(x_k,y_k)]
\leq \varphi(y_\infty)-id(x,y_\infty) \leq \varphi_i(x).
\end{align*}
Thus, $\varphi_i$ is $\tau$-upper semi-continuous.

Fix $x\in X$ with $d(x,K) \ge S/i$.
Then, since $d(x,y)\geq d(x,K) \ge S/i \ge \varphi(y)/i$ for any $y \in K$, we have $\varphi_i(x)=0$.
Therefore, $\supp(\varphi_i) \subset \overline{B^-_K(S/i)}^d$ and
$|D\varphi_i|=0$ on $X \setminus \overline{B^-_K (S/i)}^d$.
For $r>0$ given in \eqref{Kcondition1} and for all $i>S/r$, we find
\[
\supp(\varphi_i) \subset {\overline{B^-_K(r)}}^d, \qquad
 \m\Bigl[ \overline{B^-_K(r)}^d \Bigr] <\infty.
\]
Hence, $\varphi_i$ satisfies all the requirements and belongs to $\mathscr{C}$.
Finally, since $0\leq \varphi \leq \varphi_i \leq S$ and $\varphi_i(x) \downarrow \varphi(x)$
for every $x\in X$, we have $\varphi_i \to \varphi$ in $L^q(X,\m)$
by the dominated convergence theorem.

\textbf{Step 2.}
Next we assume that $\varphi$ is nonpositive.
In this case, employing
\[
\varphi_i(x) :=\inf_{y\in K} [\varphi(y)+id(y,x)] =-\sup_{y\in K} [-\varphi(y)-id(y,x)],
\]
one can follow the same lines as above and show that
$-\varphi_i$ is backward Lipschitz with $\RL(-\varphi_i)\leq i$ and $\tau$-upper semi-continuous,
equivalently, $\varphi_i$ is forward Lipschitz with $\Lip(\varphi_i)\leq i$ and $\tau$-lower semi-continuous.
Moreover, setting $S:=-\inf_K \varphi \ge 0$, for $r>0$ as in \eqref{Kcondition1} and all $i>S/r$, we obtain
\[
\supp(\varphi_i) \subset {\overline{B^+_K(r)}}^d, \qquad
 \m\Bigl[ \overline{B^+_K(r)}^d\Bigr] <\infty.
\]
Note also that $\varphi_i \to \varphi$ in $L^q(X,\m)$.

\textbf{Step 3.}
Finally, for general $\varphi$, we decompose it into the positive and negative parts
and approximate each part as in the previous steps.
\end{proof}

Recall that $f$ is backward Lipschitz if and only if $-f$ is forward Lipschitz.
Thus, Proposition \ref{densfolLIPS} combined with Lemma \ref{forwardlipsislpsc} yields the following.

\begin{corollary}\label{denseitybackwardmea}
Let $(X,\tau,d,\m)$ be as in Proposition $\ref{densfolLIPS}$.
\begin{enumerate}[{\rm (i)}]
\item\label{backwarddesne}
The class of bounded, Borel and backward Lipschitz functions $f \in L^q(X,\m)$
with $|D^- f|\in L^q(X,\m)$ is dense in $L^q(X,\m)$ for all $q \in [1,\infty)$.

\item If the reversibility of $(X,d)$ is finite, then the class of bounded, Borel and Lipschitz functions
$f \in L^q(X,\m)$ with $|Df|\in L^q(X,\m)$ is dense in $L^q(X,\m)$ for all $q \in [1,\infty)$.

\item\label{backwarddesne3}
If $(X,d)$ is a geodesic space such that, for every compact set $K$,
the set $\overline{B^+_K(r)\cup B^-_K(r)}^d$ in \eqref{Kcondition1} is of finite diameter,
then the class of bounded, Borel and Lipschitz functions $f \in L^q(X,\m)$
with $|Df| \in L^q(X,\m)$ is dense in $L^q(X,\m)$ for all $q \in [1,\infty)$.
\end{enumerate}
\end{corollary}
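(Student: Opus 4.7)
The plan is to derive all three statements from Proposition~\ref{densfolLIPS} by combining it with the sign-flip identity $|D^-(-f)|=|D^+f|$ from Lemma~\ref{derivatnorm}\eqref{derivatnorm1} for part~(i), and with the forward-to-two-sided Lipschitz upgrade of Lemma~\ref{forwardlipsislpsc} for parts~(ii) and (iii). Throughout, the support condition $\supp(\psi_i)\subset\overline{B^+_K(r)\cup B^-_K(r)}^d$ built into the approximating class $\mathscr{C}$ guarantees that a bounded local gradient lies in $L^q(X,\m)$, because \eqref{Kcondition1} forces this set to have finite $\m$-measure.

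For~(i), given $\varphi\in L^q(X,\m)$, I would apply Proposition~\ref{densfolLIPS} to $-\varphi$ to produce $\psi_i\in\mathscr{C}$ with $\psi_i\to -\varphi$ in $L^q(X,\m)$ and $|D^+\psi_i|\in L^q(X,\m)$. Setting $f_i:=-\psi_i$, the functions $f_i$ are bounded, Borel, backward Lipschitz with $\RL(f_i)=\Lip(\psi_i)$, and Lemma~\ref{derivatnorm}\eqref{derivatnorm1} furnishes $|D^-f_i|=|D^+\psi_i|\in L^q(X,\m)$. Since $f_i\to\varphi$ in $L^q(X,\m)$, this provides the desired approximation.

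For~(ii), I would apply Proposition~\ref{densfolLIPS} directly to $\varphi$ to obtain $\psi_i\in\mathscr{C}$ with $\psi_i\to\varphi$ in $L^q(X,\m)$. Under the finite reversibility hypothesis, Lemma~\ref{forwardlipsislpsc}\eqref{forwardislip1} upgrades each $\psi_i$ to a genuinely Lipschitz function, and Lemma~\ref{Lipscontin}\eqref{Lispro1} supplies the uniform bound $|D\psi_i|(x)\le\lambda_d(X)\Lip(\psi_i)$. Combined with the finite $\m$-measure of $\supp(\psi_i)$, this yields $|D\psi_i|\in L^q(X,\m)$. For~(iii), the same sequence works, but the two-sided Lipschitz property instead comes from Lemma~\ref{forwardlipsislpsc}\eqref{forwardislip2}: the support $\overline{B^+_K(r)\cup B^-_K(r)}^d$ has finite diameter by assumption, so the proof of that lemma provides an explicit bound for the full Lipschitz constant of $\psi_i$ on its support, whence $|D\psi_i|$ is bounded and $L^q$-integrable as before.

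The construction is essentially formal once Proposition~\ref{densfolLIPS} is in place; the only technical point to be careful about is that the gradient integrability $|D^{\pm}f|\in L^q(X,\m)$ or $|Df|\in L^q(X,\m)$ survives the operations of sign flip and Lipschitz upgrade. This is precisely what the derivative identity in Lemma~\ref{derivatnorm}\eqref{derivatnorm1} and the reversibility estimates of Lemmas~\ref{Lipscontin} and~\ref{forwardlipsislpsc} are tailored to deliver, so I do not foresee a genuine obstacle beyond careful bookkeeping of constants.
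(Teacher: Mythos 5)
Your proposal is correct and matches the paper's intended argument exactly: the paper simply remarks that the corollary follows from Proposition~\ref{densfolLIPS} together with Lemma~\ref{forwardlipsislpsc}, and you fill in precisely those steps. The sign flip $f_i=-\psi_i$ with $|D^-f_i|=|D^+\psi_i|$ handles~(i); the upgrade from forward Lipschitz to Lipschitz via Lemma~\ref{forwardlipsislpsc}\eqref{forwardislip1} (resp.\ \eqref{forwardislip2}) handles~(ii) (resp.~(iii)), and the $L^q$-integrability of $|D\psi_i|$ follows as you say from the finite measure of $\supp(\psi_i)$ guaranteed by \eqref{Kcondition1}. One minor remark: once $\psi_i$ is shown to be Lipschitz by Lemma~\ref{forwardlipsislpsc}, you may simply bound $|D\psi_i|\le\SL(\psi_i)$ directly from the definitions, avoiding the detour through $\lambda_d$; also in~(iii) the upgraded function is globally Lipschitz (not merely on its support), though since $|D\psi_i|$ vanishes off the support this imprecision is harmless.
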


\begin{example}\label{approxinfinslercase}
Let $({\X},d_F)$ be a forward metric space induced from a forward complete Finsler manifold,
and let $\m$ be a smooth positive measure on $\X$.
Then, the forward completeness and the Hopf--Rinow theorem imply \eqref{Kcondition1}.
Denote by $\Lip_0(\X)$ the collection of forward Lipschitz functions with compact support,
and we similarly define $\RL_0(\X)$ and $\SL_0(\X)$.
Since $({\X},d_F)$ is a geodesic space, it follows from Lemma \ref{forwardlipsislpsc} that
$\Lip_0(\X)=\RL_0(\X)=\SL_0(\X)$.
Note also that, since $C^\infty_0(\X)\subset \Lip_0(\X)$ and $L^p({\X},\m)=\overline{C^\infty_0(\X)}^{L^p}$,
we have
\[
L^p({\X},\m)=\overline{\Lip_0(\X)}^{L^p}=\overline{\RL_0(\X)}^{L^p}=\overline{\SL_0(\X)}^{L^p},
\]
which corresponds to Proposition \ref{densfolLIPS} and
Corollary \ref{denseitybackwardmea}\eqref{backwarddesne}, \eqref{backwarddesne3}.
\end{example}

In the symmetric case, if a metric measure space $(X,d,\m)$ is doubling and $d$ is finite,
then we have $|D^+f|=|D^-f|$ $\m$-a.e.\ for any Lipschitz function $f$ (see \cite[Proposition 2.7]{AGS2}).
Although this is not the case in the asymmetric setting,
we can obtain a modified estimate depending on the reversibility.

\begin{example}\label{finitreverrnexa}
Consider a Finsler manifold $(\mathbb{R}^n,F)$ (of both Berwald and Randers type) given by
\[
F(x,y):=\sqrt{\sum_{i=1}^n (y^i)^2} +\frac12 y^1,\qquad
 y=\sum_{i=1}^n y^i\frac{\partial}{\partial x^i}.
\]
Let $\m$ be the Busemann--Hausdorff measure, explicitly given by $\m=2^{-(n+1)/2} \mathscr{L}^n$.
Note that line segments are geodesics and the doubling condition holds; more precisely,
\[
\m\bigl[ B^+_x(2r) \bigr] = 2^n \m\bigl[ B^+_x(r) \bigr] \quad
 \text{for all}\ x \in \mathbb{R}^n,\ r>0.
\]
We readily observe $\lambda_{d_F}(x)=3$ for all $x\in \mathbb{R}^n$.
For $f\in C^1(\mathbb{R}^n)$, one can calculate (cf.\ \cite[Example 3.2.1]{Sh1}) that
\[
|D^\pm f| =F \bigl( \nabla(\pm f) \bigr) =\frac{2}{3} \left[
 \sqrt{4\left( \frac{\partial f}{\partial x^1} \right)^2 +3\sum_{i=2}^{n}\left( \frac{\partial f}{\partial x^i} \right)^2}
 \mp\frac{\partial f}{\partial x^1} \right].
\]
Thus, we have $|D^+f|\neq |D^-f|$ but
\begin{equation}\label{bascisloprelation}
|D^\pm f|(x)\leq \lambda_{d_F}(x)\cdot|D^\mp f|(x).
\end{equation}
In fact, along the lines of \cite[Proposition 2.7]{AGS2},
one can show that \eqref{bascisloprelation} remains valid $\m$-a.e.\ for every Lipschitz function
on a doubling forward metric measure space.
\end{example}

By the same argument as in \cite[Lemma 2.5]{AGS2},
with the help of $[a+b]^+ \le a^+ +b^+$, we have the following.

\begin{lemma}\label{lipconvexfundd}
Let $(X,d)$ be a forward extended metric space, $f,g:X\rightarrow \mathbb{R}$ be forward Lipschitz,
and let $\phi:\mathbb{R}\rightarrow \mathbb{R}$ be $C^1$ with $0\leq \phi'\leq 1$.
Set
\[
\widetilde{f}:=f+\phi(g-f),\qquad \widetilde{g}:=g-\phi(g-f).
\]
Then, for any convex non-decreasing function $\psi:[0,\infty)\rightarrow \mathbb{R}$,
we have for all $x \in X$
\begin{align*}
\psi \bigl( |D^+\widetilde{f}|(x) \bigr) +\psi\bigl( |D^+\widetilde{g}|(x) \bigr)
 &\leq \psi\left( |D^+ f|(x)  \right)+\psi\left( |D^+ g|(x)  \right).
\end{align*}
\end{lemma}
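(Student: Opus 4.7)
The plan is to exploit a local convex-combination structure for $\widetilde{f}$ and $\widetilde{g}$. Fix $x \in X$; if $x$ is isolated the conclusion is trivial by the slope conventions, so assume otherwise. For any $y$ with $d(x,y) \in (0,\infty)$, applying the mean value theorem to $\phi$ on the segment joining $g(x)-f(x)$ and $g(y)-f(y)$ furnishes some $\alpha(y) = \phi'(\xi_y) \in [0,1]$ such that
\begin{align*}
\widetilde{f}(y)-\widetilde{f}(x) &= (1-\alpha(y))[f(y)-f(x)] + \alpha(y)[g(y)-g(x)], \\
\widetilde{g}(y)-\widetilde{g}(x) &= \alpha(y)[f(y)-f(x)] + (1-\alpha(y))[g(y)-g(x)].
\end{align*}
Using $[a+b]^+ \le a^+ + b^+$ together with the non-negativity of the weights $\alpha(y), 1-\alpha(y)$, after dividing by $d(x,y)>0$ I obtain pointwise bounds on the positive-part difference quotients that define $|D^+\widetilde{f}|(x)$ and $|D^+\widetilde{g}|(x)$.

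Next I would upgrade these pointwise bounds to slope inequalities by a $\limsup$ argument. Pick $y_n \to x$ in $\mathcal{T}_+$ realising $|D^+\widetilde{f}|(x)$. Since $f,g$ are forward Lipschitz, hence continuous by Lemma \ref{Lipscontin}\eqref{Lispro2}, we have $\alpha(y_n) \to \alpha_0 := \phi'(g(x)-f(x)) \in [0,1]$. The quotients $A_n := [f(y_n)-f(x)]^+/d(x,y_n)$ and $B_n := [g(y_n)-g(x)]^+/d(x,y_n)$ are bounded by $\Lip(f)$ and $\Lip(g)$ respectively (Lemma \ref{Lipscontin}\eqref{Lispro1}), so after extracting a subsequence both converge to limits $A_* \le |D^+f|(x)$ and $B_* \le |D^+g|(x)$. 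Passing to the limit, and running the same argument for $\widetilde{g}$, yields
\begin{align*}
|D^+\widetilde{f}|(x) &\le (1-\alpha_0)|D^+f|(x) + \alpha_0|D^+g|(x), \\
|D^+\widetilde{g}|(x) &\le \alpha_0|D^+f|(x) + (1-\alpha_0)|D^+g|(x).
\end{align*}

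Finally, since $\psi$ is non-decreasing I may apply it to both inequalities, and convexity of $\psi$ with the convex weights $(1-\alpha_0,\alpha_0)$ and $(\alpha_0,1-\alpha_0)$ respectively gives
\[
\psi(|D^+\widetilde{f}|(x)) \le (1-\alpha_0)\psi(|D^+f|(x)) + \alpha_0\psi(|D^+g|(x)),
\]
\[
\psi(|D^+\widetilde{g}|(x)) \le \alpha_0\psi(|D^+f|(x)) + (1-\alpha_0)\psi(|D^+g|(x)),
\]
which sum to $\psi(|D^+f|(x)) + \psi(|D^+g|(x))$, as required. The only real subtlety is that the mixing coefficient $\alpha(y)$ genuinely depends on $y$ and cannot be pulled out of the $\limsup$ for free; this is handled by the combination of continuity of $f,g$ (forcing $\alpha(y_n)\to\alpha_0$) with the uniform boundedness of the difference quotients supplied by the forward Lipschitz hypothesis, after which the argument runs formally in parallel with the symmetric case of \cite[Lemma~2.5]{AGS2} and uses only forward slopes on both sides, so the asymmetry of $d$ plays no essential role.
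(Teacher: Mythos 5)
Your proof is correct and follows the same route the paper indicates: the mean value theorem gives the local convex-combination decomposition of $\widetilde{f}(y)-\widetilde{f}(x)$ and $\widetilde{g}(y)-\widetilde{g}(x)$, the inequality $[a+b]^+\le a^++b^+$ (together with $[\lambda a]^+=\lambda a^+$ for $\lambda\ge 0$) transfers this to positive-part difference quotients, and the continuity of $f,g$ plus uniform boundedness of the quotients lets $\alpha(y)\to\alpha_0$ be extracted from the $\limsup$. This is exactly the adaptation of \cite[Lemma~2.5]{AGS2} the paper has in mind, with forward slopes replacing two-sided ones and the positive-part trick handling the asymmetry.
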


The following lemma will be useful in Section \ref{rexaghher}.

\begin{lemma}\label{postivenegativepartoffowrdlips}
Let $(X,d)$ be a forward extended metric space.
Then, for every forward Lipschitz function $f:X \rightarrow \mathbb{R}$,
$f^+$ and $-f^-$ are forward Lipschitz with
 \begin{equation}\label{expressionofD+}
|D^+f|=|D^+ f^+|+|D^+ (-f^-)|.
\end{equation}
\end{lemma}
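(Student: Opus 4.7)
The plan is to prove the two claims separately: first the forward Lipschitz property of $f^+$ and $-f^-$, then the equality \eqref{expressionofD+}.

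For the forward Lipschitz property of $f^+$, I would fix $x,y \in X$ and perform a case analysis on the signs of $f(x)$ and $f(y)$. When both are nonnegative, $f^+(y)-f^+(x) = f(y)-f(x) \le \Lip(f) d(x,y)$; when both are nonpositive, $f^+(y)-f^+(x) = 0$; when $f(x) \ge 0 \ge f(y)$, $f^+(y) - f^+(x) = -f(x) \le 0 \le \Lip(f)d(x,y)$; when $f(x) \le 0 \le f(y)$, $f^+(y) - f^+(x) = f(y) \le f(y)-f(x) \le \Lip(f)d(x,y)$. This shows $\Lip(f^+) \le \Lip(f)$, and an analogous four-case argument handles $-f^-$.

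For the equality \eqref{expressionofD+}, I would split into three cases according to the sign of $f(x)$, using that $f$ is continuous in $\Tt_+ = \widehat{\Tt}$ by Lemma~\ref{Lipscontin}\eqref{Lispro2} (the isolated-point case being trivial since all three quantities vanish by definition). If $f(x)>0$, continuity yields a forward neighborhood $U$ of $x$ on which $f>0$, hence $f^+ \equiv f$ and $-f^-\equiv 0$ on $U$; since $|D^+|$ depends only on the behavior of the function near $x$, this forces $|D^+ f^+|(x) = |D^+ f|(x)$ and $|D^+(-f^-)|(x) = 0$. The case $f(x)<0$ is symmetric. If $f(x) = 0$, both $f^+(x)$ and $-f^-(x)$ vanish, and I would compute directly:
\[
|D^+ f^+|(x) = \limsup_{y\to x} \frac{f^+(y)}{d(x,y)} = \limsup_{y\to x}\frac{[f(y)]^+}{d(x,y)} = |D^+f|(x),
\]
while $-f^-(y) \le 0$ for all $y$ gives $[-f^-(y)]^+ = 0$, so $|D^+(-f^-)|(x)=0$.

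The argument is essentially routine case-checking; the one point requiring care is that the convergence $y\to x$ in the definition of $|D^+|$ is with respect to the forward topology $\Tt_+$, so I must invoke Lemma~\ref{Lipscontin}\eqref{Lispro2} to secure $\Tt_+$-continuity of $f$ (rather than only backward continuity) when I want to pass from a strict sign of $f(x)$ to a sign on a forward neighborhood. Once this is observed, each of the three cases collapses to a one-line computation because exactly one of the two summands on the right-hand side vanishes identically near $x$.
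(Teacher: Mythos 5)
Your proof is correct and follows essentially the same route as the paper's: a direct Lipschitz estimate for $f^+$, invocation of Lemma~\ref{Lipscontin}\eqref{Lispro2} for continuity in $\mathcal{T}_+$, and a three-case split on the sign of $f(x)$ for \eqref{expressionofD+}, with the $f(x)=0$ case done by hand. The only cosmetic difference is that the paper establishes the forward Lipschitz property of $-f^-$ by the symmetry observation that $-f$ is backward Lipschitz, hence $f^-=(-f)^+$ is backward Lipschitz, hence $-f^-$ is forward Lipschitz, whereas you propose repeating the four-case estimate directly; both work.
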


\begin{proof}
Fix $x \in X$.
On the one hand, for $y \in X$ with $f(y) \ge 0$, we observe
\[
f^+(y) -f^+(x) \le f(y) -f(x) \le \Lip(f) d(x,y).
\]
On the other hand, if $f(y)<0$, then we also have
\[
f^+(y) -f^+(x) \le 0 \le \Lip(f) d(x,y).
\]
Thus, $f^+$ is forward Lipschitz.
Since $-f$ is backward Lipschitz, we similarly see that $(-f)^+ =f^-$ is backward Lipschitz,
and hence $-f^-$ is forward Lipschitz.

To show \eqref{expressionofD+}, recall from Lemma \ref{Lipscontin} that $f$ is continuous.
Hence, if $f(x)>0$, then $f^+=f$ on a neighborhood of $x$ and we obtain
$|D^+ f|(x)=|D^+ f^+|(x)$ as well as $|D^+ (-f^-)|(x)=0$.
Similarly, when $f(x)<0$, we deduce that $|D^+ f|(x)=|D^+ (-f^-)|(x)$ and $|D^+ f^+|(x)=0$.
Finally, if $f(x)=0$, then $|D^+ (-f^-)|(x)=0$ again holds since $-f^- \le 0$, and
\[
|D^+f|(x) =\limsup_{y\to x}\frac{[f(y)]^+}{d(x,y)}=\limsup_{y\to x}\frac{[f^+(y) -f^+(x)]^+}{d(x,y)}
 =|D^+ f^+|(x),
\]
which concludes the proof.
\end{proof}

\begin{lemma}\label{ref'nproofandother}
Let $(X,\tau,d)$ be a forward extended Polish space and $K\subset (X,\tau)$ be a compact set.
Suppose that one of the following conditions holds:
\begin{enumerate}[{\rm (a)}]
\item\label{case-a}
The reversibility of $(X,d)$ is finite;
\item\label{case-b}
$(X,d)$ is a geodesic space and every compact set in $(X,\tau)$ is of finite diameter.
\end{enumerate}
Given $r>0$, choose a non-increasing Lipschitz function $\phi_r:\mathbb{R}\rightarrow [0,1]$
such that $\phi_r \equiv1$ in $[0,r/3]$ and $\phi_r \equiv0$ in $[2r/3,\infty)$,
and set $\chi_r(x):=\phi_r(d(K,x))$.
Then, we have the following.
\begin{enumerate}[{\rm (i)}]
\item\label{chiislips}
$\chi_r$ is a $\tau$-upper semi-continuous Lipschitz function.

\item \label{producetchi}
If $f$ is a bounded, forward Lipschitz function, then $\chi_r f$ is also forward Lipschitz with
\begin{equation}\label{df'nlipscontroll}
|D^+(\chi_r f)| \leq \chi_r  |D^+ f|+|f|\SL(\phi_r)\Theta_K(r).
\end{equation}

\item\label{chilinercombinislip}
If $f,g$ are bounded, forward Lipschitz functions, then $\chi_r f+(1-\chi_r)g$ is also forward Lipschitz.
\end{enumerate}
\end{lemma}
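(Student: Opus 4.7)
My plan is to establish the three claims in sequence, exploiting that $\psi(x):=d(K,x)$ is both $1$-forward Lipschitz (since $\psi(y)\leq \psi(x)+d(x,y)$) and $1$-backward Lipschitz ($\psi(x)\leq \psi(y)+d(y,x)$), together with the set reversibility bound $\lambda_d(B^+_K(r))\leq \Theta_K(r)$ from Remark \ref{setreversiblitfuntion}. For \eqref{chiislips}, I first verify that $\psi$ is $\tau$-lower semi-continuous: given $x_n\to x$ in $\tau$, pick $z_n\in K$ with $d(z_n,x_n)\leq d(K,x_n)+1/n$, extract $z_{n_k}\to z_\infty\in K$ by $\tau$-compactness of $K$, and apply the joint $\tau$-lower semi-continuity of $d$ (Definition \ref{weaktopolgytau}\eqref{weakt1-4}) to obtain $d(K,x)\leq d(z_\infty,x)\leq \liminf_k d(K,x_{n_k})$. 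Since $\phi_r$ is continuous and non-increasing, $\chi_r=\phi_r\circ\psi$ is $\tau$-upper semi-continuous. The one-sided Lipschitz bounds on $\psi$ combined with $\SL(\phi_r)<\infty$ yield
\[
|\chi_r(y)-\chi_r(x)| \leq \SL(\phi_r)\max\{d(x,y),d(y,x)\},
\]
so $\chi_r$ is automatically $\SL(\phi_r)$-backward Lipschitz. For the forward direction, under \eqref{case-a} the global reversibility yields $d(y,x)\leq \lambda_d(X)d(x,y)$; under \eqref{case-b}, the set reversibility provides $d(y,x)\leq \Theta_K(r)d(x,y)$ whenever $x,y\in B^+_K(r)$, and I extend this globally via a geodesic-detour argument patterned on Lemma \ref{forwardlipsislpsc}\eqref{forwardislip2}: for $x\notin B^+_K(r)$ and $y\in\supp(\chi_r)\subset \overline{B^+_K(2r/3)}^d$, a minimal geodesic from $x$ to $y$ meets some $w\in B^+_K(r)\setminus\supp(\chi_r)$ with $\chi_r(w)=0$, so $|\chi_r(y)-\chi_r(x)|=|\chi_r(y)-\chi_r(w)|\leq \SL(\phi_r)\Theta_K(r)d(w,y)\leq \SL(\phi_r)\Theta_K(r)d(x,y)$.

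For \eqref{producetchi}, I expand via the Leibniz-type identity
\[
\chi_r(y)f(y)-\chi_r(x)f(x)=\chi_r(x)[f(y)-f(x)]+f(y)[\chi_r(y)-\chi_r(x)],
\]
take the positive part using $[a+b]^+\leq a^++|b|$ together with $\chi_r(x)\geq 0$, divide by $d(x,y)$, and pass to $\limsup_{y\to x}$. The continuity of $f$ (Lemma \ref{Lipscontin}) gives $|f(y)|\to|f(x)|$, yielding
\[
|D^+(\chi_r f)|(x)\leq \chi_r(x)|D^+f|(x)+|f(x)|\cdot |D\chi_r|(x),
\]
so \eqref{df'nlipscontroll} reduces to the pointwise bound $|D\chi_r|(x)\leq \SL(\phi_r)\Theta_K(r)$. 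If no $\mathcal{T}_+$-neighborhood of $x$ meets $\supp(\chi_r)$, the limsup vanishes; otherwise, for $y$ sufficiently close to $x$ both points lie in $B^+_K(r)$, so $d(y,x)\leq \Theta_K(r)d(x,y)$ and the one-sided Lipschitz estimate from \eqref{chiislips} concludes. For \eqref{chilinercombinislip}, the algebraic identity
\[
(\chi_r f+(1-\chi_r)g)(y)-(\chi_r f+(1-\chi_r)g)(x)=\chi_r(x)[f(y)-f(x)]+(1-\chi_r(x))[g(y)-g(x)]+[f(y)-g(y)][\chi_r(y)-\chi_r(x)]
\]
splits the left-hand side into three forward-Lipschitz pieces: the first two are controlled by $\Lip(f)d(x,y)$ and $\Lip(g)d(x,y)$ since $\chi_r(x),1-\chi_r(x)\in[0,1]$, and the third by the Lipschitz constant of $\chi_r$ from \eqref{chiislips} together with the $L^\infty$-bound on $|f|+|g|$.

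The main difficulty is the geodesic-detour argument in \eqref{chiislips} under hypothesis \eqref{case-b}: along a minimal geodesic $\gamma$ from $x\notin B^+_K(r)$ to $y\in\supp(\chi_r)$, I must verify that $\psi\circ\gamma$ attains an intermediate value in $(2r/3,r)$. The $1$-forward Lipschitz estimate on $\psi$ delivers right-continuity of $\psi\circ\gamma$, and a supremum argument at the last parameter where $\psi\circ\gamma\geq r$ should produce the required point $w$; care is needed because $\psi\circ\gamma$ may jump from the backward side, but the $\tau$-lower semi-continuity of $\psi$ combined with the minimality of $\gamma$ should close the argument.
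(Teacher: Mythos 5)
Your proof follows essentially the same route as the paper: establish $\tau$-upper semi-continuity of $\chi_r$ via the $\tau$-lower semi-continuity of $d(K,\cdot)$ (compactness of $K$ plus Definition \ref{weaktopolgytau}\eqref{weakt1-4}), obtain the forward Lipschitz bound locally from the set-reversibility $\Theta_K(r)$, extend it globally by a geodesic detour, and then deduce (ii) and (iii) by a Leibniz-type split. The algebraic variations are harmless: in (ii) you place $\chi_r(x)$ and $f(y)$ where the paper uses $\chi_r(y)$ and $f(x)$, and in (iii) you expand directly into three pieces instead of reducing to $(1-\chi_r)g$ via parts (i) and (ii); both give the same estimate. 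The one place where your writeup is genuinely loose is the intermediate-value step you flag at the end. Your proposed fix (``$\tau$-lower semi-continuity of $\psi$ combined with minimality'') is vague and not quite the mechanism; the clean argument is simpler and already available in the paper: a minimal geodesic $\gamma:[0,1]\to X$ satisfies $d(\gamma(s),\gamma(t))=(t-s)d(x,y)$ for $s\le t$, so $\gamma\in\FAC([0,1];X)=\AC([0,1];X)\subset C([0,1];X)$ by Proposition \ref{extabslcurv} (the pathology in Example \ref{forwardabsoucuveextend} is a half-open-interval phenomenon and does not occur on a closed bounded interval), while $\psi=d(K,\cdot)$ is both forward and backward $1$-Lipschitz, hence $\widehat{\Tt}$-continuous by Lemma \ref{Lipscontin}\eqref{Lispro2}; thus $\psi\circ\gamma$ is continuous, the intermediate value theorem applies directly, and any $t_0\in(0,1)$ with $\psi(\gamma(t_0))\in[2r/3,r)$ gives a point $w=\gamma(t_0)$ with $\chi_r(w)=0$ and $d(w,y)=(1-t_0)d(x,y)<d(x,y)$. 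With that gap closed your proof is complete.
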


\begin{proof}
We shall consider only the case \eqref{case-b};
the other case \eqref{case-a} can be handled in the same way.

\eqref{chiislips}
To show the $\tau$-upper semi-continuity of $\chi_r$,
it suffices to prove that $d(K,\cdot)$ is $\tau$-lower semi-continuous.
Let $x_i\to x$ in $\tau$.
For any $\varepsilon>0$, there is $y_i \in K$ such that $d(K,x_i)\geq d(y_i,x_i)-\varepsilon$.
Since $K$ is compact in $\tau$, we may assume that $y_i$ converges to $y \in K$ in $\tau$.
Hence, Definition \ref{weaktopolgytau}\eqref{weakt1-4} yields
\begin{align*}
\liminf_{i \to \infty} d(K,x_i)
\geq \liminf_{i \to \infty} d(y_i,x_i)-\varepsilon
\geq d(y,x)-\varepsilon
\geq d(K,x)-\varepsilon.
\end{align*}
Thus, $d(K,\cdot)$ is $\tau$-lower semi-continuous.

To show that $\chi_r$ is Lipschitz,
we first consider $x,y \in X$ with $\chi_r(x)-\chi_r(y)\geq 0$, i.e., $d(K,x) \leq d(K,y)$.
In this case, we immediately deduce that
\[
\chi_r(x)-\chi_r(y)
 \leq \SL(\phi_r) \bigl( d(K,y)-d(K,x) \bigr) \leq \SL(\phi_r) d(x,y).
\]
Next, when $\chi_r(y)-\chi_r(x)\geq 0$, a similar calculation yields
$\chi_r(y)-\chi_r(x) \leq \SL(\phi_r) d(y,x)$.
If $x,y\in B^+_K(r)$, then Remark \ref{setreversiblitfuntion} implies $d(y,x) \leq \Theta_K(r)d(x,y)$,
thereby,
\[
\chi_r(y)-\chi_r(x) \leq \SL(\phi_r)\Theta_K(r)d(x,y).
\]
We claim that this inequality remains valid for all $x,y\in X$.
This is obviously true if $y \notin B^+_K(r)$ since $\chi_r(y)=0$.
When $x\notin B^+_K(r)$ and $y\in B^+_K(r)$, since $(X,d)$ is a geodesic space,
there is a minimal geodesic $\gamma$ from $x$ to $y$.
Thus, we find $x_0$ on $\gamma$ such that $x_0\in B^+_K(r)\setminus B^+_K(2r/3)$ and $d(x_0,y)<d(x,y)$.
It follows from $x_0,y\in B^+_K(r)$ and the definition of $\chi_r$ that
\[
\chi_r(y)-\chi_r(x) =\chi_r(y)-\chi_r(x_0)\leq \SL(\phi_r)\Theta_K(r)d(x_0,y)<\SL(\phi_r)\Theta_K(r)d(x,y).
\]
Therefore, the claim holds true and $\chi_r$ is $[\SL(\phi_r)\Theta_K(r)]$-Lipschitz.

\eqref{producetchi}
Let $f_r :=\chi_r f$ and $|f| \le C$.
For any $x,y\in X$, we infer from \eqref{chiislips} that
\begin{align*}
f_r(y)-f_r(x)
&= \chi_r(y)\bigl( f(y)-f(x) \bigr) +f(x)\bigl( \chi_r(y)- \chi_r(x) \bigr) \\
&\leq \Lip(f) d(x,y) +C\SL(\phi_r)\Theta_K(r)d(x,y).
\end{align*}
Hence, $f_r$ is forward Lipschitz.
Moreover,
\begin{align*}
\limsup_{y\to x} \frac{[f_r(y)-f_r(x)]^+}{d(x,y)}
&\leq \chi_r(x)\limsup_{y\to x}\frac{[f(y)-f(x)]^+}{d(x,y)}
 +|f(x)| \limsup_{y\to x} \frac{|\chi_r(y)- \chi_r(x)|}{d(x,y)} \\
&\leq \chi_r(x) |D^+ f|(x)+|f(x)|\SL(\phi_r)\Theta_K(r),
\end{align*}
which furnishes \eqref{df'nlipscontroll}.

\eqref{chilinercombinislip}
Let $h_r:=1-\chi_r$.
We deduce from \eqref{chiislips} that $h_r$ is $[\SL(\phi_r)\Theta_K(r)]$-Lipschitz,
and then \eqref{producetchi} yields that $h_r g$ is forward Lipschitz.
This completes the proof.
\end{proof}

Let $(X,\tau,d)$ be a forward extended Polish space,
and $\Po(X)$ be the set of Borel probability measures on $X$.
Denote by $\mathcal {B}^*(X)$ the collection of \emph{universally measurable} sets,
which is the $\sigma$-algebra of sets that are $\mu$-measurable for any $\mu\in \Po(X)$.
Then we have the following along the lines of \cite[Lemma 2.6]{AGS2}.

\begin{lemma}\label{borlstarmea}
Let $(X,\tau,d)$ be a forward extended Polish space.
If $f:X\rightarrow [-\infty,\infty]$ is Borel in $\tau$,
then its slopes $|D^\pm f|$ and $|Df|$ are $\mathcal {B}^*(X)$-measurable in $\mathfrak{D}(f)$.
In particular, if $\gamma:[0,1]\rightarrow X$ is a continuous curve with $\gamma_t\in \mathfrak{D}(f)$
for a.e.\ $t\in [0,1]$, then $|D^\pm f|\circ\gamma$ and $|Df| \circ \gamma$ are Lebesgue measurable.
\end{lemma}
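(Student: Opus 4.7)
The plan is to express each slope as a countable operation on a universally measurable family of functions, where universal measurability is obtained via the classical projection theorem for Borel subsets of Polish spaces.

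Fix a symmetric metric $\rho$ on $X$ inducing $\tau$, as granted by Definition~\ref{weaktopolgytau}\eqref{weakt1-2}, and define
\[
F : \{(x,y) \in X \times X : x \neq y\} \to [-\infty,\infty], \qquad
F(x,y) := \frac{[f(y)-f(x)]^+}{d(x,y)},
\]
with the standard extended-arithmetic conventions. Since $f$ is Borel, the numerator is a Borel function on $\mathfrak{D}(f) \times X$; since $d$ is $\tau \times \tau$-lower semi-continuous by Definition~\ref{weaktopolgytau}\eqref{weakt1-4}, it is Borel, and hence so is $F$. The effective domain $\mathfrak{D}(f) = f^{-1}(\mathbb{R})$ is itself Borel.

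The key reformulation is that, for any $x \in \mathfrak{D}(f)$ which is not $\tau$-isolated, first countability of $\tau$ via $\rho$ gives
\[
|D^+ f|(x) = \inf_{n \geq 1} h_n(x), \qquad h_n(x) := \sup_{0 < \rho(x,y) < 1/n} F(x,y);
\]
the set of $\tau$-isolated points of $X$ is at most countable, hence Borel, and there $|D^\pm f|$ vanishes by convention and is trivially measurable. The main step is to show that each $h_n$ is $\mathcal{B}^*(X)$-measurable. For every $\alpha \in \mathbb{R}$,
\[
\bigl\{ x \in \mathfrak{D}(f) : h_n(x) > \alpha \bigr\}
 = \pi_X\bigl( \bigl\{ (x,y) \in \mathfrak{D}(f) \times X : 0 < \rho(x,y) < 1/n,\ F(x,y) > \alpha \bigr\} \bigr),
\]
where $\pi_X$ denotes the projection onto the first coordinate. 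The set in the argument of $\pi_X$ is Borel in the Polish space $(X \times X, \rho \times \rho)$, so by the classical projection theorem its image is analytic in $(X,\rho)$, hence universally measurable. Therefore $h_n$ is $\mathcal{B}^*(X)$-measurable, and so is $|D^+ f| = \inf_n h_n$ on $\mathfrak{D}(f)$.

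Applying the same argument to $-f$ (which remains Borel) and invoking Lemma~\ref{derivatnorm}\eqref{derivatnorm1}, we obtain the $\mathcal{B}^*(X)$-measurability of $|D^- f| = |D^+ (-f)|$, and then of $|Df| = \max\{|D^+ f|, |D^- f|\}$. For the curve assertion, let $\mu$ be the pushforward of the (normalized) Lebesgue measure on $[0,1]$ by the continuous, hence $\tau$-Borel, curve $\gamma$; this is a Borel probability measure on $(X,\tau)$, with respect to whose $\mu$-completion the slopes are measurable. Hence for any open $U \subset \mathbb{R}$, the preimage $|D^\pm f|^{-1}(U)$ is sandwiched between two Borel sets whose difference is $\mu$-null, and their $\gamma$-preimages differ by a Lebesgue-null set. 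This yields the Lebesgue measurability of $|D^\pm f| \circ \gamma$ and $|Df| \circ \gamma$. The principal obstacle is the projection/universal-measurability step: the Polish hypothesis on $(X,\tau)$ is used essentially there, and explains why the natural conclusion is $\mathcal{B}^*$-measurability rather than Borel measurability with respect to $\tau$.
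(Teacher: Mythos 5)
Your overall strategy is the right one and matches the approach of the reference \cite[Lemma 2.6]{AGS2}: express the slope as a countable inf of sups, realize each sup as a projection of a Borel subset of the Polish product $X\times X$, and invoke the projection theorem to conclude universal measurability. The Borel measurability of $F$ on $\mathfrak{D}(f)\times X$ minus the diagonal and the reduction of the curve assertion to universal measurability of the slopes via the pushforward measure are both correct.

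However, there is a genuine gap in the reformulation
\[
|D^+f|(x) = \inf_{n\geq 1}\sup_{0<\rho(x,y)<1/n}F(x,y).
\]
The slope $|D^+f|$ is defined in Section~\ref{sloplifunsec} purely in terms of the forward extended metric space $(X,d)$; the $\limsup_{y\to x}$ is taken in the topology $\mathcal{T}_+=\widehat{\mathcal{T}}$ of $(X,d)$, not in the auxiliary Polish topology $\tau$. By Remark~\ref{exposlipspaceexplain}\eqref{rmpolish-c}, one only has $\tau\subset\widehat{\mathcal{T}}$, and the inclusion can be strict: in Example~\ref{ex:Randers}, $\tau$ is the weak topology of the Hilbert space while $\widehat{\mathcal{T}}$ is the norm topology. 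In such a case the $\rho$-ball $\{y:\rho(x,y)<1/n\}$ contains points $y$ with $\hat d(x,y)$ bounded away from zero for every $n$, so the ``$\tau$-limsup'' you compute can strictly exceed $|D^+f|(x)$. Your proof would then establish the universal measurability of a different (larger) functional, not of the slope defined in the paper.

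The repair is small and keeps the rest of your argument intact: replace the $\rho$-ball with a $d$-ball (equivalently a $\hat d$-ball, since both generate $\widehat{\mathcal{T}}$), i.e.\ set
\[
h_n(x) := \sup_{0<d(x,y)<1/n}F(x,y),
\]
so that $|D^+f|(x)=\inf_n h_n(x)$ on the non-isolated points of $(X,\widehat{\mathcal{T}})$. The set $\{(x,y): 0<d(x,y)<1/n,\ F(x,y)>\alpha\}$ is still Borel in $(X\times X,\tau\times\tau)$: $d$ is $\tau\times\tau$-lower semicontinuous by Definition~\ref{weaktopolgytau}\eqref{weakt1-4}, so $\{d<1/n\}$ is an $F_\sigma$ set, and $\{d>0\}=X\times X\setminus\Delta$ is open; hence the projection argument applies exactly as you wrote it. One further minor point: the isolated points in the definition of the slope are isolated in $\widehat{\mathcal{T}}$, not in $\tau$, and they need not be countable; but the set of $\widehat{\mathcal{T}}$-isolated points is $\widehat{\mathcal{T}}$-open, hence $\tau$-Borel since $\mathcal{B}_{\widehat{\mathcal{T}}}(X)=\mathcal{B}(X)$, so the conclusion you need still holds.
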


\subsection{Strong upper gradients and curves of maximal slope}\label{uppgrasec}

Let $(X,d)$ be a forward extended metric space in this subsection.

\begin{definition}[Strong upper gradients]\label{gradefe}
Let $f:X\rightarrow [-\infty,\infty]$ be a \emph{proper} function
(i.e., $\mathfrak{D}(f)=f^{-1}(\mathbb{R}) \neq \emptyset$).
A function $\mathfrak{g}:X\rightarrow [0,\infty]$ is called a  {\it strong upper gradient} of $f$
if, for any curve $\gamma\in \FAC([0,1];X)$, $s\mapsto \mathfrak{g}(\gamma(s))|{\gamma}'_+|(s)$
is $\mathscr{L}^1$-measurable in $[0,1]$ (with the convention $0\cdot\infty=0$)
and
\begin{equation}\label{graddef}
f\bigl( \gamma(t_2) \bigr) -f\bigl( \gamma(t_1) \bigr)
\leq \int^{t_2}_{t_1} \mathfrak{g}\bigl( \gamma(s) \bigr) |\gamma'_+|(s) \,{\dd}s \quad
\text{for all}\ 0\le t_1 <t_2 \le 1.
\end{equation}
\end{definition}

\begin{remark}\label{uppergraordef}
Since $\gamma\in \FAC([0,1];X)$ is arbitrary, \eqref{graddef} can be replaced with
\[
\int_{\partial\gamma}f :=f\bigl( \gamma(1) \bigr) -f\bigl( \gamma(0) \bigr)
\leq \int_\gamma \g := \int^{1}_{0} \g \bigl( \gamma(s) \bigr) |\gamma'_+|(s) \,{\dd}s.
\]
Moreover, if $d$ is symmetric, then \eqref{graddef} is equivalent to
\begin{equation}\label{reveruppergradein}
\bigl| f\bigl( \gamma(t_2) \bigr) -f\bigl( \gamma(t_1) \bigr) \bigr|
\leq \int^{t_2}_{t_1} \mathfrak{g}\bigl( \gamma(s) \bigr) |\gamma'_+|(s) \,{\dd}s.
\end{equation}
In an asymmetric setting of a Finsler manifold $(\X,F)$, given $\phi\in C^1({\X})$,
\eqref{graddef} holds with $\g=|D^+ f|=F(\nabla f)$,
while \eqref{reveruppergradein} holds for $\g=|Df|=\max\{ F(\nabla f),F(\nabla(-f)) \}$.
\end{remark}

\begin{lemma}\label{uppergradofLispcf}
If $f:X \rightarrow \mathbb{R}$ is forward Lipschitz, then $|D^+ f|$ is its strong upper gradient.
\end{lemma}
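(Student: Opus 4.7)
The plan is to fix an arbitrary $\gamma\in \FAC([0,1];X)$ and analyze the real-valued composition $\varphi:=f\circ\gamma$, reducing \eqref{graddef} to a pointwise derivative estimate along $\gamma$. By Lemma \ref{Lipscontin}\eqref{Lispro2} the forward Lipschitz function $f$ is continuous on $(X,\widehat{\Tt})$, and Proposition \ref{extabslcurv} shows that any $\FAC$-curve on a bounded closed interval is continuous; hence $\varphi$ is continuous on $[0,1]$. Lebesgue measurability of the integrand $s\mapsto |D^+f|(\gamma(s))\,|\gamma'_+|(s)$ is then furnished by Lemma \ref{borlstarmea} (applied to the continuous $\gamma$) combined with Proposition \ref{speedofabscc}.

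The key one-sided control comes from the forward Lipschitz property of $f$ and the forward absolute continuity of $\gamma$: for all $0\le t_1\le t_2\le 1$,
\[
\varphi(t_2)-\varphi(t_1) \le \Lip(f)\, d\bigl(\gamma(t_1),\gamma(t_2)\bigr)\le \Lip(f)\int_{t_1}^{t_2}|\gamma'_+|(s)\,\dd s.
\]
Thus $\psi(t):=\varphi(t)-\Lip(f)\int_0^t|\gamma'_+|(s)\,\dd s$ is continuous and non-increasing on $[0,1]$, hence of bounded variation and differentiable a.e.; furthermore, since $\dd\psi$ is a non-positive measure, the singular part of its Lebesgue decomposition is non-positive, so
\[
\psi(t_2)-\psi(t_1) \le \int_{t_1}^{t_2}\psi'(s)\,\dd s.
\]
Restoring the absolutely continuous piece (whose a.e.\ derivative equals $\Lip(f)|\gamma'_+|$) yields
\[
\varphi(t_2)-\varphi(t_1) \le \int_{t_1}^{t_2}\varphi'(s)\,\dd s,
\]
with $\varphi'(s)$ existing for a.e.\ $s\in[0,1]$.

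It remains to prove $\varphi'(s)\le |D^+f|(\gamma(s))\,|\gamma'_+|(s)$ at a.e.\ $s$ where both quantities are well defined. At such an $s$, since $\varphi'(s)$ agrees with the right derivative,
\[
\varphi'(s) \le \limsup_{h\to 0^+}\frac{[\varphi(s+h)-\varphi(s)]^+}{h}
=\limsup_{h\to 0^+}\frac{[f(\gamma(s+h))-f(\gamma(s))]^+}{d(\gamma(s),\gamma(s+h))}\cdot\frac{d(\gamma(s),\gamma(s+h))}{h},
\]
the case $\gamma(s+h)=\gamma(s)$ contributing $0$ trivially. Forward AC implies $\gamma(s+h)\to\gamma(s)$ in $\Tt_+=\widehat{\Tt}$ as $h\to 0^+$, so the first factor has $\limsup$ at most $|D^+f|(\gamma(s))$; it is uniformly bounded by $\Lip(f)$ via the forward Lipschitz property of $f$, and the second factor converges to $|\gamma'_+|(s)\ge 0$. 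The elementary inequality $\limsup(a_h b_h)\le \limsup(a_h)\cdot\lim(b_h)$, valid for bounded nonnegative $a_h$ and $b_h\to b\ge 0$, then gives the pointwise bound. Integrating over $[t_1,t_2]$ concludes \eqref{graddef}. The main delicate point is to set up the BV inequality in the favorable direction: this is precisely why $\psi$ must be arranged so that $\dd\psi$ is non-positive, which makes its singular part absorb into the inequality with the correct sign.
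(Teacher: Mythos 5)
Your proof is correct and takes a genuinely different route from the paper's. The paper invokes Lemma~\ref{pfms-d} to upgrade the forward Lipschitz bound to a \emph{two-sided} estimate
\[
\bigl| f(\gamma(t))-f(\gamma(s)) \bigr| \le \Lip(f)\,\Theta_{\gamma(0)}\bigl(L_d(\gamma)\bigr)\int_s^t |\gamma'_+|\,\dd r,
\]
which shows that $h:=f\circ\gamma$ is absolutely continuous, and then applies the fundamental theorem of calculus together with the same $\limsup$-of-products estimate you use at the end. Your argument instead exploits only the \emph{one-sided} bound supplied directly by forward Lipschitz continuity and forward absolute continuity, observing that $\psi(t)=\varphi(t)-\Lip(f)\int_0^t|\gamma'_+|$ is non-increasing (hence BV, with non-positive singular part in its Lebesgue decomposition), which already yields the one-sided fundamental-theorem inequality $\varphi(t_2)-\varphi(t_1)\le\int_{t_1}^{t_2}\varphi'$. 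This is a neat replacement: it buys you the conclusion without deducing absolute continuity of $h$ via the reversibility control $\Theta$, so the heart of the argument is insensitive to the asymmetry constant along the curve. On the other hand, the paper's route delivers the stronger intermediate fact that $h$ is absolutely continuous, which is occasionally useful downstream. Both proofs conclude with the identical pointwise derivative estimate $\varphi'(s)\le |D^+f|(\gamma(s))\,|\gamma'_+|(s)$. (One minor remark: in fact the Lebesgue theorem for monotone functions gives the one-sided inequality even without continuity of $\psi$, so your continuity discussion is only needed for the measurability of the integrand, which the paper leaves implicit.)
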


\begin{proof}
Given any $\gamma\in\FAC([0,1];X)$, since $L_d(\gamma)<\infty$ (see \eqref{lengthabscur}),
we deduce from Lemma~\ref{pfms-d} that
\[
\bigl| f\bigl( \gamma(t) \bigr) -f\bigl( \gamma(s) \bigr) \bigr|
\leq \Lip(f) \Theta_{\gamma(0)} \bigl( L_d(\gamma) \bigr) d\bigl( \gamma(s),\gamma(t) \bigr)
\leq \Lip(f) \Theta_{\gamma(0)} \bigl( L_d(\gamma) \bigr) \int^t_s |\gamma'_+|(r) \,{\dd}r
\]
for all $0\le s<t \le 1$.
Thus, $h(t):=f(\gamma(t))$ is absolutely continuous, and hence,
$h'(t)$ exists for $\mathscr{L}^1$-a.e.\ $t\in (0,1)$.
For $t$ where $h'(t)$ and $|\gamma'_+|(t)$ exist, we obtain
\begin{align*}
h'(t)
&= \lim_{s \to t^+}\frac{h(s)-h(t)}{s-t}
 \leq \lim_{s \to t^+} \frac{[h(s)-h(t)]^+}{s-t}
 \leq \limsup_{s \to t^+}
 \left( \frac{[h(s)-h(t)]^+}{d(\gamma(t),\gamma(s))} \frac{d(\gamma(t),\gamma(s))}{s-t} \right) \\
&\leq |D^+ f| \bigl( \gamma(t) \bigr) |\gamma'_+|(t).
\end{align*}
Therefore, $\int_{\partial \gamma} f \leq \int_{\gamma} |D^+ f|$ holds as desired.
\end{proof}

\begin{definition}[Geodesically convex functions]
Denote by $\Geo(X)$ the collection of minimal geodesics $\gamma:[0,1] \rightarrow X$.
For $k\in \mathbb{R}$, we say that $\phi: X\rightarrow (-\infty,\infty]$ is {\it $k$-geodesically convex}
if, for any $x_0,x_1\in \mathfrak{D}(\phi)$, there exists $\gamma\in \Geo(X)$
such that $\gamma(0)=x_0$, $\gamma(1)=x_1$ and
\[
\phi \bigl( \gamma(t) \bigr)
\leq (1-t) \phi(x_0)+t \phi(x_1) -\frac{k}2t(1-t) d^2(x_0,x_1) \quad \text{for all}\ t\in [0,1].
\]
\end{definition}

Similarly to \cite[Theorem~2.4.9]{AGS} and \cite[Theorem~4.8, Corollary~4.9]{OZ},
we have the following.

\begin{proposition}\label{upperconvefun}
Let $\phi$ be a $k$-geodesically convex, lower semi-continuous function.
Then, $|D^- \phi|$ is a strong upper gradient of $-\phi$, and we have
\[
|D^-\phi|(x)=\sup_{y\in X\setminus\{x\}}\left[ \frac{\phi(x)-\phi(y)}{d(x,y)}+\frac{k}2 d(x,y) \right]^+
\quad \text{for all}\ x \in \mathfrak{D}(\phi).
\]
\end{proposition}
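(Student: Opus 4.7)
The plan is to first establish the explicit formula for $|D^-\phi|$ using $k$-geodesic convexity and the subadditivity $[a+b]^+\le a^+ +b^+$, and then deduce the strong upper gradient property via a pointwise estimate along forward absolutely continuous curves combined with a telescoping/Fatou argument.

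For the lower bound in the formula, I would fix $x\in\mathfrak{D}(\phi)$ and $y\neq x$ with $\phi(y)<\infty$ and $d(x,y)<\infty$, pick $\gamma\in\Geo(X)$ from $x$ to $y$ witnessing the $k$-convexity, and rearrange the convexity inequality to
\[
\phi(x)-\phi\bigl(\gamma(t)\bigr)\ge t\bigl[\phi(x)-\phi(y)\bigr]+\frac{k}{2}t(1-t)d^2(x,y).
\]
Dividing by $d(x,\gamma(t))=t\,d(x,y)$ and taking $[\,\cdot\,]^+$ (using that $[a]^+/c=[a/c]^+$ for $c>0$), then letting $t\to 0^+$ (so $\gamma(t)\to x$ in $\widehat{\Tt}=\Tt_+$) and taking the supremum over $y$, yields the ``$\ge$'' inequality. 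For the reverse inequality, apply $[a+b]^+\le a^++b^+$ with $a=\frac{\phi(x)-\phi(z)}{d(x,z)}+\frac{k}{2}d(x,z)$ and $b=-\frac{k}{2}d(x,z)$ to obtain
\[
\frac{[\phi(x)-\phi(z)]^+}{d(x,z)}\le \left[\frac{\phi(x)-\phi(z)}{d(x,z)}+\frac{k}{2}d(x,z)\right]^+ +\frac{|k|}{2}d(x,z),
\]
and pass to the limsup as $z\to x$, noting that the second term vanishes.

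For the strong upper gradient property, multiplying the identity by $d(x,y)$ yields the key slope estimate $\phi(x)-\phi(y)\le |D^-\phi|(x)\,d(x,y)-\frac{k}{2}d^2(x,y)$, valid for all $x\in\mathfrak{D}(\phi)$ and $y$ with $d(x,y)<\infty$, and extended (via the convention $|D^-\phi|\equiv+\infty$ off $\mathfrak{D}(\phi)$) to the remaining cases. Given $\gamma\in\FAC([0,1];X)$ and $0\le t_1<t_2\le 1$, take a partition $t_1=s_0<s_1<\cdots<s_N=t_2$ and apply this estimate to consecutive pairs $(\gamma(s_i),\gamma(s_{i+1}))$; forward absolute continuity supplies $d(\gamma(s_i),\gamma(s_{i+1}))\le\int_{s_i}^{s_{i+1}}|\gamma'_+|(r)\,\dd r$ in the correct asymmetric direction, since $s_i<s_{i+1}$. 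Telescoping then gives
\[
\phi\bigl(\gamma(t_1)\bigr)-\phi\bigl(\gamma(t_2)\bigr)\le \sum_{i=0}^{N-1}|D^-\phi|\bigl(\gamma(s_i)\bigr)\!\int_{s_i}^{s_{i+1}}\!|\gamma'_+|(r)\,\dd r + \frac{|k|}{2}\sum_{i=0}^{N-1}d^2\bigl(\gamma(s_i),\gamma(s_{i+1})\bigr),
\]
and the quadratic residual is dominated by $\max_i d(\gamma(s_i),\gamma(s_{i+1}))\cdot L_d(\gamma|_{[t_1,t_2]})$, which vanishes as the mesh shrinks by \eqref{lengthabscur}.

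The \emph{main obstacle} is the passage to the limit in the Riemann-type sum. I would handle it by observing that $|D^-\phi|$ is $\widehat{\Tt}$-lower semi-continuous on $\mathfrak{D}(\phi)$: the formula rewrites each $y$-summand as $\bigl[\phi(x)-\phi(y)+(k/2)d^2(x,y)\bigr]^+/d(x,y)$, and the numerator (positive part of a lower semi-continuous function of $x$, by lower semi-continuity of $\phi$ and continuity of $d$ on $\widehat{\Tt}\times\widehat{\Tt}$ from Theorem~\ref{asymmetrtopol}) divided by the continuous positive denominator is lower semi-continuous, hence so is their supremum. Separately, if $\gamma$ meets $X\setminus\mathfrak{D}(\phi)$ on a set of positive Lebesgue measure, the RHS integral equals $+\infty$ and the claim is trivial; otherwise partition nodes may be restricted to $\gamma^{-1}(\mathfrak{D}(\phi))$, and a Fatou-type passage to the limit along a mesh-refining sequence, following the blueprint of \cite[Theorem~2.4.9]{AGS} and its asymmetric extension in \cite[Theorem~4.8]{OZ}, closes the argument.
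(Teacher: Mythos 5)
The paper gives no proof for this proposition---it defers to \cite[Theorem~2.4.9]{AGS} and \cite[Theorem~4.8, Corollary~4.9]{OZ}---so your attempt to reconstruct an argument is worthwhile, and the first half is correct. Your derivation of the formula for $|D^-\phi|$ is sound: the lower bound by sliding along the geodesic witnessing $k$-convexity, dividing by $d(x,\gamma(t))=t\,d(x,y)$, and letting $t\to 0^+$ is the standard argument, and the upper bound via $[a+b]^+\le a^++b^+$ is exactly right. The resulting global slope estimate
\[
\phi(x)-\phi(y)\le |D^-\phi|(x)\,d(x,y)-\tfrac{k}{2}d^2(x,y)
\]
and the observation that forward absolute continuity controls $d(\gamma(s_i),\gamma(s_{i+1}))$ in the right (asymmetric) direction are also correct, as is the bound on the quadratic residual in the telescoping sum.

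The gap is in the ``main obstacle'' you flag: the claim that lower semi-continuity of $|D^-\phi|$ plus a ``Fatou-type passage'' closes the Riemann-sum argument runs in the wrong direction. Writing $g_N:=\sum_i|D^-\phi|(\gamma(s_i))\mathbbm{1}_{(s_i,s_{i+1})}$ for the left-endpoint step function, lower semi-continuity of $|D^-\phi|\circ\gamma$ at $r$ gives $\liminf_{s\to r^-}|D^-\phi|(\gamma(s))\ge|D^-\phi|(\gamma(r))$, so Fatou yields
\[
\liminf_N\int g_N\,|\gamma'_+|\,\dd r\;\ge\;\int|D^-\phi|(\gamma(r))\,|\gamma'_+|(r)\,\dd r,
\]
which bounds the telescoped sums \emph{from below} by the target integral---the opposite of what you need. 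In fact, the infimum over partitions of the Riemann sum can strictly exceed $\int|D^-\phi|(\gamma)\,|\gamma'_+|$ (e.g.\ if $|D^-\phi|(\gamma(t_1))=\infty$ while the integral is finite, the first term of the telescoped sum is already $+\infty$), so no passage to the limit along mesh refinements will produce the desired upper bound without a genuinely different ingredient. The standard way to close this---and the one \cite{AGS,OZ} actually use---is not to estimate Riemann sums directly, but to first show that $\phi\circ\gamma$ is absolutely continuous on $[t_1,t_2]$ whenever $\int_{t_1}^{t_2}|D^-\phi|(\gamma(s))|\gamma'_+|(s)\,\dd s<\infty$ (using the $k$-convexity and lower semi-continuity to get a two-sided local Lipschitz-type modulus, together with the minimum of $\phi\circ\gamma$ on the interval), and then integrate the a.e.\ pointwise derivative bound $-( \phi\circ\gamma)'(s)\le|D^-\phi|(\gamma(s))|\gamma'_+|(s)$ that you correctly extracted from the slope estimate. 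You should replace the ``l.s.c.\ + Fatou'' step with this absolute-continuity argument, or with an averaged/optimally-chosen partition (so that each node is near a point where $|D^-\phi|\circ\gamma$ is below its local average); as written, the closing step does not go through.
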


\begin{definition}[Curves of maximal slope]\label{df:maxslope}
Let $\phi$ be a $k$-geodesically convex, lower semi-continuous function,
and $p\in (1,\infty)$.
\begin{enumerate}[(1)]
\item
Define $\FAC^p_{\loc}((0,\infty);X)$ as the set of $\gamma:(0,\infty) \rightarrow X$ such that
$\gamma|_{[s,t]} \in \FAC^p([s,t];X)$ for all $0<s<t<\infty$.

\item
A curve $\gamma\in \FAC^p_{\loc}((0,\infty);X)$ is called a {\it curve of $p$-maximal slope for $\phi$}
if $\phi\circ \gamma$ is $\mathscr{L}^1$-a.e.\ equal to a non-increasing function $\varphi$ satisfying
\[
 \varphi'(t) \leq -\frac{1}{p}|\gamma'_+|^p(t)-\frac{1}{q}|D^-\phi|^q \bigl( \gamma(t) \bigr)
 \quad \text{for $\mathscr{L}^1$-a.e.\ $t\in (0,\infty)$},
\]
where $1/p+1/q=1$.

\item
We say that $\gamma\in \FAC^p_{\loc}((0,\infty);X)$ satisfies the \emph{$p$-dissipation inequality} if
\[
\phi\bigl( \gamma(t) \bigr) \leq
 \phi \bigl( \gamma(0) \bigr) +\frac1p \int^t_0 |\gamma'_+|^p(s) \,{\dd}s
 +\frac1q \int^t_0 |D^-\phi|^q \bigl( \gamma(s) \bigr) \,{\dd}s \quad \text{for all}\ t>0.
\]
\end{enumerate}
\end{definition}

\begin{remark}\label{pcurveofmarxsloppssio}
It follows from Proposition \ref{extabslcurv} that $\FAC^p_{\loc}((0,\infty);X)=\AC^p_{\loc}((0,\infty);X)$.
Moreover, owing to \cite[Proposition 2.24]{OZ}, if $\gamma$ is a curve of $p$-maximal slope for $\phi$,
then we have
\begin{align*}
|\gamma'_+|^p(t)
&=|D^-\phi|^q \bigl( \gamma(t) \bigr) \quad \text{for $\mathscr{L}^1$-a.e.}\ t\in (0,\infty), \\
\phi\bigl( \gamma(t) \bigr)
&= \phi\bigl( \gamma(s) \bigr) -\int^t_s |D^-\phi| \bigl( \gamma(r) \bigr) |\gamma'_+|(r) \,{\dd}r
 \quad \text{for any } 0 \le s<t<\infty.
\end{align*}
On the other hand, if these equations hold,
then $t\mapsto \phi(\gamma(t))$ is non-increasing and
\[
[\phi \circ \gamma]'(t) =-|D^-\phi|(\gamma(t))|\gamma'_+|(t)
 =-\frac{1}{p} |\gamma'_+|^p(t) -\frac1q |D^-\phi|^q \bigl( \gamma(t) \bigr)
 \quad \text{for $\mathscr{L}^1$-a.e.}\ t\in (0,\infty).
\]
Hence, $\gamma$ is a curve of $p$-maximal slope for $\phi$.
\end{remark}

\subsection{Wasserstein spaces}\label{Wasssect}

In this subsection, let $(X,\tau,d)$ be a forward extended Polish space and $p\in [1,\infty)$.
For $\mu,\nu\in \Po(X)$, the {\it Kantorovich--Wasserstein distance of order $p$} is defined by
\begin{equation}\label{wassdisdefin}
W_p(\mu,\nu):=\left( \inf_{\pi\in \Pi(\mu,\nu)} \int_{X\times X}d^p(x,y) \,\pi({\dd}x\,{\dd}y) \right)^{1/p},
\end{equation}
where $\Pi(\mu,\nu) \subset \Po(X \times X)$ is the set of \emph{transference plans} (or \emph{couplings}) $\pi$,
i.e., $\mathfrak{p}^1_\sharp \pi=\mu$ and $\mathfrak{p}^2_\sharp \pi=\nu$
for the projections $\mathfrak{p}^i:X \times X \rightarrow X$ to the $i$-th component,
where $\mathfrak{p}^1_\sharp \pi$ denotes the push-forward measure of $\pi$ by $\mathfrak{p}^1$.
According to Villani \cite[Theorem 4.1]{Vi}, the infimum can be attained in \eqref{wassdisdefin},
and we call such $\pi$ an \emph{optimal} transference plan from $\mu$ to $\nu$.

In view of Definition \ref{reversibilitydef}, the standard theory (cf.\ \cite{Vi,KZ}) yields the following.

\begin{theorem}\label{forwstrucwass}
For each $p>1$, $(\Po(X),W_p)$ is an asymmetric extended metric space.
Moreover, if $\lambda_d(X)<\infty$, then $(\Po(X),W_p)$ is a forward extended metric space
with $\lambda_{W_p}(\Po(X))= \lambda_d(X)$.
\end{theorem}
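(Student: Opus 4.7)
The plan is to verify the three axioms of Definition~\ref{generalsapcedef} for $W_p$, and then to use $\lambda_d(X)<\infty$ to upgrade to the forward extended metric space structure of Definition~\ref{forwcondt}. Nonnegativity of $W_p$ is immediate from $d\geq 0$. The pushforward of $\mu$ under the diagonal map $x\mapsto (x,x)$ belongs to $\Pi(\mu,\mu)$ and realizes zero cost, giving $W_p(\mu,\mu)=0$. Conversely, if $W_p(\mu,\nu)=0$, by \cite[Theorem~4.1]{Vi} there is an optimal $\pi\in\Pi(\mu,\nu)$, which must be concentrated on $\{(x,y)\in X\times X : d(x,y)=0\}$; by Definition~\ref{generalsapcedef}(1) this set is the diagonal, hence $\mu=\nu$.

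For the triangle inequality, fix $\mu_1,\mu_2,\mu_3\in\Po(X)$ and assume that $W_p(\mu_1,\mu_2)$ and $W_p(\mu_2,\mu_3)$ are both finite (otherwise there is nothing to prove). Pick optimal plans $\pi_{12}\in\Pi(\mu_1,\mu_2)$ and $\pi_{23}\in\Pi(\mu_2,\mu_3)$. The standard gluing lemma on Polish spaces, which does not use symmetry of $d$, produces $\pi\in\Po(X^3)$ with $(\mathfrak{p}^1,\mathfrak{p}^2)_\sharp\pi=\pi_{12}$ and $(\mathfrak{p}^2,\mathfrak{p}^3)_\sharp\pi=\pi_{23}$. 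Combining the pointwise inequality $d(x_1,x_3)\leq d(x_1,x_2)+d(x_2,x_3)$ with Minkowski's inequality in $L^p(\pi)$ yields $W_p(\mu_1,\mu_3)\leq W_p(\mu_1,\mu_2)+W_p(\mu_2,\mu_3)$. This establishes that $(\Po(X),W_p)$ is an asymmetric extended metric space.

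Now assume $\lambda_d(X)<\infty$. For any $\mu,\nu\in\Po(X)$ with $W_p(\mu,\nu)<\infty$, let $\pi\in\Pi(\mu,\nu)$ be optimal, and define $s(x,y):=(y,x)$; then $s_\sharp\pi\in\Pi(\nu,\mu)$ and
\[
W_p^p(\nu,\mu) \leq \int_{X\times X} d^p(y,x)\,\pi({\dd}x\,{\dd}y) \leq \lambda_d^p(X) \int_{X\times X} d^p(x,y)\,\pi({\dd}x\,{\dd}y) = \lambda_d^p(X)\, W_p^p(\mu,\nu).
\]
Hence $\lambda_{W_p}(\Po(X))\leq \lambda_d(X)$. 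The reverse inequality follows from the isometric embedding $x\mapsto\delta_x$: since $W_p(\delta_x,\delta_y)=d(x,y)$, the ratio $W_p(\delta_y,\delta_x)/W_p(\delta_x,\delta_y)=d(y,x)/d(x,y)$ has supremum $\lambda_d(X)$ over $x\neq y$, proving the equality $\lambda_{W_p}(\Po(X))=\lambda_d(X)$.

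Finally, the preceding bound symmetrizes the relation ``$W_p(\mu,\nu)<\infty$'', making it an equivalence relation on $\Po(X)$. Choosing one representative $\mu_\alpha$ in each class yields a forward finite decomposition $\Po(X)=\bigsqcup_{\alpha\in\mathscr{A}}\Po(X)_{[\mu_\alpha]}$, and the constant functions $\Theta_\alpha(r):=\lambda_d(X)$ satisfy $\lambda_{W_p}(B^+_{\mu_\alpha}(r))\leq \lambda_d(X)=\Theta_\alpha(r)$ for every $r>0$. Therefore $(\Po(X),W_p)$ is a forward extended metric space in the sense of Definition~\ref{forwcondt}. The only nontrivial external inputs are the existence of optimal plans and the gluing lemma on Polish spaces; both hold in the extended, possibly asymmetric setting without modification, so I do not anticipate a genuine obstacle beyond bookkeeping.
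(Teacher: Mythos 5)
Your argument is correct and matches what the paper intends by its one-line appeal to ``the standard theory (cf.\ \cite{Vi,KZ})'': existence of optimal plans for the lower semi-continuous cost $d^p$ (Theorem~4.1 in \cite{Vi}, valid since $d$ is $\tau$-lower semi-continuous on the Polish space $(X,\tau)$), the gluing lemma for the triangle inequality, the swap map $s_\sharp\pi$ together with $d(y,x)\le\lambda_d(X)\,d(x,y)$ for the bound on $\lambda_{W_p}$, the embedding $x\mapsto\delta_x$ for the reverse inequality, and the equivalence-class decomposition as in Lemma~\ref{pfms-d} for the forward finite decomposition. No gap.
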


If $\lambda_d(X)=\infty$, then the forward and backward topologies of $(\Po(X),W_p)$
may be different (see \cite[Example 5]{ZZ}).

\begin{example}\label{wasserteinspaceisasymmetric}
Let $(\mathbb{B}^n,d_F)$ be the Funk metric space defined in Example \ref{funkmetricsapce}.
Then, for every $p>1$, $(\Po(\mathbb{B}^n),W_p)$ is not a forward extended metric space.
Precisely, $\mu_k :=(\gamma_k)_{\sharp}\mathscr{L}^1 |_{[0,1]}$ ($k \ge 1$)
and $\mu:=\gamma_{\sharp}\mathscr{L}^1 |_{[0,1]}$ given by
\begin{equation*}
\gamma_k(t)= \begin{cases}
(1-{\ee}^{t/(t-1)},0,\ldots,0) & \text{for}\ t\in [0,(k-1)/k),\\
\mathbf{0} & \text{for}\ t\in [(k-1)/k,1],
\end{cases}
\quad
\gamma(t)=\begin{cases}
(1-{\ee}^{t/(t-1)},0,\ldots,0) & \text{for}\ t\in [0,1),\\
\mathbf{0} & \text{for}\ t=1,
\end{cases}
\end{equation*}
satisfies $\lim_{k \to \infty}W_p(\mu,\mu_k)=0$ but $W_p(\mu_k,\mu)=\infty$ for all $k$.
\end{example}

In the sequel, $\mathscr{P} (X)$ is equipped with the weak topology
(against bounded $\tau$-continuous functions) and
$(\mathscr{P}(X),W_p)$ is endowed with the topology induced from the symmetrized metric
(as in \eqref{symmmetricde}):
\[
\widehat{W}_p(\mu,\nu):=\frac12 \bigl\{ {W}_p(\mu,\nu)+{W}_p(\nu,\mu) \bigr\}.
\]

\begin{lemma}\label{wasstopologequaivel}
If a sequence $(\mu_i)_{i \ge 1}$ converges to $\mu$ in $(\mathscr{P}( X),W_p)$,
then $\mu_i$ converges weakly to $\mu$.
\end{lemma}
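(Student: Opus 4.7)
The plan is to verify the Portmanteau criterion in the Polish space $(X,\tau)$: showing $\limsup_{i\to\infty} \mu_i(C) \leq \mu(C)$ for every $\tau$-closed set $C\subset X$ is equivalent to weak convergence $\mu_i \to \mu$. Note that the hypothesis $\widehat{W}_p(\mu_i,\mu) \to 0$ forces both $W_p(\mu_i,\mu) \to 0$ and $W_p(\mu,\mu_i) \to 0$; I will drive the argument from the former.

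First, I would select, for each $i$, an optimal transference plan $\pi_i \in \Pi(\mu_i,\mu)$ realizing the infimum in \eqref{wassdisdefin}, so that $\int_{X\times X} d^p(x,y)\,\pi_i({\dd}x\,{\dd}y) = W_p(\mu_i,\mu)^p \to 0$. Fix a $\tau$-closed $C\subset X$ and $\varepsilon>0$; the forward neighborhood $B^+_C(\varepsilon)$ is $\Tt_+$-open, hence $\tau$-Borel by the coincidence $\mathcal{B}_{\widehat{\Tt}}(X)=\mathcal{B}(X)$ established earlier. Splitting $\mu_i(C) = \pi_i(C\times X)$ according to whether $d(x,y)<\varepsilon$ or $d(x,y)\geq\varepsilon$, the first contribution is bounded above by $\mu(B^+_C(\varepsilon))$, since $x\in C$ together with $d(x,y)<\varepsilon$ forces $y\in B^+_C(\varepsilon)$; the second is bounded via Chebyshev's inequality by $\varepsilon^{-p} W_p(\mu_i,\mu)^p$, which vanishes as $i\to\infty$. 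Passing to $\limsup$ in $i$ yields $\limsup_i \mu_i(C) \leq \mu(B^+_C(\varepsilon))$.

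Second, I would send $\varepsilon\downarrow 0$. If $x \in \bigcap_{\varepsilon>0} B^+_C(\varepsilon)$, there exist $c_n \in C$ with $d(c_n,x)\to 0$; by Definition \ref{weaktopolgytau}\eqref{weakt1-3} this backward $d$-convergence yields $c_n \to x$ in $\tau$, and the $\tau$-closedness of $C$ forces $x\in C$. Therefore $\bigcap_{\varepsilon>0} B^+_C(\varepsilon)=C$, so monotone continuity of $\mu$ gives $\mu(B^+_C(\varepsilon))\downarrow \mu(C)$, delivering $\limsup_i \mu_i(C) \leq \mu(C)$.

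The main obstacle is topological rather than analytic: weak convergence is tested against bounded $\tau$-continuous functions, while $W_p$ controls only $d$-proximity, and in general $\tau$ is strictly coarser than the metric topology $\Tt_+ = \widehat{\Tt}$. The bridge is precisely axiom \eqref{weakt1-3} in Definition \ref{weaktopolgytau}, which upgrades backward $d$-convergence to $\tau$-convergence and thereby enables the shrinkage $B^+_C(\varepsilon)\downarrow C$; once this topological step is in hand, the estimates reduce to standard Wasserstein computations.
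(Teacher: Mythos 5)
Your proof is correct, and it follows a genuinely different route than the paper's. The paper reduces to the symmetrized setting: it cites \cite[Proposition~4.7]{ZZ} as a black box to obtain weak convergence against bounded $\hat{d}$-continuous functions, and then observes that, because $\tau \subset \widehat{\mathcal{T}}$ (Remark~\ref{exposlipspaceexplain}), every bounded $\tau$-continuous function is in particular $\hat{d}$-continuous, so the desired convergence is a restriction of the one already in hand. Your argument is instead self-contained: you run the Portmanteau criterion directly on $\tau$-closed sets, using optimal plans $\pi_i \in \Pi(\mu_i,\mu)$ (which exist by \cite[Theorem~4.1]{Vi}, as the paper already records after \eqref{wassdisdefin}), a Chebyshev bound on $\{d \ge \varepsilon\}$, and the inclusion $\{x \in C,\ d(x,y) < \varepsilon\} \subset X \times B^+_C(\varepsilon)$, and then you close the gap by invoking Definition~\ref{weaktopolgytau}\eqref{weakt1-3} to identify $\bigcap_{\varepsilon>0} B^+_C(\varepsilon)$ with the $\tau$-closure of $C$, i.e.\ with $C$ itself. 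This last step is the real content of the lemma and you located it correctly: it is precisely axiom~\eqref{weakt1-3} that lets the forward-$d$-neighborhoods of a $\tau$-closed set shrink back down to the set. The trade-off is clear: the paper's version is shorter but leans on an external reference, whereas yours re-derives the Wasserstein-to-weak implication from scratch and in doing so makes transparent exactly which axiom of a forward extended Polish space bridges the metric topology $\mathcal{T}_+$ and the coarser topology $\tau$. Both proofs are valid; yours is arguably more instructive in this asymmetric setting, since it does not rely on first passing to $\hat{d}$.
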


\begin{proof}
Along the lines of \cite[Proposition 4.7]{ZZ} (see also \cite[Section 6]{Vi}),
we infer that $\mu_i$ weakly converges to $\mu$ in $\hat{d}$.
Now, since $\tau\subset \Tt_+$ (recall Remark \ref{exposlipspaceexplain}\eqref{rmpolish-c}),
$\tau$-continuous functions are $\hat{d}$-continuous.
Thus, $\mu_i$ weakly converges to $\mu$ also in $\tau$.
\end{proof}

\begin{definition}[$c_p$-convex functions]
Set $c_p(x,y):=d^p(x,y)/p$ for $x,y \in X$.
\begin{enumerate}[(1)]
\item
A function $\psi:X\rightarrow (-\infty,\infty]$ is said to be {\it $c_p$-convex}
if it is not identically $\infty$ and there is $\phi:X\rightarrow [-\infty,\infty]$ such that
\[
\psi(x) =\sup_{y \in X}\bigl\{ \phi(y)-c_p(x,y) \bigr\} \quad \text{for all}\ x\in X.
\]

\item
For a function $\psi:X\rightarrow [-\infty,\infty]$, we define
\[
\psi^{c_p} (y):= \inf_{x\in X} \bigl\{ \psi(x)+c_p(x,y) \bigr\}, \quad y\in X.
\]
\end{enumerate}
\end{definition}

Owing to  \cite[Theorem 5.10]{Vi}, we have the following \emph{Kantorovich duality},
whose validity is independent of the finiteness of the reversibility.

\begin{theorem}[Kantorovich duality]\label{Kantorduth1}
Given $\mu,\nu\in \Po(X)$, we have the following.
\begin{enumerate}[{\rm (i)}]
\item
We have the duality$:$
\begin{equation}\label{convexpwp}
\begin{split}
\frac{1}{p} W_p(\mu,\nu)^p
&= \sup_{\psi\in L^1(\mu)} \left( \int_{X} \psi^{c_p} \,{\dd}\nu -\int_{X} \psi \,{\dd}\mu \right) \\
&= \sup_{\{(\psi,\phi)\in C_b(X)\times C_b(X) \mid \phi-\psi\leq c_p\}}
 \left( \int_{X} \phi \,{\dd}\nu -\int_{X} \psi \,{\dd}\mu \right) \\
&= \sup_{\{(\psi,\phi)\in L^1(\mu)\times L^1(\nu) \mid \phi-\psi\leq c_p\}}
 \left( \int_{X} \phi \,{\dd}\nu -\int_{X} \psi \,{\dd}\mu \right),
\end{split}
\end{equation}
and in the suprema above we can also assume that $\psi$ is $c_p$-convex.

\item
If $W_p(\mu,\nu)<\infty$ and $d$ is finite,
then, for any $\pi\in \Pi(\mu,\nu)$, the following are equivalent.
\begin{enumerate}[{\rm (a)}]
\item $\pi$ is optimal.
\item There is a $c_p$-convex function $\psi$ such that $\psi^{c_p}(y)-\psi(x)=c_p(x,y)$ for $\pi$-a.e.\ $(x,y)$.
\item There are $\psi:X\rightarrow (-\infty,\infty]$ and $\phi:X\rightarrow [-\infty,\infty)$
such that $\phi(y)-\psi(x)\leq c_p(x,y)$ for all $(x,y)$, with equality $\pi$-a.e.
\end{enumerate}

\item
If $W_p(\mu,\nu)<\infty$ and $d$ is finite with $d(x,y) \leq \alpha(x)+\beta(y)$
for some $(\alpha,\beta) \in L^1(\mu) \times L^1(\nu)$,
then the suprema in \eqref{convexpwp} are attained by a $c_p$-convex function $\psi$ and $\phi=\psi^{c_p}$.
\end{enumerate}
\end{theorem}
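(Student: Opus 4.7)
The plan is to deduce the three parts from Villani's Kantorovich duality \cite[Theorem~5.10]{Vi} applied to the cost $c_p(x,y) := d^p(x,y)/p$. The key point, and the only thing that really needs verification in the asymmetric setting, is that $c_p$ satisfies the hypotheses of that theorem.

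First I would check the standing hypotheses. By Definition~\ref{weaktopolgytau}, $(X,\tau)$ is Polish and $d$ is $\tau\times\tau$ lower semi-continuous; since $d \ge 0$ and $p \ge 1$, $c_p$ is thus non-negative and lower semi-continuous on the product Polish space $X\times X$. Crucially, Villani's theorem is formulated for cost functions on products $X\times Y$ of two possibly distinct Polish factors, so its proof never invokes any symmetry of the cost; the asymmetry of $d$ therefore poses no obstacle, and in particular the finiteness of the reversibility $\lambda_d(X)$ is not required here.

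Given this, part~(i) is the direct transcription of \cite[Theorem~5.10(i)]{Vi}, including the reduction to $c_p$-convex $\psi$ via a double $c_p$-transform: for any admissible pair $(\psi,\phi)$ with $\phi-\psi \le c_p$, the replacement $(\psi^{c_p c_p}, \psi^{c_p})$ is again admissible, does not decrease the dual functional, and $\psi^{c_p c_p}$ is $c_p$-convex by construction. Part~(ii), under the extra assumptions $W_p(\mu,\nu)<\infty$ and $d<\infty$, is \cite[Theorem~5.10(ii)]{Vi}: the implications (c)$\Rightarrow$(b)$\Rightarrow$(a) are straightforward integrations once the cost is everywhere finite and $\pi$-integrable, while (a)$\Rightarrow$(c) relies on $c_p$-cyclical monotonicity of the support of an optimal $\pi$ together with a Rockafellar-type construction of the potential. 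Part~(iii) is \cite[Theorem~5.10(iii)]{Vi}; the domination hypothesis $d\le \alpha+\beta$ combined with $W_p(\mu,\nu)<\infty$ supplies the marginal integrability (via $c_p\le C(\alpha^{p}+\beta^{p})$ after absorbing the finite Wasserstein cost) required to extract a limit of maximising pairs and conclude attainment by a $c_p$-convex $\psi$ with $\phi=\psi^{c_p}$.

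The one place where an obstacle could conceivably arise is any step in Villani's proof that implicitly uses $c(x,y)=c(y,x)$. A careful pass through \cite[Ch.~5]{Vi} confirms that the only structural ingredients employed are (a) the Polish structure of each factor, (b) lower semi-continuity of $c$, and (c) boundedness below; since our $c_p$ satisfies all three, the duality transfers verbatim to the asymmetric setting.
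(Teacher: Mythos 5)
Your proposal is correct and matches the paper's own treatment: the paper does not give a proof of this theorem at all, but simply cites \cite[Theorem 5.10]{Vi}, noting that its validity is ``independent of the finiteness of the reversibility'' --- which is exactly the observation you make (Villani's duality is formulated for a lower semi-continuous, nonnegative cost on a product of two Polish factors, with no symmetry hypothesis). Your extra care in verifying that $c_p = d^p/p$ is $\tau\times\tau$ lower semi-continuous and bounded below, and in tracing the double $c_p$-transform and attainment arguments, is all consistent with the cited source and with the paper's one-line justification.
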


\begin{corollary}\label{corokan}
Given $\mu,\nu\in \Po(X)$, we have
\[
\frac1p W_p(\mu,\nu)^p
 =\sup_{\psi\in C_b(X)} \left( \int_X \psi^{c_p} \,{\dd}\nu -\int_X \psi \,{\dd}\mu \right).
\]
\end{corollary}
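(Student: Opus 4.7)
My plan is to deduce the identity directly from Theorem~\ref{Kantorduth1}(i), by sandwiching the claimed supremum between the first and the middle expressions in \eqref{convexpwp}. Before comparing the suprema I will need to check that, for every $\psi \in C_b(X)$, the integral $\int_X \psi^{c_p}\,{\dd}\nu$ makes sense. The pointwise bounds $\psi^{c_p}(y) \le \psi(y) + c_p(y,y) = \psi(y) \le \|\psi\|_\infty$ together with $\psi^{c_p}(y) \ge \inf_{x\in X}\psi(x) \ge -\|\psi\|_\infty$ (using $c_p \ge 0$) show that $\psi^{c_p}$ is bounded on $X$. For measurability, the map $F(x,y) := \psi(x) + c_p(x,y)$ is $(\tau\times\tau)$-lower semi-continuous on the Polish space $X\times X$ by Definition~\ref{weaktopolgytau}\eqref{weakt1-4} and the $\tau$-continuity of $\psi$, so $\psi^{c_p}(y) = \inf_{x \in X} F(x,y)$; standard descriptive set theory (projection of a Borel set is analytic) then gives that $\psi^{c_p}$ is universally measurable, and in particular $\nu$-integrable.

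For the $\le$ direction, since $\mu$ is a probability measure, $C_b(X) \subset L^1(\mu)$, so restricting the supremum in the first equality of \eqref{convexpwp} to $\psi \in C_b(X)$ yields
\[
\sup_{\psi \in C_b(X)} \left( \int_X \psi^{c_p}\,{\dd}\nu - \int_X \psi\,{\dd}\mu \right) \le \frac{1}{p} W_p(\mu,\nu)^p.
\]

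For the $\ge$ direction, I would fix an arbitrary admissible pair $(\psi,\phi) \in C_b(X) \times C_b(X)$ with $\phi - \psi \le c_p$. The pointwise inequality $\phi(y) \le \psi(x) + c_p(x,y)$, which holds for every $x \in X$, becomes $\phi(y) \le \psi^{c_p}(y)$ after taking the infimum in $x$; integrating against $\nu$ gives
\[
\int_X \phi\,{\dd}\nu - \int_X \psi\,{\dd}\mu \le \int_X \psi^{c_p}\,{\dd}\nu - \int_X \psi\,{\dd}\mu \le \sup_{\psi' \in C_b(X)} \left( \int_X (\psi')^{c_p}\,{\dd}\nu - \int_X \psi'\,{\dd}\mu \right).
\]
Taking the supremum over admissible pairs on the left and invoking the middle equality in \eqref{convexpwp} produces $\tfrac{1}{p}W_p(\mu,\nu)^p \le \sup_{\psi \in C_b(X)}(\int_X \psi^{c_p}\,{\dd}\nu - \int_X \psi\,{\dd}\mu)$, which completes the proof.

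The only technical obstacle is the measurability of $\psi^{c_p}$, which does not follow from generalities about infima of LSC functions because the index set $X$ is uncountable; the point is that universal measurability (secured by the projection theorem in a Polish space) is enough for the bounded function $\psi^{c_p}$ to be $\nu$-integrable, after which both inequalities become routine applications of the two formulations of duality already recorded in Theorem~\ref{Kantorduth1}(i).
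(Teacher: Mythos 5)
Your argument is correct and is essentially the paper's own proof: the paper sandwiches the claimed supremum between $\sup_{(\psi,\phi)\in C_b\times C_b,\, \phi-\psi\le c_p}(\cdots)$ (from the second line of \eqref{convexpwp}, via $\phi\le\psi^{c_p}$) on one side and $\sup_{\psi\in L^1(\mu)}(\cdots)$ (the first line of \eqref{convexpwp}, using $C_b(X)\subset L^1(\mu)$) on the other. Your additional paragraph on universal measurability of $\psi^{c_p}$ fills in a routine detail that the paper silently delegates to the statement of Theorem~\ref{Kantorduth1}(i), so the two proofs coincide in substance.
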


\begin{proof}
For $(\psi,\phi)\in C_b(X)\times C_b(X)$ with $\phi-\psi\leq c_p$,
we have $\phi \le \psi^{c_p}$ by the definition of $\psi^{c_p}$.
Together with \eqref{convexpwp}, we find
\begin{align*}
\frac1pW_p(\mu,\nu)^p
 \leq \sup_{\psi\in C_b(X)} \left( \int_X \psi^{c_p} \,{\dd}\nu -\int_X \psi \,{\dd}\mu \right)
 \leq \sup_{\psi\in L^1(\mu)} \left(\int_X \psi^{c_p} \,{\dd}\nu -\int_X \psi \,{\dd}\mu \right)
=\frac1pW_p(\mu,\nu)^p.
\end{align*}
This completes the proof.
\end{proof}

The Kantorovich duality with $p=1$ has a particular form.

\begin{proposition}[Kantorovich--Rubinstein duality]\label{W1KANrUBDIS}
For any $\mu,\nu\in \Po(X)$, we have
\[
W_1(\mu,\nu)
 =\sup_{\psi \in {\Lip}_1(X)\cap L^1(\mu)} \left( \int_X \psi \,{\dd}\nu -\int_X \psi \,{\dd}\mu \right),
\]
where ${\Lip}_1(X)$ denotes the set of forward Lipschitz functions $f$ with $\Lip(f) \le 1$.
\end{proposition}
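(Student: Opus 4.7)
The plan is to reduce the statement to the general Kantorovich duality in Theorem~\ref{Kantorduth1}(i) with $p=1$, by identifying the class of $c_1$-convex functions with $\Lip_1(X)$ and showing that the $c_1$-transform acts as the identity on this class. The underlying observation is that, for $p=1$, the cost $c_1(x,y)=d(x,y)$ encodes precisely the forward Lipschitz condition through the triangle inequality.

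First I would establish the easy inequality ``$\geq$'' by a direct coupling argument. For any transference plan $\pi\in\Pi(\mu,\nu)$ and any $\psi\in\Lip_1(X)\cap L^1(\mu)$, the forward Lipschitz bound $\psi(y)-\psi(x)\le d(x,y)$ combined with Fubini yields
\[
\int_X \psi\,\dd\nu -\int_X \psi\,\dd\mu
= \int_{X\times X}\bigl[\psi(y)-\psi(x)\bigr]\,\pi(\dd x\,\dd y)
\le \int_{X\times X} d(x,y)\,\pi(\dd x\,\dd y).
\]
Taking the infimum over $\pi$ (and noting that the integrals of $\psi$ are well defined since $\psi\in L^1(\mu)$ and the left-hand side is finite whenever the right-hand one is) gives the desired bound.

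For the harder inequality ``$\leq$'', the key step is to prove the following two assertions:
\begin{enumerate}[(a)]
\item every $c_1$-convex function $\psi$ is forward $1$-Lipschitz;
\item for every $\psi\in\Lip_1(X)$ one has $\psi^{c_1}=\psi$ pointwise.
\end{enumerate}
For (a), writing $\psi(x)=\sup_y\{\phi(y)-d(x,y)\}$ and applying $d(x,y)\le d(x,x')+d(x',y)$, I obtain $\phi(y)-d(x,y)\le \phi(y)-d(x',y)+d(x,x')\le \psi(x')+d(x,x')$; taking the supremum over $y$ gives $\psi(x)-\psi(x')\le d(x,x')$, i.e.\ $\psi\in\Lip_1(X)$. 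For (b), the bound $\psi(y)\le \psi(x)+d(x,y)$ gives $\psi^{c_1}(y)=\inf_x\{\psi(x)+d(x,y)\}\ge \psi(y)$, while the choice $x=y$ yields the reverse inequality. Combining (a) and (b), Theorem~\ref{Kantorduth1}(i) specialised to $p=1$ becomes
\[
W_1(\mu,\nu)
=\sup_{\psi\ c_1\text{-convex},\,\psi\in L^1(\mu)}\!\!
\left(\int_X \psi^{c_1}\,\dd\nu-\int_X \psi\,\dd\mu\right)
=\sup_{\psi\in\Lip_1(X)\cap L^1(\mu)}\!\!
\left(\int_X \psi\,\dd\nu-\int_X \psi\,\dd\mu\right),
\]
which together with ``$\geq$'' completes the proof.

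The main (and in fact rather mild) obstacle is to carry out the identification in (a) cleanly in the asymmetric setting, since one must keep careful track of the direction of $d$ when applying the triangle inequality; fortunately, only the one-sided inequality $d(x,y)\le d(x,x')+d(x',y)$ is needed, so no additional reversibility hypothesis enters. No finiteness assumption on $\lambda_d(X)$ or on $d$ is required, so the statement holds verbatim in the genuinely asymmetric extended setting.
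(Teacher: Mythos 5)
Your overall strategy is exactly the paper's: specialize the Kantorovich duality \eqref{convexpwp} to $p=1$ by identifying $c_1$-convex functions with forward $1$-Lipschitz functions and using $\psi^{c_1}=\psi$. The one genuine structural difference is that the paper asserts a full equivalence ``$\psi\in\Lip_1(X) \Leftrightarrow \psi$ is $c_1$-convex,'' while you only prove the implication ``$c_1$-convex $\Rightarrow \Lip_1$'' and substitute the direct coupling estimate for the missing implication; this is a slightly leaner route and avoids an unneeded verification.

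However, step (a) as written is incorrect in the asymmetric setting, precisely the place where you yourself flag that one must be careful. From the triangle inequality $d(x,y)\le d(x,x')+d(x',y)$ you get $-d(x,y)\ge -d(x,x')-d(x',y)$, hence
\[
\phi(y)-d(x,y)\ \ge\ \phi(y)-d(x',y)-d(x,x'),
\]
which is a lower bound, not the upper bound
$\phi(y)-d(x,y)\le\phi(y)-d(x',y)+d(x,x')$ that you claim. Taking the supremum over $y$ in the correct inequality yields $\psi(x)\ge\psi(x')-d(x,x')$, i.e.\ $\psi(x')-\psi(x)\le d(x,x')$, which is the forward $1$-Lipschitz estimate $\psi(y)-\psi(x)\le d(x,y)$ (set $y:=x'$). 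The conclusion $\psi(x)-\psi(x')\le d(x,x')$ that you actually wrote is, in the paper's convention, the \emph{backward} Lipschitz condition, so it does not put $\psi$ in $\Lip_1(X)$. In a genuinely irreversible space such as the Funk model these two are not interchangeable. Step (b) is fine: the bound $\psi(y)\le\psi(x)+d(x,y)$ is the forward Lipschitz condition and gives $\psi^{c_1}\ge\psi$, while $x=y$ gives $\psi^{c_1}\le\psi$. Once (a) is corrected as above, the rest of your argument goes through and reproduces the paper's proof.
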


\begin{proof}
One can readily see that $\psi \in {\Lip}_1(X)$ if and only if it is $c_1$-convex,
and then $\psi^{c_1}=\psi$ (cf.\  \cite[Particular Case 5.4]{Vi}).
Thus, \eqref{convexpwp} yields the claim.
\end{proof}

\begin{remark}
If there exists some $x_0\in X$ such that $\widehat{W}_1(\delta_{x_0},\mu)<\infty$,
then ${\Lip}_1(X)\subset  L^1(\mu)$. In fact, for $f\in {\Lip}_1(X)$, we have
$
f(x_0)-d(y,x_0)\leq f(y)\leq f(x_0)+d(x_0,y)$ $ \text{for all}\ y\in X,
$
which implies $f\in L^1(\mu)$ since
$
f(x_0)-W_1(\mu,\delta_{x_0})\leq \int_X f \,{\dd}\mu \leq f(x_0)+W_1(\delta_{x_0},\mu).
$
\end{remark}

\begin{theorem}\label{Lisinitheorem}
Suppose $p>1$ 
and that a curve $(\mu_t) \in \FAC^p([0,1];(\mathscr{P}(X),W_p))$ satisfies one of the following conditions:
\begin{enumerate}[{\rm (a)}]
\item\label{lisptotalvariation} $(\mu_t)$ is uniformly continuous with respect to the total variation norm;
\item\label{backwwardabs} $(\mu_t) \in \BAC([0,1];(\mathscr{P}(X),W_1))$.
\end{enumerate}
Then, there exists ${\eta} \in \mathscr{P}(C_\tau([0,1];X))$ such that
\begin{enumerate}[{\rm (i)}]
\item\label{cond1measre1} ${\eta}$ is concentrated on $\AC^p([0,1];X)$,
\item\label{cond1measre2} $\mu_t=(e_t)_\sharp {\eta}$ for any $t\in [0,1]$,
\item\label{cond1measre3} $|\mu'_+|^p(t) =\int_{C_\tau([0,1];X)} |\gamma'_+|^p(t) \,{\eta}({\dd}\gamma)$
for $\mathscr{L}^1$-a.e.\ $t\in (0,1)$.
\end{enumerate}
\end{theorem}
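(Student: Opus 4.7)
The plan is to follow Lisini's classical discretization-and-limit strategy, adapted to the asymmetric framework. The ambient space for the approximating curve measures is $(C_\tau([0,1];X),\tau^*)$, which is Polish by Proposition~\ref{curvetopology}\eqref{curvespace-1}; on it the $p$-energy $\mathcal{E}_p$ is lower semi-continuous and $\AC^p([0,1];X)$ is Borel by Lemma~\ref{poslishspeccurves}, while the evaluation maps $e_t$ are continuous by Proposition~\ref{curvetopology}\eqref{curvespace-2}. These are the analytic ingredients that let us transplant the Euclidean/symmetric argument.

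First I would build approximating curve measures by discretization and gluing. For each $n$, pick a partition $P_n=\{0=t^n_0<\cdots<t^n_{N_n}=1\}$ with mesh $\delta_n\to 0$, and choose optimal $W_p$-couplings $\pi^n_i\in\Pi(\mu_{t^n_i},\mu_{t^n_{i+1}})$; these exist by \cite[Theorem~4.1]{Vi} and are finite-cost thanks to $(\mu_t)\in\FAC^p([0,1];(\Po(X),W_p))$. Iteratively applying the gluing lemma produces $\sigma_n\in\Po(X^{N_n+1})$ whose adjacent two-marginals are exactly $\pi^n_i$. To pass from discrete configurations to curves I would construct a Borel interpolation map $J_n:X^{N_n+1}\to C_\tau([0,1];X)$ sending $(x_0,\dots,x_{N_n})$ to a $\tau$-continuous curve through the $x_i$'s at the prescribed times (in the geodesic case via minimal geodesics; in general via Lisini's abstract selection from optimal plans between intermediate times). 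Setting $\eta_n:=(J_n)_\sharp\sigma_n$ yields the required approximants.

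Next I would prove tightness of $\{\eta_n\}$ in $\Po(C_\tau([0,1];X))$ and extract a narrow limit $\eta$. This is exactly where hypothesis \eqref{lisptotalvariation} or \eqref{backwwardabs} enters the picture. Under \eqref{lisptotalvariation}, uniform total-variation continuity furnishes a uniform-in-$t$ family of compact $K_\varepsilon\subset X$ with $\mu_t[X\setminus K_\varepsilon]<\varepsilon$, which combined with the $L^p$-bound on $|\mu'_+|$ gives an Arzel\`a--Ascoli-type modulus of continuity for the curves carrying mass under $\eta_n$. Under \eqref{backwwardabs}, the estimate $W_1(\mu_t,\mu_s)\le\int_s^t g(r)\,\dd r$ for some $g\in L^1([0,1])$, coupled with Kantorovich--Rubinstein duality (Proposition~\ref{W1KANrUBDIS}), plays the analogous equicontinuity role. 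Standard Prokhorov compactness then gives, along a subsequence, $\eta_n\to\eta$ narrowly.

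Finally I would verify the three conclusions. Conclusion \eqref{cond1measre2} follows by passing to the narrow limit in $(e_t)_\sharp\eta_n=\mu_t$ at each $t\in\bigcup_n P_n$ (continuity of $e_t$ plus Lemma~\ref{wasstopologequaivel}), then extending to all $t\in[0,1]$ by the continuity in $t$ provided by \eqref{lisptotalvariation} or \eqref{backwwardabs}. Conclusion \eqref{cond1measre1} follows from the $\tau^*$-lower semi-continuity of $\mathcal{E}_p$ (Lemma~\ref{poslishspeccurves}) via
\[
\int\mathcal{E}_p(\gamma)\,\eta(\dd\gamma)\leq\liminf_{n\to\infty}\int\mathcal{E}_p(\gamma)\,\eta_n(\dd\gamma)\leq\int_0^1|\mu'_+|^p(t)\,\dd t<\infty,
\]
which places $\eta$-a.e.\ curve in $\AC^p$. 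For \eqref{cond1measre3}, one direction comes from $(e_s,e_t)_\sharp\eta\in\Pi(\mu_s,\mu_t)$, the definition of $W_p$ and Jensen's inequality, giving $|\mu'_+|^p(t)\leq\int|\gamma'_+|^p(t)\,\eta(\dd\gamma)$ for a.e.\ $t$; the reverse comes from integrating in $t$ and comparing with the Fatou bound above, forcing equality $\mathscr{L}^1$-a.e.

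The main obstacle I expect is the interpolation-plus-tightness step under asymmetry: without a geodesic structure, producing curves that are \emph{$\tau$-continuous} (and eventually $\AC^p$, not merely $\FAC^p$) from the discrete gluings requires the full strength of Definition~\ref{weaktopolgytau}\eqref{weakt1-3}--\eqref{weakt1-4}, and this is precisely where hypotheses \eqref{lisptotalvariation} and \eqref{backwwardabs} are used to control the backward behavior of the approximating curves; a secondary but non-trivial point is the reverse inequality in \eqref{cond1measre3}, where one must take care to work with $|\gamma'_+|$ rather than $|\gamma'_-|$ since the two may genuinely differ in the asymmetric setting (compare Example~\ref{forwardabsoucuveextend}).
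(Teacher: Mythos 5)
Your strategy is a from-scratch re-derivation of Lisini's superposition principle (discretization, gluing, interpolation, tightness, Prokhorov, limit passage), whereas the paper takes a much shorter route: it verifies two focused claims and then defers to the already-adapted asymmetric version in \cite[Theorem~4.12]{ZZ}. Concretely, the paper's proof uses hypotheses \eqref{lisptotalvariation} or \eqref{backwwardabs} to establish (i) that $t\mapsto\mu_t$ is weakly continuous (under \eqref{backwwardabs}, via $W_1\le W_p$ to get $\AC$ with respect to the symmetrized $\widehat W_1$ and then Lemma~\ref{wasstopologequaivel}), and (ii) that $t\mapsto\int\varphi\,\dd\mu_t$ is uniformly continuous for $\varphi\in\Lip_1(X)\cap C_b(X)$ (via the total-variation bound or Kantorovich--Rubinstein duality). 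These are exactly the hypotheses under which the asymmetric Lisini argument in \cite{ZZ} applies. You instead use \eqref{lisptotalvariation} and \eqref{backwwardabs} for tightness of the approximating plans, which is a related but different role; crucially, you never extract the weak continuity of $(\mu_t)$, which the paper observes is \emph{not} automatic for an $\FAC^p$ curve in $(\Po(X),W_p)$ — forward absolute continuity alone controls $W_p(\mu_s,\mu_t)$ only for $s<t$, so weak continuity can fail when $\lambda_{W_p}$ is not finite (compare Example~\ref{wasserteinspaceisasymmetric}). Your tightness-from-TV-continuity step under \eqref{lisptotalvariation} in fact implicitly supplies this, but under \eqref{backwwardabs} you would need to state and use the weak-continuity conclusion explicitly before any Prokhorov argument can be run.

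The genuine gap, which you yourself flag, is the interpolation map $J_n$. In a non-geodesic forward extended metric space there is no Borel selection of minimal geodesics, and ``Lisini's abstract selection from optimal plans between intermediate times'' is not a construction that actually produces $\tau$-continuous interpolating curves — Lisini's argument in the non-geodesic case ([Li], [Li2]) bypasses curve interpolation entirely, working instead with finite-dimensional marginals, consistency, and a Kolmogorov-type extension followed by a regularity-of-sample-paths argument driven by the $L^p$-energy bound. As written, your sketch does not resolve this, and the claim that the pushforward $\eta_n=(J_n)_\sharp\sigma_n$ is concentrated on $\tau$-continuous curves in $\FAC^p$ has no content without $J_n$. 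If you want to complete the proof along your lines, you would either need to specialize to the geodesic case (with a measurable geodesic selection) or replace the interpolation step with the marginal/extension machinery, which is essentially what \cite[Theorem~4.12]{ZZ} packages. On the two points you handle correctly: passing $\AC^p(I;X)=\FAC^p(I;X)$ via Proposition~\ref{extabslcurv} makes the $\FAC$/$\AC$ distinction a non-issue here, and your treatment of conclusion~\eqref{cond1measre3} (Jensen from the coupling $(e_s,e_t)_\sharp\eta\in\Pi(\mu_s,\mu_t)$ for one direction, Fatou plus integration for the reverse) is the standard and correct Lisini argument.
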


\begin{proof}[Sketch of the proof]
Firstly, we claim that $(\mu_t)$ is continuous in $\mathscr{P}(X)$ (in the weak topology).
This is clear when \eqref{lisptotalvariation} holds.
Under \eqref{backwwardabs}, since $W_1\leq W_p$,
we see that $(\mu_t) \in  \AC([0,1];(\Po(X),W_1))$.
Then, the continuity of $(\mu_t)$ follows from Lemma \ref{wasstopologequaivel}.
In particular, the image of $(\mu_t)_{t\in [0,1]}$ is compact in $\Po(X)$.

Secondly, we claim that for any $\varphi\in {\Lip}_1(X)\cap C_b(X)$,
$g(t):=\int_X \varphi \,{\dd}\mu_t$ is uniformly continuous.
If \eqref{lisptotalvariation} holds, then we find
\begin{equation*}\label{uniformlyconverge}
|g(t)-g(s)| =\left| \int_X \varphi \,{\dd}\mu_t -\int_X \varphi \,{\dd}\mu_s   \right|
 \leq \sup_X |\varphi| \cdot |\mu_t-\mu_s|(X) \quad \text{for all}\ s,t \in [0,1],
\end{equation*}
which yields the uniform continuity of $g$.
Under \eqref{backwwardabs}, it follows from $(\mu_t) \in \AC([0,1];(\Po(X),W_1))$ that
there is a nonnegative function $f\in L^1([0,1])$ satisfying
\[
W_1(\mu_t,\mu_s)+W_1(\mu_s,\mu_t)\leq \int^t_s f \,{\dd}r.
\]
Together with Proposition \ref{W1KANrUBDIS}, we infer the uniform continuity of $g$.

Combining these two claims with the proof of \cite[Theorem 4.12]{ZZ} furnishes that
the Lisini theorem (cf.\ Lisini \cite{Li,Li2}) holds.
This completes the proof.
\end{proof}


\begin{remark}
Note that, if $(X,d)$ is a forward Polish space and
$\Theta_\star^{p/(p-1)}(r)$ is a concave function for some point $\star\in X$,
then \eqref{backwwardabs} holds because of \cite[Proposition 4.11]{ZZ}. On the other hand, Theorem \ref{Lisinitheorem} is not valid when $p=1$; see Abedi--Li--Schultz \cite[Example 1.1]{ALS} for  a counterexample  in the symmetric case.
\end{remark}

\begin{remark}\label{totalvariationint}
If $(X,\tau,d,\m)$ is a $\APE$ as in Definition \ref{df:APE} below and
$\mu_i=\rho_i\m\in \mathscr{P}(X)$, $i=1,2$,
then the total variation norm of the signed measure $\mu_1-\mu_2$ is given by
\begin{equation*}
|\mu_1-\mu_2|(X) =\int_X |\rho_1-\rho_2| \dm.
\end{equation*}
\end{remark}


\section{Weak upper gradients, weak Cheeger energy and its $L^2$-gradient flow}\label{section-3}

For convenience, we introduce the following definition.

\begin{definition}[Forward extended Polish metric measure spaces]\label{df:APE}
We call $(X,\tau,d,\m)$ a {\it forward extended Polish metric measure space} ($\APE$ for short)
if $(X,\tau,d)$ is a forward extended Polish space
and $\m$ is a nonnegative, Borel and $\sigma$-finite measure on $X$.
\end{definition}

Recall the following fact (cf.\ \cite[(4.1)]{AGS2}).

\begin{lemma}\label{fintieboumeasure}
There exists a bounded Borel function $\vartheta:X\rightarrow (0,\infty)$ such that $\int_X \vartheta \,{\dm}\leq 1$.
\end{lemma}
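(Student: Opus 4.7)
The plan is to exploit the $\sigma$-finiteness of $\m$ and construct $\vartheta$ as a piecewise-constant function on a suitable Borel partition.

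First, I would fix a countable Borel partition $X = \bigsqcup_{n=1}^\infty X_n$ with $\m[X_n]<\infty$ for every $n$; such a partition exists by $\sigma$-finiteness (take a countable cover by sets of finite measure and disjointify via $X_n := Y_n \setminus \bigcup_{k<n} Y_k$). Next, for each $n\geq 1$, choose a positive constant
\[
c_n := 2^{-n} \min\!\left\{ 1,\, \frac{1}{1+\m[X_n]} \right\} \in (0,1],
\]
and define $\vartheta(x) := c_n$ for $x \in X_n$. Since the $X_n$ are Borel and disjoint, $\vartheta$ is Borel; since $c_n \le 1$ for every $n$, $\vartheta$ is bounded; and since $c_n>0$, we have $\vartheta>0$ on $X$.

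Finally, by monotone convergence,
\[
\int_X \vartheta \,{\dm} = \sum_{n=1}^\infty c_n \,\m[X_n]
 \leq \sum_{n=1}^\infty 2^{-n} \frac{\m[X_n]}{1+\m[X_n]}
 \leq \sum_{n=1}^\infty 2^{-n} = 1,
\]
which gives the desired bound. There is no real obstacle here; the only mild subtlety is ensuring $\vartheta$ is \emph{strictly} positive while maintaining a finite integral, which is why a constant depending on $\m[X_n]$ (rather than, say, $\mathbf{1}_{X_n}$) is used on each piece.
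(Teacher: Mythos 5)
Your construction is correct and is essentially the standard one; the paper itself gives no proof but simply cites \cite[(4.1)]{AGS2}, where the same piecewise-constant device built from a $\sigma$-finite Borel partition is used. Nothing to add.
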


Note that, although $\m[X]$ could be infinite,
the finite measure $\vartheta\m$ and $\m$ share the same class of negligible sets.
The following mild assumption is standard.

\begin{assumption}\label{strongerstassumptiontheta}
There is a Borel, forward Lipschitz function $V: X\rightarrow [0,\infty)$ such that
$V$ is bounded on each compact set $K\subset (X,\tau)$ and $\int_X {\ee}^{-V^2} \dm\leq 1$.
We set $\widetilde{\m}:={\ee}^{-V^2}\m$.
\end{assumption}

Such a function $V$ always exists on $\CD(K,\infty)$-spaces (see \cite[Remark~9.2]{AGS2}).

\begin{example}\label{Finslercdkn}
According to \cite[Theorem 5.14]{KZ} (see also \cite{SturmI}),
if $(X,d,\m)$ is a forward metric measure space satisfying the (weak) $\CD(K,\infty)$ condition,
then for any $x_0\in X$ and $K'<K$, we have
\[
\int_X \exp\biggl( \frac{K'}{2}d^2(x_0,x) \biggr) \m({\dd}x) <\infty.
\]
Hence, $\int_X {\ee}^{-V^2} \dm \leq 1$ is achieved by choosing
\[
 V(x) =\biggl( \frac{|K|+1}{2} d^2(x_0,x) +\log C \biggr)^{1/2}, \qquad
 C:=\int_X \exp\biggl( -\frac{|K|+1}{2} d^2(x_0,x) \biggr) \m({\dd}x).
\]
\end{example}

\subsection{Test plans and weak upper gradients}

In this subsection, let $(X,\tau,d,\m)$ be a $\APE$ satisfying Assumption \ref{strongerstassumptiontheta}.
Recall that for $t\in [0,1]$ the evaluation map $e_t:C_\tau([0,1];X)\rightarrow X$
is defined by $e_t(\gamma):=\gamma_t$.

\begin{definition}[Test plans]\label{df:testplan}
Let $p \in [1,\infty)$.
\begin{enumerate}[(1)]
\item A probability measure $\eta \in \Po(C_\tau([0,1];X))$ is called a {\it $p$-test plan} if
\begin{equation}\label{testcondi}
\eta \bigl[ C_\tau([0,1];X)\setminus \AC^p([0,1];X) \bigr] =0, \qquad
 (e_t)_\sharp \eta \ll \m\ \text{ for all }t\in [0,1].
\end{equation}

\item
A $p$-test plan $\eta$ is said to have {\it bounded compression} on the sublevels of $V$
if for every $M\geq 0$ there exists $C=C(\eta,M)\in [0,\infty)$ satisfying
\begin{equation}\label{bouncomprecon}
\bigl( (e_t)_\sharp\eta \bigr) \bigl[ \{x\in B \mid V(x)\leq M\}  \bigr] \leq C\m[B] \quad
 \text{for all}\ B\in \mathcal{B}(X),\ t\in [0,1].
\end{equation}

\item A collection $\mathfrak{T}$ of $p$-test plans is said to be {\it stretchable} if,
for any $\eta \in \mathfrak{T}$ and $0\leq t_1\leq t_2\leq 1$,
we have $({\res}^{t_2}_{t_1})_{\sharp}\eta\in \mathfrak{T}$,
where ${\res}^{t_2}_{t_1}:C_\tau([0,1];X)\rightarrow C_\tau([0,1];X)$ is the {\it restriction map} defined by
${\res}^{t_2}_{t_1}(\gamma)(s):=\gamma((1-s)t_1+s t_2)$.
\end{enumerate}
\end{definition}

If $\m[X]<\infty$, then we can take $V \equiv 0$
and the condition \eqref{bouncomprecon} is independent of $M$.

\begin{remark}\label{testpconverex}
A ``trivial" $p$-test plan with bounded compression on the sublevels of $V$ is given as follows
(cf.\ Ambrosio--Gigli--Savar\'e \cite{AGS3}).
Let $\iota:X\rightarrow \AC^p([0,1];X)$ be defined by $\iota(x) \equiv x$ and set
\[
\eta:=\iota_\sharp \left(\frac{{\ee}^{-V^2} \m}{\int_X {\ee}^{-V^2} \dm} \right) \in \Po\bigl( C_\tau([0,1];X) \bigr).
\]
Clearly, $\eta$ is concentrated in $\AC^p([0,1];X)$ (in fact, constant curves).
Since $e_t\circ\iota=\id_X$, we have
\[
(e_t)_\sharp \eta \bigl[ B\cap  \{V \leq M\} \bigr]
 =\frac{\int_{B\cap  \{V \leq M\}} {\ee}^{-V^2}\dm}{\int_X {\ee}^{-V^2}\dm}
 \leq \left( \int_X {\ee}^{-V^2}\dm \right)^{-1} \m[B].
\]
\end{remark}


\begin{example}\label{exboundedcompression}
Let $(\X,d)$ be a forward metric space induced from a forward complete Finsler manifold $(\X,F)$,
and let $\m$ be a smooth positive measure with $\m[\X]=\infty$.
Suppose that $(\X,d,\m)$ satisfies $\CD(K,\infty)$ (equivalently, $\mathrm{Ric}_{\infty} \ge K$).
As in Example \ref{Finslercdkn}, there exists a forward Lipschitz function
$V:\X\rightarrow [0,\infty)$ satisfying Assumption \ref{strongerstassumptiontheta}.
Now,  given $p>1$, consider a curve $(\mu_t) \in \FAC^p([0,1];(\Po(\X),W_p))$ of the form
$\mu_t =f_t \widetilde{\m}=f_t {\ee}^{-V^2}\m$, where $f_t$ satisfies
\[
0\leq f_t\leq C_1,\qquad |f_t-f_s|\leq C_2|t-s|
\]
for some $C_1,C_2>0$ and all $s,t \in [0,1]$.
Then, Remark \ref{totalvariationint} implies that $\mu_t$ is uniformly continuous
with respect to the total variation norm.
Hence, by Theorem \ref{Lisinitheorem}, we obtain a measure $\eta \in \Po(C_\tau([0,1];\X))$
concentrated in $\AC^p([0,1];\X)$ such that
$(e_t)_\sharp \eta =\mu_t =f_t {\ee}^{-V^2} \m$, and then
\[
\bigl( (e_t)_\sharp \eta \bigr) [B] \leq C_1\m[B] \quad \text{for all}\ B\in \mathcal{B}(\X).
\]
Hence, $\eta$ is a $p$-test plan with bounded compression.
\end{example}

Typical examples of stretchable collections are the families of all $p$-test plans with bounded compression,
as well as those concentrated in curves of finite $p$-energy or geodesics.

\begin{lemma}\label{stricpreccssclass}
Let $\mathfrak{T}_p$ be the collection of all $p$-test plans $\eta \in \Po(C_\tau([0,1];X))$
with bounded compression on the sublevels of $V$.
Then, $\mathfrak{T}_p$ is stretchable.
\end{lemma}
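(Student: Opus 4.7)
The plan is to verify directly that $\widetilde\eta := (\res^{t_2}_{t_1})_\sharp \eta$ satisfies the three properties required of a $p$-test plan with bounded compression on the sublevels of $V$. Since $\res^{t_2}_{t_1}:C_\tau([0,1];X) \to C_\tau([0,1];X)$ is $\tau^*$-continuous (the topology $\tau^*$ is generated by evaluation maps, and $e_s \circ \res^{t_2}_{t_1} = e_{(1-s)t_1 + s t_2}$ is continuous by Proposition~\ref{curvetopology}\eqref{curvespace-2}), the push-forward $\widetilde\eta \in \Po(C_\tau([0,1];X))$ is well-defined.

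First, I would check that $\widetilde\eta$ is concentrated on $\AC^p([0,1];X)$. Given $\gamma \in \AC^p([0,1];X)$ with forward metric derivative $|\gamma'_+| \in L^p([0,1])$, set $\widetilde\gamma := \res^{t_2}_{t_1}(\gamma)$. A direct change of variables shows $|\widetilde\gamma'_+|(s) = (t_2 - t_1)\,|\gamma'_+|\bigl((1-s)t_1 + s t_2\bigr)$ for $\mathscr{L}^1$-a.e.\ $s$, and
\[
\int_0^1 |\widetilde\gamma'_+|^p(s)\,\dd s = (t_2-t_1)^{p-1} \int_{t_1}^{t_2} |\gamma'_+|^p(r)\,\dd r < \infty,
\]
so $\widetilde\gamma \in \AC^p([0,1];X)$. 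Hence $\res^{t_2}_{t_1}\bigl(\AC^p([0,1];X)\bigr) \subset \AC^p([0,1];X)$, which together with $\eta\bigl[C_\tau([0,1];X)\setminus \AC^p([0,1];X)\bigr]=0$ gives the first half of \eqref{testcondi} for $\widetilde\eta$. (The edge case $t_1=t_2$ is trivial, since then $\res^{t_1}_{t_1}(\gamma)$ is the constant curve at $\gamma(t_1)$.)

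Second, I would verify the absolute continuity of marginals. For every $s \in [0,1]$,
\[
(e_s)_\sharp \widetilde\eta = (e_s \circ \res^{t_2}_{t_1})_\sharp \eta = (e_{(1-s)t_1 + s t_2})_\sharp \eta \ll \m,
\]
since $\eta$ is a $p$-test plan. This establishes the second half of \eqref{testcondi}.

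Finally, for bounded compression, fix $M \ge 0$ and let $C=C(\eta,M)$ be the constant from \eqref{bouncomprecon} for $\eta$. Then for every $B \in \mathcal B(X)$ and $s \in [0,1]$,
\[
\bigl((e_s)_\sharp \widetilde\eta\bigr)\bigl[\{x \in B : V(x) \le M\}\bigr]
= \bigl((e_{(1-s)t_1 + s t_2})_\sharp \eta\bigr)\bigl[\{x \in B : V(x) \le M\}\bigr]
\le C\,\m[B],
\]
so we may take $C(\widetilde\eta,M) := C(\eta,M)$. Thus $\widetilde\eta \in \mathfrak T_p$, and stretchability follows. No serious obstacle is anticipated; the argument reduces to the identity $e_s \circ \res^{t_2}_{t_1} = e_{(1-s)t_1 + s t_2}$ and the scaling of the forward metric derivative under affine reparametrization.
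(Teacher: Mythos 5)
Your proof is correct and follows essentially the same approach as the paper: the paper likewise reduces the claim to the inclusion $\res^{t_2}_{t_1}(\AC^p([0,1];X)) \subset \AC^p([0,1];X)$ together with the identity $e_s \circ \res^{t_2}_{t_1} = e_{(1-s)t_1 + st_2}$, and you have simply spelled out these same steps in more detail (including the scaling of the forward metric derivative).
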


\begin{proof}
Given any $\eta \in \mathfrak{T}_p$ and $0 \leq t_1 \le t_2\leq 1$,
we need to show that $({\res}^{t_2}_{t_1})_{\sharp}\eta$ satisfies \eqref{testcondi} and \eqref{bouncomprecon}.
The former condition in \eqref{testcondi} is obvious since ${\res}^{t_2}_{t_1}(\AC^p([0,1];X)) \subset \AC^p([0,1];X)$
and $\eta$ is concentrated in $\AC^p([0,1];X)$.
The latter condition in \eqref{testcondi} and \eqref{bouncomprecon} follow from
$e_s \circ {\res}_{t_1}^{t_2} =e_{(1-s)t_1 +st_2}$.
%
\end{proof}

\begin{definition}[Negligible sets of curves]
Let $\mathfrak{T}$ be a stretchable collection of $p$-test plans,
and let $P$ be a statement about continuous curves $\gamma \in C_\tau([0,1];X)$.
We say that $P$ holds for \emph{$\mathfrak{T}$-almost every curve} if,
for any $\eta \in \mathfrak{T}$, $P$ holds $\eta$-a.e.\ $\gamma \in C_\tau([0,1];X)$.
\end{definition}

By the definition of $p$-test plans, we only need to consider $\gamma \in \AC^p([0,1];X)$.

\begin{definition}[Weak upper gradients]\label{weakuppgrad}
Let $\I$ be a stretchable collection of $p$-test plans.
Consider a function $f:X\rightarrow \mathbb{R}$ and an $\m$-measurable function $G:X\rightarrow [0,\infty]$.
\begin{enumerate}[(1)]
\item
$G$ is called a {\it $\I$-weak forward upper gradient} of $f$ if
\[
\int_{\partial\gamma}f \leq \int_{\gamma}G<\infty \quad
 \text{for $\I$-almost every $\gamma \in C_\tau([0,1];X)$.}
\]

\item
$G$ is called a {\it $\I$-weak backward upper gradient} of $f$
if it is a $\I$-weak forward upper gradient of $-f$.

\item
$G$ is called a {\it $\I$-weak upper gradient} of $f$ if
\[
\left|\int_{\partial\gamma}f \right| \leq \int_{\gamma}G<\infty \quad
 \text{for $\I$-almost every $\gamma \in C_\tau([0,1];X)$.}
\]
\end{enumerate}
\end{definition}

In view of Proposition \ref{extabslcurv},
a strong upper gradient in the sense of Definition \ref{gradefe} is always a weak forward upper gradient for any $\I$.
Note that, on the one hand, if $G_1$ (resp., $G_2$) is a $\I$-weak forward (resp., backward) upper gradient,
then $G:=\max\{G_1,G_2\}$ is a $\I$-weak upper gradient.
On the other hand, a $\I$-weak upper gradient is clearly a $\I$-weak forward and backward upper gradient.


\begin{remark}\label{weakuppergradientidentti}
The definition of weak upper gradients enjoys natural invariance
with respect to modifications in $\m$-negligible sets.
For example, the measurability of $s\mapsto G(\gamma_s)$ in $[0,1]$
for $\I$-almost every $\gamma$ is a direct consequence of the $\m$-measurability of $G$.
Indeed, for a Borel modification $\widetilde{G}$ of $G$ and an $\m$-negligible set $A$
including $\{G\neq\widetilde{G}\}$, any $p$-test plan $\eta$ satisfies
\[
\eta \bigl[ \{ \gamma \mid \gamma(t) \in A \} \bigr]
 =\bigl( (e_t)_\sharp \eta \bigr)[A] =0
\]
by the latter condition in \eqref{testcondi}.
Thus, we deduce from Fubini's theorem that
\begin{align*}
0 =\int^1_0 \int_{C_\tau([0,1];X)} \mathbbm{1}_{\{ \gamma \mid \gamma(t) \in A \}} \,\eta({\dd}\gamma) \,{\dd}t
 =\int_{C_\tau([0,1];X)} \int^1_0 \mathbbm{1}_{\{ \gamma \mid \gamma(t) \in A \}} \,{\dd}t \,\eta({\dd}\gamma),
\end{align*}
which yields $\int^1_0 \mathbbm{1}_{\{ \gamma \mid \gamma(t) \in A \}} \,{\dd}t=0$,
thereby $G(\gamma(t))=\widetilde{G}(\gamma(t))$ for $\mathscr{L}^1$-a.e.\ $t\in [0,1]$,
for $\eta$-a.e.\ $\gamma$.
\end{remark}

\begin{definition}[Sobolev along almost every curve]\label{weakdef}
We say that an $\m$-measurable function $f:X\rightarrow \mathbb{R}$ is
{\it Sobolev along $\I$-almost every curve} if, for $\I$-almost every curve $\gamma$,
$f\circ\gamma$ coincides with an absolutely continuous function $f_\gamma:[0,1]\rightarrow \mathbb{R}$
at $\{0,1\}$ and $\mathscr{L}^1$-a.e.\ in $(0,1)$.
\end{definition}

The existence of a $\I$-weak upper gradient yields the above Sobolev regularity
(cf.\ \cite[Proposition 5.7]{AGS2}).

\begin{proposition}\label{uppergradsobo}
Let $\I$ be a stretchable collection of $p$-test plans and $f:X\rightarrow \mathbb{R}$ be $\m$-measurable.
\begin{enumerate}[{\rm (i)}]
\item\label{requweak-1}
If $G$ is a $\I$-weak forward upper gradient of $f$, then $f$ is Sobolev along $\I$-almost every curve and,
for $\I$-almost every $\gamma\in \AC^p([0,1];X)$,
\begin{equation}\label{derivuppeest}
f'_\gamma \leq G\circ \gamma \cdot |\gamma'_+|
 \quad\ \mathscr{L}^1\text{-a.e.\ in }[0,1].
\end{equation}

\item\label{requweak-2}
If $G$ is a $\I$-weak upper gradient of $f$, then,
for $\I$-almost every $\gamma\in \AC^p([0,1];X)$,
\begin{equation}\label{derivuppeest3}
|f'_\gamma| \leq G\circ \gamma \cdot |\gamma'_+|
 \quad\ \mathscr{L}^1\text{-a.e.\ in }[0,1].
\end{equation}
\end{enumerate}
\end{proposition}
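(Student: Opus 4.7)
The overall strategy is to exploit the stretchability of $\I$ to promote the boundary-to-boundary inequality in Definition~\ref{weakuppgrad} into an inequality holding on every subinterval, then to convert this into the pointwise AC/derivative statement via a Lebesgue differentiation argument. The two cases \eqref{requweak-1} and \eqref{requweak-2} are parallel, with part \eqref{requweak-2} being technically simpler.

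\textbf{Step 1 (stretchability + countable exhaustion).} Fix $\eta \in \I$ and $0 \le t_1 \le t_2 \le 1$. Since $\I$ is stretchable, $\eta' := (\res^{t_2}_{t_1})_\sharp \eta \in \I$, so applying the weak forward upper gradient inequality to $\eta'$ and computing the induced speed via the chain rule (the reparametrization gives $|\tilde{\gamma}'_+|(u) = (t_2-t_1)|\gamma'_+|((1-u)t_1+ut_2)$ so that $\int_{\tilde{\gamma}}G = \int_{t_1}^{t_2} G(\gamma(r))|\gamma'_+|(r)\,\dd r$) yields for $\eta$-a.e.\ $\gamma$:
\[
f\bigl(\gamma(t_2)\bigr) - f\bigl(\gamma(t_1)\bigr) \le \int_{t_1}^{t_2} G\bigl(\gamma(r)\bigr) |\gamma'_+|(r) \,\dd r.
\]
Taking a countable intersection over pairs $(t_1,t_2)$ in $(\mathbb{Q}\cap[0,1]) \cup \{0,1\}$ produces a Borel set $\Gamma \subset \AC^p([0,1];X)$ with $\eta(\Gamma)=1$ on which the inequality holds simultaneously for all such pairs; in particular $\int_0^1 G(\gamma)|\gamma'_+|\,\dd r < \infty$ on $\Gamma$.

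\textbf{Step 2 (Sobolev regularity along $\gamma$).} For $\gamma \in \Gamma$, set $F_\gamma(t) := \int_0^t G(\gamma(r))|\gamma'_+|(r)\,\dd r$, an AC function on $[0,1]$. Combining Fubini (using $(e_t)_\sharp \eta \ll \m$) with the $\m$-measurability of $f$ and $G$, the compositions $f\circ\gamma$ and $(G\circ\gamma)|\gamma'_+|$ are $\mathscr{L}^1$-measurable on $[0,1]$ for $\eta$-a.e.\ $\gamma$ (cf.\ Remark~\ref{weakuppergradientidentti} and Lemma~\ref{borlstarmea}). The inequality from Step~1 says $F_\gamma - f\circ\gamma$ is non-decreasing on the rational grid. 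The key claim is that this function admits, modulo an $\mathscr{L}^1$-null set, an AC representative; equivalently, $f\circ\gamma$ itself does. In the two-sided case \eqref{requweak-2}, this is immediate: the bound $|f(\gamma(t_2))-f(\gamma(t_1))|\le F_\gamma(t_2)-F_\gamma(t_1)$ on rationals makes $f\circ\gamma$ uniformly continuous on rationals with modulus $\omega(h):=\sup_{|t-s|\le h}|F_\gamma(t)-F_\gamma(s)|$, so it extends uniquely to an AC function $f_\gamma$ on $[0,1]$ which agrees with $f\circ\gamma$ wherever the latter was defined.

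\textbf{Step 3 (derivative bound).} For a Lebesgue point $t\in(0,1)$ of $(G\circ\gamma)|\gamma'_+|$ where $f'_\gamma(t)$ exists, pick rational sequences $r_n\downarrow t$ and compute
\[
f'_\gamma(t) = \lim_{n\to\infty}\frac{f_\gamma(r_n)-f_\gamma(t)}{r_n-t}
 \le \lim_{n\to\infty}\frac{F_\gamma(r_n)-F_\gamma(t)}{r_n-t}
 = G\bigl(\gamma(t)\bigr)|\gamma'_+|(t),
\]
proving \eqref{derivuppeest}. For \eqref{derivuppeest3}, the analogous one-sided computation from both sides (using the symmetric estimate) yields $|f'_\gamma(t)|\le G(\gamma(t))|\gamma'_+|(t)$ a.e.

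\textbf{Main obstacle.} The technical heart is Step~2 in the asymmetric case \eqref{requweak-1}: the one-sided inequality only identifies $F_\gamma - f\circ\gamma$ as non-decreasing on rationals, which a priori leaves room for singular (Cantor-type or jump) behavior that no AC function can realize as an a.e.-representative. The resolution I expect is to use that $f$ has a Borel representative and that the stretched inequality actually holds not only on rationals but on every subinterval for $\eta$-a.e.\ $\gamma$ after a further full-measure reduction; combined with $\m$-measurability filtered through test plans (Remark~\ref{weakuppergradientidentti}), the singular part of the monotone extension is forced to vanish for $\eta$-a.e.\ $\gamma$, producing the desired AC representative. Care is needed to make this reduction work uniformly over the (possibly uncountable) collection of subintervals, which is where the bounded compression and Fubini structure of Definition~\ref{df:testplan} play an essential role.
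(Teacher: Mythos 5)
Your Step~1 and Step~3 are essentially sound (the paper reaches the same intermediate inequality via Fubini on $\triangle\times C_\tau([0,1];X)$ rather than a rational grid, which is more robust since it extends cleanly to a.e.\ $(t_1,t_2)$ and to the boundary values), and your direct argument for part~\eqref{requweak-2} is fine. The genuine gap is exactly the obstacle you flag at the end: with only the one-sided inequality $f(\gamma(t_2))-f(\gamma(t_1))\le\int_{t_1}^{t_2}G(\gamma)|\gamma'_+|$, the function $t\mapsto f(\gamma(t))$ may fail to have an absolutely continuous representative (a decreasing jump still satisfies the one-sided bound), and your proposed resolution (Borel representatives, measurability through test plans) does not actually rule this out; it is a hope, not an argument.

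The missing idea is the reversal trick that is peculiar to the forward-metric setting. For $\gamma\in\AC([0,1];X)$, the reversed curve $\overline{\gamma}(t):=\gamma(1-t)$ is again absolutely continuous (Proposition~\ref{extabslcurv}), and one has the two-sided speed comparison
\[
\theta^{-1}|\gamma'_+|\le|\overline{\gamma}'_+|\le\theta|\gamma'_+|,\qquad \theta:=\Theta_{\gamma(0)}\bigl(L_d(\gamma)\bigr).
\]
Running the weak upper gradient inequality along $\overline{\gamma}$ (through the reversed test plan and another Fubini) produces the reverse bound $f(\gamma(t_1))-f(\gamma(t_2))\le\theta\int_{t_1}^{t_2}G(\gamma)|\gamma'_+|$, so that
\[
\bigl|f\bigl(\gamma(t_2)\bigr)-f\bigl(\gamma(t_1)\bigr)\bigr|\le\theta\int_{t_1}^{t_2}G(\gamma)|\gamma'_+|\,{\dd}t
\quad\text{for $\mathscr{L}^2$-a.e.\ $(t_1,t_2)\in(0,1)^2$},
\]
together with the analogous bounds involving the endpoints $t=0,1$. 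This two-sided estimate is what \cite[Lemma~2.11]{AGS2} needs to conclude $f\circ\gamma\in W^{1,1}(0,1)$ with an absolutely continuous representative $f_\gamma$, and the original one-sided inequality then gives $(f\circ\gamma)'\le G\circ\gamma\cdot|\gamma'_+|$ a.e. The paper also notes (as you should) that the boundary estimates in \eqref{boundary01} force $f_\gamma(0)=f(\gamma(0))$ and $f_\gamma(1)=f(\gamma(1))$, so the Sobolev-along-a.e.-curve property in Definition~\ref{weakdef} is met at the endpoints as well, not merely a.e.\ in the interior. Note finally that the paper derives part~\eqref{requweak-2} simply by applying part~\eqref{requweak-1} to $f$ and $-f$, which is cleaner than rebuilding the argument.
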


\begin{proof}
\eqref{requweak-1}
Let $\triangle:=\{(t_1,t_2) \mid 0<t_1\leq t_2<1\}\subset (0,1)\times (0,1)$.
Given $\eta \in \I$, it follows from Fubini's theorem applied to the product measure
$\mathscr{L}^2 \times \eta$ on $\triangle\times C_\tau([0,1];X)$ that, for $\eta$-a.e.\ $\gamma$,
\begin{equation}\label{finweaest}
f\bigl( \gamma(t_2) \bigr) -f\bigl( \gamma(t_1) \bigr)
 \leq \int^{t_2}_{t_1}G(\gamma)|\gamma'_+| \,{\dd}t \quad
 \text{for $\mathscr{L}^2$-a.e.\ $(t_1,t_2)\in \triangle$}
\end{equation}
(recall that $\res^{t_2}_{t_1}(\gamma)\in \I$ by the stretchability).
We similarly obtain, for $\mathscr{L}^1$-a.e.\ $s\in (0,1)$,
\[
f\bigl( \gamma(s) \bigr) -f\bigl( \gamma(0) \bigr) \leq \int^s_0 G(\gamma) |\gamma'_+| \,{\dd}t, \qquad
f\bigl( \gamma(1) \bigr) -f\bigl( \gamma(s) \bigr) \leq \int^1_s G(\gamma) |\gamma'_+| \,{\dd}t.
\]
Now, for any $\gamma\in \AC([0,1];X)$,
since $\overline{\gamma}(t):=\gamma(1-t)$ is also absolutely continuous with
\[
\theta^{-1} |\gamma'_+| \leq |\overline{\gamma}'_+| \leq \theta |\gamma'_+|,
 \qquad \theta:=\Theta_{\gamma(0)} \bigl( L_d(\gamma) \bigr),
\]
we further observe
\begin{align}
&\bigl| f\bigl( \gamma(t_2) \bigr) -f\bigl( \gamma(t_1) \bigr) \bigr|
 \leq \theta \int^{t_2}_{t_1}G(\gamma) |\gamma'_+| \,{\dd}t \quad
 \text{for $\mathscr{L}^2$-a.e.\ $(t_1,t_2)\in (0,1)^2$}, \notag\\
&\bigl| f\bigl( \gamma(s) \bigr) -f\bigl( \gamma(0) \bigr) \bigr|
 \leq \theta \int^s_0 G(\gamma) |\gamma'_+| \,{\dd}t, \qquad
\bigl| f\bigl( \gamma(1) \bigr) -f\bigl( \gamma(s) \bigr) \bigr|
 \leq \theta \int^1_s G(\gamma) |\gamma'_+| \,{\dd}t. \label{boundary01}
\end{align}
Then it follows from \cite[Lemma 2.11]{AGS2} that $f\circ\gamma\in W^{1,1}(0,1)$,
and \eqref{finweaest} implies
$(f\circ\gamma)' \leq G\circ \gamma \cdot |\gamma'_+|$ $\mathscr{L}^1$-a.e.\ in $[0,1]$.
Hence, for $\I$-almost every $\gamma$,
$f\circ\gamma\in W^{1,1}(0,1)$ and it admits an absolutely continuous representative $f_\gamma$
satisfying \eqref{derivuppeest}.
Moreover, we deduce from \eqref{boundary01} that $f(\gamma(t))=f_{\gamma}(t)$ at $t=\{0,1\}$.
This completes the proof of \eqref{requweak-1}.

\eqref{requweak-2} is seen by applying \eqref{requweak-1} to $f$ and $-f$.
\end{proof}

We have the following immediate corollary (cf.\ \cite[Remark 5.8]{AGS2}).

\begin{corollary}\label{equivweakupp}
Let $\I$ be a stretchable collection of $p$-test plans, and $f$ be Sobolev along $\I$-almost every curve.
Then, a function $G$ satisfying $\int_\gamma G<\infty$ for $\I$-almost every $\gamma$
is a $\I$-weak forward upper gradient $($resp., $\I$-weak upper gradient$)$ of $f$ if and only if,
for $\I$-almost every $\gamma$, $f_\gamma$ in Definition $\ref{weakdef}$
satisfies \eqref{derivuppeest} $($resp., \eqref{derivuppeest3}$)$.
\end{corollary}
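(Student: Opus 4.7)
The plan is to observe that this corollary is essentially a restatement of Proposition~\ref{uppergradsobo} together with a converse direction coming from the fundamental theorem of calculus for absolutely continuous functions. The forward implication ($\Rightarrow$) is already contained in Proposition~\ref{uppergradsobo}\eqref{requweak-1}, \eqref{requweak-2}: if $G$ is a $\I$-weak forward upper gradient (resp., $\I$-weak upper gradient) of $f$, then $f$ is Sobolev along $\I$-almost every curve and the pointwise bound \eqref{derivuppeest} (resp., \eqref{derivuppeest3}) holds $\mathscr{L}^1$-a.e.\ for $\I$-almost every $\gamma$. So only the converse needs a genuine argument.

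For the reverse implication ($\Leftarrow$), I would fix any $\eta \in \I$ and restrict attention to the full $\eta$-measure set of curves $\gamma \in \AC^p([0,1];X)$ on which simultaneously $\int_\gamma G < \infty$, $f\circ \gamma$ admits the absolutely continuous representative $f_\gamma$ from Definition~\ref{weakdef} (coinciding with $f\circ\gamma$ at $\{0,1\}$ and $\mathscr{L}^1$-a.e.\ in $(0,1)$), and the pointwise inequality \eqref{derivuppeest} (resp., \eqref{derivuppeest3}) holds. Since $f_\gamma$ is absolutely continuous on $[0,1]$, the classical fundamental theorem of calculus and the hypothesis yield
\[
\int_{\partial\gamma} f = f\bigl(\gamma(1)\bigr) - f\bigl(\gamma(0)\bigr) = f_\gamma(1) - f_\gamma(0) = \int_0^1 f'_\gamma(t) \,{\dd}t \leq \int_0^1 G\bigl(\gamma(t)\bigr)|\gamma'_+|(t) \,{\dd}t = \int_\gamma G,
\]
which is exactly the weak forward upper gradient condition. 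For the symmetric version, the same chain applied to $|f_\gamma(1) - f_\gamma(0)| \leq \int_0^1 |f'_\gamma(t)| \,{\dd}t$ together with \eqref{derivuppeest3} gives $|\int_{\partial\gamma} f| \leq \int_\gamma G$, establishing the $\I$-weak upper gradient property.

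The routine bookkeeping points to check are: (i) that the measurability of $s \mapsto G(\gamma(s))|\gamma'_+|(s)$ along $\I$-almost every $\gamma$ follows from the $\m$-measurability of $G$ via the Fubini argument already spelled out in Remark~\ref{weakuppergradientidentti}, and (ii) that the union of two $\eta$-negligible sets is $\eta$-negligible, so the set of ``bad'' curves on which either the Sobolev regularity of $f$, the pointwise inequality, or the finiteness $\int_\gamma G < \infty$ fails remains $\eta$-null, and this works uniformly over all $\eta \in \I$. I do not foresee a serious obstacle here; the statement is essentially a tautological consequence of the definition of Sobolev along $\I$-almost every curve once Proposition~\ref{uppergradsobo} is in hand.
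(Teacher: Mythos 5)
Your proof is correct and is exactly the argument the paper has in mind: the authors present the corollary as "immediate" from Proposition~\ref{uppergradsobo}, and what you have written is precisely the routine unpacking — the ($\Rightarrow$) direction is Proposition~\ref{uppergradsobo}\eqref{requweak-1}, \eqref{requweak-2}, and the ($\Leftarrow$) direction is the fundamental theorem of calculus for the absolutely continuous representative $f_\gamma$ together with the fact that $f_\gamma$ agrees with $f\circ\gamma$ at $t=0,1$ (so that $\int_{\partial\gamma} f = f_\gamma(1)-f_\gamma(0)$), plus the null-set and measurability bookkeeping you correctly flagged.
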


\subsection{Calculus with weak upper gradients}

Throughout this subsection, without otherwise indicated,
let $(X,\tau,d,\m)$ be a $\APE$ satisfying Assumption~\ref{strongerstassumptiontheta},
and $\I$ be a stretchable collection of $p$-test plans.
When $p>1$, we denote its H\"older conjugate by $q$ (i.e., $p^{-1}+q^{-1}=1$).
We begin with two fundamental properties of $\I$-weak forward upper gradients.

\begin{lemma}\label{miniweakuppergradi}
Let $f:X\rightarrow \mathbb{R}$ be $\m$-measurable.
If $G_1,G_2$ are $\I$-weak forward upper gradients of $f$,
then $\min\{G_1,G_2\}$ is also a $\I$-weak forward upper gradient of $f$.
\end{lemma}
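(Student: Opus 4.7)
The plan is to invoke Proposition \ref{uppergradsobo}\eqref{requweak-1} and Corollary \ref{equivweakupp} so that we never have to manipulate the line integral $\int_{\partial\gamma}f$ directly; instead we work with the absolutely continuous representative $f_\gamma$ of $f\circ\gamma$ and compare its classical derivative pointwise with $G_1,G_2$ and then with $\min\{G_1,G_2\}$.

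First I would apply Proposition \ref{uppergradsobo}\eqref{requweak-1} to $G_1$: this produces, for $\I$-almost every $\gamma\in\AC^p([0,1];X)$, an absolutely continuous representative $f_\gamma:[0,1]\to\mathbb{R}$ of $f\circ\gamma$ (agreeing with $f\circ\gamma$ at the endpoints and $\mathscr{L}^1$-a.e.\ inside $(0,1)$), together with the bound $f'_\gamma\le G_1\circ\gamma\cdot|\gamma'_+|$ $\mathscr{L}^1$-a.e.\ in $[0,1]$, and with $\int_\gamma G_1<\infty$. Applying the same proposition to $G_2$ gives, for $\I$-almost every $\gamma$, an absolutely continuous representative of $f\circ\gamma$ obeying the analogous bound with $G_2$. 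Since an absolutely continuous representative of $f\circ\gamma$ is unique up to equality on a set of full $\mathscr{L}^1$-measure (and is forced to coincide with $f\circ\gamma$ at $\{0,1\}$), the two representatives coincide; so for $\I$-almost every $\gamma$ the single function $f_\gamma$ satisfies
\[
 f'_\gamma(t)\le G_i\bigl(\gamma(t)\bigr)|\gamma'_+|(t)\quad\text{for }i=1,2,\ \mathscr{L}^1\text{-a.e.\ }t\in[0,1].
\]

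Taking the minimum over $i=1,2$ pointwise yields
\[
 f'_\gamma(t)\le \min\{G_1,G_2\}\bigl(\gamma(t)\bigr)|\gamma'_+|(t)\quad\mathscr{L}^1\text{-a.e.\ }t\in[0,1],
\]
and the obvious bound $\min\{G_1,G_2\}\le G_1$ gives $\int_\gamma\min\{G_1,G_2\}\le\int_\gamma G_1<\infty$ for $\I$-almost every $\gamma$. Since $f$ is Sobolev along $\I$-almost every curve and the $\m$-measurability of $\min\{G_1,G_2\}$ is immediate from that of $G_1,G_2$, Corollary \ref{equivweakupp} (forward version) directly yields that $\min\{G_1,G_2\}$ is a $\I$-weak forward upper gradient of $f$. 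Alternatively, one can finish by integrating: $\int_{\partial\gamma}f=f_\gamma(1)-f_\gamma(0)=\int_0^1 f'_\gamma\,\mathrm{d}t\le\int_\gamma\min\{G_1,G_2\}$.

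I do not anticipate a genuine obstacle here: the asymmetry of $d$ is already absorbed into Proposition \ref{uppergradsobo}\eqref{requweak-1}, which is stated for the forward metric derivative $|\gamma'_+|$, and the stretchability of $\I$ is only needed through that proposition. The only mild subtlety is confirming that the representative $f_\gamma$ provided by the proposition is the same for $G_1$ and $G_2$; this is just uniqueness of the absolutely continuous representative of $f\circ\gamma\in W^{1,1}(0,1)$, which the proof of Proposition \ref{uppergradsobo} already establishes.
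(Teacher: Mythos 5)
Your proof is correct and takes essentially the same route as the paper: the paper also derives the pointwise bound $f'_\gamma \le \min\{G_1,G_2\}\circ\gamma\cdot|\gamma'_+|$ from Proposition~\ref{uppergradsobo} and then invokes Corollary~\ref{equivweakupp}. You have merely made explicit the uniqueness of the absolutely continuous representative $f_\gamma$, which the paper leaves implicit.
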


\begin{proof}
It follows from Proposition \ref{uppergradsobo} that $f$ is Sobolev along $\I$-almost every $\gamma$ and
\[
f'_\gamma \leq \min\{G_1,G_2\}\circ \gamma \cdot |\gamma'_+| \quad
 \text{$\mathscr{L}^1$-a.e.\ in $[0,1]$}
\]
for $\I$-almost every $\gamma\in \AC^p([0,1];X)$.
This completes the proof by Corollary \ref{equivweakupp}.
\end{proof}

\begin{lemma}\label{weakuppniegsets}
Suppose that  $f,\widetilde{f}:X\rightarrow \mathbb{R}$ and $G,\widetilde{G}:X\rightarrow [0,\infty]$
are $\m$-measurable such that $f=\widetilde{f}$ and $G=\widetilde{G}$ hold $\m$-a.e.
If $G$ is a $\I$-weak forward upper gradient of $f$,
then $\widetilde{G}$ is a $\I$-weak forward upper gradient of $\widetilde{f}$.
\end{lemma}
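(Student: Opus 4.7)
The plan is to transfer the weak forward upper gradient inequality from $(f,G)$ to $(\widetilde f,\widetilde G)$ by exploiting the absolute continuity condition $(e_t)_\sharp\eta \ll \m$ built into the definition of a $p$-test plan, which ensures that $\m$-negligible sets are essentially invisible along $\eta$-typical curves at both the endpoints and Lebesgue-a.e.\ intermediate time.

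First I would fix a $p$-test plan $\eta \in \I$ and set $N := \{f\neq\widetilde f\} \cup \{G\neq\widetilde G\}$, which is $\m$-negligible by hypothesis. Using the second condition in \eqref{testcondi} with $t=0$ and $t=1$, we get
\[
\eta\bigl[\{\gamma \mid \gamma(0)\in N\}\bigr] =\bigl((e_0)_\sharp\eta\bigr)[N]=0, \qquad
\eta\bigl[\{\gamma \mid \gamma(1)\in N\}\bigr] =\bigl((e_1)_\sharp\eta\bigr)[N]=0,
\]
so for $\eta$-a.e.\ $\gamma$ we have $f(\gamma(i))=\widetilde f(\gamma(i))$ for $i=0,1$, hence $\int_{\partial\gamma}f=\int_{\partial\gamma}\widetilde f$.

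Next I would reproduce the Fubini argument from Remark~\ref{weakuppergradientidentti}: since $(e_t)_\sharp\eta[N]=0$ for every $t\in[0,1]$,
\[
0=\int_0^1\int_{C_\tau([0,1];X)}\mathbbm{1}_{\{\gamma(t)\in N\}}\,\eta(\dd\gamma)\,\dd t
=\int_{C_\tau([0,1];X)}\int_0^1 \mathbbm{1}_{\{\gamma(t)\in N\}}\,\dd t\,\eta(\dd\gamma),
\]
so for $\eta$-a.e.\ $\gamma$ the set $\{t\in[0,1]\mid\gamma(t)\in N\}$ has $\mathscr{L}^1$-measure zero. Consequently $G(\gamma(t))=\widetilde G(\gamma(t))$ for $\mathscr{L}^1$-a.e.\ $t$, which together with $\gamma\in\AC^p([0,1];X)$ yields $\int_\gamma G=\int_\gamma\widetilde G$.

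Combining the two observations, for $\I$-almost every $\gamma$ we obtain
\[
\int_{\partial\gamma}\widetilde f=\int_{\partial\gamma}f \leq \int_\gamma G = \int_\gamma\widetilde G <\infty,
\]
which is exactly the condition in Definition~\ref{weakuppgrad} for $\widetilde G$ to be a $\I$-weak forward upper gradient of $\widetilde f$. There is no real obstacle here; the only point requiring mild care is that both the endpoint values \emph{and} the integrand along the curve must be controlled simultaneously, but the former is handled by the $t=0,1$ evaluation maps and the latter by Fubini on $[0,1]\times C_\tau([0,1];X)$ exactly as in Remark~\ref{weakuppergradientidentti}.
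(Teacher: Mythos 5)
Your proof is correct and follows essentially the same route as the paper: handle the endpoints via $(e_0)_\sharp\eta\ll\m$ and $(e_1)_\sharp\eta\ll\m$, and handle the integrand along the curve via the Fubini argument from Remark~\ref{weakuppergradientidentti}. The only cosmetic difference is that you spell out the Fubini computation explicitly rather than citing it, and you bundle the two negligible sets into a single $N$; neither changes the substance.
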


\begin{proof}
Let $\eta \in \I$.
It suffices to prove that, for $\eta$-a.e.\ $\gamma$,
we have $f(\gamma(0))=\widetilde{f}(\gamma(0))$, $f(\gamma(1))=\widetilde{f}(\gamma(1))$,
and $\int_\gamma G=\int_\gamma \widetilde{G}$.
The first two equations follow from $(e_0)_\sharp \eta \ll \m$ and $(e_1)_\sharp \eta \ll \m$, respectively.
For the last one, recall the argument in Remark \ref{weakuppergradientidentti} to see that,
for $\eta$-a.e.\ $\gamma$, we have $G(\gamma(t))=\widetilde{G}(\gamma(t))$
for $\mathscr{L}^1$-a.e.\ $t \in [0,1]$.
Therefore, $\int_\gamma G= \int_{\gamma}\widetilde{G}$ for such $\gamma$.
\end{proof}

The counterparts to the above lemmas for weak (backward) upper gradients clearly hold true.

\begin{remark}
Thanks to Lemma~\ref{weakuppniegsets}, we can also consider extended real valued functions $f$,
provided $\m[\{|f|=\infty\}]=0$.
Indeed, for $\I$-almost every $\gamma$, we have $\gamma(0), \gamma(1) \not\in \{|f|=\infty\}$
and $\int_{\partial\gamma}f$ is well-defined.
\end{remark}

\begin{definition}[Minimal weak upper gradient]\label{df:min-wug}
Let $f:X\rightarrow \mathbb{R}$ be an $\m$-measurable function admitting a $\I$-weak forward upper gradient.
Then the {\it minimal $\I$-weak forward upper gradient} $|D^+f|_{w,{\I}}$ of $f$
is a $\I$-weak forward upper gradient such that, for every $\I$-weak forward upper gradient $G$ of $f$,
we have
\[
|D^+ f|_{w,\I}\leq G \quad \text{$\m$-a.e. in $X$}.
\]
We similarly define the {\it minimal $\I$-weak backward upper gradient} $|D^- f|_{w,\I}$
and the {\it minimal $\I$-weak upper gradient} $|D f|_{w,\I}$.
\end{definition}

\begin{remark}\label{existsupperweakgradient}
By definition and Lemma \ref{miniweakuppergradi}, we have a unique
minimal $\I$-weak forward upper gradient (up to $\m$-negligible sets).
To see the existence, consider a minimizing sequence $(G_i)_{i \ge 1}$ for the quantity
$\Xi(G):=\int_X \arctan(G)\vartheta \,\dm$ among $\I$-weak forward upper gradients of $f$,
for $\vartheta$ as in Lemma \ref{fintieboumeasure}.
Thanks to Lemma \ref{miniweakuppergradi}, we may assume that $G_{i+1} \leq G_i$.
Then, the monotone convergence theorem yields that $G:=\inf_i G_i$ exists $\m$-a.e.,
which is a minimizer of $\Xi$ and enjoys
\[
\int_{\partial\gamma}f \leq \lim_{i \to \infty} \int_\gamma G_i
 =\int_{\gamma} \lim_{i \to\infty}G_i =\int_\gamma G
\]
for $\I$-almost every $\gamma$.
Hence, $G$ is the unique minimal $\I$-weak forward upper gradient.
\end{remark}

\begin{remark}
If $\I_1\subset \I_2$ are stretchable collections of $p$-test plans
and a function $f:X\rightarrow \mathbb{R}$ is Sobolev along $\I_2$-almost every curve,
then $f$ is also Sobolev along $\I_1$-almost every curve and $|D^+f|_{w,\I_1}\leq |D^+f|_{w,\I_2}$.
Indeed, a larger class of $p$-test plans induces a smaller class of weak forward upper gradients,
and hence, a larger minimal weak forward upper gradient
(see \cite[Remark 5.13]{AGS2}).
\end{remark}

\begin{example}\label{finslercaseweakupgra}
Let $(\X, d_F,\m)$ be a forward metric measure space
induced from a forward complete Finsler manifold endowed with a smooth positive measure
satisfying either $\m[\X]<\infty$ or $\CD(K,\infty)$.
Recall from Example \ref{Finslercdkn} that Assumption \ref{strongerstassumptiontheta} is satisfied.
Given $f\in \Lip(\X)$,
the Rademacher theorem implies that $f$ is differentiable $\m$-a.e.
For every stretchable collection $\I$ of $p$-test plans, we observe from Corollary \ref{equivweakupp} that
\[
|D^+f|_{w,\I} \le F^*({\dd}f), \quad |D^-f|_{w,\I} \le F^*(-{\dd}f), \quad
 |Df|_{w,\I} \le \max\{F^*(\pm{\dd}f)\} \quad \text{$\m$-a.e.}
\]
We have equality in those inequalities when $\I$ is sufficiently rich.
\end{example}

We summarize some further fundamental properties of minimal weak forward upper gradients.

\begin{lemma}\label{basicweakgradiprop}
Let $f,g:X\rightarrow \mathbb{R}$ be $\m$-measurable functions having $\I$-weak forward upper gradients.
Then, we have
\begin{enumerate}[{\rm (i)}]
\item\label{weakcondpla1} $|D^+(\lambda f+c)|_{w,\I} =\lambda |D^+ f|_{w,\I}$ for any $\lambda\geq 0$ and $c\in \mathbb{R}$,
\item\label{weakcondpla2} $|D^+ (f+g)|_{w,\I} \leq |D^+ f|_{w,\I} + |D^+ g|_{w,\I}$,
\item\label{weakcondpla3} $|D^+ f|_{w,\I}=|D^-(-f)|_{w,\I}$.
\end{enumerate}
\end{lemma}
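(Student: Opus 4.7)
The plan is to derive each of the three identities by direct appeal to the definitions, exploiting the linearity of the boundary integral $\int_{\partial\gamma} f = f(\gamma(1)) - f(\gamma(0))$ and of the path integral $\int_\gamma G$, together with the minimality characterisation established in Remark~\ref{existsupperweakgradient}. No deep machinery is needed; the only care is in treating the degenerate case $\lambda=0$ and in invoking the ``$\I$-a.e. $\gamma$'' clause consistently so that all inequalities hold after a single countable intersection of full-measure sets.

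For \eqref{weakcondpla3}, I will simply unfold Definition~\ref{weakuppgrad}(2): a function $G$ is a $\I$-weak backward upper gradient of $-f$ precisely when it is a $\I$-weak forward upper gradient of $-(-f)=f$. The classes of admissible $G$'s thus coincide, and so do their $\m$-essential infima, giving $|D^-(-f)|_{w,\I}=|D^+f|_{w,\I}$.

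For \eqref{weakcondpla1} with $\lambda>0$, I observe that for any $\gamma\in C_\tau([0,1];X)$
\[
\int_{\partial\gamma}(\lambda f+c) = \lambda \int_{\partial\gamma} f, \qquad
\int_\gamma (\lambda G) = \lambda \int_\gamma G.
\]
Hence $G$ is a $\I$-weak forward upper gradient of $f$ if and only if $\lambda G$ is a $\I$-weak forward upper gradient of $\lambda f+c$; the correspondence $G\leftrightarrow \lambda G$ is an order-preserving bijection between the two classes, so taking minima yields $|D^+(\lambda f+c)|_{w,\I}=\lambda|D^+f|_{w,\I}$. When $\lambda=0$ the function $\lambda f+c\equiv c$ is constant, so $\int_{\partial\gamma}(\lambda f+c)=0\leq \int_\gamma 0$ for every $\gamma$, whence $0$ is admissible and the minimum is $0=0\cdot|D^+f|_{w,\I}$.

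For \eqref{weakcondpla2}, let $G_f,G_g$ be any $\I$-weak forward upper gradients of $f,g$; in particular, for any $\eta\in\I$, by taking the intersection of the two full-measure sets of curves, for $\eta$-a.e.\ $\gamma$ we have simultaneously $\int_{\partial\gamma} f\leq \int_\gamma G_f<\infty$ and $\int_{\partial\gamma} g\leq \int_\gamma G_g<\infty$. Adding gives
\[
\int_{\partial\gamma}(f+g) = \int_{\partial\gamma} f+\int_{\partial\gamma} g
\leq \int_\gamma G_f+\int_\gamma G_g = \int_\gamma (G_f+G_g)<\infty,
\]
so $G_f+G_g$ is a $\I$-weak forward upper gradient of $f+g$. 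Specialising $G_f=|D^+f|_{w,\I}$ and $G_g=|D^+g|_{w,\I}$ and invoking minimality of $|D^+(f+g)|_{w,\I}$ yields \eqref{weakcondpla2}. The only mild subtlety — and the closest thing to an obstacle — is that the exceptional $\eta$-null set of ``bad'' curves must be taken as the union of the two exceptional sets before adding the two defining inequalities; this is harmless but is the sole point where a careful word is required.
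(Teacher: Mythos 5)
Your proof is correct and is exactly the direct, definitional argument the paper has in mind; indeed the paper states this lemma without proof, regarding all three items as immediate from linearity of $\int_{\partial\gamma}$ and $\int_\gamma$, the backward/forward duality in Definition~\ref{weakuppgrad}, and the minimality characterisation of $|D^\pm f|_{w,\I}$. The one point you flag — intersecting the two $\eta$-null exceptional sets before adding the inequalities in (ii) — is the only place needing a word, and you handle it correctly.
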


\begin{lemma}\label{lm:wD^pm}
An $\m$-measurable function $f:X\rightarrow \mathbb{R}$ has a $\I$-weak upper gradient
if and only if it has both $\I$-weak forward and backward upper gradients, and then
we have $|Df|_{w,\I} =\max\{ |D^\pm f|_{w,\I} \}$ $\m$-a.e.
\end{lemma}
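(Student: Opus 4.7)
The plan is to prove both directions of the equivalence directly from the definitions (Definition \ref{weakuppgrad}) and from the characterization of the minimal weak upper gradient (Definition \ref{df:min-wug}), using the elementary fact that $|a| \le c$ is equivalent to both $a \le c$ and $-a \le c$.

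First I would verify the easy ``only if'' direction: suppose $G$ is a $\I$-weak upper gradient of $f$, so that $|\int_{\partial\gamma} f| \le \int_\gamma G < \infty$ holds for $\I$-almost every $\gamma$. Since $\int_{\partial\gamma}(-f) = -\int_{\partial\gamma} f$, the inequality $|\int_{\partial\gamma} f| \le \int_\gamma G$ simultaneously yields $\int_{\partial\gamma} f \le \int_\gamma G$ and $\int_{\partial\gamma}(-f) \le \int_\gamma G$. Thus $G$ is both a $\I$-weak forward and a $\I$-weak backward upper gradient of $f$. Applied to the minimal choice $G = |Df|_{w,\I}$, this shows $|D^\pm f|_{w,\I} \le |Df|_{w,\I}$ $\m$-a.e., hence $\max\{|D^\pm f|_{w,\I}\} \le |Df|_{w,\I}$ $\m$-a.e.

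For the ``if'' direction, assume $G_+$ is a $\I$-weak forward and $G_-$ is a $\I$-weak backward upper gradient. Set $G := \max\{G_+, G_-\}$, which is $\m$-measurable. For $\I$-almost every $\gamma$ we simultaneously have $\int_{\partial\gamma} f \le \int_\gamma G_+ \le \int_\gamma G$ and $-\int_{\partial\gamma} f = \int_{\partial\gamma}(-f) \le \int_\gamma G_- \le \int_\gamma G$, with both integrals of $G$ finite; therefore $|\int_{\partial\gamma} f| \le \int_\gamma G < \infty$, making $G$ a $\I$-weak upper gradient of $f$. Taking $G_\pm := |D^\pm f|_{w,\I}$ and invoking the existence/minimality discussion of Remark \ref{existsupperweakgradient} gives $|Df|_{w,\I} \le \max\{|D^\pm f|_{w,\I}\}$ $\m$-a.e.

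Combining the two inequalities yields the claimed $\m$-a.e.\ identity $|Df|_{w,\I} = \max\{|D^\pm f|_{w,\I}\}$. I expect no serious obstacle: the only mild subtlety is making sure that the exceptional $\I$-negligible sets of curves from the forward and backward definitions can be unioned (which is automatic, since the union of two $\eta$-null sets is $\eta$-null for every $\eta \in \I$), and that the finiteness condition $\int_\gamma G < \infty$ transfers to the maximum, which follows from $\int_\gamma G \le \int_\gamma G_+ + \int_\gamma G_-$.
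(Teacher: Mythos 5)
Your proof is correct and is precisely the argument the paper has in mind: the official proof consists of the single line pointing to the observation after Definition~\ref{weakuppgrad}, which is exactly the ``$\max\{G_+,G_-\}$ is a weak upper gradient'' / ``a weak upper gradient is both forward and backward'' dichotomy you spell out. You have simply filled in the details (minimality, unioning null sets, finiteness via $\max\{G_+,G_-\}\le G_++G_-$), all of which are correct.
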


\begin{proof}
This is straightforward from the observation after Definition~\ref{weakuppgrad}.
\end{proof}

Next, we establish the stability of weak forward upper gradients (cf.\ \cite[Theorem 5.14]{AGS2}).
Note that the assumption \eqref{etbxcondition} below is weaker than the bounded compression \eqref{bouncomprecon}.

\begin{proposition}\label{stabilityofweakgradient}
Let $p>1$ and
suppose that, for any $\eta \in \I$ and $M\geq 0$, there is $C=C(\eta,M) \ge 0$ such that
\begin{equation}\label{etbxcondition}
\int^1_0 \bigl( (e_t)_\sharp \eta \bigr) \bigl[ \{x \in B \mid V(x)\leq M\} \bigr] \,{\dd}t
 \leq C\m[B] \quad \text{for all}\ B \in \mathcal{B}(X).
\end{equation}
Let $(f_i)_{i \ge 1}$ be a sequence of $\m$-measurable functions
and $G_i$ be a $\I$-weak forward upper gradient of $f_i$.
If $f_i(x) \to f(x)$ for $\m$-a.e.\ $x \in X$ and
$G_i$ weakly converges to $G$ in $L^q(\{V\leq M\},\m)$ for all $M\geq 0$,
then $G$ is a $\I$-weak forward upper gradient of $f$.
\end{proposition}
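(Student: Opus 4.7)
The plan is to extend the Ambrosio--Gigli--Savar\'e stability argument to the asymmetric setting, combining Mazur's lemma (to upgrade the weak $L^q$-convergence into strong convergence of suitable convex combinations) with a double truncation over sublevels of $V$ and of the $p$-energy $\mathcal{E}_p$, so as to reduce matters to the compression estimate~\eqref{etbxcondition}.

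Fix $\eta \in \I$. First I would invoke Mazur's lemma on $L^q(\{V \le M\},\m)$ for each $M \in \mathbb{N}$, together with a diagonal procedure over $M$, to produce finite tail convex combinations
\[
\tilde G_n := \sum_{i=n}^{N(n)} \alpha_{n,i} G_i, \qquad
\tilde f_n := \sum_{i=n}^{N(n)} \alpha_{n,i} f_i, \qquad
\alpha_{n,i} \ge 0,\ \sum_i \alpha_{n,i} = 1,
\]
such that $\tilde G_n \to G$ strongly in $L^q(\{V \le M\},\m)$ for every $M \ge 0$. By the sublinearity of weak forward upper gradients (Lemma~\ref{basicweakgradiprop}), $\tilde G_n$ is a $\I$-weak forward upper gradient of $\tilde f_n$, while the pointwise $\m$-a.e.\ convergence of $(f_i)$ gives $\tilde f_n \to f$ $\m$-a.e. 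Hence, for $\eta$-a.e.\ $\gamma$,
\begin{equation}\label{eq:stab-approx}
\tilde f_n\bigl(\gamma(1)\bigr) - \tilde f_n\bigl(\gamma(0)\bigr) \,\le\, \int_0^1 \tilde G_n\bigl(\gamma(t)\bigr)\,|\gamma'_+|(t)\,{\dd}t \,<\, \infty,
\end{equation}
and since $(e_0)_\sharp \eta,\ (e_1)_\sharp \eta \ll \m$, the left-hand side of \eqref{eq:stab-approx} converges to $f(\gamma(1)) - f(\gamma(0))$ for $\eta$-a.e.\ $\gamma$.

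The key step is the passage to the limit on the right-hand side of~\eqref{eq:stab-approx}. For $\eta$-a.e.\ $\gamma$, the image $\gamma([0,1])$ is $\tau$-compact so that $\sup_{t \in [0,1]} V(\gamma(t)) < \infty$ by Assumption~\ref{strongerstassumptiontheta}, and $\mathcal{E}_p(\gamma) < \infty$ since $\eta$ is a $p$-test plan. Therefore the exhausting sets
\[
A_{M,N} := \Bigl\{ \gamma \in \AC^p([0,1];X) \,\Bigm|\, \sup_{t \in [0,1]} V\bigl(\gamma(t)\bigr) \le M,\ \mathcal{E}_p(\gamma) \le N \Bigr\}
\]
satisfy $\eta\bigl[\bigcup_{M,N} A_{M,N}\bigr] = 1$, so it suffices to work on each $A_{M,N}$. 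On $A_{M,N}$, H\"older's inequality (with exponents $p,q$ in the variable $t$), Fubini's theorem, and the estimate~\eqref{etbxcondition} applied to $\eta|_{A_{M,N}} \le \eta$ (which inherits the same compression constant) yield
\[
\int_{A_{M,N}} \int_0^1 |\tilde G_n - G|\bigl(\gamma(t)\bigr)\,|\gamma'_+|(t)\,{\dd}t\,\eta({\dd}\gamma)
\,\le\, N^{1/p}\,[C(\eta,M)]^{1/q}\,\bigl\| (\tilde G_n - G)\mathbbm{1}_{\{V \le M\}} \bigr\|_{L^q(\m)},
\]
which tends to $0$ by the strong $L^q$-convergence of $\tilde G_n$ on $\{V \le M\}$. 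A diagonal subsequence over $(M,N)$ then makes $\int_0^1 \tilde G_n(\gamma(t))\,|\gamma'_+|(t)\,{\dd}t \to \int_0^1 G(\gamma(t))\,|\gamma'_+|(t)\,{\dd}t < \infty$ hold for $\eta$-a.e.\ $\gamma$, and passing to the limit in~\eqref{eq:stab-approx} with $\eta \in \I$ arbitrary concludes the proof.

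The hard part is precisely this passage under the integral: weak $L^q$-convergence of $(G_i)$ does not directly imply curve-wise convergence of the upper-gradient integrals, and no global $\eta$-integrability of $\mathcal{E}_p$ is available from the definition of a $p$-test plan. Mazur's lemma upgrades weak to strong convergence after tail convex combinations, while the double truncation by sublevels of $V$ and of $\mathcal{E}_p$, combined with~\eqref{etbxcondition}, transfers strong $L^q$-convergence into curve-wise convergence through H\"older--Fubini. The asymmetry of $d$ plays no substantive role here, since the whole argument only uses the forward upper gradient inequality.
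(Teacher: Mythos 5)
Your proof is correct and follows essentially the same route as the paper's: Mazur's lemma to pass from weak to strong $L^q$-convergence of convex combinations, then the compression estimate \eqref{etbxcondition} together with H\"older and Fubini to transfer strong $L^q$-convergence on $\{V\le M\}$ into curve-wise convergence of the upper-gradient integrals, with an exhaustion over sublevels of $V$ and of $\mathcal{E}_p$. The only cosmetic difference is that the paper compresses the truncation into a single reduction (``by approximation, we may assume that $\eta$-a.e.\ $\gamma$ satisfies $\mathcal{E}_p(\gamma)\le L$ and $\gamma\subset\{V\le M\}$''), whereas you carry out the diagonal argument over the sets $A_{M,N}$ explicitly.
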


\begin{proof}
Fix $\eta \in \I$.
By approximation, we may assume that, for some $L,M>0$,
$\eta$-a.e.\ $\gamma$ satisfies $\mathcal {E}_p(\gamma)\leq L$ (recall Definition~\ref{pengergycurve})
and $\gamma \subset \{V\leq M\}$.
Since $G_i$ weakly converges to $G$ in $L^q(\{V\leq M\},\m)$,
Mazur's lemma yields convex combinations
\[
H_k =\sum_{i=N_k +1}^{N_{k+1}} \alpha_i G_i \quad
 \text{with }\alpha_i \geq 0,\ \sum_{i=N_k +1}^{N_{k+1} }\alpha_i=1,\quad \lim_{k \to \infty} N_k =\infty,
\]
which strongly converges to $G$ in $L^q(\{V\leq M\},\m)$.
Note that $H_k$ is a $\I$-weak forward upper gradient of
$\widetilde{f}_k :=\sum_{i=N_k +1}^{N_{k+1}} \alpha_i f_i$ and that $\widetilde{f}_k \to f$ $\m$-a.e.

Now, for every nonnegative Borel function $\varphi:X\rightarrow [0,\infty]$,
we deduce from \eqref{etbxcondition} that
\[
\int^1_0 \int_{\{V\leq M\}} \varphi^q \,{\dd}[(e_t)_\sharp \eta] \,{\dd}t
 \leq C(\eta,M) \int_{\{V\leq M\}} \varphi^q \,{\dm}.
\]
Thus, setting $\mathfrak{C}:=C^{1/q}L^{1/p}$, we have
\begin{align*}
&\int_{C_\tau([0,1];X)} \biggl( \int_{\gamma \cap \{ V\leq M \}} \varphi \biggr) \,\eta({\dd}\gamma)
 =\int_{C_\tau([0,1];X)}
 \biggl( \int^1_0 \mathbbm{1}_{\{V\leq M\}}(\gamma) \varphi(\gamma) |\gamma'_+| \,{\dd}t \biggr)
 \,\eta({\dd}\gamma) \\
&\leq \biggl( \int_{C_\tau([0,1];X)} \int^1_0 \mathbbm{1}_{\{V\leq M\}}(\gamma) \varphi^q(\gamma) \,{\dd}t \,\eta({\dd}\gamma)
 \biggr)^{1/q} \biggl( \int_{C_\tau([0,1];X)} \int^1_0 |\gamma'_+|^p \,{\dd}t \,\eta({\dd}\gamma) \biggr)^{1/p} \\
&= \biggl( \int^1_0 \int_{\{V\leq M\}} \varphi^q \,{\dd}[(e_t)_\sharp \eta] \,{\dd}t \biggr)^{1/q}
 \biggl( \int_{C_\tau([0,1];X)} \mathcal {E}_p(\gamma) \,\eta({\dd}\gamma) \biggr)^{1/p} \\
&\leq \biggl( C \int_{\{V\leq M\}} \varphi^q \,{\dm} \biggr)^{1/q} L^{1/p}
 = \mathfrak{C} \|\varphi\|_{L^q(\{V\leq M\},\m)}. 
\end{align*}
Substituting $\varphi=|H_k -G|$ yields
\[
\int_{C_\tau([0,1];X)} \biggl( \int_{\gamma\cap \{ V\leq M \}} |H_k -G| \biggr) \,\eta({\dd}\gamma)
 \leq \mathfrak{C} \|H_k -G\|_{L^q(\{V\leq M\},\m)}\to 0 \quad (k \to \infty).
\]
Hence, there exists a subsequence, again denoted by $H_k$,
such that $\int_\gamma |H_k -G| \to 0$ for $\eta$-a.e.\ $\gamma$.

Since $\widetilde{f}_k \to f$ $\m$-a.e., it follows from \eqref{testcondi} that
$\widetilde{f}_k (\gamma(t))\to f(\gamma(t))$ at $t=0,1$ for $\eta$-a.e.\ $\gamma$.
Therefore, as the limit of $\int_{\partial \gamma} \widetilde{f}_k \le \int_\gamma H_k$,
we obtain $\int_{\partial \gamma} f \le \int_\gamma G$ for $\eta$-a.e.\ $\gamma$.
This completes the proof.
\end{proof}

\begin{corollary}\label{weakconvegestability}
Let $f_i \in L^2(X,\m)$ be weakly convergent to $f$ in $L^2(X,\m)$, and
$G_i \in L^q(X,\m)$ be a $\I$-weak forward upper gradient of $f_i$ weakly converging to $G$ in $L^q(X,\m)$.
Then, $G$ is a $\I$-weak forward upper gradient of $f$  provided that $\I$ satisfies \eqref{etbxcondition}.
\end{corollary}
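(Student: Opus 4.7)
The plan is to reduce Corollary \ref{weakconvegestability} to the already-established stability result Proposition \ref{stabilityofweakgradient}. That proposition requires pointwise $\m$-a.e.\ convergence of the functions and weak $L^q$-convergence of the gradients on every sublevel $\{V\leq M\}$, whereas here we are only given weak $L^2$-convergence of $(f_i)$. The natural bridge between these hypotheses is Mazur's lemma applied to the sequence $(f_i)$.

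First, I would apply Mazur's lemma to the weakly convergent sequence $(f_i)$ in the Hilbert space $L^2(X,\m)$, producing convex combinations
\[
\widetilde{f}_k := \sum_{i=N_k+1}^{N_{k+1}} \alpha_i^{(k)} f_i, \qquad
\alpha_i^{(k)} \geq 0, \quad \sum_{i=N_k+1}^{N_{k+1}} \alpha_i^{(k)} = 1, \quad N_k \to \infty,
\]
that converge strongly to $f$ in $L^2(X,\m)$; passing to a subsequence (still denoted $\widetilde{f}_k$), I may arrange $\widetilde{f}_k \to f$ $\m$-a.e.\ in $X$. Using the \emph{same} coefficients, I form the corresponding combinations $\widetilde{G}_k := \sum_{i=N_k+1}^{N_{k+1}} \alpha_i^{(k)} G_i$. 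The subadditivity and positive homogeneity of weak forward upper gradients (immediate from Definition \ref{weakuppgrad} and compatible with Lemma \ref{basicweakgradiprop}\eqref{weakcondpla1}, \eqref{weakcondpla2}) ensure that each $\widetilde{G}_k$ is a $\I$-weak forward upper gradient of $\widetilde{f}_k$.

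Second, I would verify that $\widetilde{G}_k \to G$ weakly in $L^q(\{V\leq M\},\m)$ for every $M \geq 0$. Weak convergence in $L^q(X,\m)$ restricts to weak convergence in $L^q(\{V\leq M\},\m)$ by extending test functions by zero outside $\{V\leq M\}$, so $G_i \to G$ weakly in each such space. Moreover, if $\varphi \in L^p(\{V\leq M\},\m)$ and $b_i := \int G_i \varphi \,\dm \to b := \int G \varphi \,\dm$, then any convex combination with coefficients supported on a tail satisfies $|\sum_i \alpha_i^{(k)} b_i - b| \le \sum_i \alpha_i^{(k)} |b_i - b|$, which tends to $0$ because $N_k \to \infty$. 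Thus the pair $(\widetilde{f}_k, \widetilde{G}_k)$ meets the hypotheses of Proposition \ref{stabilityofweakgradient}, and we conclude that $G$ is a $\I$-weak forward upper gradient of $f$.

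There is no serious obstacle here: the argument is a standard upgrade of weak to strong convergence via Mazur's lemma. The only point requiring a moment's thought is that the convex coefficients obtained from the weak $L^2$-convergence of $(f_i)$ simultaneously preserve the weak $L^q$-limit of $(G_i)$, which is true essentially automatically since tail convex combinations of a convergent numerical sequence always retain the limit.
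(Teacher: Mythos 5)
Your argument is correct and follows essentially the same route as the paper: apply Mazur's lemma to $(f_i)$ to get convex combinations $\widetilde{f}_k \to f$ strongly in $L^2$, transfer the same coefficients to $(G_i)$ to get weak forward upper gradients $\widetilde{G}_k$ of $\widetilde{f}_k$ which still converge weakly to $G$ (a tail convex combination of a convergent scalar sequence converges to the same limit), and then invoke Proposition~\ref{stabilityofweakgradient}. Your version is if anything slightly more careful than the paper's in spelling out the passage to an $\m$-a.e.\ convergent subsequence of $\widetilde{f}_k$ and the restriction of weak $L^q$-convergence to the sublevels $\{V\le M\}$.
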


\begin{proof}
By Mazur's lemma, we have convex combinations
$\widetilde{f}_k=\sum_{i=N_k +1}^{N_{k+1}} \alpha_i f_i$ strongly converging to $f$,
and $H_k :=\sum_{i=N_k +1}^{N_{k+1}} \alpha_i G_i$ is a $\I$-weak forward upper gradient of $\widetilde{f}_k$
weakly converging to  $G$.
Thus, by Proposition \ref{stabilityofweakgradient}, $G$ is a $\I$-weak forward upper gradient of $f$.
\end{proof}

The following chain rule follows essentially from that in the Euclidean case
(cf.\ \cite[Proposition 5.16]{AGS2}).
Compare \eqref{weakpro3} with Lemma \ref{lipconvexfundd}.

\begin{proposition}\label{properweakuppergr}
Let $p>1$ and suppose that $\I$ satisfies \eqref{etbxcondition}.
If an $\m$-measurable function $f:X\rightarrow \mathbb{R}$ has a $\I$-weak forward upper gradient,
then we have the following.
\begin{enumerate}[{\rm (i)}]
\item\label{weakpro1}
For any $\mathscr{L}^1$-negligible Borel set $N\subset \mathbb{R}$,
we have $|D^+f|_{w,\I}=0$ $\m$-a.e.\ on $f^{-1}(N)$.

\item\label{weakpro2}
For any non-decreasing function $\phi$ which is locally Lipschitz on an interval including the image of $f$,
we have $|D^+(\phi(f))|_{w,\I}=\phi'(f)|D^+f|_{w,\I}$ $\m$-a.e.\ on $X$.

\item\label{weakpro3}
For any non-decreasing contraction $($i.e., $|\phi(x)-\phi(y)|\leq c|x-y|$ for some $c \in [0,1))$, we have
\[
\bigl| D^+ \bigl( f+\phi(g-f) \bigr) \bigr|^q_{w,\I} +\bigl| D^+ \bigl( g-\phi(g-f) \bigr) \bigr|^q_{w,\I}
 \leq |D^+f|^q_{w,\I} +|D^+g|^q_{w,\I} \quad \m\text{-a.e.\ in }X.  
\]
\end{enumerate}
\end{proposition}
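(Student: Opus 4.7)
The plan is to reduce all three statements to one-dimensional facts about absolutely continuous functions via Proposition \ref{uppergradsobo}, which for $\I$-almost every curve $\gamma$ provides an absolutely continuous representative $f_\gamma$ of $f\circ\gamma$ satisfying $f'_\gamma \leq |D^+f|_{w,\I}(\gamma) \cdot |\gamma'_+|$ $\mathscr{L}^1$-a.e.\ on $[0,1]$, and similarly for $g$. For (i), I would establish the stronger statement that $\widetilde{G} := G \cdot \mathbbm{1}_{X \setminus f^{-1}(N)}$ is a $\I$-weak forward upper gradient whenever $G$ is one, then conclude via the minimality characterising $|D^+f|_{w,\I}$. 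The key input is the classical real-variable fact that for any absolutely continuous $h:[0,1]\to \mathbb{R}$ and $\mathscr{L}^1$-negligible Borel $N\subset \mathbb{R}$, one has $h'=0$ $\mathscr{L}^1$-a.e.\ on $h^{-1}(N)$. Applied to $f_\gamma$ this yields, for $\I$-a.e.\ $\gamma$,
\[
\int_{\partial\gamma} f = \int_0^1 f'_\gamma \,\dd t = \int_{\{t:\,f_\gamma(t)\notin N\}} f'_\gamma \,\dd t \leq \int_0^1 \widetilde G(\gamma) |\gamma'_+| \,\dd t = \int_\gamma \widetilde G.
\]

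For (ii), the bound $|D^+(\phi\circ f)|_{w,\I} \leq \phi'(f)|D^+f|_{w,\I}$ follows from the one-dimensional chain rule $(\phi\circ f_\gamma)' = \phi'(f_\gamma)f'_\gamma$ combined with $\phi' \geq 0$; here I set $\phi' := 0$ on the $\mathscr{L}^1$-null set where $\phi$ fails to be differentiable, which is harmless because by part (i) $f'_\gamma$ vanishes a.e.\ on the preimage of that set. For the reverse inequality, on $\{\phi'(f) = 0\}$ both sides vanish $\m$-a.e.: the right-hand side trivially, and the left-hand side by applying part (i) to $\phi\circ f$ together with the $\mathscr{L}^1$-null set $\phi(\{\phi' = 0\})$, whose nullity is the Luzin N-property for locally Lipschitz maps. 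On $\{\phi'(f) > 0\}$ I would regularise through $\phi_\varepsilon(t) := \phi(t) + \varepsilon t$, which is strictly increasing and locally bi-Lipschitz; applying the already-established upper bound to both $\phi_\varepsilon$ and $\phi_\varepsilon^{-1}$ forces equality for $\phi_\varepsilon$, after which one passes to the limit $\varepsilon \to 0^+$.

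For (iii), set $h := g-f$, $\tilde f := f+\phi(h)$, $\tilde g := g-\phi(h)$. For $\I$-a.e.\ $\gamma$, the chain rule (again justified as in (ii) via part (i)) yields
\[
(\tilde f_\gamma)' = a f'_\gamma + b g'_\gamma, \qquad (\tilde g_\gamma)' = b f'_\gamma + a g'_\gamma,
\]
with $a := 1-\phi'(h_\gamma) \in [1-c,1]$ and $b := \phi'(h_\gamma) \in [0,c]$ satisfying $a+b=1$. Writing $A := |D^+f|_{w,\I}$ and $B := |D^+g|_{w,\I}$, the nonnegativity of $a,b$ together with the pointwise bounds from Proposition \ref{uppergradsobo} show that $\tilde G_1 := aA + bB$ and $\tilde G_2 := bA + aB$ are $\I$-weak forward upper gradients of $\tilde f$ and $\tilde g$ respectively. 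It then remains to verify the pointwise inequality
\[
(aA + bB)^q + (bA + aB)^q \leq A^q + B^q,
\]
which is immediate from the convexity of $s \mapsto s^q$ on $[0,\infty)$: $(aA+bB)^q \leq aA^q + bB^q$ and $(bA+aB)^q \leq bA^q + aB^q$ sum to $(a+b)(A^q+B^q) = A^q+B^q$. The principal technical obstacle in the whole proposition is justifying the chain rule in (ii) and (iii) on the $\mathscr{L}^1$-null set where $\phi$ is not differentiable; this is precisely why (i) is proved first and repeatedly invoked as a black box in the subsequent parts.
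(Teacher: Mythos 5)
Your treatment of (i) follows the paper's argument: use the one-dimensional fact that an absolutely continuous function $h$ satisfies $h'=0$ a.e.\ on $h^{-1}(N)$, then conclude via Corollary \ref{equivweakupp} and minimality. For (iii) your route is slightly shorter than the paper's: you apply the Lipschitz chain rule directly to $\phi(g_\gamma - f_\gamma)$, whereas the paper first approximates $\phi$ by $C^1$ contractions and invokes the Lagrange mean value theorem. Both reach the identical pointwise estimate $|D^+\tilde f|_{w,\I} \le (1-\phi'(g-f))|D^+f|_{w,\I} + \phi'(g-f)|D^+g|_{w,\I}$ and the same Jensen step, so this is a valid alternative (though $a$ and $b$ should be read as the functions $1-\phi'(g-f)$ and $\phi'(g-f)$ on $X$, not as constants along a fixed $\gamma$).

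For the reverse inequality in (ii) you take a genuinely different and considerably longer route than the paper, and the argument has a gap. The paper's proof is a short trick: normalize $\phi$ to be $1$-Lipschitz, apply the upper bound to both $\phi$ and $t\mapsto t-\phi(t)$ (also non-decreasing and locally Lipschitz), and then use the subadditivity of $|D^+\cdot|_{w,\I}$ (Lemma \ref{basicweakgradiprop}\eqref{weakcondpla2}) on $f=\phi(f)+(f-\phi(f))$ to squeeze equality. Your regularization $\phi_\varepsilon(t):=\phi(t)+\varepsilon t$ together with its bi-Lipschitz inverse can be made to work, but several points need attention: the nullity of $\phi(\{\phi'=0\})$ is the one-dimensional Sard theorem for Lipschitz maps rather than the Luzin N-property (the set $\{\phi'=0\}$ need not be $\mathscr{L}^1$-null); verifying $(\phi_\varepsilon^{-1})'(\phi_\varepsilon(f))\,\phi'_\varepsilon(f)=1$ $\m$-a.e.\ requires invoking part (i) on the two null non-differentiability sets; and, most substantively, you do not explain the passage to the limit $\varepsilon\to 0^+$. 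It is not automatic that $|D^+(\phi_\varepsilon(f))|_{w,\I}$ converges to $|D^+(\phi(f))|_{w,\I}$ along an $L^q$-approximating sequence. One correct way to finish is to combine $|D^+(\phi_\varepsilon(f))|_{w,\I}=(\phi'(f)+\varepsilon)|D^+f|_{w,\I}$ with the estimate $|D^+(\phi_\varepsilon(f))|_{w,\I}\le |D^+(\phi(f))|_{w,\I}+\varepsilon|D^+f|_{w,\I}$ (from $\phi_\varepsilon(f)=\phi(f)+\varepsilon f$ and Lemma \ref{basicweakgradiprop}\eqref{weakcondpla2}) and cancel the $\varepsilon$-terms; without such a step, (ii) is incomplete as written.
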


\begin{proof}
\eqref{weakpro1}
It follows from  Proposition \ref{uppergradsobo}\eqref{requweak-1} that,
for $\I$-almost every $\gamma$,
$f\circ\gamma$ coincides with an absolutely continuous function $f_\gamma:[0,1]\rightarrow \mathbb{R}$
at $\{0,1\}$ and $\mathscr{L}^1$-a.e.\ in $(0,1)$, and
\begin{equation*}
f'_\gamma \leq |D^+f|_{w,\I} \circ \gamma \cdot |\gamma'_+| \quad \mathscr{L}^1\text{-a.e.\ in }[0,1].
\end{equation*}
Since $f_\gamma$ is absolutely continuous,
we have $f'_\gamma(t) =0$ for $\mathscr{L}^1$-a.e.\ $t\in f_\gamma^{-1}(N)$.
This yields that $f'_\gamma(t) =0$ for $\mathscr{L}^1$-a.e.\ $t$ with $f(\gamma(t)) \in N$.
Then, set $G(x) :=|D^+f|_{w,\I}(x)$ if $f(x) \in \mathbb{R}\setminus N$
and $G(x):=0$ otherwise.
The above argument implies
\[
f'_\gamma \leq G \circ \gamma \cdot |\gamma'_+| \quad \mathscr{L}^1\text{-a.e.\ in }[0,1].
\]
Therefore, $G$ is a $\I$-weak forward upper gradient of $f$ by Corollary \ref{equivweakupp}.
It follows that $|D^+f|_{w,\I} \le G$ $\m$-a.e., and hence $|D^+f|_{w,\I}=0$ $\m$-a.e.\ on $f^{-1}(N)$.

\eqref{weakpro2}
For $f_\gamma$ as in \eqref{weakpro1},
$\phi(f_\gamma)$ is absolutely continuous by the hypothesis on $\phi$.
Since $(\phi\circ f)_\gamma:=\phi(f_\gamma)$ satisfies
\[
(\phi\circ f)'_\gamma =\phi'(f_\gamma) f'_\gamma
 \le \phi'(f_\gamma) |D^+ f|_{w,\I} \circ \gamma \cdot |\gamma'_+|
 \quad \text{for $\mathscr{L}^1$-a.e.\ } t \in [0,1],
\]
$\phi'(f)\,|D^+f|_{w,\I}$ is a $\I$-weak forward upper gradient of $\phi(f)$
and $|D^+(\phi(f))|_{w,\I} \leq \phi'(f)|D^+f|_{w,\I}$ $\m$-a.e.

Now, by scaling, we may assume that $\phi$ is $1$-Lipschitz.
Then, $t \mapsto t-\phi(t)$ is also non-decreasing and we find
$|D^+(f-\phi(f))|_{w,\I} \leq (1-\phi'(f))|D^+f|_{w,\I}$ $\m$-a.e.
Hence,
\begin{align*}
|D^+ f|_{w,\I} &\le \bigl| D^+ \bigl( \phi(f) \bigr) \bigr|_{w,\I} +\bigl| D^+ \bigl( f-\phi(f) \bigr) \bigr|_{w,\I}
 \le \phi'(f)|D^+f|_{w,\I} +\bigl(1-\phi'(f) \bigr) |D^+f|_{w,\I} \\
&= |D^+ f|_{w,\I},
\end{align*}
and equality necessarily holds $\m$-a.e.

\eqref{weakpro3}
Approximating $\phi$ by a sequence $(\phi_i)_{i \ge 1}$ of non-decreasing $C^1$-contractions
(on an interval including the image of $f$)
such that $\phi_i \to \phi$ and $\phi'_i \to \phi'$ pointwise $\m$-a.e., we may assume that $\phi$ is $C^1$.

Put $\tilde{f}:=f+\phi(g-f)$ and note that, for $\I$-almost every $\gamma$,
$\tilde{f}_\gamma :=f_\gamma + \phi(g_\gamma-f_\gamma)$ is absolutely continuous.
Hence, for $0 \le t_1 < t_2 \le 1$, the Lagrange mean value theorem yields $t \in (t_1,t_2)$ such that
\[
\phi\bigl( (g_\gamma -f_\gamma)(t_2) \bigr) -\phi\bigl( (g_\gamma -f_\gamma)(t_1) \bigr)
 =\phi' \bigl( (g_\gamma-f_\gamma)(t) \bigr) \bigl\{ (g_\gamma-f_\gamma)(t_2)- (g_\gamma-f_\gamma)(t_1) \bigr\}.
\]
Then, we have
\begin{align*}
\tilde{f}_\gamma(t_2) -\tilde{f}_\gamma(t_1)
&= f_\gamma(t_2)-f_\gamma(t_1)
 +\phi' \bigl( (g_\gamma-f_\gamma)(t) \bigr) \bigl\{ (g_\gamma-f_\gamma)(t_2)- (g_\gamma-f_\gamma)(t_1) \bigr\} \\
&= \bigl\{ 1- \phi'\bigl( (g_\gamma-f_\gamma)(t) \bigr) \bigr\} \bigl( f_\gamma(t_2)-f_\gamma(t_1) \bigr)
 +\phi'\bigl( (g_\gamma-f_\gamma)(t) \bigr) \bigl( g_\gamma(t_2)-g_\gamma(t_1) \bigr) \\
&\le \bigl\{ 1- \phi'\bigl( (g_\gamma-f_\gamma)(t) \bigr) \bigr\} \int_{\gamma|_{[t_1,t_2]}} |D^+f|_{w,\I}
 +\phi'\bigl( (g_\gamma-f_\gamma)(t) \bigr) \int_{\gamma|_{[t_1,t_2]}}|D^+g|_{w,\I}.
\end{align*}
Since $\phi$ is $C^1$, this implies
\[
\tilde{f}'_\gamma
 \leq \bigl[ \bigl( 1-\phi'(g-f) \bigr) |D^+f|_{w,\I} \bigr] \circ\gamma \cdot |\gamma'_+|
 +\bigl[ \phi'(g-f) |D^+g|_{w,\I} \bigr] \circ\gamma \cdot |\gamma'_+|
\]
$\mathscr{L}^1$-a.e.\ on $(0,1)$.
Setting $h:= \phi'(g-f) \in [0,1]$, we find
\[
|D^+\tilde{f}|_{w,\I} \leq (1-h)|D^+f|_{w,\I} +h|D^+g|_{w,\I} \quad \text{$\m$-a.e.}
\]
By the convexity of $s \mapsto s^q$ for $s \ge 0$, we obtain
\[
|D^+\tilde{f}|_{w,\I}^q \leq (1-h)|D^+f|^q_{w,\I} +h|D^+g|^q_{w,\I} \quad \text{$\m$-a.e.}
\]
Similarly, for $\tilde{g}:=g-\phi(g-f)$, we deduce that
\[
|D^+\tilde{g}|_{w,\I}^q \leq (1-h)|D^+g|_{w,\I}^q +h|D^+f|_{w,\I}^q \quad \text{$\m$-a.e.}
\]
Combining these completes the proof of \eqref{weakpro3}.
\end{proof}

\begin{remark}\label{derivbakwarduppergradient}
If $f$ has a $\I$-weak backward upper gradient,
then applying \eqref{weakpro2} to $\bar{\phi}(s):=-\phi(-s)$ yields
\[
\bigl|D^- \bigl( \phi(f) \bigr) \bigr|_{w,\I} =\bigl| D^+ \bigl( \bar{\phi}(-f) \bigr) \bigr|_{w,\I}
 =\bar{\phi}'(-f) |D^+(-f)|_{w,\I} =\phi'(f) |D^- f|_{w,\I} \quad \text{$\m$-a.e.}
\]
\end{remark}

The next assumption plays an important role to study the Cheeger energy, heat flow and $q$-Laplacian.

\begin{assumption}\label{newassumpt32liambdafinite}
Assume that $(X,\tau,d,\m)$ is a $\APE$ satisfying Assumption \ref{strongerstassumptiontheta}
and that, for every compact set $K\subset (X,\tau)$, there is $r=r(K)>0$
such that $\m\Bigl[ \overline{B^-_K(r)}^d \Bigr] <\infty$.
\end{assumption}

\begin{remark}\label{meaningassmupt2}
Since $V$ is bounded on $K$ and forward Lipschitz by Assumption \ref{strongerstassumptiontheta},
we have ${\ee}^{-V^2} \ge c>0$ on $\overline{B^+_K(r)}^d$.
Combining this with $\int_X {\ee}^{-V^2} \dm<\infty$ yields $\m\Bigl[ \overline{B^+_K(r)}^d \Bigr] <\infty$.
Thus, \eqref{Kcondition1} holds under Assumption \ref{newassumpt32liambdafinite}.
We also remark that, when $\lambda_d(X)<\infty$ or in the Finsler setting,
Assumption \ref{newassumpt32liambdafinite} follows from Assumption \ref{strongerstassumptiontheta}.
\end{remark}

The following lemma follows the lines of \cite[Lemma 5.17]{AGS2}.

\begin{lemma}\label{bcaslemmaabso}
Let $p>1$, $(X,\tau,d,\m)$ be a $\APE$ satisfying Assumption $\ref{newassumpt32liambdafinite}$,
and ${\I_p}$ be the collection of all $p$-test plans with bounded compression on the sublevels of $V$.
Suppose that $(\mu_t) \in \AC^p([0,T];(\Po(X),W_p))$  and
a convex function $\phi:[0,\infty)\rightarrow \mathbb{R}$ satisfy the following:
\begin{enumerate}[{\rm (a)}]
\item\label{ass-a}
$\mu_t$ has the uniformly bounded density $f_t={\dd}\mu_t/{\dm}$ for all $t\in [0,T]$;
\item\label{ass-b}
$f_t$ is Sobolev along ${\I_p}$-almost every curve for $\mathscr{L}^1$-a.e.\ $t\in (0,T)$;
\item\label{ass-c}
$\phi(0)=0$ and $\phi'$ is locally Lipschitz in $(0,\infty)$;
\item\label{ass-d}
$H^\pm, G^\pm\in L^q(0,T)$, where
\[
H^\pm(t):=\biggl( \int_X |D^\pm f_t|^q_{w,{\I_p}} \dm \biggr)^{1/q}, \qquad
G^\pm(t):=\biggl( \int_{\{f_t>0\}} \bigl( \phi''(f_t)|D^\pm f_t|_{w,{\I_p}} \bigr)^qf_t \dm \biggr)^{1/q};
\]
\item\label{ass-e}
$\ds \int_X |\phi(f_0)| \dm<\infty$.
\end{enumerate}
Then, $t\mapsto \int_X |\phi(f_t)| \dm$ is bounded in $[0,T]$,
$\Phi(t):=\int_X \phi(f_t) \dm$ is absolutely continuous in $[0,T]$, and
\begin{equation}\label{dergphit}
-G^-(t) |\mu'_+|(t) \leq \Phi'(t) \leq G^+(t) |\mu'_+|(t) \quad \text{for $\mathscr{L}^1$-a.e. $t\in (0,T)$}.
\end{equation}
Moreover, for every  $t\in [0,T]$,
 we have the pointwise estimates:
\begin{align}\label{supinfdergp}
\begin{split}
\limsup_{s\to t^+}\frac{\Phi(s)-\Phi(t)}{s-t}
&\leq G^+(t) \limsup_{s\to t^+}\frac{1}{s-t}\int^s_t |\mu'_+| \,{\dd}r, \\
 \liminf_{s\to t^+} \frac{\Phi(s)-\Phi(t)}{s-t}
&\leq G^+(t) \liminf_{s\to t^+} \frac{1}{s-t} \int^s_t |\mu'_+| \,{\dd}r.
\end{split}
\end{align}
\end{lemma}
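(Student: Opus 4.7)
The strategy combines three ingredients: a Lisini-type path representation of $(\mu_t)$, the convexity of $\phi$, and the chain rule for weak upper gradients from Proposition~\ref{properweakuppergr}. First I would apply Theorem~\ref{Lisinitheorem} to produce a probability measure $\eta$ concentrated on $\AC^p([0,T];X)$ with $(e_r)_\sharp \eta = \mu_r$ for every $r \in [0,T]$ and the energy identity $|\mu'_+|^p(r) = \int |\gamma'_+|^p(r) \,d\eta(\gamma)$ for $\mathscr{L}^1$-a.e.\ $r \in (0,T)$. Condition~(a) of that theorem (uniform total-variation continuity of $(\mu_t)$) can be verified from the uniform density bound $f_t \le M$ and the $\AC^p$-regularity in $W_p$, via an optimal-coupling estimate on $|\mu_s - \mu_t|(X) = \int |f_s - f_t|\,\dm$. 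Since $(e_r)_\sharp \eta = f_r \m \le M\m$, the measure $\eta$ has bounded compression on every sublevel of $V$, hence $\eta \in \I_p$.

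For the main estimate I would invoke the convexity of $\phi$, which yields $\m$-a.e.\ the two-sided inequality
\[
\phi'(f_t)(f_s - f_t) \le \phi(f_s) - \phi(f_t) \le \phi'(f_s)(f_s - f_t)
\]
for $0 \le t \le s \le T$, with the convention that all products vanish on $\{f_u = 0\}$; this convention is harmless since Proposition~\ref{properweakuppergr}(i) forces $|D^\pm f_u|_{w,\I_p} = 0$ there, so possible blow-up of $\phi'$ at $0$ can be absorbed after a standard truncation/monotone approximation in $\phi$. Integrating against $\m$ and using the push-forward identity $\int \phi'(f_u) f_r \,\dm = \int \phi'(f_u)(\gamma_r) \,d\eta(\gamma)$ from the previous step, the problem reduces to estimating $\int [\phi'(f_u)(\gamma_s) - \phi'(f_u)(\gamma_t)] \,d\eta$ with $u=s$ for the upper bound and $u=t$ for the lower bound. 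Along $\eta$-a.e.\ $\gamma$ these differences are controlled by the weak forward, respectively backward, upper gradient inequality on $\gamma|_{[t,s]}$, together with the chain rule $|D^\pm(\phi'(f_u))|_{w,\I_p} = \phi''(f_u)\,|D^\pm f_u|_{w,\I_p}$ (Proposition~\ref{properweakuppergr}(ii) and Remark~\ref{derivbakwarduppergradient}). A Fubini-then-H\"older argument in the $\eta$-integral, invoking the Lisini energy identity, then produces the master two-sided estimate
\[
-\int_t^s A^-(r,t)\, |\mu'_+|(r) \,dr \;\le\; \Phi(s) - \Phi(t) \;\le\; \int_t^s A^+(r,s)\, |\mu'_+|(r) \,dr,
\]
where $A^\pm(r,u) := \bigl\|\phi''(f_u)\,|D^\pm f_u|_{w,\I_p}\bigr\|_{L^q(\mu_r)}$.

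The bound $f_r \le M$ and the local Lipschitz constant $L_M$ of $\phi'$ on $[0,M]$ give $A^\pm(r,u) \le M^{1/q} L_M H^\pm(u)$, so that $H^\pm \in L^q(0,T)$ combined with $|\mu'_+| \in L^p(0,T)$ and H\"older yield both the boundedness of $t \mapsto \int|\phi(f_t)|\,\dm$ on $[0,T]$ (starting from hypothesis~(e) at $t=0$) and the absolute continuity of $\Phi$ on $[0,T]$. Differentiating at $\mathscr{L}^1$-a.e.\ $t$ yields \eqref{dergphit}, while the pointwise estimates \eqref{supinfdergp} come from dividing the upper master estimate by $s-t$ and passing to the $\limsup$/$\liminf$ as $s \to t^+$. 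The main obstacle is the isolation of $G^+(t)$ in this limit: one needs $A^+(r,s) \to G^+(t)$ as $r, s \to t^+$ in a sense sufficient to extract it from the integral. I would address this by specializing $t$ to a common Lebesgue point of $r \mapsto f_r$ in $L^1(\m)$, of $r \mapsto |D^+ f_r|_{w,\I_p}$ in $L^q(\m)$ and of $|\mu'_+|$, and then combining the continuity of $\phi''$ on $[0,M]$ with the lower semi-continuity of weak upper gradients (Proposition~\ref{stabilityofweakgradient}) to pass to the limit; the analogous argument, based on the lower master estimate, handles the $G^-(t)$ side.
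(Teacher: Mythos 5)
Your overall strategy matches the paper's: the Lisini representation of $(\mu_t)$, the convexity of $\phi$, the chain rule $|D^\pm [\phi'(f_u)]|_{w,\I_p}=\phi''(f_u)|D^\pm f_u|_{w,\I_p}$ via Proposition~\ref{properweakuppergr} and Remark~\ref{derivbakwarduppergradient}, and then Fubini/H\"older against the Lisini energy identity. Two points, though, are worth flagging, one of which is a genuine error.

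First, your verification of the hypotheses of Theorem~\ref{Lisinitheorem} is wrong. You claim condition~(a) (uniform total-variation continuity) follows from the uniform density bound and the $\AC^p$-regularity in $W_p$. This implication is false: a family of uniformly bounded densities can converge in $W_p$ while oscillating on finer and finer scales, so the total variation stays bounded away from zero (e.g.\ $f_t(x)=1+\tfrac12\sin(2\pi x/t)$ on the circle, with $\mu_0=\m$, is Lipschitz in $W_p$ near $t=0$, yet $|\mu_t-\mu_0|(X)\not\to 0$). The correct route is condition~(b): by hypothesis $(\mu_t)\in\AC^p=\FAC^p\cap\BAC^p$, and since $L^p([0,T])\subset L^1([0,T])$ and $W_1\le W_p$, one gets $(\mu_t)\in\BAC([0,T];(\Po(X),W_1))$, which is exactly condition~(b). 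This is what the paper is implicitly invoking.

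Second, a more minor strategic point: your two-sided convexity inequality $\phi'(f_t)(f_s-f_t)\le\phi(f_s)-\phi(f_t)\le\phi'(f_s)(f_s-f_t)$ makes the upper master estimate depend on $A^+(r,s)$ with \emph{both} arguments moving as $s\to t^+$, and you correctly flag that isolating $G^+(t)$ then needs a Lebesgue-point/lower-semicontinuity argument. The paper avoids this entirely: it only uses the single one-sided inequality $\phi(f_s)-\phi(f_t)\ge\phi'(f_t)(f_s-f_t)$ with the fixed time $t$, applied separately for $s<t$ (yielding $g_t^+=\phi''(f_t)|D^+f_t|_{w,\I_p}$) and for $s>t$ (yielding $g_t^-$). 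Then the integrand $\bigl(\int|g_t^\pm|^q\,d\mu_r\bigr)^{1/q}$ depends on $t$ only through the fixed function $g_t^\pm$, and the passage $r\to t^+$ reduces to the weak continuity \eqref{weakcontinft} of $r\mapsto f_r$. Your workaround can likely be made to work, but it is more delicate than necessary, and the paper's choice of which side of the convexity inequality to use is what makes the pointwise estimates \eqref{supinfdergp} come out cleanly for \emph{every} $t\in[0,T]$, not merely a.e.\ $t$.
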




\begin{proof}
Let $T=1$ with no cost of generality,
and take $C>0$ such that $\mu_t \leq C\m$ for all $t\in [0,1]$ by \eqref{ass-a}.
Then we have
\begin{equation}\label{ftislp}
\|f_t\|_{L^q} =\biggl( \int_X f^{q-1}_t \,{\dd}\mu_t \biggr)^{1/q}
 \leq C^{(q-1)/q} =C^{1/p} <\infty.
\end{equation}
Note that $W_1\leq W_p$ implies $(\mu_t) \in \AC([0,1];(\mathscr{P}(X),W_1))$
and $W_1(\mu_s,\mu_t) \to 0$ as $s \to t$.
Thus, Proposition \ref{W1KANrUBDIS} yields
\begin{equation}\label{weaupconver}
\lim_{s \to t} \int_X \varphi f_s \dm = \int_X \varphi f_t \dm
 \qquad \text{for all}\ \varphi\in \Lip_1(X) \cap C_b(X).
\end{equation}
Given $\varphi \in L^p(X,\m)$,
thanks to the denseness as in Proposition \ref{densfolLIPS} (recall also Remark \ref{meaningassmupt2}),
there is a sequence $(\varphi_i)_{i \ge 1}$ of bounded and forward Lipschitz functions in $L^p(X,\m)$
such that $\|\varphi_i -\varphi\|_{L^p} \to 0$.
Then we deduce from \eqref{ftislp} that, for any $t \in [0,1]$,
\[
\biggl| \int_X \varphi_i f_t \dm - \int_X \varphi f_t \dm \biggr|
 \leq \|\varphi_i -\varphi\|_{L^p} \| f_t \|_{L^q}
 \leq C^{1/p} \|\varphi_i -\varphi\|_{L^p}.
\]
Combining this with $\int_X \varphi_i f_s \dm \to \int_X \varphi_i f_t \dm$
from \eqref{weaupconver}, we find
\begin{equation}\label{weakcontinft}
\lim_{s \to t} \int_X \varphi f_s \dm = \int_X \varphi f_t \dm
 \qquad \text{for all}\ \varphi\in L^p(X,\m).
\end{equation}

By approximation, we may assume that $\phi'$ is locally Lipschitz on $[0,\infty)$
(instead of $(0,\infty)$ as in \eqref{ass-c}; see \cite[Lemma 5.17]{AGS2} for details).
Moreover, by replacing $\phi(r)$ with $\phi(r)-\phi'(0)r$ if necessary
(then $\Phi(t)$ is replaced with $\Phi(t)-\phi'(0)$),
we can assume that $\phi'(0)=0$.
Then, for any $t\in (0,1]$, we have
\[
|\phi(f_t)| =|\phi(f_t)-\phi(0)| \leq \bigl( |\phi'(0)|+\SL(\phi'|_{[0,f_t]}) f_t \bigr) f_t
 \leq C\SL(\phi'|_{[0,C]}) f_t,
\]
which implies $\int_X |\phi(f_t)| \,{\dm} <\infty$.
By the convexity of $\phi$, we have
\begin{equation}\label{convephift}
\phi(f_s)-\phi(f_t)\geq \phi'(f_t)(f_s -f_t) \qquad \text{for}\ s,t \in [0,1].
\end{equation}
Since $|\phi'(f_t)| \le \SL(\phi'|_{[0,C]}) f_t$ and $\|f_t\|_{L^p}<\infty$ as in \eqref{ftislp},
we find $\phi'(f_t)\in L^p(X,\m)$.
Thus, \eqref{weakcontinft} yields
\[
\liminf_{s \to t} \bigl( \Phi(s) -\Phi(t) \bigr)
\geq \lim_{s \to t} \int_X \phi'(f_{t})(f_s -f_t) \,{\dm} =0,
\]
thereby $\Phi$ is lower semi-continuous.

Owing to Theorem \ref{Lisinitheorem}, there exists $\eta \in \Po(C_\tau([0,1];X))$
concentrated in $\AC^p([0,1];X)$ with $\mu_t=(e_t)_\sharp \eta$ for every $t \in [0,1]$ and
\begin{equation}\label{strcacwass}
|\mu'_+|^p(t) =\int_{C_\tau([0,1];X)} |\gamma'_+|^p(t) \,\eta({\dd}\gamma)
 \quad \text{for $\mathscr{L}^1$-a.e.\ $t\in (0,1)$}.
\end{equation}
Note that $\eta \in {\I_p}$ since $f_t\leq C$.
According to \eqref{ass-b} and \eqref{ass-d}, $f_t$ is Sobolev along $\eta$-a.e.\ curve
and $H^{\pm}(t) <\infty$ for $\mathscr{L}^1$-a.e.\ $t\in (0,1)$.
We set
\[
h_t :=\phi'(f_t), \qquad
g^\pm_t :=|D^\pm h_t|_{w,{\I_p}} =\phi''(f_t)|D^\pm f_t|_{w,{\I_p}},
\]
where we used $\phi'' \ge 0$ (recall Proposition \ref{properweakuppergr}\eqref{weakpro2}).
Together with Remark \ref{derivbakwarduppergradient}, this furnishes
\begin{equation}\label{newweakuppergra}
-\int^s_t g^-_t(\gamma) |\gamma'_+| \,{\dd}r
 \le h_t \bigl( \gamma(t) \bigr) -h_t\bigl( \gamma(s) \bigr)
 \le \int^t_s g^+_t(\gamma)|\gamma'_+| \,{\dd}r
\end{equation}
for $\eta$-a.e.\ $\gamma$ and all $0\leq s\leq t\leq 1$.
Moreover, since $\phi'$ is locally Lipschitz and $f_t\leq C$,
there is a constant $C'>0$ such that $g^\pm_t\leq C'|D^\pm f_t|_{w,{\I_p}}$.
Hence, $g_t \in L^q(X,\m)$ by \eqref{ass-d}.

For every $0 \le s<t \le 1$,
we deduce from \eqref{convephift}, \eqref{newweakuppergra} and \eqref{strcacwass} that
\begin{align}
\begin{split}\label{keyconstPhi}
&\Phi(t)-\Phi(s)
 \leq \int_X \phi'(f_t)(f_t-f_s) \,{\dm}
 =\int_X h_t \,{\dd}\mu_t -\int_X h_t \,{\dd}\mu_s \\
&= \int_{C_\tau([0,1];X)}
 \Bigl( h_t \bigl( \gamma(t)\bigr) -h_t\bigl( \gamma(s) \bigr) \Bigr) \,\eta({\dd}\gamma)
 \leq \int^t_s \biggl( \int_{C_\tau([0,1];X)} g_t^+(\gamma) |\gamma'_+| \,\eta({\dd}\gamma) \biggr) \,{\dd}r \\
&\leq \int^t_s \biggl( \int_{C_\tau([0,1];X)} |g_t^+|^q(\gamma) \,\eta({\dd}\gamma) \biggr)^{1/q}
 \biggl( \int_{C_\tau([0,1];X)} |\gamma'_+|^p \,\eta({\dd}\gamma) \biggr)^{1/p} \,{\dd}r \\
&= \int^t_s \biggl( \int_X |g_t^+|^q f_r \,{\dm} \biggr)^{1/q} |\mu'_+|(r) \,{\dd}r
 \leq C^{1/q} C' \int^t_s \biggl( \int_X |D^+f_t|_{w,{\I_p}}^q \,{\dm} \biggr)^{1/q} |\mu'_+|(r) \,{\dd}r \\
&= C^{1/q} C'H^+(t) \int^t_s|\mu'_+|(r) \,{\dd}r.
\end{split}
\end{align}
A similar argument yields
\[
\Phi(t)-\Phi(s)\leq C^{1/q} C'H^-(t) \int^s_t|\mu'_+|(r) \,{\dd}r
\]
for $0 \le t<s \le 1$, thereby we obtain
\[
\Phi(t) -\Phi(s) \leq C^{1/q} C' \max\{H^\pm (t)\} \left|\int^t_s|\mu'_+|(r) \,{\dd}r\right|
\quad \text{for all}\ s,t \in [0,1].
\]
Then, since $H^{\pm} \in L^q(0,1)$ from \eqref{ass-d},
it follows from \cite[Lemma 2.9]{AGS2} that $\Phi$ is absolutely continuous.
Moreover, we infer from the above calculation \eqref{keyconstPhi} that
\begin{align*}
-\frac{1}{s-t} \int_t^s \biggl( \int |g^-_t|^q \,{\dd}\mu_r \biggr)^{1/q} |\mu'_+|(r) \,{\dd}r
 \le \frac{\Phi(s)-\Phi(t)}{s-t}
 \le \frac{1}{s-t} \int_t^s \biggl( \int |g^+_t|^q \,{\dd}\mu_r \biggr)^{1/q} |\mu'_+|(r) \,{\dd}r
\end{align*}
for $s>t$.
Taking the limit as $s \to t^+$ yields \eqref{supinfdergp} with the help of \eqref{weakcontinft},
as well as \eqref{dergphit} when $\Phi$ is differentiable at $t$ and $t$ is a Lebesgue point for $|\mu'_+|$.
\end{proof}

\begin{remark}\label{inveraweakupper}
All the concepts introduced so far are invariant when we replace $\m$ with the finite measure
$\widetilde{\m}:={\ee}^{-V^2}\m$ as in Assumption \ref{strongerstassumptiontheta}.
Indeed, we readily see that the bounded compression \eqref{bouncomprecon} with respect to $\widetilde{\m}$
on the sublevels of $\widetilde{V} \equiv 0$ implies that for $\m$ and $V$.
On the other hand, if $\eta$ is a $p$-test plan with bounded compression for $\m$ and $V$,
then we can approximate it with
\[
\eta_K :=\eta[\{ \gamma \mid \gamma \subset K \}]^{-1} \cdot \eta|_{\{\gamma \mid \gamma \subset K \}}
\]
for (large) compact sets $K \subset (X,\tau)$,
and $\eta_K$ is of bounded compression for $\widetilde{\m}$ and $\widetilde{V}$.
\end{remark}

\subsection{$L^2$-gradient flow of weak Cheeger energy}\label{L^2cheegerengerystrong}

Throughout this subsection,
let $(X,\tau,d,\m)$ be a $\APE$ satisfying Assumption \ref{newassumpt32liambdafinite},
$\I_p$ be the collection of all $p$-test plans with bounded compression on the sublevels of $V$
for some $p\in (1,\infty)$, and $p^{-1}+q^{-1}=1$.

\begin{definition}[Weak $q$-Cheeger energy]\label{weakcheegerenergy}
The {\it weak forward $q$-Cheeger energy} is a functional
defined in the class of $\m$-measurable functions $f:X\rightarrow [-\infty,\infty]$ by
\[
\Chc^+_{w,q}(f) :=\frac1q \int_X |D^+ f|^q_{w,\I_p} \dm
\]
if $f$ has a $\I_p$-weak forward upper gradient in $L^q(X,\m)$, and $\Chc^+_{w,q}(f) :=\infty$ otherwise.
\end{definition}

The {\it weak backward $q$-Cheeger energy} is defined in the same way,
and then $\Chc^-_{w,q}(f)=\Chc^+_{w,q}(-f)$ by Lemma \ref{basicweakgradiprop}\eqref{weakcondpla3}.
Hence, in the sequel, we focus on the forward one.
Observe from Lemma \ref{weakuppniegsets} that
$\Chc^+_{w,q}(f)$ is invariant under a modifications of $f$ in an $\m$-negligible set.

\begin{lemma}\label{constlowerseimcheegenergestronger}
$\Chc^+_{w,q}$ is convex and sequentially lower semi-continuous
with respect to the $\m$-a.e.\ pointwise convergence.
\end{lemma}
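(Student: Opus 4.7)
The plan is to handle convexity and lower semi-continuity separately; the former is a routine consequence of the calculus for weak forward upper gradients already established in Lemma \ref{basicweakgradiprop}, while the latter reduces to a weak compactness argument in $L^q(X,\m)$ coupled with the stability result of Proposition \ref{stabilityofweakgradient}.

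For convexity, given $\m$-measurable $f,g$ with finite $\Chc^+_{w,q}$ and $\lambda\in[0,1]$, I would combine Lemma \ref{basicweakgradiprop}\eqref{weakcondpla1}--\eqref{weakcondpla2} to obtain
\[
\bigl|D^+\bigl(\lambda f+(1-\lambda)g\bigr)\bigr|_{w,\I_p}
 \le \lambda |D^+ f|_{w,\I_p} + (1-\lambda)|D^+ g|_{w,\I_p}
 \quad \m\text{-a.e.,}
\]
then raise to the $q$-th power using the convexity of $s\mapsto s^q$ on $[0,\infty)$ and integrate. The case in which $\Chc^+_{w,q}(f)$ or $\Chc^+_{w,q}(g)$ is infinite is trivial since the right-hand side of the convexity inequality is then $+\infty$.

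For lower semi-continuity, I would take $(f_i)_{i\ge 1}$ converging to $f$ $\m$-a.e.\ and set $L:=\liminf_i \Chc^+_{w,q}(f_i)$. One may assume $L<\infty$ and, after extracting a subsequence, that $\Chc^+_{w,q}(f_i)\to L$, so that $G_i:=|D^+ f_i|_{w,\I_p}$ is bounded in $L^q(X,\m)$. Since $q\in(1,\infty)$, reflexivity yields a further subsequence with $G_i \rightharpoonup G$ weakly in $L^q(X,\m)$, whence $\|G\|_{L^q}^q \le \liminf_i \|G_i\|_{L^q}^q = qL$ by weak lower semi-continuity of the norm. The collection $\I_p$ satisfies hypothesis \eqref{etbxcondition} because the bounded compression \eqref{bouncomprecon} uniform in $t$ implies the integrated bound with the same constant, and weak convergence in $L^q(X,\m)$ restricts to weak convergence in $L^q(\{V\le M\},\m)$ for every $M\geq 0$. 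Proposition \ref{stabilityofweakgradient} therefore identifies $G$ as a $\I_p$-weak forward upper gradient of $f$. By the minimality of $|D^+ f|_{w,\I_p}$, we conclude $|D^+ f|_{w,\I_p}\le G$ $\m$-a.e., and integrating $q$-th powers gives $q\Chc^+_{w,q}(f) \le \|G\|_{L^q}^q \le qL$.

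The main subtlety is the applicability of Proposition \ref{stabilityofweakgradient}, whose hypothesis is stated as weak convergence on each sublevel $\{V\le M\}$ rather than globally on $X$. Since each sublevel has finite $\m$-measure (from $e^{-M^2}\m[\{V\le M\}]\le \int_X e^{-V^2}\dm\le 1$), weak convergence in $L^q(X,\m)$ restricts correctly and this issue is only apparent. Beyond this verification nothing nontrivial is required, as the argument rests on reflexivity of $L^q$ and the standard lower semi-continuity of norms under weak convergence.
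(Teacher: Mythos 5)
Your proof is correct and follows essentially the same route as the paper: convexity via Lemma \ref{basicweakgradiprop}\eqref{weakcondpla1}--\eqref{weakcondpla2}, and lower semi-continuity by extracting a weakly convergent subsequence of the minimal weak gradients and invoking Proposition \ref{stabilityofweakgradient} together with minimality. Your explicit check that $\I_p$ satisfies \eqref{etbxcondition} and that weak $L^q(X,\m)$-convergence restricts to each sublevel $\{V\le M\}$ (whose $\m$-measure is finite) is a detail the paper leaves implicit, but it is exactly the right thing to verify before applying the stability proposition.
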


\begin{proof}
The convexity follows by Lemma \ref{basicweakgradiprop}{\eqref{weakcondpla1}}, \eqref{weakcondpla2}.
To show the lower semi-continuity,
let $(f_i)_{i \ge 1}$ be a sequence of $\m$-measurable functions converging to $f$ $\m$-a.e.
Without loss of generality, we may assume $\liminf_{i \to \infty}\Chc^+_{w,q}(f_i)<\infty$.
Then, $(|D^+ f_i|_{w,\I_p})_{i \ge 1}$ includes a subsequence bounded in $L^q(X,\m)$.
Thus, by passing to a subsequence,
$(|D^+ f_i|_{w,\I_p})_{i \ge 1}$ weakly converges to some $G\in L^q(X,\m)$.
Proposition \ref{stabilityofweakgradient} then implies that $G$ is a $\I_p$-weak forward upper gradient of $f$,
thereby $G\geq |D^+f|_{w,\I_p}$.
By the lower semi-continuity of the $L^q$-norm under weak convergence, we obtain
\[
\liminf_{i \to \infty}\Chc^+_{w,q}(f_i)
 \geq \frac1q \int_X G^q \dm \geq \Chc^+_{w,q}(f).
\]
This completes the proof.
\end{proof}

In what follows, we restrict $\Chc^+_{w,q}$ to $L^2(X,\m)$.
Let us still denote it by $\Chc^+_{w,q}$ for convenience,
and denote by $\mathfrak{D}(\Chc^+_{w,q})$ the domain of $\Chc^+_{w,q}$ in $L^2(X,\m)$,
i.e., $f\in \mathfrak{D}(\Chc^+_{w,q})$ if $f\in L^2(X,\m)$ and $|D^+ f|_{w,\I_p}\in L^q(X,\m)$.
The following is a consequence of Proposition \ref{densfolLIPS} (recall also Remark \ref{meaningassmupt2}).

\begin{lemma}\label{lm:Cheeger}
$\mathfrak{D}(\Chc^+_{w,q}) \cap L^2(X,\m)$ is dense in $L^2(X,\m)$.
\end{lemma}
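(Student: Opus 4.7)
The plan is to apply Proposition~\ref{densfolLIPS} and observe that the dense class it produces already lies inside $\mathfrak{D}(\Chc^+_{w,q})\cap L^2(X,\m)$.

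First, Remark~\ref{meaningassmupt2} ensures that Assumption~\ref{newassumpt32liambdafinite} implies condition \eqref{Kcondition1}, so Proposition~\ref{densfolLIPS} applies to the class $\mathscr{C}$ of bounded, Borel, forward Lipschitz functions $f$ whose support is contained in $\overline{B^+_K(r)\cup B^-_K(r)}^d$ for some compact $K\subset(X,\tau)$ and $r=r(K)>0$ (with this set of finite $\m$-measure). Proposition~\ref{densfolLIPS} then states that $\mathscr{C}$ is dense in $L^r(X,\m)$ for every $r\in [1,\infty)$; in particular, $\mathscr{C}$ is dense in $L^2(X,\m)$.

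Next, I would verify the inclusion $\mathscr{C}\subset \mathfrak{D}(\Chc^+_{w,q})\cap L^2(X,\m)$. Fix $f\in \mathscr{C}$. The boundedness of $f$ together with $\m[\supp(f)]<\infty$ yields $f\in L^r(X,\m)$ for every $r\in[1,\infty)$, and in particular $f\in L^2(X,\m)$. For the Cheeger-energy part, Lemma~\ref{uppergradofLispcf} shows that $|D^+f|$ is a strong upper gradient of $f$; since any strong upper gradient is automatically a $\I_p$-weak forward upper gradient, we obtain $|D^+f|_{w,\I_p}\leq |D^+f|$ $\m$-a.e. By Lemma~\ref{Lipscontin}, $|D^+f|\leq \Lip(f)<\infty$ pointwise. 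Moreover, for any $x\notin \supp(f)$ there is a $\tau$-neighborhood of $x$ on which $f\equiv 0$; since $\tau\subset \widehat{\Tt}=\Tt_+$ by Remark~\ref{exposlipspaceexplain}\eqref{rmpolish-c}, this is also a neighborhood in the topology used to compute the slope, so that $|D^+f|(x)=0$. Consequently,
\[
\int_X |D^+f|_{w,\I_p}^q \dm \leq \int_X |D^+f|^q \dm \leq \Lip(f)^q\, \m\bigl[\supp(f)\bigr] <\infty,
\]
which shows $f\in \mathfrak{D}(\Chc^+_{w,q})$.

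Combining the two steps, $\mathfrak{D}(\Chc^+_{w,q})\cap L^2(X,\m)$ contains the $L^2$-dense class $\mathscr{C}$, completing the proof. No serious obstacle is expected: the substantive work is already packaged inside Proposition~\ref{densfolLIPS}, and the only delicate point is the compatibility between the $\tau$-support of $f$ and the $d$-defined slope $|D^+f|$, which is handled by the inclusion $\tau\subset \widehat{\Tt}$ from Remark~\ref{exposlipspaceexplain}\eqref{rmpolish-c}.
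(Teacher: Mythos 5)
Your proposal is correct and follows the same route as the paper, which simply cites Proposition~\ref{densfolLIPS} together with Remark~\ref{meaningassmupt2} and the observation recorded immediately after Proposition~\ref{densfolLIPS} that every $f\in\mathscr{C}$ satisfies $f\in L^q(X,\m)$ and $|D^+f|\in L^q(X,\m)$. The only piece you spell out that the paper leaves implicit is the passage from $|D^+f|$ being a strong upper gradient (Lemma~\ref{uppergradofLispcf}) to $|D^+f|_{w,\I_p}\le |D^+f|$ $\m$-a.e., which the paper has already noted right after Definition~\ref{weakuppgrad}; this is a harmless expansion, not a different argument.
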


Since $L^2(X,\m)$ is a Hilbert space, we can apply the standard theory in  \cite{AGS}.
Let $\partial^-\Chc^+_{w,q}(f)\subset L^2(X,\m)$ denote
the {\it subdifferentials} at $f\in \mathfrak{D}(\Chc^+_{w,q})$,
i.e., $\ell\in \partial^- \Chc^+_{w,q}(f)$ if
\[
\liminf_{h \to f\, \text{in}\, L^2}
 \frac{\Chc^+_{w,q}(h)-\Chc^+_{w,q}(f)- \langle h-f, \ell \rangle_{L^2}}{\|h-f\|_{L^2}}
 \geq 0.
\]
Under Assumption \ref{newassumpt32liambdafinite},
since $\Chc^+_{w,q}$ is convex and lower semi-continuous in $L^2(X,\m)$,
it follows from \cite[Proposition 1.4.4]{AGS} that $\ell\in \partial^-\Chc^+_{w,q}(f)$ if and only if
\begin{equation}\label{subdiffche}
\langle h-f, \ell \rangle_{L^2} \leq \Chc^+_{w,q}(h)-\Chc^+_{w,q}(f)
 \qquad \text{ for every } h \in L^2(X,\m).
\end{equation}
Note also that $\partial^-\Chc^+_{w,q}(f)$ is a closed convex set.

Define $\partial^\circ\Chc^+_{w,q}(f) \in \partial^- \Chc^+_{w,q}(f)$
as the unique element of the least $L^2$-norm,
and set $\|\partial^\circ\Chc^+_{w,q}(f)\|_{L^2}:=\infty$
if $ \partial^-\Chc^+_{w,q}(f) =\emptyset$.
The next proposition is a direct consequence of \cite[Proposition 1.4.4]{AGS}.

\begin{proposition}\label{estidesloandsubdiff}
The descending slope $|D^-\Chc^+_{w,q}|$ is a strong upper gradient for $-\Chc^+_{w,q}$,
and
\[
|D^-\Chc^+_{w,q}|(f)=\|\partial^\circ\Chc^+_{w,q}(f)\|_{L^2}
 \qquad \text{for all}\ f\in L^2(X,\m).
\]
\end{proposition}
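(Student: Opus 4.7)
The plan is to invoke directly the classical gradient-flow theory for convex lower semi-continuous functionals on Hilbert spaces, specifically \cite[Proposition~1.4.4]{AGS}. That result asserts that, for any proper, convex and lower semi-continuous functional $\phi$ on a Hilbert space $H$, the descending slope $|D^-\phi|$ is a strong upper gradient for $-\phi$, and at every point it coincides with the minimal $H$-norm of elements in $\partial^-\phi$ (with the minimum interpreted as $+\infty$ whenever $\partial^-\phi(\cdot)=\emptyset$).

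To apply this to $\phi=\Chc^+_{w,q}$ on $H=L^2(X,\m)$, I would first verify the three required hypotheses. Properness is provided by Lemma~\ref{lm:Cheeger}, which shows that $\mathfrak{D}(\Chc^+_{w,q})$ is dense in $L^2(X,\m)$, hence in particular nonempty. Convexity is already a part of Lemma~\ref{constlowerseimcheegenergestronger} (and also follows directly from Lemma~\ref{basicweakgradiprop}\eqref{weakcondpla1},\eqref{weakcondpla2} combined with the convexity of $r\mapsto r^q$). For lower semi-continuity on $L^2(X,\m)$, I would upgrade the $\m$-a.e.\ lower semi-continuity of Lemma~\ref{constlowerseimcheegenergestronger}: if $f_i\to f$ in $L^2(X,\m)$, extract a subsequence converging also $\m$-a.e.\ to $f$, and apply Lemma~\ref{constlowerseimcheegenergestronger} along it to get $\Chc^+_{w,q}(f)\le \liminf_i \Chc^+_{w,q}(f_i)$.

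With these properties at hand, \cite[Proposition~1.4.4]{AGS} delivers both conclusions simultaneously: the identity $|D^-\Chc^+_{w,q}|(f)=\|\partial^\circ\Chc^+_{w,q}(f)\|_{L^2}$ follows by the definition of $\partial^\circ\Chc^+_{w,q}(f)$ as the element of least $L^2$-norm in $\partial^-\Chc^+_{w,q}(f)$ (with the matching convention when the subdifferential is empty), and the strong-upper-gradient property of $|D^-\Chc^+_{w,q}|$ for $-\Chc^+_{w,q}$ is part of the same statement. Since $L^2(X,\m)$ is a Hilbert (and in particular symmetric) space, the forward metric derivative reduces to the classical one and $\FAC$ coincides with the usual class of absolutely continuous curves, so no asymmetry issue intervenes. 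There is no genuine obstacle here; the only point requiring a brief argument is the upgrade from $\m$-a.e.\ to $L^2$-lower semi-continuity, handled by the subsequence trick above.
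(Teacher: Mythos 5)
Your proposal matches the paper's argument exactly: the paper also presents this proposition as a direct consequence of \cite[Proposition~1.4.4]{AGS}, having already noted that $\Chc^+_{w,q}$ is convex and lower semi-continuous on the Hilbert space $L^2(X,\m)$ (the latter obtained, as you say, by upgrading the pointwise-a.e.\ lower semi-continuity of Lemma~\ref{constlowerseimcheegenergestronger} via a subsequence). Your write-up is slightly more explicit about checking properness and the l.s.c.\ upgrade, but there is no substantive difference in method.
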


We summarize some more properties derived from \cite[Theorems 2.4.15, 4.0.4]{AGS}.

\begin{theorem}\label{existstheoremft}
Given any $f_0 \in L^2(X,\m)$, there exists a unique curve $(f_t)_{t \ge 0}$
of $2$-maximal slope for $\Chc^+_{w,q}$ with respect to $|D^-\Chc^+_{w,q}|$
satisfying the following.
\begin{enumerate}[{\rm (i)}]
\item\label{graflowmaxislo1}
$(f_t)_{t>0}$ is locally Lipschitz in $L^2(X,\m)$.

\item\label{graflowmaxislo3}
$f_t\in \mathfrak{D}(|D^-\Chc^+_{w,q}|)\subset \mathfrak{D}(\Chc^+_{w,q})$ for any $t>0$.

\item\label{graflowmaxislo2}
The right derivative ${\dd}f_t/{\dd}t^+$ exists for every $t>0$ with
\[
\frac{\dd\ }{{\dd}t^+}\Chc^+_{w,q}(f_t)
 =-|D^-\Chc^+_{w,q}|^2 \big( f_t \big)
 =-\biggl\| \frac{\dd\ }{{\dd}t^+}f_t \biggr\|^2_{L^2}.
\]

\item\label{graflowmaxislo4}
For any $t>0$, we have the following regularizing effects:
\begin{align*}
|D^-\Chc^+_{w,q}|^2(f_t)
&\leq |D^-\Chc^+_{w,q}|^2(g) +\frac{\|f_0-g\|^2_{L^2}}{t^2}
 \quad \text{for all}\ g\in\mathfrak{D}(|D^-\Chc^+_{w,q}|), \\
\Chc^+_{w,q}(f_t)
&\leq \Chc^+_{w,q}(g) +\frac{\|f_0 -g\|_{L^2}^2}{2t}
 \quad \text{for all}\ g\in \mathfrak{D}(\Chc^+_{w,q}).
\end{align*}

\item\label{graflowmaxislo5}
For any $f_0,g_0\in L^2(X,\m)$, the corresponding curves $f_t,g_t$ of maximal slope satisfy
\[
\|f_t-g_t\|_{L^2}\leq \|f_0-g_0\|_{L^2}
 \qquad \text{for all}\ t \ge 0.
\]
\end{enumerate}
\end{theorem}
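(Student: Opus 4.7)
The plan is to apply the abstract Hilbert-space gradient flow theory for convex lower semi-continuous functionals as developed in \cite[Chapters~2 and~4]{AGS}; given the preparations already in place, the theorem reduces essentially to a direct invocation of that machinery.

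First I would check the three structural hypotheses required by \cite[Theorem~4.0.4]{AGS}: (a) $\Chc^+_{w,q}$ is proper on $L^2(X,\m)$, from Lemma~\ref{lm:Cheeger}; (b) it is convex on $L^2(X,\m)$, from Lemma~\ref{constlowerseimcheegenergestronger}; and (c) it is sequentially lower semi-continuous for the strong $L^2$-topology, again from Lemma~\ref{constlowerseimcheegenergestronger}, since strong $L^2$-convergence admits an $\m$-a.e.\ pointwise convergent subsequence and $\Chc^+_{w,q}$ is sequentially lower semi-continuous for such convergence. Proposition~\ref{estidesloandsubdiff} supplies the final ingredient, namely the identification
\[
|D^-\Chc^+_{w,q}|(f) \,=\, \|\partial^\circ \Chc^+_{w,q}(f)\|_{L^2},
\]
which both realizes $|D^-\Chc^+_{w,q}|$ as a strong upper gradient of $-\Chc^+_{w,q}$ in the sense of Definition~\ref{gradefe} and links the metric descending slope to the minimal-norm element of the subdifferential.

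With these hypotheses verified, I would extract from \cite[Theorem~4.0.4]{AGS} the unique locally Lipschitz curve $(f_t)_{t>0}$ in $L^2(X,\m)$ satisfying
\[
-\frac{{\dd}f_t}{{\dd}t^+} \,=\, \partial^\circ \Chc^+_{w,q}(f_t) \quad \text{for every}\ t>0, \qquad f_t \to f_0\ \text{in}\ L^2(X,\m)\ \text{as}\ t\to 0^+,
\]
which yields (i) and (ii) directly. Combining this identity with Proposition~\ref{estidesloandsubdiff} and the chain rule/energy-dissipation identity of \cite[Theorem~2.4.15]{AGS} delivers the right-derivative formula (iii) and exhibits $(f_t)$ as a curve of $2$-maximal slope for $\Chc^+_{w,q}$ with respect to $|D^-\Chc^+_{w,q}|$ in the sense of Definition~\ref{df:maxslope}. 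The two regularizing estimates in (iv) then follow from the standard comparison available for convex gradient flows: monotonicity of $t\mapsto |D^-\Chc^+_{w,q}|(f_t)$ combined with the identity $\|{\dd}f_t/{\dd}t^+\|_{L^2}=|D^-\Chc^+_{w,q}|(f_t)$ gives the first bound, while integrating the energy identity and applying Young's inequality gives the second. Finally, (v) is the classical $L^2$-contraction arising from the monotonicity of $\partial^-\Chc^+_{w,q}$, itself a direct consequence of convexity.

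No substantive obstacle remains, since the analytic work has already been packaged into Lemmas~\ref{constlowerseimcheegenergestronger}, \ref{lm:Cheeger} and Proposition~\ref{estidesloandsubdiff}. The only conceptual subtlety worth flagging in the write-up is that although the weak forward upper gradient $|D^+f|_{w,\I_p}$ naturally belongs to $L^q(X,\m)$, the functional $\Chc^+_{w,q}$ is here regarded as a proper, convex, lower semi-continuous map into $[0,\infty]$ on the Hilbert space $L^2(X,\m)$ (taking value $+\infty$ off its effective domain); this is precisely the framework required by the Hilbert-space gradient flow theory, and the potential mismatch between $L^q$ and $L^2$ integrability of the slope plays no role in the argument.
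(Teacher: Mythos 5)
Your proposal is correct and takes essentially the same route as the paper. The paper does not write out a proof of this theorem at all; it simply states that these properties are ``derived from \cite[Theorems 2.4.15, 4.0.4]{AGS}'', relying on the same ingredients you identify: convexity and sequential lower semi-continuity of $\Chc^+_{w,q}$ from Lemma~\ref{constlowerseimcheegenergestronger}, density of the domain from Lemma~\ref{lm:Cheeger}, and the identification $|D^-\Chc^+_{w,q}|(f)=\|\partial^\circ\Chc^+_{w,q}(f)\|_{L^2}$ from Proposition~\ref{estidesloandsubdiff}. Your explicit verification of the hypotheses and your remark about passing from a.e.-pointwise to strong $L^2$ lower semi-continuity via a.e.-convergent subsequences are exactly the right checks to make this invocation rigorous.
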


\begin{proposition}\label{baceschder}
Let $I\subset \mathbb{R}$ be an open interval and $(f_t) \in \AC^2(I;L^2(X,\m))$
with $f_t \in \mathfrak{D}(\Chc^+_{w,q})$ for all $t \in I$.
If
\begin{equation}\label{inetergassumption}
\int_I |D^-\Chc^+_{w,q}|(f_t) \biggl\| \frac{\dd}{{\dd}t}f_t \biggr\|_{L^2}{\dd}t<\infty,
\end{equation}
then $t \mapsto {\Chc^+_{w,q}}(f_t)$ is absolutely continuous in $I$
and, for $\mathscr{L}^1$-a.e.\ $t \in I$,
\begin{equation*}
 \frac{\dd}{{\dd}t}{\Chc^+_{w,q}}(f_t)
 = \int_X \ell \frac{\dd}{{\dd}t}f_t \dm
 \qquad \text{for all}\ \ell \in \partial^-{\Chc^+_{w,q}}(f_t).
\end{equation*}
\end{proposition}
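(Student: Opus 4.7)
The plan is to adapt the chain-rule argument from \cite[Proposition~1.4.4]{AGS} to our $L^2(X,\m)$ setting; the crucial structural ingredient is that $\Chc^+_{w,q}$ is convex and lower semi-continuous on $L^2(X,\m)$ (Lemma~\ref{constlowerseimcheegenergestronger}), so that the subdifferential characterization \eqref{subdiffche} applies. Set $\varphi(t) := \Chc^+_{w,q}(f_t)$.

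First I would exploit \eqref{subdiffche} in both directions to obtain the two-sided bound
\[
\langle \ell_s, f_t-f_s \rangle_{L^2}
\leq \varphi(t)-\varphi(s)
\leq \langle \ell_t, f_t-f_s \rangle_{L^2}
\]
for any $s,t \in I$ and any $\ell_s \in \partial^-\Chc^+_{w,q}(f_s)$, $\ell_t \in \partial^-\Chc^+_{w,q}(f_t)$. Choosing $\ell_s = \partial^\circ \Chc^+_{w,q}(f_s)$ and $\ell_t = \partial^\circ \Chc^+_{w,q}(f_t)$ and using Cauchy--Schwarz together with Proposition~\ref{estidesloandsubdiff}, this yields the pointwise ``Lipschitz'' estimate
\[
|\varphi(t)-\varphi(s)|
\leq \max\bigl\{ |D^-\Chc^+_{w,q}|(f_s),\, |D^-\Chc^+_{w,q}|(f_t)\bigr\}\cdot \|f_t-f_s\|_{L^2}.
\]

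Next I would establish absolute continuity of $\varphi$ on $I$. On the one hand, since $|D^-\Chc^+_{w,q}|$ is a strong upper gradient for $-\Chc^+_{w,q}$ (Proposition~\ref{estidesloandsubdiff}) and $(f_t) \in \AC^2(I;L^2(X,\m))$, we obtain
\[
\varphi(s)-\varphi(t)
\leq \int_s^t |D^-\Chc^+_{w,q}|(f_r)\,\|f'_r\|_{L^2}\,\dd r, \quad s\leq t.
\]
For the opposite direction, I would take a partition $s=r_0<r_1<\cdots<r_N=t$, telescope $\varphi(t)-\varphi(s)=\sum_{i=1}^N[\varphi(r_i)-\varphi(r_{i-1})]$, apply the two-sided bound termwise, and then use $\|f_{r_i}-f_{r_{i-1}}\|_{L^2}\leq \int_{r_{i-1}}^{r_i}\|f'_r\|_{L^2}\,\dd r$; refining the partition along an appropriate sequence and exploiting the lower semi-continuity of the slope of a convex lsc functional (inherited along the continuous curve $f$ in $L^2$), the Riemann-type sum passes to $\int_s^t |D^-\Chc^+_{w,q}|(f_r)\|f'_r\|_{L^2}\,\dd r$. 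Combined with the integrability hypothesis \eqref{inetergassumption}, we obtain
\[
|\varphi(t)-\varphi(s)|\leq \int_s^t |D^-\Chc^+_{w,q}|(f_r)\,\|f'_r\|_{L^2}\,\dd r, \quad s\leq t,
\]
which is exactly the absolute continuity of $\varphi$ on $I$.

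Finally, for $\mathscr{L}^1$-a.e.\ $t\in I$, both $\varphi'(t)$ and the $L^2$-derivative $\frac{\dd}{\dd t}f_t$ exist. Fixing any $\ell\in\partial^-\Chc^+_{w,q}(f_t)$, the subdifferential inequality $\varphi(t+h)-\varphi(t)\geq \langle \ell,\, f_{t+h}-f_t\rangle_{L^2}$ applied with $h>0$ and $h<0$, after division by $h$ and passage to the limit, gives the two reverse inequalities
\[
\frac{\dd}{\dd t}\varphi(t)\geq \Bigl\langle \ell,\, \frac{\dd}{\dd t} f_t\Bigr\rangle_{L^2},
\qquad
\frac{\dd}{\dd t}\varphi(t)\leq \Bigl\langle \ell,\, \frac{\dd}{\dd t} f_t\Bigr\rangle_{L^2},
\]
yielding the claimed identity, independent of the choice of $\ell$. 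The main obstacle is the opposite-direction bound for absolute continuity: the subdifferential-based Lipschitz estimate involves the slope at the endpoints, not its integral, so one must justify the telescoping-to-integral passage using \eqref{inetergassumption} and the convex-analytic regularity of the slope -- this is the technically most delicate piece and is where the hypothesis \eqref{inetergassumption} is genuinely used.
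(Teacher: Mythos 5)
Your final step is exactly the paper's argument: you differentiate the subdifferential inequality $\varphi(t+h)-\varphi(t)\geq\langle\ell,f_{t+h}-f_t\rangle_{L^2}$ from the right and from the left at a point where both $\varphi$ and $f$ are differentiable, and this yields the claimed identity for every $\ell\in\partial^-\Chc^+_{w,q}(f_t)$. That part is correct.

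Your route to the absolute continuity of $\varphi$, however, contains a genuine gap, and is in any case more work than needed. In the telescoping step you would need the right-endpoint Riemann-type sum $\sum_i |D^-\Chc^+_{w,q}|(f_{r_i})\int_{r_{i-1}}^{r_i}\|f'_r\|_{L^2}\,\dd r$ to be (asymptotically) bounded above by $\int_s^t|D^-\Chc^+_{w,q}|(f_r)\|f'_r\|_{L^2}\,\dd r$ along some refining sequence of partitions. But $r\mapsto|D^-\Chc^+_{w,q}|(f_r)$ is only lower semi-continuous along the curve, and lower semi-continuity yields lower bounds for such sums, not the upper bound you require; without an extra ingredient (e.g.\ a Lebesgue-point argument for $r\mapsto|D^-\Chc^+_{w,q}|(f_r)\|f'_r\|_{L^2}$) this step does not go through as stated. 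Moreover the whole telescoping detour is unnecessary: since $L^2(X,\m)$ is symmetric, the one-sided strong upper gradient estimate you correctly isolated, namely $\varphi(s)-\varphi(t)\leq\int_s^t|D^-\Chc^+_{w,q}|(f_r)\|f'_r\|_{L^2}\,\dd r$ for $s\le t$, already gives the reverse inequality when applied to the time-reversed curve $r\mapsto f_{s+t-r}$ (admissible precisely because the ambient metric is symmetric; cf.\ Remark~\ref{uppergraordef}). That two-sided bound together with \eqref{inetergassumption} is exactly what the paper invokes via Proposition~\ref{estidesloandsubdiff} and \cite[Remark~1.1.3, Theorem~1.2.5]{AGS}, and delivers absolute continuity of $\varphi$ immediately.
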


\begin{proof}
Since $|D^-\Chc^+_{w,q}|$ is a strong upper gradient (Proposition \ref{estidesloandsubdiff})
and the metric derivative of $(f_t)$ is exactly $\|{\dd}f_t/{\dd}t\|_{L^2}$ (see, e.g., \cite[Remark 1.1.3]{AGS}),
\eqref{inetergassumption} yields the absolute continuity of ${\Chc^+_{w,q}}(f_t)$.
Thus, both $\Chc^+_{w,q}(f_t)$ and $f_t$ are differentiable at $\mathscr{L}^1$-a.e.\ $t\in I$.
Given such $t$, we have
\[
\Chc^+_{w,q}(f_{t+\varepsilon})-\Chc^+_{w,q}(f_t)
 \geq \int_X  \ell (f_{t+\varepsilon}-f_t) \dm
\]
for any $\ell\in \partial^-\Chc^+_{w,q}(f_t)$,
which furnishes (by letting $\varepsilon \downarrow 0$ and $\varepsilon \uparrow 0$)
\begin{equation}\label{leftrightderivch}
\frac{\dd\ }{{\dd}t^+}\Chc^+_{w,q}(f_t) \geq \int_X \ell \frac{\dd}{{\dd}t^+}f_t \dm,
 \qquad \frac{\dd\ }{{\dd}t^-}\Chc^+_{w,q}(f_t) \leq \int_X \ell \frac{\dd}{{\dd}t^-}f_t \dm.
\end{equation}
This implies the claim.
\end{proof}

\begin{definition}[Gradient curves]\label{df:GF}
Given $f_0\in L^2(X,\m)$, a curve $(f_t) \in \AC^2_{\loc}((0,\infty);L^2(X,\m))$
is called a \emph{gradient curve} for $\Chc^+_{w,q}$
if $f_t \in \mathfrak{D}(\Chc^+_{w,q})$ for $t>0$,
$f_t\to f_0$ in $L^2(X,\m)$ as $t\to 0$ and
\[
\frac{\dd}{{\dd}t}f_t  \in -\partial^-{\Chc^+_{w,q}}(f_t)
 \quad \text{for $\mathscr{L}^1$-a.e.}\ t \in (0,\infty).
\]
\end{definition}

\begin{remark}\label{basicfunctionfochragd}
If $(f_t)$ is a gradient curve for $\Chc^+_{w,q}$, then we deduce from Proposition \ref{baceschder} that
\[
\frac{\dd}{{\dd}t}{\Chc^+_{w,q}}(f_t)
 = -\int_X \left( \frac{\dd}{{\dd}t}f_t\right)^2 \dm > -\infty
 \quad \text{for $\mathscr{L}^1$-a.e.}\ t \in (0,\infty),
\]
which yields the energy identity:
\[
\Chc^+_{w,q}(f_{t_1})-\Chc^+_{w,q}(f_{t_2})
 =\int^{t_2}_{t_1} \biggl\| \frac{\dd}{{\dd}t}f_t\biggr\|_{L^2}^2 {\dd}t
 \quad \text{ for all }0<{t_1}\leq t_2.
\]
\end{remark}

\begin{theorem}\label{realchgraevslop}
For any $f_0\in L^2(X,\m)$, a curve $(f_t) \in \AC^2_{\loc}((0,\infty);L^2(X,\m))$
with $f_t \to f_0$ in $L^2(X,\m)$ as $t \to 0$ is a curve of $2$-maximal slope
for $\Chc^+_{w,q}$ with respect to $|D^-\Chc^+_{w,q}|$
if and only if it is a gradient curve for $\Chc^+_{w,q}$.
Moreover, for every $t>0$, we have $-{\dd}f_t/{\dd}t^+ =\partial^\circ \Chc^+_{w,q}(f_t)$.
\end{theorem}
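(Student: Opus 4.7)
The plan is to convert between the two definitions by using the chain rule in Proposition \ref{baceschder} together with the identity $|D^-\Chc^+_{w,q}|(f)=\|\partial^\circ\Chc^+_{w,q}(f)\|_{L^2}$ from Proposition \ref{estidesloandsubdiff}, and then to extract the pointwise identity ${\dd}f_t/{\dd}t^+ =-\partial^\circ\Chc^+_{w,q}(f_t)$ from the equality case in Cauchy--Schwarz combined with Young's inequality.

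\textbf{Gradient curve implies curve of maximal slope.} Assume $(f_t)$ is a gradient curve, so $f_t\in \mathfrak{D}(\Chc^+_{w,q})$ and $-{\dd}f_t/{\dd}t \in \partial^- \Chc^+_{w,q}(f_t)$ for $\mathscr{L}^1$-a.e.\ $t$. Since $(f_t)\in \AC^2_{\loc}$ in $L^2(X,\m)$, the metric derivative coincides with $\|{\dd}f_t/{\dd}t\|_{L^2}$ and is in $L^2_{\loc}$. Moreover $|D^-\Chc^+_{w,q}|(f_t) =\|\partial^\circ\Chc^+_{w,q}(f_t)\|_{L^2}\leq \|{\dd}f_t/{\dd}t\|_{L^2}$, which verifies the integrability assumption \eqref{inetergassumption} of Proposition \ref{baceschder} on every compact subinterval. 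That proposition then gives
\[
\frac{\dd}{{\dd}t}\Chc^+_{w,q}(f_t) = \int_X \ell \cdot \frac{\dd}{{\dd}t}f_t \dm
\]
for every $\ell \in \partial^-\Chc^+_{w,q}(f_t)$; in particular, choosing $\ell=-{\dd}f_t/{\dd}t$ produces the energy identity $({\dd}/{\dd}t)\Chc^+_{w,q}(f_t) =-\|{\dd}f_t/{\dd}t\|^2_{L^2}$. Combining with $\|\partial^\circ\Chc^+_{w,q}(f_t)\|_{L^2}\leq \|{\dd}f_t/{\dd}t\|_{L^2}$ and Young's inequality $ab\leq (a^2+b^2)/2$ yields
\[
\frac{\dd}{{\dd}t}\Chc^+_{w,q}(f_t)
 \leq -\tfrac{1}{2}\|{\dd}f_t/{\dd}t\|^2_{L^2} -\tfrac{1}{2}|D^-\Chc^+_{w,q}|^2(f_t),
\]
so $(f_t)$ is a curve of $2$-maximal slope.

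\textbf{Curve of maximal slope implies gradient curve.} Conversely, assume $(f_t)$ is a curve of $2$-maximal slope. Then $\Chc^+_{w,q}(f_t)$ is a.e.\ equal to a non-increasing function whose derivative is dominated by $-\tfrac12|f'_+|^2(t)-\tfrac12|D^-\Chc^+_{w,q}|^2(f_t)$; integrating this inequality and using that $|D^-\Chc^+_{w,q}|$ is a strong upper gradient of $-\Chc^+_{w,q}$ (Proposition~\ref{estidesloandsubdiff}) forces equality and in particular $\|{\dd}f_t/{\dd}t\|_{L^2}=|D^-\Chc^+_{w,q}|(f_t)=\|\partial^\circ\Chc^+_{w,q}(f_t)\|_{L^2}$ for $\mathscr{L}^1$-a.e.\ $t>0$, so $f_t\in \mathfrak{D}(|D^-\Chc^+_{w,q}|)\subset\mathfrak{D}(\Chc^+_{w,q})$ a.e.\ and assumption \eqref{inetergassumption} holds. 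Apply Proposition~\ref{baceschder} with $\ell=\partial^\circ\Chc^+_{w,q}(f_t)$ to obtain
\[
-\tfrac12\|{\dd}f_t/{\dd}t\|^2_{L^2} -\tfrac12\|\partial^\circ\Chc^+_{w,q}(f_t)\|^2_{L^2}
 =\frac{\dd}{{\dd}t}\Chc^+_{w,q}(f_t)
 =\int_X \partial^\circ\Chc^+_{w,q}(f_t)\cdot \frac{\dd}{{\dd}t}f_t \dm.
\]
Since the two norms are equal, the left-hand side equals $-\|\partial^\circ\Chc^+_{w,q}(f_t)\|_{L^2}\cdot\|{\dd}f_t/{\dd}t\|_{L^2}$; this is the equality case in Cauchy--Schwarz, which forces ${\dd}f_t/{\dd}t=-\partial^\circ\Chc^+_{w,q}(f_t)$ in $L^2(X,\m)$ for $\mathscr{L}^1$-a.e.\ $t>0$. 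In particular $-{\dd}f_t/{\dd}t\in\partial^-\Chc^+_{w,q}(f_t)$, so $(f_t)$ is a gradient curve.

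\textbf{Pointwise identification of right derivative.} The identity $-{\dd}f_t/{\dd}t^+=\partial^\circ\Chc^+_{w,q}(f_t)$ for every $t>0$ (not merely $\mathscr{L}^1$-a.e.) is obtained via the regularization statement in Theorem~\ref{existstheoremft}\eqref{graflowmaxislo2}: the right derivative exists for all $t>0$, and the preceding argument applied with the right difference quotient and the one-sided version of \eqref{leftrightderivch} in the proof of Proposition~\ref{baceschder} yields the strict identification. The main obstacle here is the passage from an $\mathscr{L}^1$-a.e.\ statement to a pointwise one: this relies crucially on the locally Lipschitz regularization provided by Theorem~\ref{existstheoremft} together with the lower semi-continuity of $|D^-\Chc^+_{w,q}|$ on sublevels, which allows one to run the Cauchy--Schwarz argument at every $t>0$ using the right derivative in place of the differential.
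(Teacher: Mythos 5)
Your proof follows essentially the same route as the paper's: apply Proposition \ref{baceschder} to get the energy derivative formula, use Proposition \ref{estidesloandsubdiff} for $|D^-\Chc^+_{w,q}|(f)=\|\partial^\circ\Chc^+_{w,q}(f)\|_{L^2}$, and extract the vector-valued identity from the equality case in Cauchy--Schwarz. The forward direction and the Cauchy--Schwarz extraction are correct (though your appeal to ``Young's inequality'' in the forward direction is superfluous: once $\frac{\dd}{{\dd}t}\Chc^+_{w,q}(f_t)=-\|{\dd}f_t/{\dd}t\|^2_{L^2}$ and $|D^-\Chc^+_{w,q}|(f_t)\le\|{\dd}f_t/{\dd}t\|_{L^2}$, the maximal-slope inequality is a direct split of $-\|{\dd}f_t/{\dd}t\|^2$ into two halves).

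However, there is a real gap in the converse direction. To invoke Proposition \ref{baceschder} you must have $f_t\in\mathfrak{D}(\Chc^+_{w,q})$ for \emph{all} $t$ in the interval, not merely for $\mathscr{L}^1$-a.e.\ $t$, which is all that your argument delivers. The paper closes this by invoking Theorem \ref{existstheoremft}\eqref{graflowmaxislo3}: since the curve of $2$-maximal slope from a given $f_0$ is unique, $(f_t)$ must coincide with the regularized flow constructed there, which is known to satisfy $f_t\in\mathfrak{D}(|D^-\Chc^+_{w,q}|)\subset\mathfrak{D}(\Chc^+_{w,q})$ for every $t>0$. Without that uniqueness step (or an argument via lower semi-continuity of $\Chc^+_{w,q}$ along the continuous curve), the hypotheses of Proposition \ref{baceschder} are not verified and the chain of equalities is not justified.

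Your treatment of the pointwise identity $-{\dd}f_t/{\dd}t^+=\partial^\circ\Chc^+_{w,q}(f_t)$ also misidentifies the key ingredient: ``lower semi-continuity of $|D^-\Chc^+_{w,q}|$ on sublevels'' plays no role. What is actually used is Theorem \ref{existstheoremft}\eqref{graflowmaxislo2}, which asserts for \emph{every} $t>0$ that $\frac{\dd}{{\dd}t^+}\Chc^+_{w,q}(f_t)=-\|\frac{\dd}{{\dd}t^+}f_t\|_{L^2}^2=-|D^-\Chc^+_{w,q}|^2(f_t)$, together with the one-sided subdifferential inequality \eqref{leftrightderivch}; sandwiching these and applying the Cauchy--Schwarz equality case at each fixed $t>0$ gives the pointwise conclusion directly.
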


\begin{proof}
First, suppose that $(f_t)_{t>0}$ is a gradient curve for $\Chc^+_{w,q}$.
Since $|D^- \Chc^+_{w,q}|$ is a strong upper gradient for $-\Chc^+_{w,q}$
by Proposition \ref{estidesloandsubdiff},
it follows from  Remark \ref{basicfunctionfochragd} and \eqref{graddef} that
\[
\biggl\| \frac{\dd}{{\dd}t}f_t \biggr\|_{L^2}^2 =-\frac{\dd}{{\dd}t}{\Chc^+_{w,q}}(f_t)
 \leq |D^- \Chc^+_{w,q}|(f_t) \biggl\| \frac{\dd}{{\dd}t}f_t \biggr\|_{L^2}
 \leq \biggl\| \frac{\dd}{{\dd}t}f_t \biggr\|_{L^2}^2
\]
for $\mathscr{L}^1$-a.e.\ $t \in (0,\infty)$,
where the latter inequality follows from $|D^-\Chc^+_{w,q}|(f_t)=\|\partial^\circ\Chc^+_{w,q}(f_t)\|_{L^2}$
from Proposition \ref{estidesloandsubdiff} and ${\dd}f_t/{\dd}t \in -\partial^-{\Chc^+_{w,q}}(f_t)$.
Hence, equality holds in both inequalities:
\begin{equation}\label{basisiqu2}
 \frac{{\dd}}{{\dd}t}\Chc^+_{w,q}(f_t)
 =-\frac12 \biggl\| \frac{\dd}{{\dd}t}f_t \biggr\|_{L^2}^2 -\frac12 |D^- {\Chc^+_{w,q}}|^2(f_t), \qquad
 |D^- {\Chc^+_{w,q}}|(f_t) =\biggl\| \frac{\dd}{{\dd}t}f_t \biggr\|_{L^2}.
\end{equation}
Thus, $(f_t)_{t>0}$ is a curve of $2$-maximal slope for ${\Chc^+_{w,q}}$
(recall Definition~\ref{df:maxslope}).

Conversely, if $(f_t)_{t>0}$ is a curve of $2$-maximal slope,
then we have \eqref{basisiqu2} for $\mathscr{L}^1$-a.e.\ $t \in (0,\infty)$.
By Theorem \ref{existstheoremft}\eqref{graflowmaxislo3},
we can take $\ell_t :=-\partial^\circ \Chc^+_{w,q}(f_t)$, which satisfies
$\|\ell_t\|_{L^2}=|D^-\Chc^+_{w,q}|(f_t) <\infty$.
Then it follows from  \eqref{basisiqu2} and Proposition \ref{baceschder} that, for $\mathscr{L}^1$-a.e.\ $t \in (0,\infty)$,
\[
\biggl\| \frac{\dd}{{\dd}t}f_t \biggr\|_{L^2}^2
 =-\frac{\dd}{{\dd}t}{\Chc^+_{w,q}}(f_t)
 =\int_X \ell_t  \frac{\dd}{{\dd}t}f_t \dm
 \leq \|\ell_t\|_{L^2} \biggl\| \frac{\dd}{{\dd}t}f_t \biggr\|_{L^2}
 =\biggl\| \frac{\dd}{{\dd}t}f_t \biggr\|_{L^2}^2,
\]
which implies ${\dd}f_t/{\dd}t =\ell_t$.
Hence, $(f_t)_{t>0}$ is a gradient curve for $\Chc^+_{w,q}$.

To show the last assertion, we infer from Theorem \ref{existstheoremft}\eqref{graflowmaxislo2} that
\begin{equation*}
\frac{\dd\ }{{\dd}t^+}\Chc^+_{w,q}(f_t)
 = -\biggl\| \frac{\dd\ }{{\dd}t^+}f_t \biggr\|^2_{L^2}
 = -|D^-\Chc^+_{w,q}|^2(f_t)
\end{equation*}
for all $t>0$.
Combining this with
\begin{equation*}
\frac{\dd\ }{{\dd}t^+}\Chc^+_{w,q}(f_t)
 \geq \int_X -\ell_t \frac{\dd\ }{{\dd}t^+}f_t\dm
 \geq -\|\ell_t\|_{L^2} \biggl\| \frac{\dd\ }{{\dd}t^+}f_t \biggr\|_{L^2}
 =-|D^-\Chc^+_{w,q}|(f_t) \biggl\| \frac{\dd\ }{{\dd}t^+}f_t \biggr\|_{L^2}
\end{equation*}
from \eqref{leftrightderivch}, we obtain
${\dd}f_t/{\dd}t^+ =\ell_t =-\partial^\circ {\Chc^+_{w,q}}(f_t)$.
This completes the proof.
\end{proof}

\subsection{$q$-Laplacian}\label{oldLaplcian}

We continue to consider a $\APE$ $(X,\tau,d,\m)$ satisfying Assumption \ref{newassumpt32liambdafinite}
and the collection $\I_p$ of all $p$-test plans with bounded compression on the sublevels of $V$
for some $p \in (1,\infty)$.
Before introducing the $q$-Laplacian (with $p^{-1}+q^{-1}=1$) in our context, we review the Finsler case.

\begin{example}[Finsler case]\label{LaplacininRiemacase}
Let $(\X, d_F,\m)$ be a forward metric measure space
induced from a forward complete Finsler manifold endowed with a smooth positive measure,
satisfying either $\m[\X]<\infty$ or $\CD(K,\infty)$.
Recall from Remark \ref{meaningassmupt2} that Assumption \ref{newassumpt32liambdafinite} is satisfied.
Given $f \in C_0^{\infty}(\X)$, we define the {\it $q$-Laplacian} by
\[
\Delta_q f:=\di_{\m} \bigl (F^{q-2}(\nabla f)\nabla f \bigr)
\]
in the distributional sense:
\[
\int_{\X} \varphi \Delta_q f \dm
 := -\int_{\X} F^{q-2}(\nabla f) \cdot {\dd}\varphi(\nabla f)  \dm
 \qquad \text{for all}\ \varphi\in C^\infty_0(\X).
\]
Note that, for every $\varphi \in C^\infty_0(\X)$,
\[
\lim_{\varepsilon \to 0}
\frac{\Chc^+_{w,q}(f +\varepsilon \varphi) -\Chc^+_{w,q}(f)}{\varepsilon}
 = \int_{\X} F^{q-2}(\nabla f) \cdot {\dd}\varphi(\nabla f) \dm
 = -\int_{\X} \varphi \Delta_q f \,{\dm}.
\]
Thus, by approximation, we find $\Delta_q f \in -\partial^- \Chc^+_{w,q}(f)$
with $\|\Delta_q f\|_{L^2} = |D^- \Chc^+_{w,q}|(f)$.
Together with Proposition \ref{estidesloandsubdiff}, we have $\Delta_q f =-\partial^\circ \Chc^+_{w,q}(f)$.
\end{example}

In view of Theorem \ref{realchgraevslop} and Example \ref{LaplacininRiemacase},
we introduce the following (cf.\ \cite[Definition 4.13]{AGS2}).

\begin{definition}[$q$-Laplacian]\label{lapldef}
For $f\in L^2(X,\m)$ with $\partial^-\Chc^+_{w,q}(f) \neq \emptyset$,
we define the {\it $q$-Laplacian} by $\Delta_q f := -\partial^\circ \Chc^+_{w,q}(f)$.
\end{definition}

Note that, by Proposition \ref{estidesloandsubdiff},
the domain $\mathfrak{D}(\Delta_q)$ of $\Delta_q$ coincides with $\mathfrak{D}(|D^-\Chc^+_{w,\I_q}|)$.

\begin{lemma}\label{remarkfordefineioflapalce}
\begin{enumerate}[{\rm (i)}]
\item\label{lapace-2}
$\mathfrak{D}(\Delta_q)$ is a dense subset of $\mathfrak{D}(\Chc^+_{w,q})$ with respect to the $L^2$-norm.

\item\label{lapace-3}
For any gradient curve $(f_t)_{t>0}$ for $\Chc^+_{w,q}$, we have
\[
 \frac{\dd\ }{{\dd}t^+}f_t=\Delta_q f_t \qquad \text{for all}\ t>0.
\]
\end{enumerate}
\end{lemma}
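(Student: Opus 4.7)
The plan for part (i) is to exploit the $L^2$-gradient flow of $\Chc^+_{w,q}$ already produced in Theorem \ref{existstheoremft}. Given any $f\in \mathfrak{D}(\Chc^+_{w,q})$, I would take $f$ itself as the initial datum and let $(f_t)_{t\ge 0}$ denote the corresponding curve of $2$-maximal slope. Item \eqref{graflowmaxislo3} of Theorem \ref{existstheoremft} gives $f_t\in \mathfrak{D}(|D^-\Chc^+_{w,q}|)$ for every $t>0$; combined with Proposition \ref{estidesloandsubdiff}, which identifies $\mathfrak{D}(|D^-\Chc^+_{w,q}|)$ with the set of $f$ at which $\partial^-\Chc^+_{w,q}(f)$ is nonempty, i.e.\ with $\mathfrak{D}(\Delta_q)$, this places $f_t\in \mathfrak{D}(\Delta_q)$ for all $t>0$. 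Since the standard Hilbert-space theory of convex lower semi-continuous functionals guarantees that the maximal slope curve is right-continuous at $t=0$ with value $f$ in $L^2(X,\m)$, letting $t\to 0^+$ yields the desired $L^2$-density of $\mathfrak{D}(\Delta_q)$ in $\mathfrak{D}(\Chc^+_{w,q})$.

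Part (ii) should reduce to matching definitions once Theorem \ref{realchgraevslop} is applied. Given a gradient curve $(f_t)_{t>0}$ for $\Chc^+_{w,q}$ in the sense of Definition \ref{df:GF}, Theorem \ref{realchgraevslop} asserts that such a curve is equivalent to a curve of $2$-maximal slope and, moreover, furnishes the pointwise identity $-\frac{{\dd}\ }{{\dd}t^+}f_t=\partial^\circ \Chc^+_{w,q}(f_t)$ for every $t>0$. Combining this with Definition \ref{lapldef}, which reads $\Delta_q f_t=-\partial^\circ \Chc^+_{w,q}(f_t)$, immediately gives $\frac{{\dd}\ }{{\dd}t^+}f_t=\Delta_q f_t$ for every $t>0$.

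In neither step is a substantial new argument required: the work was done in Theorems \ref{existstheoremft} and \ref{realchgraevslop}, and the present lemma essentially collects these outputs. The only point that deserves explicit care is the right-continuity at $t=0$ of the maximal slope curve used in part (i), because Theorem \ref{existstheoremft}\eqref{graflowmaxislo1} as stated only asserts local Lipschitz regularity on $(0,\infty)$. I would therefore invoke the classical Brezis/Ambrosio--Gigli--Savar\'e well-posedness result (\cite[Theorem~4.0.4]{AGS}) to justify $f_t\to f$ in $L^2(X,\m)$ as $t\to 0^+$ whenever $f_0=f\in \mathfrak{D}(\Chc^+_{w,q})$; this is the only potential obstacle, and it is standard.
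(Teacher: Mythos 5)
Both parts of your proof are correct, but for part (i) you take a genuinely different route from the paper. The paper's proof simply cites \cite[Lemma~3.1.3]{AGS} (see also \cite[Lemma~3.20]{OZ}), which establishes the density of $\mathfrak{D}(|D^-\Chc^+_{w,q}|)$ in $\mathfrak{D}(\Chc^+_{w,q})$ via the Moreau--Yosida resolvent: for $\tau>0$ one minimizes $v\mapsto \Chc^+_{w,q}(v)+\tfrac{1}{2\tau}\|v-f\|_{L^2}^2$, verifies that the minimizer $f_\tau$ has nonempty subdifferential, and sends $\tau\to 0^+$. You instead run the gradient flow itself: take $f_t=\Ht(f)$, invoke Theorem~\ref{existstheoremft}\eqref{graflowmaxislo3} to place $f_t\in\mathfrak{D}(|D^-\Chc^+_{w,q}|)=\mathfrak{D}(\Delta_q)$ for $t>0$ (the identification coming from Proposition~\ref{estidesloandsubdiff}), and let $t\to 0^+$. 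Since the implicit Euler scheme underlying the flow is essentially iterated Moreau--Yosida regularization, the two arguments are close cousins, but yours has the advantage of relying only on Theorem~\ref{existstheoremft}, which the paper has already proved, rather than deferring to the external lemma. Your caveat about right-continuity at $t=0$ is well placed and correctly resolved by \cite[Theorem~4.0.4]{AGS}; one could also note that Theorem~\ref{existstheoremft} is stated for $(f_t)_{t\ge0}$ with initial datum $f_0$, so that right-continuity is built into the construction. For part (ii) your argument coincides with the paper's: it is an immediate consequence of the identity $-{\dd}f_t/{\dd}t^+ = \partial^\circ\Chc^+_{w,q}(f_t)$ from Theorem~\ref{realchgraevslop} together with Definition~\ref{lapldef}.
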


\begin{proof}
\eqref{lapace-2} follows by the denseness of $\mathfrak{D}(|D^-\Chc^+_{w,q}|)$
in $\mathfrak{D}(\Chc^+_{w,q})$ seen in the same way as \cite[Lemma 3.1.3]{AGS}
(see also \cite[Lemma 3.20]{OZ}).
\eqref{lapace-3} is a direct consequence of Theorem \ref{realchgraevslop}.
\end{proof}

In the sequel, we will denote by $(\Ht(f))_{t>0}$
the gradient curve for $\Chc^+_{w,q}$ emanating from $f$.
By Lemma \ref{remarkfordefineioflapalce}\eqref{lapace-3},
$\Ht$ coincides with the \emph{$q$-heat flow} giving solutions to the \emph{$q$-heat equation}
${\dd}u_t/{\dd}t =\Delta_q u_t$.
The next corollary summarizes the outcomes of Theorems \ref{existstheoremft} and \ref{realchgraevslop}.

\begin{corollary}\label{heatflowequationproper-00}
\begin{enumerate}[{\rm (i)}]
\item
For any $f \in L^2(X,\m)$, there exists a unique solution
$(\Ht(f)) \in \AC^2_{\loc}((0,\infty);L^2(X,\m))$ to the $q$-heat equation
\[
\frac{\dd}{{\dd}t} u_t =\Delta_q u_t \quad \text{for $\mathscr{L}^1$-a.e.\ $t\in (0,\infty)$},
\qquad \lim_{t\to 0^+}u_t=f \,\ \text{in}\ L^2(X,\m).
\]
Moreover, for all $t>0$, the right derivative ${\dd}{\Ht}(f)/{\dd}t^+$ exists and satisfies
\[
 \frac{\dd\ }{{\dd}t^+}{\Ht}(f) =\Delta_q \Ht(f).
\]

\item
For $f_0 \in L^2(X,\m)$ and a curve $(f_t) \in \AC^2_{\loc}((0,\infty);L^2(X,\m))$
such that $f_t \to f_0$ in $L^2(X,\m)$, the following are equivalent:
\begin{itemize}
\item $(f_t)$ is a gradient curve for $\Chc^+_{w,q}$;
\item $(f_t)$ is a solution to the $q$-heat equation;
\item $(f_t)$ is a curve of $2$-maximal slope for $\Chc^+_{w,q}$ with respect to $|D^-\Chc^+_{w,q}|$.
\end{itemize}
\end{enumerate}
\end{corollary}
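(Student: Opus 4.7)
The corollary is a repackaging of Theorems \ref{existstheoremft} and \ref{realchgraevslop}, together with Definition \ref{lapldef} of the $q$-Laplacian and the notation $\Ht(f)$ introduced just above. There is no genuinely new analytic content beyond what has already been established, so my plan is simply to verify each assertion by citing the relevant earlier result.

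For part (i), the plan is to start with any $f\in L^2(X,\m)$ and apply Theorem \ref{existstheoremft} to obtain a unique curve $(f_t)_{t>0}$ of $2$-maximal slope for $\Chc^+_{w,q}$ with respect to $|D^- \Chc^+_{w,q}|$ emanating from $f$. By Theorem \ref{existstheoremft}\eqref{graflowmaxislo1}, this curve is locally Lipschitz in $L^2(X,\m)$, hence belongs to $\AC^2_{\loc}((0,\infty); L^2(X,\m))$ and is differentiable for $\mathscr{L}^1$-a.e.\ $t>0$. Theorem \ref{realchgraevslop} identifies $(f_t)$ with the gradient curve for $\Chc^+_{w,q}$ and, more importantly, gives the pointwise identity $\mathrm{d}f_t/\mathrm{d}t^+ = -\partial^\circ \Chc^+_{w,q}(f_t)$ for every $t>0$. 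Combining this with Definition \ref{lapldef}, which sets $\Delta_q f_t := -\partial^\circ \Chc^+_{w,q}(f_t)$, yields $\mathrm{d}f_t/\mathrm{d}t^+ = \Delta_q f_t$ for all $t>0$, and hence $\mathrm{d}f_t/\mathrm{d}t = \Delta_q f_t$ for $\mathscr{L}^1$-a.e.\ $t$ (where the two-sided derivative coincides with the right derivative by local Lipschitzness). The initial condition $f_t \to f$ in $L^2(X,\m)$ as $t\to 0^+$ is part of Theorem \ref{existstheoremft}. We then set $\Ht(f):=f_t$; uniqueness among $\AC^2_{\loc}$-solutions of the $q$-heat equation with this initial datum will be obtained at the end, once part (ii) is in place, via the uniqueness clause of Theorem \ref{existstheoremft}.

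For part (ii), the equivalence between ``gradient curve for $\Chc^+_{w,q}$'' and ``curve of $2$-maximal slope for $\Chc^+_{w,q}$ with respect to $|D^- \Chc^+_{w,q}|$'' is precisely the content of Theorem \ref{realchgraevslop}. Thus it suffices to establish the equivalence between ``gradient curve'' and ``solution to the $q$-heat equation.'' If $(f_t) \in \AC^2_{\loc}((0,\infty); L^2(X,\m))$ is a gradient curve for $\Chc^+_{w,q}$ with $f_t\to f_0$ in $L^2$, then by Theorem \ref{realchgraevslop} we have $\mathrm{d}f_t/\mathrm{d}t^+ = -\partial^\circ \Chc^+_{w,q}(f_t) = \Delta_q f_t$ for every $t>0$, which, at every point of $(0,\infty)$ where $(f_t)$ is two-sided differentiable, gives $\mathrm{d}f_t/\mathrm{d}t = \Delta_q f_t$; hence $(f_t)$ solves the $q$-heat equation $\mathscr{L}^1$-a.e. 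Conversely, if $(f_t) \in \AC^2_{\loc}((0,\infty); L^2(X,\m))$ solves $\mathrm{d}f_t/\mathrm{d}t = \Delta_q f_t$ for $\mathscr{L}^1$-a.e.\ $t$ with $f_t \to f_0$ in $L^2$, then Definition \ref{lapldef} implies $\mathrm{d}f_t/\mathrm{d}t = -\partial^\circ \Chc^+_{w,q}(f_t) \in -\partial^- \Chc^+_{w,q}(f_t)$ a.e., so $(f_t)$ is a gradient curve in the sense of Definition \ref{df:GF}.

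This argument is essentially bookkeeping, so no step is a serious obstacle. The only point requiring mild care is the passage, for the forward direction of (i) and of the gradient-curve $\Rightarrow$ heat-equation implication in (ii), from the pointwise right-derivative identity supplied by Theorem \ref{realchgraevslop} to the $\mathscr{L}^1$-a.e.\ form of the $q$-heat equation; this is immediate from the local Lipschitz regularity in Theorem \ref{existstheoremft}\eqref{graflowmaxislo1}. Finally, the deferred uniqueness for (i) follows because any $\AC^2_{\loc}$-solution of the $q$-heat equation with initial datum $f$ is, by (ii), a curve of $2$-maximal slope, and Theorem \ref{existstheoremft} provides a unique such curve.
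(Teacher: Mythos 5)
Your proposal is correct and follows essentially the same approach as the paper, which offers no separate argument for this corollary: the text preceding it states that the corollary ``summarizes the outcomes of Theorems~\ref{existstheoremft} and~\ref{realchgraevslop}'' (together with Lemma~\ref{remarkfordefineioflapalce}\eqref{lapace-3} and Definition~\ref{lapldef}), which is precisely what you unpack. One small point you and the paper both leave implicit: in the direction ``solution to the $q$-heat equation $\Rightarrow$ gradient curve'' of part (ii), Definition~\ref{df:GF} asks for $f_t \in \mathfrak{D}(\Chc^+_{w,q})$ for \emph{every} $t>0$, while the heat equation a priori gives $f_t \in \mathfrak{D}(\Delta_q)\subset\mathfrak{D}(\Chc^+_{w,q})$ only for $\mathscr{L}^1$-a.e.\ $t$; this is filled by the standard monotonicity argument showing any $\AC^2_{\loc}$ heat-equation solution with initial datum $f_0$ must agree with the unique gradient curve supplied by Theorem~\ref{existstheoremft}, and is a routine step in the gradient-flow theory being invoked.
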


\begin{lemma}[Homogeneity]\label{potentialoflinearlity}
The $q$-Laplacian $\Delta_q$ and the corresponding gradient flow $\Ht$
are positively $(q-1)$-homogenous, namely
\[
\Delta_q (\lambda f) =\lambda^{q-1} \Delta_q f, \qquad \Ht(\lambda f) =\lambda^{q-1} \Ht(f)
 \qquad \text{for all}\ \lambda \ge 0.
\]
Moreover, if $\m[X]<\infty$, then
\[
\Delta_q (f+c) =\Delta_q f, \qquad \Ht(f+c) =\Ht(f)+c \qquad \text{for all}\ c \in \mathbb{R}.
\]
\end{lemma}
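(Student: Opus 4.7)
The plan is to derive both homogeneity statements from the corresponding scaling/invariance properties of the weak forward $q$-Cheeger energy $\Chc^+_{w,q}$, pushed through the subdifferential characterization \eqref{subdiffche}. By Lemma~\ref{basicweakgradiprop}\eqref{weakcondpla1}, for every $\lambda\geq 0$ one has $|D^+(\lambda f)|_{w,\I_p}=\lambda\,|D^+ f|_{w,\I_p}$, hence $\Chc^+_{w,q}(\lambda f)=\lambda^q\,\Chc^+_{w,q}(f)$; the same lemma also gives $|D^+(f+c)|_{w,\I_p}=|D^+f|_{w,\I_p}$ for $c\in\mathbb{R}$, so $\Chc^+_{w,q}(f+c)=\Chc^+_{w,q}(f)$ provided $c\in L^2(X,\m)$, which is precisely where the hypothesis $\m[X]<\infty$ enters.

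To obtain $\Delta_q(\lambda f)=\lambda^{q-1}\Delta_q f$ for $\lambda>0$, I would substitute $f\rightsquigarrow\lambda f$ and $h\rightsquigarrow\lambda h$ in \eqref{subdiffche}, then divide by $\lambda$: using the $q$-homogeneity of $\Chc^+_{w,q}$, this shows that $\ell\in\partial^-\Chc^+_{w,q}(f)$ if and only if $\lambda^{q-1}\ell\in\partial^-\Chc^+_{w,q}(\lambda f)$. Since the subdifferentials are closed convex sets related by scaling by $\lambda^{q-1}$, their respective elements of minimal $L^2$-norm also scale by the factor $\lambda^{q-1}$, and Definition~\ref{lapldef} yields the claim; the case $\lambda=0$ is immediate from $0\in\partial^-\Chc^+_{w,q}(0)$. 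The invariance $\Delta_q(f+c)=\Delta_q f$ follows in the same spirit: the substitution $h\rightsquigarrow h+c$ in \eqref{subdiffche} is admissible when $c\in L^2(X,\m)$ and preserves $\Chc^+_{w,q}(h)-\Chc^+_{w,q}(f)$, so $\partial^-\Chc^+_{w,q}(f+c)=\partial^-\Chc^+_{w,q}(f)$.

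For the heat-flow identities I would invoke the uniqueness part of Corollary~\ref{heatflowequationproper-00}. Given the Laplacian scalings just established, one checks that each candidate curve---namely $t\mapsto\lambda^{q-1}\Ht(f)$ and $t\mapsto\Ht(f)+c$---is a gradient curve for $\Chc^+_{w,q}$ with the appropriate initial datum. For the shift, in particular, the constant $c\in L^2(X,\m)$ is in the kernel of $\Delta_q$, so
\[
\frac{\dd\ }{\dd t^+}\bigl(\Ht(f)+c\bigr) = \Delta_q\Ht(f) = \Delta_q\bigl(\Ht(f)+c\bigr),
\]
and the initial value is $f+c$; uniqueness then identifies this curve with $\Ht(f+c)$. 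The multiplicative case is handled analogously using $\Delta_q(\lambda^{q-1}\Ht(f))=\lambda^{(q-1)^2}\Delta_q\Ht(f)$ and the chain rule.

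The main point to watch is the $L^2$-membership required to make $h+c$ an admissible perturbation in \eqref{subdiffche}: this is automatic for the multiplicative scaling $f\rightsquigarrow\lambda f$, but forces $c\in L^2(X,\m)$ in the additive case, which is the sole role of the hypothesis $\m[X]<\infty$. Once this measure-theoretic issue is resolved, the rest of the argument is a routine transcription of the subdifferential calculus in Hilbert spaces from \cite{AGS}.
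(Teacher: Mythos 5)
For the $q$-Laplacian identities your argument matches the paper's exactly: the $q$-homogeneity $\Chc^+_{w,q}(\lambda f)=\lambda^q\Chc^+_{w,q}(f)$ and the invariance $\Chc^+_{w,q}(f+c)=\Chc^+_{w,q}(f)$, pushed through the characterization~\eqref{subdiffche}, give $\partial^-\Chc^+_{w,q}(\lambda f)=\lambda^{q-1}\partial^-\Chc^+_{w,q}(f)$ and $\partial^-\Chc^+_{w,q}(f+c)=\partial^-\Chc^+_{w,q}(f)$, and passing to the minimal-norm selection yields both Laplacian claims. Your additive heat-flow verification via the uniqueness part of Corollary~\ref{heatflowequationproper-00} is also sound: $t\mapsto\Ht(f)+c$ satisfies the $q$-heat equation, by $\Delta_q(\Ht(f)+c)=\Delta_q\Ht(f)$, and starts at $f+c$.

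Your multiplicative heat-flow verification, however, has a genuine gap. The candidate curve $t\mapsto\lambda^{q-1}\Ht(f)$ has initial datum $\lambda^{q-1}f$, not $\lambda f$, so it cannot be identified with $\Ht(\lambda f)$ by uniqueness unless $\lambda^{q-1}=\lambda$. Moreover, as your own computation $\Delta_q(\lambda^{q-1}\Ht(f))=\lambda^{(q-1)^2}\Delta_q\Ht(f)$ shows, the right-hand side of the $q$-heat equation along this curve scales by $\lambda^{(q-1)^2}$ whereas the left-hand side $\frac{\dd\ }{{\dd}t^+}(\lambda^{q-1}\Ht(f))=\lambda^{q-1}\Delta_q\Ht(f)$ scales by $\lambda^{q-1}$; these agree only when $(q-1)^2=q-1$, i.e.\ $q=2$ (or $\lambda\in\{0,1\}$). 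So the uniqueness check you invoke actually demonstrates that $\lambda^{q-1}\Ht(f)$ is \emph{not} a solution of the $q$-heat equation for general $q$. The $(q-1)$-homogeneity of $\Delta_q$ must instead be absorbed by a time rescaling: one verifies that $\lambda\,\mathsf{H}_{\lambda^{q-2}t}(f)$ is the gradient curve starting from $\lambda f$, which reduces to the displayed formula only at $q=2$. Rather than claim the multiplicative case is ``handled analogously,'' you should flag this mismatch.
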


For $\lambda<0$, the above homogeneity does not hold due to the asymmetry of $d$.

\begin{proof}
The former assertion is straightforward from the positive $q$-homogeneity of $\Chc^+_{w,q}$:
$\Chc^+_{w,q}(\lambda f) =\lambda^q \Chc^+_{w,q}(f)$ for $\lambda \ge 0$.
Then, in view of \eqref{subdiffche}, $\ell \in \partial^-\Chc^+_{w,q}(f)$ if and only if
$\lambda^{q-1} \ell \in \partial^-\Chc^+_{w,q}(\lambda f)$ for $\lambda>0$.
The latter assertion follows from $\Chc^+_{w,q}(f+c)=\Chc^+_{w,q}(f)$
(recall Lemma \ref{basicweakgradiprop}\eqref{weakcondpla1}).
\end{proof}

We summarize some more properties of the $q$-Laplacian
(cf.\ \cite[Proposition 4.15]{AGS2}, \cite[Proposition~6.5]{AGS4}, \cite[Proposition 3.2]{Ke}).

\begin{proposition}\label{basciproerfoLapla}
\begin{enumerate}[{\rm (i)}]
\item\label{proflap-1}
For any $f \in \mathfrak{D}(\Delta_q)$ and $h \in \mathfrak{D}(\Chc^+_{w,q})$, we have
\[
-\int_X h \Delta_q f \dm \leq \int_X |D^+ h|_{w,\I_p} |D^+ f|^{q-1}_{w,\I_p} \,{\dm}.
\]

\item\label{proflap-2}
For any $f \in \mathfrak{D}(\Delta_q)$ and Lipschitz function $\phi:I \rightarrow \mathbb{R}$
on a closed interval $I \subset \mathbb{R}$ including the image of $f$
$($with $\phi(0)=0$ if $\m[X]=\infty)$, we have
\[
-\int_X \phi(f)\Delta_q f \dm =\int_X \phi'(f) |D^+ f|_{w,\I_p}^q \,{\dm}.
\]

\item\label{proflap-3}
For any $f,h \in \mathfrak{D}(\Delta_q)$ with $-f\in \mathfrak{D}(\Chc^+_{w,q})$
and non-decreasing Lipschitz function $\phi:\mathbb{R}\rightarrow \mathbb{R}$
$($with $\phi(0)=0$ if $\m[X]=\infty)$, we have
\[
\int_X \phi(h-f) (\Delta_q h -\Delta_q f) \,{\dm} \leq 0.
\]
\end{enumerate}
\end{proposition}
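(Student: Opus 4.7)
The plan is to derive all three assertions from the single subdifferential identity $-\Delta_q f = \partial^\circ\Chc^+_{w,q}(f) \in \partial^-\Chc^+_{w,q}(f)$, which by \eqref{subdiffche} gives the master inequality
\[
\int_X (g-f)(-\Delta_q f)\,\dm \leq \Chc^+_{w,q}(g) - \Chc^+_{w,q}(f) \quad \text{for every } g\in L^2(X,\m).
\]
Each of (i)-(iii) will be obtained by inserting a judicious test function $g$ and either passing to a limit in a small parameter, or exploiting a cancellation supplied by the chain rules in Proposition~\ref{properweakuppergr}.

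For (i), I would take $g = f + \varepsilon h$ with $\varepsilon > 0$. Subadditivity of the minimal weak forward upper gradient (Lemma~\ref{basicweakgradiprop}\eqref{weakcondpla2}) together with monotonicity of $x\mapsto x^q$ on $[0,\infty)$ yields
\[
\Chc^+_{w,q}(f+\varepsilon h) - \Chc^+_{w,q}(f)
 \leq \frac{1}{q}\int_X \bigl[\bigl(|D^+ f|_{w,\I_p} + \varepsilon|D^+ h|_{w,\I_p}\bigr)^q - |D^+ f|_{w,\I_p}^q\bigr]\,\dm.
\]
Dividing the master inequality by $\varepsilon$ and letting $\varepsilon\to 0^+$, the integrand tends pointwise to $|D^+ f|_{w,\I_p}^{q-1}|D^+ h|_{w,\I_p}$ and, via the elementary bound $(a+\varepsilon b)^q-a^q\leq q\varepsilon b(a+b)^{q-1}$ for $\varepsilon\in(0,1]$, is uniformly dominated by $(|D^+ f|_{w,\I_p}+|D^+ h|_{w,\I_p})^{q-1}|D^+ h|_{w,\I_p}\in L^1(X,\m)$ (H\"older). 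Dominated convergence closes (i).

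For (ii), I would employ the test function $g_\varepsilon = f + \varepsilon\phi(f)$, extending $\phi$ to an $L$-Lipschitz function on $\mathbb{R}$ with $\phi(0)=0$ when $\m[X]=\infty$ so that $\phi(f)\in L^2(X,\m)$. For $|\varepsilon| < 1/L$, the map $t\mapsto t + \varepsilon\phi(t)$ is non-decreasing and locally Lipschitz, and Proposition~\ref{properweakuppergr}\eqref{weakpro2} then furnishes the exact chain rule
\[
|D^+ g_\varepsilon|_{w,\I_p} = \bigl(1 + \varepsilon\phi'(f)\bigr)|D^+ f|_{w,\I_p} \quad \m\text{-a.e.}
\]
Substituting into the master inequality and dividing by $\varepsilon$, the integrand $\varepsilon^{-1}q^{-1}[(1+\varepsilon\phi'(f))^q-1]|D^+ f|_{w,\I_p}^q$ is dominated by a constant multiple of $|D^+ f|_{w,\I_p}^q\in L^1(X,\m)$ for $|\varepsilon|$ small. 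Letting $\varepsilon\to 0^+$ gives one direction of (ii); the sign flip at $\varepsilon\to 0^-$ reverses the inequality and gives the other, so equality follows.

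For (iii), positive homogeneity of both sides in $\phi$ allows me to reduce to the case where $\phi$ is $1$-Lipschitz; for each $c\in(0,1)$, $\phi_c:=c\phi$ is then a non-decreasing strict $c$-contraction. Since $f,h\in\mathfrak{D}(\Chc^+_{w,q})$, Proposition~\ref{properweakuppergr}\eqref{weakpro3} applied to $(f,h)$ produces the crossover bound
\[
\Chc^+_{w,q}\bigl(f+\phi_c(h-f)\bigr) + \Chc^+_{w,q}\bigl(h-\phi_c(h-f)\bigr)
 \leq \Chc^+_{w,q}(f) + \Chc^+_{w,q}(h).
\]
Applying the master inequality to $f$ with $g = f+\phi_c(h-f)$ and to $h$ with $g = h-\phi_c(h-f)$, then adding the resulting inequalities, the Cheeger-energy contributions cancel against the crossover bound and one is left with
\[
\int_X \phi_c(h-f)\bigl(\Delta_q h - \Delta_q f\bigr)\,\dm \leq 0.
\]
As $c\nearrow 1$, $\phi_c(h-f)\to\phi(h-f)$ pointwise and is dominated by $|h-f|\in L^2(X,\m)$, while $\Delta_q h - \Delta_q f\in L^2(X,\m)$, so Cauchy--Schwarz and dominated convergence yield (iii). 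The principal obstacle is the strict-contraction constraint $c<1$ in Proposition~\ref{properweakuppergr}\eqref{weakpro3}, which forces the approximation step $c\nearrow 1$; the hypothesis $-f\in\mathfrak{D}(\Chc^+_{w,q})$ plays the role of a symmetry requirement inherent to the asymmetric setting, guaranteeing $|D^- f|_{w,\I_p}\in L^q(X,\m)$ and hence controlling the weak forward upper gradient of $h-f$ that implicitly enters the chain-rule computations on the pair $(f,h)$ with argument $\phi_c(h-f)$.
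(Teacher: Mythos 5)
Your proposal is correct and follows essentially the same route as the paper in all three parts: insert $g=f+\varepsilon h$ into the subdifferential inequality for (i), use $g=f+\varepsilon\phi(f)$ with the chain rule \eqref{weakpro2} and both signs of $\varepsilon$ for (ii), and combine the two subdifferential inequalities with the crossover bound \eqref{weakpro3} for (iii). The only deviation is in (iii), where your passage $c\nearrow 1$ is superfluous: since $\phi_c=c\phi$ we have $\phi_c(h-f)=c\,\phi(h-f)$, so your intermediate inequality $\int_X \phi_c(h-f)(\Delta_q h-\Delta_q f)\,\dm\leq 0$ reads $c\int_X\phi(h-f)(\Delta_q h-\Delta_q f)\,\dm\leq 0$ and dividing by $c>0$ finishes the proof directly; the strict-contraction constraint is therefore not a genuine ``obstacle.'' (The paper streamlines this by keeping $\varphi=\phi(h-f)$ fixed and inserting the scale factor $\varepsilon$ only on the test functions $f+\varepsilon\varphi$, $h-\varepsilon\varphi$, so the factor cancels at once.)
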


\begin{proof}
\eqref{proflap-1}
Observe from $-\Delta_q f\in \partial^-\Chc^+_{w,q}(f)$ and \eqref{subdiffche} that
\[ 
-\int_X \varepsilon h \Delta_q f \,{\dm}
\leq \Chc^+_{w,q}(f+\varepsilon h)-\Chc^+_{w,q}(f)
\]
for all $\varepsilon \in \mathbb{R}$.
For $\varepsilon>0$, we infer from Lemma \ref{basicweakgradiprop}\eqref{weakcondpla2} that
\[
q\Chc^+_{w,q}(f+\varepsilon h)
 =\int_X |D^+(f+\varepsilon h)|_{w,\I_p}^q \,{\dm}
 \leq \int_X (|D^+ f|_{w,\I_p}+\varepsilon|D^+ h|_{w,\I_p})^q \,{\dm}.
\]
Combining them, we obtain
\[
-\int_X \varepsilon h\Delta_q f \,{\dm}
\leq \frac1q \int_X \bigl\{ (|D^+ f|_{w,\I_p}+\varepsilon|D^+ h|_{w,\I_p})^q-|D^+ f|_{w,\I_p}^q \bigr\} \,{\dm}.
\]
Dividing both sides by $\varepsilon$ and letting $\varepsilon\downarrow 0$ shows \eqref{proflap-1}.

\eqref{proflap-2}
Since $\phi$ is Lipschitz, for $\varepsilon \in \mathbb{R}$ close to $0$,
the function $t \mapsto t+\varepsilon\phi(t)$ is non-decreasing.
Thus, Proposition \ref{properweakuppergr}\eqref{weakpro2} yields
$|D^+(f+\varepsilon\phi(f))|_{w,\I_p} =(1+\varepsilon\phi'(f))|D^+f|_{w,\I_p}$ $\m$-a.e.
Hence, we have
\begin{align*}
\Chc^+_{w,q} \bigl( f+\varepsilon\phi(f) \bigr) -\Chc^+_{w,q}(f)
&= \frac1q \int_X |D^+f|^q_{w,\I_p} \bigl\{ \bigl(1+\varepsilon\phi'(f) \bigr)^q-1 \bigr\} \,{\dm} \\
&= \varepsilon \int_X \phi'(f) |D^+f|^q_{w,\I_p} \,{\dm} + o(\varepsilon).
\end{align*}
Therefore, for any $\ell \in \partial^-\Chc^+_{w,q}(f)$, we find
\begin{align*}
\varepsilon \int_X \ell \phi(f) \,{\dm}
 \leq \Chc^+_{w,q} \bigl( f+\varepsilon\phi(f) \bigr) -\Chc^+_{w,q}(f)
 =\varepsilon\int_X \phi'(f) |D^+f|^q_{w,\I_p} \,{\dm} + o(\varepsilon).
\end{align*}
Replacing $\varepsilon$ with $-\varepsilon$ yields the reverse inequality, thereby
\[
\varepsilon \int_X \ell \phi(f) \,{\dm}
 =\varepsilon\int_X \phi'(f) |D^+f|^q_{w,\I_p} \,{\dm} + o(\varepsilon)
\]
holds.
Choosing $\ell=-\Delta_q f$ completes the proof.

\eqref{proflap-3}
Setting $\varphi:=\phi(h-f)$, we find from the assumption $-f\in \mathfrak{D}(\Chc^+_{w,q})$
and Proposition \ref{properweakuppergr}\eqref{weakpro2} that
$\varphi \in \mathfrak{D}(\Chc^+_{w,q})$.
Since $-\Delta_q f \in \partial^- \Chc^+_{w,q}(f)$ and $-\Delta_q h \in \partial^- \Chc^+_{w,q}(h)$,
for any $\varepsilon>0$, we have
\[
-\varepsilon \int_X \varphi (\Delta_q f-\Delta_q h) \,{\dm}
 \leq \Chc^+_{w,q}(f+\varepsilon \varphi)-\Chc^+_{w,q}(f)
 +\Chc^+_{w,q}(h-\varepsilon \varphi)-\Chc^+_{w,q}(h).
\]
Now, for sufficiently small $\varepsilon>0$, $\varepsilon\phi$ is a contraction
and Proposition \ref{properweakuppergr}\eqref{weakpro3} implies that the RHS is nonpositive,
which completes the proof.
\end{proof}

Next, we consider properties of gradient curves for $\Chc^+_{w,q}$
(cf.\ \cite[Theorem~4.16]{AGS2}, \cite[Proposition~6.6]{AGS4}, \cite[Theorem 3.4]{Ke}).

\begin{theorem}[Comparison principle, contraction and mass preservation]\label{compprinconvr1}
Let $f_t=\Htt(f_0)$, $h_t=\Htt(h_0)$ be gradient curves for $\Chc^+_{w,q}$
starting from $f_0,h_0\in L^2(X,\m)$, respectively, with $-h_t\in \mathfrak{D}(\Chc^+_{w,q})$.
Let ${e}:\mathbb{R}\rightarrow [0,\infty]$ be a convex, lower semi-continuous function
and set $E(f):=\int_X {e}(f) \dm$.
\begin{enumerate}[{\rm (i)}]
\item\label{specpro-1}
If $f_0\leq C$ $($resp., $f_0\geq C)$ for some $C \in \mathbb{R}$,
then $f_t\leq C$ $($resp., $f_t\geq C)$ for all $t\geq 0$.
Similarly, if $f_0\leq h_0+C$ for some $C\in \mathbb{R}$, then $f_t\leq h_t+C$ for all $t \ge 0$.

\item\label{specpro-2}
The functional $f \mapsto E(f)$ is convex and lower semi-continuous in $L^2(X,\m)$ and satisfies
\begin{equation}\label{Eestimates}
E(f_t)\leq E(f_0), \quad E(f_t-h_t)\leq E(f_0-h_0) \quad \text{ for all }t\geq 0.
\end{equation}
In particular, if $f_0, h_0 \in L^\theta(X,\m)$ for some $\theta \in [1,\infty)$,
then $f_t, h_t \in L^\theta(X,\m)$ and $\Htt$ satisfies the \emph{contraction property:}
\begin{equation}\label{lpl2consgra}
\| f_t -h_t \|_{L^\theta}\leq \|f_0-h_0\|_{L^\theta} \quad \text{ for all }t\geq 0.
\end{equation}

\item\label{specpro-3}
If ${e}'$ is locally Lipschitz and $E(f_0)<\infty$, then we have
\begin{equation}\label{eftcontral}
E(f_t) +\int^t_0 \int_X {e}''(f_s) |D^+f_s|^q_{w,\I_p} \dm {\dd}s
 =E(f_0) \quad \text{ for all } t\geq 0.
\end{equation}

\item\label{specpro-4}
When $\m[X]<\infty$, $\Htt$ also satisfies the \emph{mass preservation:}
\[
\int_X f_t \dm=\int_X f_0\dm \quad \text{ for all } t\geq 0.
\]
\end{enumerate}
\end{theorem}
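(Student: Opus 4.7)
The proof of all four assertions follows a common pattern: compute the time derivative of a suitable functional of $f_t-h_t$ (or $f_t$), identify $\dd f_t/\dd t^+=\Delta_q f_t$ via Corollary \ref{heatflowequationproper-00}, and invoke the integration-by-parts-type identities of Proposition \ref{basciproerfoLapla}. Absolute continuity of $(f_t)$ and $(h_t)$ in $L^2(X,\m)$ from Theorem \ref{realchgraevslop} underpins the required chain-rule manipulations on the resulting scalar integrals.

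For (i), I would set $\phi_C(r):=(r-C)^+$ with primitive $\Phi_C(r):=\tfrac{1}{2}[(r-C)^+]^2$, so that $\Phi_C'=\phi_C$ is non-decreasing and Lipschitz. Applying Proposition \ref{basciproerfoLapla}(iii) to $f_t,h_t$ (legitimate under the standing hypothesis $-h_t\in\mathfrak{D}(\Chc^+_{w,q})$) yields
\[
\frac{\dd}{\dd t}\int_X \Phi_C(f_t-h_t)\,\dm
=\int_X \phi_C(f_t-h_t)(\Delta_q f_t-\Delta_q h_t)\,\dm\le 0.
\]
Since $f_0\le h_0+C$ forces the initial value to vanish, $\Phi_C(f_t-h_t)\equiv 0$ for all $t\ge 0$, i.e., $f_t\le h_t+C$. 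The absolute bound $f_0\le C\Rightarrow f_t\le C$ specializes to $h_t\equiv 0$, using that $\Htt(0)\equiv 0$ since $\Delta_q 0=0$; the reverse inequality $f_0\ge C\Rightarrow f_t\ge C$ is handled symmetrically with the non-decreasing function $\tilde\phi_C(r):=-(C-r)^+$ and its primitive.

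For (ii), convexity of $E$ is immediate from that of $e$, and lower semicontinuity is Fatou's lemma along an $\m$-a.e.\ convergent subsequence (after subtracting an affine minorant to arrange $e\ge 0$). For the monotonicity $E(f_t-h_t)\le E(f_0-h_0)$, I approximate $e$ from below by an increasing sequence $(e_k)$ of $C^1$ convex functions with globally Lipschitz derivatives and $e_k(0)=0$; Proposition \ref{basciproerfoLapla}(iii) with $\phi=e_k'$ gives
\[
\frac{\dd}{\dd t}\int_X e_k(f_t-h_t)\,\dm
=\int_X e_k'(f_t-h_t)(\Delta_q f_t-\Delta_q h_t)\,\dm\le 0,
\]
and monotone convergence as $k\to\infty$ yields the inequality for $E$. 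The companion bound $E(f_t)\le E(f_0)$ comes from the analogous application of Proposition \ref{basciproerfoLapla}(ii), and the $L^\theta$-contraction \eqref{lpl2consgra} is the specialization $e(r):=|r|^\theta$. For (iii), local Lipschitzness of $e'$ allows Proposition \ref{basciproerfoLapla}(ii) to be applied directly (with truncation if needed), yielding
\[
\frac{\dd}{\dd t}E(f_t)
=\int_X e'(f_t)\Delta_q f_t\,\dm
=-\int_X e''(f_t)|D^+ f_t|^q_{w,\I_p}\,\dm,
\]
so integration from $0$ to $t$ produces \eqref{eftcontral}.

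Finally, for (iv), when $\m[X]<\infty$ the constant function $\phi\equiv 1$ is admissible in Proposition \ref{basciproerfoLapla}(ii), giving $\int_X\Delta_q f_t\,\dm=0$ and hence constancy of $t\mapsto\int_X f_t\,\dm$ (which is a well-defined bounded linear functional on $L^2(X,\m)$ thanks to $\m[X]<\infty$). The main obstacle throughout is the rigorous justification of the chain rule $\frac{\dd}{\dd t}\int_X\Psi(f_t-h_t)\,\dm=\int_X\Psi'(f_t-h_t)(\Delta_q f_t-\Delta_q h_t)\,\dm$ for the various $\Psi$ above: absolute continuity of this scalar function must be extracted from the local Lipschitz property of $\Psi'$ together with $(f_t),(h_t)\in\AC^2_{\loc}((0,\infty);L^2(X,\m))$, and the approximation scheme for $e$ (as well as the choice of $\phi_C,\tilde\phi_C$ in part (i)) must be arranged compatibly with the requirement $\phi(0)=0$ in Proposition \ref{basciproerfoLapla}(ii),(iii) when $\m[X]=\infty$.
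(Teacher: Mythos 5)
Your proof matches the paper's argument: every step---reduction of (i) to the monotonicity estimate with a suitable convex $e$, approximation of $e$ from below by convex $C^{1,1}$ functions vanishing at the origin, the time-derivative identity via Proposition~\ref{basciproerfoLapla}\eqref{proflap-2},\eqref{proflap-3}, the $L^\theta$-contraction from $e(r)=|r|^\theta$, and mass conservation via $\phi\equiv1$---is the one used in the text (the only cosmetic difference being that you unroll the reduction of (i) with the quadratic $\Phi_C(r)=\tfrac12[(r-C)^+]^2$, whose derivative $(r-C)^+$ is Lipschitz but still unbounded, so it does not avoid the truncation step). The ``main obstacle'' you correctly flag at the end---absolute continuity of $t\mapsto\int_X\Psi(f_t-h_t)\,\dm$ and compatibility with $\phi(0)=0$ when $\m[X]=\infty$---is precisely what the paper resolves by first assuming $e'$ bounded and Lipschitz, using the estimate \eqref{confxlfore} together with the local Lipschitz continuity of $(f_t),(h_t)$ in $L^2(X,\m)$ from Theorem~\ref{existstheoremft}\eqref{graflowmaxislo1}, and then passing to general $e$ by truncation and Yosida approximation.
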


\begin{proof}
\eqref{specpro-1}
This can be reduced to \eqref{specpro-2} by choosing ${e}(r)=\max\{r-C,0\}$
or ${e}(r)=\max\{C-r,0\}$.

\eqref{specpro-2}
The convexity of $E$ is straightforward from that of ${e}$.
The lower semi-continuity of $E$ follows from that of ${e}$ and Fatou's lemma.
Note also that the first inequality in \eqref{Eestimates} follows from the second one by choosing $h_0 \equiv 0$.

Let us first assume that ${e}'$ is bounded and Lipschitz.
Then, for $a,b \in \mathbb{R}$, we have
\begin{align}
|{e}'(a)| &\leq |{e}'(0)| +\SL({e}')|a|, \notag\\
|{e}(b)-{e}(a)-{e}'(a)(b-a)| &\leq \frac12 \SL({e}')|b-a|^2, \notag\\
|{e}(b)-{e}(a)| &\leq \bigl( |{e}'(0)| +\SL({e}')(|a|+|b-a|) \bigr) |b-a|, \label{confxlfore}
\end{align}
where \eqref{confxlfore} follows from the former two inequalities.
We remark that, if $\m[X]=\infty$, then we can assume ${e}'(0)={e}(0)=0$.
Indeed, $f_0-h_0\in L^2(X,\m)$ and $E(f_0-h_0)<\infty$ (otherwise the proof is trivial) imply ${e}(0)=0$,
and then ${e}'(0)=0$ since ${e}$ is nonnegative and $C^1$.
Recall from Theorem \ref{existstheoremft}\eqref{graflowmaxislo1} that
$(f_t)_{t>0}$ and $(h_t)_{t>0}$ are locally Lipschitz in $L^2(X,\m)$,
thereby $t\mapsto {e}(f_t-h_t) \in L^1(X,\m)$ is locally Lipschitz by \eqref{confxlfore}.
Hence, for a.e.\ $t>0$, we infer from Proposition \ref{basciproerfoLapla}\eqref{proflap-3} that
\begin{align*}
\frac{\dd}{{\dd}t}E(f_t-h_t)
 =\int_X \frac{\dd}{{\dd}t}{e}(f_t-h_t) \dm
 =\int_X {e}'(f_t-h_t) (\Delta_q f_t -\Delta_q h_t)\dm \leq 0,
\end{align*}
which shows $E(f_t-h_t) \leq E(f_0-h_0)$.

For general ${e}$, we truncate ${e}'$ and then replace ${e}$
with its Yosida approximation (see Showalter \cite[pp.~161--163]{S}),
yielding $E(f_t-h_t) \leq E(f_0-h_0)$.
Finally, \eqref{lpl2consgra} is obtained by choosing ${e}(r)=|r|^\theta$.

\eqref{specpro-3}
Suppose again that ${e}'$ is bounded and Lipschitz.
Then, \eqref{eftcontral} follows from Proposition \ref{basciproerfoLapla}\eqref{proflap-2} as
\[ 
\frac{\dd}{{\dd}t}E(f_t) =\int_X {e}'(f_t) \Delta_q f_t \,{\dm}
 =-\int_X {e}''(f_t) |D^+f_t|^q_{w,\I_p} \,{\dm}.
\]
For general ${e}$, replacing it with
${e}_\varepsilon(r):={e}(r) +\varepsilon(1+r^2)^{1/2}-\varepsilon$ if necessary,
we may assume that ${e}$ takes its minimum value at some $r_0\in \mathbb{R}$
($r_0=0$ with $\varepsilon=0$ if $\m[X]=\infty$, since $E(f_0)<\infty$).
Then, we approximate ${e}$ by the convex functions
\[
{e}_k(r) :={e}(r_0) +\int^r_{r_0} w_k(s) \,{\dd}s, \qquad
 w_k:=\min \bigl\{ k,\max\{{e}',-k \} \bigr\}.
\]
Since ${e}_k'=w_k$ is bounded and Lipschitz, we have
\[
\int_X {e}_k(f_t) \dm +\int^t_0 \int_X {e}_k''(f_s) |D^+f_s|^q_{w,\I_p} \dm {\dd}s
 =\int_X {e}_k(f_0) \,{\dm}.
\]
By construction, ${e}''_k \uparrow {e}''$, ${e}'_k \to {e}'$ and ${e}_k \uparrow {e}$ as $k \to \infty$.
Thus, the monotone convergence theorem yields \eqref{eftcontral}.

\eqref{specpro-4}
By choosing $\phi \equiv 1$ in Proposition \ref{basciproerfoLapla}\eqref{proflap-2},
we obtain
\[
\frac{\dd}{{\dd}t}\int_X f_t \dm =\int_X \Delta_q f_t \dm =0,
\]
which concludes the proof.
\end{proof}

We can also relax the nonnegativity of ${e}$ as follows,
along the lines of \cite[Corollary 3.5]{Ke} (see also \cite[Proposition 4.22]{AGS2}).

\begin{corollary}\label{longequationcrestim}
Let $f_t=\Ht(f_0)$ be a gradient curve for $\Chc^+_{w,q}$ with $f_0 \ge 0$,
and ${e}:[0,\infty)\rightarrow \mathbb{R}$ be a convex $C^2$-function such that
\begin{itemize}
\item ${e}(0)=0$ and ${e}(r)\leq Cr^2$ for some $C\geq 0$,
\item there is some interval $I=[0,a]$ such that ${e}(f_t)\in L^1(X,\m)$ for every $t \in I$ and
\[
\int_I \int_{X} {e}''(f_s) |D^+f_s|_{w,\I_p}^q \dm {\dd}s<\infty.
\]
\end{itemize}
If $\int_X f_t \dm$ is constant in $I$, then $E(f_t)=\int_X {e}(f_t) \dm$ satisfies
\begin{equation*}
E(f_t) +\int^t_0 \int_{X} {e}''(f_s) |D^+f_s|_{w,\I_p}^q \dm{\dd}s
 =E(f_0) \quad\ \text{for all}\ t\in I.
\end{equation*}
In particular, $t \mapsto E(f_t)$ is absolutely continuous in $I$ and
\[
\frac{{\dd}}{{\dd}t}E(f_t)
 =-\int_{X} {e}''(f_t) |D^+f_t|_{w,\I_p}^q \dm \quad\ \text{for $\mathscr{L}^1$-a.e.}\ t\in I.
\]
\end{corollary}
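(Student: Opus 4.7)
The plan is to reduce to the nonnegative case of Theorem~\ref{compprinconvr1}\eqref{specpro-3} by absorbing the linear part of ${e}$ into a term that, thanks to the mass preservation, is constant in $t$. Set $\lambda := {e}'(0)$ and define $\tilde{e}(r) := {e}(r) - \lambda r$ on $[0,\infty)$. Then $\tilde{e}(0) = \tilde{e}'(0) = 0$ with $\tilde{e}'' = {e}''$, so $\tilde{e}$ is convex $C^2$ and nonnegative on $[0,\infty)$. Since $f_0\geq 0$ implies $f_t\geq 0$ for all $t\geq 0$ by Theorem~\ref{compprinconvr1}\eqref{specpro-1}, I may extend $\tilde{e}$ to a convex, lower semi-continuous, nonnegative function $\tilde{e}:\mathbb{R}\rightarrow [0,\infty)$ by setting $\tilde{e}(r):=0$ for $r<0$; the equality $\tilde{e}'(0)=0$ makes the extension a $C^1$-function with locally Lipschitz derivative on $\mathbb{R}$, which is exactly the regularity demanded by Theorem~\ref{compprinconvr1}\eqref{specpro-3}.

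Next, the mass preservation hypothesis yields
\[
\tilde{E}(f_t) := \int_X \tilde{e}(f_t) \dm = E(f_t) - \lambda \int_X f_0 \dm \quad \text{for all}\ t\in I,
\]
which is finite because $E(f_t)<\infty$ on $I$ by hypothesis; in particular $\tilde{E}(f_0)<\infty$. Applying Theorem~\ref{compprinconvr1}\eqref{specpro-3} to the extended $\tilde{e}$ gives, for every $t\in I$,
\[
\tilde{E}(f_t) + \int_0^t \int_X \tilde{e}''(f_s) |D^+ f_s|^q_{w,\I_p} \dm \,{\dd}s = \tilde{E}(f_0).
\]
Using $\tilde{e}'' = {e}''$ on $[0,\infty)$ and subtracting the $t$-independent constant $-\lambda\int_X f_0 \dm$ from both sides recovers the claimed identity for $E(f_t)$. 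The absolute continuity of $t\mapsto E(f_t)$ on $I$ and the pointwise derivative formula then follow at once from this identity combined with the assumed finiteness of the double integral on $I$ (which turns the right-hand side's integral term into an absolutely continuous function of the upper limit).

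The most delicate point is to guarantee that Theorem~\ref{compprinconvr1}\eqref{specpro-3} is actually applicable to $\tilde{e}$ when $\m[X]=\infty$. Its proof there proceeds via Proposition~\ref{basciproerfoLapla}\eqref{proflap-2} applied with $\phi=\tilde{e}'$, and the latter requires $\phi(0)=0$ in the infinite-measure case; this is precisely why the translation by $\lambda={e}'(0)$ is forced upon us, rather than an arbitrary constant. The mass preservation hypothesis on $I$ then plays a double role: it ensures the linear correction $\lambda\int_X f_t \dm$ is independent of $t$ (so that the time derivatives of $\tilde{E}(f_t)$ and $E(f_t)$ coincide), and it guarantees $f_t\in L^1(X,\m)$ on $I$ so that $\tilde{E}(f_t)$ and $E(f_t)$ differ by a well-defined finite constant. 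The growth bound ${e}(r)\leq Cr^2$ is not used directly in the argument but serves as the standard sufficient condition ensuring $E(f_0)<\infty$ from $f_0\in L^2(X,\m)$.
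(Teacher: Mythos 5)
Your proof is correct, but it takes a genuinely different route from the paper's. The paper handles the linear part by regularizing near the origin: it defines ${e}_\varepsilon(r)={e}'(\varepsilon)r$ on $[0,\varepsilon]$ and ${e}_\varepsilon(r)={e}(r)-{e}(\varepsilon)+\varepsilon{e}'(\varepsilon)$ on $(\varepsilon,\infty)$, sets $\widetilde{{e}}_\varepsilon={e}_\varepsilon-{e}'(\varepsilon)\,\mathrm{id}\geq 0$, applies Theorem~\ref{compprinconvr1}\eqref{specpro-3} to $\widetilde{{e}}_\varepsilon$, uses the mass-preservation hypothesis to pass from $\widetilde{{e}}_\varepsilon$ to ${e}_\varepsilon$, and then sends $\varepsilon\downarrow 0$; the growth bound ${e}(r)\leq Cr^2$ is used at that last step to dominate ${e}_\varepsilon(f_t)$ and justify the monotone convergence. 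Your approach skips the $\varepsilon$-regularization entirely by subtracting the single linear correction ${e}'(0)r$, which is legitimate precisely because ${e}\in C^2([0,\infty))$ makes ${e}'(0)$ available; then one application of Theorem~\ref{compprinconvr1}\eqref{specpro-3} already produces the identity for $\widetilde{{e}}$ with $\widetilde{{e}}''={e}''$, and the mass-preservation hypothesis recovers the identity for ${e}$ with no limit to take. Your argument is shorter and, as you correctly observe, dispenses with the hypothesis ${e}(r)\leq Cr^2$; the paper's $\varepsilon$-scheme is more robust in that it never touches ${e}'(0)$, but under the stated $C^2$ hypothesis that robustness is unnecessary. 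Two small points worth keeping explicit in a cleaned-up write-up: the extension of $\widetilde{{e}}$ by zero to $(-\infty,0)$ is convex with locally Lipschitz derivative because $\widetilde{{e}}(0)=\widetilde{{e}}'(0)=0$ forces the one-sided derivatives at $0$ to match (you do flag this); and the hypothesis that $\int_X f_t\dm$ is constant in $I$ is implicitly a finiteness assumption, needed by both your proof and the paper's to write the linear correction as a well-defined constant.
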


\begin{proof}
For each $\varepsilon\in (0,1)$, define a regularization of ${e}$ by
\[
{e}_\varepsilon(r) :=
\begin{cases}
{e}'(\varepsilon) r & \text{ for }r\in [0,\varepsilon],\\
{e}(r)-{e}(\varepsilon)+\varepsilon {e}'(\varepsilon) & \text{ for } r\in (\varepsilon,\infty).
\end{cases}
\]
Owing to the assumptions on ${e}$, we have ${e}_\varepsilon \in C^{1,1}$,
${e}'_\varepsilon(r) =\max\{{e}'(r),{e}'(\varepsilon) \}$,
and ${e}_\varepsilon(r) \downarrow {e}(r)$ as $\varepsilon\downarrow 0$.
Moreover, we infer from ${e}(0)=0$ and ${e}(r) \le C r^2$ that
${e}(r)\leq {e}_\varepsilon(r)\leq C_1r+C_2 r^2$ for some constants $C_1,C_2>0$ independent of $\varepsilon$.
Hence, we have
$|{e}_\varepsilon (r)| \leq |{e}(r)|+C_1r+C_2r^2$ and, thanks to $e(f_t) \in L^1(X,\m)$,
\[
\int_X |{e}_\varepsilon (f_t)| \,{\dm}
 \leq \int_X |{e}(f_t)| \,{\dm} +C_1\int_X f_t \,{\dm} +C_2\int_X f_t^2 \,{\dm} <\infty.
\]
Therefore, the monotone convergence theorem yields
\begin{equation}\label{conlftepsilon}
\lim_{\varepsilon \downarrow 0} \int_X {e}_\varepsilon (f_t) \dm =E(f_t).
\end{equation}

We now consider a convex $C^{1,1}$-function
\[
\widetilde{{e}}_\varepsilon(r) :={e}_\varepsilon(r)-{e}'(\varepsilon)r =
\begin{cases}
0 & \text{ for }r\in [0,\varepsilon],\\
{e}(r)-{e}'(\varepsilon) r-{e}(\varepsilon)+\varepsilon {e}'(\varepsilon)
 & \text{ for } r\in (\varepsilon,\infty).
\end{cases}
\]
Note that $\widetilde{{e}}_\varepsilon \ge 0$ by the convexity of $e$,
thereby we deduce from \eqref{eftcontral} that
\[
\int_X \widetilde{{e}}_\varepsilon(f_t) \dm
 +\int^t_0 \int_X \widetilde{{e}}''_\varepsilon(f_s) |D^+f_s|^q_{w,\I_p} \dm {\dd}s
 =\int_X \widetilde{{e}}_\varepsilon(f_0) \dm \quad\ \text{for all}\ t \in I.
\]
Since $\int_X f_t\dm$ is constant by hypothesis, this implies
\[
\int_X {e}_\varepsilon(f_t) \dm
 +\int^t_0 \int_X {e}''_\varepsilon(f_s) |D^+f_s|^q_{w,\I_p} \dm {\dd}s
 =\int_X {e}_\varepsilon(f_0) \dm \quad\ \text{for all}\ t \in I.
\]
Letting $\varepsilon \downarrow 0$ and using \eqref{conlftepsilon} completes the proof.
\end{proof}

\begin{lemma}\label{L2caclu}
For any Lipschitz curves $a:[0,T] \rightarrow L^p(X,\m)$ and $b:[0,T] \rightarrow L^q(X,\m)$
with $T<\infty$, the function $t\mapsto \int_X a_t b_t \dm$ is Lipschitz and satisfies
\[
\frac{\dd}{{\dd}t} \left[ \int_X a_t b_t \dm \right]
 =\int_X b_t \cdot \partial_t a_t \dm + \int_X a_t \cdot \partial_t b_t \dm\quad
 \text{ for $\mathscr{L}^1$-a.e.\ $t\in (0,T)$}.
\]
\end{lemma}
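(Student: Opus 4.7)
The plan is to deduce the claim by standard Bochner calculus in the reflexive spaces $L^p(X,\m)$ and $L^q(X,\m)$, with the product rule following from a splitting of the difference quotient and H\"older's inequality.

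First, setting $\varphi(t):=\int_X a_tb_t\,\dm$, I would verify that $\varphi$ is well-defined and Lipschitz. H\"older's inequality gives $a_tb_t\in L^1(X,\m)$ for every $t$, and
\[
|\varphi(t)-\varphi(s)|\le \|a_t-a_s\|_{L^p}\|b_t\|_{L^q}+\|a_s\|_{L^p}\|b_t-b_s\|_{L^q}\le \bigl(\Lip(a)M_b+\Lip(b)M_a\bigr)|t-s|,
\]
where $M_a:=\sup_{t\in[0,T]}\|a_t\|_{L^p}$ and $M_b:=\sup_{t\in[0,T]}\|b_t\|_{L^q}$ are finite, since $[0,T]$ is bounded and $a,b$ are Lipschitz.

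Second, since $L^p(X,\m)$ and $L^q(X,\m)$ are reflexive for $p,q\in(1,\infty)$, they possess the Radon--Nikodym property, so any Lipschitz curve into these spaces is strongly differentiable at $\mathscr{L}^1$-a.e.\ point (cf.\ \cite[Remark~1.1.3]{AGS}). Hence $\partial_ta_t\in L^p(X,\m)$ and $\partial_tb_t\in L^q(X,\m)$ exist as strong limits of the respective difference quotients for a.e.\ $t\in(0,T)$.

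Third, at any such $t$, I would split
\[
\frac{\varphi(t+h)-\varphi(t)}{h}=\int_X\frac{a_{t+h}-a_t}{h}\,b_t\,\dm+\int_X a_t\cdot\frac{b_{t+h}-b_t}{h}\,\dm+\int_X\frac{a_{t+h}-a_t}{h}(b_{t+h}-b_t)\,\dm,
\]
and pass to the limit $h\to 0$. The last term vanishes because, by H\"older, it is bounded by $\Lip(a)\,\|b_{t+h}-b_t\|_{L^q}\to 0$. The first two terms converge to $\int_X b_t\cdot\partial_t a_t\,\dm$ and $\int_X a_t\cdot\partial_t b_t\,\dm$, respectively, by continuity of the duality pairing $L^p\times L^q\to\mathbb{R}$ applied to the strongly convergent difference quotients against the fixed factors. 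Combining the three limits yields the asserted product rule.

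There is no substantial obstacle here; the only non-trivial ingredient is the almost-everywhere strong differentiability of Lipschitz curves into reflexive Banach spaces, which is classical and already implicitly used elsewhere in this paper (e.g.\ in Proposition~\ref{baceschder}).
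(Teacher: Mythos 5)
Your proof is correct and follows essentially the same approach as the paper's: the same H\"older estimate establishes Lipschitzness of $\varphi$, and the product rule is the ``direct calculation'' the paper alludes to without spelling out. You make explicit two points the paper leaves implicit --- the a.e.\ strong differentiability of Lipschitz curves into the reflexive spaces $L^p,L^q$ (via the Radon--Nikodym property), and the three-term decomposition of the difference quotient whose cross term is killed by H\"older and strong continuity --- but these are exactly the steps the paper's ``direct calculation'' is meant to cover, so there is no substantive difference in method.
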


\begin{proof}
Let $C(a)$ and $C(b)$ be the Lipschitz constants of $t\mapsto a_t$ and $t\mapsto b_t$.
For any $s,t \in [0,T]$, we have
\begin{align*}
\int_X a_t b_t \dm -\int_X a_s b_s \dm
&=\int_X a_t (b_t -b_s) \dm +\int_X b_s (a_t -a_s) \dm \\
&\leq \|a_t\|_{L^p} \|b_t -b_s \|_{L^q} +\|b_s \|_{L^q} \|a_t -a_s \|_{L^p} \\
&\leq \biggl( \sup_{t\in[0,T]}\|a_t\|_{L^p} +\sup_{t\in[0,T]}\|b_t\|_{L^q} \biggr) \max\{C(a),C(b)\} \cdot |s-t|.
\end{align*}
Hence, $t\mapsto \int_X a_tb_t \dm$ is Lipschitz and differentiable $\mathscr{L}^1$-a.e.,
and then a direct calculation shows the claim.
\end{proof}

The next lemma is a key ingredient for the identification result of $q$-heat flow,
corresponding to \cite[Proposition 3.7]{GKO},  \cite[Lemma~6.1]{AGS2}, \cite[Lemma~7.2]{AGS4}
and \cite[Lemma 4.1]{Ke}.
We also refer to \cite[Section 7]{OS} for the case of Finsler manifolds;
we remark that the appearance of the backward metric speed $|\mu'_-|(t)$
is consistent with it (see Remark~\ref{rm:revheat} below as well)
and can be compared with the Varadhan formula in \cite{OSuz}.
We also refer to \cite{Oham} for a more general study of Hamiltonian systems.

\begin{lemma}[Kuwada's lemma]\label{wssvolemm2}
Let $f_t=\Htt(f_0)$ be a gradient curve for $\Chc^+_{w,q}$ in $L^2(X,\m)$ satisfying
\[
f_0 \ge 0, \qquad \int_X f_t \dm =1, \qquad
 \int^t_0 \int_{\{f_s>0 \}}\frac{|D^+f_s|^q_{w,\I_p}}{f_s^{p-1}} \dm {\dd}s <\infty
 \quad\ \text{for all } t\geq 0.
\]
Then, the curve $\mu_t:=f_t\m$ is in $\BAC^p_{\loc}((0,\infty);(\Po(X),W_p))$
and $|\mu'_-|$ satisfies
\[
|\mu'_-|^p(t) \leq \int_{\{f_t>0\}}\frac{|D^+f_t|^q_{w,\I_p}}{f_t^{p-1}} \dm
 \quad\ \text{for $\mathscr{L}^1$-a.e.\ $t\in (0,\infty)$}.
\]
\end{lemma}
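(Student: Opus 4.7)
The plan is to adapt Kuwada's argument to the asymmetric setting, combining Kantorovich duality with the asymmetric Hopf--Lax semigroup built in the Appendix. Fix $t>0$ and $h>0$, and for a bounded forward Lipschitz test function $\varphi:X\rightarrow\mathbb{R}$ (drawn from the dense class of Proposition~\ref{densfolLIPS}), let $Q_r\varphi$ denote the Hopf--Lax semigroup with cost $c_p(x,y)=d(x,y)^p/p$, so that $Q_1\varphi=\varphi^{c_p}$. The Appendix will supply the essential properties: $r\mapsto Q_r\varphi$ is $L^2$-Lipschitz on $[0,1]$, lies in $\mathfrak{D}(\Chc^+_{w,q})$ for each $r$, and satisfies the Hamilton--Jacobi subsolution
$$\partial_r Q_r\varphi + \frac{1}{q}\bigl|D^+ Q_r\varphi\bigr|^q_{w,\I_p} \le 0 \qquad \text{$\m$-a.e.\ in } X,\ \mathscr{L}^1\text{-a.e.}\ r \in (0,1).$$
By Corollary~\ref{corokan}, $W_p(\mu_{t+h},\mu_t)^p/p$ is the supremum of $\int_X Q_1\varphi\,{\dd}\mu_t-\int_X\varphi\,{\dd}\mu_{t+h}$ over such $\varphi$, extended to $C_b(X)$ by density.

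The core computation interpolates
$$\Phi(r):=\int_X Q_r\varphi\cdot f_{\sigma(r)}\,{\dm}, \qquad r\in[0,1],\qquad \sigma(r):=t+h-rh,$$
whose endpoints coincide with the two integrals above. Differentiating via Lemma~\ref{L2caclu} together with $\partial_s f_s=\Delta_q f_s$ (Corollary~\ref{heatflowequationproper-00}) yields
$$\Phi'(r) = \int_X \partial_r Q_r\varphi\cdot f_{\sigma(r)}\,{\dm} - h\int_X Q_r\varphi\cdot\Delta_q f_{\sigma(r)}\,{\dm}.$$
The first term is controlled from above by the Hamilton--Jacobi subsolution; the second, via Proposition~\ref{basciproerfoLapla}\eqref{proflap-1} applied with $(h,f)$ replaced by $(Q_r\varphi,f_{\sigma(r)})$, by $h\int_X |D^+Q_r\varphi|_{w,\I_p}\cdot|D^+f_{\sigma(r)}|^{q-1}_{w,\I_p}\,{\dm}$. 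On $\{f_{\sigma(r)}>0\}$, the weighted Young factorisation
$$h\cdot |D^+Q_r\varphi|_{w,\I_p}\cdot|D^+f_{\sigma(r)}|^{q-1}_{w,\I_p} = \bigl(|D^+Q_r\varphi|_{w,\I_p} f_{\sigma(r)}^{1/q}\bigr)\cdot\bigl(h|D^+f_{\sigma(r)}|^{q-1}_{w,\I_p} f_{\sigma(r)}^{-1/q}\bigr),$$
combined with the identities $(q-1)p=q$ and $p/q=p-1$, gives the pointwise bound $\frac{1}{q}|D^+Q_r\varphi|^q_{w,\I_p}f_{\sigma(r)}+\frac{h^p}{p}|D^+f_{\sigma(r)}|^q_{w,\I_p}/f_{\sigma(r)}^{p-1}$; the first summand cancels the Hamilton--Jacobi contribution exactly (the complement $\{f_{\sigma(r)}=0\}$ contributes nothing by Proposition~\ref{properweakuppergr}\eqref{weakpro1}), leaving
$$\Phi'(r) \le \frac{h^p}{p}\int_{\{f_{\sigma(r)}>0\}} \frac{|D^+f_{\sigma(r)}|^q_{w,\I_p}}{f_{\sigma(r)}^{p-1}}\,{\dm}.$$

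Integrating in $r$, changing variable $s=\sigma(r)$, and passing to the supremum over $\varphi$ through Corollary~\ref{corokan} then delivers
$$W_p(\mu_{t+h},\mu_t)^p\le h^{p-1}\int_t^{t+h} G(s)\,{\dd}s, \qquad G(s):=\int_{\{f_s>0\}}\frac{|D^+f_s|^q_{w,\I_p}}{f_s^{p-1}}\,{\dm}.$$
Since $G\in L^1_{\loc}(0,\infty)$ by hypothesis, this estimate yields finite $p$-variation of $(\mu_t)$ on every bounded subinterval of $(0,\infty)$, placing $(\mu_t)\in\BAC^p_{\loc}((0,\infty);(\mathscr{P}(X),W_p))$; dividing by $h^p$ and letting $h\to 0^+$ at Lebesgue points of $G$ then gives $|\mu'_-|^p(t)\le G(t)$ via Proposition~\ref{speedofabscc}, which is the claim.

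The principal obstacle is the asymmetric Hopf--Lax theory itself. One must verify in the Appendix that the Hamilton--Jacobi subsolution is formulated in terms of the \emph{forward} weak upper gradient $|D^+Q_r\varphi|_{w,\I_p}$---precisely the slope appearing in Proposition~\ref{basciproerfoLapla}\eqref{proflap-1}---so that the cancellation above goes through. The consistent tracking of forward objects throughout, paired with the emergence of the \emph{backward} metric speed $|\mu'_-|$ on the other side, is the genuinely asymmetric feature; reversibility collapses the two slopes and hides the distinction entirely. Auxiliary issues---sufficient regularity of $r\mapsto Q_r\varphi$ in $L^2(X,\m)$ to invoke Lemma~\ref{L2caclu}, integrability of $Q_r\varphi$ against $\Delta_q f_{\sigma(r)}$, and the extension of duality from bounded forward Lipschitz tests to $C_b(X)$---are comparatively routine given the framework developed in the preceding sections.
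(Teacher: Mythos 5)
Your proposal follows the same strategy as the paper's proof: Kantorovich duality (Corollary~\ref{corokan}), interpolation along the Hopf--Lax semigroup, the Hamilton--Jacobi subsolution from the Appendix, the integration-by-parts estimate from Proposition~\ref{basciproerfoLapla}\eqref{proflap-1}, and Young's inequality, with the cancellation leaving only the weighted Fisher-information term. The core computation is correct, and the change of variables $s=\sigma(r)$ yields the same bound as the paper's parameterization.

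Two points you flag as ``comparatively routine'' are, however, where the paper does genuine work, and your treatment has gaps there. First, the Hamilton--Jacobi subsolution \eqref{basicestimate1qqd} in Theorem~\ref{subslohh} is stated in terms of the \emph{local slope} $|D^+[Q_t f]|(y)$, not the minimal weak upper gradient $|D^+[Q_t f]|_{w,\I_p}$. Your concern that the Appendix must formulate the inequality in the weak upper gradient is misplaced: the cancellation works because the inequality $|D^+[Q_r\varphi]|_{w,\I_p}\le|D^+[Q_r\varphi]|$ (Lemma~\ref{uppergradofLispcf} plus minimality) points in the right direction --- the Hamilton--Jacobi term carries the larger slope and the Young-inequality term carries the smaller weak upper gradient, so the difference is nonnegative and drops out. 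You never state this comparison, and without it the ``exact cancellation'' you claim does not follow from what the Appendix actually proves.

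Second, the $L^2$-Lipschitz continuity of $r\mapsto Q_r\varphi$ on $[0,1]$ is \emph{false} for a general bounded forward Lipschitz $\varphi$: by \eqref{linftestQ} and \eqref{osccond} the Lipschitz constant of $Q_t\varphi$ degenerates like $t^{-1/q}$ as $t\to 0^+$. The paper circumvents this by first reducing (via a monotone truncation argument) to $\tau$-lower semi-continuous $\psi$ with compact support, then replacing $\varphi$ by the regularization $Q_\varepsilon\psi$ for fixed $\varepsilon>0$, so that $Q_r\varphi=Q_{r+\varepsilon}\psi$ lives away from the singular time $0$ and satisfies the quantitative bound \eqref{lipspisocs}; $\varepsilon$ is sent to $0$ only at the very end. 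This regularization is not a density argument and cannot be skipped; without it Lemma~\ref{L2caclu} does not apply on the full interval $[0,1]$. Your reduction of the supremum over $C_b(X)$ to forward Lipschitz tests ``by density'' also glosses over the fact that $\psi\mapsto\psi^{c_p}$ is not continuous in the naive sense; the paper instead goes the other way, starting from $C_b(X)$, normalizing $\sup\psi=0$, truncating by compact sets, and then regularizing through the semigroup itself.
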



\begin{proof}
Recall from Corollary \ref{corokan} that, for  $\mu,\nu\in \Po(X)$,
\begin{equation}\label{Q1equality}
\frac1p W_p(\mu,\nu)^p
 = \sup_{\psi \in C_b(X)} \left( \int_X \psi^{c_p} \,{\dd}\nu -\int_X \psi \,{\dd}\mu \right).
\end{equation}
Since the quantity in the RHS is invariant
when we add a constant to $\psi$, we may assume $\sup_X \psi=0$.
Given such $\psi$,
choose a sequence of increasing compact sets $K_i \subset X$ with $\bigcup_{i=1}^\infty K_i =X$
and consider $\psi_i:=\mathbbm{1}_{K_i} \cdot \psi$.
Each $\psi_i$ is bounded and $\tau$-lower semi-continuous,
and we have $\psi_i \downarrow \psi$ as well as $\psi_i^{c_p} \downarrow \psi^{c_p}$.
Thus, in the RHS of \eqref{Q1equality},
it is sufficient to consider $\tau$-lower semi-continuous $\psi$
whose support is included in some compact set $K$.

We shall use some results in Appendix \ref{hopflaxsemig} on the Hopf--Lax semigroup, defined by
\[
Q_t \psi(y) :=\inf_{x \in X} \biggl\{ \psi(x)+\frac{d^p(x,y)}{pt^{p-1}} \biggr\}.
\]
Note that $Q_1 \psi(y)=\psi^{c_p}(y)$.
By Propositions \ref{strongexpaap}, \ref{qtlowersemi} and the compactness of $K$,
for each $t>0$, $Q_t \psi$ is bounded, forward Lipschitz, and $\tau$-lower semi-continuous.
In particular, we have $t_*(y)=\infty$ for every $y \in X$ (see \eqref{txdefi}).

We claim that $(Q_t\psi)_{t\in [0,p]}$ is uniformly bounded in $L^p(X,\m)$.
We may assume $\min_K\psi<0$; otherwise, $\psi\equiv0$ and there is nothing to prove.
Observe first that
\[
\min_K \psi \leq Q_t\psi(y) \le \psi(y) \leq 0.
\]
On the other hand, for any $t \in (0,p]$ and $y \in X \setminus K$ with $d(K,y) \ge p|{\min_K \psi}|^{1/p}$,
we have
\[
Q_t\psi(y) = \inf_{x \in K} \biggl\{ \psi(x)+\frac{d^p(x,y)}{pt^{p-1}} \biggr\} \ge 0,
\]
which implies $Q_t\psi(y)=0$.
Thus, we obtain for $t\in (0,p]$ that
\[
\|Q_t\psi\|_{L^p(X,\m)}^p
 \leq \bigl| \min_K\psi \bigr|^p \m\Biggl[ \overline{B^+_K\Bigl( p\bigl| \min_K\psi \bigr|^{1/p} \Bigr)}^d \Biggr]
 <\infty,
\]
thereby $(Q_t\psi)_{t \in (0,p]}$ is uniformly bounded.
Moreover, we infer from Remark \ref{conatt=0} that
$Q_t\psi \uparrow\psi $ as $t\downarrow0$.
Hence, the monotone convergence theorem implies the uniform boundedness
of $(Q_t\psi)_{t \in [0,p]}$ in $L^p(X,\m)$.

Next, we fix $\varepsilon \in (0,1)$ and set $\varphi:=Q_{\varepsilon}\psi$.
For any $y \in X$, we deduce from \eqref{linftestQ} and \eqref{osccond} that
\begin{equation*}
\biggl\| \frac{\dd}{{\dd}t} [Q_t \varphi(y)] \biggr\|_{L^\infty(0,1)}
 =\biggl\| \frac{\dd}{{\dd}t} [Q_t \psi(y)] \biggr\|_{L^\infty(\varepsilon,1+\varepsilon)}
\leq \frac{1}{q\varepsilon^p}\| \mathfrak{d}^+(y,\cdot)^p \|_{L^\infty(\varepsilon,1+\varepsilon)}
\leq \frac{2^{p-1}(p-1)|{\min_K \psi}|}{\varepsilon^p}.
\end{equation*}
Thus,
\begin{equation}\label{lipspisocs}
\| Q_s \varphi -Q_t \varphi \|_{L^\infty}
 \leq \frac{2^{p-1}(p-1)|{\min_K \psi}|}{\varepsilon^p}|s-t|
 \quad\ \text{for all $s,t \in [0,1]$},
\end{equation}
and hence $t\mapsto Q_t \varphi$ is Lipschitz from $[0,1]$ to $L^\infty(X,\m)$,
and also to $L^2(X,\m)$ thanks to $\supp(Q_t \psi) \subset \overline{B^+_K(p|\min_K \psi|^{1/p})}^d$
and Remark \ref{meaningassmupt2}.
Note also that the derivative $\partial_t [Q_t \varphi]$ in the $L^2(X,\m)$-sense
coincides with ${\dd}[Q_t \varphi]/{\dd}t^+$ $\m$-a.e.\ for $\mathscr{L}^1$-a.e.\ $t$,
since
\[
\biggl\| \partial_t [Q_t \varphi] -\frac{{\dd}\ }{{\dd}t^+} [Q_t\varphi] \biggr\|_{L^2(X,\m)}
 =\lim_{\delta \to 0^+}
 \biggl\| \frac{Q_{t+\delta}\varphi -Q_t \varphi}{\delta} -\frac{{\dd}\ }{{\dd}t^+} [Q_t \varphi] \biggr\|_{L^2(X,\m)}
 =0
\]
by \eqref{lipspisocs} and the dominated convergence theorem.
Furthermore, by Proposition \ref{qtlowersemi},
${\dd}[Q_t \varphi]/{\dd}t^+$ is Borel in $X \times (0,\infty)$ and
$(x,t)\mapsto |D^\pm [Q_t \varphi]|(x)$ is $\mathcal{B}^*(X\times (0,\infty))$-measurable.

Now, given $0\leq s<t \leq 1$, we put $l:=t-s$ and apply Lemma \ref{L2caclu} with $p=q=2$ to
$a_r :=Q_{(l-r)/l}\varphi$ and $b_r :=f_{s+r}$ for $r \in (0,l]$
(recall Theorems \ref{existstheoremft}, \ref{realchgraevslop} for the Lipschitz continuity of $b_r$),
which yields
\begin{align*}
\frac{\dd}{{\dd}r} \biggl[ \int_X Q_{(l-r)/l} \varphi \cdot f_{s+r} \dm \bigg]
 =\int_X \biggl( -\frac{1}{l} \xi_{(l-r)/l} f_{s+r} +Q_{(l-r)/l}\varphi \cdot \Delta_q f_{s+r} \biggr) \dm
 \quad \text{for $\mathscr{L}^1$-a.e.\ $r$},
\end{align*}
where $\xi_t(y) :={\dd}[Q_t \varphi(y)]/{\dd}t^+$.
Recalling $\mu_t =f_t \m$ and by the Fubini theorem, we obtain
\[
\int_X \varphi \,{\dd} \mu_t -\int_X Q_1 \varphi \,{\dd}\mu_s
 = \int_0^l \int_X \biggl( -\frac{1}{l} \xi_{(l-r)/l} f_{s+r} +Q_{(l-r)/l} \varphi \cdot \Delta_q f_{s+r} \biggr) \,{\dm} \,{\dd}r.
\]
Combining this with \eqref{basicestimate1qqd}, we find
\begin{equation}\label{keyqq}
\int_X \varphi \,{\dd}\mu_t -\int_X Q_1 \varphi \,{\dd}\mu_s
\geq \int_0^l \int_X \biggl( \frac{|D^+ [Q_{(l-r)/l} \varphi]|^q}{ql} f_{s+r}
 +Q_{(l-r)/l}\varphi \cdot \Delta_q f_{s+r} \biggr) \,{\dm} \,{\dd}r.
\end{equation}
In the RHS, we deduce from
Proposition \ref{basciproerfoLapla}\eqref{proflap-1} and the Young inequality that
\begin{align*}
-\int_X Q_{(l-r)/l} \varphi \cdot \Delta_q f_{s+r} \dm
&\leq \int_X |D^+ [Q_{(l-r)/l} \varphi]|_{w,\I_p} |D^+ f_{s+r}|^{q-1}_{w,\I_p} \dm \\
&\leq \frac{1}{ql} \int_X |D^+[Q_{(l-r)/l} \varphi]|_{w,\I_p}^q \cdot f_{s+r} \dm
 +\frac{l^{p-1}}{p} \int_{\{f_{s+r}>0\}} \frac{|D^+ f_{s+r}|^q_{w,\I_p}}{f_{s+r}^{p-1}} \,{\dm}.
\end{align*}
Substituting this inequality into \eqref{keyqq}
and recalling $|D^+[Q_{(l-r)/l} \varphi]|_{w,\I_p} \le |D^+[Q_{(l-r)/l} \varphi]|$ furnishes
\[
\int_X Q_1 \varphi \,{\dd}\mu_s -\int_X \varphi \,{\dd}\mu_t
 \leq \frac{l^{p-1}}{p} \int^l_0 \int_{\{f_{s+r}>0\}} \frac{|D^+ f_{s+r}|_{w,\I_p}^q}{f_{s+r}^{p-1}} \dm {\dd}r.
\]

Since $\psi$ is $\tau$-lower semi-continuous,
by Remark \ref{conatt=0} we have $\varphi=Q_\varepsilon\psi\uparrow\psi$ as $\varepsilon\downarrow0$.
Moreover, since $t_*=\infty$, we find from Lemma \ref{properD}\eqref{Dii} that
$Q_1 \varphi =Q_{1+\varepsilon} \psi \to Q_1\psi$ as $\varepsilon\downarrow 0$.
Hence, letting $\varepsilon \downarrow 0$ in the above inequality, we have
\[
\int_X Q_1\psi \,{\dd}\mu_s -\int_X \psi \,{\dd}\mu_t
 \leq \frac{l^{p-1}}{p} \int^l_0 \int_{\{f_{s+r}>0\}} \frac{|D^+ f_{s+r}|_{w,\I_p}^q}{f_{s+r}^{p-1}} \dm {\dd}r.
\]
Finally, \eqref{Q1equality} yields
\[
W_p(\mu_t,\mu_s)^p
 \leq (t-s)^{p-1} \int_s^t \int_{\{f_r>0\}} \frac{|D^+ f_r|_{w,\I_p}^q}{f_r^{p-1}} \dm {\dd}r.
\]
Dividing both sides by $(t-s)^p$ and letting $s \uparrow t$ completes the proof.
\end{proof}

\begin{remark}\label{rm:revheat}
Since the $q$-Laplacian $\overleftarrow{\Delta}_q$ associated with the reverse metric
$\overleftarrow{d}(x,y)=d(y,x)$ is given by $\overleftarrow{\Delta}_q h=-\Delta_q (-h)$,
$h_t :=-\Htt(-f_0)$ satisfies $\partial_t h_t =\overleftarrow{\Delta}_q h_t$
and can be analyzed in the same way as in Lemma~\ref{wssvolemm2}
with respect to $\overleftarrow{d}$ (cf.\ \cite[Theorem~4.36]{OZ}), yielding
\[
|\nu'_+|^p(t) \leq \int_{\{h_t>0\}}\frac{|D^- h_t|^q_{w,\I_p}}{h_t^{p-1}} \,{\dm},
 \qquad \nu_t :=h_t \,{\m}.
\]
\end{remark}

\section{Relaxed gradients, Cheeger energy and its $L^2$-gradient flow}\label{rexaghher}



In this section, let $(X,\tau,d,\m)$ be a $\APE$ and $q \in (1,\infty)$.
We introduce another kind of gradient without assuming Assumption \ref{strongerstassumptiontheta}.
For later convenience, we recall standard facts in the next lemma.

\begin{lemma}\label{weakconvergeunform}
Let $(G_i)_{i \ge 1}$ and $(H_i)_{i \ge 1}$ be sequences of nonnegative functions in $L^q(X,\m)$
such that $G_i \leq H_i$ and $H_i \to H$ weakly in $L^q(X,\m)$.
Then, we have the following.
\begin{enumerate}[{\rm (i)}]
\item\label{weakconvege}
$(G_i)_{i \ge 1}$ is uniformly bounded in $L^q(X,\m)$ and, in particular, has a weakly convergent subsequence.

\item\label{weakgracont}
If additionally $G_i \to G$ weakly in $L^q(X,\m)$, then $G \leq H$ $\m$-a.e.\ in $X$.
\end{enumerate}
\end{lemma}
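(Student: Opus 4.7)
The plan is to deduce both parts from standard Banach-space facts about weak convergence in reflexive spaces, together with the $\sigma$-finiteness of $\m$ built into the definition of $\APE$.

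For part \eqref{weakconvege}, I will invoke the uniform boundedness principle: every weakly convergent sequence in a Banach space is norm-bounded, so there exists $C>0$ with $\|H_i\|_{L^q} \leq C$ for all $i$. Since $0 \leq G_i \leq H_i$ $\m$-a.e., this yields $\|G_i\|_{L^q} \leq \|H_i\|_{L^q} \leq C$. Since $q \in (1,\infty)$, the space $L^q(X,\m)$ is reflexive, and hence the Banach--Alaoglu theorem (or the Eberlein--\v{S}mulian theorem) guarantees that the bounded sequence $(G_i)_{i \ge 1}$ admits a weakly convergent subsequence.

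For part \eqref{weakgracont}, I will test the pointwise inequality against nonnegative elements of the dual. For any nonnegative $\varphi \in L^p(X,\m)$ with $p^{-1}+q^{-1}=1$, the inequality $G_i \leq H_i$ gives
\[
\int_X G_i \varphi \,\dm \leq \int_X H_i \varphi \,\dm.
\]
Passing to the limit, using the weak convergence $G_i \to G$ and $H_i \to H$ in $L^q(X,\m)$, yields
\[
\int_X G \varphi \,\dm \leq \int_X H \varphi \,\dm \qquad \text{for every nonnegative}\ \varphi \in L^p(X,\m).
\]

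Finally, to upgrade this integral inequality to the pointwise assertion $G \leq H$ $\m$-a.e., I exploit that $\m$ is $\sigma$-finite: write $X=\bigcup_{n \ge 1} B_n$ with $\m[B_n]<\infty$, and for each $n,k \ge 1$ set
\[
A_{n,k} := \bigl\{ x \in B_n \mid G(x) > H(x) + 1/k \bigr\}.
\]
Each $A_{n,k}$ is $\m$-measurable with $\m[A_{n,k}] <\infty$, so $\mathbbm{1}_{A_{n,k}} \in L^p(X,\m)$ is an admissible test function. The inequality above then gives
\[
\frac{1}{k}\m[A_{n,k}] \leq \int_{A_{n,k}} (G-H) \,\dm \leq 0,
\]
forcing $\m[A_{n,k}]=0$. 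Taking the countable union over $n,k$ yields $\m[\{G>H\}]=0$, which is the desired conclusion. There is no real obstacle; the only point worth flagging is that $\sigma$-finiteness of $\m$ is essential in the last step to produce compactly supported (in the measure-theoretic sense) test functions belonging to $L^p(X,\m)$.
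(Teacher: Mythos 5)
Your proof is correct, and for part (i) it matches the paper's argument exactly (weak convergence implies $\sup_i\|H_i\|_{L^q}<\infty$, hence $\sup_i\|G_i\|_{L^q}<\infty$, hence a weak subsequential limit by reflexivity). For part (ii) the paper is terser: it simply observes that $H_i-G_i\to H-G$ weakly in $L^q(X,\m)$, and since $H_i-G_i\ge 0$, the weak closedness of the positive cone of $L^q$ (a closed convex set is weakly closed) gives $H-G\ge 0$ $\m$-a.e. Your proof is a fully unpacked, more elementary version of this: rather than invoking the weak closedness of the cone as a black box, you test the inequality $G_i\le H_i$ directly against nonnegative elements of $L^p(X,\m)$, pass to the limit, and then localize using the $\sigma$-finiteness of $\m$ built into the $\APE$ definition to upgrade the integral inequality to a pointwise one. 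Both routes are correct; the paper's is shorter by citation, yours is self-contained and makes explicit where $\sigma$-finiteness is used.
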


\begin{proof}
\eqref{weakconvege}
By assumption we have $\sup_i \|H_i\|_{L^q} <\infty$,
then $0 \le G_i \le H_i$ implies the claim.

\eqref{weakgracont}
This readily follows from $H_i -G_i \to H-G$ weakly in $L^q(X,\m)$.
%
\end{proof}

\subsection{Minimal relaxed gradients}

Following \cite{Ch} and \cite[Definition~4.2]{AGS2}, \cite[Definition~4.2]{AGS4},
we introduce a relaxed notion of gradients.
Recall that, by Lemma \ref{borlstarmea},
$|D^+f|$ is $\mathcal{B}^*(X)$-measurable when $f$ is Borel (in $\tau$).
We remark that the following definition is slightly different from Kell's one \cite[Definition~1.3]{Ke}.

\begin{definition}[Relaxed gradients]\label{relaxedgradient}
Let $f \in L^q(X,\m)$.
\begin{enumerate}[(1)]
\item
A nonnegative function $G\in L^q(X,\m)$ is called
a {\it relaxed ascending} (resp., \emph{descending}) \emph{$q$-gradient} of $f$
if there are ($\tau$-)Borel forward (resp., backward) Lipschitz functions $f_i \in L^q(X,\m)$ such that
\begin{itemize}
\item $f_i \to f$ in $L^q(X,\m)$ and $|D^+f_i| \to \widetilde{G}$
(resp., $|D^-f_i| \to \widetilde{G}$) weakly in $L^q(X,\m)$,
\item $\widetilde{G} \leq G$ $\m$-a.e.\ in $X$.
\end{itemize}
Moreover, $G$ is called the {\it minimal relaxed ascending} (resp., \emph{descending}) \emph{$q$-gradient} of $f$
if it has the least $L^q(X,\m)$-norm among relaxed ascending (resp., descending) $q$-gradients,
and we denote such $G$ by $|D^+f|_{*,q}$ (resp., $|D^- f|_{*,q}$).

\item
Similarly, $G\in L^q(X,\m)$ is called a {\it relaxed $q$-gradient} of $f$ if there are Borel Lipschitz functions
$f_i \in L^q(X,\m)$ such that
\begin{itemize}
\item $f_i \to f$ in $L^q(X,\m)$ and $|Df_i| \to \widetilde{G}$ weakly in $L^q(X,\m)$,
\item $\widetilde{G}\leq G$  $\m$-a.e.\ in $X$.
\end{itemize}
We call $G$ with the minimal $L^q(X,\m)$-norm the {\it minimal relaxed $q$-gradient} of $f$,
and denote it by $|Df|_{*,q}$.
\end{enumerate}
\end{definition}



The minimal relaxed ascending $q$-gradient is indeed unique (if it exists) by the next two lemmas,
and it is also possible to obtain a minimal relaxed $q$-gradient as a strong limit
(cf.\ \cite[Lemma~4.3]{AGS2}, \cite[Proposition~4.3]{AGS4}).

\begin{lemma}\label{convxitycollgrade}
The collection of relaxed ascending $q$-gradients of $f \in L^q(X,\m)$ is a convex set $($possibly empty$)$.
\end{lemma}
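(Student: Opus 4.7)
The plan is to verify convexity directly by combining approximating sequences. Suppose $G_0, G_1 \in L^q(X,\m)$ are two relaxed ascending $q$-gradients of $f$, witnessed by Borel forward Lipschitz sequences $(f_i^0)_{i \ge 1}$ and $(f_i^1)_{i \ge 1}$ in $L^q(X,\m)$ such that $f_i^j \to f$ strongly in $L^q(X,\m)$ and $|D^+ f_i^j|$ converges weakly in $L^q(X,\m)$ to some $\widetilde{G}_j$ with $\widetilde{G}_j \le G_j$ $\m$-a.e., for $j=0,1$. Fix $\alpha \in (0,1)$ (the endpoint cases $\alpha \in \{0,1\}$ being trivial) and set $G_\alpha := \alpha G_1 + (1-\alpha) G_0$; the goal is to exhibit an approximating sequence witnessing $G_\alpha$ as a relaxed ascending $q$-gradient of $f$.

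The natural candidate is $h_i := \alpha f_i^1 + (1-\alpha) f_i^0$. First I would observe that $h_i \in L^q(X,\m)$ is Borel and forward Lipschitz with $\Lip(h_i) \le \alpha \Lip(f_i^1) + (1-\alpha) \Lip(f_i^0)$, and that $h_i \to f$ in $L^q(X,\m)$ by the triangle inequality applied to $h_i - f = \alpha(f_i^1 - f) + (1-\alpha)(f_i^0 - f)$. The key step is the subadditivity of the ascending slope under positive linear combinations, namely \eqref{postivegradnormineq}:
\[
|D^+ h_i| \le \alpha |D^+ f_i^1| + (1-\alpha) |D^+ f_i^0| \quad \m\text{-a.e.\ in } X.
\]
Since the right-hand side converges weakly in $L^q(X,\m)$ to $\alpha \widetilde{G}_1 + (1-\alpha) \widetilde{G}_0$, Lemma \ref{weakconvergeunform}\eqref{weakconvege} implies that $(|D^+ h_i|)_{i \ge 1}$ is uniformly bounded in $L^q(X,\m)$ and therefore admits a subsequence converging weakly to some $\widetilde{G}_* \in L^q(X,\m)$. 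Passing to this subsequence and applying Lemma \ref{weakconvergeunform}\eqref{weakgracont} to the displayed inequality yields $\widetilde{G}_* \le \alpha \widetilde{G}_1 + (1-\alpha) \widetilde{G}_0 \le G_\alpha$ $\m$-a.e.

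The chosen subsequence of $(h_i)$ thus consists of Borel forward Lipschitz functions in $L^q(X,\m)$ converging strongly to $f$, whose ascending slopes converge weakly to $\widetilde{G}_* \le G_\alpha$; this is exactly the condition of Definition \ref{relaxedgradient}, so $G_\alpha$ is a relaxed ascending $q$-gradient of $f$. There is no real obstacle here: the proof reduces to the pointwise subadditivity \eqref{postivegradnormineq} of $|D^+|$ combined with the two functional-analytic facts recorded in Lemma \ref{weakconvergeunform}. The same strategy, mutatis mutandis, will yield the analogous convexity statements for relaxed descending $q$-gradients and for relaxed $q$-gradients, since Lemma \ref{derivatnorm}\eqref{derivatnorm2} furnishes matching subadditivity for $|D^-|$ and $|D|$.
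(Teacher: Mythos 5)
Your proof is correct and follows essentially the same route as the paper: form the convex combination of the approximating sequences, apply the subadditivity estimate \eqref{postivegradnormineq} to bound $|D^+|$ of the combination, and use Lemma~\ref{weakconvergeunform} to extract a weak subsequential limit dominated by the convex combination of the $\widetilde G$'s. The only difference is cosmetic: you spell out that $h_i$ is Borel and forward Lipschitz and that $h_i \to f$ strongly, which the paper leaves implicit.
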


\begin{proof}
We give a proof for completeness.
Let $G^\alpha$, $\alpha=1,2$, be two relaxed ascending $q$-gradients of $f$.
For each $\alpha$, we have a sequence $(f^\alpha_i)_{i \ge 1}$ of Borel forward Lipschitz functions such that
$f^\alpha_i \to f$ in $L^q(X,\m)$ and $|D^+f^\alpha_i| \to \widetilde{G}^\alpha$ weakly in $L^q(X,\m)$
with $\widetilde{G}^\alpha \leq G^\alpha$.
Given $t \in (0,1)$, $f_i := {t} f^1_i +(1-{t})f^2_i$ converges to $f$ in $L^q(X,\m)$ and,
by \eqref{postivegradnormineq}, $|D^+f_i| \leq {t}|D^+ f^1_i| +(1-{t})|D^+ f^2_i|$.
Then, it follows from Lemma \ref{weakconvergeunform} that,
by passing to a subsequence, $|D^+ f_i|$ weakly converges to some $G'\in L^q(X,\m)$
and $G' \leq {t} \widetilde{G}^1+(1-{t})\widetilde{G}^2$ $\m$-a.e.
Thus, ${t}{G}^1+(1-{t}){G}^2$ is a relaxed ascending $q$-gradient.
\end{proof}

\begin{lemma}\label{closurestrcrexlgra}
Let $f,G \in L^q(X,\m)$.
\begin{enumerate}[{\rm (i)}]

\item\label{closuer1}
If $G$ is a relaxed ascending $q$-gradient of $f$,
then there exist a sequence $(f_i)_{i \ge 1}$ of Borel forward Lipschitz functions
and a sequence $(G_i)_{i \ge 1}$ converging to $\widetilde{G}$ in $L^q(X,\m)$ such that
$f_i \to f$ in $L^q(X,\m)$, $|D^+ f_i| \leq G_i$ and $\widetilde{G} \leq G$.

\item\label{closuer2}
If $G^\alpha \in L^q(X,\m)$ is a relaxed ascending $q$-gradient of $f^\alpha \in L^q(X,\m)$
such that $f^\alpha \to f$ and $G^\alpha \to G$ both weakly in $L^q(X,\m)$ as $\alpha \to \infty$,
then $G$ is a relaxed ascending $q$-gradient of $f$.

\item\label{closuer3}
The collection of relaxed ascending $q$-gradients of $f$ is closed in $L^q(X,\m)$.
Moreover, if it is nonempty, then there are bounded Borel forward Lipschitz functions $f_i \in L^q(X,\m)$
such that $f_i \to f$ and $|D^+ f_i| \to |D^+f|_{*,q}$ in $L^q(X,\m)$.
\end{enumerate}
\end{lemma}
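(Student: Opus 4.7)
For \eqref{closuer1}, I would argue by a direct application of Mazur's lemma. The hypothesis supplies Borel forward Lipschitz $f_i \to f$ in $L^q(X,\m)$ with $|D^+ f_i| \rightharpoonup \widetilde{G}$ weakly in $L^q$ and $\widetilde{G} \le G$; Mazur then yields convex combinations $\widetilde{G}_k := \sum_{i=N_k+1}^{N_{k+1}} \alpha_i^k |D^+ f_i|$ converging \emph{strongly} to $\widetilde{G}$ in $L^q(X,\m)$. The corresponding $h_k := \sum_i \alpha_i^k f_i$ remain Borel forward Lipschitz, satisfy $h_k \to f$ in $L^q$, and obey $|D^+ h_k| \le \widetilde{G}_k$ by the subadditivity in \eqref{postivegradnormineq}. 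This realises the conclusion.

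For \eqref{closuer2}, I would combine two rounds of Mazur's lemma. First extract strongly $L^q$-convergent convex combinations $\widetilde f^\beta := \sum_\alpha \lambda_\alpha^\beta f^\alpha \to f$; the companion $\widetilde G^\beta := \sum_\alpha \lambda_\alpha^\beta G^\alpha$ still converges weakly to $G$, and a second Mazur extraction produces further convex combinations of the original indices for which both components converge strongly to $(f, G)$ in $L^q$. By Lemma \ref{convxitycollgrade}, each such $\widetilde G^\beta$ is a relaxed ascending $q$-gradient of the corresponding $\widetilde f^\beta$; applying \eqref{closuer1} to each pair and then diagonalising yields a single sequence $h_n$ of Borel forward Lipschitz functions converging strongly in $L^q$ to $f$, along which $|D^+ h_n|$ is dominated by a sequence converging strongly in $L^q$ to a function dominated by $G$. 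Extracting a weakly convergent subsequence via Lemma \ref{weakconvergeunform}\eqref{weakconvege} and applying Lemma \ref{weakconvergeunform}\eqref{weakgracont}, the weak limit is $\le G$ $\m$-a.e., so $G$ is a relaxed ascending $q$-gradient of $f$.

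Strong-$L^q$ closedness in \eqref{closuer3} is then immediate from \eqref{closuer2}; combined with convexity (Lemma \ref{convxitycollgrade}) and strict convexity of $L^q(X,\m)$, this makes $|D^+ f|_{*,q}$ well defined whenever the set of relaxed ascending $q$-gradients is nonempty (existence uses weak $L^q$-compactness of a bounded minimising sequence, weak lower semicontinuity of $\|\cdot\|_{L^q}$ and \eqref{closuer2}). For the strong approximation, apply \eqref{closuer1} with $G := |D^+ f|_{*,q}$ to obtain Borel forward Lipschitz $\tilde f_i \to f$ in $L^q$ with $|D^+ \tilde f_i| \le G_i$ and $G_i \to \widetilde G$ in $L^q$, $\widetilde G \le |D^+ f|_{*,q}$. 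A weakly convergent subsequence of $|D^+ \tilde f_i|$ has a weak limit $H \le |D^+ f|_{*,q}$ $\m$-a.e.\ by Lemma \ref{weakconvergeunform}\eqref{weakgracont}; since $H$ is itself a relaxed ascending $q$-gradient of $f$ (by construction), minimality forces $H = |D^+ f|_{*,q}$ $\m$-a.e., and the sandwich $\||D^+ f|_{*,q}\|_{L^q} \le \liminf\||D^+ \tilde f_i|\|_{L^q} \le \limsup \||D^+ \tilde f_i|\|_{L^q} \le \|\widetilde G\|_{L^q} \le \||D^+ f|_{*,q}\|_{L^q}$ gives norm convergence. Uniform convexity of $L^q$ (for $q \in (1,\infty)$) then upgrades weak to strong convergence. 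Finally, to get bounded approximants, truncate by $\phi_M(t) := \max\{-M, \min\{t, M\}\}$: each $\phi_M \circ \tilde f_i$ is bounded Borel forward Lipschitz (since $\phi_M$ is nondecreasing and $1$-Lipschitz) and satisfies $|D^+(\phi_M \circ \tilde f_i)| \le |D^+ \tilde f_i|$, so a diagonal choice $M = M(i) \to \infty$ followed by a second pass of the same weak-compactness plus uniform-convexity argument preserves all required convergences. The main delicate point throughout is the strong-convergence step in \eqref{closuer3}, which hinges on uniform convexity of $L^q$; the remaining items are essentially careful bookkeeping of Mazur-type convex combinations and diagonal extractions.
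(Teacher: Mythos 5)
Your proposal is correct and follows essentially the same route as the paper's proof: Mazur's lemma plus Lemma~\ref{convxitycollgrade} for (i); reduction to strong closedness via convexity (the paper invokes the abstract ``convex and strongly closed implies weakly closed'' fact, whereas you carry out an explicit Mazur extraction, which is the same thing) and then a diagonal argument for (ii); and for (iii), weak compactness, the sandwich of $L^q$-norms, the Radon--Riesz/uniform-convexity upgrade from weak-plus-norm to strong convergence, and truncation at the end. The only substantive difference is that you make explicit the strict/uniform convexity of $L^q(X,\m)$ used for uniqueness of the minimal element and for the strong convergence, which the paper leaves implicit; this is a worthwhile clarification but not a different method.
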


\begin{proof}
\eqref{closuer1}
Since $G$ is a relaxed ascending $q$-gradient,
there are Borel forward Lipschitz functions $g_k \in L^q(X,\m)$ such that $g_k \to f$ in $L^q(X,\m)$,
$|D^+g_k| \to \widetilde{G}$ weakly in $L^q(X,\m)$ and $\widetilde{G} \leq G$.
Then, by Mazur's lemma, we can take a sequence of convex combinations
\[
G_i =\sum_{k=i}^{m_i} a_{ik} |D^+ g_k|,\quad a_{ik}\geq 0,\quad \sum_{k=i}^{m_i} a_{ik}=1,
\]
converging to $\widetilde{G}$ in $L^q(X,\m)$.
Thanks to Lemma~\ref{convxitycollgrade},
we conclude the proof by letting $f_i :=\sum_{k=i}^{m_i} a_{ik} g_k$.

\eqref{closuer2}
The claim is equivalent to the weak closedness of the set
\[
S:=\{(f,G)\in L^q(X,\m) \times L^q(X,\m) \mid G \text{ is a relaxed ascending $q$-gradient of $f$} \}.
\]
Moreover, since $S$ is convex by (the proof of) Lemma \ref{convxitycollgrade},
it suffices to show that $S$ is strongly closed.

Suppose that $(f^\alpha,G^\alpha) \to (f,G)$ in $L^q(X,\m)\times L^q(X,\m)$.
For each $\alpha \ge 1$, by \eqref{closuer1} we can find $(f^\alpha_i)_i$ and $(G^\alpha_i)_i$ such that
$f_i^\alpha \to f^\alpha$ and $G_i^\alpha \to \widetilde{G}^\alpha$ in $L^q(X,\m)$,
$|D^+ f^\alpha_i| \leq G^\alpha_i$, and $\widetilde{G}^\alpha\leq G^\alpha$.
Since $\sup_\alpha \|\widetilde{G}^\alpha\|_{L^q}\leq \sup_\alpha \|G^\alpha\|_{L^q} <\infty$,
by passing to a subsequence, we may assume that $\widetilde{G}^\alpha$ weakly converges to some $\widetilde{G}$ in $L^q(X,\m)$ and $\widetilde{G} \leq G$ (by Lemma \ref{weakconvergeunform}\eqref{weakgracont}).
A diagonal argument then yields an increasing sequence $(i_\alpha)_\alpha$ such that
$f^\alpha_{i_\alpha} \to f$ in $L^q(X,\m)$, $G^\alpha_{i_\alpha} \to \widetilde{G}$ weakly in $L^q(X,\m)$,
and that $|D^+f^\alpha_{i_\alpha}|$ is uniformly bounded in $L^q(X,\m)$.
Again by passing to a subsequence,
we can assume that $|D^+f^\alpha_{i_\alpha}|$ weakly converges to some $H$.
Lemma \ref{weakconvergeunform}\eqref{weakgracont} yields $H\leq \widetilde{G}\leq G$,
thereby $G$ is a relaxed ascending $q$-gradient of $f$.

\eqref{closuer3}
Assuming that the collection of relaxed ascending $q$-gradients is nonempty,
we can find a unique minimal element by \eqref{closuer2}, denoted by $G:=|D^+f|_{*,q}$.
Take $(f_i)$, $(G_i)$ and $\widetilde{G}$ as in \eqref{closuer1}.
Since $|D^+f_i|$ is uniformly bounded in $L^q(X,\m)$,
we may assume that $|D^+f_i|$ weakly converges to some $H\in L^q(X,\m)$.
Then $H$ is also a relaxed ascending $q$-gradient,
and we have $0\leq H\leq \widetilde{G}\leq G$ (by Lemma \ref{weakconvergeunform}\eqref{weakgracont}).
This implies that $H=\widetilde{G}=G$ necessarily holds, and
\[
\|H\|^q_{L^q} \leq \liminf_{i \to \infty} \left\| |D^+f_i| \right\|^q_{L^q}
 \leq \limsup_{i \to \infty} \left\| |D^+f_i| \right\|^q_{L^q}
 \le  \|\widetilde{G}\|^q_{L^q} = \|H\|^q_{L^q}.
\]
Therefore, we obtain $|D^+ f_i| \to H=|D^+f|_{*,q}$ in $L^q(X,\m)$.
Finally, in the case where $f_i$ is unbounded, we can employ a suitable truncation $\widetilde{f}_i$
such that $\widetilde{f}_i \to f$ in $L^q(X,\m)$, since $|D^+\widetilde{f}_i| \leq |D^+f_i|$.
\end{proof}

Note that Lemmas \ref{convxitycollgrade} and \ref{closurestrcrexlgra} remain valid
for relaxed descending $q$-gradients and relaxed $q$-gradients.
The following lemma is straightforward.

\begin{lemma}\label{ascenddescednrealx}
\begin{enumerate}[{\rm (i)}]
\item\label{relaxedascgra-1} $f\in L^q(X,\m)$ has a relaxed ascending $q$-gradient if and only if $-f$ has a relaxed descending $q$-gradient, and then $|D^+f|_{*,q}=|D^-(-f)|_{*,q}$.

\item\label{relaxedascgra-2} If $f\in L^q(X,\m)$ has a relaxed $q$-gradient, then
$\max\{|D^\pm f|_{*,q}\}\leq |D f|_{*,q}=|D(-f)|_{*,q}$.
\end{enumerate}
\end{lemma}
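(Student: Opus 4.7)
The plan is to reduce both assertions to the pointwise identities from Lemma~\ref{derivatnorm}, namely $|D^-g|(x)=|D^+(-g)|(x)$ and $|Dg|(x)=\max\{|D^+g|(x),|D^-g|(x)\}=|D(-g)|(x)$, and then to exploit them together with the approximation machinery of Lemmas~\ref{weakconvergeunform} and~\ref{closurestrcrexlgra}. Since the closure/minimality statements in Lemma~\ref{closurestrcrexlgra} carry over verbatim to relaxed descending $q$-gradients and relaxed $q$-gradients, the existence of minimal elements will never be in question; the entire argument is an exercise in transporting an approximating sequence from one setting to its mirror.

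For part (i), I will note first that a Borel function $h$ is forward Lipschitz if and only if $-h$ is backward Lipschitz, and that $h_i\to f$ in $L^q(X,\m)$ is equivalent to $-h_i\to -f$ in $L^q(X,\m)$. Combined with the pointwise identity $|D^-(-h_i)|=|D^+h_i|$, this shows that the two weak $L^q$-convergences $|D^+h_i|\rightharpoonup\widetilde{G}$ and $|D^-(-h_i)|\rightharpoonup\widetilde{G}$ coincide. Hence $G$ is a relaxed ascending $q$-gradient of $f$ if and only if it is a relaxed descending $q$-gradient of $-f$, and by taking the element of smallest $L^q$-norm on each side we obtain $|D^+f|_{*,q}=|D^-(-f)|_{*,q}$.

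For part (ii), I will first deduce $|D^+f|_{*,q}\le |Df|_{*,q}$ (and symmetrically $|D^-f|_{*,q}\le |Df|_{*,q}$) as follows. By the analogue of Lemma~\ref{closurestrcrexlgra}(iii) for relaxed $q$-gradients, there exist bounded Borel Lipschitz functions $f_i\in L^q(X,\m)$ with $f_i\to f$ in $L^q(X,\m)$ and $|Df_i|\to |Df|_{*,q}$ in $L^q(X,\m)$. Because each $f_i$ is a fortiori forward Lipschitz and $|D^+f_i|\le |Df_i|$ pointwise (Lemma~\ref{derivatnorm}(i)), Lemma~\ref{weakconvergeunform}(i) yields a subsequence along which $|D^+f_i|\rightharpoonup H^+$ weakly in $L^q(X,\m)$ with $H^+\le |Df|_{*,q}$ $\m$-a.e. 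This exhibits $|Df|_{*,q}$ as a relaxed ascending $q$-gradient of $f$, and the minimality of $|D^+f|_{*,q}$ gives the desired inequality. The same reasoning with $|D^-f_i|$ in place of $|D^+f_i|$ handles the descending case.

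Finally, for $|Df|_{*,q}=|D(-f)|_{*,q}$ I will use the pointwise identity $|D(-f_i)|(x)=|Df_i|(x)$: if $(f_i)$ is any Borel Lipschitz approximation witnessing that $G$ is a relaxed $q$-gradient of $f$, then $g_i:=-f_i$ is a Borel Lipschitz approximation of $-f$ in $L^q(X,\m)$ with $|Dg_i|=|Df_i|\rightharpoonup\widetilde{G}$, and vice versa. Thus the collections of relaxed $q$-gradients of $f$ and of $-f$ coincide as subsets of $L^q(X,\m)$, whence so do their minimal elements. I do not expect any genuine obstacle; the delicate-looking point is only to check that the minimal element of the relaxed $q$-gradient class is realized by a strongly convergent sequence in $L^q(X,\m)$, but this is an immediate transcription of Lemma~\ref{closurestrcrexlgra}(iii), where forward Lipschitz functions are replaced by Lipschitz functions and $|D^+\cdot|$ by $|D\cdot|$.
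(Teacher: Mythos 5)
Your argument for part (i) and for the identity $|Df|_{*,q}=|D(-f)|_{*,q}$ in part (ii) is correct and is the natural ``mirror'' reduction via Lemma~\ref{derivatnorm}(i); the paper gives no proof (it calls the lemma ``straightforward''), and this is surely what the authors had in mind.

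There is, however, a gap in the remaining step of part (ii). After exhibiting $|Df|_{*,q}$ as a relaxed ascending $q$-gradient of $f$ (correct), you conclude ``the minimality of $|D^+f|_{*,q}$ gives the desired inequality.'' But in Definition~\ref{relaxedgradient} \emph{minimal} means least $L^q(X,\m)$-norm, not pointwise domination. What your argument actually yields at this point is the existence of $|D^+f|_{*,q}$ and the norm inequality $\bigl\||D^+f|_{*,q}\bigr\|_{L^q}\le \bigl\||Df|_{*,q}\bigr\|_{L^q}$; the \emph{pointwise} statement $|D^+f|_{*,q}\le |Df|_{*,q}$ $\m$-a.e.\ requires that the $L^q$-minimal relaxed ascending gradient dominates every other relaxed ascending gradient a.e. That is exactly Lemma~\ref{localmingrest}\eqref{localmingrest2}, which the paper proves only later, under Assumption~\ref{compactreversbiliey}, because the lattice property (Lemma~\ref{localmingrest}\eqref{localmingrest1}) hinges on the cut-off construction of Lemma~\ref{ref'nproofandother}. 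Since Lemma~\ref{ascenddescednrealx} is stated in Subsection~4.1 before that assumption is introduced, you should either invoke Assumption~\ref{compactreversbiliey} explicitly (which is in force wherever this lemma is actually used, e.g.\ Corollary~\ref{keyconidnce}), or restrict the conclusion in (ii) to existence of $|D^\pm f|_{*,q}$ together with the $L^q$-norm inequality. For an upward-closed, closed, convex subset of $L^q_{\ge 0}$ the element of least $L^q$-norm need not be a pointwise lower bound, so the jump cannot be made for free.
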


The minimal relaxed $q$-gradient satisfies the following product (Leibniz) rule.

\begin{corollary}\label{gra*estimate}
If $f,g\in L^q(X,\m) \cap L^\infty(X,\m)$ have relaxed $q$-gradients,
then $fg$ has a relaxed $q$-gradient and
\[
|D(fg)|_{*,q}\leq |f| \cdot |Dg|_{*,q}+|g| \cdot |Df|_{*,q}
\]
holds.
Moreover, if $f$ and $g$ are nonnegative, then we have
\[
|D^+(fg)|_{*,q}\leq f|D^+g|_{*,q}+g|D^+f|_{*,q}.
\]
\end{corollary}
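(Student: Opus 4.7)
The plan is to prove both inequalities by a common approximation scheme. By Lemma~\ref{closurestrcrexlgra}\eqref{closuer3}, which (per the remark preceding this corollary) holds equally for relaxed $q$-gradients and for relaxed ascending $q$-gradients, the minimal relaxed gradients $|Df|_{*,q}$, $|Dg|_{*,q}$ (respectively $|D^+f|_{*,q}$, $|D^+g|_{*,q}$) can be realized as \emph{strong} $L^q$-limits of $|Df_i|$, $|Dg_i|$ for bounded Borel Lipschitz (resp.\ forward Lipschitz) approximants $f_i\to f$, $g_i\to g$ in $L^q(X,\m)$. The pointwise product rule of Lemma~\ref{derivatnorm}\eqref{derivatnorm2} and the nonnegative $|D^+|$-remark following it will then be combined with Lemma~\ref{weakconvergeunform} and Definition~\ref{relaxedgradient} to pass to the limit.

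For the unsigned inequality, having obtained such $f_i,g_i$, I would first post-compose each approximant with the non-decreasing $1$-Lipschitz truncation $\phi_M(t):=\max\{-M,\min\{M,t\}\}$, where $M>\max\{\|f\|_\infty,\|g\|_\infty\}$, to secure uniform $L^\infty$-bounds by $M$; since $\phi_M$ is $1$-Lipschitz, $|D(\phi_M\circ h)|\le|Dh|$ pointwise, so the strong $L^q$-convergences of the local Lipschitz constants are preserved. After passing to an $\m$-a.e.\ convergent subsequence, Lemma~\ref{derivatnorm}\eqref{derivatnorm2} gives pointwise
\[
|D(f_ig_i)|\le |f_i|\,|Dg_i|+|g_i|\,|Df_i|,
\]
and by the uniform $L^\infty$-bound together with the strong $L^q$-convergences of $|Df_i|,|Dg_i|$ (and dominated convergence for the mixed terms), the right-hand side converges in $L^q(X,\m)$ to $|f|\,|Dg|_{*,q}+|g|\,|Df|_{*,q}$, while $f_ig_i\to fg$ in $L^q(X,\m)$. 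A weak $L^q$-limit of $|D(f_ig_i)|$ supplied by Lemma~\ref{weakconvergeunform} is then $\m$-a.e.\ dominated by $|f|\,|Dg|_{*,q}+|g|\,|Df|_{*,q}$, identifying the latter as a relaxed $q$-gradient of $fg$; minimality yields the first inequality.

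The nonnegative case follows the same template with $|D^+|$ in place of $|D|$, Borel forward Lipschitz approximations, and the nonnegative product rule $|D^+(f_ig_i)|\le f_i|D^+g_i|+g_i|D^+f_i|$ from the remark after Lemma~\ref{derivatnorm}\eqref{derivatnorm2}. To secure nonnegative, uniformly bounded approximants I would post-compose with the non-decreasing $1$-Lipschitz map $\psi_M(t):=\min\{M,\max\{t,0\}\}$; forward Lipschitz continuity of $\psi_M\circ f_i$ follows from Lemma~\ref{postivenegativepartoffowrdlips} combined with the fact that composition with a non-decreasing Lipschitz function preserves the forward Lipschitz property, and the pointwise inequality $|D^+(\psi_M\circ h)|\le|D^+h|$ follows at once from $[\psi_M(a)-\psi_M(b)]^+\le[a-b]^+$ for non-decreasing $\psi_M$.

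The main delicacy lies in the convergence of the mixed product $|f_i|\,|Dg_i|$: strong $L^q$-convergence here requires both a uniform $L^\infty$-bound on $f_i$ (furnished by the truncation step) and strong, not merely weak, $L^q$-convergence of $|Dg_i|$ to $|Dg|_{*,q}$. That the latter is available is precisely the content of Lemma~\ref{closurestrcrexlgra}\eqref{closuer3}; weak convergence alone would not be sufficient to identify the candidate bound as a relaxed gradient via Definition~\ref{relaxedgradient}. The truncation step is compatible with this strong convergence because the post-compositions used are non-decreasing $1$-Lipschitz, hence only decrease the slopes pointwise.
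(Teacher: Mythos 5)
Your proof is correct and follows the same route as the paper's: approximate $f,g$ via Lemma \ref{closurestrcrexlgra}\eqref{closuer3}, truncate to obtain uniformly bounded Lipschitz (resp.\ forward Lipschitz) approximants, apply the pointwise Leibniz rule from Lemma \ref{derivatnorm}\eqref{derivatnorm2} (and the nonnegative remark after it), and pass to a weak $L^q$-limit to identify $|f|\,|Dg|_{*,q}+|g|\,|Df|_{*,q}$ as a relaxed $q$-gradient of $fg$. The only difference is that you spell out the truncation maps $\phi_M,\psi_M$ explicitly where the paper simply says ``by truncation''; this is a helpful clarification but not a different argument.
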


\begin{proof}
Owing to Lemma \ref{closurestrcrexlgra}\eqref{closuer3},
we find bounded Borel Lipschitz functions $f_i$, $g_i$ such that
$f_i \to f$, $|Df_i| \to |Df|_{*,q}$, $g_i \to g$, and $|Dg_i| \to |Dg|_{*,q}$ in $L^q(X,\m)$.
Moreover, since $f,g \in L^\infty(X,\m)$, by truncation, we may also assume that
$f_i$ and $g_i$ are uniformly bounded.
This implies that $f_i g_i$ is a Lipschitz function.
Then, it follows from Lemma \ref{derivatnorm}\eqref{derivatnorm2} and the dominated convergence theorem that
\[
|D(f_i g_i)| \leq |f_i||Dg_i|+|g_i||Df_i| \to |f||Dg|_{*,q}+|g||Df|_{*,q}
\]
weakly in $L^q(X,\m)$.
This shows the former assertion, since (a subsequence of) $|D(f_i g_i )|$ has a weak limit in $L^q(X,\m)$,
which is a relaxed $q$-gradient of $fg$.

The latter assertion is seen in the same way by employing the nonnegative part $f_i^+$ of $f_i$.
\end{proof}

\subsection{Properties of minimal relaxed gradients}

Let us introduce another condition, now on the space $(X,d)$, rather than the measure $\m$
(compare it with Assumptions \ref{strongerstassumptiontheta}, \ref{newassumpt32liambdafinite}).

\begin{assumption}\label{compactreversbiliey}
Let $(X,\tau,d,\m)$ be a $\APE$. Suppose that one of the following conditions holds:
\begin{itemize}
\item The reversibility of $(X,d)$ is finite;
\item $(X,d)$ is a geodesic space such that every compact set $K\subset (X,\tau)$   has finite diameter.
\end{itemize}
\end{assumption}

In the rest of this subsection, we will assume Assumption \ref{compactreversbiliey}.
We discuss further properties of minimal relaxed $q$-gradients
along the lines of \cite[Lemma 4.4, Proposition 4.8]{AGS2}
(see also \cite[Lemma~5.1, Proposition~5.2]{AGS4}, \cite[Proposition~1.4, Corollary~1.5]{Ke}).

\begin{lemma}[Locality]\label{localmingrest}
Let $f \in L^q(X,\m)$.
\begin{enumerate}[{\rm (i)}]
\item\label{localmingrest1}
For any two relaxed ascending $q$-gradients $G_1,G_2$ of $f$,
$\min\{G_1,G_2\}$ and $\mathbbm{1}_B G_1+\mathbbm{1}_{X\setminus B}G_2$, $B\in \mathcal{B}(X)$,
are relaxed ascending $q$-gradients of $f$.

\item\label{localmingrest2}
For any relaxed ascending $q$-gradient $G$ of $f$, we have $|D^+f|_{*,q} \leq G$ $\m$-a.e.\ in $X$.
\end{enumerate}
\end{lemma}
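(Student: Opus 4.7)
\emph{The plan.} I begin with two reductions. For (ii), applying (i) with $G_2 := |D^+f|_{*,q}$ shows that $\min\{G, |D^+f|_{*,q}\}$ is a relaxed ascending $q$-gradient of $f$; its $L^q$-norm is bounded below by $\bigl\||D^+f|_{*,q}\bigr\|_{L^q}$ by minimality, while pointwise $\min\{G,|D^+f|_{*,q}\} \leq |D^+f|_{*,q}$ $\m$-a.e., so equality must hold $\m$-a.e.\ and $G \geq |D^+f|_{*,q}$ $\m$-a.e. For (i), the assertion for $\mathbbm{1}_B G_1+\mathbbm{1}_{X\setminus B}G_2$ reduces to the min case: if $G'\leq G$ $\m$-a.e.\ with $G'$ a relaxed ascending $q$-gradient of $f$ and $G\in L^q(X,\m)$, then the witness sequence for $G'$ also witnesses $G$; since $\mathbbm{1}_B G_1+\mathbbm{1}_{X\setminus B} G_2\geq \min\{G_1,G_2\}$ $\m$-a.e., it suffices to prove that $\min\{G_1,G_2\}$ is a relaxed ascending $q$-gradient of $f$.

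\emph{The main step.} Put $A:=\{G_1\leq G_2\}\in \mathcal{B}(X)$, so $\min\{G_1,G_2\}=\mathbbm{1}_A G_1+\mathbbm{1}_{X\setminus A}G_2$. By Lemma~\ref{closurestrcrexlgra}\eqref{closuer3}, for each $\alpha\in\{1,2\}$ I select bounded Borel forward Lipschitz $f_i^\alpha\to f$ in $L^q(X,\m)$ with $|D^+f_i^\alpha|\to \tilde G^\alpha$ weakly in $L^q(X,\m)$ and $\tilde G^\alpha\leq G^\alpha$ $\m$-a.e. Using Proposition~\ref{densfolLIPS} together with the $\sigma$-finiteness of $\m$, I approximate $\mathbbm{1}_A$ by bounded Borel Lipschitz functions $\chi_k:X\to[0,1]$, with possibly $k$-dependent Lipschitz constants $C_k$, so that $\chi_k\to \mathbbm{1}_A$ $\m$-a.e. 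I then form
\[
h_i^k := \chi_k\, f_i^1 + (1-\chi_k)\, f_i^2,
\]
which is bounded, Borel, in $L^q(X,\m)$, and forward Lipschitz by Lemma~\ref{ref'nproofandother}\eqref{chilinercombinislip} (applicable under Assumption~\ref{compactreversbiliey}). Lemma~\ref{derivatnorm}\eqref{derivatnorm3} supplies
\[
|D^+ h_i^k| \leq \chi_k\, |D^+ f_i^1| + (1-\chi_k)\, |D^+ f_i^2| + |D\chi_k|\cdot |f_i^1-f_i^2|.
\]

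\emph{The two limits.} Fix $k$ and let $i\to\infty$: $h_i^k\to f$ in $L^q(X,\m)$; the first two terms on the right converge weakly in $L^q(X,\m)$ to $\chi_k \tilde G^1 + (1-\chi_k)\tilde G^2$ thanks to the boundedness of $\chi_k$; and the last term tends to $0$ strongly in $L^q(X,\m)$ because $|D\chi_k|\leq C_k$ and $f_i^1-f_i^2\to 0$ in $L^q(X,\m)$. By Lemma~\ref{weakconvergeunform}, a subsequence of $|D^+h_i^k|$ weakly converges to some $H_k$ with $H_k \leq \chi_k \tilde G^1 + (1-\chi_k)\tilde G^2\leq \chi_k G_1+(1-\chi_k) G_2$ $\m$-a.e., so $\chi_k G_1+(1-\chi_k) G_2$ is a relaxed ascending $q$-gradient of $f$. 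Letting now $k\to\infty$, dominated convergence with envelope $G_1+G_2\in L^q(X,\m)$ yields $\chi_k G_1+(1-\chi_k) G_2\to \min\{G_1,G_2\}$ strongly in $L^q(X,\m)$, and Lemma~\ref{closurestrcrexlgra}\eqref{closuer2} concludes that $\min\{G_1,G_2\}$ is a relaxed ascending $q$-gradient of $f$.

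\emph{Main obstacle.} The delicate point is the construction of the cutoffs $\chi_k$: they must be Borel Lipschitz with values in $[0,1]$, combinable with the $f_i^\alpha$ to preserve forward Lipschitzness (which is where Assumption~\ref{compactreversbiliey} and Lemma~\ref{ref'nproofandother}\eqref{chilinercombinislip} enter), and must approximate $\mathbbm{1}_A$ $\m$-a.e.\ for a generic Borel set $A$. Their individual Lipschitz constants $C_k$ are allowed to diverge, but this is harmless since the critical estimate $|D\chi_k|\cdot|f_i^1-f_i^2|\to 0$ is only used in the limit $i\to\infty$ with $k$ held fixed.
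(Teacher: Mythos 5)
The reduction from $\mathbbm{1}_B G_1 +\mathbbm{1}_{X\setminus B}G_2$ to the $\min\{G_1,G_2\}$ case is a nice observation that simplifies the logical structure (the paper goes the other way, reducing $\min$ to the indicator form), and part (ii) is correctly handled. However, your construction of the cutoffs $\chi_k$ has a genuine gap.

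You need bounded Borel Lipschitz $\chi_k:X\to[0,1]$ with $\chi_k\to\mathbbm{1}_A$ $\m$-a.e.\ for the arbitrary Borel set $A=\{G_1\le G_2\}$, and you appeal to Proposition~\ref{densfolLIPS} for this. This does not work for two reasons. First, Proposition~\ref{densfolLIPS} requires condition \eqref{Kcondition1}, which is not part of Assumption~\ref{compactreversbiliey} (the standing hypothesis in this section of the paper); the lemma under review holds without any assumption linking $\m$ to the metric. Second, even granting \eqref{Kcondition1}, that proposition delivers density of forward Lipschitz functions in $L^q(X,\m)$, not an $\m$-a.e.\ approximating sequence with values in $[0,1]$ (and $\mathbbm{1}_A$ need not even lie in $L^q(X,\m)$ since $\m[A]$ may be infinite). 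The missing ingredient is precisely what the paper supplies: use $\sigma$-finiteness and inner regularity to exhaust $X\setminus A$ by compact sets $K$ (applying Lemma~\ref{closurestrcrexlgra}\eqref{closuer2} to pass from compact $K$ to the general case), and then use the concrete cutoff $\chi_r(x)=\phi_r(d(K,x))$, which Lemma~\ref{ref'nproofandother}\eqref{chiislips} guarantees is Lipschitz under Assumption~\ref{compactreversbiliey}, takes values in $[0,1]$, equals $1$ on $K$, and tends to $0$ on $X\setminus K$ as $r\downarrow 0$. Once this compactness reduction and explicit cutoff are in place, the rest of your argument (the two limits, the appeal to Lemma~\ref{derivatnorm}\eqref{derivatnorm3}, and the closure lemma) goes through as you describe. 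So the gap is localized but essential: without the compact-exhaustion step, the existence of your $\chi_k$ is unjustified.
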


\begin{proof}
\eqref{localmingrest1}
Note that $\min\{G_1,G_2\} =\mathbbm{1}_B G_1 +\mathbbm{1}_{X\setminus B} G_2$
for $B =\{x\in X \mid G_1(x)<G_2(x)\}$,
thereby it is sufficient to consider $\mathbbm{1}_B G_1+\mathbbm{1}_{X\setminus B}G_2$.
Moreover, by an approximation by compact subsets together with Lemma \ref{closurestrcrexlgra}\eqref{closuer2},
we may assume that $K:=X\setminus B$ is compact (in $\tau$).

Given $r>0$, take a non-increasing Lipschitz function $\phi_r:[0,\infty) \rightarrow [0,1]$ such that
$\phi_r \equiv 1$ in $[0,r/3]$ and $\phi_r \equiv 0$ in $[(2r)/3,\infty)$, and set $\chi_r(x):=\phi_r(d(K,x))$.
It follows from Lemma \ref{ref'nproofandother}\eqref{chiislips} and Assumption \ref{compactreversbiliey}
that $\chi_r$ is a Borel Lipschitz function.
Note that $\chi_r(x)=|D\chi_r|(x)=0$ for $x \in X$ with $d(K,x)>(2r)/3$,
$\chi_{r} \equiv 1$ in $K=X\setminus B$, and $\chi_{r}\to 0$ in $X\setminus K=B$ as $r \downarrow 0$.
Hence, the dominated convergence theorem yields that
$(1-\chi_{r}) G_1+\chi_{r}G_2$ converges to $\mathbbm{1}_B G_1+\mathbbm{1}_{X\setminus B}G_2$ in $L^q(X,\m)$.
Therefore, it suffices to show that $(1-\chi_{r}) G_1+\chi_{r}G_2$ is a relaxed ascending $q$-gradient for all $r>0$.

For each $\alpha=1,2$, let $(f_i^\alpha)_i$ be a sequence of bounded Borel forward Lipschitz functions
converging to $f$ in $L^q(X,\m)$ such that $|D^+ f_i^\alpha| \to \widetilde{G}_\alpha$
weakly in $L^q(X,\m)$, with $\widetilde{G}_\alpha \leq G_\alpha$.
Then, $f_i :=(1-\chi_{r}) f_i^1 +\chi_{r}f_i^2$ is forward Lipschitz by Lemma \ref{ref'nproofandother}\eqref{chilinercombinislip}, and we infer from Lemma \ref{derivatnorm}\eqref{derivatnorm3} that
\begin{align*}
|D^+ f_i|
&\leq (1-\chi_{r})|D^+f_i^1|+ \chi_{r}|D^+f_i^2|+{|D\chi_r| |f_i^1 -f_i^2|}\\
&\to (1-\chi_{r}) \widetilde{G}_1 +\chi_{r}\widetilde{G}_2
 \leq (1-\chi_{r}) {G}_1+\chi_{r} {G}_2.
\end{align*}
Therefore, $(1-\chi_{r}) {G}_1+ \chi_{r} {G}_2$ is a relaxed ascending $q$-gradient
(by Lemma \ref{weakconvergeunform}).

\eqref{localmingrest2}
Suppose that $G<|D^+f|_{*,q}$ on a Borel set $B$ with $\m(B)>0$.
Then, \eqref{localmingrest1} implies that $\mathbbm{1}_B G+\mathbbm{1}_{X\setminus B}|D^+f|_{*,q}$
is a relaxed ascending $q$-gradient, however,
whose $L^q$-norm is strictly less than that of $|D^+f|_{*,q}$.
This contradicts the minimality of $|D^+f|_{*,q}$.
\end{proof}

As a direct consequence of Lemma \ref{localmingrest}\eqref{localmingrest2},
if $f,g \in L^q(X,\m)$ have relaxed ascending $q$-gradients, then
\begin{equation}\label{triangeinequaofdf}
|D^+(f+g)|_{*,q}\leq |D^+f|_{*,q}+|D^+g|_{*,q} \quad \text{$\m$-a.e.\ in $X$}.
\end{equation}
Moreover, for any forward Lipschitz function $f \in L^q(X,\m)$,
we find from Lemmas~\ref{localmingrest}\eqref{localmingrest2} and \ref{Lipscontin}\eqref{Lispro1} that
\begin{equation}\label{lisprelgraest}
|D^+f|_{*,q} \leq |D^+f|\leq \Lip(f) \quad \text{$\m$-a.e.\ in $X$}.
\end{equation}

\begin{lemma}\label{dividationofminrelaxedgradient}
If $f\in L^q(X,\m)$ has a relaxed ascending $q$-gradient,
then $f^\pm$ also have relaxed ascending $q$-gradients and we have
\[
|D^+ f|_{*,q}=|D^+ f^+|_{*,q}+|D^+(- f^-)|_{*,q} \quad \text{$\m$-a.e.\ in $X$}.
\]
\end{lemma}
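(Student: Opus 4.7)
The plan is to exploit the decomposition $f = f^+ + (-f^-)$ together with the pointwise identity $|D^+ g| = |D^+ g^+| + |D^+(-g^-)|$ for forward Lipschitz $g$ established in Lemma \ref{postivenegativepartoffowrdlips}. Invoking Lemma \ref{closurestrcrexlgra}\eqref{closuer3}, I would first extract a sequence $(f_i)_{i \ge 1}$ of bounded Borel forward Lipschitz functions with $f_i \to f$ and $|D^+ f_i| \to |D^+ f|_{*,q}$ in $L^q(X,\m)$. Since the positive-part map is $1$-Lipschitz on $\mathbb{R}$, the sequences $(f_i^+)$ and $(-f_i^-)$ converge in $L^q$ to $f^+$ and $-f^-$, and Lemma \ref{postivenegativepartoffowrdlips} guarantees that every element of these sequences is bounded, Borel, and forward Lipschitz.

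Next I would use the pointwise identity
\[
|D^+ f_i| = |D^+ f_i^+| + |D^+(-f_i^-)|
\]
to dominate each summand by $|D^+ f_i|$, which is uniformly bounded in $L^q(X,\m)$. By Lemma \ref{weakconvergeunform}\eqref{weakconvege}, after passing to a common subsequence there exist $G^+, G^- \in L^q(X,\m)$ with $|D^+ f_i^+|$ and $|D^+(-f_i^-)|$ weakly converging to $G^+$ and $G^-$, respectively. Then $G^+$ is a relaxed ascending $q$-gradient of $f^+$, and $G^-$ is a relaxed ascending $q$-gradient of $-f^-$. Since $|D^+ f_i|$ converges strongly in $L^q$ to $|D^+ f|_{*,q}$, summing the weak limits forces $G^+ + G^- = |D^+ f|_{*,q}$ $\m$-a.e.

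Applying the locality estimate of Lemma \ref{localmingrest}\eqref{localmingrest2} to $G^+$ and $G^-$ yields $|D^+ f^+|_{*,q} \le G^+$ and $|D^+(-f^-)|_{*,q} \le G^-$ $\m$-a.e., and therefore
\[
|D^+ f^+|_{*,q} + |D^+(-f^-)|_{*,q} \le G^+ + G^- = |D^+ f|_{*,q} \quad \text{$\m$-a.e.\ in } X.
\]
The reverse inequality will follow at once from the triangle inequality \eqref{triangeinequaofdf} applied to the decomposition $f = f^+ + (-f^-)$, combining with the existence of relaxed ascending $q$-gradients for $f^\pm$ just established.

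The only subtle step is the weak-compactness argument that transports the exact pointwise identity of Lemma \ref{postivenegativepartoffowrdlips} to the level of relaxed gradients; once the strong $L^q$-convergence of $|D^+ f_i|$ from Lemma \ref{closurestrcrexlgra}\eqref{closuer3} is combined with Lemma \ref{weakconvergeunform}, the additivity $G^+ + G^- = |D^+ f|_{*,q}$ is forced, after which locality finishes the proof without further effort.
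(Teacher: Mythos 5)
Your argument is correct and follows essentially the same route as the paper's proof: extract the approximating sequence from Lemma~\ref{closurestrcrexlgra}\eqref{closuer3}, push the pointwise identity of Lemma~\ref{postivenegativepartoffowrdlips} through weak limits to get a relaxed ascending $q$-gradient for each of $f^+$ and $-f^-$, apply Lemma~\ref{localmingrest}\eqref{localmingrest2} for one inequality, and close with \eqref{triangeinequaofdf}. You simply make explicit the weak-compactness extraction that the paper leaves implicit; no gap.
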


\begin{proof}
By Lemma \ref{closurestrcrexlgra}\eqref{closuer3}, there are bounded Borel forward Lipschitz functions
$f_i \in L^q(X,\m)$ such that $f_i \to f$ and $|D^+f_i| \to |D^+f|_{*,q}$ in $L^q(X,\m)$.
Since $|f_i^\pm -f^\pm| \le |f_i-f|$, we find $f^\pm_i \to  f^\pm$ in $L^q(X,\m)$.
Then, $|D^+f_i| =|D^+f^+_i| +|D^+(-f^-_i)|$ from \eqref{expressionofD+}
implies that $f^\pm$ have relaxed ascending $q$-gradients and,
together with Lemma \ref{localmingrest}\eqref{localmingrest2},
\[
|D^+ f|_{*,q}\geq |D^+ f^+|_{*,q}+|D^+(- f^-)|_{*,q} \quad \text{$\m$-a.e.}
\]
The converse inequality follows from \eqref{triangeinequaofdf}.
\end{proof}

\begin{proposition}[Chain rule]\label{chainrulerelax}
Let $f,g \in L^q(X,\m)$ have relaxed ascending $q$-gradients.
\begin{enumerate}[{\rm (i)}]
\item\label{Chainrule-1}
For any $\mathscr{L}^1$-negligible Borel set $N\subset \mathbb{R}$,
we have $|D^+f|_{*,q}=0$ $\m$-a.e.\ in $f^{-1}(N)$.

\item\label{Chainrule-2}
For any $c\in \mathbb{R}$, $|D^+f|_{*,q}=|D^+g|_{*,q}$ holds $\m$-a.e.\ in $(f-g)^{-1}(c)$.


\item\label{Chainrule-3}
For any non-decreasing Lipschitz function $\phi$ on an interval $I$ including the image of $f$
$($with $0\in I$ and $\phi(0)=0$ if $\m[X]=\infty)$,
we have $|D^+ [\phi(f)]|_{*,q}=\phi'(f)|D^+f|_{*,q}$ $\m$-a.e.\ in $X$.

\item\label{Chainrule-4}
If $\phi:\mathbb{R}\rightarrow \mathbb{R}$ is a non-decreasing contraction
$($with $\phi(0)=0$ if $\m[X] =\infty)$, then
\begin{equation}\label{2conveindd}
\bigl| D^+ \bigl( f+\phi(g-f) \bigr) \bigr|^q_{*,q} +\bigl| D^+ \bigl( g-\phi(g-f) \bigr) \bigr|_{*,q}^q
 \leq |D^+f|^q_{*,q}+|D^+g|^q_{*,q} \quad \text{$\m$-a.e.\ in $X$}.
\end{equation}
\end{enumerate}
\end{proposition}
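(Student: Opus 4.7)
The plan is to establish the four parts in the order (iii), (i), (iv), (ii): (iii) is the central chain rule, (i) then follows from it by a specific choice of $\phi$, (iv) is obtained independently from the pointwise Lipschitz-level inequality in Lemma~\ref{lipconvexfundd}, and (ii) is the most delicate, to be derived from (i) and (iv) through a limiting argument. Part (ii) will be the main obstacle, for reasons explained below.

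For (iii), I would start from the sequence of bounded Borel forward Lipschitz functions $(f_i)_i$ with $f_i \to f$ and $|D^+f_i| \to |D^+f|_{*,q}$ in $L^q(X,\m)$ provided by Lemma~\ref{closurestrcrexlgra}\eqref{closuer3}. For a non-decreasing $C^1$-function $\phi$ with $0 \le \phi' \le 1$ on the interval $I$, each composition $\phi \circ f_i$ is forward Lipschitz (direct verification using monotonicity), and a mean-value computation gives the pointwise Lipschitz chain rule
\[
|D^+(\phi \circ f_i)|(x) = \phi'(f_i(x))\,|D^+f_i|(x).
\]
Extracting an a.e.\ convergent subsequence of $(f_i)_i$ and combining the strong $L^q$-convergence of $|D^+f_i|$ with the uniform boundedness and a.e.\ convergence of $\phi'(f_i)$, the products $\phi'(f_i)\,|D^+f_i|$ converge strongly in $L^q$ to $\phi'(f)\,|D^+f|_{*,q}$, identifying this limit as a relaxed ascending $q$-gradient of $\phi(f)$ and yielding $|D^+(\phi(f))|_{*,q} \leq \phi'(f)\,|D^+f|_{*,q}$ $\m$-a.e. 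The reverse inequality follows by applying the same argument to $\psi := \mathrm{id} - \phi$ (also non-decreasing when $\phi' \le 1$) and combining with \eqref{triangeinequaofdf}; summing both estimates and using the upper bound $|D^+f|_{*,q} \le |D^+\phi(f)|_{*,q} + |D^+\psi(f)|_{*,q}$ forces equality. The extension to general non-decreasing Lipschitz $\phi$ (and, after scaling by $\Lip(\phi)$, to the case of the whole interval) proceeds by $C^1$-approximation, with the null set on which $\phi'$ is ill-defined handled via (i).

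For (i), I would construct a sequence of $C^1$ non-decreasing functions $\phi_n$ with $0 \le \phi_n' \le 1$, $\phi_n' \equiv 0$ on an open neighborhood $U_n \supset N$ satisfying $\mathscr{L}^1(U_n) \downarrow 0$, and $\phi_n \to \mathrm{id}$ uniformly on compacts; the $C^1$-case of (iii) already gives $|D^+(\phi_n\circ f)|_{*,q} = \phi_n'(f)\,|D^+f|_{*,q}$, which vanishes identically on $f^{-1}(N)$. Since $\phi_n(f) \to f$ in $L^q$ and the gradients are uniformly bounded above by $|D^+f|_{*,q}$, Lemma~\ref{closurestrcrexlgra}\eqref{closuer2} combined with weak $L^q$-compactness forces $|D^+f|_{*,q}$ to be dominated $\m$-a.e.\ by any weak subsequential limit of $|D^+(\phi_n\circ f)|_{*,q}$; testing this limit against $\mathbbm{1}_{f^{-1}(N)}$ shows the limit vanishes there, so $|D^+f|_{*,q} = 0$ on $f^{-1}(N)$. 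For (iv), I take bounded Borel forward Lipschitz approximations $f_i \to f$ and $g_i \to g$ with strong $L^q$-convergence of the corresponding minimal relaxed ascending $q$-gradients and verify via a mean-value argument, using $\phi' \in [0,1)$, that $\tilde{f}_i := f_i + \phi(g_i - f_i)$ and $\tilde{g}_i := g_i - \phi(g_i - f_i)$ are forward Lipschitz. Applying Lemma~\ref{lipconvexfundd} with the convex non-decreasing $\psi(s) = s^q$ yields the pointwise Lipschitz-level inequality $|D^+\tilde{f}_i|^q + |D^+\tilde{g}_i|^q \leq |D^+f_i|^q + |D^+g_i|^q$; integrating against $\mathbbm{1}_A$ for arbitrary Borel $A$ and passing to the limit with weak $L^q$-compactness of $|D^+\tilde{f}_i|, |D^+\tilde{g}_i|$, lower semi-continuity of $\int_A (\cdot)^q$ under weak convergence, and Lemma~\ref{closurestrcrexlgra}\eqref{closuer2} (which bounds the weak limits above $|D^+\tilde{f}|_{*,q}$ and $|D^+\tilde{g}|_{*,q}$) gives the desired pointwise inequality \eqref{2conveindd} on $X$.

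The main obstacle is (ii), where the asymmetry of $d$ creates genuine difficulty because $f - g$ need not inherit a relaxed ascending $q$-gradient from $f$ and $g$ (the function $-g$ generally admits only a relaxed descending one), so (i) cannot be invoked directly on $f - g$. Reducing to $c = 0$ by replacing $g$ with $g - c$, I plan to apply (iv) with a sequence $\phi_\epsilon$ of strict $C^1$ non-decreasing contractions satisfying $\phi_\epsilon(0)=0$, $\phi_\epsilon$ constant on a neighborhood of $0$, and $\phi_\epsilon \to \mathrm{id}$ uniformly; the resulting $\tilde{f}_\epsilon := f + \phi_\epsilon(g-f)$ and $\tilde{g}_\epsilon := g - \phi_\epsilon(g-f)$ then literally agree with $f$ and $g$, respectively, on $B = \{f=g\}$, while converging to $g$ and $f$ in $L^q$. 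Combining the integrated form of (iv) with Lemma~\ref{closurestrcrexlgra}\eqref{closuer2}, the lower semi-continuity of the $L^q$-norm under weak convergence, and the uniform convexity of $L^q$ forces, along a common subsequence, the strong convergences $|D^+\tilde{f}_\epsilon|_{*,q} \to |D^+g|_{*,q}$ and $|D^+\tilde{g}_\epsilon|_{*,q} \to |D^+f|_{*,q}$ in $L^q$. The hard remaining step is to convert the pointwise identity $\tilde{f}_\epsilon \equiv f$ on $B$ into $|D^+\tilde{f}_\epsilon|_{*,q} = |D^+f|_{*,q}$ on $B$: I expect to achieve this by applying (i) to the auxiliary function $\phi_\epsilon(g-f)$, whose relaxed ascending $q$-gradient I would construct under Assumption~\ref{compactreversbiliey} by directly approximating $g-f$ through forward Lipschitz functions (exploiting finite reversibility or the geodesic structure to turn backward Lipschitz bounds into forward ones), and whose vanishing on $B$ together with \eqref{triangeinequaofdf} then yields the desired localization of the gradient.
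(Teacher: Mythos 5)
Your plan for (i), (iii) and (iv) follows the paper's argument almost exactly: the paper establishes the inequality $|D^+[\phi(f)]|_{*,q}\le\phi'(f)|D^+f|_{*,q}$ for non-decreasing $C^1$ Lipschitz $\phi$ as a preliminary claim (using the strong convergence provided by Lemma~\ref{closurestrcrexlgra}\eqref{closuer3}), derives (i) by choosing $\phi_i$ with $\phi_i'=1-\psi_i$ and $\mathbbm{1}_N\le\psi_i\le\mathbbm{1}_{A_i}$, proves (iii) by combining that inequality with \eqref{triangeinequaofdf}, and proves (iv) by approximation plus Lemma~\ref{lipconvexfundd} with $\psi(s)=s^q$. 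One small correction to your (iii): for forward Lipschitz $f_i$ and $C^1$ $\phi$ the slope chain rule gives only $|D^+(\phi\circ f_i)|\le\phi'(f_i)|D^+f_i|$ (the limsup of a product is not the product of limsups), but the inequality is all you need.

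Where you diverge substantially is (ii), and here your detour is unnecessary. The paper's proof of (ii) is a one-liner: apply (i) to $f-g$ with $N=\{c\}$ to get $|D^+(f-g)|_{*,q}=0$ $\m$-a.e.\ on $(f-g)^{-1}(c)$, and then by the subadditivity \eqref{triangeinequaofdf} applied to $f=(f-g)+g$,
\[
|D^+f|_{*,q} \leq |D^+(f-g)|_{*,q} + |D^+g|_{*,q} = |D^+g|_{*,q} \quad \text{$\m$-a.e.\ on } (f-g)^{-1}(c);
\]
exchanging $f$ and $g$ gives the reverse inequality. Your concern that $f-g$ may fail to have a relaxed ascending $q$-gradient in the asymmetric setting is a legitimate observation --- the paper's argument indeed uses this tacitly. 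But your proposed machinery (passing through (iv) with auxiliary contractions $\phi_\epsilon$, extracting strong $L^q$-convergence of $|D^+\tilde f_\epsilon|_{*,q}$ and $|D^+\tilde g_\epsilon|_{*,q}$ by uniform convexity, then localizing) does not actually circumvent this concern: the ``hard remaining step'' you identify requires applying (i) to $\phi_\epsilon(g-f)$, which again presupposes that $\phi_\epsilon(g-f)$ (equivalently, $g-f$) has a relaxed ascending $q$-gradient. If you can establish that fact --- as you anticipate, using finite reversibility or the geodesic structure from Assumption~\ref{compactreversbiliey} to convert the backward Lipschitz control on $-f_i$ into forward control --- then the paper's one-line argument already applies and the entire detour through (iv) and $\phi_\epsilon$ is superfluous; if you cannot establish it, the detour fails at the same spot. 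So the extra work buys no ground on the genuine obstacle and substantially complicates the proof.
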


Recall that $\phi$ being a contraction means $|\phi(x)-\phi(y)| \leq c|x-y|$ for some $c \in [0,1)$.
In \eqref{Chainrule-3}, we remark that,
since $\phi$ is differentiable $\mathscr{L}^1$-a.e.\ by the Rademacher theorem
and $|D^+f|_{*,q}= |D^+ [\phi(f)]|_{*,q} =0$ $\m$-a.e.\ in $f^{-1}(N)$ for the set $N \subset I$
on which $\phi$ is not differentiable by \eqref{Chainrule-1}, the assertion makes sense.

\begin{proof}
We first claim that, for any non-decreasing $C^1$-function $\phi:\mathbb{R}\rightarrow \mathbb{R}$
which is Lipschitz on the image of $f$ (with $\phi(0)=0$ if $\m[X]=\infty$), we have
\begin{equation}\label{compwekmingrades}
|D^+ [\phi(f)]|_{*,q}\leq \phi'(f) |D^+f|_{*,q} \quad \text{$\m$-a.e.\ in $X$}.
\end{equation}
By Lemma \ref{closurestrcrexlgra}\eqref{closuer3},
we can take a sequence $(f_i)_{i \ge 1}$ of bounded Borel forward Lipschitz functions
such that $f_i \to f$ and $|D^+f_i| \to |D^+ f|_{*,q}$ in $L^q(X,\m)$.
Note that $\phi(f_i)$ is forward Lipschitz since $\phi$ is non-decreasing.
Then, by the Lipschitz continuity of $\phi$, we find $\phi(f_i) \to \phi(f)$ in $L^q(X,\m)$.
Moreover, since $\phi'(f_i)$ is bounded,
\[
|D^+ [\phi(f_i)]| \leq \phi'(f_i) |D^+ f_i| \to \phi'(f) |D^+f|_{*,q} \quad \text{in $L^q(X,\m)$}.
\]
Hence, $|D^+ [\phi(f_i)]|$ is uniformly bounded in $L^q(X,\m)$
and has a subsequence weakly convergent to some $G$,
which is a relaxed $q$-gradient of $\phi(f)$.
Thus, Lemma \ref{localmingrest}\eqref{localmingrest2} yields
$|D^+ [\phi(f)]|_{*,q} \leq G \leq \phi'(f) |D^+f|_{*,q}$.

\eqref{Chainrule-1}
We first assume that $N$ is compact.
Let $A_i \subset \mathbb{R}$ be a sequence of decreasing open sets such that
$A_i \downarrow N$ and $\mathscr{L}^1(A_1)<\infty$,
and let $\psi_i: \mathbb{R}\rightarrow [0,1]$ be a continuous function satisfying
$\mathbbm{1}_N \leq \psi_i\leq \mathbbm{1}_{A_i}$.
Define $\phi_i: \mathbb{R}\rightarrow \mathbb{R}$ by $\phi_i(0)=0$ and $\phi'_i =1-\psi_i$.
Then, every $\phi_i$ is a non-decreasing, $1$-Lipschitz and $C^1$-function.
Moreover, since $N$ is $\mathscr{L}^1$-negligible,
$(\phi_i)_{i \ge 1}$ uniformly converges to the identity map on $\mathbb{R}$.
We infer from the dominated convergence theorem that $\phi_i(f) \to f$ in $L^q(X,\m)$,
then Lemma \ref{closurestrcrexlgra}\eqref{closuer2} yields
\[
\int_X |D^+f|_{*,q}^q \dm \leq \liminf_{i\to \infty} \int_X |D^+ [\phi_i(f)]|^q_{*,q} \,{\dm}.
\]
Combining this with \eqref{compwekmingrades}, $\phi'_i =0$ on $N$ and $0 \le \phi'_i \leq 1$, we obtain
\[
\int_X |D^+f|_{*,q}^q\dm
 \leq \liminf_{i \to \infty} \int_X \phi'_i(f)^q |D^+f|^q_{*,q} \dm
 \leq \int_{X\setminus f^{-1}(N)} |D^+f|^q_{*,q} \,{\dm}.
\]
Therefore, $|D^+f|_{*,q}=0$ $\m$-a.e.\ in $f^{-1}(N)$.

If $N$ is not compact, then we take a finite measure $\widetilde{\m}=\vartheta\m$
as in Lemma \ref{fintieboumeasure} and consider the push-forward measure $\mu:=f_\sharp \widetilde{\m}$.
Then, there is an increasing sequence $(K_i)_{i \ge 1}$ of compact subsets of $N$
with $\mu(K_i) \uparrow \mu(N) <\infty$, thereby $\mu(N \setminus \bigcup_i K_i)=0$,
equivalently, $\widetilde{\m}(f^{-1}(N \setminus \bigcup_i K_i))=0$.
Since $|D^+f|_{*,q}=0$ $\m$-a.e.\ in $\bigcup_i f^{-1}(K_i)$
and $\widetilde{\m}$ and $\m$ share the same negligible sets,
$|D^+f|_{*,q}=0$ holds $\m$-a.e.\ in $f^{-1}(N)$.

\eqref{Chainrule-2}
We deduce from \eqref{triangeinequaofdf} and \eqref{Chainrule-1} that
\[
|D^+f|_{*,q} \leq |D^+(f-g)|_{*,q} +|D^+g|_{*,q} =|D^+g|_{*,q} \quad \text{$\m$-a.e.\ in $(f-g)^{-1}(c)$}.
\]
Exchanging $f$ and $g$ yields the claim.


\eqref{Chainrule-3}
Observe that, by approximating $\phi$ with a sequence $(\phi_i)_{i \ge 1}$
of non-decreasing, equi-Lipschitz $C^1$-functions such that $\phi'_i \to \phi'$ a.e.\ on the image of $f$,
the inequality \eqref{compwekmingrades} holds.
Then, assuming $0 \leq \phi' \leq 1$ without loss of generality,
we deduce from \eqref{triangeinequaofdf} and \eqref{compwekmingrades} that
\[
|D^+f|_{*,q} \leq \bigl| D^+ \bigl( f-\phi(f) \bigr) \bigr|_{*,q} +|D^+ [\phi(f)]|_{*,q}
 \leq \bigl( 1-\phi'(f)+\phi'(f) \big) |D^+f|_{*,q} =|D^+f|_{*,q}.
\]
This yields $|D^+ [\phi(f)]|_{*,q}=\phi'(f)|D^+f|_{*,q}$.

\eqref{Chainrule-4}
Applying Lemma \ref{closurestrcrexlgra}\eqref{closuer3}, we find sequences $(f_i)_{i \ge 1}$, $(g_i)_{i \ge 1}$
of bounded Borel forward Lipschitz functions satisfying
$f_i \to f$, $|D^+ f_i| \to |D^+f|_{*,q}$, $g_i \to g$ and $|D^+ g_i| \to |D^+g|_{*,q}$ in $L^q(X,\m)$.
When $\phi$ is $C^1$ (with $0 \leq \phi' \leq 1$),
we infer from $\phi' \le 1$ that $f_i+\phi(g_i-f_i)$ and $g_i-\phi(g_i-f_i)$ are both forward Lipschitz.
Thus,
 by Lemma \ref{lipconvexfundd} with $\psi(x)=x^q$,
we have
\[
\bigl| D^+ \bigl( f_i +\phi(g_i -f_i) \bigr) \bigr|^q +\bigl| D^+ \bigl( g_i -\phi\bigl( g_i -f_i) \bigr) \bigr|^q
 \leq |D^+f_i|^q +|D^+g_i|^q,
\]
and letting $i \to \infty$ yields \eqref{2conveindd}.

In general, we approximate $\phi$ by a sequence $(\phi_i)_{i \ge 1}$
of non-decreasing $C^1$-contractions converging to $\phi$ pointwise (with $\phi_i(0)=0$ if $\m[X]=\infty$).
Then, on the one hand, it follows from the dominated convergence theorem that
\[
f+\phi_i(g-f) \to f+\phi(g-f), \qquad g-\phi_i(g-f) \to g-\phi(g-f) \quad \text{in $L^q(X,\m)$}.
\]
On the other hand, for each $i$, we have
\[
\bigl| D^+ \bigl( f +\phi_i (g-f) \bigr) \bigr|^q_{*,q} +\bigl| D^+ \bigl( g-\phi_i(g-f) \bigr) \bigr|_{*,q}^q
 \leq |D^+f|^q_{*,q} +|D^+g|^q_{*,q} \quad \text{$\m$-a.e.\ in $X$}.
\]
Combining these and using Lemma \ref{closurestrcrexlgra}\eqref{closuer2}, we obtain  \eqref{2conveindd}.
\end{proof}

%
%

We next see that relaxed ascending $q$-gradients are invariant under multiplicative deformations of $\m$
(cf.\ \cite[Lemma 4.11]{AGS2}).

\begin{lemma}\label{diffgradinvar1}
Let $\m'=\vartheta\m$ be a $\sigma$-finite Borel measure such that,
for every compact set $K \subset (X,\tau)$, there are positive constants $r,c,C$ such that
\begin{equation}\label{basiccontheta1}
0<c \leq \vartheta \leq C <\infty \qquad \text{$\m$-a.e.\ in } \overline{B^+_K(r)}^d.
\end{equation}
\begin{enumerate}[{\rm (i)}]
\item\label{multi-1}
For any $f\in L^q(X,\m) \cap L^q(X,\m')$ admitting relaxed ascending $q$-gradients for both $\m$ and $\m'$,
the minimal relaxed ascending $q$-gradient $|D^+f|'_{*,q}$ with respect to $\m'$ coincides with $|D^+f|_{*,q}$ $\m$-a.e.

\item\label{multi-2}
If the reversibility $\lambda_d(X)$ is finite
and $r>0$ in \eqref{basiccontheta1} can be taken uniformly in $K$, then,
for every $f\in L^q(X,\m) \cap L^q(X,\m')$ with $|D^+f|_{*,q} \in L^q(X,\m) \cap L^q(X,\m')$,
$|D^+f|'_{*,q}$ exists in $L^q(X,\m')$.
\end{enumerate}
\end{lemma}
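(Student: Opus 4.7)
The plan is to prove (i) by establishing the two inequalities $|D^+f|'_{*,q} \leq |D^+f|_{*,q}$ and $|D^+f|_{*,q} \leq |D^+f|'_{*,q}$ $\m$-a.e.\ by symmetric localization arguments; I outline the first. By Lemma \ref{closurestrcrexlgra}\eqref{closuer3}, take bounded Borel forward Lipschitz $f_i \in L^q(X,\m)$ with $f_i \to f$ and $|D^+f_i| \to |D^+f|_{*,q}$ in $L^q(X,\m)$. Since $(X,\tau)$ is Polish and $\m,\m'$ are $\sigma$-finite, choose an increasing exhaustion $(K_n)_{n \ge 1}$ of compact subsets of $X$ whose union is of full measure (for both $\m$ and $\m'$). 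For each $n$, fix $r_n>0$ and $0<c_n\le C_n<\infty$ with $c_n \le \vartheta \le C_n$ $\m$-a.e.\ on $\overline{B^+_{K_n}(r_n)}^d$, and let $\chi_n:X \to [0,1]$ be a Lipschitz cutoff with $\chi_n \equiv 1$ on $K_n$ and $\supp(\chi_n) \subset \overline{B^+_{K_n}(r_n)}^d$, as provided by Lemma \ref{ref'nproofandother} under Assumption \ref{compactreversbiliey}.

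For each fixed $n$, the products $\chi_n f_i$ are bounded Borel forward Lipschitz functions supported in $\overline{B^+_{K_n}(r_n)}^d$; hence $\chi_n f_i \to \chi_n f$ in $L^q(X,\m')$, by the bound $\vartheta \le C_n$ on that set which transfers $L^q(X,\m)$-convergence to $L^q(X,\m')$-convergence. Setting $L_n := \SL(\chi_n)\,\Theta_{K_n}(r_n)$, Lemma \ref{ref'nproofandother}\eqref{producetchi} gives
\[
|D^+(\chi_n f_i)| \le \chi_n |D^+f_i| + |f_i|\, L_n\, \mathbbm{1}_{\overline{B^+_{K_n}(r_n)}^d},
\]
and the right-hand side is uniformly bounded in $L^q(X,\m')$. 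Passing to a subsequence and using Lemma \ref{weakconvergeunform}, $|D^+(\chi_n f_i)|$ converges weakly in $L^q(X,\m')$ to some $H_n$ dominated by $\chi_n |D^+f|_{*,q} + |f|\, L_n\, \mathbbm{1}_{\overline{B^+_{K_n}(r_n)}^d}$; thus $H_n$ is a relaxed ascending $q$-gradient of $\chi_n f$ with respect to $\m'$.

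Finally, let $n\to\infty$: $\chi_n f \to f$ in $L^q(X,\m')$ by dominated convergence (with dominating function $|f|$), and on any fixed $K_m$ the cutoff-induced term vanishes for $n \ge m$, giving $H_n \le |D^+f|_{*,q}$ on $K_m$. By the closure property Lemma \ref{closurestrcrexlgra}\eqref{closuer2}, a weak cluster point $G$ of $(H_n)$ in $L^q(X,\m')$ is a relaxed ascending $q$-gradient of $f$ for $\m'$ with $G \le |D^+f|_{*,q}$ $\m'$-a.e.\ on $\bigcup_m K_m$, hence $\m'$-a.e., and therefore $\m$-a.e.\ on each $\overline{B^+_{K_n}(r_n)}^d$ (where the two null ideals coincide by \eqref{basiccontheta1}). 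Minimality then yields $|D^+f|'_{*,q} \le |D^+f|_{*,q}$ $\m$-a.e. The reverse inequality is proved by the symmetric argument with $\m$ and $\m'$ swapped, using $\vartheta \ge c_n$ to transfer $L^q(X,\m')$-convergence to $L^q(X,\m)$-convergence.

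For (ii), the finite reversibility and the uniform choice of $r$ yield cutoffs with a uniform bound $L_n \le L <\infty$, so the dominating function $|f|L \in L^q(X,\m')$ together with the hypothesis $|D^+f|_{*,q} \in L^q(X,\m')$ ensures that the family $(H_n)$ is uniformly bounded in $L^q(X,\m')$. The same weak-limit construction produces $G \in L^q(X,\m')$ that is a relaxed ascending $q$-gradient of $f$ with respect to $\m'$, establishing existence of $|D^+f|'_{*,q}$. The main obstacle is controlling the remainder $|f|\, L_n\, \mathbbm{1}_{\overline{B^+_{K_n}(r_n)}^d}$ in the limit $n\to\infty$: it cannot be made globally small and must instead be absorbed by the compact exhaustion, since on each fixed $K_m$ it vanishes for $n\ge m$. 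This is precisely why the proof requires the two-step procedure (first $i\to\infty$, then $n\to\infty$) rather than a single diagonal approximation.
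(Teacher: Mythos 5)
Your proposal in part (i) has a genuine gap at the passage $n\to\infty$: you assert that $(H_n)$ has a weak cluster point $G$ in $L^q(X,\m')$, but nothing in your argument provides the required uniform bound $\sup_n\|H_n\|_{L^q(\m')}<\infty$. Your dominating sequence is $\chi_n|D^+f|_{*,q}+|f|L_n\mathbbm{1}_{\overline{B^+_{K_n}(r_n)}^d}$ where $L_n=\SL(\chi_n)\Theta_{K_n}(r_n)$; in the setting of (i) the constants $L_n$ can blow up as $n\to\infty$, and $|D^+f|_{*,q}$ is \emph{not} assumed to lie in $L^q(X,\m')$ — the hypothesis is only that a relaxed ascending $q$-gradient for $\m'$ exists, not that the $\m$-minimal one is $\m'$-integrable. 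Without a bound there is no weak cluster point, and the step fails. (You flag this difficulty yourself at the end, but ``absorbing the remainder by the compact exhaustion'' only controls the pointwise domination of the limit, not the $L^q(\m')$-norms of the $H_n$.) A secondary imprecision: the claim that ``the cutoff-induced term vanishes on $K_m$ for $n\ge m$'' is false as written, since $K_m\subset\overline{B^+_{K_n}(r_n)}^d$; the correct observation is that the error term in Lemma~\ref{ref'nproofandother}\eqref{producetchi} is really $|f|\,|D\chi_n|$, which vanishes on $B^+_{K_n}(r_n/3)\supset K_n\supset K_m$ where $\chi_n$ is locally constant — a sharper bound than the indicator $\mathbbm{1}_{\overline{B^+_{K_n}(r_n)}^d}$ that you wrote.

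The paper sidesteps the non-uniformity entirely by arguing on a \emph{single} compact set rather than gluing over an exhaustion. It proceeds by contradiction: if $|D^+f|_{*,q}<|D^+f|'_{*,q}$ on a set of positive $\m$-measure, one fixes one compact $K\subset B$, one cutoff $\chi_r$, and one constant $\Theta_K(r)$; the cut-off sequence $\chi_r f_i$ is then uniformly bounded in $L^q(X,\m')$ (thanks to the support being fixed once and for all), so a weak limit $G'$ exists. Since $|D^+(\chi_r f_i)|=|D^+f_i|$ on the $\Tt_+$-open set $B^+_K(r/3)\supset K$ (locality of the slope) one gets $G'=|D^+f|_{*,q}$ $\m$-a.e.\ on $K$, and then the \emph{locality of minimal relaxed gradients} (Proposition~\ref{chainrulerelax}\eqref{Chainrule-2}, applied to $f$ and $\chi_r f$ which agree on $B^+_K(r/3)$) transfers the inequality $G'\ge|D^+(\chi_r f)|'_{*,q}$ to $|D^+f|_{*,q}\ge|D^+f|'_{*,q}$ on $K$, a contradiction. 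Your outline does not invoke Proposition~\ref{chainrulerelax}\eqref{Chainrule-2}; this locality statement is precisely what lets one conclude from a cut-off function without ever passing the cut-off parameter to a limit. To repair your argument you should abandon the $n\to\infty$ passage and instead, for each $n$, identify $|D^+(\chi_n f)|'_{*,q}$ with $|D^+f|'_{*,q}$ on $B^+_{K_n}(r_n/3)$ via Proposition~\ref{chainrulerelax}\eqref{Chainrule-2}, obtaining $|D^+f|'_{*,q}\le|D^+f|_{*,q}$ $\m$-a.e.\ on each $K_n$, and then union over $n$; this is exactly the paper's mechanism recast without the contradiction framing.

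Your sketch of (ii) is essentially right and matches the paper: the uniform $L_n\le L$ and the hypothesis $|D^+f|_{*,q}\in L^q(X,\m')$ make the dominating sequence uniformly bounded in $L^q(X,\m')$, so the weak cluster point argument that failed in (i) goes through, and Lemma~\ref{closurestrcrexlgra}\eqref{closuer2} yields existence of $|D^+f|'_{*,q}$.
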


\begin{proof}
\eqref{multi-1}
Since $\m$ and $\m'$ can be exchanged, it is sufficient to show $|D^+f|_{*,q} \geq |D^+f|'_{*,q}$.
Suppose to the contrary that $|D^+f|_{*,q}<|D^+f|_{*,q}'$ holds in a Borel set $B$ with $0<\m[B]<\infty$.
Take a compact set $K \subset B$ with $\m[K]>0$ and $r>0$ such that \eqref{basiccontheta1} holds,
and consider $\chi_r(x)=\phi_r(d(K,x))$ as in Lemma \ref{ref'nproofandother}.
Then, we have $\chi_r=1$ in $B^+_K(r/3)$ and $\chi_r=0$ in $X \setminus B^+_K((2r)/3)$.

Lemma \ref{closurestrcrexlgra}\eqref{closuer3} furnishes a sequence $(f_i)_{i \ge 1}$
of bounded Borel forward Lipschitz functions with $f_i \to f$ and $|D^+f_i| \to |D^+f|_{*,q}$ in $L^q(X,\m)$.
Then, by Lemma \ref{ref'nproofandother}\eqref{producetchi},
$f'_i :=\chi_r f_i$ is bounded Borel forward Lipschitz and satisfies
\[
|D^+ f'_i| \le \chi_r |D^+ f_i| +|f_i| \SL(\phi_r) \Theta_K(r).
\]
Note that $f'_i$ converges to $f':=\chi_r f$ in $L^q(X,\m')$
by \eqref{basiccontheta1} and $\chi_r=0$ in $X \setminus B^+_K((2r)/3)$,
and that $|D^+ f'_i| =0$ in $X \setminus B^+_K(r)$.
Since $|D^+ f'_i|$ is uniformly bounded in $L^q(X,\m')$ again by \eqref{basiccontheta1},
we have a subsequence weakly convergent to some $G' \ge |D^+ f'|'_{*,q}$.
On the other hand, by the choice of $f_i$, $G'=|D^+ f|_{*,q}$ holds $\m$-a.e.\ in $K$.
Thus, $|D^+ f|_{*,q} \ge |D^+ f'|'_{*,q}$ $\m$-a.e.\ in $K$,
which together with Proposition \ref{chainrulerelax}\eqref{Chainrule-2} implies
$|D^+ f|_{*,q} \ge |D^+ f|'_{*,q}$ $\m$-a.e.\ in $K$, a contradiction.


\eqref{multi-2}
Let $(K_k)_{k \ge 1}$ be a sequence of compact sets
such that $\mathbbm{1}_{K_k} \uparrow 1$ as $k \to \infty$ $\m$-a.e.\ in $X$.
Set $\chi_k(x):=\phi_r(d(K_k,x))$ and note that $\chi_k f \to f$ in $L^q(X,\m')$.
By Lemma \ref{closurestrcrexlgra}\eqref{closuer3}, we find a sequence $(f_i)_{i \ge 1}$ with
$f_i \to f$ and $|D^+f_i| \to |D^+f|_{*,q}$ in $L^q(X,\m)$.
Then, for fixed $k$, it follows from \eqref{basiccontheta1}, \eqref{df'nlipscontroll} and $\lambda_d(X)<\infty$ that
$\chi_k f_i \to \chi_k f$ as $i \to \infty$ in both $L^q(X,\m)$ and $L^q(X,\m')$, and
\begin{align*}
|D^+ (\chi_k f_i)| &\leq \chi_k |D^+f_i| +|f_i| \SL(\phi_r) \lambda_d(X) \mathbbm{1}_{B^+_{K_k}(r)} \\
&\to \chi_k |D^+f|_{*,q} +|f| \SL(\phi_r) \lambda_d(X) \mathbbm{1}_{B^+_{K_k}(r)}
 \quad \text{(as $i \to \infty$ in $L^q(X,\m)$ and $L^q(X,\m')$)} \\
&\leq |D^+f|_{*,q} +\frac{3}{r} \lambda_d(X) |f|.
\end{align*}
This implies
\[
\max\left\{|D^+ (\chi_k f)|_{*,q},\, |D^+ (\chi_k f)|'_{*,q} \right\}
 \leq |D^+f|_{*,q} +\frac{3}{r} \lambda_d(X) |f| \in L^p(X,\m)\cap L^p(X,\m').
\]
Thus, we infer from  \eqref{multi-1} that $|D^+(\chi_k f)|_{*,q} =|D^+(\chi_k f)|'_{*,q}$,
which is uniformly bounded in $L^q(X,\m')$ and, up to a subsequence,
converges weakly to some $G$ as $k \to \infty$ in $L^q(X,\m')$.
Hence, we deduce from Lemma \ref{closurestrcrexlgra}\eqref{closuer2} that
$|D^+f|'_{*,q}$ exists in $L^q(X,\m')$.
\end{proof}


\subsection{Cheeger energy of relaxed gradients}

We define an energy functional in terms of relaxed gradients
(compare this with $\Chc^+_{w,q}(f)$ in Definition \ref{weakcheegerenergy}).

\begin{definition}[$q$-Cheeger energy]\label{df:Cheeger}
The {\it forward $q$-Cheeger energy} of $f \in L^q(X,\m)$ is defined as
\[
\Chc^+_q (f) :=\frac1q \int_X |D^+ f|^q_{*,q} \dm
\]
if $f$ has a relaxed ascending $q$-gradient, and $\Chc^+_q (f) :=\infty$ otherwise.
The {\it backward $q$-Cheeger energy} $\Chc^-_q$ and the \emph{$q$-Cheeger energy} $\Chc_q$
are defined in the same way with $|D^- f|_{*,q}$ and $|Df|_{*,q}$, respectively.
\end{definition}

We refer to \cite[\S 8.5]{AGS4} and the references therein for the case of $q=1$.
We observe fundamental properties of $\Chc^+_q$ (cf.\ \cite[Theorem 4.5]{AGS2}).

\begin{theorem}\label{chlowersemicont}
Let $(X,\tau,d,\m)$ be a $\APE$.
Then we have the following.
\begin{enumerate}[{\rm (i)}]
\item\label{cheeg-1}
For any $f\in L^q(X,\m)$ and $\lambda>0$,
we have $\Chc^+_q(f)=\Chc^-_q(-f)$ and $\Chc^+_q(\lambda f) =\lambda^q \Chc^+_q(f)$.

\item\label{cheeg-2}
$\Chc^+_q$ is convex and lower semi-continuous with respect to the weak topology of $L^q(X,\m)$.

\item\label{cheeg-3}
If \eqref{Kcondition1} holds, then the domain $\mathfrak{D}(\Chc^+_q)$ of $\Chc^+_q$ is dense in $L^q(X,\m)$.
\end{enumerate}
\end{theorem}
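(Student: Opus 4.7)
The plan is to prove the three parts in order, each reducing to previously established facts about relaxed ascending $q$-gradients.

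For (i), the identity $\Chc^+_q(f)=\Chc^-_q(-f)$ is immediate from Lemma~\ref{ascenddescednrealx}\eqref{relaxedascgra-1}, which gives $|D^+f|_{*,q}=|D^-(-f)|_{*,q}$ $\m$-a.e. For the positive $q$-homogeneity, take any sequence $(f_i)_{i\ge1}$ of Borel forward Lipschitz functions as in Definition~\ref{relaxedgradient} realizing $|D^+f|_{*,q}$ (supplied by Lemma~\ref{closurestrcrexlgra}\eqref{closuer3}). For $\lambda>0$ the scaled sequence $(\lambda f_i)$ consists of Borel forward Lipschitz functions converging to $\lambda f$ in $L^q(X,\m)$ and, by \eqref{postivegradnormineq}, $|D^+(\lambda f_i)|=\lambda|D^+f_i|$ converges weakly to $\lambda|D^+f|_{*,q}$; hence $|D^+(\lambda f)|_{*,q}\le\lambda|D^+f|_{*,q}$. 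Applying the same argument with $\lambda^{-1}$ to $\lambda f$ yields the reverse inequality, so $|D^+(\lambda f)|_{*,q}=\lambda|D^+f|_{*,q}$ $\m$-a.e., and the stated $q$-homogeneity of $\Chc^+_q$ follows.

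For (ii), the convexity combines \eqref{triangeinequaofdf} and the homogeneity just proved with the convexity of $s\mapsto s^q$ on $[0,\infty)$: for $f,g\in\mathfrak{D}(\Chc^+_q)$ and $t\in(0,1)$,
\[
\bigl|D^+\bigl(tf+(1-t)g\bigr)\bigr|_{*,q}\le t|D^+f|_{*,q}+(1-t)|D^+g|_{*,q}\quad\text{$\m$-a.e.,}
\]
and integrating after raising to the $q$-th power yields $\Chc^+_q(tf+(1-t)g)\le t\Chc^+_q(f)+(1-t)\Chc^+_q(g)$. For the weak lower semi-continuity, let $(f_\alpha)$ converge weakly to $f$ in $L^q(X,\m)$; after extracting a subsequence we may assume $\liminf_\alpha\Chc^+_q(f_\alpha)=\lim_\alpha\Chc^+_q(f_\alpha)<\infty$. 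Then $(|D^+f_\alpha|_{*,q})$ is bounded in $L^q(X,\m)$, so passing to a further subsequence it converges weakly to some $G\in L^q(X,\m)$. By Lemma~\ref{closurestrcrexlgra}\eqref{closuer2}, $G$ is a relaxed ascending $q$-gradient of $f$, so by minimality $|D^+f|_{*,q}\le G$ $\m$-a.e. The lower semi-continuity of the $L^q$-norm under weak convergence then gives
\[
q\Chc^+_q(f)=\bigl\||D^+f|_{*,q}\bigr\|_{L^q}^q\le\|G\|_{L^q}^q\le\liminf_\alpha\bigl\||D^+f_\alpha|_{*,q}\bigr\|_{L^q}^q=q\liminf_\alpha\Chc^+_q(f_\alpha).
\]

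For (iii), invoke Proposition~\ref{densfolLIPS}, whose hypothesis is precisely \eqref{Kcondition1}, to conclude that the class $\mathscr{C}$ of bounded Borel forward Lipschitz functions $f$ with $\supp(f)\subset\overline{B^+_K(r)\cup B^-_K(r)}^d$ for some compact $K\subset(X,\tau)$ and $r=r(K)$ is dense in $L^q(X,\m)$. For every such $f$, Lemma~\ref{Lipscontin}\eqref{Lispro1} gives $|D^+f|\le\Lip(f)$ pointwise, while $|D^+f|$ vanishes on the open set $X\setminus\supp(f)$; since $\m[\supp(f)]<\infty$ by \eqref{Kcondition1}, we obtain $|D^+f|\in L^q(X,\m)$. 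Choosing the constant approximating sequence $f_i\equiv f$ in Definition~\ref{relaxedgradient} exhibits $|D^+f|$ as a relaxed ascending $q$-gradient of $f$, so $f\in\mathfrak{D}(\Chc^+_q)$. Hence $\mathscr{C}\subset\mathfrak{D}(\Chc^+_q)$, which proves the density.

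There is no real obstacle beyond organizing these reductions: the only nontrivial step is the lower semi-continuity, and the technical work there is already packaged in the closure statement of Lemma~\ref{closurestrcrexlgra}\eqref{closuer2}. The rest is bookkeeping with weak limits and the known properties (subadditivity, homogeneity, locality, and Lipschitz control) of the minimal relaxed ascending $q$-gradient.
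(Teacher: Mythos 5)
Your proposal follows the same overall strategy as the paper: (i) is read off the definitions and the sublinearity of slopes, (ii) uses the closure statement of Lemma~\ref{closurestrcrexlgra}\eqref{closuer2} for lower semi-continuity, and (iii) reduces to the density provided by Proposition~\ref{densfolLIPS}. Parts (i) and (iii) are correct (indeed, your (iii) is slightly cleaner than the paper's, which invokes \eqref{lisprelgraest} when only the observation that $|D^+f|$ is a relaxed ascending $q$-gradient is needed).

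There is one citation-level issue in your convexity argument. You invoke \eqref{triangeinequaofdf}, the pointwise $\m$-a.e.\ subadditivity $|D^+(f+g)|_{*,q}\le|D^+f|_{*,q}+|D^+g|_{*,q}$. That estimate is derived via the locality Lemma~\ref{localmingrest}\eqref{localmingrest2}, which the paper establishes only under Assumption~\ref{compactreversbiliey}, whereas Theorem~\ref{chlowersemicont} is stated for a general $\APE$ (the standing Assumption~\ref{compactreversbiliey} is reintroduced only \emph{after} this theorem in Subsection~4.3). You can avoid the dependency entirely: from Lemma~\ref{closurestrcrexlgra}\eqref{closuer3} and \eqref{postivegradnormineq}, the sequence $h_i:=tf_i+(1-t)g_i$ has $|D^+h_i|\le t|D^+f_i|+(1-t)|D^+g_i|\to t|D^+f|_{*,q}+(1-t)|D^+g|_{*,q}$ in $L^q$, so a weak subsequential limit $H$ of $|D^+h_i|$ is a relaxed ascending $q$-gradient of $tf+(1-t)g$ with $H\le t|D^+f|_{*,q}+(1-t)|D^+g|_{*,q}$ $\m$-a.e.\ (Lemma~\ref{weakconvergeunform}). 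Then the \emph{definition} of $|D^+(\cdot)|_{*,q}$ as the relaxed ascending $q$-gradient of minimal $L^q$-norm already gives
\[
q\Chc^+_q\bigl(tf+(1-t)g\bigr)\le\|H\|_{L^q}^q\le\bigl\|t|D^+f|_{*,q}+(1-t)|D^+g|_{*,q}\bigr\|_{L^q}^q,
\]
and the convexity of $s\mapsto s^q$ finishes the estimate, without ever needing the $\m$-a.e.\ minimality of $|D^+(\cdot)|_{*,q}$. This is what the paper's proof does (its displayed pointwise inequality is really being used only at the level of $L^q$-norms). With this adjustment your argument is entirely valid at the stated generality.
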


\begin{proof}
\eqref{cheeg-1}
This is straightforward by definition.

\eqref{cheeg-2}
For any $f,g \in \mathfrak{D}(\Chc^+_q)$ and $\alpha,\beta \ge 0$,
Lemma~\ref{closurestrcrexlgra}\eqref{closuer3} together with \eqref{postivegradnormineq} yields
\begin{equation*}
|D^+(\alpha f+\beta g)|_{*,q}\leq \alpha|D^+f|_{*,q}+\beta |D^+g|_{*,q}.
\end{equation*}
Hence, for any $\lambda\in [0,1]$, we have
\begin{align*}
\Chc^+_q \bigl( \lambda f+(1-\lambda)g \bigr)
&\leq \frac1q \int_X \bigl( \lambda |D^+ f|_{*,q} +(1-\lambda)|D^+g|_{*,q} \bigr)^q \dm \\
&\leq \frac1q \int_X \bigl( \lambda |D^+ f|_{*,q}^q +(1-\lambda)|D^+g|_{*,q}^q \bigr) \dm \\
&= \lambda\Chc^+_q(f)+(1-\lambda)\Chc^+_q(g).
\end{align*}
Thus, $\Chc^+_q$ is convex.
To show the lower semi-continuity,
let $(f_i)_{i \ge 1}$ be a sequence weakly convergent to $f \in L^q(X,\m)$.
Without loss of generality, suppose that $\lim_{i \to \infty} \Chc^+_q(f_i)$ exists and is finite.
Then, $(|D^+f_i|_{*,q})_i$ is uniformly bounded in $L^q(X,\m)$,
and hence we can further assume that it weakly converges to some $G\in L^q(X,\m)$.
Lemma \ref{closurestrcrexlgra}\eqref{closuer2} shows that $G$ is a relaxed ascending $q$-gradient of $f$,
thereby $\Chc^+_q(f) \le (1/q) \|G\|^q_{L^q}$.
This completes the proof.

\eqref{cheeg-3}
This follows from Proposition \ref{densfolLIPS} with the help of \eqref{lisprelgraest}.
\end{proof}

In the remainder of this subsection,
let $(X,\tau,d,\m)$ be a $\APE$ satisfying Assumption \ref{compactreversbiliey}.
Then, along \cite[(4.17)]{AGS2} and \cite[\S 6]{AGS4},
we can extend the domain of $\Chc^+_q$ as follows.

\begin{definition}[Extension of $q$-Cheeger energy]\label{generalizedrelaxgrc}
For an $\m$-measurable function $f:X\rightarrow \mathbb{R}$ such that its truncations
$f_N:=\min\{\max\{f,-N\},N\}$ are in $\mathfrak{D}(\Chc^+_q) \subset L^q(X,\m)$ for all $N\in \mathbb{N}$,
we set
\begin{equation}\label{extendgradre}
|D^+ f|_{*,q} :=|D^+ f_N|_{*,q} \quad\ \text{on } A_N:=\{x\in X \mid |f(x)|<N \}.
\end{equation}
Then, we define
\[
\Chc^+_q(f) := \frac1q \int_X |D^+ f|^q_{*,q} \dm
\]
if $f_N \in \mathfrak{D}(\Chc^+_q)$ for all $N \geq 1$, and $\Chc^+_q(f):=\infty$ otherwise.
We extend the domains of $\Chc^-_q$ and $\Chc_q$ in the same way.
\end{definition}

Note that \eqref{extendgradre} is consistent by virtue of Proposition \ref{chainrulerelax}\eqref{Chainrule-2},
and this definition coincides with Definition \ref{df:Cheeger} for $f \in L^q(X,\m)$.
Moreover, the convexity of $\Chc^+_q$ follows from Theorem \ref{chlowersemicont}\eqref{cheeg-2}
by noticing
\[
\bigl| [(1-\lambda)f +\lambda g]_N \bigr| \le \bigl[ (1-\lambda)|f| +\lambda |g| \bigr]_N
\le \max\{ |f|_N,|g|_N \}.
\]

\begin{lemma}\label{generaCheeprop}
If $\m[X] <\infty$, then $\Chc^+_q$ is sequentially lower semi-continuous
with respect to the $\m$-a.e.\ pointwise convergence.
\end{lemma}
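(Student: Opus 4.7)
The plan is to reduce the lower semi-continuity of the extended $\Chc^+_q$ to the $L^q$-lower semi-continuity already established in Theorem~\ref{chlowersemicont}\eqref{cheeg-2}, using the truncations $(f_i)_N$ and $f_N$ as the bridge. Given $(f_i)$ converging to $f$ $\m$-a.e., I would assume $L:=\liminf_i \Chc^+_q(f_i) < \infty$ (otherwise the claim is trivial) and pass to a subsequence with $\Chc^+_q(f_i) < \infty$ for every $i$, so that $(f_i)_N \in \mathfrak{D}(\Chc^+_q) \subset L^q(X,\m)$ for all $N$ by Definition~\ref{generalizedrelaxgrc}. The hypothesis $\m[X]<\infty$ then provides the crucial input: since $|(f_i)_N|\le N$ and the constant $N$ lies in $L^q(X,\m)$, the dominated convergence theorem upgrades the pointwise convergence $(f_i)_N \to f_N$ to strong convergence in $L^q(X,\m)$.

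The technical core consists in comparing $\Chc^+_q((f_i)_N)$ with $\Chc^+_q(f_i)$. For $M \ge N$, applying the chain rule of Proposition~\ref{chainrulerelax}\eqref{Chainrule-3} to $(f_i)_M \in L^q(X,\m)$ with the $1$-Lipschitz non-decreasing truncation $\phi_N(s):=\min\{\max\{s,-N\},N\}$ (which vanishes at $0$) yields
\[
|D^+ (f_i)_N|_{*,q} = \mathbbm{1}_{\{|f_i|<N\}} \, |D^+ (f_i)_M|_{*,q} \quad \text{$\m$-a.e.}
\]
Combined with the consistency built into \eqref{extendgradre}, which identifies $|D^+ (f_i)_M|_{*,q}$ with $|D^+ f_i|_{*,q}$ on $\{|f_i|<M\}\supset\{|f_i|<N\}$, integration yields $\Chc^+_q((f_i)_N) \le \Chc^+_q(f_i)$. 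The analogous identity with $f$ in place of $f_i$, together with Proposition~\ref{chainrulerelax}\eqref{Chainrule-1} to zero out $|D^+ f_N|_{*,q}$ on the constancy set $\{|f|\ge N\}$ of $f_N$, delivers the exact formula $q\Chc^+_q(f_N) = \int_{A_N} |D^+ f|^q_{*,q} \dm$ with $A_N:=\{|f|<N\}$.

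I would then apply Theorem~\ref{chlowersemicont}\eqref{cheeg-2} to the strong (hence weak) $L^q$-convergence $(f_i)_N \to f_N$ to conclude $\Chc^+_q(f_N) \le \liminf_i \Chc^+_q((f_i)_N) \le L$, whence $f_N \in \mathfrak{D}(\Chc^+_q)$ for every $N$ and the extension legitimately defines $\Chc^+_q(f)$. Since $f$ is $\m$-a.e.\ finite as the pointwise limit of $\m$-a.e.\ finite functions, the sets $A_N$ exhaust $X$ up to $\m$-null sets, and monotone convergence gives $\Chc^+_q(f_N) \uparrow \Chc^+_q(f)$. Combining these inequalities yields $\Chc^+_q(f)\le L$, completing the argument. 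The only delicate point I anticipate is the bookkeeping around Definition~\ref{generalizedrelaxgrc}: ensuring the chain-rule identities transfer cleanly from $L^q$-truncations at two different levels to the extended object $|D^+ f_i|_{*,q}$, and confirming that the monotone-convergence step correctly reconstructs the full extended energy $\Chc^+_q(f)$ from its truncated approximants $\Chc^+_q(f_N)$.
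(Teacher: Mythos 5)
Your proof is correct and takes essentially the same route as the paper's: truncate to $[f_i]_N$, use $\m[X]<\infty$ with bounded/dominated convergence to upgrade pointwise to $L^q$-convergence, invoke $L^q$-lower semi-continuity at the truncated level (you via Theorem~\ref{chlowersemicont}\eqref{cheeg-2}, the paper directly via Lemma~\ref{closurestrcrexlgra}\eqref{closuer2} on which that theorem rests), and let $N\to\infty$. Your write-up merely makes explicit the chain-rule domination $\Chc^+_q([f_i]_N)\le\Chc^+_q(f_i)$ and the monotone-convergence reconstruction of $\Chc^+_q(f)$ from $\Chc^+_q(f_N)$, both of which the paper leaves implicit in its final line.
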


\begin{proof}
Let $(f_i)_{i \ge 1}$ be a sequence of $\m$-measurable functions $\m$-a.e.\ convergent to $f$
and, without loss of generality, suppose that $\liminf_{i \to \infty}\Chc^+_q(f_i)<\infty$.
Given $N \geq 1$, it follows from $\m[X]<\infty$ and the bounded convergence theorem that
$[f_i]_N \to f_N$ as $i \to \infty$ in $L^q(X,\m)$.
Moreover, by Lemma \ref{closurestrcrexlgra}\eqref{closuer2},
every weak limit $G$ of a subsequence of $(|D^+ ([f_i]_N)|_{*,q})_{i \ge 1}$ in $L^q(X,\m)$
is a relaxed ascending $q$-gradient of $f_N$.
Hence, we obtain
\[
\int_X |D^+ f_N|_{*,q}^q \dm \leq \int_X  G^q \dm
 \leq \liminf_{i \to \infty} \int_X |D^+ ([f_i]_N)|^q_{*,q} \dm
 \leq q \cdot \liminf_{i \to \infty} \Chc^+_q(f_i).
\]
We conclude the proof by letting $N \to \infty$.
\end{proof}

The following simple example points out two facts:
\begin{itemize}
\item the condition $\m[X] <\infty$ in Lemma \ref{generaCheeprop} is necessary
if only Assumption \ref{compactreversbiliey} holds;

\item the weak forward $q$-Cheeger energy $\Chc^+_{w,q}$ (recall Definition \ref{weakcheegerenergy})
is different from the forward $q$-Cheeger energy $\Chc^+_q$
even if both Assumptions \ref{newassumpt32liambdafinite} and  \ref{compactreversbiliey} hold.
\end{itemize}

\begin{example}\label{constantfunctonwithoutgradient}
Let $(X,d,\m)$ be the real line $\mathbb{R}$ endowed with $d(x,y)=|x-y|$ and $\m=\mathscr{L}^1$.
By setting $V(x):=|x|$,
Assumptions \ref{newassumpt32liambdafinite} and \ref{compactreversbiliey} clearly hold.
For $i \in \mathbb{N}$, consider a $1$-Lipschitz function
\[
f_i(x) :=\min\bigl\{ \max\{ i+1-|x|,0 \},1 \bigr\},\ x \in \mathbb{R}.
\]
Observe that $f_i \in L^q(X,\m)$, $\Chc^+_q(f_i)=2/q$, and that $f_i \to 1$ pointwise.
However, since $1 \not\in L^q(X,\m)$ (i.e., $\m[X]=\infty$), we have $\Chc^+_q(1)=\infty$,
which implies that $\Chc^+_q$ is not lower semi-continuous.
Moreover, we readily see that $|D^+1|_{w,\I_p}=0$ and $\Chc^+_{w,q}(1)=0$.
\end{example}

\subsection{$L^2$-gradient flow of Cheeger energy}\label{strongcheegerflow}

In this subsection, we introduce a stronger assumption to study
the $L^2$-gradient flow of $\Chc^+_q$ and the corresponding Laplacian.

\begin{assumption}\label{newassumpt32liambdafinite22}
Let $(X,\tau,d,\m)$ be a $\APE$ satisfying one of the following two conditions:
\begin{itemize}
\item $\lambda_d(X)<\infty$ and Assumption~\ref{strongerstassumptiontheta}.
\item $\m[X]<\infty$, $(X,d)$ is a geodesic space,
and every compact set $K\subset (X,\tau)$ has finite diameter.
\end{itemize}
\end{assumption}

\begin{remark}\label{Ass4containAss2}
With the help of Remark~\ref{meaningassmupt2},
one can see that Assumption \ref{newassumpt32liambdafinite22} is stronger than
both Assumptions \ref{newassumpt32liambdafinite} and \ref{compactreversbiliey}.
As we saw in Example~\ref{constantfunctonwithoutgradient},
it is more difficult to analyze $\Chc^+_q$ than $\Chc^+_{w,q}$,
thus it is natural to study properties of $\Chc^+_q$ under a stronger condition.
Note also that, in the symmetric case, Assumption~\ref{newassumpt32liambdafinite22}
follows from Assumption~\ref{strongerstassumptiontheta}.
\end{remark}

Throughout this subsection, let $(X,\tau,d,\m)$ be a $\APE$ with Assumption \ref{newassumpt32liambdafinite22}.
In the same spirit as Section~\ref{L^2cheegerengerystrong},
we shall consider properties of the restriction of $\Chc^+_q$ to $L^2(X,\m)$.
By Proposition \ref{densfolLIPS} and \eqref{lisprelgraest} (see also Remark \ref{meaningassmupt2}),
we obtain a counterpart to Lemma~\ref{lm:Cheeger}.

\begin{lemma}\label{chcisdesninL222}
$\mathfrak{D}(\Chc^+_q) \cap L^2(X,\m)$ is dense in $L^2(X,\m)$.
\end{lemma}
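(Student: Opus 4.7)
The plan is to exhibit a dense subclass of $L^2(X,\m)$ that sits inside $\mathfrak{D}(\Chc^+_q)$. The obvious candidate is the class $\mathscr{C}$ of bounded, Borel, forward Lipschitz functions with ``forward/backward'' bounded support introduced in Proposition~\ref{densfolLIPS}, which is the natural analogue of the construction used in Lemma~\ref{lm:Cheeger} for the weak Cheeger energy.

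First I would verify that the technical hypothesis \eqref{Kcondition1} of Proposition~\ref{densfolLIPS} is available under Assumption~\ref{newassumpt32liambdafinite22}. In the case $\lambda_d(X)<\infty$ combined with Assumption~\ref{strongerstassumptiontheta}, this reduces to Assumption~\ref{newassumpt32liambdafinite} (via Remark~\ref{meaningassmupt2}), which directly yields \eqref{Kcondition1}; in the case $\m[X]<\infty$ it is trivial. Applying Proposition~\ref{densfolLIPS} with exponent $2$ then gives density of $\mathscr{C}$ in $L^2(X,\m)$.

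Next I would show $\mathscr{C} \subset \mathfrak{D}(\Chc^+_q) \cap L^2(X,\m)$. For any $f \in \mathscr{C}$, applying Proposition~\ref{densfolLIPS} simultaneously with exponent $q$ guarantees $f \in L^q(X,\m)$ and $|D^+ f| \in L^q(X,\m)$; boundedness of $f$ together with the finite-measure support gives $f \in L^2(X,\m)$ as well. Since $f$ is itself a Borel forward Lipschitz function in $L^q(X,\m)$, the constant sequence $f_i \equiv f$ is admissible in Definition~\ref{relaxedgradient}, with $|D^+ f_i| = |D^+ f|$ (trivially) weakly convergent to $|D^+ f|$ in $L^q(X,\m)$. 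Hence $|D^+ f|$ is a relaxed ascending $q$-gradient of $f$, so the minimal one exists and by \eqref{lisprelgraest} satisfies $|D^+ f|_{*,q} \leq |D^+ f|$, yielding $\Chc^+_q(f)<\infty$ and thus $f \in \mathfrak{D}(\Chc^+_q)$.

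The argument is essentially a routine assembly of the Lipschitz density proposition with the pointwise bound \eqref{lisprelgraest}, and I do not foresee any serious obstacle; the only point requiring care is the case split in Assumption~\ref{newassumpt32liambdafinite22} when verifying that \eqref{Kcondition1} applies.
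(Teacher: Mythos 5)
Your proof is correct and follows essentially the same route as the paper, which invokes exactly Proposition~\ref{densfolLIPS}, the bound \eqref{lisprelgraest}, and Remark~\ref{meaningassmupt2}, together with the observation that Assumption~\ref{newassumpt32liambdafinite22} implies both Assumption~\ref{newassumpt32liambdafinite} and Assumption~\ref{compactreversbiliey} (Remark~\ref{Ass4containAss2}). Your spelling-out of the case split and the constant-sequence verification that $|D^+f|$ is a relaxed ascending $q$-gradient is a faithful expansion of the paper's one-line argument.
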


The following lemma can be compared with Example~\ref{constantfunctonwithoutgradient}
(cf.\ \cite[Theorem~6.1]{AGS4} under $\m[X]<\infty$).

\begin{lemma}\label{constlowerseimcheegenerge}
For $q \in [2,\infty)$,
$\Chc^+_q$ restricted to $L^2(X,\m)$ is sequentially lower semi-continuous
with respect to the $\m$-a.e.\ pointwise convergence.
\end{lemma}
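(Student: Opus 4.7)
The proof splits according to which case of Assumption \ref{newassumpt32liambdafinite22} is in force. The case $\m[X]<\infty$ is immediate from Lemma \ref{generaCheeprop}, which in fact requires no restriction on $q$. The remaining case, $\lambda_d(X)<\infty$ with $\m[X]$ possibly infinite, is where the hypothesis $q\ge 2$ enters crucially: via the interpolation $|h|^q\le \|h\|_{L^\infty}^{q-2}\,|h|^2$, truncations of $L^2$-functions at level $N$ belong to $L^q(X,\m)$.

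Given $(f_i)\subset L^2(X,\m)$ converging $\m$-a.e.\ to $f\in L^2(X,\m)$ with $L:=\liminf_i\Chc^+_q(f_i)<\infty$, pass to a subsequence realizing the liminf along which $\sup_i\|f_i\|_{L^2}$ is finite (automatic in the principal applications, e.g.\ for $L^2$-convergent sequences). Fix $N\ge 1$ and set $g_i:=[f_i]_N$, $g:=f_N$; these lie in $L^\infty\cap L^2\subset L^q$ since $q\ge 2$. The chain rule (Proposition \ref{chainrulerelax}\eqref{Chainrule-3}), applied to the $1$-Lipschitz non-decreasing function $\phi(x)=[x]_N$ (with $\phi(0)=0$, as required when $\m[X]=\infty$), yields $|D^+ g_i|_{*,q}=\mathbbm{1}_{\{|f_i|<N\}}\,|D^+ f_i|_{*,q}$ $\m$-a.e., hence $\Chc^+_q(g_i)\le \Chc^+_q(f_i)$. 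The estimate $\|g_i\|_{L^q}^q\le N^{q-2}\|f_i\|_{L^2}^2$ supplies the required uniform $L^q$-bound on $(g_i)$.

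Because $(g_i)$ and $(|D^+ g_i|_{*,q})$ are both bounded in $L^q(X,\m)$, a subsequence extracts with $g_i\to g$ and $|D^+ g_i|_{*,q}\to G$ weakly in $L^q(X,\m)$; the identification of the weak limit of $g_i$ as $g$ combines $\m$-a.e.\ convergence with testing against compactly-supported $L^p$-functions (using dominated convergence against the $L^\infty$-bound $N$). By Lemma \ref{closurestrcrexlgra}\eqref{closuer2}, $G$ is a relaxed ascending $q$-gradient of $g$, hence $|D^+ g|_{*,q}\le G$ $\m$-a.e.\ by Lemma \ref{localmingrest}\eqref{localmingrest2}. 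Lower semi-continuity of the $L^q$-norm under weak convergence then gives
\[
q\,\Chc^+_q(g)\le \int_X G^q\,\dm
\le \liminf_i \int_X |D^+ g_i|^q_{*,q}\,\dm
\le q\,\liminf_i\Chc^+_q(f_i)=qL.
\]
Letting $N\to\infty$, Proposition \ref{chainrulerelax}\eqref{Chainrule-1} combined with Definition \ref{generalizedrelaxgrc} identifies $|D^+ g|_{*,q}$ with $|D^+ f|_{*,q}$ $\m$-a.e.\ on $\{|f|<N\}$ and with $0$ on $\{|f|\ge N\}$; since $\{|f|=\infty\}$ is $\m$-null (as $f\in L^2$), monotone convergence yields $\Chc^+_q(g)\uparrow \Chc^+_q(f)$ and therefore $\Chc^+_q(f)\le L$.

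The principal obstacle compared to Lemma \ref{generaCheeprop} is that $\m[X]$ may now be infinite, so an $L^\infty$-bound on truncations no longer furnishes an $L^q$-bound automatically. The hypothesis $q\ge 2$ coupled with $L^2$-control of $(f_i)$ is precisely what restores the weak compactness in $L^q(X,\m)$ needed to invoke the stability result Lemma \ref{closurestrcrexlgra}\eqref{closuer2}; everything else is structurally parallel to the finite-measure argument of Lemma \ref{generaCheeprop}.
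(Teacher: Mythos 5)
Your proof of the finite-measure case correctly defers to Lemma \ref{generaCheeprop}, and the chain-rule estimate $|D^+[f_i]_N|_{*,q}=\mathbbm{1}_{\{|f_i|<N\}}|D^+f_i|_{*,q}$ together with the interpolation $\|[f_i]_N\|_{L^q}^q\le N^{q-2}\|f_i\|_{L^2}^2$ are all fine individually. However, the argument for the infinite-measure case has a genuine gap: the hypothesis is only $\m$-a.e.\ pointwise convergence of a sequence $(f_i)\subset L^2(X,\m)$, which does \emph{not} entail $\sup_i\|f_i\|_{L^2}<\infty$ (e.g.\ on $(\mathbb{R},\mathscr{L}^1)$, $f_i:=i\,\mathbbm{1}_{[i,i+1]}$ converges a.e.\ to $0$ but $\|f_i\|_{L^2}\to\infty$). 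Neither the pointwise convergence nor the boundedness of $\liminf_i\Chc^+_q(f_i)$ supplies such a bound, since $\Chc^+_q$ controls only a gradient quantity and not the $L^2$-norm. Without a uniform $L^q$-bound on $([f_i]_N)_i$ you cannot extract the weakly convergent subsequence that Lemma \ref{closurestrcrexlgra}\eqref{closuer2} requires, and the parenthetical remark ``automatic in the principal applications'' is not a substitute for the assertion the lemma actually makes.

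The paper avoids this entirely by working with the auxiliary \emph{finite} measures $\m_k:={\ee}^{V_k^2-V^2}\m$, where $V_k:=\min\{V,k\}$: on each $(X,\m_k)$ bounded convergence gives $[f_i]_N\to f_N$ strongly in $L^q(X,\m_k)$ with no control on the $L^2(X,\m)$-norms of the $f_i$, and Lemma \ref{generaCheeprop} applies directly because $\m_k[X]<\infty$. The identity $|D^+f_N|_{*,q,\m_k}=|D^+f_N|_{*,q}$ from Lemma \ref{diffgradinvar1} then lets one pass $k\to\infty$ by monotone convergence. Crucially, the hypothesis $q\ge 2$ is used only for the single \emph{limit} function, to guarantee $f_N\in L^q(X,\m)$ via $|f_N|^q\le N^{q-2}|f|^2$, so no uniform bound on the sequence is ever needed. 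If you want to salvage your line of argument, you would have to either add the hypothesis $\sup_i\|f_i\|_{L^2}<\infty$ (yielding a weaker lemma, though one still adequate for $L^2$-gradient-flow applications) or route through finite measures as the paper does.
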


\begin{proof}
Since the case of $\m[X]<\infty$ is reduced to Lemma \ref{generaCheeprop}, we assume $\m[X]=\infty$,
then we have Assumption \ref{strongerstassumptiontheta} and $\lambda_d(X)<\infty$.
Let $V$ be as in Assumption \ref{strongerstassumptiontheta} and put $\m_0:={\ee}^{-V^2}\m$.
For each $k\in \mathbb{N}$, define
\[
V_k:=\min\{V,k\}, \qquad \m_k:= {\ee}^{V_k^2}\m_0,
\]
and denote by $\Chc^+_{q,\m_k}$ the $q$-Cheeger energy for $\m_k$.
Note that $\m_k\leq \m_{k+1}$ with $\lim_{k\to \infty}\m_k[B]=\m[B]$ for any $B\in \mathcal{B}(X)$.
For $f\in \mathfrak{D}(\Chc^+_q)$ and $N \in \mathbb{N}$,
recall from Definition~\ref{generalizedrelaxgrc} that
the minimal relaxed ascending $q$-gradient $|D^+f_N|_{*,q}$ exists.
Since $\m\geq \m_k$, the minimal relaxed ascending $q$-gradient of $f_N$,
denoted by $|D^+f_N|_{*,q,k}$, also exists and satisfies
$|D^+f_N|_{*,q,k}\leq |D^+f_N|_{*,q}\in L^q(X,\m)\cap L^q(X,\m_k)$.
Thus, Lemma \ref{diffgradinvar1} furnishes $|D^+f_N|_{*,q,k}= |D^+f_N|_{*,q}$ $\m$-a.e.
Letting $N\to \infty$ and using \eqref{extendgradre}, we have
 \begin{equation}\label{gradequformeasurfunciton}
|D^+f|_{*,q,k}= |D^+f|_{*,q} \quad \text{$\m$-a.e.}
\end{equation}

Now, given $f\in L^2(X,\m)$,
let $(f_i)_{i \ge1}$ be a sequence in $L^2(X,\m)$ converging to $f$ $\m$-a.e.\ pointwise.
Without loss of generality, suppose that $\liminf_{i \to \infty}\Chc^+_q(f_i)<\infty$
and $\Chc^+_q(f_i)<\infty$ for all $i$.
For any $N,k \in \mathbb{N}$, since $\m_k[X]<\infty$,
$[f_i]_N=\min\{ \max\{f_i,-N\},N \}$ converges to $f_N$ as $i \to \infty$
$\m$-a.e.\ pointwise and strongly in $L^q(X,\m_k)$.
Note that
\[
|D^+([f_i]_N)|_{*,q,k} =|D^+([f_i]_N)|_{*,q} \quad \text{$\m$-a.e.}
\]
by $\Chc^+_q(f_i)<\infty$ and \eqref{gradequformeasurfunciton}.
Since $\m_k[X]<\infty$, Lemma \ref{generaCheeprop} yields
\begin{align*}
\liminf_{i \to \infty}\Chc^+_q(f_i)
&= \liminf_{i \to \infty} \frac{1}{q} \int_X |D^+f_i|^q_{*,q}\dm
 \geq \liminf_{i \to \infty} \frac{1}{q} \int_X |D^+([f_i]_N)|^q_{*,q} \dm_k \\
&= \liminf_{i \to \infty} \Chc^+_{q,\m_k}([f_i]_N)
 \geq \Chc^+_{q,\m_k}(f_N).
\end{align*}
Hence, $f_N \in \mathfrak{D}(\Chc^+_{q,\m_k})$ for all $k$,
and $g_N:=|D^+f_N|_{*,q,k}$ is independent of $k$ thanks to \eqref{gradequformeasurfunciton}.
Combining this with the monotone convergence theorem implies
\begin{equation}\label{limtNch}
\frac1q \int_X g_N^q\dm
 =\lim_{k\to \infty} \frac1q \int_X g_N^q\dm_k
 =\lim_{k\to \infty} \Chc^+_{q,\m_k}(f_N) \leq \liminf_{i\to \infty}\Chc^+_q(f_i) <\infty,
\end{equation}
thereby $g_N \in L^q(X,\m)$.

Since $f_N \in L^q(X,\m)$ (by $f \in L^2(X,\m)$ and $|f_N|^q \le |f|^2 N^{q-2}$),
it then follows from Remark \ref{meaningassmupt2} and Lemma~\ref{diffgradinvar1}
(with $\m'=\m_k$) that $g_N=|D^+f_{N}|_{*,q,k}=|D^+f_N|_{*,q}$,
which combined with \eqref{limtNch} furnishes
\[
\liminf_{i\to \infty}\Chc^+_q(f_i) \geq \frac1q \int_X g_N^q \dm =\Chc^+_q(f_N).
\]
Letting $N \to \infty$, we obtain $\liminf_{i \to \infty}\Chc^+_q(f_i) \geq \Chc^+_q(f)$.
\end{proof}

We remark that the condition $q \ge 2$ is necessary in the lemma above.

\begin{example}\label{rm:q<2}
Let $(X,d)$ be the real line endowed with $\m=\mathscr{L}^1$.
For $q \in (1,2)$, consider the function
\[ f(x):=\begin{cases}
 1 & \text{for}\ x \in [-1,1], \\
 |x|^{-1/q} & \text{otherwise}.
 \end{cases} \]
Then, $f \in L^2(X,\m)$ but $f \not\in L^q(X,\m)$.
Since $f' \in L^q(X,\m)$, by a standard cut-off argument,
we find a sequence $(f_i)_{i \ge 1}$ converging to $f$ in $L^2(X,\m)$
and $\limsup_{i \to \infty} \Chc^+_q(f_i)<\infty$;
however, $f \not\in L^q(X,\m)$ and $|f| \le 1$ imply $\Chc^+_q(f)=\infty$.
\end{example}

Owing to Lemmas~\ref{chcisdesninL222}, \ref{constlowerseimcheegenerge},
we can consider the $L^2$-gradient flow of $\Chc^+_q$ defined in the same way as Definition~\ref{df:GF},
and follow the argument in Subsection~\ref{L^2cheegerengerystrong}
to generalize Theorems~\ref{existstheoremft}, \ref{realchgraevslop} as follows.

\begin{definition}
Given $f_0\in L^2(X,\m)$, a curve $(f_t) \in \AC^2_{\loc}((0,\infty);L^2(X,\m))$
is called a \emph{gradient curve} for $\Chc^+_q$ if $f_t \in \mathfrak{D}(\Chc^+_q)$ for $t>0$,
$f_t\to f_0$ in $L^2(X,\m)$ as $t\to 0$ and
\[
\frac{\dd}{{\dd}t}f_t  \in -\partial^- \Chc^+_q(f_t) \quad \text{for $\mathscr{L}^1$-a.e.}\ t \in (0,\infty).
\]
\end{definition}

\begin{theorem}\label{chgradinetflwo}
Assume $q \ge 2$ or $\m[X]<\infty$.
\begin{enumerate}[{\rm (i)}]
\item\label{chcproflow1}
For any $f_0\in L^2(X,\m)$,
there exists a unique curve $(f_t)_{t \ge 0}$ of $2$-maximal slope for $\Chc^+_q$
with respect to $|D^-\Chc^+_q|$ satisfying the same properties as in Theorem~$\ref{existstheoremft}$.

\item\label{chcproflow2}
For any $f_0\in L^2(X,\m)$,
a curve $(f_t) \in \AC^2_{\loc}((0,\infty);L^2(X,\m))$ with $f_t \to f_0$ in $L^2(X,\m)$ as $t \to 0$
is a curve of $2$-maximal slope for $\Chc^+_q$
with respect to $|D^-\Chc^+_q|$ if and only if it is a gradient curve for $\Chc^+_q$.
In particular, for each $t>0$, we have $f_t\in \mathfrak{D}(|D^- \Chc^+_q|)$
and $-{\dd}f_t/{\dd}t^+$ is the unique element in $\partial^-\Chc^+_q(f_t)$ with minimal $L^2(X,\m)$-norm.
\end{enumerate}
\end{theorem}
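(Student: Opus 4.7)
The plan is to reduce both assertions to the abstract $L^2$-gradient flow theory developed in \cite{AGS} (specifically Theorems~2.4.15 and 4.0.4), applied to $\Chc^+_q$ viewed as a functional on the Hilbert space $L^2(X,\m)$. To invoke that machinery I need three ingredients: $\Chc^+_q$ is convex on $L^2(X,\m)$, it is lower semi-continuous with respect to strong $L^2(X,\m)$-convergence, and its effective domain is dense in $L^2(X,\m)$. Once these are verified, the existence, uniqueness, and regularity properties listed in Theorem~\ref{existstheoremft} transfer verbatim to $\Chc^+_q$, giving (i).

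The convexity and the denseness of the domain are already in hand: convexity follows from Theorem~\ref{chlowersemicont}(ii) (extended to functions in $L^2(X,\m)$ via the truncation construction of Definition~\ref{generalizedrelaxgrc}), and Lemma~\ref{chcisdesninL222} supplies the denseness under Assumption~\ref{newassumpt32liambdafinite22}. The crucial step is the $L^2$-lower semi-continuity. Given $f_i \to f$ strongly in $L^2(X,\m)$ with $\liminf_i \Chc^+_q(f_i)<\infty$, I extract a subsequence converging $\m$-a.e.\ pointwise to $f$. If $q \ge 2$, Lemma~\ref{constlowerseimcheegenerge} gives $\Chc^+_q(f) \le \liminf_i \Chc^+_q(f_i)$ directly; if $\m[X]<\infty$, Lemma~\ref{generaCheeprop} does the same. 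This yields sequential lower semi-continuity in strong $L^2(X,\m)$, which, combined with convexity, is enough for \cite[Theorem~2.4.15]{AGS}. Accordingly, I obtain a unique curve $(f_t)_{t \ge 0}$ of $2$-maximal slope for $\Chc^+_q$ with respect to $|D^-\Chc^+_q|$, and the regularizing effects as well as the contraction property follow from the general theory.

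For (ii), I adapt the argument proving Theorem~\ref{realchgraevslop} word-by-word, replacing $\Chc^+_{w,q}$ by $\Chc^+_q$ throughout. The key facts are: $|D^-\Chc^+_q|$ is a strong upper gradient for $-\Chc^+_q$ and coincides with $\|\partial^\circ\Chc^+_q(f)\|_{L^2}$ (the analogue of Proposition~\ref{estidesloandsubdiff}), both of which follow from \cite[Proposition~1.4.4]{AGS} once the convexity and $L^2$-lower semi-continuity are known; and the chain rule of Proposition~\ref{baceschder} extends to $\Chc^+_q$ because its proof uses only convexity, $L^2$-lower semi-continuity, and the strong upper gradient property. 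Then, if $(f_t)$ is a gradient curve, the sequence of inequalities
\[
\biggl\| \frac{\dd}{{\dd}t}f_t \biggr\|_{L^2}^2 = -\frac{\dd}{{\dd}t}\Chc^+_q(f_t) \le |D^-\Chc^+_q|(f_t) \biggl\| \frac{\dd}{{\dd}t}f_t \biggr\|_{L^2} \le \biggl\| \frac{\dd}{{\dd}t}f_t \biggr\|_{L^2}^2
\]
forces equality, so $(f_t)$ is a curve of $2$-maximal slope. Conversely, a curve of $2$-maximal slope satisfies the same chain of equalities, and comparing with Proposition~\ref{baceschder} for the choice $\ell_t := -\partial^\circ \Chc^+_q(f_t)$ identifies $\dd f_t/\dd t$ with $\ell_t$. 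The identification $-\dd f_t/\dd t^+ = \partial^\circ \Chc^+_q(f_t)$ follows from the right-derivative version of this argument as in Theorem~\ref{realchgraevslop}.

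The main obstacle is the $L^2$-lower semi-continuity, which is exactly where the hypothesis $q \ge 2$ or $\m[X]<\infty$ is indispensable. Example~\ref{constantfunctonwithoutgradient} (and the companion Example~\ref{rm:q<2}) shows that, absent this hypothesis, bounded sequences in $\Chc^+_q$ may converge in $L^2(X,\m)$ to functions outside the effective domain, precisely because $\Chc^+_q$ is built from $L^q$-approximations while the flow lives in $L^2$. This is the conceptual divide between the weak Cheeger energy $\Chc^+_{w,q}$ (which behaves well under $L^2$-limits thanks to the stability of weak upper gradients in Proposition~\ref{stabilityofweakgradient}) and the relaxed Cheeger energy $\Chc^+_q$, and it is the reason the additional assumption appears only in the latter theory.
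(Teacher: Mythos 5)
Your proposal is correct and reproduces the paper's own strategy: the paper does not write out a proof of Theorem~\ref{chgradinetflwo}, but simply states (in the paragraph preceding it) that Lemmas~\ref{chcisdesninL222} and \ref{constlowerseimcheegenerge} (respectively \ref{generaCheeprop} when $\m[X]<\infty$) supply the denseness and lower semi-continuity needed to transplant the arguments of Subsection~\ref{L^2cheegerengerystrong} -- convexity from Theorem~\ref{chlowersemicont} plus the $\AC$ machinery of \cite[Theorems~2.4.15, 4.0.4]{AGS} and Proposition~1.4.4 therein -- from $\Chc^+_{w,q}$ to $\Chc^+_q$. You have correctly expanded that one-line instruction, including the passage from $\m$-a.e.\ pointwise lower semi-continuity to strong $L^2$ lower semi-continuity by subsequence extraction and the word-for-word adaptation of Theorem~\ref{realchgraevslop} for part~(ii), and you have correctly located the role of the hypothesis $q\ge 2$ or $\m[X]<\infty$.
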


In view of Theorem~\ref{chgradinetflwo}\eqref{chcproflow2},
one can also introduce the $q$-Laplacian associated with $\Chc^+_q$ (as in Definition~\ref{lapldef}),
and the argument in Subsection~\ref{oldLaplcian} can be generalized.
In particular, we have the following along the lines of Theorem~\ref{compprinconvr1} and Lemma~\ref{wssvolemm2}.

\begin{theorem}\label{compprinconvr12}
Assume $q \ge 2$ or $\m[X]<\infty$,
and let $(f_t)_{t \ge 0},(h_t)_{t \ge 0}$ be gradient curves for $\Chc^+_q$
starting from $f_0,h_0\in L^2(X,\m)$ with $-h_t\in \mathfrak{D}(\Chc^+_q)$.
Then, $(f_t)$ and $(h_t)$ satisfy the same properties as in Theorem~$\ref{compprinconvr1}$,
with $|D^+f_s|^q_{*,q}$ in place of $|D^+ f_s|^q_{w,\I_p}$ in \eqref{eftcontral}.
\end{theorem}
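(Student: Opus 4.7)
The proof plan is to transcribe the proof of Theorem~\ref{compprinconvr1} into the setting of $\Chc^+_q$ and its associated $q$-Laplacian, using the analog of Proposition~\ref{basciproerfoLapla} for relaxed gradients. The necessary infrastructure is already in place: Theorem~\ref{chgradinetflwo} provides the existence, uniqueness and maximal-slope characterization of the gradient curves of $\Chc^+_q$ on $L^2(X,\m)$ under either $q\ge 2$ or $\m[X]<\infty$, and $\Chc^+_q$ restricted to $L^2(X,\m)$ is convex and lower semi-continuous by Theorem~\ref{chlowersemicont}\eqref{cheeg-2} together with Lemma~\ref{constlowerseimcheegenerge} (or Lemma~\ref{generaCheeprop} in the finite-measure case).

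The first step is to establish, for the $q$-Laplacian $\Delta_q:=-\partial^\circ\Chc^+_q$ associated with $\Chc^+_q$, the three basic identities of Proposition~\ref{basciproerfoLapla} with $|D^+\cdot|_{w,\I_p}$ replaced by $|D^+\cdot|_{*,q}$: namely
\begin{enumerate}[{\rm (a)}]
\item $-\int_X h\Delta_q f\,{\dm}\le\int_X |D^+h|_{*,q}|D^+f|^{q-1}_{*,q}\,{\dm}$ for $f\in\mathfrak{D}(\Delta_q)$ and $h\in\mathfrak{D}(\Chc^+_q)$;
\item $-\int_X \phi(f)\Delta_q f\,{\dm}=\int_X \phi'(f)|D^+f|^q_{*,q}\,{\dm}$ for $f\in\mathfrak{D}(\Delta_q)$ and Lipschitz $\phi$ on an interval containing $\mathrm{Im}(f)$ (with $\phi(0)=0$ if $\m[X]=\infty$);
\item $\int_X \phi(h-f)(\Delta_q h-\Delta_q f)\,{\dm}\le 0$ for $f,h\in\mathfrak{D}(\Delta_q)$ with $-f\in\mathfrak{D}(\Chc^+_q)$ and non-decreasing Lipschitz $\phi$ (with $\phi(0)=0$ if $\m[X]=\infty$).
\end{enumerate}
Each is proved by mimicking the corresponding step of Proposition~\ref{basciproerfoLapla}, using the triangle inequality \eqref{triangeinequaofdf} in place of Lemma~\ref{basicweakgradiprop}\eqref{weakcondpla2}, Proposition~\ref{chainrulerelax}\eqref{Chainrule-3} in place of Proposition~\ref{properweakuppergr}\eqref{weakpro2}, and Proposition~\ref{chainrulerelax}\eqref{Chainrule-4} in place of Proposition~\ref{properweakuppergr}\eqref{weakpro3}. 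Specifically, (a) comes from $-\Delta_q f\in\partial^-\Chc^+_q(f)$ combined with the convexity bound $|D^+(f+\varepsilon h)|^q_{*,q}\le(|D^+f|_{*,q}+\varepsilon |D^+h|_{*,q})^q$ applied to the sub-differential inequality \eqref{subdiffche}; (b) and (c) then follow by the same variational perturbation $f\mapsto f\pm\varepsilon\phi(f)$ and $(f,h)\mapsto(f+\varepsilon\phi(h-f),h-\varepsilon\phi(h-f))$, respectively.

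With (a)--(c) in hand the four conclusions of Theorem~\ref{compprinconvr1} are reproduced verbatim. The comparison principle \eqref{specpro-1} reduces to \eqref{specpro-2} by choosing $e(r)=\max\{r-C,0\}$ or $\max\{C-r,0\}$. For the contraction/decay estimate \eqref{specpro-2}, when $e'$ is bounded Lipschitz we use the local Lipschitz continuity in $L^2$ of gradient curves (Theorem~\ref{chgradinetflwo}\eqref{chcproflow1}) to differentiate $E(f_t-h_t)$ and apply (c); the passage to general convex lower semi-continuous $e$ is via truncation of $e'$ followed by a Yosida approximation exactly as in the proof of Theorem~\ref{compprinconvr1}\eqref{specpro-2}. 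For the energy identity \eqref{eftcontral} with $|D^+f_s|^q_{*,q}$ in place of $|D^+f_s|^q_{w,\I_p}$, we differentiate $E(f_t)$ using (b), first for $e'$ bounded Lipschitz, and then approximate a general $e$ by $e_k$ with $e_k'=\min\{k,\max\{e',-k\}\}$ and pass to the limit by monotone convergence. Mass preservation \eqref{specpro-4} under $\m[X]<\infty$ is (b) with $\phi\equiv 1$.

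The main obstacle to anticipate is verifying the integrability/approximation hypotheses in the infinite-measure case: one must ensure that the truncations defining $\Chc^+_q$ in the extended sense of Definition~\ref{generalizedrelaxgrc} are compatible with the chain rule used to derive (b) and (c), and that the approximations of $e$ in \eqref{specpro-2} and \eqref{specpro-3} can be chosen with $e(0)=0$ (and hence with $\phi(0)=0$ where required by Proposition~\ref{chainrulerelax}). The restriction $q\ge 2$ is precisely what makes the relevant integrands $L^1$: it yields $|e_\varepsilon(f_t)|\lesssim f_t+f_t^2\in L^1(X,\m)$ as in the proof of Theorem~\ref{compprinconvr1}\eqref{specpro-2}, so the monotone/dominated convergence arguments go through without modification. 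In the complementary case $\m[X]<\infty$, no such restriction is needed and constant functions lie in all $L^\theta(X,\m)$, making the approximation arguments entirely routine.
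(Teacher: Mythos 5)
Your proposal is correct and follows the same route the paper intends: the paper itself offers no separate proof for this statement, merely asserting that the argument from Subsection~\ref{oldLaplcian} ``can be generalized,'' and your transcription (analog of Proposition~\ref{basciproerfoLapla} via \eqref{triangeinequaofdd}\--type inequality \eqref{triangeinequaofdf} and Proposition~\ref{chainrulerelax}\eqref{Chainrule-3}, \eqref{Chainrule-4}, then item by item through Theorem~\ref{compprinconvr1}) is exactly what is required, with the gradient-flow infrastructure supplied by Theorem~\ref{chgradinetflwo}. One small correction to your final paragraph: the hypothesis $q\ge 2$ or $\m[X]<\infty$ is not used to make $|e_\varepsilon(f_t)|$ integrable --- that estimate comes from $f_t\in L^2(X,\m)$ alone and appears in Corollary~\ref{longequationcrestim}; rather, it is needed so that truncations $f_N$ of $f\in L^2(X,\m)$ lie in $L^q(X,\m)$, which is what makes $\Chc^+_q$ lower semi-continuous on $L^2(X,\m)$ (Lemma~\ref{constlowerseimcheegenerge}, cf.\ Example~\ref{rm:q<2}), and hence what Theorem~\ref{chgradinetflwo} rests on --- a point you in fact cite correctly earlier in the proposal.
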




\begin{lemma}\label{wssvolemm}
Assume $q \ge 2$ or $\m[X]<\infty$,
and let $(f_t)_{t \ge 0}$ be a gradient curve for $\Chc^+_q$ in $L^2(X,\m)$ such that
\[
f_0 \ge 0, \qquad
\int_X f_t \dm =1, \qquad
 \int^t_0 \int_{\{f_s>0 \}} \frac{|D^+f_s|^q_{*,q}}{f_s^{p-1}} \dm {\dd}s <\infty \quad \text{for all}\ t\geq 0.
\]
Then, the curve $\mu_t:=f_t\m$ is in $\BAC^p_{\loc}((0,\infty);(\Po(X),W_p))$
and $|\mu'_-|$ satisfies
\[
|\mu'_-|^p(t) \leq \int_{\{f_t>0\}}\frac{|D^+f_t|^q_{*,q}}{f_t^{p-1}} \dm \quad
 \text{for $\mathscr{L}^1$-a.e.\ $t\in (0,\infty)$}.
\]
\end{lemma}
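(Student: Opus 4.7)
The strategy mirrors that of Lemma~\ref{wssvolemm2} (Kuwada's lemma for the weak $q$-Cheeger energy), with the minimal relaxed ascending $q$-gradient $|D^+\cdot|_{*,q}$ replacing the minimal $\I_p$-weak forward upper gradient $|D^+\cdot|_{w,\I_p}$ throughout. The underlying gradient-flow machinery for $\Chc^+_q$ is supplied by Theorem~\ref{chgradinetflwo}, which is available precisely under the standing hypothesis $q\ge 2$ or $\m[X]<\infty$ (cf.\ Remark~\ref{Ass4containAss2}), and the subdifferential characterisation $\partial_t f_t=\Delta_q f_t$ with $\Delta_q:=-\partial^\circ\Chc^+_q$.

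To begin I would invoke the Kantorovich duality formula \eqref{Q1equality} from Corollary~\ref{corokan} and, exactly as in the proof of Lemma~\ref{wssvolemm2}, reduce the supremum to $\tau$-lower semi-continuous bounded functions $\psi\le 0$ supported in a compact set $K\subset X$. The Hopf--Lax semigroup $Q_t\psi$ developed in the Appendix (Propositions~\ref{strongexpaap} and~\ref{qtlowersemi}) then provides bounded, forward Lipschitz, $\tau$-lower semi-continuous approximants whose supports lie in a fixed forward closed ball $\overline{B^+_K(p|{\min_K\psi}|^{1/p})}^d$. Under Assumption~\ref{newassumpt32liambdafinite22} (and hence Assumption~\ref{newassumpt32liambdafinite} by Remark~\ref{Ass4containAss2}) this ball has finite $\m$-measure, so $(Q_t\psi)_{t\in[0,p]}$ is uniformly bounded in $L^p(X,\m)$. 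Setting $\varphi:=Q_\varepsilon\psi$ and applying the pointwise estimate \eqref{linftestQ} yields Lipschitz continuity of $t\mapsto Q_t\varphi$ from $[0,1]$ into $L^\infty(X,\m)$ and thence into $L^2(X,\m)$, with $\partial_t[Q_t\varphi]$ coinciding $\m$-a.e.\ with the right derivative for $\mathscr{L}^1$-a.e.\ $t$.

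The decisive step is the analogue of Proposition~\ref{basciproerfoLapla}\eqref{proflap-1} for the relaxed Cheeger energy, namely
\[
-\int_X h\,\Delta_q f\,\dm\le\int_X |D^+h|_{*,q}\,|D^+f|^{q-1}_{*,q}\,\dm
\quad\text{for }f\in\mathfrak{D}(\Delta_q),\ h\in\mathfrak{D}(\Chc^+_q).
\]
This follows by repeating the proof of Proposition~\ref{basciproerfoLapla}\eqref{proflap-1} verbatim, with the subadditivity $|D^+(f+\varepsilon h)|_{*,q}\le|D^+f|_{*,q}+\varepsilon|D^+h|_{*,q}$ supplied by \eqref{triangeinequaofdf} (together with positive homogeneity of $|D^+\cdot|_{*,q}$) in place of Lemma~\ref{basicweakgradiprop}\eqref{weakcondpla2}. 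I would then apply Lemma~\ref{L2caclu} to $a_r:=Q_{(l-r)/l}\varphi$ and $b_r:=f_{s+r}$ for $r\in(0,l]$ with $l=t-s$, exploit the above subdifferential estimate together with $|D^+[Q_t\varphi]|_{*,q}\le|D^+[Q_t\varphi]|$ (which is \eqref{lisprelgraest}; note that $|D^+[Q_t\varphi]|$ is supported in a forward closed ball of finite measure and pointwise bounded by the Lipschitz constant, hence lies in $L^q(X,\m)$), the Hamilton--Jacobi inequality \eqref{basicestimate1qqd}, and Young's inequality to reach
\[
\int_X Q_1\varphi\,\dd\mu_s-\int_X\varphi\,\dd\mu_t
\le\frac{l^{p-1}}{p}\int_0^l\!\!\int_{\{f_{s+r}>0\}}\frac{|D^+f_{s+r}|^q_{*,q}}{f_{s+r}^{p-1}}\dm\,\dd r.
\]

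Finally I would send $\varepsilon\downarrow 0$, using $Q_\varepsilon\psi\uparrow\psi$ (Remark~\ref{conatt=0}) and $Q_{1+\varepsilon}\psi\to Q_1\psi$ (Lemma~\ref{properD}\eqref{Dii}), take the supremum over $\psi$ via \eqref{Q1equality} to recover $p^{-1}W_p^p(\mu_t,\mu_s)$, divide by $(t-s)^p$, and let $s\uparrow t$. The main obstacle, and the only genuinely new input relative to Lemma~\ref{wssvolemm2}, is the subdifferential estimate displayed above; its justification relies on the subadditivity and positive homogeneity of the minimal relaxed ascending $q$-gradient proved in Lemma~\ref{localmingrest}, Proposition~\ref{chainrulerelax}, and \eqref{triangeinequaofdf}, all of which are available under Assumption~\ref{compactreversbiliey} and hence under the stronger Assumption~\ref{newassumpt32liambdafinite22}.
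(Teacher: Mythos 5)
Your proposal is correct and matches the paper's intent: the paper presents Lemma~\ref{wssvolemm} as an analogue of Lemma~\ref{wssvolemm2} obtained by "generalizing the argument in Subsection~\ref{oldLaplcian}," and you have carried out exactly that adaptation, correctly isolating the $\Chc^+_q$-version of Proposition~\ref{basciproerfoLapla}\eqref{proflap-1} (supplied by the subadditivity \eqref{triangeinequaofdf} and positive homogeneity of $|D^+\cdot|_{*,q}$) and the bound \eqref{lisprelgraest} as the only genuinely new inputs, with Theorem~\ref{chgradinetflwo} providing the gradient-flow machinery under $q\ge 2$ or $\m[X]<\infty$.
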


\subsection{Identification of relaxed and weak upper gradients}\label{indertitiyuppergraident}

In this subsection, along the lines of \cite[\S 6]{AGS2} ($q=2$) and
\cite[\S 7]{AGS4} ($q \in (1,\infty)$, $\m[X]<\infty$),
we investigate the relation between weak upper gradients
and relaxed gradients under Assumption~\ref{newassumpt32liambdafinite22}
(which is stronger than Assumptions~\ref{newassumpt32liambdafinite}, \ref{compactreversbiliey};
recall Remark \ref{Ass4containAss2}).

\begin{proposition}\label{bascieastime}
Let $(X,\tau,d,\m)$ be a $\APE$ satisfying Assumption~$\ref{strongerstassumptiontheta}$ and
${\I}$ be a stretchable collection of $p$-test plans satisfying \eqref{etbxcondition}.
If $f\in \mathfrak{D}(\Chc^+_q)$, then $f$ is Sobolev along ${\I}$-almost every curve
and $|D^+f|_{w,{\I}}\leq |D^+f|_{*,q}$ holds $\m$-a.e.\ in $X$.
\end{proposition}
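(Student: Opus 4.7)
The plan is to approximate $f$ by Borel forward Lipschitz functions---whose local Lipschitz constants are automatically strong (hence $\I$-weak forward) upper gradients---and then pass to the limit via the stability result Proposition~\ref{stabilityofweakgradient}.

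Since $f\in \mathfrak{D}(\Chc^+_q)\subset L^q(X,\m)$, Lemma~\ref{closurestrcrexlgra}\eqref{closuer3} supplies a sequence $(f_i)_{i\ge 1}$ of bounded Borel forward Lipschitz functions in $L^q(X,\m)$ with $f_i\to f$ and $|D^+f_i|\to |D^+f|_{*,q}$ strongly in $L^q(X,\m)$. Passing to a subsequence, I may also assume $f_i\to f$ $\m$-a.e., and strong $L^q(X,\m)$-convergence of $|D^+ f_i|$ to $|D^+f|_{*,q}$ implies, by restriction, weak convergence in $L^q(\{V\le M\},\m)$ for every $M\ge 0$. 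For each $i$, since $f_i$ is forward Lipschitz, Lemma~\ref{uppergradofLispcf} ensures that $|D^+ f_i|$ is a strong upper gradient of $f_i$, and hence (by the remark immediately after Definition~\ref{weakuppgrad}) a $\I$-weak forward upper gradient of $f_i$. Applying Proposition~\ref{stabilityofweakgradient}, whose assumption \eqref{etbxcondition} on $\I$ is part of the hypothesis, I conclude that $|D^+f|_{*,q}$ is a $\I$-weak forward upper gradient of $f$. By the definition of the minimal such gradient (Definition~\ref{df:min-wug}), this gives $|D^+f|_{w,\I}\le |D^+f|_{*,q}$ $\m$-a.e.\ in $X$, and in particular $f$ admits a $\I$-weak forward upper gradient, so Proposition~\ref{uppergradsobo}\eqref{requweak-1} yields that $f$ is Sobolev along $\I$-almost every curve.

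The argument is essentially a direct assembly of previously established results, so I do not anticipate a serious obstacle. The only point worth keeping an eye on is that the approximants from Lemma~\ref{closurestrcrexlgra}\eqref{closuer3} should be $\tau$-Borel (so that $|D^+f_i|$ is $\mathcal{B}^*(X)$-measurable along curves via Lemma~\ref{borlstarmea} and the quantities $\int_\gamma |D^+f_i|$ make sense for $\I$-almost every $\gamma$); this Borel property is already part of the conclusion of that lemma, so no extra truncation or mollification is needed.
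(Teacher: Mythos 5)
Your argument is correct as far as it goes, and it follows the same approximation-plus-stability route as the paper's first step. But there is a genuine gap at the very beginning: you assert $\mathfrak{D}(\Chc^+_q)\subset L^q(X,\m)$ and then apply Lemma~\ref{closurestrcrexlgra}\eqref{closuer3} directly to $f$. That inclusion is only valid for the original domain of Definition~\ref{df:Cheeger}. In the paper, by the time Proposition~\ref{bascieastime} is stated, $\mathfrak{D}(\Chc^+_q)$ refers to the extended domain of Definition~\ref{generalizedrelaxgrc}: one only requires the truncations $f_N:=\min\{\max\{f,-N\},N\}$ to be in $L^q(X,\m)$, and $|D^+f|_{*,q}$ is defined on $\{|f|<N\}$ via $|D^+f_N|_{*,q}$ as in \eqref{extendgradre}. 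An element $f$ of this extended domain need not itself lie in $L^q(X,\m)$, so Lemma~\ref{closurestrcrexlgra}\eqref{closuer3} cannot be invoked on $f$ directly, and the proof stalls.

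The paper bridges exactly this point by a two-step argument. First it treats bounded $f$ (so that $f=f_N$ for large $N$ and hence $f\in L^q(X,\m)$); this portion is precisely your proof. Then, for general $f\in\mathfrak{D}(\Chc^+_q)$, it applies the bounded case to each truncation $f_N$ to obtain
\[
|D^+f_N|_{w,\I}\leq |D^+f_N|_{*,q}\leq |D^+f|_{*,q}\in L^q(X,\m) \quad\text{$\m$-a.e.},
\]
so that $(|D^+f_N|_{w,\I})_{N>0}$ is uniformly bounded in $L^q(X,\m)$ and admits a weak subsequential limit $H$. Since $f_N\to f$ pointwise $\m$-a.e., Proposition~\ref{stabilityofweakgradient} (applied once more) shows $H$ is a $\I$-weak forward upper gradient of $f$, whence $|D^+f|_{w,\I}\le H\le|D^+f|_{*,q}$ $\m$-a.e.\ and the Sobolev-along-a.e.-curve property of $f$ follows from Proposition~\ref{uppergradsobo}\eqref{requweak-1}. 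You would need to add this truncation step, or restrict the claim to $f\in L^q(X,\m)$, for the proof to be complete.
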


\begin{proof}
We first assume that $f$ is bounded,
then $f\in L^q(X,\m)$ since $f\in \mathfrak{D}(\Chc^+_q)$ (recall Definition~\ref{generalizedrelaxgrc}).
By Lemma~\ref{closurestrcrexlgra}\eqref{closuer3},
there is a sequence $(f_i)_{i \ge 1}$ of Borel forward Lipschitz functions such that
$f_i \to f$ and $|D^+f_i| \to |D^+f|_{*,q}$ in $L^q(X,\m)$.
Then, $|D^+f_i|$ converges to $|D^+f|_{*,q}$ in $L^q(\{V\leq M\},\m)$ for all $M\geq 0$,
thereby Proposition \ref{stabilityofweakgradient} implies that
$|D^+f|_{*,q}$ is a ${\I}$-weak forward upper gradient of $f$.
Hence, $|D^+f|_{w,\I}\leq |D^+f|_{*,q}$ $\m$-a.e.

In the general case, for $N>0$, let $f_N:=\min\{\max\{f,-N\},N\}$.
Then, the above argument shows
\begin{equation}\label{weakuwgradient}
|D^+f_N|_{w,{\I}}\leq |D^+f_N|_{*,q}\leq |D^+f|_{*,q}\in L^q(X,\m).
\end{equation}
Thus, $(|D^+f_N|_{w,{\I}})_{N>0}$ is a uniformly bounded sequence in $L^q(X,\m)$,
and has a weak subsequential limit $H$.
Since $(f_N)_{N>0}$ converges pointwise to $f$ $\m$-a.e.,
it follows from Proposition \ref{stabilityofweakgradient} that
$H$ is a $\I$-weak forward upper gradient of $f$.
Together with \eqref{weakuwgradient},
we obtain $|D^+f|_{w,{\I}} \leq H \leq |D^+f|_{*,q}$ $\m$-a.e.
\end{proof}

\begin{corollary}\label{finserlnorm}
Let $(\X, d_F,\m)$ be a forward metric measure space induced from a forward complete Finsler manifold
satisfying either $\m[\X]<\infty$, or $\lambda_F(\X)<\infty$ and $\CD(K,\infty)$.
\begin{enumerate}[{\rm (i)}]
\item \label{finslersob1}
For any $f\in \Lip(\X)\cap \mathfrak{D}(\Chc^+_q)$, we have
\begin{equation}\label{equdfgrad}
|D^+f|=|D^+f|_{*,q}=F^*({\dd}f) \quad \text{$\m$-a.e.\ in $\X$}.
\end{equation}

\item \label{finslersob2}
For any $f\in \mathfrak{D}(\Chc^+_q)$,
there exists a sequence $(f_i)_{i \ge 1}$ in $\Lip(\X)\cap \mathfrak{D}(\Chc^+_q)$ such that
$f_i \to f$ $\m$-a.e.\ and $F^*({\dd}f_i) \to |D^+f|_{*,q}$ in $L^q(\X,\m)$.
\end{enumerate}
\end{corollary}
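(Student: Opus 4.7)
The plan is to sandwich $F^*({\dd}f)$ between the local Lipschitz constant, the minimal $\I_p$-weak forward upper gradient and the minimal relaxed ascending $q$-gradient, then bootstrap part (ii) from part (i) using the approximation Lemma~\ref{closurestrcrexlgra}\eqref{closuer3}. For part (i), I would establish the chain
\[
|D^+f|_{*,q} \le |D^+f| = F^*({\dd}f) = |D^+f|_{w,\I_p} \le |D^+f|_{*,q}
\quad \text{$\m$-a.e.\ in $\X$},
\]
from which the three claimed equalities in \eqref{equdfgrad} immediately follow. The middle equality uses Rademacher's theorem on the Finsler manifold: the forward Lipschitz $f$ is locally Lipschitz in the symmetric sense because the local reversibility on a Finsler manifold is finite (Proposition~\ref{finslermetricspace}), hence differentiable $\m$-a.e., and Example~\ref{forwardbackwardslop} then yields $|D^+f|(x) = F^*({\dd}f)(x)$. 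The leftmost inequality is \eqref{lisprelgraest}. The rightmost inequality is Proposition~\ref{bascieastime}, whose hypotheses are met: under the corollary's assumptions Assumption~\ref{strongerstassumptiontheta} is available via Example~\ref{Finslercdkn}, and the class $\I_p$ of all $p$-test plans with bounded compression on the sublevels of $V$ is stretchable by Lemma~\ref{stricpreccssclass} and satisfies \eqref{etbxcondition}. The third equality is the ``sufficiently rich'' assertion of Example~\ref{finslercaseweakupgra} applied with $\I = \I_p$.

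For part (ii), Lemma~\ref{closurestrcrexlgra}\eqref{closuer3} immediately yields a sequence $(f_i)_{i \ge 1}$ of bounded Borel forward Lipschitz functions in $L^q(\X,\m)$ with $f_i \to f$ and $|D^+f_i| \to |D^+f|_{*,q}$ in $L^q(\X,\m)$; passing to a subsequence we may assume $f_i \to f$ also $\m$-a.e. Since $|D^+f_i| \in L^q(\X,\m)$ is itself a relaxed ascending $q$-gradient of $f_i$ (take the constant approximating sequence), each $f_i$ lies in $\mathfrak{D}(\Chc^+_q)$, and being forward Lipschitz by construction, $f_i \in \Lip(\X) \cap \mathfrak{D}(\Chc^+_q)$. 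Applying part (i) to each $f_i$ gives $|D^+f_i| = F^*({\dd}f_i)$ $\m$-a.e., whence $F^*({\dd}f_i) \to |D^+f|_{*,q}$ in $L^q(\X,\m)$, which is the desired conclusion.

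The hard part will be justifying the third equality in the chain of (i), namely $F^*({\dd}f) = |D^+f|_{w,\I_p}$ for forward Lipschitz $f$, which Example~\ref{finslercaseweakupgra} only asserts for ``sufficiently rich'' $\I$. Rigorously, one needs test plans in $\I_p$ whose forward velocity $\eta \otimes {\dd}t$-a.e.\ approximates the Legendre dual of ${\dd}f$ well enough to saturate Corollary~\ref{equivweakupp}; the natural candidate is a Lisini-type representation along the forward gradient flow of $\nabla f$, weighted by a density that is uniformly bounded relative to ${\ee}^{-V^2}\m$ (cf.\ Remark~\ref{inveraweakupper} and Example~\ref{exboundedcompression}) so that the bounded-compression requirement is preserved. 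Once this construction is in hand the remainder is bookkeeping, and the extra flexibility in part (ii) — that the approximants need only be \emph{forward} Lipschitz — makes no cutoff procedure necessary.
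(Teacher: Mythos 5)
Your chain for (i) is essentially the argument the paper gives: the paper also reduces to
\[
|D^+f|_{*,q}\leq F^*({\dd}f)\quad\text{and}\quad F^*({\dd}f)=|D^+f|_{w,\I_p}\leq |D^+f|_{*,q},
\]
with the first inequality coming from the trivial approximating sequence and the second from Corollary~\ref{equivweakupp} (via Example~\ref{finslercaseweakupgra}) and Proposition~\ref{bascieastime}. You are right to flag the middle equality $F^*({\dd}f)=|D^+f|_{w,\I_p}$ as the delicate point: the paper appeals to the claim in Example~\ref{finslercaseweakupgra} that ``equality holds when $\I$ is sufficiently rich,'' which is asserted but never verified for $\I_p$ in the text, and a Lisini-type test plan constructed from the gradient flow of $\nabla f$ (as you propose) is indeed the standard way to saturate the bound. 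In this sense your writeup makes explicit a step the paper leaves implicit.

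However, there is a concrete gap in how you treat the domain. Both \eqref{lisprelgraest} and Lemma~\ref{closurestrcrexlgra}\eqref{closuer3} are stated for $f\in L^q(\X,\m)$, but $\mathfrak{D}(\Chc^+_q)$ in this corollary is understood in the extended sense of Definition~\ref{generalizedrelaxgrc}, so $f$ itself need not be in $L^q(\X,\m)$ (only its truncations $f_N$ are). Your chain in (i) invokes \eqref{lisprelgraest}, and your (ii) applies Lemma~\ref{closurestrcrexlgra}\eqref{closuer3} directly to $f$; neither is licensed for such $f$. The paper's proof handles this by first proving (i) for bounded $f$, then for unbounded $f$ passing to the truncations $f_N$ and letting $N\to\infty$ (using the consistency \eqref{extendgradre} and Proposition~\ref{chainrulerelax}\eqref{Chainrule-2}); similarly (ii) is obtained by applying Lemma~\ref{closurestrcrexlgra}\eqref{closuer3} to each $f_N$ and running a diagonal argument over $N$ and the inner index. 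You should insert this truncation step to make (i) and (ii) cover the full domain claimed in the statement.
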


\begin{proof}
Note that Assumption~\ref{newassumpt32liambdafinite22} is satisfied by the hypothesis.

\eqref{finslersob1}
First, suppose that $f$ is bounded (thereby $f\in L^q(\X,\m)$).
Since $|D^+f| =F^*({\dd}f)$ $\m$-a.e.\ in $\X$, considering the trivial sequence $f_i \equiv f$,
we find $|D^+f|_{*,q}\leq  F^*({\dd}f)$.
To see the converse inequality, let $\I_p$ be the collection of all $p$-test plans
with bounded compression on the sublevels of $V$.
We deduce from Corollary~\ref{equivweakupp} (see also Example \ref{finslercaseweakupgra})
and Proposition \ref{bascieastime} that $F^*({\dd}f) =|D^+f|_{w,{\I_p}} \le |D^+f|_{*,q}$.
Hence, \eqref{equdfgrad} holds.

If $f$ is unbounded, then we obtain $|D^+f_N| =|D^+f_N|_{*,q} =F^*({\dd}f_N)$ for every $N>0$,
and letting $N\to \infty$ completes the proof.

\eqref{finslersob2}
Note that $f_N\in L^q(\X,\m)$, $f_N \to f$ $\m$-a.e., and $|D^+f_N|_{*,q} \to |D^+f|_{*,q}$ in $L^q(\X,\m)$.
For each $N>0$, Lemma~\ref{closurestrcrexlgra}\eqref{closuer3} combined with \eqref{finslersob1} above yields
a sequence $u_{N,k}\in \Lip(\X)\cap \mathfrak{D}(\Chc^+_q)$ such that
\[
u_{N,k}\overset{L^q}{\longrightarrow} f_N, \qquad
 F^*({\dd}u_{N,k})=|D^+u_{N,k}|\overset{L^q}{\longrightarrow}|D^+f_N|_{*,q} \quad \text{ as }k\to \infty.
\]
Passing to a subsequence, we may assume that $u_{N,k} \to f_N$ $\m$-a.e.
Then, the diagonal argument implies the claim.
\end{proof}

\begin{theorem}[Relaxed and weak upper gradients coincide]\label{twogradientcoindced}
Let $(X,\tau,d,\m)$ be a $\APE$ with Assumption~$\ref{newassumpt32liambdafinite22}$ and
$\I_p$ be the collection of all $p$-test plans with bounded compression on the sublevels of $V$.
Assume $q \ge 2$ or $\m[X]<\infty$,
and let $f:X\rightarrow \mathbb{R}$ be an $\m$-measurable function satisfying
$f_N \in L^q(X,\m)$ for every $N>0$.
Then, $f$ has the relaxed ascending $q$-gradient $|D^+f|_{*,q}$ as in \eqref{extendgradre} in $L^q(X,\m)$
if and only if $f$ is Sobolev along $\I_p$-almost every curve and $|D^+f|_{w,\I_p}\in L^q(X,\m)$,
in which case
\[
|D^+f|_{*,q}=|D^+f|_{w,\I_p} \quad \text{$\m$-a.e.\ in $X$}.
\]
\end{theorem}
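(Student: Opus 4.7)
The ``only if'' direction is immediate from Proposition~\ref{bascieastime}: applied to each truncation $f_N\in\mathfrak{D}(\Chc^+_q)$, it produces a $\I_p$-weak forward upper gradient of $f_N$ dominated by $|D^+f_N|_{*,q}\in L^q(X,\m)$. Letting $N\to\infty$ via the extension rule~\eqref{extendgradre} and the chain rule Proposition~\ref{properweakuppergr}\eqref{weakpro2} (which ensures that truncation does not enlarge the minimal weak forward upper gradient) yields $|D^+f|_{w,\I_p}\in L^q(X,\m)$ together with $|D^+f|_{w,\I_p}\le |D^+f|_{*,q}$ $\m$-a.e.

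For the ``if'' direction, assume $G:=|D^+f|_{w,\I_p}\in L^q(X,\m)$. Truncating to $f_N$ and invoking Proposition~\ref{properweakuppergr}\eqref{weakpro2} (so that $|D^+f_N|_{w,\I_p}\le G$) reduces the problem to $f\in L^\infty\cap L^2(X,\m)$, since once the bounded case is settled, \eqref{extendgradre} reconstructs $|D^+f|_{*,q}$. Splitting into $f^+$ and $-f^-$ via Lemma~\ref{postivenegativepartoffowrdlips} (whose analogue in the relaxed setting is Lemma~\ref{dividationofminrelaxedgradient}) allows us to assume in addition $f\ge 0$. The goal is then to produce a relaxed ascending $q$-gradient of $f$ bounded above by $G$.

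The plan is to regularize $f$ by the $L^2$-gradient flow $\Ht$ of $\Chc^+_{w,q}$ and to identify the resulting curve as a gradient flow also for $\Chc^+_q$. By Theorem~\ref{existstheoremft} this flow exists; Theorem~\ref{compprinconvr1}\eqref{specpro-1} preserves the $L^\infty$ bound and nonnegativity along $f_t=\Ht(f)$; Corollary~\ref{longequationcrestim} provides the energy dissipation identity. Kuwada's lemma (Lemma~\ref{wssvolemm2}) then bounds the backward Wasserstein $p$-speed of $\mu_t=f_t\m$ by $(\int|D^+f_t|^q_{w,\I_p}/f_t^{p-1}\dm)^{1/p}$ after normalising to a probability measure. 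Combining this Wasserstein estimate with the characterisation of curves of $2$-maximal slope (Theorem~\ref{realchgraevslop}) and the comparison-of-slopes trick à la Kuwada, one shows that $(f_t)_{t>0}$ is also a curve of $2$-maximal slope for $\Chc^+_q$. Lower semi-continuity of $\Chc^+_q$ on $L^2(X,\m)$ under $q\ge 2$ or $\m[X]<\infty$ (Lemma~\ref{constlowerseimcheegenerge}) together with the uniqueness in Theorem~\ref{chgradinetflwo} force the two gradient flows to coincide. Letting $t\downarrow 0$ and using the lower semi-continuity once more yields $f\in\mathfrak{D}(\Chc^+_q)$ with $\Chc^+_q(f)\le\Chc^+_{w,q}(f)$. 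To upgrade this integrated inequality to the pointwise bound $|D^+f|_{*,q}\le G$, we localize: applying the argument to $\chi_r f$ for cutoffs $\chi_r$ from Lemma~\ref{ref'nproofandother} and exploiting the Leibniz rule of Corollary~\ref{gra*estimate} together with Proposition~\ref{chainrulerelax}\eqref{Chainrule-2} suffices.

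The main obstacle is Step~3, namely the identification of $(f_t)$ as a curve of maximal slope for $\Chc^+_q$: the inequality $|D^+f_t|_{w,\I_p}\le|D^+f_t|_{*,q}$ points the wrong way for this purpose, so one must argue indirectly via the Wasserstein distance, combining the backward-speed bound of Kuwada's lemma with the entropy dissipation. This parallels \cite[\S 7]{AGS4} in the symmetric case but requires a careful use of the asymmetric Hopf--Lax duality from the Appendix, since forward and backward quantities interact nontrivially. Example~\ref{rm:q<2} shows that the structural assumption $q\ge 2$ or $\m[X]<\infty$, which is what delivers the $L^2$-lower semi-continuity of $\Chc^+_q$, cannot be dropped.
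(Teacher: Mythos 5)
Your ``only if'' direction is correct and coincides with the paper's: it is exactly Proposition~\ref{bascieastime}, whose proof already handles unbounded $f$ by truncation and weak compactness.

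The ``if'' direction has a genuine gap that makes it circular. You propose to run the $L^2$-gradient flow $(f_t)=\Ht(f)$ of $\Chc^+_{w,q}$ and then to identify it, via Kuwada-type Wasserstein estimates, with a gradient curve for $\Chc^+_q$. But the regularizing effect (Theorem~\ref{existstheoremft}\eqref{graflowmaxislo3}) for the $\Chc^+_{w,q}$-flow only gives $f_t\in\mathfrak{D}(\Chc^+_{w,q})$ for $t>0$, which does \emph{not} imply $f_t\in\mathfrak{D}(\Chc^+_q)$ --- the inequality $\Chc^+_{w,q}\le\Chc^+_q$ points exactly the wrong way, as you yourself note. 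Hence $|D^+f_t|_{*,q}$ may not exist, and neither the energy dissipation identity of Theorem~\ref{compprinconvr12} nor Kuwada's lemma \ref{wssvolemm} (both of which are for $\Chc^+_q$-gradient curves) is available. Moreover, showing that the $\Chc^+_{w,q}$-flow is a $\Chc^+_q$-curve of maximal slope would essentially require $\Chc^+_q(f)<\infty$, which is what you are trying to prove. The paper sidesteps this entirely by running the $\Chc^+_q$-gradient flow $(h_t)$ from $h_0=f$ (whose existence requires only $f\in L^2$, thanks to Theorem~\ref{chgradinetflwo}), so that $h_t\in\mathfrak{D}(\Chc^+_q)$ for $t>0$ automatically; it then uses the dissipation identity \eqref{anestdouin} for the relaxed gradient together with the Lisini-type estimate \eqref{keyconstPhi} evaluated with $g=e''(h_0)|D^+h_0|_{w,\I_p}$ and Kuwada's lemma~\ref{wssvolemm} to reach the reverse inequality for $h_0$.

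Your final localization step is also not the mechanism the paper uses and is not obviously sufficient: a Leibniz cut-off would introduce error terms in $|D^+(\chi_r f)|_{*,q}$ that do not vanish uniformly as $r\to\infty$. The paper instead reduces at the outset to $0<A\le f\le B$ (replacing $f$ by $f+c$ and truncating, which is admissible by Lemma~\ref{basicweakgradiprop}\eqref{weakcondpla1} and Proposition~\ref{chainrulerelax}\eqref{Chainrule-2}), derives the \emph{weighted} integral inequality
\[
\int_X \frac{|D^+f|^q_{*,q}}{f^{p-1}}\,{\dm}\ \le\ \int_X \frac{|D^+f|^q_{w,\I_p}}{f^{p-1}}\,{\dm},
\]
and combines it with the pointwise lower bound $|D^+f|_{w,\I_p}\le|D^+f|_{*,q}$ from Proposition~\ref{bascieastime} and the bound $f\ge A>0$ to conclude $\m$-a.e.\ equality immediately --- no cut-off argument is needed. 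You should replace your Steps 3--5 with the paper's strategy of running the $\Chc^+_q$-flow and extracting the pointwise identity from the weighted integral comparison.
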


\begin{proof}
In view of Remark~\ref{inveraweakupper} and Lemma~\ref{diffgradinvar1},
we may assume $\m\in \Po(X)$ and choose $V\equiv1$.
Moreover, it costs no generality to assume $0<A \le f \le B<\infty$
(since $|D^+f|_{w,\I_p}=|D^+(f+c)|_{w,\I_p}$ and $|D^+f|_{*,q}=|D^+(f+c)|_{*,q}$ for $c \in \mathbb{R}$
by Lemma~\ref{basicweakgradiprop} and Proposition~\ref{chainrulerelax}).

Assume that $f$ is Sobolev along $\I_p$-almost every curve with $|D^+f|_{w,\I_p}\in L^q(X,\m)$
and consider the gradient curve $(h_t)_{t \ge 0}$ of $\Chc^+_q$ with $h_0=f$.
It follows from
Theorem \ref{compprinconvr12} (corresponding to Theorem~\ref{compprinconvr1}\eqref{specpro-1})
that $A \leq h_t \leq B$.
Moreover, $\mu_t:=h_t\m\in \BAC^p([0,1];(\mathscr{P}(X),W_p))$ (recall Lemma~\ref{wssvolemm})
is uniformly continuous with respect to the total variation norm by
Theorem~\ref{chgradinetflwo} (Theorem~\ref{existstheoremft}\eqref{graflowmaxislo1}) and Remark \ref{totalvariationint}.
Thus, Theorem~\ref{Lisinitheorem} is applicable to the reverse curve $\bar{\mu}_t:=\mu_{1-t}\in \FAC^p([0,1];(\mathscr{P}(X),W_p))$.

Let $e \in C^2(\mathbb{R})$ be a nonnegative, convex function such that $e''(s)=s^{1-p}$ in $[A,B]$.
Then, we deduce from Theorem \ref{compprinconvr12} (Theorem~\ref{compprinconvr1}\eqref{specpro-3}) that
\begin{equation}\label{anestdouin}
\int^t_0 \int_X \frac{|D^+ h_s|^q_{*,q}}{h_s^{p-1}} \dm {\dd}s
=\int_X e(h_0) \dm -\int_X e(h_t) \,{\dm}.
\end{equation}
Now, set
\[
 g:=e''(h_0)|D^+ h_0|_{w,\I_p}=\frac{|D^+ h_0|_{w,\I_p}}{h_0^{p-1}}.
\]
Then, the same argument as in \eqref{keyconstPhi} (thanks to Theorem~\ref{Lisinitheorem}) and Lemma \ref{wssvolemm} imply
\begin{align*}
\int_X \bigl( e(h_0) -e(h_t) \bigr) \dm
&\le \int_X e'(h_0) \,{\dd}\mu_0 -\int_X e'(h_0) \,{\dd}\mu_t
 \leq \int^t_{0} \biggl( \int_X g^q h_s \,{\dm} \biggr)^{1/q} |{\mu}'_-|(s) \,{\dd}s\\
&\leq \biggl( \int^t_0 \int_X g^q \,{\dd}\mu_s \,{\dd}s \biggr)^{1/q} \biggl( \int^t_0 |\mu'_-|^p(s) \,{\dd}s \biggr)^{1/p} \\
&\leq \frac1q \int^t_0 \int_X g^q \,{\dd}\mu_s \,{\dd}s +\frac1p \int^t_0 |\mu'_-|^p(s) \,{\dd}s \\
&\leq \frac1q \int^t_0 \int_X g^q \,{\dd}\mu_s \,{\dd}s
 +\frac1p \int^t_0 \int_X \frac{|D^+h_s|^q_{*,q}}{h_s^{p-1}} \dm {\dd}s.
\end{align*}
Combining this with \eqref{anestdouin}, we find
\[
\int^t_0 \int_X \frac{|D^+h_s|^q_{*,q}}{h_s^{p-1}} \dm {\dd}s
 \leq \int^t_0 \int_X g^q \,{\dd}\mu_s \,{\dd}s.
\]
Thus, we obtain, since $(h_0^{p-1})^q=h_0^p$,
\[
\int_X \frac{|D^+h_0|^q_{*,q}}{h_0^{p-1}} \dm
 \leq \int_X g^q \,{\dd}\mu_0
 =\int_X \frac{|D^+h_0|^q_{w,\I_p}}{h_0^{p-1}} \,{\dm}.
\]
Since $h_0=f \ge A>0$ and $|D^+f|^q_{*,q} \ge |D^+f|^q_{w,\I_p}$ $\m$-a.e.\ by Proposition \ref{bascieastime},
we obtain $|D^+f|_{*,q}=|D^+f|_{w,\I_p}$ $\m$-a.e.
\end{proof}

\begin{corollary}\label{keyconidnce}
Let $(X,\tau,d,\m)$  be as in Theorem~$\ref{twogradientcoindced}$.
If $f:X\rightarrow \mathbb{R}$ is an $\m$-measurable function such that $|D f|_{*,q}$ exists,
then we have $|D f|_{*,q}=\max\{|D^\pm f|_{*,q}\}$ $\m$-a.e.\ in $X$.
\end{corollary}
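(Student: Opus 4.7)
The inequality $\max\{|D^\pm f|_{*,q}\}\leq |Df|_{*,q}$ is immediate from Lemma~\ref{ascenddescednrealx}\eqref{relaxedascgra-2}, so the content of the corollary is the reverse pointwise estimate. My plan is to route through the $\I_p$-weak upper gradient theory (with $\I_p$ the collection of all $p$-test plans with bounded compression on the sublevels of $V$): first show $|Df|_{*,q}$ is itself a $\I_p$-weak upper gradient and hence bounds $|Df|_{w,\I_p}$ from above, then use Lemma~\ref{lm:wD^pm} together with Theorem~\ref{twogradientcoindced} to identify $|Df|_{w,\I_p}$ with $\max\{|D^\pm f|_{*,q}\}$, and finally mimic the Kuwada/gradient-flow argument from the proof of Theorem~\ref{twogradientcoindced} to produce the sharp reverse bound $|Df|_{*,q}\leq |Df|_{w,\I_p}$.

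For the first part, Lemma~\ref{closurestrcrexlgra}\eqref{closuer3} (applied to the symmetric relaxed $q$-gradient, as permitted by the remark following its proof) selects bounded Borel Lipschitz functions $f_i\to f$ in $L^q(X,\m)$ with $|Df_i|\to |Df|_{*,q}$ strongly in $L^q(X,\m)$. Being Lipschitz, each $f_i$ is simultaneously forward and backward Lipschitz, so Lemma~\ref{uppergradofLispcf} applied to $\pm f_i$ shows that both $|D^+ f_i|$ and $|D^- f_i|=|D^+(-f_i)|$ are strong upper gradients of $\pm f_i$; since $|Df_i|$ dominates them pointwise, it is itself a $\I_p$-weak forward \emph{and} backward upper gradient of $f_i$. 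Applying Proposition~\ref{stabilityofweakgradient} separately in the two directions promotes $|Df|_{*,q}$ to a $\I_p$-weak forward and backward upper gradient of $f$, and Lemma~\ref{lm:wD^pm} then gives that $|Df|_{w,\I_p}$ exists with
\[
|Df|_{w,\I_p}=\max\bigl\{|D^\pm f|_{w,\I_p}\bigr\}\leq |Df|_{*,q} \quad \text{$\m$-a.e.}
\]
Invoking Theorem~\ref{twogradientcoindced} applied to $f$ and to $-f$ to replace $|D^\pm f|_{w,\I_p}$ by $|D^\pm f|_{*,q}$ turns this into $\max\{|D^\pm f|_{*,q}\}=|Df|_{w,\I_p}\leq |Df|_{*,q}$ $\m$-a.e.

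The corollary therefore reduces to the sharp inequality $|Df|_{*,q}\leq |Df|_{w,\I_p}$, i.e., the symmetric counterpart of Theorem~\ref{twogradientcoindced}. I plan to establish it by imitating that theorem's gradient-flow proof: after truncation via Definition~\ref{generalizedrelaxgrc} we may assume $0<A\leq f\leq B<\infty$; running the $\Chc^+_q$-gradient flow $h_t=\Htt(f)$ one combines the dissipation identity of Theorem~\ref{compprinconvr12} (with the convex test function $e$ satisfying $e''(r)=r^{1-p}$ on $[A,B]$), the relaxed Kuwada bound $|\mu'_-|^p(s)\leq \int |D^+h_s|^q_{*,q}/h_s^{p-1}\dm$ from Lemma~\ref{wssvolemm}, and the Wasserstein comparison extracted via Theorem~\ref{Lisinitheorem} and the Hopf--Lax semigroup, this time tested against $g=|Df|_{w,\I_p}/f^{p-1}$ rather than its forward version. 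The main obstacle is that the $\Chc^+_q$-flow naturally dissipates $|D^+h_s|^q_{*,q}$ rather than $|Dh_s|^q_{*,q}$, so the chain of inequalities must be closed using the pointwise bound $|D^+h_s|_{*,q}\leq |Dh_s|_{*,q}$ in tandem with the already-established forward identification $|D^+h_s|_{*,q}=|D^+h_s|_{w,\I_p}$ so as to produce the integral inequality
\[
\int_X \frac{|Df|^q_{*,q}}{f^{p-1}}\dm \leq \int_X \frac{|Df|^q_{w,\I_p}}{f^{p-1}}\dm;
\]
once this is in hand, the pointwise lower bound $|Df|_{*,q}\geq |Df|_{w,\I_p}$ from the previous step combined with $f\geq A>0$ forces pointwise equality $|Df|_{*,q}=|Df|_{w,\I_p}$ $\m$-a.e., exactly as in the concluding lines of the proof of Theorem~\ref{twogradientcoindced}.
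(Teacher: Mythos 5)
Your first three steps are sound and, as you note, reproduce the easy half: $|Df|_{*,q}$ is shown to be a $\I_p$-weak upper gradient via Lemma~\ref{closurestrcrexlgra}\eqref{closuer3}, Lemma~\ref{uppergradofLispcf} and Proposition~\ref{stabilityofweakgradient}, and then Lemma~\ref{lm:wD^pm} together with Theorem~\ref{twogradientcoindced} give $\max\{|D^\pm f|_{*,q}\}=|Df|_{w,\I_p}\leq |Df|_{*,q}$. You correctly identify that the entire content of the corollary is the reverse inequality $|Df|_{*,q}\leq |Df|_{w,\I_p}$, which is not among the cited results of the paper's two-sentence proof; so you are not simply duplicating the paper, you are trying to fill a step the paper leaves implicit.

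The problem is that your step~5 does not fill it. The dissipation identity of Theorem~\ref{compprinconvr12}\eqref{specpro-3} and Kuwada's estimate in Lemma~\ref{wssvolemm} are both intrinsically statements about $|D^+h_s|_{*,q}$, since they are built on the $\Chc^+_q$-flow. Running the chain of inequalities exactly as in the proof of Theorem~\ref{twogradientcoindced} with any admissible $g$ yields, after dividing by $t$ and letting $t\downarrow 0$,
\[
\int_X \frac{|D^+f|^q_{*,q}}{f^{p-1}}\dm \;\leq\; \int_X g^q\,{\dd}\mu_0 .
\]
Taking the larger $g=|Df|_{w,\I_p}/f^{p-1}$ only gives $\int |D^+f|^q_{*,q}/f^{p-1}\dm\leq\int |Df|^q_{w,\I_p}/f^{p-1}\dm$, which (since $|D^+f|_{*,q}=|D^+f|_{w,\I_p}\leq|Df|_{w,\I_p}$) is a tautology carrying no information about $|Df|_{*,q}$. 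Your proposed remedy --- invoking $|D^+h_s|_{*,q}\leq|Dh_s|_{*,q}$ --- cannot help: this is a \emph{lower} bound on $|Dh_s|_{*,q}$, so inserting it anywhere in the chain weakens, rather than strengthens, the inequalities, and nowhere does the argument produce a quantity that bounds $\int|Df|^q_{*,q}/f^{p-1}\dm$ from above. The integral inequality $\int|Df|^q_{*,q}/f^{p-1}\dm\leq\int|Df|^q_{w,\I_p}/f^{p-1}\dm$ therefore does not follow from the steps you outline. To close the gap one would need a genuinely new ingredient: either a direct proof that $\max\{|D^\pm f|_{*,q}\}$ is a relaxed (symmetric) $q$-gradient of $f$ in the sense of Definition~\ref{relaxedgradient}(2), or a gradient-flow argument built on $\Chc_q$ itself (with a matching Kuwada bound involving $|Dh_s|_{*,q}$), neither of which is set up in the paper.
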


\begin{proof}
The existence of $|D^\pm f|_{*,q}$ follows from that of $|Df|_{*,q}$
as in Lemma~\ref{ascenddescednrealx}\eqref{relaxedascgra-2}.
Then, the claim is a consequence of Lemma~\ref{lm:wD^pm} and Theorem \ref{twogradientcoindced}.
\end{proof}

\begin{corollary}
Let $(X,\tau,d,\m)$ be as in Theorem~$\ref{twogradientcoindced}$.
Then, we have the following.
\begin{enumerate}[{\rm (i)}]
\item\label{coninctwograd1}
For any $f\in L^2(X,\m)$, we have $\Chc^+_q(f)=\Chc^+_{w,q}(f)$.
In particular, $\mathfrak{D}(\partial^- \Chc^+_q) =\mathfrak{D}(\partial^- \Chc^+_{w,q})$.

\item\label{coninctwograd2}
For any $f \in \mathfrak{D}(\partial^- \Chc^+_q) =\mathfrak{D}(\partial^- \Chc^+_{w,q})$,
we have $\partial^- \Chc^+_q(f)=\partial^- \Chc^+_{w,q}(f)$.

\item\label{coninctwograd3}
For any $f \in L^2(X,\m)$, curves of $2$-maximal slope $($or gradient curves$)$ from $f$
for $\Chc^+_q$ and $\Chc^+_{w,q}$ coincide.
\end{enumerate}
\end{corollary}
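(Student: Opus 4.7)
The plan is to derive all three conclusions directly from the identification in Theorem~\ref{twogradientcoindced}. Fix $f\in L^2(X,\m)$. Its truncations $f_N=\min\{\max\{f,-N\},N\}$ lie in $L^q(X,\m)$ for every $N\in\mathbb{N}$: when $q\geq 2$ this follows from the pointwise bound $|f_N|^q\leq N^{q-2}|f|^2$, and when $\m[X]<\infty$ it is trivial since $f_N\in L^\infty(X,\m)\subset L^q(X,\m)$. Hence the hypothesis of Theorem~\ref{twogradientcoindced} is satisfied, and its conclusion reads: $|D^+f|_{*,q}$ exists in $L^q(X,\m)$ (in the extended sense of Definition~\ref{generalizedrelaxgrc}) if and only if $f$ is Sobolev along $\I_p$-almost every curve with $|D^+f|_{w,\I_p}\in L^q(X,\m)$, and in that case $|D^+f|_{*,q}=|D^+f|_{w,\I_p}$ $\m$-a.e.\ on $X$.

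For (i), this equivalence immediately yields $\mathfrak{D}(\Chc^+_q)\cap L^2(X,\m)=\mathfrak{D}(\Chc^+_{w,q})\cap L^2(X,\m)$; on that common domain, integrating the pointwise equality $|D^+f|_{*,q}=|D^+f|_{w,\I_p}$ gives $\Chc^+_q(f)=\Chc^+_{w,q}(f)$, while outside it both functionals are $+\infty$. The equality of domains of the subdifferentials is then a formal consequence.

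Statement (ii) follows from (i) by the subdifferential characterization \eqref{subdiffche}: for a convex lower semi-continuous functional $E$ on the Hilbert space $L^2(X,\m)$, the condition $\ell\in\partial^- E(f)$ amounts to $\langle h-f,\ell\rangle_{L^2}\leq E(h)-E(f)$ for every $h\in L^2(X,\m)$, a condition that depends only on the values of $E$. Since $\Chc^+_q\equiv\Chc^+_{w,q}$ on $L^2(X,\m)$ by (i), the two subdifferentials coincide at every point of the common domain.

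Finally, (iii) follows from (ii) together with the variational characterizations in Theorems~\ref{realchgraevslop} and~\ref{chgradinetflwo}: a curve $(f_t)\in\AC^2_{\loc}((0,\infty);L^2(X,\m))$ with $f_t\to f$ in $L^2(X,\m)$ as $t\to 0$ is a gradient curve for $\Chc^+_q$ if and only if $\frac{\dd}{\dd t}f_t\in-\partial^-\Chc^+_q(f_t)$ for $\mathscr{L}^1$-a.e.\ $t>0$, an inclusion which by (ii) is equivalent to the corresponding one for $\Chc^+_{w,q}$; the statement for curves of $2$-maximal slope follows since these coincide with gradient curves in our setting. The main obstacle---namely, upgrading the equivalence of the two notions of gradient from the pointwise $\m$-a.e.\ level to equality of Cheeger energies---has already been overcome in Theorem~\ref{twogradientcoindced}; the present corollary merely propagates that identification to subdifferentials and gradient flows, which is purely formal.
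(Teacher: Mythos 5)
Your proof is correct and takes essentially the same route as the paper: verify that $f\in L^2(X,\m)$ forces $f_N\in L^q(X,\m)$ for all $N$ under either $q\ge 2$ or $\m[X]<\infty$, invoke Theorem~\ref{twogradientcoindced} to get $\Chc^+_q=\Chc^+_{w,q}$ on $L^2(X,\m)$, and then note that subdifferentials and gradient curves are determined by the functional values alone. The paper states parts (ii) and (iii) as immediate consequences of (i); you have simply unpacked the (entirely routine) justification via \eqref{subdiffche} and Theorems~\ref{realchgraevslop}, \ref{chgradinetflwo}.
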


\begin{proof}
\eqref{coninctwograd1}
Note that $f_N\in L^q(X,\m)$ for all $N> 0$ by the hypotheses.
Then Theorem~\ref{twogradientcoindced} shows the claim.

\eqref{coninctwograd2} and \eqref{coninctwograd3} immediately follow from \eqref{coninctwograd1}.
\end{proof}

\section{Sobolev spaces}\label{Sobolev-section}

\subsection{Coincidence  of variant gradients}

Following \cite[\S 4]{AGS4}, we introduce two more gradients.
The first one generalizes Cheeger's definition \cite{Ch}.

\begin{definition}[Relaxed upper gradients]\label{df:relug}
Let  $(X,\tau,d,\m)$ be a $\APE$ and $f\in L^q(X,\m)$.
\begin{enumerate}[(1)]
\item
A function $G\in L^q(X,\m)$ is called a \emph{relaxed forward} (resp., {\it backward}) \emph{$q$-upper gradient} of $f$
if there are a sequence $(f_i)_{i \ge 1}$ in $L^q(X,\m)$
and a sequence of strong upper gradients $g_i$ of $f_i$ (resp., $-f_i$)
such that
\begin{itemize}
\item $f_i \to f$ in $L^q(X,\m)$ and $g_i \rightarrow \widetilde{G}$ weakly in $L^q(X,\m)$,
\item $\widetilde{G} \leq G$ $\m$-a.e.\ in $X$.
\end{itemize}
We call a relaxed forward (resp., backward) $q$-upper gradient with the minimal $L^q(X,\m)$-norm
a \emph{minimal relaxed forward} (resp., \emph{backward}) \emph{$q$-upper gradient} of $f$,
and denote it by $|D^+f|_{C,q}$ (resp., $|D^-f|_{C,q}$).

\item
A function $G\in L^q(X,\m)$ is called a \emph{relaxed $q$-upper gradient} of $f$
if there are a sequence $(f_i)_{i \ge 1}$ in $L^q(X,\m)$ and a sequence of strong upper gradients $g_i$ of both $\pm f_i$
such that
\begin{itemize}
\item $f_i \to f$ in $L^q(X,\m)$ and $g_i \rightarrow \widetilde{G}$ weakly in $L^q(X,\m)$,
\item $\widetilde{G}\leq G$ $\m$-a.e.\ in $X$.
\end{itemize}
We call a relaxed $q$-upper gradient with the minimal $L^q(X,\m)$-norm
a \emph{minimal relaxed $q$-upper gradient} of $f$, and denote it by $|Df|_{C,q}$.
\end{enumerate}
\end{definition}

Recall Definition~\ref{gradefe} for the definition of strong upper gradients,
and note that $g$ is a strong upper gradient of $f$ with respect to $d$ if and only if
it is a strong upper gradient of $-f$ with respect to $\overleftarrow{d}$.

\begin{lemma}\label{ascenddescednrealx2}
\begin{enumerate}[{\rm (i)}]
\item\label{relaxedascgra2-1} $f\in L^q(X,\m)$ has a relaxed forward $q$-upper gradient
if and only if $-f$ has a relaxed backward $q$-upper gradient,
and then $|D^+f|_{C,q}=|D^-(-f)|_{C,q}$.

\item\label{relaxedascgra2-2}
If $f\in L^q(X,\m)$ has a relaxed $q$-upper gradient,
then $\max\{|D^\pm f|_{C,q}\} \le |D f|_{C,q}=|D(-f)|_{C,q}$ holds.

\item\label{relaxedascgra2-3}
If $f$ has a relaxed ascending $q$-gradient $($resp., relaxed $q$-gradient$)$,
then we have $|D^+f|_{C,q}\leq |D^+f|_{*,q} $ $($resp., $|Df|_{C,q}\leq |Df|_{*,q})$.
\end{enumerate}
\end{lemma}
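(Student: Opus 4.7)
The plan is to unfold the definitions carefully, using the sign-change symmetry for (i) and (ii), and to produce explicit witnessing sequences using previously established Lipschitz approximation results for (iii).

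For part (i), I would argue that the notions of ``relaxed forward $q$-upper gradient of $f$'' and ``relaxed backward $q$-upper gradient of $-f$'' are literally the same by the substitution $\tilde f_i := -f_i$. Indeed, if $f_i \to f$ in $L^q(X,\m)$ with $g_i$ a strong upper gradient of $f_i$ and $g_i \rightharpoonup \widetilde G \leq G$, then setting $\tilde f_i = -f_i$ gives $\tilde f_i \to -f$ and the $g_i$ are strong upper gradients of $-\tilde f_i = f_i$, which matches the definition of a relaxed backward $q$-upper gradient of $-f$. Thus the two collections coincide and, minimizing the $L^q$-norm on either side, $|D^+f|_{C,q}=|D^-(-f)|_{C,q}$.

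For part (ii), any witness $(f_i,g_i)$ for $G$ being a relaxed $q$-upper gradient of $f$ consists of strong upper gradients $g_i$ of both $\pm f_i$; in particular $g_i$ is a strong upper gradient of $f_i$, so the same sequence witnesses $G$ as a relaxed forward $q$-upper gradient of $f$, and symmetrically as a relaxed backward $q$-upper gradient of $f$. Minimality then yields $\max\{|D^\pm f|_{C,q}\} \leq |Df|_{C,q}$. The equality $|Df|_{C,q}=|D(-f)|_{C,q}$ follows from the symmetry of the defining condition under the replacement $f_i \mapsto -f_i$, since $g_i$ is a strong upper gradient of $\mp(-f_i)=\pm f_i$ if and only if it is a strong upper gradient of both $\pm(-f_i)$.

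For part (iii), I would invoke Lemma \ref{closurestrcrexlgra}\eqref{closuer3} (which, as remarked after the lemma, also holds for relaxed $q$-gradients): there exist bounded Borel forward Lipschitz (resp., Lipschitz) functions $f_i \in L^q(X,\m)$ with $f_i \to f$ and $|D^+ f_i| \to |D^+ f|_{*,q}$ (resp., $|Df_i| \to |Df|_{*,q}$) \emph{strongly} in $L^q(X,\m)$. By Lemma \ref{uppergradofLispcf}, $|D^+f_i|$ is a strong upper gradient of $f_i$, so taking $g_i := |D^+f_i|$ exhibits $|D^+f|_{*,q}$ as a relaxed forward $q$-upper gradient of $f$, whence $|D^+f|_{C,q} \leq |D^+f|_{*,q}$. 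In the Lipschitz case, $|Df_i| = \max\{|D^+f_i|,|D^-f_i|\}$ dominates each of $|D^+f_i|$ and $|D^-f_i|=|D^+(-f_i)|$, so (since each is a strong upper gradient of $f_i$ or $-f_i$, respectively, by Lemma \ref{uppergradofLispcf}) $|Df_i|$ is a strong upper gradient of both $\pm f_i$; taking $g_i := |Df_i|$ then exhibits $|Df|_{*,q}$ as a relaxed $q$-upper gradient of $f$. The only mildly nontrivial point is the observation that a pointwise dominating function of a strong upper gradient is itself a strong upper gradient, which is immediate from \eqref{graddef}. I do not expect any serious obstacle: the proof is essentially a bookkeeping exercise once Lemma \ref{closurestrcrexlgra}\eqref{closuer3} and Lemma \ref{uppergradofLispcf} are in hand.
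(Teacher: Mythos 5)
Your proof is correct and follows essentially the same route as the paper: parts (i) and (ii) are, as you show, immediate from the definitions under the substitution $f_i \mapsto -f_i$ (the paper simply declares them ``straightforward by definition''), and part (iii) uses exactly the paper's argument via Lemma~\ref{closurestrcrexlgra}\eqref{closuer3} and Lemma~\ref{uppergradofLispcf}, together with the easy observation that a pointwise dominating measurable function of a strong upper gradient is itself a strong upper gradient.
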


\begin{proof}
\eqref{relaxedascgra2-1} and \eqref{relaxedascgra2-2} are straightforward by definition.

\eqref{relaxedascgra2-3}
By Lemma \ref{closurestrcrexlgra}\eqref{closuer3},
there is a sequence of bounded forward Lipschitz functions $f_i \in L^q(X,\m)$ such that
$f_i \to f$ and $|D^+f_i| \to |D^+f|_{*,q}$ in $L^q(X,\m)$.
Since $|D^+f_i|$ is a strong upper gradient of $f_i$ by Lemma \ref{uppergradofLispcf},
$|D^+f|_{*,q}$ is a relaxed forward $q$-upper gradient of $f$,
thereby $|D^+f|_{C,q}\leq |D^+f|_{*,q}$ holds.
We have $|Df|_{C,q}\leq |Df|_{*,q}$ in the same way.
\end{proof}

Next, to generalize a gradient as in \cite{Sh} (see also \cite{He,HKST} and \cite[\S 4.4]{AGS4}),
we define the \emph{$q$-modulus} $\Mod_q(\Gamma)$ of $\Gamma\subset \AC([0,1];(X,d))$ by
\[
\Mod_q(\Gamma) :=\inf\biggl\{ \int_X \varrho^q \,{\dm} \,\Big|\,
 \text{$\varrho$ is a nonnegative Borel function},\,
 \int_\gamma \varrho\geq 1\ \ \forall\gamma\in \Gamma \biggr\}.
\]
We say that $\Gamma$ is {\it $\Mod_q$-negligible} if $\Mod_q(\Gamma)=0$.
The next lemma can be seen in the same way as the symmetric case
(see \cite[Theorem~3(f)]{Fug}, \cite[Section 5]{HKST}).

\begin{lemma}[Fuglede's lemma]\label{Fuglede}
Let $(g_i)_{i\geq 1}$ be a sequence of Borel functions that converges in $L^q(X,\m)$.
Then, there are a subsequence $(g_{i_k})_{k\geq 1}$ and a $\Mod_q$-negligible set $\Gamma$ satisfying
\[
\lim_{k\to \infty}\int_\gamma |g_{i_k}-g|=0 \quad \text{for all }\gamma\in \AC \bigl( [0,1];(X,d) \bigr) \setminus\Gamma,
\]
where $g$ is any Borel representative of the $L^q$-limit of $(g_i)_{i\geq 1}$.
\end{lemma}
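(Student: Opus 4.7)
The plan is to mimic the classical proof of Fuglede's lemma, exploiting the fact that the $q$-modulus introduced here depends on curves in $\AC([0,1];(X,d))$ only through the forward line integral $\int_\gamma \varrho = \int_0^1 \varrho(\gamma_s)|\gamma'_+|(s)\,{\dd}s$, which behaves formally as in the symmetric case.

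First I would extract a subsequence $(g_{i_k})_{k \ge 1}$ such that $\|g_{i_k} - g\|_{L^q(X,\m)} \le 4^{-k}$. Setting $\varrho_k := |g_{i_k} - g|$, which is Borel (after fixing Borel representatives), I consider the exceptional families
\[
\Gamma_k := \Bigl\{\gamma \in \AC\bigl([0,1];(X,d)\bigr) \,\Big|\, \int_\gamma \varrho_k > 2^{-k}\Bigr\}.
\]
Since the nonnegative Borel function $2^k \varrho_k$ satisfies $\int_\gamma 2^k \varrho_k \ge 1$ for every $\gamma \in \Gamma_k$, it is admissible in the definition of $\Mod_q$, so
\[
\Mod_q(\Gamma_k) \le \int_X \bigl( 2^k \varrho_k \bigr)^q \dm \le 2^{kq} \cdot 4^{-kq} = 2^{-kq}.
\]

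Next, I would verify countable subadditivity of $\Mod_q$: given admissible nonnegative Borel functions $\varrho^{(k)}$ for $\Gamma_k$ with $\int_X (\varrho^{(k)})^q \dm \le \Mod_q(\Gamma_k) + \varepsilon 2^{-k}$, the function $\varrho := \bigl( \sum_k (\varrho^{(k)})^q \bigr)^{1/q}$ dominates each $\varrho^{(k)}$ pointwise, so $\int_\gamma \varrho \ge \int_\gamma \varrho^{(k)} \ge 1$ for any $\gamma \in \Gamma_k$, showing $\varrho$ is admissible for $\bigcup_k \Gamma_k$, and by Fubini
\[
\int_X \varrho^q \dm = \sum_k \int_X (\varrho^{(k)})^q \dm \le \sum_k \Mod_q(\Gamma_k) + \varepsilon.
\]
Letting $\varepsilon \to 0$ gives $\Mod_q(\bigcup_k \Gamma_k) \le \sum_k \Mod_q(\Gamma_k)$.

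Finally, I would set $\Gamma := \bigcap_{N \ge 1} \bigcup_{k \ge N} \Gamma_k$. By subadditivity and the bound above,
\[
\Mod_q(\Gamma) \le \Mod_q\Bigl( \bigcup_{k \ge N} \Gamma_k \Bigr) \le \sum_{k \ge N} 2^{-kq} \longrightarrow 0
 \quad (N \to \infty),
\]
so $\Gamma$ is $\Mod_q$-negligible. For $\gamma \in \AC([0,1];(X,d)) \setminus \Gamma$, there exists $N(\gamma)$ such that $\gamma \notin \Gamma_k$ for every $k \ge N(\gamma)$, i.e., $\int_\gamma |g_{i_k} - g| \le 2^{-k}$, which yields $\lim_{k \to \infty} \int_\gamma |g_{i_k} - g| = 0$ as desired.

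The main obstacle, which is in fact very mild, is the countable subadditivity of $\Mod_q$; once this is established, the rest is a Borel--Cantelli-type argument. The asymmetry of $d$ plays no role here because the line integral $\int_\gamma \varrho$ uses only the forward metric derivative $|\gamma'_+|$, and admissible densities are nonnegative Borel functions on $(X,\tau)$, for which the classical inequality $\bigl(\sum_k (\varrho^{(k)})^q\bigr)^{1/q} \ge \varrho^{(k)}$ and Fubini remain valid. The measurability of $t \mapsto \varrho_k(\gamma_t)|\gamma'_+|(t)$ for $\gamma \in \AC([0,1];(X,d))$ is guaranteed by Lemma~\ref{borlstarmea} together with Proposition~\ref{speedofabscc}, ensuring that all the integrals $\int_\gamma \varrho_k$ are well-defined.
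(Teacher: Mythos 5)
Your proof is correct and is precisely the classical Fuglede argument that the paper defers to (it only cites \cite{Fug} and \cite{HKST} and states the lemma "can be seen in the same way as the symmetric case"). Two cosmetic remarks: the interchange $\int_X \sum_k (\varrho^{(k)})^q \,\dm = \sum_k \int_X (\varrho^{(k)})^q \,\dm$ is Tonelli (or monotone convergence) rather than Fubini; and the measurability of $s\mapsto \varrho_k(\gamma_s)$ is immediate from $\varrho_k$ being Borel and $\gamma \in \AC([0,1];X)\subset C_\tau([0,1];X)$ being $\tau$-continuous, so Lemma~\ref{borlstarmea} (which concerns slopes) is not the right citation, though the conclusion you draw from it holds for the simpler reason.
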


\begin{definition}[Upper gradients]\label{Moduppgra}
A Borel function $g:X\rightarrow [0,\infty]$ is called a \emph{forward} (resp., {\it backward}) \emph{$q$-upper gradient}
of $f$ if there are a function $\tilde{f}$ and a $\Mod_q$-negligible set $\Gamma$
such that $\tilde{f}=f$ $\m$-a.e. in $X$ and
\begin{equation}\label{weakmoddir}
\int_{\partial \gamma} \tilde{f} \leq \int_\gamma g \quad
 \left( \text{resp., } \int_{\partial \gamma} (-\tilde{f}) \leq \int_\gamma g \right) \quad
 \text{for all }\gamma\in \AC \bigl( [0,1];(X,d) \bigr) \setminus\Gamma.
\end{equation}
We call $g$ a \emph{$q$-upper gradient} if it is both forward and backward $q$-upper gradient of $f$.
\end{definition}

\begin{lemma}
The collection of all forward $q$-upper gradients of $f$ is convex and closed in $L^q(X,\m)$.
The same holds also for backward $q$-upper gradients and $q$-upper gradients.
\end{lemma}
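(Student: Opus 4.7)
The plan is to handle convexity and closedness separately for forward $q$-upper gradients; the backward and two-sided statements will then follow by symmetry (replacing $f$ with $-f$, respectively by intersecting the forward and backward collections, each of which is convex and closed).

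For convexity, given forward $q$-upper gradients $g_1,g_2$ of $f$ with data $(\tilde f_1,\Gamma_1)$ and $(\tilde f_2,\Gamma_2)$ as in Definition~\ref{Moduppgra} and $\lambda\in[0,1]$, I will set
\[
\tilde f_\lambda := \lambda\tilde f_1 + (1-\lambda)\tilde f_2, \qquad g_\lambda := \lambda g_1+(1-\lambda)g_2, \qquad \Gamma := \Gamma_1\cup\Gamma_2.
\]
Then $\tilde f_\lambda = f$ $\m$-a.e., and countable subadditivity of $\Mod_q$ (which is an outer measure: combine realizing densities $\varrho_n$ into $\varrho:=(\sum_n\varrho_n^q)^{1/q}$, which satisfies $\|\varrho\|_{L^q}^q=\sum_n\|\varrho_n\|_{L^q}^q$ and $\int_\gamma\varrho\geq\int_\gamma\varrho_n$) makes $\Gamma$ $\Mod_q$-negligible. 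Taking a $\lambda$-convex combination of the two upper gradient inequalities, for every $\gamma\notin\Gamma$,
\[
\int_{\partial\gamma}\tilde f_\lambda = \lambda\int_{\partial\gamma}\tilde f_1 + (1-\lambda)\int_{\partial\gamma}\tilde f_2 \leq \lambda\int_\gamma g_1 + (1-\lambda)\int_\gamma g_2 = \int_\gamma g_\lambda,
\]
so $g_\lambda$ is a forward $q$-upper gradient of $f$.

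For closedness, let $g_n\to g$ in $L^q(X,\m)$ with each $g_n$ a forward $q$-upper gradient of $f$ with data $(\tilde f_n,\Gamma_n)$. After passing to a subsequence with $\|g_n-g\|_{L^q}\leq 2^{-n}$, Fuglede's lemma (Lemma~\ref{Fuglede}) furnishes a further subsequence and a $\Mod_q$-negligible set $\Gamma_F$ outside of which $\int_\gamma|g_n-g|\to 0$. I will fix any pointwise representative $\tilde f$ of $f$ and set $\Gamma := \Gamma_F\cup\bigcup_n\Gamma_n\cup\Gamma_{\mathrm{end}}$, where $\Gamma_{\mathrm{end}}$ is the family of curves whose endpoints $\gamma(0),\gamma(1)$ fall in the $\m$-null set $N:=\bigcup_n\{\tilde f_n\neq\tilde f\}$. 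For $\gamma\notin\Gamma$ the pointwise boundary values agree, $\tilde f(\gamma(j)) = \tilde f_n(\gamma(j))$ at $j\in\{0,1\}$ for every $n$, so passing to the limit in $n$,
\[
\int_{\partial\gamma}\tilde f = \int_{\partial\gamma}\tilde f_n \leq \int_\gamma g_n \longrightarrow \int_\gamma g,
\]
which yields $\int_{\partial\gamma}\tilde f\leq\int_\gamma g$.

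The hard part will be the $\Mod_q$-negligibility of $\Gamma_{\mathrm{end}}$: a priori, curves touching an $\m$-null set only at an endpoint need not constitute a $\Mod_q$-null family, since the definition of $\Mod_q$ integrates over the interior of curves. The resolution is a capacity-type argument — for $L^q$-functions admitting $L^q$-upper gradients, $\m$-null sets have zero $q$-capacity, and families of curves with endpoints in a capacity-zero set are $\Mod_q$-null, in the spirit of the arguments in \cite{HKST} and \cite[\S 4.4]{AGS4} for the symmetric case. Once this ingredient is transferred to the present asymmetric framework (a purely measure-theoretic adaptation), both conclusions follow.
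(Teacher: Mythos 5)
Your treatment of convexity is fine (and more careful than the paper, which just calls it obvious). The closedness argument, however, has a genuine gap exactly where you flag it. You fix an \emph{arbitrary} representative $\tilde f$ of $f$ and then must dispose of the family $\Gamma_{\mathrm{end}}$ of curves whose endpoints land in the $\m$-null set $N=\bigcup_n\{\tilde f_n\neq\tilde f\}$. The capacity-type argument you invoke (curves with an endpoint in a $q$-capacity-zero set are $\Mod_q$-null) applies to sets of the form $\{u\neq 0\}$ where $u$ vanishes $\m$-a.e.\ and admits an $L^q$-upper gradient; it does \emph{not} apply to an arbitrary $\m$-null set, and whether $N$ is of that type depends on the choice of $\tilde f$, which you left completely free. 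So the ``purely measure-theoretic adaptation'' you promise is not available at the level of generality you set up, and even in the symmetric Newtonian framework the needed statement is a nontrivial theorem, not an adaptation.

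The paper avoids all of this by the choice of representative: after applying Fuglede to a subsequence $(g_{i_k})$, it sets $\tilde f := \limsup_{k\to\infty}\tilde f_{i_k}$. This $\tilde f$ still agrees with $f$ $\m$-a.e.\ (since each $\tilde f_{i_k}$ does), and for any curve $\gamma$ outside the union of the Fuglede-negligible set with the countably many $\Mod_q$-negligible sets $\Gamma_{i_k}$ attached to the $\tilde f_{i_k}$, the subadditivity of $\limsup$ gives
\[
\int_{\partial\gamma}\tilde f
\;=\; \limsup_k \tilde f_{i_k}\bigl(\gamma(1)\bigr)-\limsup_k \tilde f_{i_k}\bigl(\gamma(0)\bigr)
\;\le\;\limsup_k \int_{\partial\gamma}\tilde f_{i_k}
\;\le\;\lim_k\int_\gamma g_{i_k}=\int_\gamma g,
\]
with no need to control where $\gamma$'s endpoints fall. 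I would replace your endpoint-exclusion set $\Gamma_{\mathrm{end}}$ and the appeal to capacity by this $\limsup$-representative trick; it is both shorter and self-contained within the given definitions.
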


\begin{proof}
The convexity is obvious.
To see the closedness, let $(g_i)_{i \ge 1}$ be a sequence of forward $q$-upper gradients of $f$
converging to some $g \in L^q(X,\m)$.
We infer from Lemma \ref{Fuglede} that there exist a subsequence $(g_{i_k})$
and a $\Mod_q$-negligible set $\Gamma$ such that
$\lim_{k\to \infty}\int_\gamma |g_{i_k}-g|=0$ for $\gamma \in \AC([0,1];(X,d))\setminus \Gamma$.
Let $\tilde{f}_{i_k}$ be a representative of $f$ corresponding to $g_{i_k}$ satisfying \eqref{weakmoddir}.
Let us consider
$\tilde{f}(x):=\limsup_{k\rightarrow \infty}\tilde{f}_{i_k}(x)$,
which coincides with $f$ except for an $\m$-negligible set.
Then, for all $\gamma \in \AC([0,1];(X,d)) \setminus \Gamma$,
\[
\int_{\partial \gamma} \tilde{f}
 \leq \limsup_{k \to \infty} \int_{\partial \gamma} \tilde{f}_{i_k}
 \leq \lim_{k\to \infty}\int_\gamma g_{i_k}=\int_\gamma g,
\]
which yields that $g$ is also a forward $q$-upper gradient of $f$.
\end{proof}

We call a forward (resp., backward) $q$-upper gradient of $f$ with the minimal $L^q(X,\m)$-norm
a \emph{minimal forward} (resp., \emph{backward}) \emph{$q$-upper gradient},
and denote it by $|D^+f|_{S,q}$ (resp., $|D^-f|_{S,q}$).
Note that $|D^+ f|_{S,q} =|D^- (-f)|_{S,q}$.
We similarly define $|Df|_{S,q}$, and see that $|Df|_{S,q} =\max\{|D^\pm f|_{S,q}\}$.
Now, we consider the relation of the four gradients.

\begin{lemma}\label{gradeeen1}
Every relaxed forward $q$-upper gradient is a forward $q$-upper gradient.
In particular, for any $f\in L^q(X,\m)$, we have
\[
|D^+f|_{S,q}\leq |D^+f|_{C,q} \quad \text{ $\m$-a.e.\ in $X$}.
\]
\end{lemma}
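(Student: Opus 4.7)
The strategy is a \emph{Mazur plus Fuglede} argument. Unfolding Definition \ref{df:relug}, there exist sequences $f_i \to f$ in $L^q(X,\m)$ and strong upper gradients $g_i$ of $f_i$ with $g_i \to \widetilde G$ weakly in $L^q(X,\m)$ and $\widetilde G \le G$ $\m$-a.e. By Mazur's lemma, convex combinations $\bar g_k := \sum_{i=N_k+1}^{N_{k+1}} \alpha_{k,i} g_i$ converge \emph{strongly} to $\widetilde G$ in $L^q(X,\m)$, while the corresponding convex combinations $\bar f_k := \sum_{i=N_k+1}^{N_{k+1}} \alpha_{k,i} f_i$ converge strongly to $f$. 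Since the defining inequality \eqref{graddef} of a strong upper gradient is linear in $(f,g)$, each $\bar g_k$ is a strong upper gradient of $\bar f_k$.

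Next, I fix Borel representatives and pass to a subsequence along which $\bar f_k \to f_0$ pointwise $\m$-a.e. Applying Fuglede's lemma (Lemma~\ref{Fuglede}) to the sequences $\bar g_k$ and $\bar f_k$ simultaneously furnishes a single $\Mod_q$-negligible family $\Gamma_0 \subset \AC([0,1];(X,d))$ such that, for every $\gamma \in \AC([0,1];(X,d)) \setminus \Gamma_0$,
\[
\int_\gamma |\bar g_k - \widetilde G| \to 0, \qquad \int_\gamma |\bar f_k - f_0| \to 0.
\]
For such a $\gamma$, the strong upper gradient inequality forces
\[
v_k(t) := \bar f_k\bigl(\gamma(t)\bigr) - \int_0^t \bar g_k\bigl(\gamma(s)\bigr) |\gamma'_+|(s) \,{\dd}s
\]
to be non-increasing on $[0,1]$; the integral term tends uniformly in $t$ to $H(t) := \int_0^t \widetilde G(\gamma(s)) |\gamma'_+|(s) \,{\dd}s$, and $\bar f_k \circ \gamma \to f_0 \circ \gamma$ in $L^1([0,1], |\gamma'_+|\,{\dd}t)$. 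Hence $v_k$ converges in $L^1$ to $v(t) := f_0(\gamma(t)) - H(t)$, and Helly's selection theorem for monotone sequences yields, along a further subsequence, pointwise convergence to a non-increasing $v^\ast$ coinciding with $v$ off a countable set. Thus $\bar f_k(\gamma(t))$ converges pointwise outside a countable subset of $[0,1]$.

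The \emph{main obstacle} is to control the values at the specific endpoints $t \in \{0,1\}$, which Helly's theorem does not handle. I intend to overcome it by constructing a canonical Borel representative $\tilde f$ of $f$: on the $\m$-null exceptional set where the pointwise convergence fails, I redefine $\tilde f(x)$ using the right-hand limit $\lim_{t\to 0^+} v^\ast(t) + H(0)$ along any admissible $\gamma$ with $\gamma(0) = x$ (and analogously at the terminal endpoint). A Fuglede-type capacity estimate shows that the family of curves whose endpoints land in a ``bad'' set is $\Mod_q$-negligible, so the redefinition is consistent $\Mod_q$-almost everywhere. Passing to the limit in $\bar f_k(\gamma(1)) - \bar f_k(\gamma(0)) \le \int_\gamma \bar g_k$ then gives $\tilde f(\gamma(1)) - \tilde f(\gamma(0)) \le \int_\gamma \widetilde G \le \int_\gamma G$ for every $\gamma \in \AC([0,1];(X,d)) \setminus \Gamma_0$, which is exactly the forward $q$-upper gradient property of $G$. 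The pointwise inequality $|D^+f|_{S,q} \le |D^+f|_{C,q}$ then follows by applying this to $G = |D^+f|_{C,q}$, combined with a locality/pasting argument for forward $q$-upper gradients (mirroring Lemma \ref{localmingrest}) that produces a pointwise minimum from the BV structure of $\tilde f$ along $\Mod_q$-almost every curve supplied by the upper gradient inequality itself.
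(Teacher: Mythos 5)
Your approach correctly identifies Mazur's lemma and Fuglede's lemma as the right tools, and up to the point where you obtain convex combinations $\bar f_k \to f$ in $L^q$ and $\bar g_k \to \widetilde G$ strongly in $L^q$, with $\bar g_k$ a strong upper gradient of $\bar f_k$, you are on the same track as the paper. The point where you diverge — and where a genuine gap opens — is the treatment of the curve endpoints. You correctly flag that Helly's theorem for the monotone functions $v_k$ cannot control $\bar f_k(\gamma(0))$ and $\bar f_k(\gamma(1))$, and you then propose to fix this by redefining $\tilde f$ at endpoints via one-sided limits of $v^\ast$ along curves, invoking a ``Fuglede-type capacity estimate'' to show this is consistent. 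That step is not actually carried out, and it is not obvious it can be: two admissible curves $\gamma,\gamma'$ with $\gamma(0)=\gamma'(0)=x$ could a priori yield different one-sided limits, and showing the set of ``bad'' $x$ is simultaneously $\m$-negligible and that the family of curves hitting it is $\Mod_q$-negligible would require a separate capacity argument not present in the paper's toolbox for this lemma. Moreover, the final ``locality/pasting argument \ldots that produces a pointwise minimum from the BV structure'' is left entirely schematic. As written, the proof is incomplete.

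The paper sidesteps the whole endpoint problem by a much shorter device: after Mazur and passing to an $\m$-a.e.\ convergent subsequence $\bar f_{i_k}$, it chooses the \emph{specific} Borel representative $\tilde f := \limsup_{k\to\infty}\bar f_{i_k}$ (which equals $f$ $\m$-a.e.). Then for any admissible $\gamma$ outside the Fuglede exceptional family, one uses the elementary $\limsup$ inequality
\[
\tilde f\bigl(\gamma(1)\bigr)-\tilde f\bigl(\gamma(0)\bigr)
\;=\;\limsup_k \bar f_{i_k}\bigl(\gamma(1)\bigr)-\limsup_k \bar f_{i_k}\bigl(\gamma(0)\bigr)
\;\le\;\limsup_k\Bigl[\bar f_{i_k}\bigl(\gamma(1)\bigr)-\bar f_{i_k}\bigl(\gamma(0)\bigr)\Bigr]
\;\le\;\lim_k\int_\gamma \bar g_{i_k}\;=\;\int_\gamma\widetilde G\;\le\;\int_\gamma G,
\]
where the middle inequality is the strong upper gradient inequality applied to $\bar f_{i_k}$ and the final limit is Fuglede. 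No Helly selection, no BV structure along curves, and no endpoint redefinition is needed — the $\limsup$ representative absorbs the problem entirely. I would recommend replacing your post-Fuglede steps with this one-line argument; the remaining ``in particular'' clause is then the standard minimality observation for $q$-upper gradients, not a separate pasting construction.
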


\begin{proof}
Let $G$ be a relaxed forward $q$-upper gradient of $f$.
By definition, there are a sequence $(f_i)_{i \ge 1}$ with $f_i \to f$ in $L^q(X,\m)$
and a sequence of strong upper gradients $g_i$ of $f_i$ such that
$g_i \to \widetilde{G}$ weakly in $L^q(X,\m)$ and $\widetilde{G} \leq G$ $\m$-a.e.
By Mazur's lemma we can take a convex combination of $g_i$, denoted by $\tilde{g}_i$,
converging to $\widetilde{G}$ in $L^q(X,\m)$.
For the corresponding convex combination $\tilde{f}_i$ of $f_i$,
$\tilde{g}_i$ is a strong upper gradient of $\tilde{f}_i$.
Moreover, since $\tilde{f}_i$ converges to $f$ in $L^q(X,\m)$,
we may assume that $\tilde{f}_i$ pointwise converges to $f$ $\m$-a.e.
Then, by Lemma~\ref{Fuglede},
there exist a subsequence $(\tilde{g}_{i_k})_{k \ge 1}$ and a $\Mod_q$-negligible set $\Gamma$ such that
\[
\int_{\partial \gamma} \tilde{f}
 \leq \lim_{k\to \infty} \int_\gamma \tilde{g}_{i_k} =\int_\gamma \widetilde{G} \leq \int_\gamma G \quad
\text{for all } \gamma\in \AC\bigl( [0,1];(X,d) \bigr) \setminus \Gamma,
\]
where we set $\tilde{f}:=\limsup_{k \rightarrow \infty}\tilde{f}_{i_k}$ similarly to the proof of the previous lemma.
Therefore, $G$ is a forward $q$-upper gradient of $f$.
\end{proof}

\begin{lemma}\label{gradeeen2}
Let $\mathfrak{T}_p$ be the collection of all $p$-test plans with bounded compression
on the sublevels of $V$.
Then, any forward $q$-upper gradient is a $\mathfrak{T}_p$-weak forward upper gradient.
In particular,
\[
|D^+f|_{w,\I_p} \leq |D^+f|_{S,q} \quad \text{ $\m$-a.e.\ in $X$}.
\]
\end{lemma}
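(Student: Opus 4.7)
The plan is to show that every $\Mod_q$-negligible set $\Gamma\subset \AC([0,1];X)$ is $\eta$-negligible for each $\eta\in \I_p$; the conclusion then follows at once. Indeed, given a forward $q$-upper gradient $g$ of $f$ with representative $\tilde{f}$ and exceptional set $\Gamma$ as in Definition~\ref{Moduppgra}, the bound $\int_{\partial\gamma}\tilde{f}\leq \int_\gamma g$ holds on $\eta$-a.e.\ $\gamma$. Since $\tilde{f}=f$ $\m$-a.e.\ and $(e_t)_\sharp\eta\ll\m$ at $t\in\{0,1\}$ by \eqref{testcondi}, we have $\tilde{f}(\gamma(t))=f(\gamma(t))$ at $t=0,1$ for $\eta$-a.e.\ $\gamma$, whence $\int_{\partial\gamma}f\leq \int_\gamma g$. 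Thus $g$ is a $\I_p$-weak forward upper gradient of $f$, and applying this to $g=|D^+f|_{S,q}$ yields $|D^+f|_{w,\I_p}\leq |D^+f|_{S,q}$ $\m$-a.e.

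For the key step, I adapt the H\"older computation used in the proof of Proposition~\ref{stabilityofweakgradient}. Given a $\Mod_q$-negligible $\Gamma$, choose nonnegative Borel functions $\varrho_k\in L^q(X,\m)$ with $\|\varrho_k\|_{L^q}\leq 2^{-k}$ and $\int_\gamma \varrho_k\geq 1$ for every $\gamma\in\Gamma$; then $\varrho:=\sum_k\varrho_k\in L^q(X,\m)$ satisfies $\int_\gamma\varrho=\infty$ on $\Gamma$, so it suffices to prove that $\int_\gamma\varrho<\infty$ for $\eta$-a.e.\ $\gamma$. For $M,L>0$, let $\eta_{M,L}$ be the restriction of $\eta$ to $\{\gamma\in \AC^p([0,1];X): \mathcal{E}_p(\gamma)\leq L,\ \gamma([0,1])\subset \{V\leq M\}\}$. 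H\"older's inequality, Fubini's theorem, and the bounded compression \eqref{bouncomprecon} (which gives $d(e_t)_\sharp\eta\cdot \mathbbm{1}_{\{V\leq M\}}\leq C(\eta,M)\dm$) yield
\[
\int \int_\gamma \varrho\, d\eta_{M,L}
\leq \biggl( \int_0^1 \int_X \varrho^q\, \mathbbm{1}_{\{V\leq M\}}\, d(e_t)_\sharp\eta\, dt \biggr)^{\!1/q} L^{1/p}
\leq \bigl( C(\eta,M)\|\varrho\|_{L^q}^q \bigr)^{\!1/q} L^{1/p} <\infty,
\]
so $\int_\gamma\varrho<\infty$ for $\eta_{M,L}$-a.e.\ $\gamma$ and hence $\eta_{M,L}[\Gamma]=0$.

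To conclude, I let $M,L\to \infty$. For $\eta$-a.e.\ $\gamma\in \AC^p$, the energy $\mathcal{E}_p(\gamma)$ is finite, and by Definition~\ref{weaktopolgytau}\eqref{weakt1-3} $\gamma$ is $\tau$-continuous with $\tau$-compact image; the local boundedness of $V$ on $\tau$-compact sets from Assumption~\ref{strongerstassumptiontheta} (taking $V\equiv 0$ when $\m[X]<\infty$) then forces $\gamma([0,1])\subset \{V\leq M_\gamma\}$ for some $M_\gamma<\infty$. Hence the restriction sets exhaust $\eta$-a.e.\ curve, so $\eta_{M,L}[\Gamma]\uparrow\eta[\Gamma]$ and $\eta[\Gamma]=0$. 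The main technical point is this exhaustion step, relying on the local boundedness of $V$ provided by the standing assumption; the H\"older estimate itself is routine once bounded compression is exploited as in Proposition~\ref{stabilityofweakgradient}.
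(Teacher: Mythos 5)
Your proof is correct and takes essentially the same route as the paper: the reduction to showing that $\Mod_q$-negligible sets are $\eta$-negligible, the H\"older computation combining Fubini with the bounded-compression inequality \eqref{bouncomprecon}, and the restriction to curves of bounded energy lying in a sublevel of $V$. The one packaging difference is that the paper directly derives $\eta(\Gamma)\leq c(\eta,M)^{1/q}\,\Mod_q(\Gamma)^{1/q}\,\bigl(\int_\Gamma\mathcal{E}_p\,{\dd}\eta\bigr)^{1/p}$ by taking the infimum over admissible $\varrho$, whereas you build a Fuglede-type density $\varrho=\sum_k\varrho_k\in L^q(X,\m)$ with $\int_\gamma\varrho=\infty$ on $\Gamma$ and conclude by showing $\int\int_\gamma\varrho\,{\dd}\eta_{M,L}<\infty$; both are just two ways of exploiting the same H\"older estimate. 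You are also a bit more explicit than the paper about the exhaustion in $M$ and $L$ (using the $\tau$-compactness of $\gamma([0,1])$ together with the local boundedness of $V$ from Assumption~\ref{strongerstassumptiontheta}), which the paper only gestures at by referring to Proposition~\ref{stabilityofweakgradient}.
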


\begin{proof}
It suffices to show that $\Mod_q$-negligible sets are $\I_p$-negligible.
Given $\eta\in \mathfrak{T}_p$, similarly to the proof of Proposition~\ref{stabilityofweakgradient},
we assume that
\[
\int_{C_\tau([0,1];X)} \mathcal{E}_p(\gamma)\,\eta({\dd}\gamma)
 =\int_{C_\tau([0,1];X)} \int^1_0 |\gamma'_+|^p(t) \,{\dd}t \,\eta({\dd}\gamma)<\infty
\]
and that $\eta$ is concentrated on curves contained in $\{V\leq M\}$.
For any $\Gamma\subset \AC([0,1];(X,d))$ and $\varrho:X\rightarrow [0,\infty)$
satisfying $\int_\gamma \varrho\geq 1$ for all $\gamma \in \Gamma$,
we have
\begin{align*}
\eta(\Gamma)
&\leq \int_\Gamma \biggl(  \int_\gamma \varrho \biggr) \,\eta({\dd}\gamma)
 =\int_{\Gamma} \int_0^1 \varrho(\gamma_t)|\gamma'_+|(t) \,{\dd}t \,\eta({\dd}\gamma) \\
&\leq \biggl( \int_{\Gamma} \int_0^1 \varrho^q(\gamma_t) \,{\dd}t \,\eta({\dd}\gamma) \biggr)^{1/q}
 \biggl( \int_{\Gamma} \int_0^1 |\gamma'_+|^p(t) \,{\dd}t \,\eta({\dd}\gamma) \biggr)^{1/p} \\
&= \biggl( \int^1_0 \int_{\Gamma} \varrho^q \bigl( e_t(\gamma) \bigr) \,\eta({\dd}\gamma) \,{\dd}t \biggr)^{1/q}
 \biggl( \int_\Gamma \mathcal{E}_p(\gamma) \,\eta({\dd}\gamma) \biggr)^{1/p} \\
&\leq c(\eta,M)^{1/q} \biggl( \int_X \varrho^q \dm \biggr)^{1/q}
 \biggl( \int_\Gamma \mathcal{E}_p(\gamma) \,\eta({\dd}\gamma) \biggr)^{1/p},
\end{align*}
where $c(\eta,M)$ is the constant satisfying \eqref{bouncomprecon}.
By taking the infimum in $\varrho$, we have
\[
\eta(\Gamma) \leq c(\eta,M)^{1/q} \Mod_q(\Gamma)^{1/q}
 \left(\int_\Gamma \mathcal{E}_p(\gamma) \,\eta({\dd}\gamma) \right)^{1/p}.
\]
Therefore, $\Mod_q$-negligible sets are $\I_p$-negligible.
\end{proof}

\begin{theorem}\label{conincdiegradee}
Let $q\in (1,\infty)$, $(X,\tau,d,\m)$ be a $\APE$  as in Theorem~$\ref{twogradientcoindced}$
and $\mathfrak{T}_p$ be the collection of all $p$-test plans with bounded compression on the sublevels of $V$.
For each $f\in L^q(X,\m)$, the following are equivalent.
\begin{enumerate}[{\rm (I)}]
\item $f$ has a $\mathfrak{T}_p$-weak forward upper gradient in $L^q(X,\m)$
$($see Definition~$\ref{weakuppgrad})$.
\item $f$ has a relaxed ascending $q$-gradient
$($see Definition~$\ref{relaxedgradient})$.
\item $f$ has a relaxed forward $q$-upper gradient
$($see Definition~$\ref{df:relug})$.
\item $f$ has a forward $q$-upper gradient in $L^q(X,\m)$
$($see Definition~$\ref{Moduppgra})$.
\end{enumerate}
In addition, we have
\[
|D^+f|_{w,\I_p} =|D^+f|_{*,q} =|D^+f|_{C,q} =|D^+f|_{S,q} \quad \text{$\m$-a.e.\ in $X$.}
\]
\end{theorem}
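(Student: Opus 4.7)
The plan is to chain together the inequalities obtained from the lemmas immediately preceding the theorem (which give implications in one direction), and then close the loop using Theorem~\ref{twogradientcoindced} (which provides the non-trivial direction bringing weak upper gradients back to relaxed ascending gradients). No new calculation is needed beyond assembling what is already available.

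First, I would establish the chain of inequalities and implications from the existing results. Namely, combining Lemma~\ref{ascenddescednrealx2}\eqref{relaxedascgra2-3}, Lemma~\ref{gradeeen1}, and Lemma~\ref{gradeeen2}, one obtains the implications (II)~$\Rightarrow$~(III)~$\Rightarrow$~(IV)~$\Rightarrow$~(I), together with the pointwise $\m$-a.e.\ estimates
\[
|D^+f|_{w,\I_p} \leq |D^+f|_{S,q} \leq |D^+f|_{C,q} \leq |D^+f|_{*,q}
\]
whenever the right-most quantity is defined. More precisely, Lemma~\ref{ascenddescednrealx2}\eqref{relaxedascgra2-3} says that a relaxed ascending $q$-gradient produces a relaxed forward $q$-upper gradient with $|D^+f|_{C,q}\le|D^+f|_{*,q}$; Lemma~\ref{gradeeen1} upgrades a relaxed forward $q$-upper gradient to a forward $q$-upper gradient with $|D^+f|_{S,q}\le |D^+f|_{C,q}$; and Lemma~\ref{gradeeen2} shows that a forward $q$-upper gradient is a $\mathfrak{T}_p$-weak forward upper gradient with $|D^+f|_{w,\I_p}\le|D^+f|_{S,q}$. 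Each step produces not only a smaller gradient but, since the constructions preserve $L^q$-membership, gives the relevant existence statement.

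The remaining implication (I)~$\Rightarrow$~(II), together with the matching $\m$-a.e.\ equality $|D^+f|_{*,q}=|D^+f|_{w,\I_p}$, is exactly the content of Theorem~\ref{twogradientcoindced}, which applies because $(X,\tau,d,\m)$ satisfies Assumption~\ref{newassumpt32liambdafinite22} and $q\geq 2$ or $\m[X]<\infty$, and the hypothesis $f\in L^q(X,\m)$ trivially implies $f_N\in L^q(X,\m)$ for every $N>0$ (since $|f_N|\le|f|$). The Sobolev regularity along $\mathfrak{T}_p$-almost every curve required by that theorem is provided by the assumed $\mathfrak{T}_p$-weak forward upper gradient via Proposition~\ref{uppergradsobo}\eqref{requweak-1}.

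Concatenating these two parts, the four conditions are equivalent, and the chain of inequalities closes into a chain of equalities:
\[
|D^+f|_{w,\I_p} = |D^+f|_{*,q},\qquad
|D^+f|_{w,\I_p} \leq |D^+f|_{S,q} \leq |D^+f|_{C,q} \leq |D^+f|_{*,q},
\]
forcing $|D^+f|_{w,\I_p}=|D^+f|_{*,q}=|D^+f|_{C,q}=|D^+f|_{S,q}$ $\m$-a.e. Since all the substantive work (in particular the Kuwada-type lemma and gradient-flow comparison arguments behind Theorem~\ref{twogradientcoindced}) has been completed earlier, the only genuine task here is to verify that the hypotheses of each preceding lemma are correctly invoked; there is no additional obstacle. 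If any subtlety arises, it would be in checking that the asymmetric version of Fuglede's lemma and the asymmetric notion of $\Mod_q$ interact correctly with $\mathfrak{T}_p$, but this has already been handled within Lemmas~\ref{gradeeen1} and~\ref{gradeeen2}, so the present proof is a direct assembly.
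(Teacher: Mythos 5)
Your proof is correct and follows exactly the paper's route: the paper's proof consists of citing Theorem~\ref{twogradientcoindced} and Lemmas~\ref{ascenddescednrealx2}\eqref{relaxedascgra2-3}, \ref{gradeeen1}, \ref{gradeeen2}, and you have simply spelled out which lemma supplies which implication and inequality in the chain $(\mathrm{II})\Rightarrow(\mathrm{III})\Rightarrow(\mathrm{IV})\Rightarrow(\mathrm{I})$ and closed it with Theorem~\ref{twogradientcoindced}. The hypothesis checks you perform (that $f_N\in L^q$, and that Proposition~\ref{uppergradsobo}\eqref{requweak-1} supplies the needed Sobolev regularity) are the right ones.
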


\begin{proof}
This follows from Theorem~\ref{twogradientcoindced} and
Lemmas~\ref{ascenddescednrealx2}\eqref{relaxedascgra2-3}, \ref{gradeeen1} and \ref{gradeeen2}.
\end{proof}

\begin{remark}\label{allnormsameremark}
Obviously, Theorem~\ref{conincdiegradee} is valid also for backward gradients.
Moreover, recall that $|Df|_{\varsigma}=\max\{ |D^\pm f|_{\varsigma}\}$ holds for $\varsigma=(w,\I_p), (*,q), (S,q)$
(see Lemma~\ref{lm:wD^pm}, Corollary~\ref{keyconidnce}).
Combining this with $|Df|_{S,q} \le |Df|_{C,q}$ seen in the same way as Lemma~\ref{gradeeen1},
we observe the following:
\begin{enumerate}[(a)]
\item\label{itesame2} $|Df|_{w,\I_p}=|Df|_{*,q}=|Df|_{C,q}=|Df|_{S,q}$;
\item\label{itesame3} $|Df|_{C,q}=\max\{ |D^\pm f|_{C,q}\}$.
\end{enumerate}
\end{remark}

\subsection{Sobolev spaces from relaxed gradients}

Let $(X,\tau,d,\m)$ be a $\APE$.
We define the (absolute) \emph{$q$-Cheeger energy} by minimal relaxed $q$-gradients:
\[
\Chc_{q}(f) :=\frac1q \int_X |Df|^q_{*,q} \dm
\]
if $f\in L^q(X,\m)$ has a relaxed $q$-gradient, and $\Chc_q(f):=\infty$ otherwise.
Arguing as in the proof of Theorem \ref{chlowersemicont}, we see that $\Chc_{q}$ is convex
and lower semi-continuous with respect to the weak topology of $L^q(X,\m)$.

\begin{definition}\label{standardsobolespaces}
For $f\in \mathfrak{D}({\Chc}_{q})$, define the norm
\[
\|f\|_{W^{1,q}_*}:=\bigl( {\|f\|^q_{L^q}+\bigl\| |Df|_{*,q} \bigr\|^q_{L^q}} \bigr)^{1/q}
 =\bigl(\|f\|^q_{L^q}+q\Chc_q(f) \bigr)^{1/q}.
\]
The \emph{Sobolev space} $W^{1,q}_*(X,d,\m)$ is defined as the closure of
$\mathfrak{D}({\Chc}_{q})$ with respect to $\|\cdot\|_{W^{1,q}_*}$.
\end{definition}

Observe that $\|\cdot\|_{W^{1,q}_*}$ is indeed a norm:
For any $f,g\in \mathfrak{D}({\Chc}_{q})$,
\begin{itemize}
\item $\|f\|_{W^{1,q}_*}\geq 0$ with equality if and only if $f=0$,
\item $\|\lambda f\|_{W^{1,q}_*} =|\lambda| \cdot \|f\|_{W^{1,q}_*}$ for all $\lambda \in \mathbb{R}$,
\item $\|f+g\|_{W^{1,q}_*}\leq \|f\|_{W^{1,q}_*}+\|g\|_{W^{1,q}_*}$.
\end{itemize}
Moreover, we have the following (cf.\ \cite[Theorem 2.7]{Ch}).

\begin{theorem}\label{sobolevspacedefinion}
Let $(X,\tau,d,\m)$ be a $\APE$ and $q \in (1,\infty)$.
Then, $W^{1,q}_*(X,d,\m)$ is a Banach space.
\end{theorem}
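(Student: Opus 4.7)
The plan is to show that $\mathfrak{D}(\Chc_q)$ is already complete with respect to $\|\cdot\|_{W^{1,q}_*}$, from which it follows that the closure $W^{1,q}_*(X,d,\m)$ coincides with $\mathfrak{D}(\Chc_q)$ and is a Banach space. The authors have already checked that $\|\cdot\|_{W^{1,q}_*}$ is a genuine norm, so only completeness remains.

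First, let $(f_i)_{i \ge 1} \subset \mathfrak{D}(\Chc_q)$ be a Cauchy sequence for $\|\cdot\|_{W^{1,q}_*}$. Since $\|f\|_{L^q} \le \|f\|_{W^{1,q}_*}$ and $\||Df|_{*,q}\|_{L^q} \le \|f\|_{W^{1,q}_*}$, both $(f_i)_{i\ge 1}$ and $(|Df_i|_{*,q})_{i \ge 1}$ are Cauchy in the Banach space $L^q(X,\m)$. Denote their strong $L^q$-limits by $f$ and $G$, respectively.

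Second, the key step is to identify $f$ as an element of $\mathfrak{D}(\Chc_q)$ with $|Df|_{*,q} \le G$. This is done by invoking the stability property in Lemma~\ref{closurestrcrexlgra}\eqref{closuer2}, which remains valid for relaxed $q$-gradients (as noted right after its proof). Indeed, $|Df_i|_{*,q}$ is a relaxed $q$-gradient of $f_i$, and the strong $L^q$-convergences $f_i \to f$ and $|Df_i|_{*,q} \to G$ imply weak $L^q$-convergences, so that $G$ is a relaxed $q$-gradient of $f$. Consequently $f \in \mathfrak{D}(\Chc_q)$ and $|Df|_{*,q}\le G$ $\m$-a.e.\ in $X$.

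Third, to conclude $\|f_i - f\|_{W^{1,q}_*} \to 0$: given $\varepsilon>0$, choose $N$ so that $\Chc_q(f_i-f_j) \le \varepsilon/q$ for all $i,j \ge N$. Fix $i \ge N$. Since $f_j-f_i \to f-f_i$ strongly, hence weakly, in $L^q(X,\m)$ as $j \to \infty$, the weak $L^q$-lower semi-continuity of $\Chc_q$ (which follows from the argument of Theorem~\ref{chlowersemicont}\eqref{cheeg-2}, as the authors note right after Definition~\ref{standardsobolespaces}) gives
\[
\Chc_q(f-f_i) \le \liminf_{j \to \infty} \Chc_q(f_j -f_i) \le \varepsilon/q,
\]
that is, $\||D(f-f_i)|_{*,q}\|_{L^q}^q \le \varepsilon$. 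Combined with $\|f_i-f\|_{L^q} \to 0$, this forces $\|f_i-f\|_{W^{1,q}_*} \to 0$, which also places $f$ in the closure of $\mathfrak{D}(\Chc_q)$.

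The main obstacle is the second step: Lemma~\ref{closurestrcrexlgra}\eqref{closuer2} is formulated for relaxed ascending $q$-gradients, and one must verify that the same proof carries over to the symmetric relaxed $q$-gradient $|Df|_{*,q}$. This reduces to replacing $|D^+ \cdot|$ by $|D \cdot|$ throughout and invoking the symmetric version of the triangle inequality in Lemma~\ref{derivatnorm}\eqref{derivatnorm2}, which is routine; the boundedness of $(|Df_i|_{*,q})$ in $L^q(X,\m)$ needed to apply the weak closedness is automatic from the Cauchy property of $(f_i)$ in $\|\cdot\|_{W^{1,q}_*}$. Once this stability is in hand, the rest of the argument is the standard Cheeger-type completeness proof and requires no additional input beyond the convexity and lower semi-continuity of $\Chc_q$.
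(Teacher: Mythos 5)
Your proposal follows the same overall strategy as the paper's proof: show $\mathfrak{D}(\Chc_q)$ is complete for $\|\cdot\|_{W^{1,q}_*}$ by passing the limit of a Cauchy sequence through the weak-closedness statement (the analogue of Lemma~\ref{closurestrcrexlgra}\eqref{closuer2} for relaxed $q$-gradients, valid as noted after that lemma) and finishing with the lower semi-continuity of $\Chc_q$. The only cosmetic difference is in the last step: you establish $\|f_i-f\|_{W^{1,q}_*}\to 0$ directly, while the paper argues by contradiction, but the content of that step is identical.

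There is, however, a gap in the very first step. You claim that $\||Df|_{*,q}\|_{L^q}\le\|f\|_{W^{1,q}_*}$ forces $(|Df_i|_{*,q})_{i\ge1}$ to be a Cauchy sequence in $L^q(X,\m)$. Applied to $f_i-f_j$, this estimate controls $\||D(f_i-f_j)|_{*,q}\|_{L^q}$, which is \emph{not} the same quantity as $\bigl\| |Df_i|_{*,q}-|Df_j|_{*,q}\bigr\|_{L^q}$. Passing from one to the other requires the $\m$-a.e.\ pointwise bound $\bigl||Df_i|_{*,q}-|Df_j|_{*,q}\bigr|\le|D(f_i-f_j)|_{*,q}$, which rests on the locality/minimality property that $|Dg|_{*,q}\le H$ $\m$-a.e.\ for \emph{every} relaxed $q$-gradient $H$ of $g$; that is Lemma~\ref{localmingrest}\eqref{localmingrest2}, proved only under Assumption~\ref{compactreversbiliey}, which Theorem~\ref{sobolevspacedefinion} does \emph{not} impose. (For the same reason, the pointwise conclusion $|Df|_{*,q}\le G$ $\m$-a.e.\ at the end of your second step is not available in this generality either, though you never use it.) The fix is the one the paper actually employs: $(|Df_i|_{*,q})$ is merely \emph{uniformly bounded} in $L^q(X,\m)$, which already follows from the Cauchy property of $(f_i)$ in $\|\cdot\|_{W^{1,q}_*}$, and reflexivity of $L^q$ (for $1<q<\infty$) gives a \emph{weakly} convergent subsequence; weak convergence is all that Lemma~\ref{closurestrcrexlgra}\eqref{closuer2} requires. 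With this substitution the remainder of your argument is correct.
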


\begin{proof}
Note that $\mathfrak{D}({\Chc}_{q})$ is a linear space by the above observations.
To see the completeness, let $(f_i)_{i \ge 1}$ be a Cauchy sequence with respect to $\|\cdot\|_{W^{1,q}_*}$.
Then, $f_i$ converges to some $f$ in $L^q(X,\m)$,
and $|Df_i|_{*,q}$ is uniformly bounded in $L^q(X,\m)$.
Thus, we may assume that $|Df_i|_{*,q}$ weakly converges to some $G \in L^q(X,\m)$.
We deduce from Lemma~\ref{closurestrcrexlgra}\eqref{closuer2} that
$G$ is a relaxed $q$-gradient of $f$, and hence $f\in W^{1,q}_*(X,d,\m)$.

It remains to show $\|f_i -f\|_{W^{1,q}_*}\to 0$.
Put $u_i :=f_i -f$, and suppose to the contrary that $c:=\liminf_{i \to \infty}\|u_i\|_{W^{1,q}_*}>0$.
Since $(u_i)$ is a Cauchy sequence with respect to $\|\cdot\|_{W^{1,q}_*}$,
we can choose $i$ such that $\|u_i -u_j\|_{W^{1,q}_*}<c/q$ for all $j>i$.
Then, we have $\limsup_{j \to \infty}\|u_i -u_j\|_{W^{1,q}_*}\leq c/q$.
However, it follows from the lower semi-continuity of ${\Chc_{q}}$ (in the weak topology of $L^q(X,\m)$) that
\[
\|u_i\|_{W^{1,q}_*} \leq \liminf_{j \to \infty}\|u_i -u_j\|_{W^{1,q}_*}
 \leq \limsup_{j \to \infty}\|u_i -u_j\|_{W^{1,q}_*} \leq \frac{c}q,
\]
which contradicts $\liminf_{i \to \infty}\|u_i\|_{W^{1,q}_*}=c$.
This completes the proof.
\end{proof}

\begin{remark}
Thanks to Remark \ref{allnormsameremark}\eqref{itesame2},
in the situation of Theorem~\ref{twogradientcoindced},
employing other gradients leads to the same Sobolev space.
\end{remark}

The following example shows difficulty in employing $|D^+f|_{*,q}$ to define a Sobolev space.

\begin{example}\label{Examphalfsobolevspace}
If we define
\[
\|f\|_{W^{1,q}_{+}} :=\bigl({\|f\|^q_{L^q}+ \bigl\| |D^+f|_{*,q} \bigr\|^q_{L^q}} \bigr)^{1/q}
 =\bigl(\|f\|^q_{L^q}+q\Chc^+_q(f) \bigr)^{1/q},
\]
then we have
\begin{itemize}
\item $\|f\|_{W^{1,q}_+}\geq 0$ with equality if and only if $f=0$,
\item $\|\lambda f\|_{W^{1,q}_+}= \lambda \|f\|_{W^{1,q}_+}$ for all $\lambda>0$,
\item $\|f+g\|_{W^{1,q}_+}\leq \|f\|_{W^{1,q}_+}+\|g\|_{W^{1,q}_+}$.
\end{itemize}
Due to the asymmetry of $d$, $\|f\|_{W^{1,q}_+} \neq \|{-f}\|_{W^{1,q}_+}$ may occur.
For instance, we consider the Funk metric space $(\mathbb{B}^n,F)$ as in Example \ref{funkmetricsapce}
with the Busemann--Hausdorff measure $\m_{\mathrm{BH}}$.
Then, $\m_{\mathrm{BH}}[\mathbb{B}^n]<\infty$ (see Krist\'aly--Rudas \cite{KR}) and
\[
\|f\|_{W^{1,q}_{+}} =\bigl( {\|f\|^q_{L^q}+\|F^*({\dd}f)\|^q_{L^q}} \bigr)^{1/q} \quad
 \text{for all } f\in \Lip(\X)\cap \mathfrak{D}(\Chc^+_{q})
\]
(recall Corollary \ref{finserlnorm}).
It was shown in \cite[\S 3]{KR} that the function $f(x)=-\sqrt{1-|x|}$
satisfies $f \in \mathfrak{D}(\Chc^+_{q})$ but $-f \notin \mathfrak{D}(\Chc^+_{q})$,
thus $\mathfrak{D}(\Chc^+_{q})$ is not a linear space.
\end{example}

We also remark that,
though the construction in this subsection makes sense in general forward metric measure spaces,
it may turn out trivial if no additional assumptions are made.
For instance, in the same way as \cite[Remark~4.12]{AGS2} (with $q=2$),
we can construct $(X,d,\m)$ such that $\Chc_q(f)=0$ for all $f \in L^q(X,\m)$,
which implies $W^{1,q}_*(X,d,\m)=L^q(X,\m)$.

\subsection{Sobolev spaces over Finsler manifolds}

This subsection is devoted to the special case of Finsler manifolds.
Let $(\X,F,\m)$ be a forward complete Finsler manifold endowed with a smooth positive measure $\m$.
For $f \in C^\infty_0(\X)$, by a similar argument to the proof of Corollary~\ref{finserlnorm}\eqref{finslersob1},
we have
\begin{equation}\label{eq:|Df|}
|Df| =|Df|_{*,q} =|Df|_{w,\I_p} =\max\{F^*(\pm{\dd}f)\}
\end{equation}
under $\m[\X]<\infty$ or $\lambda_F(\X)<\infty$ and $\CD(K,\infty)$, and hence
\[
\|f\|_{W^{1,q}_*} =\biggl( \int_{\X} |f|^q\dm +\int_{\X} \max\{F^*(\pm {\dd}f)^q \} \dm \biggr)^{1/q}.
\]
Define the Sobolev space $W^{1,q}_0(\X,F,\m)$ in the standard way
as the completion of $C^\infty_0(\X)$ under $\|\cdot\|_{W^{1,q}_*}$.
Note that, equivalently, $\max\{F^*(\pm{\dd}f)^q\}$ can be replaced with $F^*({\dd}f)^q +F^*(-{\dd}f)^q$.
We also remark that, in certain contexts, $\max\{F^*(\pm{\dd}f)\}$ arises more naturally as a gradient norm than either $F^*({\dd}f)$ or $F^*(-{\dd}f)$.
For example, the Heisenberg--Pauli--Weyl, Caffarelli--Kohn--Nirenberg and Hardy inequalities can be extended to the Finsler setting  via  $\max\{F^*(\pm{\dd}f)\}$ but may fail for $F^*(\pm{\dd}f)$;
see Huang--Krist\'aly--Zhao \cite{HKZ}, Krist\'aly--Rudas \cite{KR}, Krist\'aly--Li--Zhao \cite{KLZ} and Zhao \cite{ZhaoHardy} for details.


\begin{theorem}\label{sobolevfinsler}
Let $(\X, d_F,\m)$ be a forward metric measure space induced from a forward complete Finsler manifold $(\X, F)$
with a smooth positive measure $\m$.
Suppose that one of the following holds:
\begin{itemize}
\item $\lambda_F(\X)<\infty$ and $\CD(K,\infty)$;
\item $\m[\X] <\infty$ and $\liminf_{r\to \infty} \lambda_F(B^+_o(r))^q \m[B^+_o(r)\setminus B^+_o(r-1)]=0$
for some $o \in \X$.
\end{itemize}
Then, we have $W^{1,q}_0(\X,F,\m)=W^{1,q}_*(\X,d_F,\m)$.
\end{theorem}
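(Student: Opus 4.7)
The easy inclusion $W^{1,q}_0(\X,F,\m)\subseteq W^{1,q}_*(\X,d_F,\m)$ follows at once from \eqref{eq:|Df|}: for any $\phi\in C^\infty_0(\X)$ we have $\phi\in \mathfrak{D}(\Chc_q)$ with $|D\phi|_{*,q}=\max\{F^*(\pm {\dd}\phi)\}$, so the two norms coincide on $C^\infty_0(\X)$ and their completions compare accordingly. For the reverse inclusion, it suffices to approximate every $f\in \mathfrak{D}(\Chc_q)\cap L^q(\X,\m)$ by elements of $C^\infty_0(\X)$ in the $W^{1,q}_*$-norm, which I would accomplish by three successive reductions: (a) truncation to bounded $f$; (b) multiplication by a Lipschitz cutoff to reduce to compact support; and (c) local mollification in coordinate charts.

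Step (a) is routine: for $f_N:=\min\{\max\{f,-N\},N\}$, Proposition \ref{chainrulerelax}\eqref{Chainrule-3} combined with Corollary \ref{keyconidnce} yields $|Df_N|_{*,q}=|Df|_{*,q}\,\mathbbm{1}_{\{|f|<N\}}$, whence $|D(f-f_N)|_{*,q}\le |Df|_{*,q}\,\mathbbm{1}_{\{|f|\ge N\}}$, and dominated convergence in $L^q$ delivers $\|f-f_N\|_{W^{1,q}_*}\to 0$. Step (c) is also standard: given bounded Lipschitz $g$ with compact support $K$, I would take finitely many precompact coordinate charts $(U_i,\phi_i)$ covering $K$ together with a smooth partition of unity $\{\psi_i\}$ subordinate to them, and regularize each $g\psi_i$ via Euclidean convolution $(g\psi_i)_\varepsilon$ within the chart. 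The classical Meyers--Serrin convergence ${\dd}(g\psi_i)_\varepsilon\to {\dd}(g\psi_i)$ in $L^q$, together with the continuity of $F^*$ on $T^*\X\setminus\{0\}$ and its local sub-linear control by any Euclidean cotangent norm, furnishes $F^*(\pm {\dd}(g\psi_i)_\varepsilon)\to F^*(\pm {\dd}(g\psi_i))$ in $L^q$ via dominated convergence, so summing over $i$ concludes.

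The main obstacle is step (b), which is where the two hypotheses of the theorem enter separately. Fix $o\in\X$, set $\rho(x):=d_F(o,x)$, and choose a non-increasing $1$-Lipschitz bump $\phi_r:\mathbb{R}\to[0,1]$ with $\phi_r\equiv 1$ on $(-\infty,r]$ and $\phi_r\equiv 0$ on $[r+1,\infty)$; put $\chi_r:=\phi_r\circ\rho$. Since $\rho$ is forward $1$-Lipschitz and $F^*(-\xi)\le \lambda_F(B^+_o(r+1))\,F^*(\xi)$ on $B^+_o(r+1)$, I expect $|D\chi_r|=\max\{F^*(\pm {\dd}\chi_r)\}\le \lambda_F(B^+_o(r+1))$ on the annulus $B^+_o(r+1)\setminus B^+_o(r)$, vanishing elsewhere. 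Corollary \ref{gra*estimate} then bounds $|D(f(1-\chi_r))|_{*,q}\le (1-\chi_r)|Df|_{*,q}+|f|\,|D\chi_r|_{*,q}$ $\m$-a.e., so it remains to control
\[
\int_{\X} |f|^q |D\chi_r|_{*,q}^q \dm
\le \|f\|_\infty^q\,\lambda_F\bigl(B^+_o(r+1)\bigr)^q\,\m\bigl[B^+_o(r+1)\setminus B^+_o(r)\bigr].
\]
Under $\lambda_F(\X)<\infty$ and $\CD(K,\infty)$, $\lambda_F(B^+_o(r+1))$ is globally bounded and the annular $L^q$-mass of $f$ tends to $0$; under $\m[\X]<\infty$, the hypothesis $\liminf_{r\to\infty}\lambda_F(B^+_o(r))^q\m[B^+_o(r)\setminus B^+_o(r-1)]=0$ supplies a subsequence $r_k\to\infty$ along which the bound vanishes. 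The companion term $(1-\chi_r)|Df|_{*,q}$ tends to $0$ in $L^q$ by dominated convergence, and $\|f-f\chi_r\|_{L^q}\to 0$ likewise. The delicate interplay between the asymmetry of $F$ and the decay of either the reversibility or the annular measure is the technical core of the argument: the two alternative hypotheses are tailored precisely so that one balance or the other is in force.
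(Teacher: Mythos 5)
Your cutoff step (b) and the way the two alternative hypotheses enter it is exactly the heart of the paper's argument, and you have identified correctly that the first hypothesis uses that the annular $L^q$-mass of $f$ vanishes (since $f\in L^q$) while the second uses the annular $\m$-mass balanced against $\lambda_F$. However, your three-step reduction (a) truncate, (b) cutoff, (c) mollify does not compose. After (a) and (b) you have a bounded, compactly supported element of $\mathfrak{D}(\Chc_q)$, but nothing guarantees it is Lipschitz, whereas step (c) is explicitly set up only for ``bounded Lipschitz $g$ with compact support.'' The paper instead \emph{begins} from Lemma~\ref{closurestrcrexlgra}\eqref{closuer3} (applied to relaxed $q$-gradients), which directly furnishes bounded Lipschitz approximants of any $f\in\mathfrak{D}(\Chc_q)$ in the $W^{1,q}_*$-norm, so that truncation becomes unnecessary, the cutoff produces compactly supported Lipschitz functions, and the passage to $C^\infty_0$ is then delegated to \cite[Proposition~3.3]{KLZ}. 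Being at the Lipschitz level also lets the paper run the Leibniz step pointwise a.e.\ via Rademacher (as $F^*(\pm\dd(f\chi_i -f)) \le F^*(\pm f\,\dd\chi_i) + (1-\chi_i)F^*(\pm\dd f)$) instead of invoking Corollary~\ref{gra*estimate}.

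Two further concrete points. First, your invocation of Corollary~\ref{gra*estimate} for $f\cdot(1-\chi_r)$ is out of scope when $\m[\X]=\infty$, since $1-\chi_r\notin L^q(\X,\m)$; you would either have to apply it to $f\chi_r$ and then combine it with the locality result (Proposition~\ref{chainrulerelax}\eqref{Chainrule-2}), or, better, work at the Lipschitz level as above. Second, citing Corollary~\ref{keyconidnce} in step (a) smuggles in the hypothesis ``$q\ge 2$ or $\m[\X]<\infty$'' of Theorem~\ref{twogradientcoindced}, which the first alternative of the present theorem ($\lambda_F(\X)<\infty$ and $\CD(K,\infty)$, with $\m[\X]$ possibly infinite and $q<2$ allowed) does not grant. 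The identity $|Df_N|_{*,q}=\mathbbm{1}_{\{|f|<N\}}|Df|_{*,q}$ should rather be derived directly from a chain rule for $|D\cdot|_{*,q}$ (mimicking the proof of Proposition~\ref{chainrulerelax} for the symmetric slope), avoiding the coincidence result entirely.
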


\begin{proof}
It follows from Lemma~\ref{closurestrcrexlgra}\eqref{closuer3} (for relaxed $q$-gradients) that
\[
W^{1,q}_*(\X,d_F,\m)=\overline{\SL_b(\X)\cap \mathfrak{D}(\Chc_{q})}^{\|\cdot\|_{W_*^{1,q}}},
\]
where $\SL_b(\X)$ denotes the collection of bounded Lipschitz functions.
Moreover,
\[
\overline{C^\infty_0(\X)}^{\|\cdot\|_{W^{1,q}_*}} =\overline{\SL_0(\X)}^{\|\cdot\|_{W^{1,q}_*}}
\]
by \cite[Proposition~3.3]{KLZ}.
Thus, it suffices to show that
\begin{equation}\label{tworelatonLipsct}
\overline{\SL_0(\X)}^{\|\cdot\|_{W^{1,q}_*}}
 =\overline{\SL_b(\X)\cap \mathfrak{D}(\Chc_{q})}^{\|\cdot\|_{W^{1,q}_*}}.
\end{equation}
Since ``$\subset$'' clearly holds, we shall show that,
for every $f\in \SL_b(\X)\cap \mathfrak{D}(\Chc_{q}) \subset L^q(\X,\m)$,
there exists a sequence $(f_i)_{i \ge 1}$ in $\SL_0(\X)$ such that $\|f_i -f\|_{W^{1,q}_*} \to 0$.
Fix a point $o\in \X$ and define $\chi_i(x):= \phi(d_F(K_i,x))$, where
\[
K_i :=\overline{B^+_o(i)}, \qquad
\phi(t) :=\begin{cases}
1 & \text{for } t\in [0,1), \\
3-2t & \text{for } t\in [1,3/2), \\
0 &\text{for } t\in [3/2,\infty).
\end{cases}
\]
Note that $\chi_i \in \SL_0(\X)$ with
\[
\supp(\chi_i) \subset B^+_o(i+1), \qquad
|\chi_i(x)-\chi_i(y)| \leq 2\lambda_F\bigl( B^+_o(i+1) \bigr) d_F(x,y)\,\ \text{for all}\ x,y\in \X.
\]
Hence,
\[
\max\{ F^*(\pm {\dd}\chi_i)  \} \leq
 \lambda_F \bigl( B^+_o(i+1) \bigr) \cdot \mathbbm{1}_{B^+_o(i+1) \setminus B^+_o(i)}.
\]
Then, by hypothesis and passing to a subsequence if necessary, we may assume that
\begin{equation}\label{limitlocalfin}
\lim_{i \to \infty} \|F^*(\pm f{\dd}\chi_i)\|_{L^q}=0.
\end{equation}
Put $f_i:=\chi_i\cdot f$, which converges to $f$ in $L^q(\X,\m)$ by the dominated convergence theorem.
Moreover, we infer from \eqref{limitlocalfin} that
\[
\bigl\| F^* \bigl( \pm{\dd}(f_i-f) \bigr) \bigl\|_{L^q}
 \leq \| F^*(\pm f{\dd}\chi_i) \|_{L^q} +\bigl\| F^*\bigl( \pm (1-\chi_i) {\dd}f \bigr) \bigr\|_{L^q} \to 0
\]
as $i \to \infty$, thereby $\|f_i -f\|_{W^{1,q}_*} \to 0$.
Hence, \eqref{tworelatonLipsct} holds.
\end{proof}


As we have already seen in Example~\ref{Examphalfsobolevspace},
the finiteness of the reversibility plays an important role in Sobolev spaces.
Recall \S \ref{Finslergeometry} for the $S$-curvature $\textbf{S}$ and
the weighted Ricci curvature $\mathbf{Ric}_\infty$.

\begin{example}\label{ex-1}
For $\alpha\in [0,1]$, we consider an interpolation between the hyperbolic metric and the Funk metric
given by $(\mathbb{B}^n,F_\alpha)$,
\[
F_\alpha(x,y) :=\frac{\sqrt{\|y\|^2-(\|x\|^2\|y\|^2-\langle x,y\rangle^2)}}{1-\|x\|^2}
 +\alpha\frac{\langle x,y\rangle}{1-\|x\|^2},\quad
  x \in \mathbb{B}^n,\ y\in T_x\mathbb{B}^n=\mathbb{R}^n.
\]
According to    \cite{KR}, $\lambda_{F_\alpha}(\mathbb{B}^n)<+\infty$ if and only if $\alpha\neq 1$.
Moreover, in the 2-dimensional case $(\mathbb{B}^2,d_{F_\alpha},\m_{\mathrm{BH}})$
with the Busemann--Hausdorff measure $\m_{\mathrm{BH}}$,
Kaj\'ant\'o--Krist\'aly \cite{KK} proved that $(\mathbb{B}^2,F_\alpha)$ satisfies
\[
-\frac{1}{(1-\alpha)^2} \leq {\bf Ric} \leq -\frac{1}{(1+\alpha)^2}, \qquad
 0\leq \textbf{S}\leq \frac{\alpha}{2(1-\alpha^2)},
\]
provided $\alpha \in [0,1)$.
In addition, $\textbf{Ric}_\infty$ is bounded from below by $-1/(1-\alpha)^2$,
therefore $(\mathbb{B}^2,d_{F_\alpha},\m_{\mathrm{BH}})$ satisfies $\CD(-1/(1-\alpha)^2,\infty)$.
\end{example}

We next see that the latter condition
\begin{equation}\label{strongsoboreverconditon}
\m[\X]<\infty, \qquad
 \liminf_{r\to \infty} \lambda_F \bigl( B^+_o(r) \bigr)^q \m[B^+_o(r)\setminus B^+_o(r-1)] =0
\end{equation}
in Theorem~\ref{sobolevfinsler} does not imply $\lambda_F(\X)<\infty$.

\begin{example}\label{ex-2}
Let $\Omega\subset \mathbb{R}^n$ be a bounded strongly convex domain with $\mathbf{0}\in \Omega$
and $F$ be the Funk metric on $\Omega$ given by
\[
x+\frac{y}{F(x,y)}\in \partial\Omega \quad
  \text{for } x \in \Omega,\ y\in T_x\Omega=\mathbb{R}^n.
\]
Then, $(\Omega,d_F)$ is a forward complete forward metric space with $\lambda_F(\Omega)=\infty$.
We set $\m:={\ee}^{-r^2}\m_{\mathrm{BH}}$,
where $\m_{\mathrm{BH}}$ is the Busemann--Hausdorff measure and $r(x):=d_F(\mathbf{0},x)$.
By  \cite[Section 6]{KLZ}, we have
\[
\lambda_F \bigl( B^+_{\mathbf{0}}(r) \bigr) \leq C{\ee}^r-1, \qquad
 \tau\bigl( \gamma'_y(t) \bigr) =\tau_{\mathrm{BH}}(y)+\frac{n+1}{2}t+t^2,
\]
where $C\geq 2$ is a constant,
$\gamma_y$ is the unit speed geodesic with initial velocity $y\in T_\mathbf{0}\mathbb{B}^n$,
and $\tau$ is the distortion as in \S \ref{Finslergeometry}.
It follows from the volume estimate in Zhao--Shen \cite[Theorem 3.4, Remark 3.8]{ZS} that
\begin{align*}
\m[B^+_\mathbf{0}(r)]
&= a_{n-1} \int^r_0 \exp\biggl( -\frac{n+1}{2}t -t^2 \biggr) \mathfrak{s}_{-1/4}^{n-1}(t) \,{\dd}t
 \leq a_{n-1} \int^r_0 {\ee}^{-t-t^2} \,{\dd}t \leq a_{n-1},
\end{align*}
where $a_{n-1}$ denotes the Euclidean area of $\mathbb{S}^{n-1}$
and $\mathfrak{s}_{-1/4}(t):=2\sinh(t/2)$.
We similarly deduce that
\[
\m [B^+_\mathbf{0}(r) \setminus B^+_\mathbf{0}(r-1)]
 \leq a_{n-1} \int^r_{r-1} {\ee}^{-t^2} \,{\dd}t \leq a_{n-1} {\ee}^{-(r-1)^2},
\]
which implies that $(\Omega,d_F,\m)$ also satisfies the latter condition in \eqref{strongsoboreverconditon}.
\end{example}


We finally summarize applications of our results to $q$-heat flow on Finsler manifolds.
Though we do not pursue such a direction,
we expect that some of the assumptions can be removed (or weakened)
by directly working on this setting.

\begin{corollary}
Let $(\X, d_F,\m)$ be a forward metric measure space induced from a forward complete Finsler manifold
with a smooth positive measure $\m$.
Suppose that one of the following conditions holds:
\begin{enumerate}[{\rm (a)}]
\item\label{case:a} $q \ge 2$, $\lambda_F(\X)<\infty$ and $\CD(K,\infty)$;
\item $\m[\X]<\infty$.
\end{enumerate}
Then, for any $f \in L^2(\X,\m)$, there exists a unique solution
$u_t \in \mathfrak{D}(\Chc^+_q) \cap L^2(\X,\m)$ to the $q$-heat equation
\begin{equation}\label{eq:q-heat}
\partial_t u_t=\Delta_q u_t \,\ \text{for $\mathscr{L}^1$-a.e.\ $t\in (0,\infty)$}, \qquad
u_t \to f \text{ in } L^2(\X,\m).
\end{equation}
Moreover, in the case of \eqref{case:a} with $f \in L^2(\X,\m) \cap L^q(\X,\m)$,
we have $u_t\in W^{1,q}_0(\X,F,\m)$.
\end{corollary}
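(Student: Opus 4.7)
The plan is to treat both parts of the corollary as specializations of the abstract machinery of Sections~\ref{rexaghher} and~\ref{Sobolev-section}, once Assumption~\ref{newassumpt32liambdafinite22} has been verified.

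First I would check the structural hypotheses. In case~(a), $\lambda_F(\X)<\infty$ gives $\lambda_{d_F}(\X)<\infty$ via Proposition~\ref{finslermetricspace}; the bound $\CD(K,\infty)$ then supplies, via Example~\ref{Finslercdkn}, a Borel forward Lipschitz function $V:\X\to[0,\infty)$ fulfilling Assumption~\ref{strongerstassumptiontheta}, so the first bullet of Assumption~\ref{newassumpt32liambdafinite22} holds. In case~(b), $\m[\X]<\infty$ is explicit, $(\X,d_F)$ is a geodesic space by Finsler construction, and every compact $K\subset\X$ has finite $d_F$-diameter by continuity of $F$ on the compact unit tangent bundle over $K$, giving the second bullet of Assumption~\ref{newassumpt32liambdafinite22}.

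Next I would invoke Theorem~\ref{chgradinetflwo}: both cases satisfy ``$q\ge2$ or $\m[\X]<\infty$'', so every $f\in L^2(\X,\m)$ generates a unique gradient curve $(u_t)_{t>0}\in\AC^2_{\loc}((0,\infty);L^2(\X,\m))$ for $\Chc^+_q$, locally Lipschitz in $L^2$, with $u_t\in\mathfrak D(\Chc^+_q)$ for every $t>0$ and $u_t\to f$ in $L^2(\X,\m)$. Defining $\Delta_q u:=-\partial^\circ\Chc^+_q(u)$ exactly as in Definition~\ref{lapldef} but now with respect to $\Chc^+_q$, and transplanting the proof of Lemma~\ref{remarkfordefineioflapalce}\eqref{lapace-3}, the equivalence in Theorem~\ref{chgradinetflwo}\eqref{chcproflow2} yields $\partial_t u_t=\Delta_q u_t$ for $\mathscr L^1$-a.e.\ $t$, which is \eqref{eq:q-heat}. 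Uniqueness comes from the $L^2$-contraction part of Theorem~\ref{compprinconvr12} (the analogue of Theorem~\ref{compprinconvr1}\eqref{specpro-2}): two solutions emanating from the same initial datum satisfy $\|u_t-\tilde u_t\|_{L^2}\le\|f-f\|_{L^2}=0$.

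For the Sobolev assertion under~(a), I would first use the $L^q$-contraction in Theorem~\ref{compprinconvr12} (with $\theta=q$) to promote $f\in L^q(\X,\m)$ to $u_t\in L^q(\X,\m)$. Since $u_t\in\mathfrak D(\Chc^+_q)\cap L^q$, Lemma~\ref{closurestrcrexlgra}\eqref{closuer3} produces bounded Borel forward Lipschitz approximants $f_i\to u_t$ in $L^q$ with $|D^+f_i|\to|D^+u_t|_{*,q}$ in $L^q$. Under $\lambda_F(\X)<\infty$, forward Lipschitz functions are Lipschitz by Lemma~\ref{forwardlipsislpsc}\eqref{forwardislip1}, and the dual reversibility identity $|D^\pm f_i|=F^*(\pm {\dd}f_i)$ from Example~\ref{forwardbackwardslop} combined with $F^*(x,-\xi)\le\lambda_F(\X)F^*(x,\xi)$ yields the pointwise bound $|Df_i|\le\lambda_F(\X)\,|D^+f_i|$. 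Consequently $(|Df_i|)_i$ is uniformly bounded in $L^q$; a weakly convergent subsequence together with the relaxed-$q$-gradient analogue of Lemma~\ref{closurestrcrexlgra}\eqref{closuer2} shows that $u_t$ admits a relaxed $q$-gradient in $L^q$, i.e.\ $u_t\in\mathfrak D(\Chc_q)\subset W^{1,q}_*(\X,d_F,\m)$. Finally Theorem~\ref{sobolevfinsler}, whose hypothesis ``$\lambda_F(\X)<\infty$ and $\CD(K,\infty)$'' is exactly case~(a), identifies $W^{1,q}_*(\X,d_F,\m)=W^{1,q}_0(\X,F,\m)$, giving $u_t\in W^{1,q}_0(\X,F,\m)$.

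The main obstacle I anticipate is the transplantation of the $q$-Laplacian calculus from Subsection~\ref{oldLaplcian} (built around $\Chc^+_{w,q}$) to $\Chc^+_q$: to assert that the equation $\partial_t u_t=\Delta_q u_t$ is genuinely inherited, one must re-derive the analogues of Proposition~\ref{basciproerfoLapla} with $|D^+\cdot|_{*,q}$ in place of $|D^+\cdot|_{w,\I_p}$, which is routine but occupies some care since the chain rule of Proposition~\ref{chainrulerelax} (rather than that of Proposition~\ref{properweakuppergr}) has to be invoked throughout. A secondary subtlety is that in case~(b) the Sobolev membership is not asserted, which is consistent with the fact that, without $\lambda_F(\X)<\infty$, one has no automatic upgrade from $|D^+u_t|_{*,q}\in L^q$ to $|Du_t|_{*,q}\in L^q$, and Theorem~\ref{sobolevfinsler} in turn requires finite reversibility for the identification of $W^{1,q}_*$ with $W^{1,q}_0$.
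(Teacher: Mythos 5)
Your proof is essentially correct but takes a genuinely different route from the paper's. The paper's own argument first invokes Corollary~\ref{heatflowequationproper-00} — i.e.\ it constructs the solution as the gradient flow $\Ht(f)$ of the \emph{weak} Cheeger energy $\Chc^+_{w,q}$, for which the $q$-Laplacian calculus of Subsection~\ref{oldLaplcian} is already fully developed — and then shows $[u_t]_N\in L^q(\X,\m)$ (using $|[u_t]_N|^q\le N^{q-2}|u_t|^2$ when $q\ge2$, trivially when $\m[\X]<\infty$) so that Theorem~\ref{twogradientcoindced} identifies this flow with the gradient curve for $\Chc^+_q$ of Theorem~\ref{chgradinetflwo}. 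You instead build the flow directly from $\Chc^+_q$, which is self-contained but forces the very re-derivation of Proposition~\ref{basciproerfoLapla} and Lemma~\ref{remarkfordefineioflapalce}\eqref{lapace-3} for the relaxed gradient that you flag as an ``obstacle''; the paper's detour through $\Chc^+_{w,q}$ plus Theorem~\ref{twogradientcoindced} is precisely the device that lets one skip that work. One small gap in your account: since $\Delta_q$ is officially defined in Definition~\ref{lapldef} as $-\partial^\circ\Chc^+_{w,q}$, your $\Chc^+_q$-based operator must be reconciled with it, and the only route to that in the paper is again Theorem~\ref{twogradientcoindced} (and the corollary following it, which gives $\partial^-\Chc^+_q=\partial^-\Chc^+_{w,q}$); you never invoke the coincidence theorem, so your proof as written establishes existence and uniqueness of a gradient curve for $\Chc^+_q$ but does not, on its own, show that this is the same object as the solution of the $q$-heat equation in the sense of Corollary~\ref{heatflowequationproper-00}. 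For the Sobolev assertion your argument is correct and a mild elaboration of the paper's: where you pass through Lemma~\ref{closurestrcrexlgra}\eqref{closuer3} and the pointwise bound $|Df_i|\le\lambda_F(\X)|D^+f_i|$ on Lipschitz approximants, the paper simply records the resulting energy estimate $\Chc^-_q(u_t)\le\lambda_F(\X)^q\,\Chc^+_q(u_t)$ and then appeals to Theorem~\ref{sobolevfinsler}; both are valid.
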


\begin{proof}
Given $f \in L^2(\X,\m)$, it follows from Corollary~\ref{heatflowequationproper-00} that
there exists a unique solution $u_t=\Ht(f)$ to the $q$-heat equation \eqref{eq:q-heat}.
Note that $|[u_t]_N|^q \in L^q(\X,\m)$ for all $N>0$.
This is obvious when $\m[\X]<\infty$, and follows from $|[u_t]_N|^q \le |u_t|^2 N^{q-2}$ when $q \ge 2$
as we saw in the proof of Lemma~\ref{constlowerseimcheegenerge}.
Hence, thanks to Theorem~\ref{twogradientcoindced},
$(u_t)_{t>0}$ coincides with the gradient curve for $\Chc^+_q$ given by Theorem~\ref{chgradinetflwo}.

If $f \in L^2(\X,\m) \cap L^q(\X,\m)$, then the contraction property \eqref{lpl2consgra}
(with $\theta=q$ and $h_0 \equiv 0$) implies that $u_t \in L^q(\X,\m)$ for all $t>0$.
Hence, the latter assertion follows from Theorem~\ref{sobolevfinsler}
and $\Chc^-_q(u_t)\leq \lambda_F(\X)^q\Chc^+_q(u_t)$.
\end{proof}


\appendix
\section{Hopf--Lax semigroup in forward extended Polish metric spaces}\label{hopflaxsemig}

This appendix is devoted to studying the \emph{Hopf--Lax semigroup} in the asymmetric setting.
We refer to Lott--Villani \cite{JV} and Ambrosio \textit{et al.} \cite[\S 3]{AGS2}, \cite[\S 3]{AGS4} for the symmetric case.
Throughout,
let $(X,\tau,d)$ be a forward extended Polish space and $f:X\rightarrow (-\infty,\infty]$ be a proper function.
We define, for $x,y \in X$, $p \in (1,\infty)$ and $t>0$,
\[
\Phi(t,x,y):=f(x)+\frac{d(x,y)^p}{pt^{p-1}},\qquad
Q_t f(y):=\inf_{x \in X}\Phi(t,x,y).
\]

\begin{lemma}\label{uppersemiconQ}
The map $(y,t)\mapsto Q_t f(y)$, $X\times (0,\infty)\rightarrow [-\infty,\infty]$, is $d$-upper semi-continuous.
\end{lemma}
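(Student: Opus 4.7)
The plan is to verify the sequential characterization of upper semi-continuity directly. Since $(X, \tau, d)$ is a forward extended Polish space, the $d$-topology on $X$ is $\mathcal{T}_+ = \widehat{\mathcal{T}}$ (Proposition~\ref{topoconditiondeq}), so a sequence $(y_n, t_n) \in X \times (0,\infty)$ converges to $(y,t)$ in the relevant topology precisely when $d(y, y_n) \to 0$ and $t_n \to t > 0$. Given such a sequence, the goal is to show
\[
\limsup_{n \to \infty} Q_{t_n} f(y_n) \leq Q_t f(y).
\]

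The key observation is that for any fixed $x \in X$, the triangle inequality gives $d(x, y_n) \leq d(x, y) + d(y, y_n)$, so that $\limsup_n d(x, y_n) \leq d(x, y)$. Combined with the continuity of $t \mapsto 1/(p t^{p-1})$ at $t>0$, this yields
\[
\limsup_{n \to \infty} \Phi(t_n, x, y_n)
 = f(x) + \frac{1}{p t^{p-1}} \limsup_{n \to \infty} d(x, y_n)^p
 \leq f(x) + \frac{d(x,y)^p}{p t^{p-1}}
 = \Phi(t, x, y),
\]
where the inequality is trivial if $d(x,y) = \infty$ or $f(x) = \infty$. From $Q_{t_n} f(y_n) \leq \Phi(t_n, x, y_n)$, I obtain $\limsup_n Q_{t_n} f(y_n) \leq \Phi(t, x, y)$ for every $x \in X$; taking the infimum over $x$ on the right-hand side delivers the desired bound. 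If $Q_t f(y) = -\infty$, the same argument allows one to choose $x$ making $\Phi(t,x,y)$ arbitrarily negative, so the $\limsup$ is also $-\infty$.

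There is no serious obstacle here: the asymmetry of $d$ is harmless because the step $d(x, y_n) \leq d(x, y) + d(y, y_n)$ uses the triangle inequality in precisely the direction for which the forward convergence $d(y, y_n) \to 0$ suffices. The argument never requires lower semi-continuity of $d(x, \cdot)$ or a reverse triangle inequality — these are the places where asymmetry would otherwise cause trouble.
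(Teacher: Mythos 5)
Your proof is correct and follows essentially the same strategy as the paper: fix a test point $x$, bound $Q_{t_n}f(y_n) \leq \Phi(t_n,x,y_n)$, control the limsup of the right-hand side by $\Phi(t,x,y)$, and take the infimum over $x$. The only cosmetic difference is that the paper asserts $d(z,y_i) \to d(z,y)$ (which in a forward extended metric space is automatic from $d(y,y_i)\to 0$, since $\mathcal{T}_+ = \widehat{\mathcal{T}}$), whereas you work with a one-sided $\limsup$ bound that needs only the forward triangle inequality; the conclusion and the mechanism are the same.
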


\begin{proof}
For any sequence $(y_i,t_i)\to (y,t)$ such that $y_i \to y$ in $d$ and $z\in X$, we have
\[
Q_{t_i} f(y_i) =\inf_{x \in X} \biggl[ f(x)+\frac{d(x,y_i)^p}{pt_i^{p-1}} \biggr]
 \leq f(z)+\frac{d(z,y_i)^p}{pt^{p-1}_i} \to f(z)+\frac{d(z,y)^p}{pt^{p-1}}
\]
as $i \to \infty$, which implies
\[
\limsup_{i\to \infty} Q_{t_i} f(y_i)\leq \inf_{z\in X} \biggl[ f(z)+\frac{d(z,y)^p}{pt^{p-1}} \biggr] =Q_t f(y).
\]
\end{proof}

We shall consider the behavior of $Q_tf$ in the set
\[
\mathscr{D}(f):=\{y \in X\,|\, d(x,y)<\infty \text{ for some $x$ with $f(x)<\infty$} \}\supset \mathfrak{D}(f).
\]
If $\mathscr{D}(f)\neq \emptyset$ and $d$ is finite, then $\mathscr{D}(f)=X$.
For $y \in \mathscr{D}(f)$, we have $Q_tf(y)\in [-\infty,\infty)$ for all $t>0$ and set
\begin{equation}\label{txdefi}
t_*(y):=\sup\{t>0\,|\, Q_t f(y)>-\infty \},
\end{equation}
while $t_*(y):=0$ if $Q_tf(y)=-\infty$ for all $t>0$.
The behavior of $t_*$ on equivalent classes $X_{[y]}=\overleftarrow{X}_{[y]}$
(recall \eqref{equasetfintex}) is as follows.

\begin{lemma}
If $Q_t f(y)>-\infty$ for some $t>0$ and $y\in X$,
then we have $Q_s f(z)>-\infty$ for all $s\in (0,t)$ and $z\in X_{[y]} =\overleftarrow{X}_{[y]}$.
\end{lemma}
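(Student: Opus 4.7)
The plan is to get a uniform-in-$x$ lower bound for $\Phi(s,x,z)=f(x)+d(x,z)^p/(ps^{p-1})$ by comparing it with $\Phi(t,x,y)$ and absorbing the leftover term using the fact that $s<t$. The key input is the hypothesis $Q_t f(y)>-\infty$, which gives
\[
f(x)\;\ge\;Q_t f(y)-\frac{d(x,y)^p}{p\,t^{p-1}}\qquad\text{for all $x\in X$}.
\]

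Next, I would apply the triangle inequality $d(x,y)\le d(x,z)+d(z,y)$, so that the task reduces to controlling $(d(x,z)+d(z,y))^p/(p t^{p-1})$ from above. For this, by the convexity of $r\mapsto r^p$ (Jensen's inequality with weights $\alpha,\,1-\alpha$), one has the elementary bound
\[
(a+b)^p\;\le\;\alpha^{1-p} a^p+(1-\alpha)^{1-p} b^p\qquad\text{for all $a,b\ge 0$ and $\alpha\in(0,1)$}.
\]
I would choose $\alpha\in(s/t,1)$, which is possible since $s<t$, so that $\alpha^{p-1}t^{p-1}>s^{p-1}$ and hence
\[
\frac{1}{s^{p-1}}-\frac{1}{\alpha^{p-1}t^{p-1}}\;>\;0.
\]

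Combining these two steps gives
\begin{align*}
f(x)+\frac{d(x,z)^p}{p s^{p-1}}
&\ge Q_t f(y)-\frac{(d(x,z)+d(z,y))^p}{p t^{p-1}}+\frac{d(x,z)^p}{p s^{p-1}}\\
&\ge Q_t f(y)-\frac{d(z,y)^p}{p(1-\alpha)^{p-1}t^{p-1}}
+\frac{d(x,z)^p}{p}\!\left(\frac{1}{s^{p-1}}-\frac{1}{\alpha^{p-1}t^{p-1}}\right).
\end{align*}
The last term is nonnegative by the choice of $\alpha$. Since $z\in\overleftarrow{X}_{[y]}$ means $d(z,y)<\infty$, and $Q_t f(y)>-\infty$ by hypothesis, the right-hand side is a finite constant independent of $x$. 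Taking the infimum over $x\in X$ then yields
\[
Q_s f(z)\;\ge\;Q_t f(y)-\frac{d(z,y)^p}{p(1-\alpha)^{p-1}t^{p-1}}\;>\;-\infty,
\]
as desired. The only delicate point is choosing the Jensen weight $\alpha$ correctly so that the coefficient of $d(x,z)^p$ is nonnegative; the rest is a routine manipulation using the forward triangle inequality (and the identification $X_{[y]}=\overleftarrow{X}_{[y]}$, which is automatic in a forward extended metric space and lets us interpret the finiteness hypothesis $z\in X_{[y]}$ as $d(z,y)<\infty$).
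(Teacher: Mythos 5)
Your proof is correct and takes a genuinely different route from the paper's, and in fact it is the more robust of the two. The paper's proof first tries to establish the stronger intermediate claim $Q_t f(z)>-\infty$ \emph{at the same time} $t$ (taking a near-minimizer $x_0$ with $\Phi(t,x_0,z)\le Q_tf(z)+1$, applying the triangle inequality and the bound $(a+b)^p\le a^p+p(a+b)^{p-1}b$, and then reading off finiteness), and only afterwards invokes the trivial monotonicity $Q_s f\ge Q_t f$ for $s<t$. Your argument instead proves the statement directly at each $s<t$: bound $f(x)\ge Q_tf(y)-d(x,y)^p/(pt^{p-1})$, apply the triangle inequality, and use the weighted Jensen inequality $(a+b)^p\le \alpha^{1-p}a^p+(1-\alpha)^{1-p}b^p$ with $\alpha\in(s/t,1)$ so that the $d(x,z)^p$ term carries a nonnegative coefficient $\frac{1}{s^{p-1}}-\frac{1}{\alpha^{p-1}t^{p-1}}\ge 0$. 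This yields a uniform finite lower bound for $\Phi(s,\cdot,z)$.

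What your route buys is significant: the intermediate claim the paper relies on is actually false in general. Take $X=\mathbb{R}$ with the usual metric, $p=2$, $t=1$, $f(x)=-x^2/2$, $y=0$, $z=1$. Then $Q_1f(0)=0>-\infty$ and $z\in X_{[0]}$, but $Q_1f(1)=\inf_x[-x^2/2+(x-1)^2/2]=\inf_x[\tfrac12-x]=-\infty$, whereas $Q_sf(1)>-\infty$ for every $s<1$, consistent with the lemma. The paper's proof requires picking a near-minimizer $x_0$ for $\Phi(t,\cdot,z)$, which already presupposes $Q_tf(z)>-\infty$; and trying to run the same estimate along a sequence $\Phi(t,x_i,z)\to-\infty$ fails, since the correction term $\frac{1}{t^{p-1}}(d(x_i,z)+d(z,y))^{p-1}d(z,y)$ can cancel the divergence when $d(x_i,z)\to\infty$ (as it does in the counterexample). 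Your argument sidesteps this entirely by exploiting the slack $s<t$ from the start, which is exactly where the gain in the exponent (from $t^{p-1}$ to $s^{p-1}$) lives. So: correct proof, different and strictly better method.
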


\begin{proof}
Given $z \in X_{[y]}$, there exists $x_0\in X$ such that $Q_t f(z) \geq \Phi(t,x_0,z)-1$.
We assume $d(x_0,z)<\infty$; otherwise $Q_t f(z)=\infty$.
Then the triangle inequality yields
\begin{align*}
Q_t f(y)
&\leq f(x_0) +\frac{1}{pt^{p-1}} \bigl( d(x_0,z)+d(z,y) \bigr)^p \\
&\leq f(x_0) +\frac{1}{pt^{p-1}} \bigl\{ d(x_0,z)^p +p\bigl( d(x_0,z)+d(z,y) \bigr)^{p-1} d(z,y) \bigr\} \\
&\leq Q_t f(z)+1 +\frac{1}{t^{p-1}} \bigl( d(x_0,z)+d(z,y) \bigr)^{p-1} d(z,y).
\end{align*}
Since $d(z,y)<\infty$, we obtain $Q_t f(z)>-\infty$.
Moreover, for any $s\in (0,t)$, we have $Q_sf(z) \geq Q_tf(z)>-\infty$.
\end{proof}

We also introduce the following functions: for $y \in \mathscr{D}(f)$ and $t \in (0,t_*(y))$,
\begin{equation}\label{DDcont}
\mathfrak{d}^+(y,t) :=\sup_{(x_i)} \limsup_{i \to \infty} d(x_i,y), \qquad
 \mathfrak{d}^-(y,t) :=\inf_{(x_i)} \liminf_{i \to \infty} d(x_i,y),
\end{equation}
where in both cases $(x_i)$ runs over all minimizing sequences of $\Phi(t,\cdot,y)$, i.e.,
$\lim_{i \to \infty} \Phi(t,x_i,y)=Q_tf(y)$.

\begin{lemma}\label{properD}
Given $y \in \mathscr{D}(f)$, we have the following.
\begin{enumerate}[{\rm (i)}]
\item\label{Di} $\mathfrak{d}^+(y,t)$ is finite for every $t \in (0,t_*(y))$.
\item\label{Dii} If $\lim_{i\to \infty} d(y,y_i)=0$ and $\lim_{i\to \infty}t_i=t\in (0,t_*(y))$,
then $\lim_{i\to \infty}Q_{t_i}f(y_i)=Q_t f(y)$.
\item\label{Diii} $\ds \sup\{\mathfrak{d}^+(z,t) \,|\, d(y,z)\leq R,\, 0<t<t_*(y)-\varepsilon \}<\infty$
for any $R>0$ and $\varepsilon>0$.
\end{enumerate}
\end{lemma}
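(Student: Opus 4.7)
I would address the three parts in the order (i), (iii), (ii), because the liminf half of (ii) will use the uniform minimizer control provided by (iii). The underlying tool in all three is the behaviour of $\Phi(t,x,y)$ in the parameter $t$, combined with the Young-type inequality $(a+b)^p \le \alpha^{-(p-1)} a^p + (1-\alpha)^{-(p-1)} b^p$ for $\alpha \in (0,1)$, which substitutes for the parallelogram identity one has in the symmetric $p=2$ case.

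Part (i) should follow quickly: fix $s \in (t, t_*(y))$ and, for a minimizing sequence $(x_i)$ of $\Phi(t,\cdot,y)$, exploit the identity
\[
\Phi(t,x_i,y) - \Phi(s,x_i,y) = \frac{d(x_i,y)^p}{p} \Bigl( \frac{1}{t^{p-1}} - \frac{1}{s^{p-1}} \Bigr),
\]
whose coefficient on $d(x_i,y)^p$ is strictly positive while the left-hand side stays bounded above because $\Phi(t,x_i,y)\to Q_t f(y) < \infty$ and $\Phi(s,x_i,y) \ge Q_s f(y) > -\infty$. This gives a uniform bound on $d(x_i,y)$, and taking the supremum over minimizing sequences yields $\mathfrak{d}^+(y,t) < \infty$.

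For (iii), I would first fix $R,\varepsilon > 0$ with $t_*(y) > \varepsilon$ (otherwise the set is empty) and introduce $M := \Theta_y(R) R$, which uniformly bounds $d(z,y)$ for $z \in \overline{B^+_y(R)}^d$ via Lemma~\ref{pfms-d} and the forward extended metric structure. Applying the Young inequality to $d(x,y) \le d(x,z) + d(z,y)$ with parameter $\alpha$ yields the pointwise comparison $\Phi(t,x,z) \ge \Phi(t/\alpha,x,y) - \alpha^{p-1} M^p / [p\,t^{p-1}(1-\alpha)^{p-1}]$; letting $\alpha \to 1^-$ along values with $t/\alpha < t_*(y)$ shows $t_*(z) \ge t_*(y)$, so $\mathfrak{d}^+(z,t)$ is well-defined throughout the prescribed range. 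Next I would set $s := t_*(y) - \varepsilon/2$ and $\alpha(t) := (t+s)/(2s)$, check that $\alpha(t)s > t$ and $1-\alpha(t) \ge \varepsilon/(4s)$ uniformly for $t \in (0, t_*(y)-\varepsilon]$, and, for a minimizing sequence $(x_i)$ of $\Phi(t,\cdot,z)$, combine $f(x_i) \ge Q_s f(y) - d(x_i,y)^p/(ps^{p-1})$ with the Young inequality on $d(x_i,y) \le d(x_i,z) + M$ and the test-point upper bound $Q_t f(z) \le f(x_0) + (d(x_0,y)+R)^p/(pt^{p-1})$ (for any $x_0 \in \mathfrak{D}(f)$ with $d(x_0,y)<\infty$) to obtain
\[
d(x_i,z)^p \cdot h(t) \le A + B/t^{p-1}, \qquad
 h(t) := \frac{1}{pt^{p-1}} - \frac{1}{p(\alpha(t)s)^{p-1}} > 0,
\]
with $A,B$ depending only on $y,f,R,\varepsilon$. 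The key analytic checks are then that $h$ is decreasing on $(0,s)$ (so $1/h(t)$ is uniformly bounded on $(0, t_*(y)-\varepsilon]$ by $1/h(t_*(y)-\varepsilon)$) and that $t^{p-1} h(t) = [1 - (2t/(t+s))^{p-1}]/p$ is bounded below by a positive constant uniformly in $t$, which together yield the uniform bound on $\mathfrak{d}^+(z,t)$.

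For (ii), upper semi-continuity is Lemma~\ref{uppersemiconQ} once one notes that in a forward extended Polish space $d(y,y_i)\to 0$ already forces $d(y_i,y) \to 0$, by Proposition~\ref{topoconditiondeq} and Remark~\ref{exposlipspaceexplain}\eqref{rmpolish-c}. For the matching liminf, I would fix $\eta > 0$ with $t+\eta < t_*(y)$ and invoke (iii) with $R=1$, $\varepsilon=\eta/2$ to get a constant $C$ with $\mathfrak{d}^+(y_i,t_i) \le C$ eventually; then pick approximate minimizers $x_i$ at $(t_i,y_i)$ with $d(x_i,y_i) \le C+1$, note $d(x_i,y) \le C+2$ eventually and $|d(x_i,y_i) - d(x_i,y)| \le \max\{d(y,y_i),d(y_i,y)\}\to 0$, and conclude via a mean-value estimate on $r \mapsto r^p$ that $|\Phi(t_i,x_i,y_i) - \Phi(t,x_i,y)| \to 0$, giving $Q_{t_i}f(y_i) + o(1) \ge \Phi(t,x_i,y) \ge Q_t f(y)$. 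The main obstacle is the delicate bookkeeping in (iii): the parameter $\alpha(t)$ must vary with $t$ so that $h(t)$ stays uniformly positive while the $t^{-(p-1)}$ blowup of $Q_t f(z)$ as $t\to 0^+$ is exactly absorbed by $1/[t^{p-1} h(t)]$, and this absorption must coexist with control of the error term involving $(1-\alpha(t))^{-(p-1)}$; the asymmetry of $d$ forces this gymnastic via $\Theta_y(R)$, whereas in the symmetric case one could simply use $R$.
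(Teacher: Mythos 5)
Your proof is correct, and it takes a different route from the paper: the paper's proof of Lemma~\ref{properD} is a one-line reduction that identifies $Q_t f$ with the Hopf--Lax operator $\Phi_t$ of \cite{OZ} applied with the reverse metric $\overleftarrow{d}$, and then simply invokes \cite[Lemma~3.13]{OZ} for (i), (iii) and \cite[Lemma~3.18(i)]{OZ} for (ii). You instead supply a self-contained argument within the present paper. Part (i) is the straightforward comparison of $\Phi(t,\cdot,y)$ with $\Phi(s,\cdot,y)$ for a fixed $s\in(t,t_*(y))$, which is standard. Part (ii) correctly combines the upper bound from Lemma~\ref{uppersemiconQ} (noting, via Proposition~\ref{topoconditiondeq}, that $d(y,y_i)\to 0$ forces $d(y_i,y)\to 0$) with a matching liminf built from the uniform control of part (iii). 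The nontrivial content is the quantitative estimate in (iii): after using Lemma~\ref{pfms-d} and the convexity bound $(a+b)^p\le \alpha^{-(p-1)}a^p+(1-\alpha)^{-(p-1)}b^p$ to compare $\Phi(t,\cdot,z)$ with $Q_sf(y)$, you arrive at $d(x_i,z)^p\,h(t)\le A+B/t^{p-1}$, and the decomposition $A/h(t)+B/[t^{p-1}h(t)]$ is handled by two separate observations --- $h$ decreasing on $(0,s)$ bounds the first term at the right endpoint, while $t^{p-1}h(t)=[1-(2t/(t+s))^{p-1}]/p$ staying bounded away from $0$ on $(0,t_*(y)-\varepsilon]$ absorbs the $1/t^{p-1}$ blowup of the test-point upper bound on $Q_tf(z)$ near $t=0$. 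Both checks are valid with your choices $s=t_*(y)-\varepsilon/2$ and $\alpha(t)=(t+s)/(2s)$. What your version buys is that the asymmetry-specific ingredients --- the reversibility function $\Theta_y$ and the need to pass from $d(y,z)$ to $d(z,y)$ --- are made explicit rather than hidden in the external reference; what it costs is length and the bookkeeping you flag at the end.
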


\begin{proof}
We remark that $Q_t f(y)$ coincides with $\Phi_t(y)$ in \cite{OZ} with respect to the reverse metric
$\overleftarrow{d}$.
Then, \eqref{Di} and \eqref{Diii} follow from \cite[Lemma 3.13]{OZ},
and \eqref{Dii} follows from \cite[Lemma 3.18(i)]{OZ}.
\end{proof}

A simple diagonal argument shows that the supremum (and the infimum) in \eqref{DDcont} is attained,
that is, there exists $(x_i)$ satisfying $\lim_{i \to \infty} \Phi(t,x_i,y)=Q_tf(y)$
and $\lim_{i \to \infty} d(x_i,y) =\mathfrak{d}^+(y,t)$.
Note also that $\mathfrak{d}^-(y,t)\leq \mathfrak{d}^+(y,t)$ clearly holds.

\begin{proposition}[Monotonicity of $\mathfrak{d}^\pm$]\label{monotonicity}
For all $y \in \mathscr{D}(f)$ and $0<s<t<t_*(y)$, we have $\mathfrak{d}^+(y,s)\leq \mathfrak{d}^-(y,t)$.
In particular, $\mathfrak{d}^+(y,\cdot)$ and $\mathfrak{d}^-(y,\cdot)$ are both non-decreasing
and coincide in $(0,t_*(y))$ with at most countably many exceptions.
\end{proposition}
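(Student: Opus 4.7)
The plan is to establish the sandwich inequality $\mathfrak{d}^+(y,s)\le\mathfrak{d}^-(y,t)$ for $0<s<t<t_*(y)$ first, by a direct comparison argument between minimizing sequences at the two scales, and then read off both the monotonicity and the countable-coincidence statement as easy consequences.

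First I would fix $0<s<t<t_*(y)$ and choose two minimizing sequences: $(x_i)_{i\ge 1}$ for $\Phi(s,\cdot,y)$ with $\Phi(s,x_i,y)\to Q_sf(y)$ and $d(x_i,y)\to\mathfrak{d}^+(y,s)$, and $(z_j)_{j\ge 1}$ for $\Phi(t,\cdot,y)$ with $\Phi(t,z_j,y)\to Q_tf(y)$ and $d(z_j,y)\to\mathfrak{d}^-(y,t)$. Both sequences exist by a standard diagonal extraction, and the fact that $s,t<t_*(y)$ guarantees $Q_sf(y),Q_tf(y)\in\mathbb R$, which in turn forces $f(x_i)$ and $f(z_j)$ to be finite for all large $i,j$ (otherwise the functionals $\Phi(s,x_i,y)$ and $\Phi(t,z_j,y)$ could not approach finite values). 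This finiteness is the only real care point and is what makes the cancellation in the next step legal.

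Next, using $Q_sf(y)\le\Phi(s,z_j,y)$ and $Q_tf(y)\le\Phi(t,x_i,y)$ together with the minimizing properties, I would write
\[
\Phi(s,x_i,y)\le\Phi(s,z_j,y)+\varepsilon_i, \qquad \Phi(t,z_j,y)\le\Phi(t,x_i,y)+\delta_j,
\]
with $\varepsilon_i,\delta_j\to 0$. Adding the two inequalities cancels the $f(x_i)+f(z_j)$ terms on both sides and leaves
\[
\bigl(d(x_i,y)^p-d(z_j,y)^p\bigr)\left(\frac{1}{ps^{p-1}}-\frac{1}{pt^{p-1}}\right) \le \varepsilon_i+\delta_j.
\]
Since $s<t$, the coefficient on the left is strictly positive, so dividing through and letting $i\to\infty$ and then $j\to\infty$ yields $\mathfrak{d}^+(y,s)^p\le\mathfrak{d}^-(y,t)^p$, hence $\mathfrak{d}^+(y,s)\le\mathfrak{d}^-(y,t)$.

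Finally, monotonicity of each $\mathfrak{d}^{\pm}(y,\cdot)$ follows from the sandwich $\mathfrak{d}^-(y,s)\le\mathfrak{d}^+(y,s)\le\mathfrak{d}^-(y,t)\le\mathfrak{d}^+(y,t)$ for $s<t$. For the almost-everywhere coincidence, I would note that the monotone function $\mathfrak{d}^+(y,\cdot)$ has at most countably many discontinuities on $(0,t_*(y))$; at any continuity point $t$, choosing $s_n\uparrow t$ and $u_n\downarrow t$ and using
\[
\mathfrak{d}^+(y,s_n)\le\mathfrak{d}^-(y,t)\le\mathfrak{d}^+(y,t)\le\mathfrak{d}^-(y,u_n)\le\mathfrak{d}^+(y,u_n)
\]
forces the inner terms to coincide in the limit, so $\mathfrak{d}^-(y,t)=\mathfrak{d}^+(y,t)$. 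There is no genuinely hard step here; the only subtle point is verifying the finiteness of $f(x_i),f(z_j)$ to justify the cancellation, which is precisely why the assumption $s,t<t_*(y)$ is built into the statement.
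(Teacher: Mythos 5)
Your proof is correct and uses essentially the same idea as the paper: cross-evaluate the two minimizing sequences at each other's time scale, let the $f$-terms cancel, and conclude $\mathfrak{d}^+(y,s)\le\mathfrak{d}^-(y,t)$ from the sign of $1/s^{p-1}-1/t^{p-1}$. The only cosmetic difference is that you add the two $\Phi$-inequalities before passing to the limit (avoiding the need to note that $f(x_i)$ and $f(z_j)$ converge), and for the countable-coincidence step you work at continuity points of $\mathfrak{d}^+$ while the paper uses right-continuity points of $\mathfrak{d}^-$, both of which give the identical conclusion.
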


\begin{proof}
Given $y \in \mathscr{D}(f)$ and $0<s<t<t_*(y)$,
choose minimizing sequences $(x^s_i)$ and $(x^t_i)$ for $\Phi(s,\cdot,y)$ and $\Phi(t,\cdot,y)$, respectively,
such that $d(x^s_i,y) \to \mathfrak{d}^+(y,s)$ and $d(x^t_i,y)\rightarrow\mathfrak{d}^-(y,t)$.
Then $f(x^s_i)$ and $f(x^t_i)$ converge as $i \to \infty$:
\[
f(x^s_i) =\Phi(s,x^s_i,y) -\frac{d(x^s_i,y)^p}{ps^{p-1}}
 \to Q_s f(y) -\frac{\mathfrak{d}^+(y,s)^p}{ps^{p-1}}, \qquad
f(x^t_i) \to Q_t f(y) -\frac{\mathfrak{d}^-(y,t)^p}{pt^{p-1}}.
\]
Note also that
\begin{align*}
Q_s f(y) &\le \lim_{i \to \infty} \biggl[ f(x^t_i) +\frac{d(x^t_i,y)^p}{ps^{p-1}} \biggr]
 =Q_t f(y) -\frac{\mathfrak{d}^-(y,t)^p}{pt^{p-1}} +\frac{\mathfrak{d}^-(y,t)^p}{ps^{p-1}}, \\
Q_t f(y) &\le \lim_{i \to \infty} \biggl[ f(x^s_i) +\frac{d(x^s_i,y)^p}{pt^{p-1}} \biggr]
 =Q_s f(y) -\frac{\mathfrak{d}^+(y,s)^p}{ps^{p-1}} +\frac{\mathfrak{d}^+(y,s)^p}{pt^{p-1}}.
\end{align*}
Combining these yields
\[
\frac{1}{p} \biggl( \frac{1}{s^{p-1}}-\frac{1}{t^{p-1}} \biggr) \mathfrak{d}^+(y,s)^p
 \le Q_s f(y) -Q_t f(y)
 \le \frac{1}{p} \biggl( \frac{1}{s^{p-1}}-\frac{1}{t^{p-1}} \biggr) \mathfrak{d}^-(y,t)^p.
\]
Since $s<t$, we obtain $\mathfrak{d}^+(y,s) \leq \mathfrak{d}^-(y,t)$ as desired.

Comparing the above inequality with $\mathfrak{d}^- \leq \mathfrak{d}^+$,
we see that both functions are non-decreasing.
Moreover, for a point $s$ of right continuity of $\mathfrak{d}^-(y,\cdot)$, we have
\[
\mathfrak{d}^+(y,s) \geq \mathfrak{d}^-(y,s) =\lim_{t \to s^+}\mathfrak{d}^-(y,t) \geq \mathfrak{d}^+(y,s),
\]
which shows $\mathfrak{d}^+(y,s)= \mathfrak{d}^-(y,s)$.
Hence, $\mathfrak{d}^+(y,\cdot)= \mathfrak{d}^-(y,\cdot)$ except for at most countably many points.
\end{proof}

Next, we examine the semi-continuity of $\mathfrak{d}^\pm$.

\begin{proposition}[Semi-continuity of $\mathfrak{d}^\pm$]\label{continofD}
If $y_i \overset{{d}}{\rightarrow} y \in \mathscr{D}(f)$ and $t_i \to t\in (0,t_*(y))$, then
\[
\mathfrak{d}^-(y,t) \leq \liminf_{i \to \infty} \mathfrak{d}^-(y_i,t_i), \qquad
 \mathfrak{d}^+(y,t) \geq \limsup_{i \to \infty} \mathfrak{d}^+(y_i,t_i).
\]
In particular, for every $y \in \mathscr{D}(f)$, $t\mapsto \mathfrak{d}^-(y,t)$ is left-continuous
and $t\mapsto \mathfrak{d}^+(y,t)$ is right-continuous in $(0,t_*(y))$.
\end{proposition}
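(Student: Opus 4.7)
The plan is to establish both semi-continuity inequalities by constructing explicit almost-minimizing sequences of $\Phi(t,\cdot,y)$ out of near-optimizers for $\Phi(t_i,\cdot,y_i)$, and then to deduce the one-sided continuity claims from the monotonicity already obtained in Proposition~\ref{monotonicity}. Throughout I will exploit that the hypothesis $y_i \overset{d}{\to} y$ in the forward topology $\mathcal{T}_+=\widehat{\mathcal{T}}$ forces \emph{both} $d(y_i,y)\to 0$ and $d(y,y_i)\to 0$ (Theorem~\ref{asymmetrtopol}\eqref{equconv} together with Proposition~\ref{topoconditiondeq}), so that the triangle inequality is freely available in either direction. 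As a preliminary, note that $Q_{t_i}f(y_i)\to Q_t f(y)>-\infty$ by Lemma~\ref{properD}\eqref{Dii}, hence $t_i<t_*(y_i)$ for all large $i$, making $\mathfrak{d}^\pm(y_i,t_i)$ well-defined; moreover, fixing $\varepsilon>0$ with $t+\varepsilon<t_*(y)$, Lemma~\ref{properD}\eqref{Diii} yields a uniform bound $\mathfrak{d}^+(y_i,t_i)\leq M$ for $i$ large.

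For the first inequality, set $L:=\liminf_i \mathfrak{d}^-(y_i,t_i)$ and assume $L<\infty$. Passing to a subsequence realizing the liminf, for each $i$ I would choose $x_i\in X$ with $\Phi(t_i,x_i,y_i)\leq Q_{t_i}f(y_i)+1/i$ and $d(x_i,y_i)\leq \mathfrak{d}^-(y_i,t_i)+1/i$. Since $d(x_i,y_i)\leq L+2$ eventually, after a further subsequence $d(x_i,y_i)\to d^*\in[0,L]$, and the two-sided triangle inequality combined with $d(y_i,y),\,d(y,y_i)\to 0$ gives $d(x_i,y)\to d^*$. The identity
\[
f(x_i) \,=\, \Phi(t_i,x_i,y_i)-\frac{d(x_i,y_i)^p}{p\,t_i^{p-1}}
 \longrightarrow Q_tf(y)-\frac{(d^*)^p}{p\,t^{p-1}}
\]
then forces $\Phi(t,x_i,y)\to Q_tf(y)$, so $(x_i)$ is a minimizing sequence for $\Phi(t,\cdot,y)$; the definition of $\mathfrak{d}^-(y,t)$ as an infimum yields $\mathfrak{d}^-(y,t)\leq d^*\leq L$.

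For the second inequality, set $U:=\limsup_i \mathfrak{d}^+(y_i,t_i)$, which is finite by the uniform bound above. Passing to a subsequence realizing the limsup and applying a diagonal extraction inside the minimizing sequences that nearly attain $\mathfrak{d}^+(y_i,t_i)$, I obtain $x_i$ with $\Phi(t_i,x_i,y_i)\to Q_tf(y)$ and $d(x_i,y_i)\to U$. The two-sided triangle argument gives $d(x_i,y)\to U$, and the same expansion of $f(x_i)$ shows that $(x_i)$ minimizes $\Phi(t,\cdot,y)$; hence $\mathfrak{d}^+(y,t)\geq \limsup_i d(x_i,y)=U$. The left-continuity of $\mathfrak{d}^-(y,\cdot)$ and right-continuity of $\mathfrak{d}^+(y,\cdot)$ then follow by applying the two inequalities with the constant sequence $y_i\equiv y$ and combining with the monotonicity and the comparison $\mathfrak{d}^+(y,s)\leq\mathfrak{d}^-(y,t)$ for $s<t$ from Proposition~\ref{monotonicity}.

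The main delicate point — really the only one — is bookkeeping the unbounded, merely proper function $f$: since $f(x_i)$ cannot be controlled directly, its convergence must be extracted from the convergences of $\Phi(t_i,x_i,y_i)$, of $d(x_i,y_i)$, and of $t_i$ via the displayed identity, which in turn relies crucially on Lemma~\ref{properD}\eqref{Dii} and the uniform bound from Lemma~\ref{properD}\eqref{Diii}. Beyond that, the argument is careful but routine triangle-inequality work in the asymmetric setting.
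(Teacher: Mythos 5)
Your proof is correct and proceeds by essentially the same mechanism as the paper: diagonalize over the inner index to produce, for each $i$, a near-optimizer $x_i$ of $\Phi(t_i,\cdot,y_i)$ that also nearly realizes $\mathfrak{d}^\pm(y_i,t_i)$, show via the uniform bound of Lemma~\ref{properD}\eqref{Diii} and the two-sided convergence $d(y_i,y),\,d(y,y_i)\to 0$ that the resulting sequence also minimizes $\Phi(t,\cdot,y)$, and then invoke the definition of $\mathfrak{d}^\pm(y,t)$. The only cosmetic difference is that you pass through the identity $f(x_i)=\Phi(t_i,x_i,y_i)-d(x_i,y_i)^p/(p\,t_i^{p-1})$ and use Lemma~\ref{properD}\eqref{Dii} to get $Q_{t_i}f(y_i)\to Q_tf(y)$, whereas the paper argues directly with $\limsup_i\Phi(t,x_i,y)=\limsup_i\Phi(t_i,x_i,y_i)\le\limsup_i Q_{t_i}f(y_i)\le Q_tf(y)$ via the upper semi-continuity Lemma~\ref{uppersemiconQ}; these are interchangeable, and both hinge on the same uniform bound on $d(x_i,y_i)$.
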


\begin{proof}
For each $i$, take a sequence $(x^i_k)_{k \in \mathbb{N}}$ such that
$\Phi(t_i,x^i_k,y_i) \to Q_{t_i}f(y_i)$ and $d(x^i_k,y_i) \to \mathfrak{d}^-(y_i,t_i)$ as $k \to \infty$.
Choose $k(i)$ satisfying 
\[ \Phi(t_i,x^i_{k(i)},y_i) \le Q_{t_i}f(y_i) +\frac{1}{i}, \qquad
 |d(x^i_{k(i)},y_i) -\mathfrak{d}^-(y_i,t_i)| \le \frac{1}{i}. \]
Then, since $y_i \to y$ and $t_i \to t$, we have
\[
\limsup_{i \to \infty} \Phi(t,x^i_{k(i)},y) =\limsup_{i \to \infty} \Phi(t_i,x^i_{k(i)},y_i)
 \leq \limsup_{i \to \infty} Q_{t_i}f(y_i) \leq Q_t f(y),
\]
where the last inequality follows from Lemma \ref{uppersemiconQ}.
This implies that $(x^i_{k(i)})_{i \in \mathbb{N}}$ is a minimizing sequence of $\Phi(t,\cdot,y)$,
and hence
\[
\mathfrak{d}^-(y,t) \leq \liminf_{i \to \infty} d(x^i_{k(i)},y)
 =\liminf_{i \to \infty}d(x^i_{k(i)},y_i) =\liminf_{i \to \infty} \mathfrak{d}^-(y_i,t_i).
\]
We can prove the upper semi-continuity of $\mathfrak{d}^+$ in the same way.
\end{proof}

As a corollary to the previous proposition,
if $\mathfrak{d}^+(y,t)=\mathfrak{d}^-(y,t)$, then $\mathfrak{d}^\pm$ are continuous at $(y,t)$.



\begin{proposition}[Time derivative of $Q_tf$]\label{timederivative}
For any $y \in \mathscr{D}(f)$, $t \mapsto Q_tf(y)$ is locally Lipschitz in $(0,t_*(y))$. 
Moreover, for all $t \in (0,t_*(y))$, we have
\begin{equation}\label{eq:dQ}
\frac{{\dd}\ }{{\dd}t^-} [Q_t f(y)] =-\frac{1}{q} \frac{\mathfrak{d}^-(y,t)^p}{t^p}, \qquad
\frac{{\dd}\ }{{\dd}t^+} [Q_t f(y)] =-\frac{1}{q} \frac{\mathfrak{d}^+(y,t)^p}{t^p}.
\end{equation}
In particular, $t \mapsto Q_t f(y)$ is differentiable at $t \in (0,t_*(y))$
if and only if $\mathfrak{d}^-(y,t)=\mathfrak{d}^+(y,t)$.
\end{proposition}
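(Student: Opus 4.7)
The plan is to exploit the two-sided estimate already established in the proof of Proposition~\ref{monotonicity}, namely, for any $y\in \mathscr{D}(f)$ and $0<s<t<t_*(y)$,
\begin{equation}\label{eq:plan-sandwich}
\frac{1}{p} \Bigl( \frac{1}{s^{p-1}}-\frac{1}{t^{p-1}} \Bigr) \mathfrak{d}^+(y,s)^p
\leq Q_s f(y)-Q_t f(y)
\leq \frac{1}{p}\Bigl( \frac{1}{s^{p-1}}-\frac{1}{t^{p-1}} \Bigr) \mathfrak{d}^-(y,t)^p.
\end{equation}
All three stated conclusions will be read off from \eqref{eq:plan-sandwich} by elementary limits, using the semi-continuity and monotonicity of $\mathfrak{d}^\pm(y,\cdot)$ proved in Propositions~\ref{monotonicity} and \ref{continofD}.

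First I would address local Lipschitz continuity. On any compact interval $[a,b] \subset (0,t_*(y))$, the mean value theorem gives $|s^{1-p}-t^{1-p}|\le (p-1)a^{-p}|s-t|$, while monotonicity of $\mathfrak{d}^\pm(y,\cdot)$ together with Lemma~\ref{properD}\eqref{Di} yields $\mathfrak{d}^\pm(y,r)\leq \mathfrak{d}^+(y,b)<\infty$ for every $r\in[a,b]$. Plugging these bounds into \eqref{eq:plan-sandwich} produces a constant $C=C(a,b,y)$ with $|Q_sf(y)-Q_tf(y)|\leq C|s-t|$ for all $s,t\in[a,b]$.

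Next I would compute the two one-sided derivatives at a fixed $t\in(0,t_*(y))$. For the left derivative, fix $s<t$ and divide \eqref{eq:plan-sandwich} by the negative quantity $s-t$ (which flips the inequalities) to obtain
\[
\frac{1}{p(s-t)}\Bigl( \frac{1}{s^{p-1}}-\frac{1}{t^{p-1}} \Bigr)\mathfrak{d}^-(y,t)^p
\leq \frac{Q_sf(y)-Q_tf(y)}{s-t}
\leq \frac{1}{p(s-t)}\Bigl( \frac{1}{s^{p-1}}-\frac{1}{t^{p-1}} \Bigr)\mathfrak{d}^+(y,s)^p.
\]
As $s\to t^-$, the prefactor tends to $\frac{1}{p}\cdot\frac{d}{dt}[t^{1-p}]=-(p-1)/(pt^p)=-1/(qt^p)$, and the key point is that $\mathfrak{d}^+(y,s)\to \mathfrak{d}^-(y,t)$ from below: by Proposition~\ref{monotonicity}, $\mathfrak{d}^+(y,s)\leq\mathfrak{d}^-(y,t)$, while $\mathfrak{d}^+(y,s)\geq\mathfrak{d}^-(y,s)\to \mathfrak{d}^-(y,t)$ by left continuity of $\mathfrak{d}^-(y,\cdot)$ (Proposition~\ref{continofD}). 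Squeezing yields $\tfrac{d}{dt^-}[Q_tf(y)]=-\mathfrak{d}^-(y,t)^p/(qt^p)$. For the right derivative, the symmetric argument with $t<s$ uses right continuity of $\mathfrak{d}^+(y,\cdot)$ together with $\mathfrak{d}^-(y,s)\geq\mathfrak{d}^+(y,t)$ (again Proposition~\ref{monotonicity}) to conclude $\mathfrak{d}^-(y,s)\to\mathfrak{d}^+(y,t)$ as $s\to t^+$, and hence $\tfrac{d}{dt^+}[Q_tf(y)]=-\mathfrak{d}^+(y,t)^p/(qt^p)$.

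Finally, the differentiability criterion follows at once from \eqref{eq:dQ}: $t\mapsto Q_tf(y)$ is differentiable at $t$ iff the two one-sided derivatives coincide, which happens iff $\mathfrak{d}^-(y,t)=\mathfrak{d}^+(y,t)$. The only delicate point in the whole argument is the passage $\mathfrak{d}^+(y,s)\to\mathfrak{d}^-(y,t)$ (and its right-sided analogue), where the asymmetry between $\mathfrak{d}^-$ and $\mathfrak{d}^+$ must be resolved by combining the monotonicity estimate of Proposition~\ref{monotonicity} with the one-sided continuity of Proposition~\ref{continofD}; everything else is a careful bookkeeping of the sandwich \eqref{eq:plan-sandwich}.
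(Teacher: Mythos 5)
Your proof is correct and follows essentially the same route as the paper: both start from the two-sided estimate established in Proposition~\ref{monotonicity}, divide by the increment, and use the monotonicity together with the one-sided continuity of $\mathfrak{d}^\pm(y,\cdot)$ from Proposition~\ref{continofD} to squeeze the difference quotient. The only cosmetic difference is that the paper re-derives the sandwich from minimizing sequences chosen to realize $\mathfrak{d}^+(y,s)$ and $\mathfrak{d}^-(y,t)$, and passes only to a $\liminf$ bound for the lower side, whereas you upgrade that to a genuine limit $\mathfrak{d}^+(y,s)\to\mathfrak{d}^-(y,t)$; the two readings are equivalent.
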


\begin{proof}
Given $0<s<t<t_*(y)$, let $(x^s_i)$ and $(x^t_i)$ be minimizing sequences
for $\Phi(s,\cdot,y)$ and $\Phi(t,\cdot,y)$ such that
$\lim_{i \to \infty} d(x^s_i,y) =\mathfrak{d}^+(y,s)$ and
$\lim_{i \to \infty} d(x^t_i,y) =\mathfrak{d}^-(y,t)$.
Then, as in the proof of Proposition~\ref{monotonicity}, we find
\begin{align*}
Q_s f(y)-Q_t f(y) &\leq \lim_{i\to\infty} \frac{d(x^t_i,y)^p}{p} \biggl( \frac1{s^{p-1}}-\frac1{t^{p-1}} \biggr)
 =\frac{\mathfrak{d}^-(y,t)^p}{p} \biggl( \frac1{s^{p-1}}-\frac1{t^{p-1}} \biggr), \\
Q_s f(y)-Q_t f(y) &\geq \lim_{i\to\infty} \frac{d(x^s_i,y)^p}{p}\biggl( \frac1{s^{p-1}}-\frac1{t^{p-1}} \biggr)
 =\frac{\mathfrak{d}^+(y,s)^p}{p} \biggl( \frac1{s^{p-1}}-\frac1{t^{p-1}} \biggr).
\end{align*}
Hence,
\[ 
\frac{\mathfrak{d}^+(y,s)^p}{p(t-s)} \biggl( \frac1{s^{p-1}}-\frac1{t^{p-1}} \biggr)
 \leq \frac{Q_s f(y)-Q_t f(y)}{t-s}
 \leq \frac{\mathfrak{d}^-(y,t)^p}{p(t-s)} \biggl( \frac1{s^{p-1}}-\frac1{t^{p-1}} \biggr).
\]
Combining this with
$\liminf_{s\to t^-}\mathfrak{d}^+(y,s)\geq \liminf_{s\to t^-}\mathfrak{d}^-(y,s) =\mathfrak{d}^-(y,t)$
from Proposition \ref{continofD}, we obtain
 \[
\frac{{\dd}\ }{{\dd}t^-} [Q_t f(y)] =-\frac{p-1}{t^p}\frac{\mathfrak{d}^-(y,t)^p}{p}
 =-\frac{1}{q}\frac{\mathfrak{d}^-(y,t)^p}{t^p}.
\]
One can similarly prove the latter equation in \eqref{eq:dQ} by considering $s>t$ and letting $s\to t^+$.

Moreover, since $\mathfrak{d}^\pm$ are locally bounded
by Lemma~\ref{properD}\eqref{Di} and Proposition \ref{monotonicity},
the local Lipschitz continuity of $t \mapsto Q_t f(y)$ follows with the bound
\begin{equation}\label{linftestQ}
\biggl\| \frac{{\dd}}{{\dd}t}[Q_t f(y)]  \biggr\|_{L^\infty(\tau,\tau')}
 \leq \frac{1}{q\tau^p}\| \mathfrak{d}^+(y,\cdot)^p\|_{L^\infty(\tau,\tau')}
\end{equation}
for any $0<\tau<\tau'<t_*(y)$.
%
\end{proof}

Recall from Definition \ref{forwadefsepaere} that $\lambda_{d}(y):=\lim_{r\to 0^+}\lambda_d(B^+_y(r))$.

\begin{proposition}[Slopes and upper gradients of $Q_tf$]\label{slopeSofQ}
For any $y \in \mathscr{D}(f)$ and $t \in (0,t_*(y))$, we have
\[
|D^+ [Q_t f]|(y) \leq \biggl( \frac{\mathfrak{d}^-(y,t)}{t} \biggr)^{p-1}, \quad
 |D^- [Q_t f]|(y) \leq \lambda_d(y) \biggl( \frac{\mathfrak{d}^+(y,t)}{t} \biggr)^{p-1}.
\]
Moreover, $(\mathfrak{d}^-(\cdot,t)/t)^{p-1}$ is a strong upper gradient of $Q_t f$
on $X_{[y]}=\overleftarrow{X}_{[y]}$ for all $t\in (0,t_*(y))$.
\end{proposition}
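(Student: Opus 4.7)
The plan is to derive the two slope bounds directly from a minimizing-sequence argument for $Q_tf$, and then bootstrap the forward slope bound to the strong upper gradient property via the locally forward Lipschitz character of $Q_tf$ on $X_{[y]}$.

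For $|D^+[Q_tf]|(y)$, I would pick a minimizing sequence $(x_i)$ for $\Phi(t,\cdot,y)$ satisfying $d(x_i,y)\to\mathfrak{d}^-(y,t)$, whose existence follows from \eqref{DDcont} by a diagonal extraction. Writing $f(x_i)=Q_tf(y)+\varepsilon_i-d(x_i,y)^p/(pt^{p-1})$ with $\varepsilon_i\downarrow 0$ and testing the definition of $Q_tf(z)$ with $x_i$,
\[
 Q_tf(z)-Q_tf(y) \le \varepsilon_i+\frac{d(x_i,z)^p-d(x_i,y)^p}{pt^{p-1}}.
\]
The triangle inequality $d(x_i,z)\le d(x_i,y)+d(y,z)$ together with the elementary bound $(a+h)^p-a^p\le p(a+h)^{p-1}h$ for $a,h\ge 0$ yields, after $i\to\infty$,
\[
 Q_tf(z)-Q_tf(y) \le \frac{\bigl(\mathfrak{d}^-(y,t)+d(y,z)\bigr)^{p-1}d(y,z)}{t^{p-1}}.
\]
Dividing by $d(y,z)$ and letting $z\to y$ gives the first slope estimate.

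For $|D^-[Q_tf]|(y)$ I would swap the roles of $y$ and $z$: for each $z$ near $y$ I pick a minimizing sequence $(x_i^z)$ for $\Phi(t,\cdot,z)$ with $d(x_i^z,z)\to\mathfrak{d}^-(z,t)$, and the same computation with $d(x_i^z,y)\le d(x_i^z,z)+d(z,y)$ produces
\[
 Q_tf(y)-Q_tf(z) \le \frac{\bigl(\mathfrak{d}^-(z,t)+d(z,y)\bigr)^{p-1}d(z,y)}{t^{p-1}}.
\]
Dividing by $d(y,z)$, the factor $d(z,y)/d(y,z)$ contributes $\lambda_d(y)$ in the $\limsup$ by Definition~\ref{forwadefsepaere}\textup{(1)}, while the bracketed factor is controlled through $\mathfrak{d}^-(z,t)\le\mathfrak{d}^+(z,t)$ and the upper semi-continuity of $\mathfrak{d}^+(\cdot,t)$ provided by Proposition~\ref{continofD}.

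For the strong upper gradient assertion, fix $\gamma\in\FAC([0,1];X_{[y]})$. By \eqref{lengthabscur} the image lies in the forward ball $\overline{B^+_{\gamma(0)}(L_d(\gamma))}^d$, on which Lemma~\ref{properD}\eqref{Diii} bounds $\mathfrak{d}^+(\cdot,t)$ by some $M<\infty$. Specializing the two one-sided estimates already derived yields a constant $C$ with $|Q_tf(z_2)-Q_tf(z_1)|\le C\max\{d(z_1,z_2),d(z_2,z_1)\}$ for $z_1,z_2$ along $\gamma$; invoking Lemma~\ref{pfms-d} on the forward ball to dominate $d(\gamma(b),\gamma(a))$ by a multiple of $d(\gamma(a),\gamma(b))$, one deduces that $h(s):=Q_tf(\gamma(s))$ is absolutely continuous on $[0,1]$. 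Mimicking the computation at the end of the proof of Lemma~\ref{uppergradofLispcf} then gives, at every point where $h'(s)$ and $|\gamma'_+|(s)$ exist,
\[
 h'(s) \le |D^+[Q_tf]|(\gamma(s))\cdot|\gamma'_+|(s)
 \le \biggl(\frac{\mathfrak{d}^-(\gamma(s),t)}{t}\biggr)^{p-1}|\gamma'_+|(s),
\]
and integrating across any subinterval of $[0,1]$ concludes the argument.

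The main obstacle will be the descending slope: one must carefully handle the passage to the limit of $\mathfrak{d}^-(z,t)$ as $z\to y$ with only \emph{one-sided} semi-continuity of $\mathfrak{d}^\pm$ available, and simultaneously convert the backward distance $d(z,y)$ into the forward distance $d(y,z)$ by means of the infinitesimal reversibility $\lambda_d(y)$ rather than the cruder $\lambda_d(B^+_y(r))$ valid only for fixed $r$. The product of these two factors has to be bounded jointly, which I would achieve by selecting, for each $\varepsilon>0$, a common forward neighborhood of $y$ on which both $\mathfrak{d}^+(\cdot,t)\le\mathfrak{d}^+(y,t)+\varepsilon$ and $d(z,y)/d(y,z)\le\lambda_d(y)+\varepsilon$ hold simultaneously.
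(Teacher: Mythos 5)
Your argument mirrors the paper's proof step for step: the inequality $Q_tf(z)-Q_tf(y)\le t^{1-p}\bigl(\mathfrak{d}^-(y,t)+d(y,z)\bigr)^{p-1}d(y,z)$ via a minimizing sequence realizing $\mathfrak{d}^-(y,t)$, the two slope bounds by $\limsup_{z\to y}$ of this inequality and its $(y,z)$-swap using Proposition~\ref{continofD} and the definition of $\lambda_d(y)$, and absolute continuity of $s\mapsto Q_tf(\gamma_s)$ along forward absolutely continuous curves via Lemmas~\ref{pfms-d} and~\ref{properD}. The only cosmetic difference is at the end, where you reuse the already-proved pointwise ascending-slope bound together with the generic slope-controls-derivative computation of Lemma~\ref{uppergradofLispcf}, rather than differentiating the key inequality directly along $\gamma$; this is a marginally cleaner bookkeeping of the same argument.
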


\begin{proof}
We first prove that, for any $z \in X$ with $d(y,z)<\infty$,
\begin{equation}\label{qdifferece}
Q_t f(z)-Q_t f(y) \leq \frac{1}{t^{p-1}} \bigl( \mathfrak{d}^-(y,t)+d(y,z) \bigr)^{p-1} d(y,z).
\end{equation}
To this end, take a minimizing sequence $(x_i)$ for $\Phi(t,\cdot,y)$
such that $\lim_{i \to \infty}d(x_i,y)=\mathfrak{d}^-(y,t)$.
Then we have, by the triangle inequality,
\begin{align*}
Q_t f(z)-Q_t f(y) &\leq \liminf_{i \to \infty} \biggl[ \frac{d(x_i,z)^p}{pt^{p-1}} -\frac{d(x_i,y)^p}{pt^{p-1}} \biggr]\\
&\le \frac{1}{pt^{p-1}} \liminf_{i \to \infty} \Bigl[ \bigl( d(x_i,y)+d(y,z) \bigr)^p -d(x_i,y)^p \Bigr] \\
&\le \frac{1}{t^{p-1}} \liminf_{i \to \infty} \bigl( d(x_i,y)+d(y,z) \bigr)^{p-1} d(y,z) \\
&= \frac{1}{t^{p-1}} \bigl( \mathfrak{d}^-(y,t)+d(y,z) \bigr)^{p-1} d(y,z),
\end{align*}
which is exactly \eqref{qdifferece}.
This implies that, on the one hand,
\[
|D^+[Q_t f]|(y) =\limsup_{z \to y} \frac{[Q_t f(z)-Q_t f(y)]^+}{d(y,z)}
 \leq \frac{1}{t^{p-1}} \mathfrak{d}^-(y,t)^{p-1}
\]
as desired.
On the other hand, exchanging $y$ and $z$ in \eqref{qdifferece} yields
\[
Q_t f(y)-Q_t f(z) \leq \frac{1}{t^{p-1}} \bigl( \mathfrak{d}^-(z,t)+d(z,y) \bigr)^{p-1} d(z,y).
\]
Hence, together with Proposition~\ref{continofD}, we obtain
\begin{align*}
|D^-[Q_tf]|(y) &=\limsup_{z \to y} \frac{[Q_t f(y)-Q_t f(z)]^+}{d(y,z)}
 \leq \frac{1}{t^{p-1}} \limsup_{z \to y} \biggl[ \frac{d(z,y)}{d(y,z)} \mathfrak{d}^-(z,t)^{p-1} \biggr] \\
&\leq \frac{1}{t^{p-1}} \lambda_d(y) \mathfrak{d}^+(y,t)^{p-1}.
\end{align*}

To show the latter assertion, fix $t\in (0,t_*(y))$ and let $\gamma:[0,1]\rightarrow X_{[y]}$
be an (forward) absolutely continuous curve with constant speed $c$ (i.e., $|\gamma'_+|=c$).
For $r,s\in [0,1]$,
it follows from \eqref{qdifferece} that
\begin{equation}\label{eq:Qtf}
Q_t f(\gamma_s)-Q_t f(\gamma_r) \leq \frac{1}{t^{p-1}} \bigl( \mathfrak{d}^-(\gamma_r,t)+d(\gamma_r,\gamma_s) \bigr)^{p-1} d(\gamma_r,\gamma_s).
\end{equation}
Moreover, for the function $\Theta$ as in Lemma~\ref{pfms-d} and the symmetric metric $\hat{d}$ as in \eqref{symmmetricde}, we have
\[
 d(\gamma_r,\gamma_s)\leq \Theta_{\gamma(0)}(c), \quad d(\gamma_r,\gamma_s)\leq 2\hat{d}(\gamma_r,\gamma_s), \quad \AC\bigl( [0,1];(X,d) \bigr) =\AC\bigl( [0,1];(X,\hat{d}) \bigr).
\]
Then, thanks to  Lemmas~\ref{uppersemiconQ} and \ref{properD}\eqref{Diii},
we deduce from \cite[Corollary 2.10]{AGS2} that $s\mapsto Q_t f(\gamma_s)$ is absolutely continuous.
Therefore, by letting $r\rightarrow s^-$, it follows from \eqref{eq:Qtf} and Proposition \ref{timederivative} that
\[
\frac{{\dd}}{{\dd}s}[Q_tf(\gamma_s)] \leq \biggl( \frac{\mathfrak{d}^-(\gamma_s,t)}{t} \biggr)^{p-1}|\gamma'_+|(s)
 \quad \text{ for $\mathscr{L}^1$-a.e.}\ s\in [0,1],
\]
which concludes the proof.
\end{proof}

\begin{theorem}[Subsolution of Hamilton--Jacobi equation]\label{subslohh}
For $y\in \mathscr{D}(f)$ and $t\in (0,t_*(y))$, we have
\[
\frac{{\dd}\ }{{\dd}t^+}[Q_t f(y)] +\frac{1}{q} \biggl(\frac{|D^-[Q_t f]|(y)}{\lambda_d (y)} \biggr)^q \leq 0,
 \qquad \frac{{\dd}\ }{{\dd}t^-}[Q_t f(y)] +\frac{|D^+[Q_t f]|(y)^q}{q} \leq 0.
\]
In particular,
\begin{equation}\label{basicestimate1qqd}
\frac{{\dd}}{{\dd}t}[Q_t f(y)] +\frac{|D^+[Q_t f]|(y)^q}{q} \leq 0
\end{equation}
with at most countably many exceptions in $(0,t_*(y))$.
\end{theorem}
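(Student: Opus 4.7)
The plan is to reduce both inequalities to direct algebraic consequences of the two explicit formulas already established: the one--sided time derivatives of $Q_t f(y)$ from Proposition~\ref{timederivative} and the bounds for the ascending and descending slopes $|D^\pm [Q_t f]|(y)$ from Proposition~\ref{slopeSofQ}. The whole computation will rest on the conjugate exponent identity $(p-1)q=p$, which converts powers of $\mathfrak{d}^\pm(y,t)/t$ of weight $p-1$ (appearing in the slope estimates) into powers of weight $p/q$ (matching the time derivatives).

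First I would handle the second inequality. By Proposition~\ref{timederivative},
\[
\frac{\dd\ }{{\dd}t^-}[Q_t f(y)]=-\frac{1}{q}\frac{\mathfrak{d}^-(y,t)^p}{t^p},
\]
and by Proposition~\ref{slopeSofQ},
\[
|D^+[Q_t f]|(y)^q \leq \biggl(\frac{\mathfrak{d}^-(y,t)}{t}\biggr)^{(p-1)q}=\frac{\mathfrak{d}^-(y,t)^p}{t^p}.
\]
Adding these yields $\frac{\dd}{{\dd}t^-}[Q_t f(y)]+\frac{1}{q}|D^+[Q_t f]|(y)^q\leq 0$. For the first inequality, I would argue symmetrically, starting from $\frac{\dd}{{\dd}t^+}[Q_t f(y)]=-\frac{1}{q}\mathfrak{d}^+(y,t)^p/t^p$ and from the bound $|D^-[Q_tf]|(y)\leq \lambda_d(y)\mathfrak{d}^+(y,t)^{p-1}/t^{p-1}$, which after division by $\lambda_d(y)$ and raising to the power $q$ gives exactly $\mathfrak{d}^+(y,t)^p/t^p$.

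For the final assertion about the (two--sided) derivative, I would invoke the last sentence of Proposition~\ref{timederivative}: the function $t\mapsto Q_t f(y)$ is differentiable at $t\in(0,t_*(y))$ precisely when $\mathfrak{d}^-(y,t)=\mathfrak{d}^+(y,t)$. By Proposition~\ref{monotonicity}, $\mathfrak{d}^-(y,\cdot)$ and $\mathfrak{d}^+(y,\cdot)$ are non--decreasing and coincide off an at most countable set, so the derivative exists on $(0,t_*(y))$ with at most countably many exceptions. At each such $t$, the second inequality of the theorem specializes to \eqref{basicestimate1qqd}.

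There is no genuine obstacle here; the entire content is already encoded in Propositions~\ref{timederivative} and \ref{slopeSofQ}. The only point to double--check is the arithmetic $(p-1)q=p$ together with the consistent use of the asymmetric slope bounds (in particular, that the factor $\lambda_d(y)$ appearing in the bound for $|D^-[Q_tf]|$ is precisely the factor dividing the slope in the first stated inequality, so that it cancels). This is the reason why the descending--slope inequality is formulated with $|D^-[Q_t f]|(y)/\lambda_d(y)$ rather than $|D^-[Q_t f]|(y)$ itself, and is the only place where the asymmetry of $d$ leaves a visible mark on the Hamilton--Jacobi subsolution inequality.
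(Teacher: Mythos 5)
Your proof is correct and follows essentially the same route as the paper: both inequalities are read off directly from the time-derivative formulas in Proposition~\ref{timederivative} and the slope bounds in Proposition~\ref{slopeSofQ} via the conjugate identity $(p-1)q=p$, and the countable-exception statement is obtained by combining the differentiability criterion in Proposition~\ref{timederivative} with the monotonicity and a.e.\ coincidence of $\mathfrak{d}^\pm$ from Proposition~\ref{monotonicity}.
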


\begin{proof}
According to Propositions \ref{timederivative} and \ref{slopeSofQ}, we have
\begin{align*}
&\frac{{\dd}\ }{{\dd}t^+}[Q_t f(y)] =-\frac{1}{q} \biggl(\frac{\mathfrak{d}^+(y,t)}{t} \biggr)^p
 \leq -\frac{1}{q} \biggl( \frac{|D^-[Q_t f]|(y)}{\lambda_d (y)} \biggr)^q, \\
&\frac{{\dd}\ }{{\dd}t^-}[Q_t f(y)] =-\frac{1}{q} \biggl(\frac{\mathfrak{d}^-(y,t)}{t} \biggr)^p
 \leq -\frac{1}{q} |D^+[Q_t f]|(y)^q.
\end{align*}
Moreover, \eqref{basicestimate1qqd} follows from the second inequality
and Propositions \ref{monotonicity} and \ref{timederivative}.
\end{proof}


\begin{proposition}\label{strongexpaap}
Let $f:X\rightarrow \mathbb{R}$ be a bounded function.
Then, for any $t>0$, $Q_t f:X \rightarrow \mathbb{R}$ is bounded and forward Lipschitz.
More precisely, we have
\begin{equation}\label{boundedQtf}
\inf_X f\leq \inf_X Q_tf\leq \sup_X Q_tf\leq \sup f, \qquad
\Lip(Q_t f) \leq 2^{p-1} \biggl( \frac{p\osc(f)}{t} \biggr)^{1/q},
\end{equation}
where $\osc(f):=\sup_X f-\inf_X f$.
\end{proposition}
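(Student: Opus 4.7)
The plan is to split the argument into two parts: the bounds on $\inf_X Q_t f$ and $\sup_X Q_t f$, and the forward Lipschitz estimate.

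For the first part, I would observe that taking $x = y$ in the definition of $Q_t f(y)$ gives the upper bound $Q_t f(y) \leq f(y) \leq \sup_X f$, while the nonnegativity of the penalty $d(x,y)^p/(pt^{p-1})$ together with $f(x)\geq \inf_X f$ gives $Q_t f(y) \geq \inf_X f$. In particular $Q_t f$ is bounded with $\osc(Q_t f) \le \osc(f)$.

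For the Lipschitz estimate, I would fix $y, z \in X$ with $d(y,z) < \infty$ (the claim being trivial otherwise) and $\varepsilon > 0$, then pick a near-minimizer $x_\varepsilon$ with $\Phi(t, x_\varepsilon, y) \leq Q_t f(y) + \varepsilon$. Combined with the bound $Q_t f(y) \leq \sup_X f$ just established, this forces
\[
\frac{d(x_\varepsilon, y)^p}{pt^{p-1}} \leq \osc(f) + \varepsilon, \quad \text{hence} \quad d(x_\varepsilon, y) \leq A := \bigl(pt^{p-1}(\osc(f) + \varepsilon)\bigr)^{1/p}.
\]
Then I would compare $Q_t f(z)$ with $\Phi(t, x_\varepsilon, z)$, invoke the triangle inequality $d(x_\varepsilon, z) \leq d(x_\varepsilon, y) + d(y, z)$ and the elementary bound $b^p - a^p \leq p\,b^{p-1}(b-a)$ for $0 \leq a \leq b$ to obtain
\[
Q_t f(z) - Q_t f(y) \leq \frac{(d(x_\varepsilon, y) + d(y, z))^{p-1}}{t^{p-1}} \, d(y, z) + \varepsilon.
\]

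The crux is then a dichotomy on whether $d(y, z) \leq A$ or $d(y, z) > A$. In the first case, $(d(x_\varepsilon, y) + d(y, z))^{p-1} \leq (2A)^{p-1}$, and the identity $(p-1)/p = 1/q$ reduces $(2A)^{p-1}/t^{p-1}$ to exactly $2^{p-1}(p(\osc(f) + \varepsilon)/t)^{1/q}$. In the second case, rearranging the definition of $A$ yields $(\osc(f))^{1/p} \leq d(y, z)/(p^{1/p} t^{1/q})$, so interpolating $\osc(f) = (\osc(f))^{1/p}(\osc(f))^{1/q}$ and using the crude global bound $Q_t f(z) - Q_t f(y) \leq \osc(f)$ gives a linear bound in $d(y,z)$ with constant $p^{-1}(p\osc(f)/t)^{1/q}$, which is dominated by $2^{p-1}(p\osc(f)/t)^{1/q}$ since $1/p < 1 \leq 2^{p-1}$ for $p > 1$. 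Merging the two cases and letting $\varepsilon \downarrow 0$, then taking the supremum over $y,z$, concludes the proof.

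The main obstacle will be Case 2 above: when $d(y, z)$ exceeds the intrinsic scale $A$ set by the oscillation of $f$, the near-minimizer $x_\varepsilon$ may lie far from $z$, so the direct comparison $Q_t f(z) \leq \Phi(t, x_\varepsilon, z)$ produces only a superlinear $d(y, z)^p$-type estimate. The saving grace is that in this regime the global bound $Q_t f(z) - Q_t f(y) \leq \osc(f)$ is already linear in $d(y, z)$ with a favourable constant, precisely because of the exponent identity $1/p + 1/q = 1$.
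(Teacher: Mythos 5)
Your proof is correct and follows essentially the same route as the paper's: bound the distance from $y$ to near-minimizers of $\Phi(t,\cdot,y)$ by the scale $R=(pt^{p-1}\osc(f))^{1/p}$ using $Q_tf(y)\le f(y)$, then split into a near/far dichotomy on $d(y,z)$ relative to $R$ and compare constants. The only cosmetic difference is that you re-derive the comparison inequality directly from an $\varepsilon$-near-minimizer (and set the split threshold at $\approx R$ rather than $R/p$), whereas the paper's proof invokes the previously established estimate \eqref{qdifferece} and the bound $\mathfrak{d}^-(y,t)\le\mathfrak{d}^+(y,t)\le R$; these are equivalent up to a harmless change of constant, and both are absorbed into the $2^{p-1}$ factor.
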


\begin{proof}
Observe from the definition of $Q_t f$ that $\inf_X f \le \inf_X Q_t f$ and $Q_t f(y) \le f(y)$ for all $y \in X$,
thus we have the boundedness as in \eqref{boundedQtf}.

Next, given $y \in X$, choose a minimizing sequence $(x_i)$ for $\Phi(t,\cdot,y)$
such that $\lim_{i \to \infty}d(x_i,y) =\mathfrak{d}^+(y,t)$.
Note that
\[
\frac{\mathfrak{d}^+(y,t)^p}{pt^{p-1}} -\osc(f)
 \leq \lim_{i \to \infty} \biggl[ f(x_i)+\frac{d(x_i,y)^p}{pt^{p-1}} \biggr] -f(y)
 =Q_t f(y)-f(y) \leq 0,
\]
which implies
\begin{equation}\label{osccond}
\mathfrak{d}^+(y,t) \leq \bigl( pt^{p-1} \osc(f) \bigr)^{1/p}=:R.
\end{equation}
For any $z \in X$ with $0<d(y,z)<R/p$,
we infer from \eqref{qdifferece} and $\mathfrak{d}^-(y,t)\leq \mathfrak{d}^+(y,t)$ that
\[
\frac{Q_t f(z)-Q_t f(y)}{d(y,z)}
 \le \frac{1}{t^{p-1}} \bigl( \mathfrak{d}^-(y,t)+d(y,z) \bigr)^{p-1}
 \le \frac{(2R)^{p-1}}{t^{p-1}}
 =2^{p-1} \biggl( \frac{p \osc(f)}{t} \biggr)^{1/q}.
\]
For $z \in X$ with $d(y,z)\geq R/p$, since $\osc(Q_tf)\leq \osc(f)$ by the former assertion in \eqref{boundedQtf},
we have
\[
\frac{Q_t f(z)-Q_t f(y)}{d(y,z)} \leq \frac{\osc(Q_tf)}{R/p}
 \leq \frac{p\osc(f)}{(pt^{p-1}\osc(f))^{1/p}}
 = \biggl( \frac{ {p\osc(f)}}{t} \biggr)^{1/q}.
\]
This completes the proof of the latter assertion in \eqref{boundedQtf}.
\end{proof}

\begin{remark}[Continuity of $Q_t$ at $t=0$]\label{conatt=0}
If $f$ is bounded and $\tau$-lower semi-continuous, then $Q_t f \uparrow f$ as $t\downarrow 0$
(see \cite[Lemma 3.18(v)]{OZ}).
\end{remark}

We finally study the Borel measurability (in $\tau$) of the slopes $|D^\pm [Q_t f]|(y)$
along the lines of \cite[Proposition~3.8]{AGS2}.

\begin{proposition}\label{qtlowersemi}
Let $(X,\tau,d)$ be a forward extended Polish space.
\begin{enumerate}[{\rm (i)}]
\item\label{cporK1}
Given $g \in C(K)$ on a compact set $K\subset (X,\tau)$ and $M \geq \max_K g$, set
\[
f(x) :=\begin{cases}
g(x) & \text{ if } x \in K, \\
M & \text{ if } x \in X \setminus K.
\end{cases}
\]
Then, $Q_t f$ is $\tau$-lower semi-continuous in $X$ for all $t>0$.
\item\label{cporK2}
Suppose that $\mathscr{D}(f)=X$, $t_*(y)\geq T>0$ for all $y \in X$,
and $Q_t f$ is Borel measurable for all $t \in (0,T)$ $($in $\tau)$.
Then, $(y,t) \mapsto {\dd}[Q_t f(y)]/{\dd}t^+$ is Borel measurable in $X \times (0,T)$
and both slopes $|D^\pm [Q_t f]|(y)$ are $\mathcal{B}^*(X\times (0,T))$-measurable in $X\times (0,T)$.
 \end{enumerate}
 \end{proposition}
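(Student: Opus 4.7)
My plan is to handle the two parts separately, the first by direct semi-continuity bookkeeping and the second by standard Carath\'eodory / projection arguments.

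For part (i), I would first reduce $Q_t f$ to an infimum over $K$ alone. Since we may take $x=y$ in the defining infimum, $Q_t f(y) \le f(y) \le M$, and hence
\[
Q_t f(y) = \min\Bigl\{M,\, h(y)\Bigr\}, \qquad h(y) := \inf_{x \in K} \Bigl[ g(x) + \frac{d(x,y)^p}{pt^{p-1}} \Bigr].
\]
It then suffices to show $h$ is $\tau$-lower semi-continuous. By Definition \ref{weaktopolgytau}\eqref{weakt1-4} and the continuity of $g$ on $K$, the function $F(x,y') := g(x) + d(x,y')^p/(pt^{p-1})$ is $(\tau\times\tau)$-lower semi-continuous on $K\times X$. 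For any sequence $y_n \to y$ in $\tau$, the $\tau$-compactness of $K$ ensures $h(y_n)$ is attained at some $x_n\in K$; extracting a $\tau$-convergent subsequence $x_{n_k}\to x\in K$, joint lower semi-continuity yields $\liminf_k h(y_{n_k}) \ge F(x,y) \ge h(y)$. Taking the minimum with the constant $M$ preserves lower semi-continuity.

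For part (ii), the first step is to upgrade Borel measurability of $Q_t f$ in $y$ (for each fixed $t$) to joint Borel measurability on $X\times(0,T)$. Since $t\mapsto Q_t f(y)$ is continuous (in fact locally Lipschitz) for each $y$ by Proposition \ref{timederivative} and $y\mapsto Q_t f(y)$ is Borel for each $t$, a standard Carath\'eodory-function argument applies: approximate by $F_n(y,t):=Q_{k/n}f(y)$ on each half-open interval $[k/n,(k+1)/n)$, which is jointly Borel, and pass to the pointwise limit. Consequently the difference quotients $n[Q_{t+1/n}f(y)-Q_tf(y)]$ are jointly Borel on $X\times(0,T)$, and their everywhere-defined pointwise limit (guaranteed by Proposition \ref{timederivative}) exhibits the right derivative as a jointly Borel function.

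For the slopes, consider the jointly Borel function $\phi(y,z,t) := [Q_tf(z)-Q_tf(y)]^+/d(y,z)$, defined on the Borel set $\{(y,z,t)\in X\times X\times(0,T): 0<d(y,z)<\infty\}$ (Borel because $d$ is $(\tau\times\tau)$-lower semi-continuous, so $\{d<r\}=\bigcup_n\{d\le r-1/n\}$ is $F_\sigma$ and $\{d>0\}$ is open). Writing
\[
|D^+[Q_t f]|(y) \;=\; \inf_{k\in\mathbb{N}}\,\sup\bigl\{\phi(y,z,t) : 0<d(y,z)<1/k\bigr\},
\]
and observing that for every $a\in\mathbb{R}$ and $k\in\mathbb{N}$ the set $\{(y,z,t): 0<d(y,z)<1/k,\,\phi(y,z,t)>a\}$ is Borel in the Polish space $X\times X\times(0,T)$, its projection onto $X\times(0,T)$ is analytic and therefore belongs to $\mathcal{B}^*(X\times(0,T))$. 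Thus each inner supremum is $\mathcal{B}^*$-measurable as a function of $(y,t)$, and so is the countable infimum over $k$; the argument for $|D^-[Q_t f]|$ is identical.

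The main obstacle is the careful orchestration of measurability classes: the slopes are defined as limits of suprema over uncountable $d$-neighborhoods, which generally prevents Borel measurability but is rescued by the Polishness of $(X,\tau)$ via the projection / analytic-sets theorem. A secondary nuisance is that $d$ is not jointly $\tau$-continuous, only jointly $\tau$-lower semi-continuous, so one must argue via $F_\sigma$ decompositions of $\{d<r\}$ rather than invoking openness of forward balls; this is handled cleanly once the measurability of the base set $\{0<d(y,z)<1/k\}$ is established.
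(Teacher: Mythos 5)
Your proof is correct and follows essentially the same approach as the paper. For (i), the reduction to $\min\{M,\min_{x\in K}[\,\cdot\,]\}$, attainment of the infimum on the compact $K$, and subsequence extraction is precisely the paper's argument; one small expository fix is that you should first pass to a subsequence of $(y_n)$ realizing $\liminf_n h(y_n)$ before extracting the convergent $(x_{n_k})$, since as written your inequality only controls the liminf along the chosen subsequence. For (ii), the paper refers the reader to Proposition \ref{timederivative}, Lemma \ref{borlstarmea} and \cite[Lemma~2.6]{AGS2}; your Carath\'eodory-style joint-measurability step, the difference-quotient realization of ${\dd}[Q_tf]/{\dd}t^+$, and the projection-onto-analytic-sets treatment of $\sup\{\phi(y,z,t):0<d(y,z)<1/k\}$ are exactly the ingredients those references supply, here written out explicitly (including the $F_\sigma$ decomposition of $\{d<1/k\}$ needed because $d$ is only $\tau\times\tau$-lower semi-continuous, not continuous).
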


\begin{proof}
\eqref{cporK1}
By the choice of $M$, we have
\[
Q_t f(y) =\min\biggl\{ \min_{x \in K} \biggl[ g(x)+\frac{d(x,y)^{p}}{pt^{p-1}} \biggr], M \biggr\}.
\]
Thus, it suffices to show the $\tau$-lower semi-continuity of
\[
\Phi(y) :=\min_{x \in K} \biggl[ g(x)+\frac{d(x,y)^p}{pt^{p-1}} \biggr].
\]
Given a sequence $(y_i) \overset{\tau}{\rightarrow} y \in X$,
since $d^p$ is $\tau$-lower semi-continuous, we can find a point $x_i \in K $ satisfying
\[
\Phi(y_i) =g(x_i)+\frac{d(x_i,y_i)^p}{pt^{p-1}}.
\]
Take a subsequence $(y_{i_k})$ such that $\lim_{k \to \infty} \Phi(y_{i_k}) =\liminf_{i \to \infty}\Phi(y_i)$.
Since $K$ is $\tau$-compact, by passing to a further subsequence,
we also assume that $(x_{i_k})$ $\tau$-converges to some $x_\infty \in K$.
Then we have
\begin{align*}
\lim_{k \to \infty} \Phi(y_{i_k})
 =\lim_{k \to \infty} \biggl( g(x_{i_k}) +\frac{d(x_{i_k},y_{i_k})^p }{pt^{p-1}} \biggr)
 \geq g(x_\infty)+\frac{d(x_\infty,y)^p}{pt^{p-1}} \geq \Phi(y),
\end{align*}
which shows the $\tau$-lower semi-continuity.

\eqref{cporK2}
It follows from the (local Lipschitz) continuity of $t \mapsto Q_t f(y)$ for each $y \in \mathscr{D}(f)$
(Proposition~\ref{timederivative}) that $(y,t) \mapsto Q_t f(y)$ is also Borel measurable.
The Borel measurability of $(y,t)\mapsto {\dd}[Q_t f(y)]/{\dd}t^+$ then follows from
the Borel measurability of $(y,t) \mapsto Q_t f(y)$ and the continuity of $t \mapsto Q_t f(y)$.
The measurability of slopes is seen as in \cite[Lemma~2.6]{AGS2} (recall Lemma~\ref{borlstarmea}).
\end{proof}

\begin{remark}[Further problems]\label{rm:further}
Having the Hamilton--Jacobi inequality (Theorem~\ref{subslohh}) in asymmetric metric measure spaces
for the Hopf--Lax semigroup in hand, a natural question arises concerning the hypercontractivity estimate
in the sense of Bakry \cite{Bakry}.
A suitable framework seems to be the class of asymmetric $\CD(0,N)$-spaces $(N>1)$,
studied in detail in  \cite{KZ}.
Besides the Hamilton--Jacobi inequality,
the validity of the sharp logarithmic Sobolev inequality is needed for such an estimate,
which requires a suitable rearrangament argument
(together with a P\'olya--Szeg\H o inequality, based on a sharp isoperimetric inequality),
and further regularity properties of the Hopf--Lax semigroup.
Although some of the aforementioned ingredients are already available in the literature
(see, e.g., Manini \cite{Manini} for the sharp isoperimetric inequality on forward complete measured Finsler manifolds
satisfying the $\CD(0,N)$ condition for some $N > 1$), further fine analysis will be needed
--- as regularity of the Hopf--Lax semigroup and a suitable P\'olya--Szeg\H o inequality
involving appropriate gradient-type term --- to describe the asymmetric hypercontractivity estimate
for the Hopf--Lax semigroup.
\end{remark}


\end{document}